\numberwithin{equation}{section}
\let\ep=\epsilon
\def\bbT{\mathbb{T}}
\newcommand{\beq}{\begin{equation}}
\newcommand{\eeq}{\end{equation}}
\newcommand{\ben}{\begin{eqnarray}}
\newcommand{\een}{\end{eqnarray}}
\newcommand{\beno}{\begin{eqnarray*}}
\newcommand{\eeno}{\end{eqnarray*}}
\newtheorem{theorem}{Theorem}[section]
\newtheorem{lemma}[theorem]{Lemma}
\newtheorem{remark}[theorem]{Remark}
\newtheorem{Theorem}{Theorem}[section]
\newtheorem{Proposition}[Theorem]{Proposition}
\newtheorem{Lemma}[Theorem]{Lemma}
\newtheorem{Corollary}[Theorem]{Corollary}
\newtheorem{Remark}[Theorem]{Remark}
\begin{document}
\begin{CJK*}{GBK}{song}
\title[The onset of instability for zonal stratospheric flows]{\textbf {The onset of instability for zonal stratospheric flows}}

\author{Adrian Constantin}
\address{Faculty of Mathematics, University of Vienna, Oskar-Morgenstern-Platz 1, 1090 Vienna, Austria}
\email{adrian.constantin@univie.ac.at}

\author{Pierre Germain}
\address{Department of Mathematics, Imperial College London, South Kensington Campus, London SW7 2AZ, United Kingdom}
\email{pgermain@ic.ac.uk}

\author{Zhiwu Lin}
\address{School of Mathematical Sciences, Fudan University,  200433, Shanghai, P. R. China}
\email{zwlin@fudan.edu.cn}

\author{Hao Zhu}
\address{School of Mathematics, Nanjing University,  210093, Nanjing, Jiangsu, P. R. China \&
 Faculty of Mathematics, University of Vienna, Oskar-Morgenstern-Platz 1, 1090 Vienna, Austria}
\email{haozhu@nju.edu.cn \& hao.zhu@univie.ac.at}

\date{\today}

\begin{abstract}
We investigate some qualitative aspects of the dynamics of the Euler equation on a rotating sphere that are relevant for stratospheric flows. Zonal flow dominates
the dynamics of the stratosphere and for most known planetary stratospheres the observed flow pattern is a small perturbation of an $n$-jet, which corresponds to choosing the Legendre polynomial of degree $n$ as the stream function. Since the $1$-jet and the $2$-jet are stable, the main interest is the onset of instability for the $3$-jet.
We confirm long-standing conjectures based on numerical simulations by proving that there exist positive and negative critical rotation rates $\omega_{cr}^+$ and $\omega_{cr}^-$ such that
the $3$-jet is linearly unstable if and only if $\omega \in (\omega_{cr}^-,\omega_{cr}^+)$. Turning to the nonlinear problem, we prove that linear instability implies nonlinear instability and that, as $\omega$ goes to infinity, nearby traveling waves gradually change from a cat's eyes streamline pattern to a profile with no stagnation points.
\end{abstract}

\maketitle

\noindent
{\it Keywords}: atmospheric flow, Euler equation, rotating sphere, stability.

\noindent
\subjclass{{\it AMS Subject Classification (2020)}: 86A10, 35Q35.}

\tableofcontents

\section{Introduction}

\subsection{Planetary stratospheric flows} 
In our galaxy, more than 3200 stars with planets orbiting them were detected, and our solar system alone has 8 planets, 5 dwarf planets
and over 160 moons. While some small astronomical bodies (comets, meteoroids, asteroids) are irregularly shaped, planets and dwarf planets are nearly spherical because
they are massive enough for their self-gravity to overcome the inherent strength of the materials they are made of. Large moons are
also nearly spherical but some less massive moons (for example, the eighth-largest moon of Saturn, Hyperion) are not rounded.
Astronomical bodies retain an atmosphere when their escape velocity is significantly larger than the average molecular velocity of the gases present
by gravitational accretion onto the celestial body or released from the celestial body itself. In our solar system all planets except Mercury and some moons have an atmosphere,
while some dwarf planets (e.g. Pluto) have an atmosphere and others (e.g. Makemake) lack one. A bewildering range of flows occur in the planetary atmospheres, mainly driven by the inhomogeneity of the energy input from the sun, with
the rotation acquired when the celestial body was formed (taking angular momentum from the impacts that shaped it) also playing an important role.
 Winds -- flows in a direction orthogonal to the normal direction -- are the most significant large-scale motions in the stratosphere of a spherical celestial body, with vertical movements
of a much smaller order of magnitude; in contrast to this, in the troposphere one can at least occasionally observe strong updrafts and downdrafts. In particular, the banded zonal cloud patterns
are among the most striking features of the visible atmospheres of Jupiter and Saturn  (see Fig. \ref{f1}).
The temperature inversion -- a stratosphere characteristic -- inhibits vertical flow, the dynamics in this atmospheric layer being inviscid and, due
to the stable stratification, layered. Consequently, stratospheric flow is governed at leading order at any fixed height by the Euler equation on the rotating unit sphere, written in non-dimensional form as
the evolution equation for the vorticity $(\mathcal{E}_\omega)$ which will be introduced below.
 \begin{table}[h!]
\centering

\begin{tabular}{| c | c|c|c|c|}
 \hline
 Celestial body & $R'$  & $\omega'$ & $U'$ &  $\omega$  \\
 \hline\hline
 Earth   & 6371 km  & $7.27\times 10^{-5}\,\text{rad/s}$ & 50 m/s & 9.26 \\
Jupiter &  69911 km  & $1.76\times 10^{-4}\,\text{rad/s}$ & 100 m/s  &  123  \\
Saturn &  58232 km  & $1.62\times 10^{-4}\,\text{rad/s}$ & 100 m/s   & 94.3  \\
Neptune &  24622 km &  $1.08\times 10^{-4}\,\text{rad/s}$ & 200 m/s  & 13.2  \\
Uranus &  25362 km   & $-1.04\times 10^{-4}\,\text{rad/s}$ & 200 m/s  & $-13.1$  \\
Pluto &  1188 km & $-1.1\times 10^{-5}\,\text{rad/s}$& 10 m/s & $-1.31$\\
Titan & 2576 km & $4.55\times 10^{-6}\,\text{rad/s}$ & 100 m/s & 0.11 \\
HD 209458b & 94380 km & $2.06\times 10^{-5}\,\text{rad/s}$ & 1940 m/s  & 1.01 \\
WASP-39b & 91000 km & $4.05\times 10^{-7}\,\text{rad/s}$ & 2000 m/s & 0.01 \\
 \hline
\end{tabular}
\vspace{0.2cm}
\caption{Approximate values of $\omega$ for astronomic bodies with a stratosphere.}
\label{tbl:Tb-omega}
\end{table}

 An important nondimensional parameter $\omega$ is defined in terms of the physical scales of the problem as
\begin{equation*}
\omega=\frac{\omega'R'}{U'}\,,
\end{equation*}
where $R'$ is the size of the planet which rotates with the speed $\omega'$ about its polar axis (measured counterclockwise with respect to the polar axis oriented towards 
the North Pole), having  zonal velocity scale $U'$ (see the discussion in \cite{cg22}). Using the data provided in \cite{cat, i, pe, rl},
we collect in Table \ref{tbl:Tb-omega} approximate values of $\omega$ for various astronomic bodies with a stratosphere: the planets Earth, Jupiter, Saturn,
Neptune, Uranus, HD 209458b, WASP-39b, the dwarf planet Pluto and the moon Titan. Note that the retrograde rotations about the polar
axis (spinning from east to west), exhibited by Uranus and Pluto, are atypical.

While the vorticity of atmospheric flows is typically calculated from velocity measurements, rather than being measured directly, the study of the evolution of the vorticity is at the core of
theoretical considerations in geophysics. This is even more so for quasi-two-dimensional flows since these showcase the emergence of long-lived vortices with the ability to self-propagate, such
structures being important in determining the weather and the climate.

\begin{figure}[H]
    \centering
	\includegraphics[scale = 0.4]{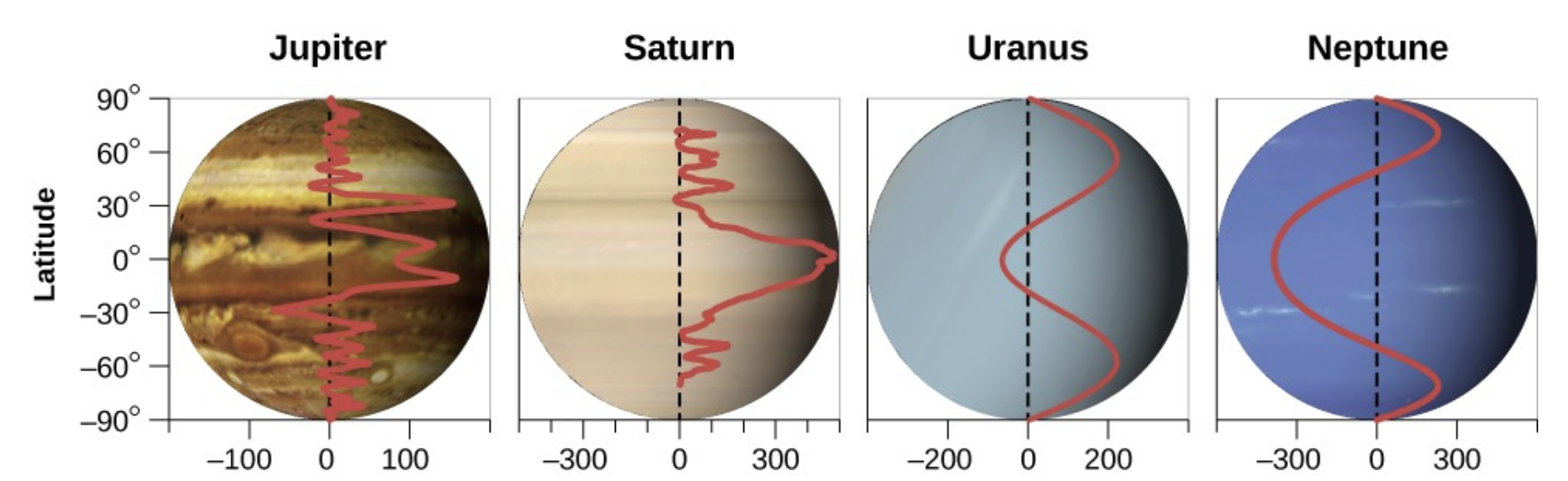}
	\caption{{\footnotesize The zonal wind profile of the outer planets in our solar system, measured in m/s relative to each planet's rotation speed (Credit: Open-Stax CNX). Zonal bandings
	are the most prominent visual features on Jupiter and Saturn, called ``zones" if they have an eastward jet along their poleward boundary and a westward one on the boundary
	nearest the equator and ``belts" if the direction of the jets along their boundary is reversed (on Jupiter, they have a strong color contrast as bright, respectively dark regions).
	Zonal flow also dominates the dynamics of the stratosphere of Uranus and Neptune, with a broad
	westward equatorial flow and an eastward flow at higher latitudes in each hemisphere. These pictures show the high altitude clouds just beneath the stratosphere (at the top of the
troposphere).}}
	\label{f1}
\end{figure}

Stratospheric winds can be decomposed into a steady zonal flow  -- the time and zonal average -- and
a wave-perturbation, the departures from the mean flow being generically called eddies \cite{vallis}. These steady zonal flows create transport barriers that have a crucial influence on mixing and confinement (e.g. the wintertime
stratospheric polar jet over Antarctica, a particularly robust flow, is effectively isolated from the rest of the atmosphere, enabling the chemistry of ozone depletion to take place \cite{mci}) and also give rise to
spectacular long-lasting patterns (e.g. Jupiter's Great Red Spot \cite{cj-jas} and Saturn's polar hexagon \cite{cj-jas2}) by shear instability.
Studies aimed at differentiating between stable and unstable mean flows in order to identify states that are expected to change rapidly due to growth of small amplitude perturbations
were initiated over half a century ago (see the discussion in \cite{pal}). Of interest is not only the stability of steady zonal flows under small perturbations, making them easily detectable to observations, but
also the type of patterns triggered by the onset of instability.

In our solar system the gas giants (Jupiter, Saturn) and the ice giants (Uranus and Neptune) have a stratosphere, as does the dwarf planet Pluto, but Earth is the only
terrestrial planet with a stratosphere, since Mercury lacks an atmosphere and on Venus and Mars the mesosphere is adjacent to the troposphere. Few moons in our solar system have
an atmosphere, and if so, the atmosphere is typically very thin. Titan, the second largest moon -- both by radius and mass -- in the solar system,
orbiting Saturn, is heralded as being the only natural satellite that has a fully developed atmosphere. However, Titan's atmosphere is denser than Earth's and
major aspects of its stratospheric motion remain to be explored. We note that, while the study of extrasolar planets in our galaxy is one of the fastest-growing subdisciplines in astronomy and planetary science,
understanding the dynamics of their atmospheres remains a daunting task where theoretical considerations can guide spectroscopic investigations. In this context, the exploration of the dynamics of the simplest dynamically
relevant zonal flows ($n$-jet combinations of retrograde and prograde zonal flows, similar to those that dominate the visible atmosphere of the outer planets in our solar system) can provide invaluable insight.
For example, the extrasolar planets HD 209458b and WASP-39b, about 160 and 700 light-years from Earth, respectively, are both known to have a stratosphere (see \cite{pe}).

The dynamics of the stratosphere is dominated by steady zonal flows with a coherent, banded structure showcasing characteristic jet-like velocity profiles. Zonal flows like the latitudinally-aligned
belts and zones observed on Jupiter and Saturn are not solely extra-terrestrial phenomena -- near the Earth's tropopause one finds the polar and subtropical jet streams which eastward-bound aircraft takes advantage of.

\subsection{Euler's equation on the sphere}
On the sphere $\mathbb{S}^2$, we choose the following coordinates:
the angle of longitude $\varphi \in [-\pi,\pi]$ and $s=\sin\theta$ with $\theta \in \big[-\frac{\pi}{2},\frac{\pi}{2}\big]$ the angle of latitude. Then the Euler equation written on the vorticity $\Upsilon(t,\varphi,s)$ is
\begin{equation*}
{\rm(\mathcal{E}_\omega)} \qquad \qquad\qquad\qquad\qquad
\partial_t \Upsilon + \Big( \partial_\varphi\Psi\,\partial_s - \partial_s\Psi \,\partial_\varphi \Big) (\Upsilon + 2\omega s)=0\,,\qquad\qquad\qquad\qquad\qquad\qquad
\end{equation*}
where the stream function is given by the Poisson equation ($\Delta$ denoting the Laplace-Beltrami operator on ${\mathbb S}^2$)
\begin{equation*}
\Upsilon=\Delta\Psi= \partial_s \Big( (1-s^2)\,\partial_s\Psi\Big) + \frac{1}{1-s^2}\,\partial_\varphi^2\Psi 
\end{equation*}
with zero mean.
The azimuthal and meridional velocity components can be expressed by means of the stream function $\Psi(t,\varphi,s)$ through
\begin{equation*}
u=-\sqrt{1-s^2}\,\partial_s\Psi \quad\text{and}\quad  v={\frac{1}{\sqrt{1-s^2}}}\,\partial_\varphi\Psi.
\end{equation*}
Denote $D_T=\mathbb{T}_{2\pi}\times [-1,1]$. Note that integrating the vorticity
over $D_T$ yields the Gauss constraint
$$\int_{D_T}{\Upsilon(t,\varphi,s)}\,{\rm d}s\,{\rm d}\varphi =0\,,$$
which ensures that one can recover the velocity field $(u,v)$ from the vorticity $\Upsilon$ by means of the Biot-Savart formula (see the discussion in \cite{cg22}).

Finally, the Euler equation on the sphere enjoys the following symmetry, which corresponds to changing the rotation of the frame of observation: $\psi_0$ solves $(\mathcal{E}_0)$ if and only if
\begin{equation}
\label{symmetry}
\psi_\omega(t,\varphi,s) = \psi_0(t,\varphi+\omega t,s) + \omega s
\end{equation}
solves $(\mathcal{E}_\omega)$.

Since the vorticity is transported by the Euler flow in dimension 2, an analogue of the Beale-Kato-Majda
theorem \cite{bkm} ensures that smooth solutions can be  continued indefinitely (see \cite{Taylor2016}). With the question of global existence settled, 
our aim will be to obtain more qualitative information on the dynamics by studying the stability of steady zonal flows. 
Note that the governing equation $(\mathcal{E}_\omega)$ for stratospheric flow can be written in the Hamiltonian form 
$$\partial_t (\Delta \Psi + 2\omega s) = \{ \Delta \Psi + 2\omega s,\,\Psi \}$$
with respect to the symplectic structure on ${\mathbb S}^2$ whose Poisson bracket is given by
$$\{f,h\}= \partial_s h \partial_\varphi f - \partial_s f \partial_\varphi h =\frac{\partial_s h \partial_\varsigma f - \partial_s f \partial_\varsigma h }{\sqrt{1-s^2}}\,,$$
corresponding to the Mathieu transformation $(s,\varsigma)=\big(\sin\theta,\,\frac{\varphi}{\cos\theta}\big)$ of the latitude-longitude coordinates $(\theta,\varphi)$ on ${\mathbb S}^2$, which 
define the standard symplectic structure in spherical coordinates (see \cite{cg22}) with Poisson bracket 
$$\{f,h\}=\frac{\partial_\theta h \partial_\varphi f - \partial_\theta f \partial_\varphi h }{\cos\theta}$$
(we refer to \cite{mh} for a discussion of symplectic transformations). The stability of specific solutions of nonlinear Hamiltonian systems is very challenging 
and often studied by linearization techniques. The spectrum of the resulting linear operator does provide clues as to the
stability of the solution of the nonlinear equation, an element of the spectrum having a strictly positive real part being indicative of instability. Since the spectrum 
of a linear Hamiltonian operator is symmetric with respect to the imaginary axis of the complex plane, for Hamiltonian systems spectral stability is equivalent to a purely 
imaginary spectrum. Linear stability is more demanding, requiring that small perturbations remain bounded for all times. Linear stability implies spectral stability but there are even 
systems with finitely many degrees of freedom for which the converse is not true; moreover, linear stability does not imply nonlinear stability and linear instability does not preclude nonlinear 
stability (we refer to \cite{hmrw} for examples). In view of these considerations, only a nonlinear stability analysis can be expected to provide genuine insight into the dynamics of physically relevant flows.

\subsection{Zonal flows, $n$-jets and their stability}
Steady zonal flows, which correspond to stream functions $\Psi=\Psi_\ast(s)$ only dependent on the $s$-variable, solve $(\mathcal{E}_\omega)$.

The study of the linear instability of zonal flows on the rotating unit sphere ${\mathbb S}^2$ is of long-standing interest in geophysics. One of the most important general results is
the classical Rayleigh's criterion, providing a necessary condition: if a zonal flow with stream function $\Psi_\ast(s)$ is linearly unstable, then
$\Upsilon_\ast'+2\omega$ must change sign on $(-1,1)$, where $\Upsilon_\ast=\Delta\Psi_\ast$ is the vorticity of the flow. An improvement is Fjortoft's necessary condition for linear instability:
for any $\gamma\in \mathbb{R}$, $(\Upsilon_\ast'(s_0)+2\omega)(\Psi_\ast'(s_0)+\gamma)<0$ at some $s_0 \in (-1,1)$ (see the discussions in \cite{cg22,Skiba2017}). Of relevance is also the semicircle theorem, stating
(see \cite{Ishioka-Yoden92,Sasaki-Takehiro-Yamada2012}) that an unstable eigenvalue  $\lambda$  must lie in the upper semicircle with center at ${\max(- \Psi_\ast')+\min(- \Psi_\ast')\over2}$ and radius larger than or equal to  ${\max(- \Psi_\ast')-\min(- \Psi_\ast')\over2}$; we refer to  \cite{Thuburn-Haynes1996} for comparisons of the semicircle theorem between the cases of spherical and flat-space geometry.
The most far-reaching general nonlinear stability result for a zonal flow with stream function $\Psi_\ast(s)$, proved in \cite{Caprino-Marchioro1988,Taylor2016}, requires a monotone total vorticity
$\Upsilon_\ast(s)+2\omega s$ on $[-1,1]$.

\medskip

The $n$-jets are distinguished zonal flows; by definition, the $n$-jet has its stream function given by the rescaled Legendre polynomial
$$P_n(s) = \sum_{k=0}^{[n/2]} \frac{(-1)^k (2n-2k)! \,s^{n-2k}}{2^n k! (n-k)!(n-2k)!} = \frac{1}{2^n n!} \,\frac{{\rm d}^n}{{\rm d}s^n}\,(s^2-1)^n\,, \qquad s \in [-1,1]\,,$$
where $n \in \mathbb{N}$ and $[n/2]$ stands for the integer part of $n/2$. The Legendre polynomial $P_n$ is the zonal eigenfunction
corresponding to the eigenvalue $-n(n+1)$ of the Laplace-Beltrami operator on ${\mathbb S}^2$ with $\int_{-1}^1 P_n^2(s)\,{\rm d}s=\frac{2}{2n+1}$. The polynomial $P_n$ has
$n$ simple roots and $n-1$ local extrema in $(-1,1)$, so that for $n \ge 2$ its graph features alternating prograde and retrograde jets (see \cite{sz} and Fig. \ref{f2}); moreover, $P_n(1)=1$ for all $n \ge 0$ and 
$$P_n(-s)=(-1)^n P_n(s)\,,\quad P_{2n}(0)=\frac{(-1)^n }{2^{2n}}\,{2n \choose n} \quad\text{for all}\quad n \ge 1\,,\ s \in [-1,1]\,.$$
The fact that any zonal flow in $L^2({\mathbb S}^2)$ can be decomposed into a combination of the $n$-jets is a consequence of the fact that
the spherical harmonics (the normalized eigenfunctions of the Laplace-Beltrami operator) form an orthonormal basis of $L^2({\mathbb S}^2)$.

\begin{figure}[ht]
    \centering
	\includegraphics[scale = 0.65]{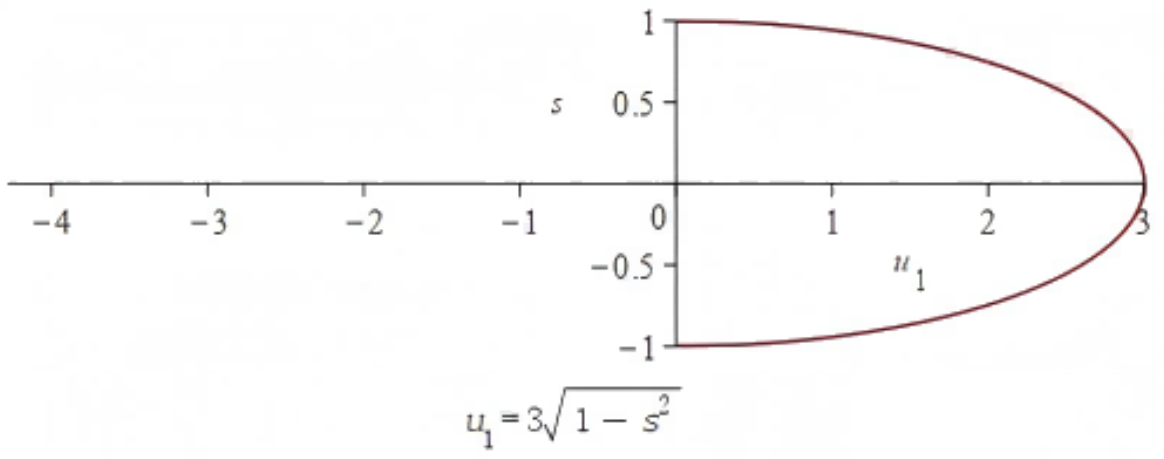}\;
        \includegraphics[scale = 0.67]{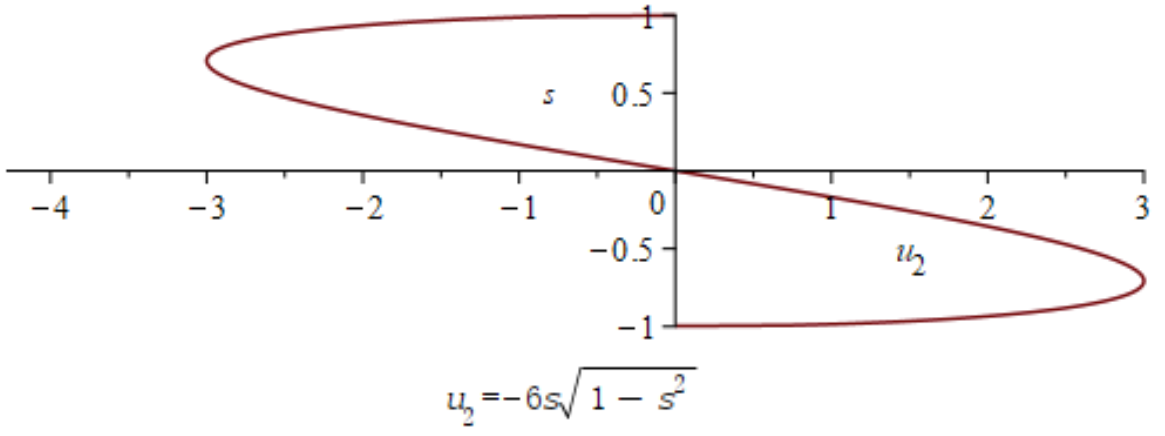}\;
        \includegraphics[scale = 0.67]{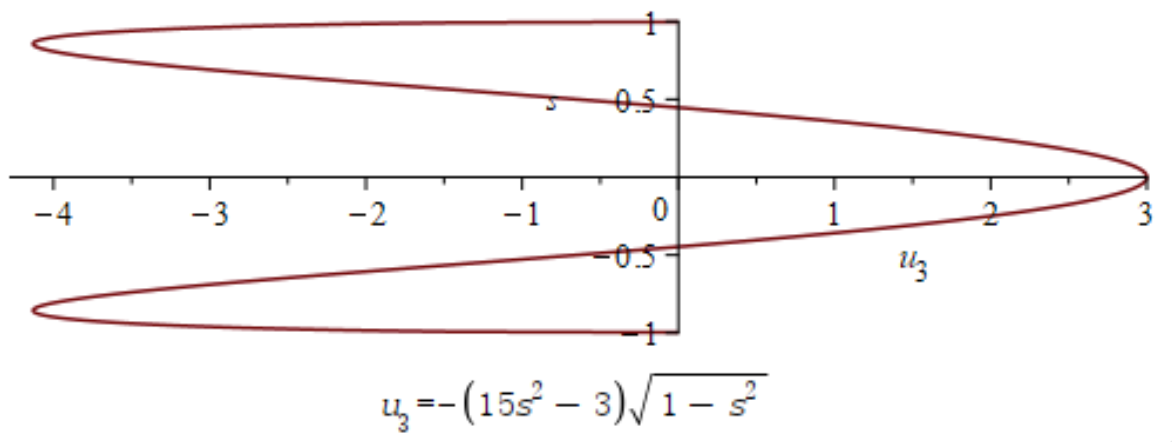}
        \caption{Zonal velocity profiles $u_n$ of the first three  jets, rescaled such that $\max\limits_{s \in [-1,1]}\{u_n(s)\}=3$ for $1 \le n \le 3$, the corresponding stream functions being $\Psi_1=-3P_1$, $\Psi_2=2P_2$ and
        $\Psi_3=2P_3$ in terms of the Legendre polynomials.}
	\label{f2}
\end{figure}

The $n$-jets are also of interest because of their connection to Rossby-Haurwitz waves \cite{Haurwitz1940, Rossby1939}. By definition, the stream function of a Rossby-Haurwitz wave is the sum of two eigenfunctions of the Laplace-Betrami operator, belonging to the first and the $n$-th eigenspaces, respectively. In the case of zonal flows, Rossby-Haurwitz waves reduce to $n$-jets. Because of their relevance in climate dynamics, much effort has been devoted to the linear and nonlinear stability/instability of general Rossby-Haurwitz waves, analytically \cite{Hoskins1973,Baines1976,Skiba1992,Thuburn-Li2000,Skiba2000,Skiba2004,Skiba-PerezGarcia2005,Skiba2009,Skiba2017,Jeong-Cheong2017,Skiba2018,Benard2020} as well as numerically \cite{Hoskins1973,Baines1976,Thuburn-Li2000,Skiba-PerezGarcia2005,Jeong-Cheong2017,Benard2020} but it remains a very challenging problem.

Are the $n$-jets stable?
\begin{itemize}
\item The stability of the $1$-jet is a consequence of the conservation of the angular momentum, see for instance \cite{cg22}.
\item The stability of the $2$-jet was proved in \cite{cg22} by relying on the Hamiltonian structure, in particular on the conservation of energy and on the
interplay between high-order Casimirs (cubic, quartic and quintic invariants).
\item The $3$-jet is the first that appears to succumb to instability mechanisms. It is numerically found in \cite{Baines1976} that
there exist positive and negative critical rotation rates $\omega_{cr}^\pm$ such that the 3-jet
is linearly unstable for $ \omega\in (\omega_{cr}^-,\omega_{cr}^+$) and spectrally stable otherwise. The accuracy of the numerical computations in \cite{Baines1976} was improved in \cite{Sasaki-Takehiro-Yamada2012,Taylor2016}: 
the formal analysis in \cite{Sasaki-Takehiro-Yamada2012} predicts that $\omega_{cr}^+={99\over2}$ and $\omega_{cr}^- \approx -16.0732$, while the numerical approximations in 
\cite{Baines1976} and \cite{Taylor2016} are $\omega_{cr}^- \approx -15.9652$ and $\omega_{cr}^- \approx -16.875$, respectively\footnote{Note that in the notation of 
\cite{Sasaki-Takehiro-Yamada2012} this critical rotation rate  is $-{33\sqrt{7}\over16}$, being
transformed in our notation to $-{33\sqrt{7}\over16}\times \left(-{24\over \sqrt{7}}\right)={99\over2}$, while the critical rotation rate $1.7719$ in the notation of \cite{Sasaki-Takehiro-Yamada2012} corresponds
to $1.7719\times \left(-{24\over \sqrt{7}}\right)\approx-16.0732$ in our notation. Similar scaling transformations yield the corresponding values for $\omega_{cr}^-$ obtained in \cite{Baines1976,Taylor2016}.}. From the analytical perspective, the $3$-jet is  spectrally stable for $\omega\in(-\infty,-18]\cup[72,\infty)$ by the Rayleigh's criterion, and is actually nonlinearly stable for $\omega\in(-\infty,-18]\cup[72,\infty)$, as proved  in \cite{Caprino-Marchioro1988,Taylor2016,Cao-Wang-Zuo2023}.
\end{itemize}

This shows that the $3$-jet is the current frontier of our understanding of stability of jets; they also provide the first example of instability of such flows. Note also that the $3$-jet plays a key role in the description of the
wind profile on Uranus: according to \cite{snh} it is the linear combination
\begin{equation}\label{ur}
u(s)=\tfrac{68}{3}\,\big(u_1(s) - 2u_3(s)\big)
\end{equation}
of the $1$-jet and the $3$-jet, where $u_1, u_3$ are rescaled as in Fig. \ref{f2}. Using the symmetry \eqref{symmetry} of the Euler equation on the sphere, this corresponds to the 
$3$-jet (with stream function $2P_3$) for $\omega = -{1647\over1360}$. These considerations prompt the following question, which will be our focus in the present paper.

\medskip

\noindent
{\bf Main question.}\label{Secondquestion} \textit{For which values of the planetary rotation speed $\omega$ is the $3$-jet (with stream function $2P_3$) stable, respectively unstable?}

\subsection{Main results: linear aspects}
Our aim is to understand the local dynamics near the $3$-jet zonal flow, the stream function of which has the form
\begin{align}\label{stream function of 3-jet zonal flow}
\Psi_0(s)=5s^3-3s=2P_3(s),
\end{align}
As a first step, we consider the linearized problem.

\subsubsection{Ranges of linear stability or instability}

Our first main result is a rigorous proof of   the critical rotation rate $\omega_{cr}^+={99\over2}$ for linear stability/instability of the $3$-jet in the positive half-line.  Our analytical result is consistent with the numerical findings in \cite{Sasaki-Takehiro-Yamada2012}.

\begin{Theorem}\label{positive half-line critical rotation rate}
 The $3$-jet is  linearly unstable  for $\omega\in\left[0,{99\over2}\right)$ and spectrally  stable for $\omega\in\left[{99\over2},\infty\right)$.
\end{Theorem}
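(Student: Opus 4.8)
The plan is to reduce the linearized Euler equation on $\mathbb{S}^2$ about the $3$-jet to a spectral problem for a self-adjoint or non-self-adjoint operator acting in Fourier modes in the longitude $\varphi$, and then to analyze that problem mode by mode. Writing a perturbation as $\psi(t,\varphi,s) = e^{ik\varphi}\phi(s) e^{-ikct}$ with wavenumber $k \in \mathbb{N}$ and complex speed $c$, the vorticity equation $(\mathcal{E}_\omega)$ linearized about $\Psi_0 = 2P_3$ becomes a Rayleigh-type equation
\begin{equation*}
(U(s) - c)\,\mathcal{L}_k\phi - \bigl(U''_{\mathrm{eff}}(s) + 2\omega\bigr)\phi = 0,
\end{equation*}
where $U(s) = -\Psi_0'(s)/\sqrt{1-s^2}$ is (up to normalization) the zonal velocity, $\mathcal{L}_k\phi = \partial_s((1-s^2)\partial_s\phi) - \frac{k^2}{1-s^2}\phi$ is the Laplace--Beltrami operator restricted to the $k$-th mode, and $\Upsilon_0'(s) + 2\omega = \Delta\Psi_0{}'(s) + 2\omega = -12\Psi_0'(s)/(1-s^2) + \cdots + 2\omega$ is the gradient of the total vorticity. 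Because $\Psi_0 = 2P_3$ is itself a Laplace--Beltrami eigenfunction ($\Delta P_3 = -12 P_3$), the coefficient $\Upsilon_0' + 2\omega$ is a rational function whose zeros are explicitly computable, and the eigenvalue problem becomes a genuinely tractable ODE on $(-1,1)$ with regular singular points at $s = \pm 1$.

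The first key step is the spectral stability side: show that for $\omega \ge 99/2$ there is no unstable eigenvalue. Here I would combine Rayleigh's criterion and Fjortoft's criterion (both quoted in the excerpt) with the semicircle theorem to rule out instability for large $\omega$, but near the threshold $\omega = 99/2$ one needs a sharper argument. The natural approach is to show that the neutral mode at $\omega = 99/2$ is a polynomial eigenfunction: I expect that at $c = U(s_0)$ for the appropriate critical latitude, the Rayleigh equation with $k=1$ admits an explicit polynomial solution, and that $\omega = 99/2$ is precisely the value at which a conjugate pair of eigenvalues collides on the real axis at this neutral mode. One then argues, via a continuity/Sturm-oscillation argument in $\omega$, that for $\omega > 99/2$ the eigenvalues stay on the imaginary axis (in the co-moving frame), i.e. $c$ stays real, so the flow is spectrally stable; crucially one must also check that higher wavenumbers $k \ge 2$ contribute no instability for $\omega \ge 99/2$, which should follow from the semicircle theorem plus the fact that $\mathcal{L}_k$ becomes more coercive as $k$ grows.

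The second key step is linear instability for $\omega \in [0, 99/2)$. The cleanest route is a degree-theoretic or analytic-continuation argument: having identified an explicit neutral mode with $c$ real at $\omega = 99/2$ and checked a transversality (nondegeneracy) condition — that $\partial c/\partial\omega \ne 0$ at the bifurcation point, equivalently that the relevant Evans-type function has a simple zero — one concludes that an unstable eigenvalue $c$ with $\operatorname{Im} c > 0$ bifurcates into $\omega < 99/2$. To propagate this all the way down to $\omega = 0$ one shows the unstable eigenvalue cannot escape: it cannot return to the real axis except at another neutral mode (which, by the explicit analysis of the indicial/neutral equation, does not occur in $(0, 99/2)$), and it cannot run off to infinity in $c$ because of the semicircle theorem's a priori bound. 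Finally one invokes the known result (cited in the excerpt) that the $3$-jet is already nonlinearly — hence spectrally — stable for $\omega \ge 72$; combined with our bifurcation analysis this forces the unstable branch to persist exactly on $[0, 99/2)$. Linear instability (growth of perturbations, not merely an eigenvalue with positive real part) then follows because the linearized operator is sectorial / the unstable eigenvalue is isolated with finite multiplicity, so the semigroup estimate gives genuine exponential growth.

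The main obstacle I anticipate is establishing rigorously that the neutral mode at $\omega = 99/2$ is simple and transversal — i.e., pinning down the Evans/Wronskian function near this degenerate point and showing its zero is simple in the joint $(c,\omega)$ variables — together with the global argument that no \emph{other} neutral mode intervenes on $(0, 99/2)$ for any wavenumber $k$. The formal computation in \cite{Sasaki-Takehiro-Yamada2012} suggests the algebra is exact but delicate; turning it into a proof requires careful treatment of the regular singular points at $s = \pm 1$ (choosing the decaying Frobenius solution) and a uniform-in-$\omega$ control of the continuous spectrum coming from the critical layer $U(s) = \operatorname{Re} c$. I would handle the critical-layer issue by working in the co-moving frame and exploiting that, for the $3$-jet, the total vorticity gradient $\Upsilon_0' + 2\omega$ vanishes only at explicitly known points, so the Rayleigh equation's singularities are nonresonant for all but a discrete set of $\omega$, which we then analyze by hand.
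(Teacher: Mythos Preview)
Your outline is reasonable in spirit but misses the paper's central organizing device, and that omission leaves a genuine gap. The paper does not track an unstable eigenvalue by continuation at all; instead it writes the linearized equation in Hamiltonian form $\partial_t\Upsilon = J_\omega L\Upsilon$ with $L = \tfrac{1}{12} + \Delta^{-1}$ and invokes the index formula
\[
k_{i}^{\le 0} + k_{0}^{\le 0} + k_{c} + k_{r} \;=\; n^{-}(L_k)
\]
for each Fourier mode $k$. Since $n^{-}(L_k)=1$ for $k=1,2$ and $n^{-}(L_k)=0$ for $|k|\ge 3$, wavenumbers $|k|\ge 3$ are stable automatically---your ``semicircle plus coercivity'' heuristic is not needed and would not, by itself, separate $k=2$ (which has its own threshold $69/2$) from $k\ge 3$. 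The whole problem then reduces to deciding whether $k_{i}^{\le 0}+k_{0}^{\le 0}$ equals $0$ or $1$ in the odd subspace for $k=1$ and the even subspace for $k=2$, i.e.\ to finding neutral modes and computing the sign of $\langle L_k\cdot,\cdot\rangle$ on them.

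The concrete gap in your plan is the interval $\omega\in(12,99/2)$. There $\tilde\Psi_\omega'$ does not change sign, so admissible neutral speeds are not confined to $\text{Ran}(-\tilde\Psi_\omega')^\circ$: there is an additional \emph{non-resonant} window $c\in[0,-12+\omega]$ (the paper's Lemma~2.5). Your continuation argument needs to exclude any second neutral mode appearing in this window as $\omega$ decreases from $99/2$, and you offer no mechanism for this beyond ``analyze by hand.'' The paper handles it by recasting the Rayleigh equation as a self-adjoint eigenvalue problem with principal eigenvalue $\lambda_1(\mu,\omega)$, $\mu=c-\omega$, computing $\lambda_1(-12,\omega)$ explicitly via Gegenbauer polynomials, proving monotonicity in $\omega$, and---crucially---establishing $\lambda_1(\mu,99/2)<-12$ for all $\mu<-12$. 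That last inequality is itself proved by contradiction \emph{using the index formula}: two non-resonant neutral modes of nonpositive signature would give $k_i^{\le 0}\ge 2> n^-(L_1)=1$. So the index count is not merely bookkeeping; it is what closes the argument that your Evans-function transversality check, even if carried out, cannot close on its own. (Minor side note: $\Upsilon_0 = \Delta\Psi_0 = -12\Psi_0$ since $P_3$ is an eigenfunction, so $\Upsilon_0' = -12\Psi_0'$ with no $1/(1-s^2)$ factor.)
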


As we shall see, the linear instability of the $3$-jet can only occur for the first and second Fourier modes (in the azimuthal variable). The positive critical rotation rate  for the $k$-th Fourier mode is denoted by $\omega_{cr,k}^+$ for $k=1, 2$.
We shall prove that   $\omega_{cr,1}^+={99\over2}$ in Theorems \ref{linear instability} and \ref{k=1 positive half-line critical rotation rate}, and  $ \omega_{cr,2}^+={69\over2}$  in Theorems \ref{linear instability} and \ref{k=2 positive half-line critical rotation rate}.

\medskip

 Our second main result is an exact determination and a rigorous proof  of   the critical rotation rate $\omega_{cr}^-$ on the negative half-line.
In particular, the exact value of $\omega_{cr}^-$
is not found in the numerical literature \cite{Baines1976,Sasaki-Takehiro-Yamada2012,Taylor2016} and is
 based on the principal eigenvalues of a modified Rayleigh equation, which has the form
 \begin{align}\label{Rayleigh-type equation thm 1.2}
((1-s^2)\Phi')'-{4\over 1-s^2}\Phi-{2\omega+12\mu\over 15s^2-3+\mu}\Phi=\tilde\lambda\Phi
 \end{align}
 with $\Phi\in X_{\omega,\mu,e}$ (defined in \eqref{def-X-omega-mu-e k=2}).
We denote its principal (i.e. maximal) eigenvalue by $\tilde\lambda_1(\mu,\omega)$, where $\mu\in[3,\infty)$ and $\omega\in[-18,-3]$. Next, we define the function
\begin{align}\label{def-g}
g(\omega)=\max_{\mu\in[3,183]}\tilde\lambda_1(\mu,\omega),\quad \omega\in[-18,-3].
\end{align}
In Lemma \ref{properties of g}, we will prove that $g$ is decreasing and continuous on $\omega\in[-18,-3]$, $g(-18)=-6$ and $g(-3)<-12$.
Then $g^{-1}(-12)\in(-18,-3)$. Now we can state our result for the negative half-line.

 \begin{Theorem}\label{negative half-line critical rotation rate}
 The $3$-jet is  linearly unstable  for $\omega\in\left(g^{-1}(-12),0\right]$ and spectrally  stable for $\omega\in\left(-\infty, g^{-1}(-12)\right]$.
\end{Theorem}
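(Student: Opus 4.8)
\medskip
\noindent\textbf{Proof strategy.}
The plan is to localize the linearized dynamics to a single azimuthal Fourier mode, exploit the Rossby--Haurwitz identity $\Delta\Psi_0=-12\Psi_0$ satisfied by the stream function \eqref{stream function of 3-jet zonal flow}, and thereby recast the non--self-adjoint $k=2$ eigenvalue problem as a one-parameter family of self-adjoint Sturm--Liouville problems whose spectral picture is exactly the function $g$ of \eqref{def-g}. Writing a perturbation as $\psi=\sum_k\phi_k(s)e^{ik\varphi}$, the mode-$k$ linearized vorticity equation reads $\lambda\,\Delta_k\phi=ik\big[\Psi_0'\,\Delta_k\phi-(\Upsilon_0'+2\omega)\phi\big]$, where $\Delta_k=\partial_s\big((1-s^2)\partial_s\big)-\frac{k^2}{1-s^2}$ and $\Upsilon_0=\Delta\Psi_0$. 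Since $\Upsilon_0'+2\omega=-12\Psi_0'+2\omega$, substituting $\lambda=-ikc$ rearranges this identity to
\begin{equation*}
(\Psi_0'+c)\,(\Delta_k+12)\,\phi=(2\omega+12c)\,\phi.
\end{equation*}
For $c=\mu\in[3,\infty)$ the coefficient $\Psi_0'+\mu=15s^2-3+\mu$ is nonnegative (positive for $\mu>3$, vanishing only at $s=0$ when $\mu=3$), and the equation becomes $\big(\Delta_k-\frac{2\omega+12\mu}{15s^2-3+\mu}\big)\phi=-12\phi$ --- that is, the modified Rayleigh equation \eqref{Rayleigh-type equation thm 1.2} at spectral level $-12$; the corresponding purely imaginary eigenvalue $\lambda=-2i\mu$ (for $k=2$) lies isolated below the essential spectrum $i[-6,24]$ when $\mu>3$ and sits at its lower endpoint $-6i$ when $\mu=3$.

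First I would invoke the reduction in Theorem \ref{linear instability} to the Fourier modes $k=1,2$ (for $|k|\ge3$ the operator $\Delta_k+12$ is negative semidefinite, which rules out unstable modes), noting that the $k=1$ mode does not enlarge the instability interval on the negative half-line, so that the negative critical rate of the $3$-jet is that of the $k=2$ mode. The core is then a Hamiltonian index argument: using the skew-adjoint/self-adjoint factorization of the linearized operator and a Krein-signature count (equivalently, an instability index of the type used for plane shear flows), I would reduce the existence of an unstable $k=2$ eigenvalue --- restricted to the parity class $X_{\omega,\mu,e}$ forced by the symmetry $s\mapsto-s$ --- to a definiteness condition on the family $\{L_{\mu,\omega}+12\}_{\mu\ge3}$, where $L_{\mu,\omega}$ is the operator on the left-hand side of \eqref{Rayleigh-type equation thm 1.2}: there is an unstable mode \emph{if and only if none} of these operators has a nonnegative direction, i.e.\ if and only if $\tilde\lambda_1(\mu,\omega)<-12$ for every $\mu\ge3$. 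Tracking this as $\omega$ varies, starting from the Rayleigh-stable value $\omega=-18$ where $L_{3,-18}=\Delta_2$ and $\tilde\lambda_1(3,-18)=-6>-12$ and using monotonicity in $\omega$, the transition is precisely where the level $-12$ rises above the whole family $\{\tilde\lambda_1(\mu,\omega):\mu\ge3\}$. Because $L_{\mu,\omega}\to\Delta_2-12$ as $\mu\to\infty$, whose top eigenvalue on the even subspace is $-18<-12$, the supremum over $\mu\ge3$ is attained on the compact interval $[3,183]$, so the transition occurs exactly at $g(\omega)=-12$.

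It remains to establish Lemma \ref{properties of g}. Continuity of $g$ follows from analytic perturbation theory for the simple principal eigenvalue $\tilde\lambda_1(\mu,\omega)$ combined with compactness of $[3,183]$; monotonicity ($g$ decreasing) follows because $\partial_\omega\big(-\frac{2\omega+12\mu}{15s^2-3+\mu}\big)=-\frac{2}{15s^2-3+\mu}<0$ for $\mu\ge3$, so $L_{\mu,\omega}$ decreases with $\omega$ in the sense of quadratic forms, hence so do $\tilde\lambda_1(\mu,\omega)$ and $g$. The endpoint $g(-18)=-6$ comes from $L_{3,-18}=\Delta_2$ (top even eigenvalue $-2\cdot3=-6$, eigenfunction $\propto1-s^2$) once one checks $L_{\mu,-18}\le\Delta_2$ for $\mu>3$, so that the maximum over $\mu$ is attained at $\mu=3$; and $g(-3)<-12$ is an explicit Sturm--Liouville estimate (a test-function/comparison bound) showing $\tilde\lambda_1(\mu,-3)<-12$ uniformly on $\mu\in[3,183]$. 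Continuity, monotonicity and $g(-18)=-6>-12>g(-3)$ then yield a unique $g^{-1}(-12)\in(-18,-3)$. Combining the index characterization with Rayleigh's criterion (spectral stability for $\omega\le-18$) and with the instability already established for $\omega\in[0,\tfrac{99}{2})$ in Theorem \ref{positive half-line critical rotation rate} --- extended to $\omega\in(-3,0]$ by the same $k=2$ argument, where the neutral-mode condition fails throughout --- shows that the $3$-jet is linearly unstable for $\omega\in(g^{-1}(-12),0]$ and spectrally stable for $\omega\in(-\infty,g^{-1}(-12)]$.

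The main obstacle is the Hamiltonian index step: converting the non--self-adjoint $k=2$ spectral problem into the clean dichotomy ``$g(\omega)<-12\Leftrightarrow$ instability'', with correct bookkeeping of Krein signatures as an isolated imaginary eigenvalue approaches and interacts with the edge $\mu=3$ of the essential spectrum (where the Rayleigh equation becomes singular), uniformly in $\omega$. A secondary difficulty is the inequality $g(-3)<-12$, which --- unlike $g(-18)=-6$ --- does not reduce to a constant-coefficient problem and requires a quantitative eigenvalue bound valid on all of $[3,183]$.
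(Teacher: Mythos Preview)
Your overall strategy matches the paper's: reduce via the index formula to $k=1,2$, recast the $k=2$ Rayleigh equation as the self-adjoint family \eqref{Rayleigh-type equation thm 1.2}, and locate the transition at $g(\omega)=-12$. Your outline of Lemma~\ref{properties of g} (monotonicity from the $\omega$-derivative of the potential, $g(-18)=-6$, reduction to the compact window $[3,183]$) is in the right direction.

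There are two genuine gaps. First, the step you yourself flag as the main obstacle: the dichotomy ``unstable $\Leftrightarrow$ $\tilde\lambda_1(\mu,\omega)<-12$ for all $\mu\ge3$'' does not follow from an abstract Krein count alone. The paper's mechanism is the explicit identity $\langle L_2\Upsilon_{\mu,\omega},\Upsilon_{\mu,\omega}\rangle=(c-c_\omega)\,\partial_\mu\tilde\lambda_1(\mu,\omega)$ (Lemma~\ref{quadratic form-computation k=2 omega less than 3}), which ties the Krein signature of a neutral mode to the \emph{slope} of the principal-eigenvalue curve; when $g(\omega)\ge-12$ one then selects the crossing where $\partial_\mu\tilde\lambda_1\le0$ (guaranteed since $\tilde\lambda_1\to-18$ as $\mu\to\infty$), and $c-c_\omega>0$ on this $\omega$-range forces signature $\le0$, hence $k_i^{\le0}=1$ and stability by \eqref{index formula 1o2}. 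To close the index balance you must also rule out resonant neutral modes with $c\in\mathrm{Ran}(-\tilde\Psi_\omega')^\circ$ and show the generalized kernel is trivial (Theorem~\ref{Psi prime change sign c}, Lemma~\ref{kernel JL}); your proposal does not mention these. Second, your continuity argument for $g$ via ``analytic perturbation theory'' breaks down at $(\mu,\omega)=(3,-18)$: the potential is singular at $s=0$ when $\mu=3$, and in fact $\tilde\lambda_1(3,\cdot)$ \emph{jumps} there, $\lim_{\omega\to-18^+}\tilde\lambda_1(3,\omega)=-12\neq-6=\tilde\lambda_1(3,-18)$ (Lemma~\ref{principal eigenvalues mu=3}); the paper proves continuity of $g$ at $\omega=-18$ by a separate limiting argument. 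For $g(-3)<-12$, a direct test-function bound uniform on $[3,183]$ looks hard; the paper instead argues indirectly via Lemma~\ref{case-omega=12}(ii), which excludes neutral $c>0$ at $\omega=-3$, so $\tilde\lambda_1(\mu,-3)\neq-12$ for $\mu>3$ and the asymptotic value $-18$ forces $\tilde\lambda_1(\mu,-3)<-12$. Finally, the claim that $k=1$ does not enlarge the negative instability interval is nontrivial and needs its own computation (Theorem~\ref{k=1 negative half-line}, via the explicit formula for $\lambda_1(3,\omega)$ in Lemma~\ref{principal eigenvalues of Rayleigh-type equation lambda k=1 negative}).
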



As for the positive half-line, linear instability only appears  for the first and second Fourier modes for the negative half-line. The negative critical rotation rate  for the $k$-th Fourier mode is denoted by $\omega_{cr,k}^-$ for $k=1, 2$.
 We prove that  $\omega_{cr,1}^-= -3$  in Theorems \ref{linear instability} and \ref{k=1 negative half-line}, and   $\omega_{cr,2}^-=g^{-1}(-12)$ in Theorems \ref{linear instability} and \ref{k=2 negative half-line}. While 
 Theorems \ref{positive half-line critical rotation rate}-\ref{negative half-line critical rotation rate} apply to vorticity eigenfunctions in $L^2$, Lemma 6.3 shows that the  unstable vorticity eigenfunctions are more regular.

The analytical value $\omega_{cr}^-=g^{-1}(-12)$ has not been found in the literature. To check whether our analytical  critical rotation rate $g^{-1}(-12)$ is consistent with the numerical computation in \cite{Sasaki-Takehiro-Yamada2012}, we
  compute $g^{-1}(-12)$ by Matlab. Our computation reveals that $g^{-1}(-12)\approx-16.0735$, which is very close to the numerical critical rotation rate $-16.0732$ in \cite{Sasaki-Takehiro-Yamada2012}.
   The $\omega$-range of linear stability or instability of the $3$-jet is illustrated in Fig. \ref{f3}.
 \begin{figure}[ht]
    \centering
	\includegraphics[scale = 0.52]{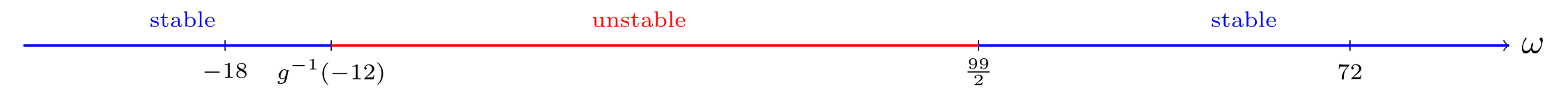}
        \caption{Rayleigh's criterion ensures that the $3$-jet is 
 spectrally stable   for $\omega\in(-\infty,-18]\cup[72,\infty)$,
  but is not helpful for $\omega\in(-18,72)$.
Theorems \ref{positive half-line critical rotation rate}-\ref{negative half-line critical rotation rate} give the sharp  $\omega$-range of linear stability or instability.}
	\label{f3}
\end{figure}

\subsubsection{Ideas of the proof}
An important difference of the Euler equation on $\mathbb{S}^2$  from that on a flat geometry is the conservation of angular momentum.
The linearized equation of ${\rm(\mathcal{E}_\omega)}$ around   $\Upsilon_0=\Delta \Psi_0$  is $\partial_t\Upsilon=\mathcal{L}_\omega\Upsilon$, where the linearized operator is
\begin{align}\label{original linearized operator near the 3 jet flow}
\mathcal{L}_\omega=\Psi_0'\partial_{\varphi}-(\Upsilon_0'+2\omega)\partial_{\varphi}\Delta^{-1}.
\end{align}
Note that the study of
linear instability of the $3$-jet with stream function $\Psi_0(s)$  in the rotating case $\omega\neq0$ is equivalent  to that  of another zonal flow with stream function  $\tilde \Psi_{\omega}(s)=\Psi_0(s)-\omega s$  in the non-rotating case. Instead of  looking at \eqref{original linearized operator near the 3 jet flow} directly,
in  the frame $(\varphi-{5\over 6}\omega t,s)$ we study the linearized vorticity  equation of ${\rm(\mathcal{E}_0)}$ around the zonal flow $\tilde \Upsilon_{\omega}=\Delta\tilde \Psi_{\omega}$, which has a Hamiltonian structure
 \begin{align*}
\partial_t\Upsilon=J_\omega L\Upsilon.
\end{align*}
Here
\begin{align}\label{def-ham-JL}
J_\omega=-\tilde \Upsilon_{\omega}'\partial_\varphi=-(\Upsilon_0'+2\omega)\partial_\varphi:X^*\supset D(J)\to X,\quad L={1\over 12}+ \Delta^{-1}:X\to X^*,
\end{align}
and
\begin{align}\label{def-space-X}
X=\left\{\Upsilon\in L^2(D_T):\iint_{D_T}\Upsilon d\varphi ds=0,\iint_{D_T}\Upsilon Y_1^m d\varphi ds=0, m=0,\pm1\right\}.
\end{align}
The conservation of angular momentum allows us to look for the unstable eigenvalues of the linearized operator $J_{\omega}L$ in the invariant subspace $X=\oplus_{n=2}^{\infty}E_n$  orthogonal to $E_1$. The relation between $\mathcal{L}_\omega$ and $J_\omega L$ is $\mathcal{L}_\omega=
J_\omega L+{1\over 6}\omega\partial_\varphi$.

A basic ingredient is now  provided by the index formulae \eqref{index formula 1o1}-\eqref{index formula 1o2}, which we do not reproduce here for the sake of conciseness.
Through these index formulae, the question of spectral stability reduces to finding neutral modes and then
calculating the signature of the corresponding energy quadratic form. A neutral mode is
a quadruple $(c,k,\omega,\Phi)$ solving the Rayleigh equation
 \begin{align*}
  \Delta_k\Phi-{\tilde \Upsilon_{\omega}'\over \tilde \Psi_{\omega}'+c}\Phi=((1-s^2)\Phi')'-{k^2\over 1-s^2}\Phi-{\tilde \Upsilon_{\omega}'\over \tilde \Psi_{\omega}'+c}\Phi=0
 \end{align*}
with $\Delta_k\Phi\in L^2(-1,1)$, $c\in \mathbb{R}$. If $c\notin Ran(-\Psi_{\omega}')$, then the neutral mode $(c,k,\omega,\Phi)$ is non-resonant.  A neutral mode $(c,k,\omega,\Phi)$ corresponds to a purely imaginary eigenvalue $-ik(c-c_\omega)$ of $J_{\omega}L$.
The analysis of the above Rayleigh equation is very delicate, we refer to  Remarks \ref{ideas thm k=1 positive half-line critical rotation rate and k=2 positive half-line critical rotation rate}, \ref{ideas in the proof of k=1 negative half-line} and \ref{ideas in the proof of k=2 negative half-line} for a description of the ideas involved. Here, we only point out that  the discovery of the analytical value of the negative critical  rotation rate can be traced back to the lift-up jump of the principal eigenvalues of the Rayleigh equations 
(see Lemma 4.10). Leaving this delicate proof aside, we will present the mechanisms which account for the onset of instability.

It is worth mentioning that geometric curvature effects play an important role in  the stability of zonal flows on the sphere, leading to significant differences to flat geometry, in particular with regard to 
the presence of non-resonant neutral modes and to their role as the stability boundary (see
Remark \ref{geometric curvature effects and differences with the flat geometry}).

\subsubsection{Mechanism  causing  instability in the positive half-line:} The positive critical rotation rate $\omega_{cr}^+={99\over2}$ in Theorem \ref{positive half-line critical rotation rate} is from the first Fourier mode.
The transition from stability to instability is
caused by purely imaginary  isolated eigenvalues  hitting an embedded eigenvalue in the continuous spectrum of the linearized  operator $\mathcal{L}_{\omega,1}$ for the first Fourier mode, see Fig. \ref{Mechanism causing  instability on the positive half-line}.
We refer to the spectral analysis in the proof of Theorem \ref{k=1 positive half-line critical rotation rate} (illustrated in Fig. \ref{fig-eigenvalue curves1}) and Lemma \ref{ometa=12} for more details.
  Similar mechanism appears in  structural instability for some equilibria, and we refer to \cite{Grillakis1990,Cuccagna-Pelinovsky-Vougalter2005} for the nonlinear
Schr\"{o}dinger equation, \cite{Grillakis1990} for the Klein-Gordon equation and \cite{lin2022instability} for general Hamiltonian systems.
Such structural instability is proved by constructing specific perturbations, and cannot be used to study our problem.
\begin{center}
 \begin{tikzpicture}[scale=0.8]
  \draw [->](-7, -2)--(7, -2)node[right]{Re};
 \draw [->](0,-5.6)--(0,6.5) node[above]{Im};
        \node[blue] (a) at (0,4.5) {$\bullet$};
        \node[blue] (a) at (0,3.5) {$\bullet$};
        \node[blue] (a) at (0,2.9) {$\bullet$};
        \node[blue] (a) at (0,2.45) {$\bullet$};
        \node[blue] (a) at (0,2.15) {$\bullet$};
        \node[blue] (a) at (0,1.9) {$\bullet$};
        \node[blue] (a) at (0,1.68) {$\bullet$};
         \node[red] (a) at (0,1.5) {$\bullet$};
       \draw  (-0.1, -3).. controls (0.1, -3) and (0.1, -3)..(0.1, -3);
        \draw  (-0.1, 1.5).. controls (0.1, 1.5) and (0.1, 1.5)..(0.1, 1.5);
     \draw [red][->](0, 1.5)--(2, 1.5)node[right]{};
       \draw [red][->](0, 1.5)--(-2, 1.5)node[left]{};

        \draw  [green][thick] (0, -3).. controls (0, -3) and (0, 1.45)..(0, 1.45);
        \node (a) at (-1.4,2.1) {\tiny$\omega\to {99\over2}^+$};
        \node (a) at (0.5,-3) {\tiny$-3i$};
        \node (a) at (0.5,1.3) {\tiny$12i$};
        \node (a) at (0.8,2.8) {\tiny$-i\mu_{1,\omega}$};
        \node (a) at (1.2,-0.8) {\tiny$\sigma_e(\mathcal{L}_{\omega,1})$};
       \node (a) at (4,3.5) {\tiny$\sigma(\mathcal{L}_{\omega,1})$};
       \node (a) at (4,1.5) {\tiny$\text{unstable eigenvalues}$};
 \end{tikzpicture}
\end{center}\vspace{-0.5cm}
\begin{figure}[ht]
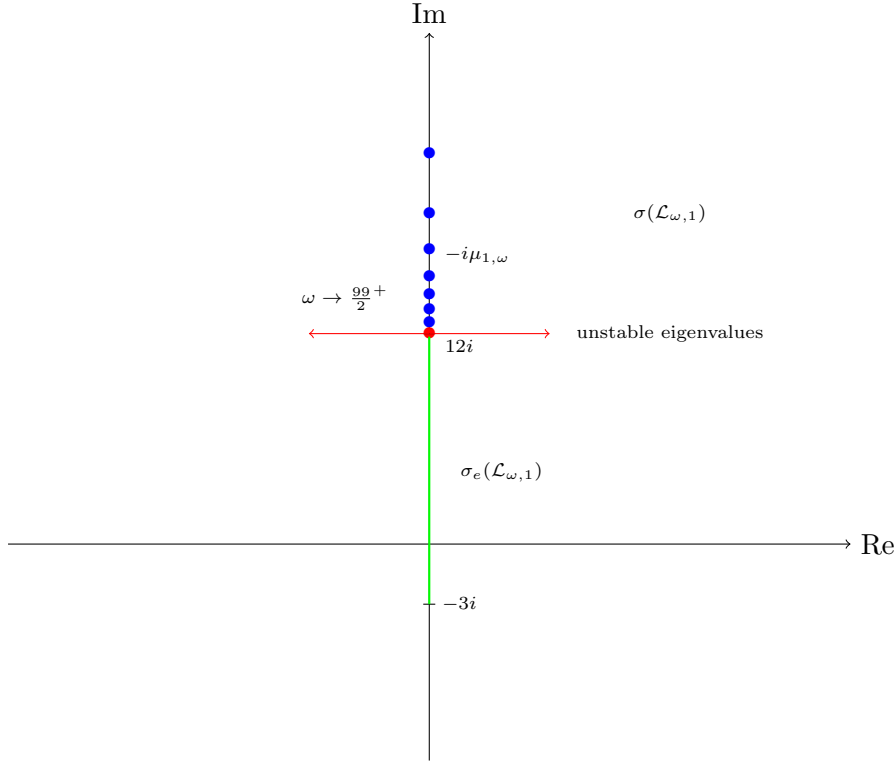

\centering
\caption{
The green interval is the essential spectrum of $\mathcal{L}_{\omega,1}$, which is the projection of $\mathcal{L}_{\omega}$ on the first Fourier mode. For $\omega$ larger than and close to ${99\over2}$, the blue bold points are the isolated eigenvalues $-i\mu_{1,\omega}$ of $\mathcal{L}_{\omega,1}$. For $\omega={99\over2}$, the red bold point is the embedded (edge) eigenvalue $12i$ of $\mathcal{L}_{{99\over2},1}$. As $\omega\to{99\over2}^+$,   the isolated eigenvalue $-i\mu_{1,\omega}$ of $\mathcal{L}_{\omega,1}$ hits the embedded eigenvalue $12i$ of $\mathcal{L}_{{99\over2},1}$, where an unstable eigenvalue emerges.
The embedded  eigenvalue $12i$  has negative sign of the energy quadratic form.
}
\label{Mechanism causing  instability on the positive half-line}
\end{figure}
\vspace{-0.2cm}

\subsubsection{Mechanism causing  instability in the negative half-line:} The negative critical rotation rate $\omega_{cr}^-=g^{-1}(-12)$ in Theorem \ref{negative half-line critical rotation rate} is from the second Fourier mode.
The transition from stability to instability is
induced by the collision of purely imaginary isolated eigenvalues  of the linearized  operator $\mathcal{L}_{\omega,2}$ with opposite Krein signatures, see Fig. \ref{Mechanism causing  instability on the negative half-line}. For more details, see the spectral analysis in the proof of Theorem \ref{k=2 negative half-line} (illustrated  in Fig. \ref{fig-eigenvalue curves2}) and Remark \ref{krein signature-rem}. Other instances of loss of stability of equilibria through the collision of purely imaginary eigenvalues with opposite Krein signatures appeared in \cite{MacKay1987,Kapitula-Kevrekidis-Sandstede2004,lin2022instability}.

\subsection{Main results: nonlinear aspects}
With Theorems \ref{positive half-line critical rotation rate}-\ref{negative half-line critical rotation rate} at hand, it is natural to turn to the nonlinear problem. Our results on the nonlinear problem will be of two kinds: first, we will see that linear instability implies nonlinear instability; and second, we will be able to describe streamline patterns of traveling waves in a neighborhood of the $3$-jet. Both results rely heavily on the precise description of the linearized problems obtained in Theorems  \ref{positive half-line critical rotation rate}-\ref{negative half-line critical rotation rate}.

\begin{center}
 \begin{tikzpicture}[scale=0.8]
 \draw [->](-7, -2)--(7, -2)node[right]{Re};
 \draw [->](0,-10.5)--(0,2) node[above]{Im};
       \node (a) at (5,1) {\tiny$\sigma(\mathcal{L}_{\omega,2})$};
       \node[blue] (a) at (0,-3) {$\bullet$};
       \node[blue] (a) at (0,-3.7) {$\bullet$};
       \node[blue] (a) at (0,-4.3) {$\bullet$};
       \node[blue] (a) at (0,-4.8) {$\bullet$};
       \node[blue] (a) at (0,-5.2) {$\bullet$};
       \node[blue] (a) at (0,-5.5) {$\bullet$};
       \node[blue] (a) at (0,-5.73) {$\bullet$};
       \node[red] (a) at (0,-5.92) {$\bullet$};
       \node[brown] (a) at (0,-6.1) {$\bullet$};
       \node[brown] (a) at (0,-6.4) {$\bullet$};
       \node[brown] (a) at (0,-6.75) {$\bullet$};
       \node[brown] (a) at (0,-7.2) {$\bullet$};
       \node[brown] (a) at (0,-7.7) {$\bullet$};
       \node[brown] (a) at (0,-8.2) {$\bullet$};
       \node[brown] (a) at (0,-9) {$\bullet$};
       \draw [red][->](0, -5.92)--(2, -5.92)node[right]{};
       \draw [red][->](0, -5.92)--(-2, -5.92)node[left]{};
      \node (a) at (4,-5.92) {\tiny$\text{unstable eigenvalues}$};
       \node (a) at (-2,-5.3) {\tiny$\omega\to g^{-1}(-12)^-$};
        \node (a) at (0.8,-4.8) {\tiny$-2i\mu_{3,\omega}$};
        \node (a) at (0.8,-7.2) {\tiny$-2i\mu_{2,\omega}$};
        \draw  [green][thick] (0, -2.5).. controls (0, -2.5) and (0, 0.5)..(0, 0.5);
         \draw  (-0.1, -2.5).. controls (0.1, -2.5) and (0.1, -2.5)..(0.1, -2.5);
         \draw  (-0.1, 0.5).. controls (0.1, 0.5) and (0.1, 0.5)..(0.1, 0.5);
          \node (a) at (0.5,-2.5) {\tiny$-6i$};
        \node (a) at (0.5,0.5) {\tiny$24i$};
        \node (a) at (1.2,-0.8) {\tiny$\sigma_e(\mathcal{L}_{\omega,2})$};
 \end{tikzpicture}
\end{center}
\vspace{-0.5cm}
\begin{figure}[ht]
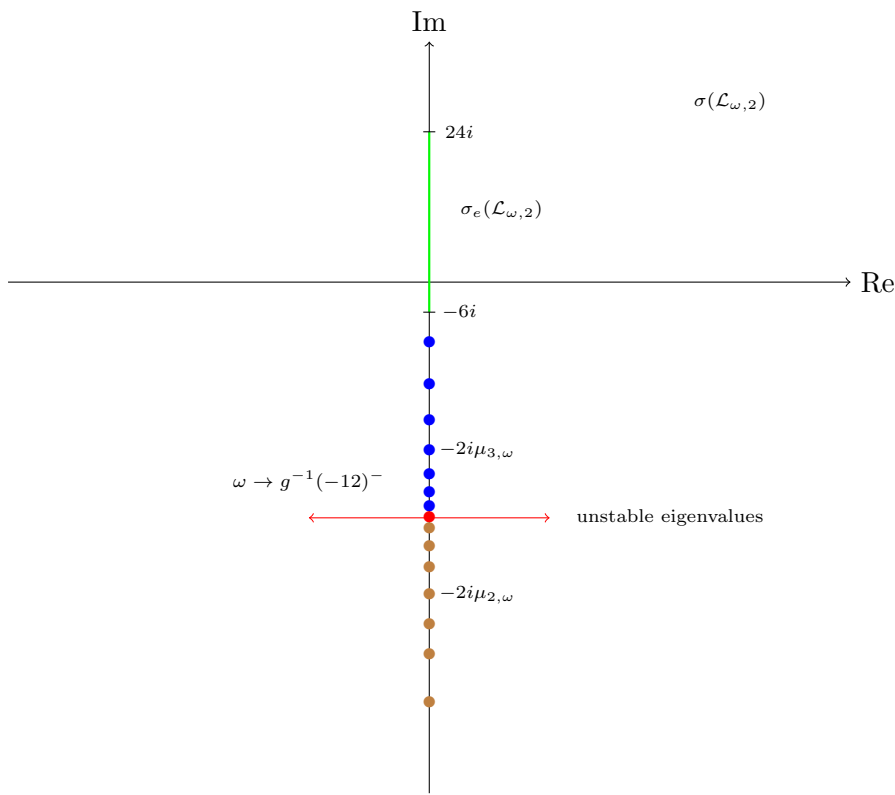

    \centering
    \caption{The green interval is the essential spectrum of $\mathcal{L}_{\omega,2}$, which is the projection of $\mathcal{L}_{\omega}$ on the second Fourier mode. For $\omega$ smaller than and close to $g^{-1}(-12)$, the blue bold points are the isolated eigenvalues $-2i\mu_{3,\omega}$ of $\mathcal{L}_{\omega,2}$ and the brown bold points are the isolated eigenvalues $-2i\mu_{2,\omega}$ of $\mathcal{L}_{\omega,2}$. As $\omega\to g^{-1}(-12)^-$, the two eigenvalues with opposite Krein signatures collide at the isolated eigenvalue $-2i\mu_{2,g^{-1}(-12)}$ of  $\mathcal{L}_{g^{-1}(-12),2}$. After the collision, an unstable eigenvalue emerges.
}
\label{Mechanism causing  instability on the negative half-line}
\end{figure}
\vspace{-0.2cm}

\subsubsection{Nonlinear instability}
This question is classical for the Euler equation set on flat domains
\cite{Friedlander-Strauss-Vishik1997,Grenier2000,Bardos-Guo-Strauss2002,Vishik-Friedlander2003,lin2004}. These results need to overcome two difficulties: the loss of derivative in the nonlinearity, and the positivity of the Lyapunov exponent of the flow generated by the steady velocity field. The latter difficulty was addressed by introducing the  averaging Lyapunov exponent of the flow and proving that it is zero in \cite{lin2004}.
In order to generalize these methods to the case of flows set on the sphere, we  resort to the tools of nonlinear analysis, including various  Sobolev embeddings and inequalities, in Riemannian manifolds and study the nonlinear  problem by an intrinsic geometric method. The quantities like the averaging Lyapunov exponent are defined globally in nature and it is preferable to  make flexible use of intrinsic quantities and coordinate charts.
For example, to prove that the globally-defined averaging  Lyapunov exponent  is zero, we can choose the poles  such that they do not touch the particle trajectories emitted from the ``dangerous" region, which is near non-degenerate saddle points and the trajectories connecting them.

Our main result in this subsection applies to a general steady flow. It is natural to consider the nonlinear orbital instability since it is embedded in the zonally translational orbit. Let $T$ be a one-parameter group of unitary operators on $L^p(T\mathbb{S}^2)$   defined by
$
(T(\tau)\mathbf{u})(\mathbf{x})=\mathbf{u}(\zeta^{-1}(\varphi+\tau,s))
$
 for $\mathbf{x}\notin \{N, S\}$ and
$(T(\tau)\mathbf{u})(\mathbf{x})=\mathbf{u}(\mathbf{x})$  for $\mathbf{x}\in\{N, S\}$, where $\mathbf{u}\in L^p(T\mathbb{S}^2)$, 
 $p\geq1$, $\tau\in\mathbb{R}$, the coordinate chart  $\zeta$ is defined in \eqref{chart1}, and $N, S$ are the North and South poles.

\if0
$(T(\tau)\psi)(\mathbf{x})=\psi(\zeta^{-1}(\varphi+\tau,s))$  (resp. $(T(\tau)\mathbf{u})(\mathbf{x})=\mathbf{u}(\zeta^{-1}(\varphi+\tau,s)))$ for $\mathbf{x}\in\mathbb{S}^2\setminus \{N, S\}$, and $(T(\tau)\psi)(\mathbf{x})= \psi(\mathbf{x})$ (resp. $(T(\tau)\mathbf{u})(\mathbf{x})=\mathbf{u}(\mathbf{x}))$ for $\mathbf{x}\in\{N, S\}$, where $\psi\in L^p(\mathbb{S}^2)$, $\mathbf{u}\in L^p(T\mathbb{S}^2)$ (defined in Subsection \ref{Differential calculus on S2}),
 $p\geq1$, $\tau\in\mathbb{R}$, $\zeta$ is defined in \eqref{chart1}, and $N, S$ are the North and South poles. 
\fi
\begin{Theorem}[Linear to nonlinear orbital instability of general steady flows]\label{thm-nonlinear instability}  
For a $C^1$ steady flow $\mathbf{u}_{G}$ with  finite
 stagnation points,
if it is linearly unstable in $L^2(T\mathbb{S}^2)$, then it is nonlinearly orbitally unstable in the sense that
there exists $\epsilon_1>0$ such that for any $\delta>0$, there exists a solution $ \mathbf{u}_{\delta,G}$ to the nonlinear Euler equation and $t_{1}=O(|\ln\delta|)$ satisfying
\begin{align*}
\|\Omega_{\delta,G}(0)-\Omega_{G}\|_{L^{p_2}(\mathbb{S}^2)}+\|\nabla( \Omega_{\delta,G}(0)-\Omega_{G})\|_{L^{p_1}(T\mathbb{S}^2)}\leq \delta
\end{align*}
and
\begin{align*}
\inf_{\tau\in\mathbb{R}}\|{\mathbf{u}}_{\delta,G}(t_{1})-T(\tau){\mathbf{u}}_{G}\|_{L^{p_0}(T\mathbb{S}^2)}\geq\epsilon_1,
 \end{align*}
where  $\Omega_{G}=curl(\mathbf{u}_{G})$, $\Omega_{\delta,G}=curl(\mathbf{u}_{\delta,G})$, $p_0\in(1,\infty)$, $p_1\in[1,b_0)$, $p_2\in[1,\infty)$,
$b_0=\infty$ if $\mu\leq {\rm Re}(\lambda_1)$ while $b_0={\mu\over \mu-{\rm Re \lambda_1}}$ if $\mu>{\rm Re(\lambda_1)}$,  $\mu$ is the
Lyapunov exponent of the flow generated by $\mathbf{u}_{G}$, and $\lambda_1$ is an unstable  eigenvalue  with the largest real part.
\end{Theorem}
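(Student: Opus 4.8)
The plan is to recast the problem in vorticity form and run a nonlinear bootstrap around the exponentially growing solution of the linearized equation, the two classical obstructions being the derivative loss in the advection nonlinearity and the possibly positive Lyapunov exponent of the steady velocity field. Write $\mathbf{u}_{\delta,G}=\mathbf{u}_G+\mathbf{v}$ and $\vartheta=\Omega_{\delta,G}-\Omega_G=\mathrm{curl}(\mathbf{v})$, so that $\vartheta$ solves
\begin{align*}
\partial_t\vartheta=\mathcal{L}\vartheta+\mathcal{N}(\vartheta),\qquad \mathcal{L}\vartheta=-\mathbf{u}_G\cdot\nabla\vartheta-\mathbf{v}[\vartheta]\cdot\nabla\Omega_G,\qquad \mathcal{N}(\vartheta)=-\mathbf{v}[\vartheta]\cdot\nabla\vartheta,
\end{align*}
where $\mathbf{v}[\vartheta]$ is recovered from $\vartheta$ by the Biot--Savart law on $\mathbb{S}^2$ (inverting $\Delta$ subject to the Gauss and angular-momentum constraints) and $\mathcal{L}$, the linearized operator, generates a $C_0$ group on $L^2(T\mathbb{S}^2)$. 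A first crucial a priori observation is that the total vorticity $\Omega_{\delta,G}$ is transported by the divergence-free flow of $\mathbf{u}_{\delta,G}$, so that $\|\Omega_{\delta,G}(t)\|_{L^p(\mathbb{S}^2)}=\|\Omega_{\delta,G}(0)\|_{L^p(\mathbb{S}^2)}$ for all $p$; this bounds $\|\vartheta(t)\|_{L^p}$ and, together with the flow-map control below, tames the derivative loss. I would take the initial perturbation of the form $\vartheta(0)=\delta\,\vartheta_\ast$, where $\vartheta_\ast$ is built from a genuine eigenfunction of $\mathcal{L}$ associated with an unstable eigenvalue $\lambda_1$ of largest real part $\mu_0:=\mathrm{Re}\,\lambda_1>0$, chosen so that $\|e^{t\mathcal{L}}\vartheta_\ast\|_{L^2}\ge c_0 e^{\mu_0 t}$; by the regularity of unstable vorticity eigenfunctions (Lemma 6.3) one may assume $\vartheta_\ast$ lies in all the function spaces used below.

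The linear input is a pair of semigroup estimates. On the weak side, the spectral projection of $e^{t\mathcal{L}}\vartheta(0)$ onto the unstable subspace grows like $e^{\mu_0 t}$, and more generally $\|e^{t\mathcal{L}}\|_{L^2\to L^2}\le C_\epsilon e^{(\max(\mu_0,\mu)+\epsilon)t}$, $\mu$ being the Lyapunov exponent of $\mathbf{u}_G$, via Vishik's description of the essential spectral radius of $e^{t\mathcal{L}}$. On the strong side, $\nabla\vartheta$ is governed by the flow map $\phi_t$ of $\mathbf{u}_{\delta,G}$ through $\nabla\vartheta(t)=(D\phi_{-t})^{\top}\big(\nabla\Omega_{\delta,G}\big)\!\circ\!\phi_{-t}-\nabla\Omega_G$, and as long as $\mathbf{v}$ stays small $\phi_t$ is a small perturbation of the flow of $\mathbf{u}_G$, whose stretching obeys $\|D\phi^{\,\mathbf{u}_G}_t\|_{L^q_x(\mathbb{S}^2)}\le C_\epsilon e^{(\mu+\epsilon)t}$. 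Establishing this last bound is where the averaging-Lyapunov-exponent idea of \cite{lin2004} must be carried over to the sphere: one analyzes the streamline foliation of $\mathbf{u}_G$ near its finitely many stagnation points, shows that the $L^q$-averaged stretching rate equals $\mu$ (and, for nondegenerate saddles, vanishes), and performs all flow-map ODE estimates in latitude--longitude or stereographic charts whose poles are chosen off the dangerous trajectories emanating from the saddle points.

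With these ingredients the bootstrap is classical. From Duhamel's formula
\begin{align*}
\vartheta(t)=\delta\,e^{t\mathcal{L}}\vartheta_\ast+\int_0^t e^{(t-\sigma)\mathcal{L}}\,\mathcal{N}(\vartheta(\sigma))\,d\sigma,
\end{align*}
I would run a continuity argument on the maximal interval on which $\|\vartheta(\sigma)\|_{L^{p_2}}+\|\nabla\vartheta(\sigma)\|_{L^{p_1}}\le K\delta e^{(\mu_0+\epsilon)\sigma}$ (together with the transported a priori bound). There the nonlinearity is estimated by $\|\mathcal{N}(\vartheta)\|_{L^{p_1}}\lesssim\|\mathbf{v}[\vartheta]\|_{L^\infty}\|\nabla\vartheta\|_{L^{p_1}}$ with $\|\mathbf{v}[\vartheta]\|_{L^\infty}\lesssim\|\vartheta\|_{L^{p_2}}$ from elliptic regularity and Sobolev embedding for $\Delta^{-1}$ on the compact manifold $\mathbb{S}^2$; inserting this into Duhamel and using the two semigroup bounds, the nonlinear contribution stays $o(\delta e^{\mu_0 t})$ precisely under $p_1<b_0$ — when $\mu>\mu_0$ the restriction $p_1<\mu/(\mu-\mu_0)$ is exactly what keeps the product $\delta^2\cdot e^{\mu t}\cdot(\text{lower powers})$ below $\delta e^{\mu_0 t}$ up to the escape time. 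Hence $\vartheta(t)=\delta e^{t\mathcal{L}}\vartheta_\ast+o(\delta e^{\mu_0 t})$, so at $t_1=\mu_0^{-1}\ln(\theta_0/\delta)=O(|\ln\delta|)$ one has $\|\vartheta(t_1)\|_{L^2}\gtrsim\theta_0$, whence $\|\mathbf{v}(t_1)\|_{L^{p_0}(T\mathbb{S}^2)}\gtrsim\epsilon_1$ by Biot--Savart and interpolation with the bounded high-integrability norm; the infimum over zonal translations is absorbed since the dominant term $\delta e^{t_1\mathcal{L}}\vartheta_\ast$ cannot be matched by a zonal translate of $\mathbf{u}_G$ (trivially so when $\mathbf{u}_G$ is zonal, as in the application to the $3$-jet). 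Global solvability up to $t_1$ is supplied by the Beale--Kato--Majda-type result \cite{Taylor2016}.

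The main obstacle is exactly the coupling of the derivative loss with a possibly positive Lyapunov exponent: neither a naive energy estimate nor a crude flow-map bound closes the bootstrap, and one must simultaneously exploit the exact conservation of every $L^p$-norm of the transported total vorticity and the sphere version of the averaging-Lyapunov-exponent estimate. The latter is the genuinely new technical point, requiring an intrinsic geometric analysis — Calderón--Zygmund and Sobolev inequalities on $\mathbb{S}^2$, elliptic regularity for $\Delta^{-1}$, and flow-map ODE estimates in coordinate charts adapted to the stagnation-point structure — rather than a direct transcription of the flat-space arguments. The threshold $b_0$ is precisely the quantitative price of permitting $\mu>0$; for steady flows with nondegenerate saddle points, where $\mu=0$, it becomes $b_0=\infty$ and the cleanest statement is recovered.
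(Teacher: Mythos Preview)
Your overall strategy---vorticity formulation, Duhamel, eigenfunction initial data, bootstrap on an exponential scale, and the \cite{lin2004} averaging-Lyapunov-exponent mechanism transplanted to the sphere---is the same as the paper's, and you correctly identify the derivative-loss/Lyapunov-exponent coupling as the central obstruction. However, two points are either misstated or genuinely incomplete.

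First, your description of the averaging Lyapunov exponent is off. You write that the ``$L^q$-averaged stretching rate equals $\mu$ (and, for nondegenerate saddles, vanishes)''. The key lemma (Lemma~6.2 in the paper) is that the \emph{$L^1$-averaging} Lyapunov exponent $\mu_{\mathrm{av}}$ is \emph{always zero} for a $C^1$ steady flow with finitely many stagnation points---not just when there are nondegenerate saddles, and certainly not equal to $\mu$. It is precisely $\mu_{\mathrm{av}}=0$ that, interpolated against the pointwise rate $\mu$, gives $\||(d\mathbf{X}_G(t))^*|_{op}\|_{L^q}\lesssim e^{\mu(1-1/q)t}$ and produces the threshold $p_1<b_0=\mu/(\mu-\mathrm{Re}\,\lambda_1)$. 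Your bootstrap does not close without this. Relatedly, the paper does \emph{not} run the bootstrap on $\|\nabla\vartheta\|_{L^{p_1}}$ directly (taking $\nabla$ through Duhamel and $e^{t\mathcal{L}}$ is exactly where the derivative loss bites); instead it bootstraps on the \emph{velocity} perturbation $\|\mathbf{u}_\delta\|_{L^{p_0}}$ and controls the nonlinear velocity piece by a duality argument, pairing against test functions $\psi\in H^{p_0'}_1$ and estimating $\|\nabla(e^{t\mathcal{L}_G'}\psi)\|_{L^{p_3}}$ via the adjoint flow and the $\mu_{\mathrm{av}}=0$ interpolation. The vorticity bounds are then recovered a posteriori from the transport formula. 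Your direct route may be salvageable, but as written the combination ``insert $\|\mathcal{N}(\vartheta)\|_{L^{p_1}}$ into Duhamel and use the two semigroup bounds'' does not obviously close.

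Second, the upgrade from plain instability to \emph{orbital} instability is not the one-liner you give. Saying the dominant linear piece ``cannot be matched by a zonal translate of $\mathbf{u}_G$'' is the right intuition, but one must make it quantitative: the paper (Section~6.4) uses Hahn--Banach to produce a functional in $L^{p_0'}$ annihilating $\mathrm{span}\{T'(0)\mathbf{u}_G\}$ while detecting the eigenfunction direction (with a uniform lower bound over the phase $\theta$ when $\lambda_1$ is nonreal), Taylor-expands $\mathbf{u}_G-T(\tau)\mathbf{u}_G$ to second order, and bounds the minimizing translation $|\tau_{t_1}|$ via a strictly positive constant $\tilde c_3=\min_\tau\|\int_0^1 T(\xi\tau)T'(0)\mathbf{u}_G\,d\xi\|_{L^{p_0}}$ (which uses minimal periodicity of $\mathbf{u}_G$). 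Without these steps the infimum over $\tau$ could a priori swallow the $\epsilon_1$ you claim.
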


Here, the linear instability condition  is in the weak sense  that the regularity of unstable (velocity) eigenfunction is $L^2$.
For  a zonal flow, the translational orbit is itself and the Lyapunov exponent is zero, which implies the following.

\begin{Corollary}
Let $p_0\in(1,\infty)$, $p_1\in[1,\infty)$ and $p_2\in[1,\infty)$.
If a $C^1$ zonal flow $\mathbf{u}_Z$ is linearly unstable in $L^2(T\mathbb{S}^2)$, then it is nonlinearly unstable in the following sense: there exists $\epsilon_1>0$ such that for any $\delta>0$, there exists a solution $ {\mathbf{u}}_{\delta,Z}$ to the nonlinear Euler equation and $t_1=O(|\ln\delta|)$ satisfying
\begin{align*}
\|\Omega_{\delta,Z}(0)-\Omega_Z\|_{L^{p_2}(\mathbb{S}^2)}+\|\nabla( \Omega_{\delta,Z}(0)-\Omega_Z)\|_{L^{p_1}(T\mathbb{S}^2)}\leq \delta\quad\text{and}\quad
\|{\mathbf{u}}_{\delta,Z}(t_1)-{\mathbf{u}}_Z\|_{L^{p_0}(T\mathbb{S}^2)}\geq\epsilon_1,
 \end{align*}
where  $\Omega_Z=curl({\mathbf{u}}_Z)$ and $\Omega_{\delta,Z}=curl({\mathbf{u}}_{\delta,Z})$.
\end{Corollary}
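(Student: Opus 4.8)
The plan is to obtain this as a specialization of Theorem~\ref{thm-nonlinear instability}, applied with $\mathbf{u}_G=\mathbf{u}_Z$, once two features of the zonal structure are exploited: the orbital instability collapses to genuine instability, and the Lyapunov exponent of the background flow vanishes. First I would observe that a zonal velocity field is invariant under the zonal translation group, $T(\tau)\mathbf{u}_Z=\mathbf{u}_Z$ for all $\tau\in\mathbb{R}$, since $\mathbf{u}_Z$ depends only on $s$ in the chart $\zeta$ of \eqref{chart1} while $T(\tau)$ merely shifts $\varphi$. Hence the translational orbit of $\mathbf{u}_Z$ is the single point $\{\mathbf{u}_Z\}$, so $\inf_{\tau\in\mathbb{R}}\|\mathbf{u}_{\delta,Z}(t_1)-T(\tau)\mathbf{u}_Z\|_{L^{p_0}(T\mathbb{S}^2)}=\|\mathbf{u}_{\delta,Z}(t_1)-\mathbf{u}_Z\|_{L^{p_0}(T\mathbb{S}^2)}$, and the orbital lower bound of Theorem~\ref{thm-nonlinear instability} becomes exactly the bound claimed here.

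Next I would check that the Lyapunov exponent $\mu$ of the flow generated by $\mathbf{u}_Z$ is zero, and that the hypothesis of finitely many stagnation points is not actually needed in this case. In the chart $\zeta$ the particle trajectories of a zonal flow form the explicit shear $(\varphi,s)\mapsto(\varphi+t\,c(s),s)$, with $c$ the angular speed determined by $\mathbf{u}_Z$; the differential of the time-$t$ flow map and of its inverse therefore grow at most linearly in $t$, so $\mu=0$. Since linear instability forces $\mathrm{Re}(\lambda_1)>0$, we have $\mu=0\le\mathrm{Re}(\lambda_1)$, so the constant $b_0$ of Theorem~\ref{thm-nonlinear instability} equals $\infty$; this is precisely why the corollary may allow $p_1\in[1,\infty)$ (and keep $p_2\in[1,\infty)$). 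Moreover, the only role of the finite-stagnation-point assumption in the proof of Theorem~\ref{thm-nonlinear instability} is to control the ``dangerous'' region near non-degenerate hyperbolic stagnation points and the heteroclinic trajectories joining them; a zonal flow has no such points — its stagnation set consists of the two poles (centers) together with the latitude circles $\{u_Z=0\}$, on which the linearization has the circle direction in its kernel — so that region is empty, and the averaging-Lyapunov-exponent argument of \cite{lin2004}, transplanted to $\mathbb{S}^2$, applies with the poles chosen off the (here merely periodic) trajectories. Thus the proof of Theorem~\ref{thm-nonlinear instability} goes through verbatim, in a simplified form.

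With these two reductions in place, the construction used for Theorem~\ref{thm-nonlinear instability} produces, for each $\delta>0$, an initial vorticity within $\delta$ of $\Omega_Z$ in $L^{p_2}(\mathbb{S}^2)$ with gradient within $\delta$ in $L^{p_1}(T\mathbb{S}^2)$ — take the background plus a small multiple of the real part of the unstable eigenfunction associated with the eigenvalue $\lambda_1$ of largest real part — together with a solution $\mathbf{u}_{\delta,Z}$ whose perturbation, controlled by a Gronwall/bootstrap argument using the Sobolev embeddings on $\mathbb{S}^2$, grows like $\delta e^{\mathrm{Re}(\lambda_1)t}$ until, at $t_1=\tfrac{1}{\mathrm{Re}(\lambda_1)}|\ln\delta|+O(1)=O(|\ln\delta|)$, it reaches a fixed size $\epsilon_1>0$ independent of $\delta$, giving $\|\mathbf{u}_{\delta,Z}(t_1)-\mathbf{u}_Z\|_{L^{p_0}(T\mathbb{S}^2)}\ge\epsilon_1$. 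The one point that genuinely requires care is the justification of $\mu=0$ despite the stagnation set being a union of circles rather than a finite set; but since those circles are non-hyperbolic and the remaining trajectories are periodic with uniformly bounded monodromy, the averaged exponent on $\mathbb{S}^2$ is indeed zero, so there is no loss of derivatives beyond the linear rate and the bootstrap closes.
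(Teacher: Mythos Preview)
Your proposal is correct and matches the paper's approach: the paper simply remarks that for a zonal flow the translational orbit is itself and the Lyapunov exponent is zero, and deduces the corollary from Theorem~\ref{thm-nonlinear instability}. Your additional observation that the finite-stagnation-point hypothesis is unnecessary for zonal flows is also handled in the paper, albeit implicitly: the proof of Lemma~\ref{averaging Lyapunov exponent is zero} opens with the zonal case separately, noting that $\psi_G$ then has no non-degenerate saddle points so $|(d\mathbf{X}_G(t,\mathbf{x}))^*|_{op}$ grows only linearly and $\mu_{\rm av}=\mu=0$ directly, without ever invoking finiteness of the stagnation set.
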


In particular, these results apply to the wind profile \eqref{ur}.

\begin{Corollary}
The wind profile \eqref{ur} on Uranus is nonlinearly unstable.
\end{Corollary}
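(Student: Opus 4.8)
The plan is to recognize the wind profile \eqref{ur} as a $C^1$ zonal flow to which the preceding Corollary applies (a $C^1$ zonal flow that is linearly unstable in $L^2(T\mathbb{S}^2)$ is nonlinearly unstable), so that the argument reduces to checking two hypotheses: regularity, and linear instability in $L^2(T\mathbb{S}^2)$. Regularity is immediate, since the stream function of \eqref{ur} is a fixed linear combination of the Legendre polynomials $P_1$ and $P_3$, i.e. of the first and third zonal spherical harmonics; it is therefore smooth on $\mathbb{S}^2$, and so is the velocity field recovered from it through $u=-\sqrt{1-s^2}\,\partial_s\Psi$ and $v=\tfrac{1}{\sqrt{1-s^2}}\,\partial_\varphi\Psi$. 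In particular \eqref{ur} is a $C^1$ zonal flow.

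For the linear instability I would invoke the identification, recorded in the discussion following \eqref{ur}, that the symmetry \eqref{symmetry} carries the steady zonal flow \eqref{ur} onto the $3$-jet (normalized to have stream function $2P_3$) at rotation rate $\omega=-\tfrac{1647}{1360}$, and then place this $\omega$ inside the instability window. By Lemma \ref{properties of g}, $g$ is decreasing and continuous on $[-18,-3]$ with $g(-18)=-6$ and $g(-3)<-12$, so the intermediate value theorem gives $g^{-1}(-12)\in(-18,-3)$ and in particular $g^{-1}(-12)<-3$; since $0<\tfrac{1647}{1360}<3$, this yields $g^{-1}(-12)<-3<-\tfrac{1647}{1360}<0$, so that $-\tfrac{1647}{1360}\in\big(g^{-1}(-12),0\big]$ lies strictly inside the instability window $\big(g^{-1}(-12),\tfrac{99}{2}\big)$ delimited by Theorems \ref{positive half-line critical rotation rate}--\ref{negative half-line critical rotation rate}. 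Hence Theorem \ref{negative half-line critical rotation rate} gives linear instability of the $3$-jet, and thus of \eqref{ur}, at this rotation rate; Lemma 6.3 shows moreover that the unstable vorticity eigenfunction is more regular than $L^2$, so the corresponding velocity eigenfunction lies in $L^2(T\mathbb{S}^2)$ and \eqref{ur} is linearly unstable in $L^2(T\mathbb{S}^2)$ in the sense required by the Corollary.

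It then remains to apply the preceding Corollary to the $3$-jet at $\omega=-\tfrac{1647}{1360}$ and to transport the conclusion back to \eqref{ur} through \eqref{symmetry}, which is a time-dependent rigid rotation of $\mathbb{S}^2$ (an isometry, hence preserving every $L^p(T\mathbb{S}^2)$-norm) composed with the addition of a fixed function to the stream function and an inert rescaling of time and amplitude that normalizes the $3$-jet to the stream function $2P_3$; none of these operations changes the class of $C^1$ zonal flows or the instability statement beyond replacing the constants $\epsilon_1$, $\delta$, $t_1$ by harmless multiples, so the nonlinear instability of \eqref{ur} follows at once. I do not expect a genuine obstacle here: the statement is a direct corollary of Theorems \ref{positive half-line critical rotation rate}--\ref{negative half-line critical rotation rate}, Lemma 6.3 and the zonal-flow Corollary, and the only points needing a little care are the verification that $\omega=-\tfrac{1647}{1360}$ is strictly interior to the instability window (done above using only the qualitative behaviour of $g$ from Lemma \ref{properties of g}, with no numerical evaluation of $g^{-1}(-12)$) and the routine bookkeeping of the symmetry transformation linking \eqref{ur} with the $3$-jet.
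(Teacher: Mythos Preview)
Your proposal is correct and follows the same route the paper leaves implicit: the paper records in the introduction that, via the symmetry \eqref{symmetry}, the Uranus wind profile corresponds to the $3$-jet (stream function $2P_3$) at $\omega=-\tfrac{1647}{1360}$, and then states the corollary as an immediate consequence of Theorems \ref{positive half-line critical rotation rate}--\ref{negative half-line critical rotation rate} together with the zonal-flow Corollary. Your verification that $-\tfrac{1647}{1360}\in(-3,0)\subset\big(g^{-1}(-12),\tfrac{99}{2}\big)$ using only Lemma \ref{properties of g} is exactly what is needed; one minor remark is that the passage from an $L^2$ vorticity eigenfunction to an $L^2(T\mathbb{S}^2)$ velocity eigenfunction follows directly from $\mathbf{v}=\nabla^{\perp}\Delta^{-1}\Omega$ (so Lemma 6.3 is not strictly required for that step, though invoking it does no harm).
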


\if0
\begin{remark}
Theorem \ref{thm-nonlinear instability2} will provide a version of the above theorem adapted to nonzonal traveling waves, for which the natural notion of instability is \textit{orbital} instability. This is of particular interest for Rossby-Haurwitz waves, whose stability is important in applications to geophysics \cite{Baines1976,Benard2020, cg22,Jeong-Cheong2017,Hoskins1973,Skiba1992,Skiba-PerezGarcia2005,Skiba2017}.
\end{remark}
\fi

\subsubsection{Nearby traveling waves}
Traveling waves near a given stationary solution play an important role in
 understanding the global dynamics. Indeed, they are potential limits, or even attractors, of the flow as $t \to \infty$. In flat geometry, it was shown that cat's eyes (or non-shear) structures exist in an $H^s$ neighborhood of the Couette flow if $s<\frac 32$ but not if $s>{3\over2}$ \cite{lin-zeng11}, see also \cite{Zelati-Elgindi-Widmayer2023,Castro-Lear2024} for other shear flows. If rotation is added via the Coriolis force, new traveling waves (unidirectional) are constructed in \cite{lin-yang-zhu20,Lin-Wei-Zhang-Zhu22,Wang-Zhang-Zhu2023}. Finally, we refer to \cite{Nualart2023} where non-zonal steady flows in an analytic neighborhood of the $2$-jet are constructed.

Turning to the case of the $3$-jet, the following theorem asserts that (1)
when the rotation rate is small, the streamlines of the travelling waves near the $3$-jet have a cat's eyes structure;
(2) when the rotation rate is slightly larger, there are both  cat's eyes travelling waves and unidirectional travelling waves near the $3$-jet; and (3) when the rotation rate becomes larger, the streamlines of the travelling waves near the $3$-jet are unidirectional.

\begin{Theorem}[Existence of nearby traveling waves] \label{travelling wave solutions cat eye unidirectional thm}
$(1)$ Let $\omega\in(-3,{69\over2})$. For any $\varepsilon>0$, there exists a cat's eyes travelling wave  $\Psi_{1,\varepsilon}(\varphi-c_{1,\varepsilon}t,s)$
satisfying
\begin{align*}
\|\Psi_{1,\varepsilon}-\Psi_0\|_{\mathcal{G}_\lambda}<\varepsilon,
\end{align*}
where $\|\cdot\|_{\mathcal{G}_\lambda}$ is the Gevrey-$1$ norm.

\smallskip

$(2)$ Let $\omega\in(-18,-3)\cup({69\over2},72)$. Then
 for any $\varepsilon>0$, there exist both a cat's eyes travelling wave  $\Psi_{2,\varepsilon}(\varphi-c_{1,\varepsilon}t,s)$ and a non-zonal  unidirectional travelling wave $\Psi_{3,\varepsilon}(\varphi-c_{1,\varepsilon}t,s)$
 satisfying
\begin{align*}
\|\Psi_{2,\varepsilon}-\Psi_0\|_{\mathcal{G}_\lambda}<\varepsilon\quad\text{and}\quad\|\Psi_{3,\varepsilon}-\Psi_0\|_{H_2^4(\mathbb{S}^2)}<\varepsilon.
\end{align*}

\smallskip

$(3)$ Let $\omega\in(-\infty,-18)\cup(72,\infty)$. For any $\varepsilon>0$, there exists a non-zonal unidirectional travelling wave $\Psi_{4,\varepsilon}(\varphi-c_{4,\varepsilon}t,s)$
 satisfying
\begin{align*}
\|\Psi_{1,\varepsilon}-\Psi_0\|_{\mathcal{G}_\lambda}<\varepsilon.
\end{align*}
\end{Theorem}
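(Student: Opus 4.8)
\emph{Proof strategy.} The plan is to build the cat's eyes waves and the wave of part~(3) \emph{explicitly}, as Rossby--Haurwitz waves, and to obtain the extra unidirectional wave of part~(2) by bifurcation; in every case the streamline portrait is then read off from the co-moving stream function. The algebraic starting point is that $\Delta$ acts as $-12\,\mathrm{Id}$ on the degree-$3$ eigenspace $E_3$ of spherical harmonics, so that $\{\Delta\Phi_3,\Phi_3\}=-12\{\Phi_3,\Phi_3\}=0$ for every $\Phi_3\in E_3$; hence $\Phi_3$ is a steady solution of $(\mathcal{E}_0)$, and since the quadratic nonlinearity of $(\mathcal{E}_\omega)$ degenerates when the vorticity is proportional to the stream function, a direct computation gives that $\Psi(\varphi,s,t)=\Phi_3\big(\varphi+\tfrac{\omega}{6}t,\,s\big)$ solves $(\mathcal{E}_\omega)$, i.e.\ is a travelling wave of speed $c_\omega:=-\tfrac{\omega}{6}$. (One may equally bifurcate such waves from the neutral mode lying in $\ker L$, whose speed then converges to $c_\omega$.)

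\emph{Step~1 --- the Rossby--Haurwitz waves and their streamlines.} I would apply this with $\Phi_3=\Psi_0+\varepsilon'\,\phi$, where $\phi=15\,s(1-s^2)\cos2\varphi$ is the degree-$3$ harmonic of azimuthal wavenumber $2$ and $\Psi_0=2P_3$ by \eqref{stream function of 3-jet zonal flow}. Since $\phi$ is the restriction of a polynomial to $\mathbb{S}^2$ it is analytic, so $\|\Psi-\Psi_0\|_{\mathcal{G}_\lambda}\le C\varepsilon'$ and the claimed Gevrey bounds follow on taking $\varepsilon'$ small. In the frame $\xi=\varphi-c_\omega t$ the trajectories are the level sets of $H(\xi,s)=\Psi_0(s)+c_\omega s+\varepsilon'\phi(\xi,s)$, with base $B(s)=5s^3-(3+\tfrac{\omega}{6})s$ and $B'(s)=15s^2-3-\tfrac{\omega}{6}$. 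If $\omega\in(-\infty,-18)\cup(72,\infty)$ then $|B'|\ge\delta_0(\omega)>0$ on $[-1,1]$, so for $\varepsilon'$ small $\partial_s H$ has a fixed sign and every streamline is a graph over $\xi$: this is the unidirectional wave $\Psi_{4,\varepsilon}$. If $\omega\in(-18,72)$ then $B$ has non-degenerate critical circles at $s=\pm s_c$ with $s_c^2=\tfrac{18+\omega}{90}$ --- exactly where $\Upsilon_0'+2\omega$ vanishes (cf.\ Rayleigh's criterion) --- and $\phi(\cdot,\pm s_c)\propto s_c(1-s_c^2)\not\equiv0$ whenever $\omega\ne-18$; localising near $s=\pm s_c$ and rescaling $s-s_c=\sqrt{\varepsilon'}\,\sigma$, one obtains the model Hamiltonian $15\,s_c\sigma^2+15\,s_c(1-s_c^2)\cos2\xi$, which on each circle has four non-degenerate stagnation points alternating hyperbolic/elliptic (the Hessian determinant at the saddles is $-60\,s_c(1-s_c^2)\cdot30\,s_c<0$, at the centres it is $>0$), i.e.\ a cat's eye. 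An implicit-function continuation then transfers this topology to the full $H$ for $\varepsilon'$ small, giving $\Psi_{1,\varepsilon}$ when $\omega\in(-3,\tfrac{69}{2})$ and the cat's eyes wave $\Psi_{2,\varepsilon}$ when $\omega\in(-18,-3)\cup(\tfrac{69}{2},72)$.

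\emph{Step~2 --- the unidirectional wave of part~(2).} On $(-18,-3)\cup(\tfrac{69}{2},72)$ the Rossby--Haurwitz wave is a cat's eye, so a different, non Rossby--Haurwitz wave is required; here I would use bifurcation. The linear analysis behind Theorems~\ref{positive half-line critical rotation rate}--\ref{negative half-line critical rotation rate} supplies, emanating from $\omega_{cr,1}^-=-3$ (first Fourier mode) and $\omega_{cr,2}^+=\tfrac{69}{2}$ (second mode), a non-resonant neutral mode $(c_*,k_*,\omega,\Phi)$ --- with $c_*$ outside the range of the relevant velocity, so that the Rayleigh equation $\Delta_{k_*}\Phi=\tfrac{\Upsilon_0'+2\omega}{\Psi_0'+c_*}\Phi$ has no genuine critical layer --- which, as in Remark~\ref{geometric curvature effects and differences with the flat geometry}, governs the stability threshold and persists throughout $(-18,-3)$ (with $k_*=1$) and $(\tfrac{69}{2},72)$ (with $k_*=2$); these are precisely the ranges on which the mode in question remains stable, whereas on $(-3,\tfrac{69}{2})$ both modes $1$ and $2$ carry unstable eigenvalues, which is why no unidirectional wave is asserted there. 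Writing a travelling wave in the frame $\xi=\varphi-c_* t$ as $\Delta\Psi+2\omega s=F(\Psi+c_* s)$ --- valid since $\Psi_0+c_* s$ is strictly monotone in $s$, so the $3$-jet corresponds to a profile $F_0$ --- the linearisation at $(\Psi_0,F_0,c_*)$ in the $k_*$-th azimuthal mode is exactly the above Rayleigh operator, with one-dimensional kernel spanned by $\Phi$; the Crandall--Rabinowitz theorem (the transversality condition following from the non-degeneracy of $\Phi$ established in the linear theory) then yields a branch $\Psi_{3,\varepsilon}$ of non-zonal solutions with $\|\Psi_{3,\varepsilon}-\Psi_0\|_{H_2^4(\mathbb{S}^2)}=O(\varepsilon)$ --- the regularity being only $H_2^4$, in contrast with the analytic Rossby--Haurwitz profiles, because of the precise behaviour of $\Phi$ at the endpoints $s\in\{0,\pm1\}$, where the Rayleigh equation has a mild singularity with non-integer indicial exponent. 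Since, along the branch, the co-moving stream function stays strictly monotone in $s$ for $\varepsilon$ small, this wave is unidirectional.

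\emph{Main difficulty.} The Rossby--Haurwitz part (Step~1) is comparatively soft; the crux is Step~2, and within it two points: (i) producing the non-resonant neutral mode with the right non-degeneracy exactly on $(-18,-3)$ and $(\tfrac{69}{2},72)$, which relies on the delicate Rayleigh-equation analysis developed for the linear part of the paper; and (ii) carrying the bifurcation, together with the global monotonicity of the co-moving stream function, through the limited $H_2^4$ regularity forced by the endpoint structure of the Rayleigh equation. A secondary, routine point is checking that the cat's eye portrait of the model Hamiltonian survives the higher-order corrections in $H$, uniformly for $\omega$ in compact subintervals of $(-18,72)$.
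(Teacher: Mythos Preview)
Your overall strategy matches the paper's: the cat's eyes waves of parts~(1)--(2) and the unidirectional wave of part~(3) are indeed obtained as explicit Rossby--Haurwitz perturbations $\Psi_0+\varepsilon Y$ with $Y\in E_3$ non-zonal (this is the paper's Lemma~\ref{travelling waves c omega}, and the proof there is much shorter than your Step~1---it simply checks whether $c=-\omega/6$ lies in $\mathrm{Ran}(-\Psi_0')^\circ$), while the unidirectional wave of part~(2) comes from bifurcation off the non-resonant neutral mode (Theorem~\ref{travelling wave solutions thm}).

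However, Step~2 contains two genuine gaps that the paper devotes substantial effort to. First, you assert a one-dimensional kernel ``in the $k_*$-th azimuthal mode'', but the bifurcation is on functions of $(\varphi,s)$ and you must show the \emph{full} linearised operator has simple kernel. The paper achieves this by restricting to carefully chosen symmetry spaces (odd in $s$, even in $\varphi$ for $k_*=1$; $\pi$-periodic and even in $\varphi$ for $k_*=2$) and then---this is the delicate point---ruling out any contribution from the $k=0$ mode via a Frobenius/variation-of-parameters argument on the pair of equations $\Delta_0\Phi=q\Phi$ and $\Delta_2\Phi=q\Phi$; see Remark~\ref{remark on the proof of travelling wave solutions thm} and the long computation in the proof of Theorem~\ref{travelling wave solutions thm}(ii). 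Second, the Crandall--Rabinowitz transversality condition amounts to $\partial_\mu\lambda_1(\mu_{*,\omega},\omega)\neq 0$, but the linear theory (Corollaries~\ref{uniqueness1}--\ref{uniqueness4}) only yields $\partial_\mu\lambda_1\ge 0$ or $\le 0$, not strict inequality; the paper handles the degenerate case $\partial_\mu\lambda_1=0$ by proving (via the index formula~\eqref{index formula 1o1}) that the first non-vanishing $\mu$-derivative has odd order, and then invoking Kielh\"ofer's degenerate bifurcation theorem, which in turn requires analyticity of the nonlinear functional $F$ near the bifurcation point. Finally, your explanation for the $H_2^4$ (rather than analytic) regularity is off: since $c_*\notin\mathrm{Ran}(-\Psi_0')$ the Rayleigh coefficient $(\Psi_0'+c_*)^{-1}$ is smooth on $[-1,1]$, so $\Phi$ has no interior singularity; the $H_2^4$ is simply the space in which the bifurcation is set up.
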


Finally, we conclude with a rigidity statement.

\begin{Theorem}[Rigidity of nearby traveling waves]
\label{Rigidity of nearby traveling waves}
$(1)$ 
Let $\omega\in(-3,{69\over2})$, $p\geq3$ and $\delta\in(0,1)$. Then there exists $\varepsilon_\delta>0$ such that any unidirectional travelling wave  $\Psi(\varphi-ct,s)$   satisfying that
    $\text{dist} (c, Ran(-\Psi_0'))\geq\delta$, $c\neq-\omega$
    and
\begin{align}\label{travelling wave norm1}
\left\lVert\Delta\Psi-\Delta\Psi_0\right\rVert_{H_{p}^{2}(\mathbb{S}^2)}
+\left\lVert\partial_{s}\Psi-\Psi_{0}'\right\rVert_{C^0(\mathbb{S}^2)}\leq\varepsilon_\delta
\end{align}
must be a zonal flow.

\smallskip

$(2)$ Let $\omega\in(-\infty,-18)\cup(72,\infty)$, $p\geq3$ and $\delta\in(0,1)$. There exists $\varepsilon_0>0$ such that any travelling wave $\Psi(\varphi-ct,s)$ satisfying that $\text{dist} (c,\mathbb{R}\setminus Ran(-\Psi_0'))\geq\delta$ and
\begin{align}\label{travelling wave norm2}
\left\lVert\Delta\Psi-\Delta\Psi_0\right\rVert_{H_{p}^{2}(\mathbb{S}^2)}
+\sum_{j=1}^2\left\lVert\partial_{s}^j(\Psi-\Psi_{0})\right\rVert_{C^0(\mathbb{S}^2)}\leq\varepsilon_\delta
\end{align}
must be a zonal flow.
\end{Theorem}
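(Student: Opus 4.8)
The plan is to argue by contradiction and compactness. Suppose some $\delta>0$ admits non-zonal travelling waves $\Psi_n(\varphi-c_nt,s)$ of $(\mathcal E_\omega)$ converging to $\Psi_0$ in the stated norm, with speeds $c_n$ obeying the $\mathrm{dist}$-hypothesis (and, in case~$(1)$, $c_n\neq-\omega$). In the moving frame a travelling wave satisfies $\{\Delta\Psi+2\omega s,\Psi+cs\}=0$, and its linearization around $\Psi_0$ — which, being zonal, is a travelling wave of every speed — in the Fourier mode $e^{ik\varphi}$ is exactly the Rayleigh equation $\Delta_k\Phi-\tfrac{\Upsilon_0'+2\omega}{\Psi_0'+c}\Phi=0$ with $\Upsilon_0=\Delta\Psi_0=-12\Psi_0$. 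I want to show that the non-zonal part of $\Psi_n$, suitably normalized, converges to a nontrivial solution of this equation (a neutral mode) of phase speed $c_*=\lim c_n$, and then exclude such a mode. It is worth recording why the excluded speeds are critical: when $c=-\omega$ the equation collapses to the degenerate semilinear problem $\Delta\chi=F(\chi)$ for $\chi=\Psi-\omega s$, which can bifurcate to non-zonal solutions, and the degree-$3$ spherical harmonics give an exact family $\Psi_0+\varepsilon Y_3$ of non-zonal travelling waves of speed $-\omega/6$; in case~$(1)$, $-\omega/6\in Ran(-\Psi_0')=[-12,3]$ so it is killed by the $\mathrm{dist}$-hypothesis while $c\neq-\omega$ is imposed directly, and in case~$(2)$ neither $-\omega/6$ nor $-\omega$ lies in $[-12,3]$, whose interior contains $c$.

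\smallskip

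\textbf{The compactness reduction.} First I would use elliptic regularity for the Laplace--Beltrami operator, the $H_p^2$-bound on $\Delta\Psi_n-\Delta\Psi_0$, and $p\geq3$, to upgrade the convergence $\Psi_n\to\Psi_0$ to a H\"older space, and pass to a subsequence with $c_n\to c_*$ (the case $c_n\to\infty$ is ruled out directly, since dividing the equation by $c_n$ forces $\Delta\phi_n\to0$, contradicting the normalization below). Writing $\Psi_n=\Psi_0+\rho_n+\psi_n$ with $\rho_n$ zonal and $\psi_n\neq 0$ non-zonal, set $\phi_n=\psi_n/\|\psi_n\|$ in the given norm. Subtracting the equation satisfied by $\Psi_0$ at speed $c_n$ and projecting onto the non-zonal Fourier modes gives $(\Psi_0'+c_n)\partial_\varphi\Delta\phi_n-(\Upsilon_0'+2\omega)\partial_\varphi\phi_n=\mathcal R_n$, where $\mathcal R_n$ collects the quadratic brackets $\{\Delta\phi_n,\rho_n\}$, $\{\Delta\rho_n,\phi_n\}$ and $\|\psi_n\|\,\{\Delta\phi_n,\phi_n\}$; since $\rho_n$ and $\psi_n$ tend to $0$ in the H\"older space while $\phi_n$ stays bounded, $\mathcal R_n\to0$. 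In case~$(1)$, $\Psi_0'+c_n$ stays bounded away from $0$, so a Fourier-mode-wise elliptic estimate gives $1=\|\phi_n\|\leq C\|\phi_n\|_{L^2}+o(1)$, hence $\|\phi_n\|_{L^2}\geq c_0>0$; Rellich compactness then yields $\phi_n\to\phi_*\neq0$ strongly enough to pass to the limit, producing a non-zonal $\phi_*$ whose Fourier components are nontrivial neutral modes of phase speed $c_*$ for some $k\geq 1$.

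\smallskip

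\textbf{Ruling out the neutral mode.} In case~$(1)$ I distinguish the Fourier mode. For $k\geq3$, pairing the Rayleigh equation with $\overline\Phi$ and using $\langle(-\Delta_k)\Phi,\Phi\rangle\geq k(k+1)\|\Phi\|_{L^2(-1,1)}^2$ forces $\min_{s}\tfrac{\Upsilon_0'+2\omega}{\Psi_0'+c_*}\leq-k(k+1)\leq-12$; but the ratio $\tfrac{36+2\omega-180s^2}{15s^2-3+c_*}$ is monotone in $s^2$ (its $s$-derivative has the sign of $-(6c_*+\omega)$), so its minimum over $[-1,1]$ is attained at $s^2\in\{0,1\}$, and a short computation shows that for $\omega\in(-3,\tfrac{69}{2})$ and $c_*\notin[-12,3]$ this minimum is never below $-10$ — contradiction. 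For $k=1,2$, a neutral mode with real $c_*\notin[-12,3]$ is a purely imaginary isolated eigenvalue of $\mathcal L_{\omega,k}$, and the sharp spectral picture behind Theorems~\ref{positive half-line critical rotation rate}--\ref{negative half-line critical rotation rate} shows that for $\omega$ in the \emph{open} interval $(-3,\tfrac{69}{2})$ the only possible speed of such a mode is $c_*=-\omega$ (carried over from the degenerate semilinear case), which the hypothesis rules out — contradiction. In case~$(2)$, $c_*$ lies in the interior of $[-12,3]$, so $\Psi_0'+c_*$ vanishes at some $\pm s_0$ bounded away from $0,\pm1$, while $\Upsilon_0'+2\omega$ has no zero because $\omega\notin[-18,72]$ (Rayleigh's criterion / the nonlinear stability range); hence $\tfrac{\Upsilon_0'+2\omega}{\Psi_0'+c_*}$ has a non-removable simple pole at $\pm s_0$, and a Frobenius analysis shows that any solution with $\Delta_k\Phi\in L^2$ must vanish at $s_0$ to the order absorbing the pole — matching this from both sides of $s_0$ against the endpoint behaviour at $s=\pm1$ then forces $\Phi\equiv0$ for every $k\geq1$ — contradiction. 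In all cases $\phi_*=0$, the required contradiction.

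\smallskip

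\textbf{Main obstacle.} The delicate part is the compactness reduction in case~$(2)$: since $c_n\to c_*$ lands in $Ran(-\Psi_0')$, the factor $(\Psi_0'+c_n)^{-1}$ blows up in a shrinking layer around $\pm s_0$, so uniform higher-regularity bounds on $\phi_n$ fail there and the resonant critical layer must be handled by hand — this is precisely why case~$(2)$ asks for the extra $C^0$-control of $\partial_s^2(\Psi-\Psi_0)$. This is compounded by the degeneracy of the Laplace--Beltrami operator at the poles $s=\pm1$ in the $(\varphi,s)$ chart, where the needed $\partial_s$-derivatives are not supplied for free by elliptic regularity (which is also the point of the $C^0$-hypothesis on $\partial_s\Psi$). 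Carrying the strong convergence of $\phi_n$ and the non-degeneracy of $\phi_*$ through these layers, and then justifying the Frobenius dichotomy with only the regularity that survives the limit, is where the real work lies.
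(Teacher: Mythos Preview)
Your overall strategy matches the paper's, but there is a genuine gap in case~(1) for the low modes. You argue that the hypothesis $c_n\neq-\omega$ ``rules out'' the limiting speed $c_*=-\omega$; this fails when $\omega\in(12,\tfrac{69}{2})$, since then $-\omega<-12$ lies in the admissible closed set for $c_*$, and nothing prevents $c_n\to-\omega$. At that speed there \emph{is} a $k=1$ neutral mode, namely (the $k=1$ component of) $Y_1^{\pm1}-\tfrac{72}{\omega}\sqrt{1/14}\,Y_3^{\pm1}$ (Remark~\ref{E1JL exact eigenvalues-rem}), so your contradiction would not close. The paper uses $c_n\neq-\omega$ differently: for any travelling wave with $c+\omega\neq0$, conservation of angular momentum (Theorem~4(iii) of \cite{cg22}) forces $\partial_\varphi\Psi_n\in X$, the subspace orthogonal to the first spherical harmonics; this constraint is inherited by the weak limit $\xi_0$, and then the absence of isolated neutral modes \emph{in $X^k$} for $\omega\in(-3,\tfrac{69}{2})$ (Lemma~\ref{spectra of the linearized operatorLrigidity}(i)) gives the contradiction for every $k\neq0$ at once. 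Your direct variational bound for $k\geq3$ is correct and a nice shortcut, but $k=1,2$ cannot be finished without this $X$-constraint.

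For case~(2) you have correctly located the critical-layer obstacle; the paper resolves it by a five-region decomposition of $[-1,1]$, carrying the $L^2$ bound on $\Delta\xi_n$ through the layer via Hardy's inequality (the $C^0$ control of $\partial_s^2(\Psi-\Psi_0)$ is what yields $|\partial_s\Psi_n+c_n|\gtrsim\sqrt\delta\,|s-s_{n,j}(\varphi)|$ there). Your ``Frobenius plus matching against endpoint behaviour'' for the limiting mode is too vague to stand on its own. The paper's argument is concrete: if $\xi_{0,k_0}$ vanished at both roots $s_1,s_2$ of $\Psi_0'+c_*$, then on one of the subintervals $(s_1,s_2)$ (for $\omega<-18$) or $(-1,s_1)\cup(s_2,1)$ (for $\omega>72$) the potential $\tfrac{\Upsilon_0'+2\omega}{\Psi_0'+c_*}$ is strictly positive---this is exactly where $\omega\notin[-18,72]$ enters---so the energy identity forces $\xi_{0,k_0}\equiv0$ there; the uniqueness Lemma~\ref{initial ode} then propagates this to all of $(-1,1)$. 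Hence some $\xi_{0,k_0}(s_{i_0})\neq0$, and the resulting non-$L^2$ singularity of $\Delta_{k_0}\xi_{0,k_0}$ at $s_{i_0}$ is the contradiction.
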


 \begin{figure}[ht]
    \centering
	\includegraphics[scale = 0.8]{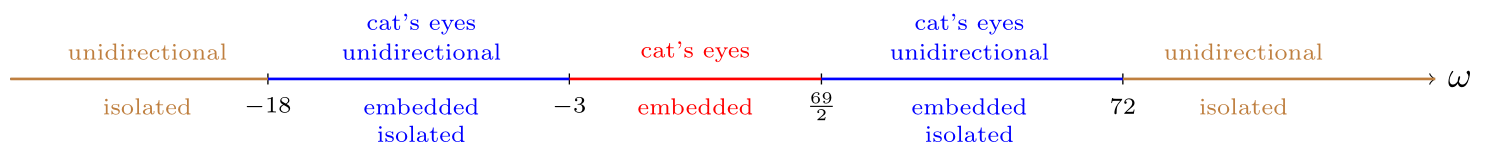}
        \caption{Nearby travelling waves and their causes: above the line are streamline patterns of nearby travelling waves, below the line are  types  of imaginary eigenvalues  of the linearized operators.}
	\label{Gradual-changes-in-streamline-patterns-of-travelling-waves}
\end{figure}

Basically, existence and rigidity  of nearby traveling waves are
 reflections of imaginary eigenvalues of the linearized operators  on  nonlinear dynamics near the $3$-jet, as illustrated in Fig. \ref{Gradual-changes-in-streamline-patterns-of-travelling-waves}. 
 Isolated imaginary eigenvalues produce unidirectional travelling waves, and embedded eigenvalues produce cat's eyes travelling waves.
See Lemma \ref{spectra of the linearized operatorLrigidity} for more details about imaginary eigenvalues of the linearized operators.

\if0
More specifically, (1)
for $\omega\in(-3,{69\over2})$, there exists $k_0\neq0$ such that $\mathcal{L}_{\omega,k_0}|_{X^{k_0}}$ has  an  embedded eigenvalue, and $\mathcal{L}_{\omega,k}|_{X^k}$ has  no imaginary isolated eigenvalues for any $k\neq0$; (2)
for $\omega\in(-18,-3)\cup({69\over2},72)$, there exist $k_0, k_1\neq0$ such that $\mathcal{L}_{\omega,k_0}|_{X^{k_0}}$ has  an embedded  eigenvalue  and $\mathcal{L}_{\omega,k_1}|_{X^{k_1}}$ has  an imaginary isolated eigenvalue; and (3)
for almost all $\omega\in(-\infty,-18)\cup\omega\in(72,\infty)$,  there exists $k_0\neq0$ such that $\mathcal{L}_{\omega,k_0}|_{X^{k_0}}$ has  an isolated eigenvalue, and
$\mathcal{L}_{\omega,k}|_{X^k}$ has  no embedded  eigenvalues for any $k\neq0$.
\fi

To construct curves of unidirectional  travelling waves   for $\omega\in\left(-18,-3\right)\cup\left({69\over2},72\right)$, one may study the bifurcation at modified zonal flows as in \cite{Lin-Wei-Zhang-Zhu22}. But this can not ensure that the travelling waves  form a curve.
To overcome the difficulty, we adopt  Kielh\"{o}fer's degenerate bifurcation theorem to the $3$-jet itself and carry out a more delicate spectral analysis  of the linearization of the nonlinear functional, see Remark $\ref{remark on the proof of travelling wave solutions thm}$.

\subsection{Organization of the article}
The rest of this paper is organized as follows. In Section 2, we give a detailed analysis of the Hamiltonian structure of the linearized equations, index formulae and properties of neutral modes. In Sections 3-4,
 we prove that the positive and negative critical rotation rates are ${99\over2}$ and $g^{-1}(-12)$, respectively.  In Section 5, we show the invariant subspace decomposition and  exponential trichotomy of the semigroup $e^{tJ_{\omega}L}$.  In Section 6, we prove nonlinear orbital instability from linear instability for general steady flows. In the last section, we study how the streamline patterns of traveling waves  near the 3-jet gradually change  as the rotation rate increases.

\section{Hamiltonian structure and neutral modes}

\subsection{Hamiltonian structure of the linearized equations and index formulae}\label{subsection-Hamiltonian structure of the linearized equations and index formulae}
Let $\Psi_0$ be the stream function  of the $3$-jet  in \eqref{stream function of 3-jet zonal flow}.
By (2.9) in \cite{cg22}, instead of studying linear instability of $\Psi_0(s)$ for the equation ${\rm(\mathcal{E}_\omega)}$ directly, we equivalently study linear instability of $\tilde \Psi_{\omega}(s)=\Psi_0(s)-\omega s$ for the equation ${\rm(\mathcal{E}_0)}$.
The transformed $3$-jet is explicitly expressed as
\begin{align}\label{def-tilde-Upsilon0-derivative}
\begin{array}{llll}
\tilde \Psi_{\omega}(s)=&\Psi_0(s)- \omega s=  5s^3-3s-\omega s,\\
\tilde \Psi_{\omega}'(s)=&\Psi_0'(s)- \omega =  15s^2-3-\omega \Longrightarrow \text{Ran}(-\tilde \Psi_{\omega}')=[-12+\omega,3+\omega],\\
\tilde \Upsilon_{\omega}(s)=&\Delta(\Psi_0-\omega s)=\Upsilon_0+2\omega s=-12\Psi_0+2\omega s=-12(5s^3-3s)+2\omega s,\\
\tilde \Upsilon_{\omega}'(s)=&\Upsilon_0'+2\omega=-12\Psi_0'+2\omega  =-12(15s^2-3)+2\omega.
\end{array}
\end{align}
Linearizing ${\rm(\mathcal{E}_0)}$ around $\tilde \Upsilon_{\omega}(s)$ in a traveling frame  $(\varphi-c_\omega t,s)$, we have
\begin{align*}
\partial_t\Upsilon-c_\omega\partial_\varphi\Upsilon=\left(\tilde \Psi_{\omega}'\partial_\varphi-\tilde \Upsilon_{\omega}'\partial_\varphi\Delta^{-1}\right)\Upsilon,
\end{align*}
where $c_\omega$ is to be determined. Thus,
\begin{align}\label{linearized equation}
\partial_t\Upsilon=-\tilde \Upsilon_{\omega}'\partial_\varphi\left(-{\tilde \Psi_{\omega}'+c_\omega\over \tilde \Upsilon_{\omega}'}+ \Delta^{-1}\right)\Upsilon.
\end{align}
 Choosing $c_\omega={5\over6}\omega$, we have
\begin{align}\label{c omega k}
-{\tilde \Psi_{\omega}'+c_\omega\over \tilde \Upsilon_{\omega}'}=-{\Psi_0'-{1\over 6}\omega\over -12\Psi_0'+2\omega}={1\over 12}.
\end{align}
Then the linearized equation \eqref{linearized equation} can be written in a Hamiltonian form
\begin{align}\label{Hamiltonian form}
\partial_t\Upsilon=-\tilde \Upsilon_{\omega}'\partial_\varphi\left({1\over 12}+ \Delta^{-1}\right)\Upsilon=J_{\omega}L\Upsilon,
\end{align}
where $J_\omega, L$ are defined in \eqref{def-ham-JL} and $X$ is defined in \eqref{def-space-X}.
Note that for the nonlinear Euler equation $e^{im\omega t}\iint_{D_T}\Upsilon Y_1^md\varphi ds$  is invariant, and this is also true for the linearized Euler equation, where $m=0,\pm1$. Thus, for a growing mode $e^{\lambda t}\Upsilon(\varphi,s)$ solving the linearized Euler equation,
$\iint_{D_T}\Upsilon Y_1^md\varphi ds=0$.
So it suffices to consider the perturbation of the vorticity in $X$ when studying the
existence of growing modes.
Since
\begin{align}\label{invariant-1}
{d\over dt}\iint_{D_T}e^{tJ_{\omega}L}\Upsilon d\varphi ds|_{t=0}=\iint_{D_T}J_{\omega}L\Upsilon d\varphi ds=0
\end{align}
and
\begin{align}\label{invariant-2}
{d\over dt}\iint_{D_T}e^{tJ_{\omega}L}\Upsilon Y_1^m d\varphi ds|_{t=0}=\iint_{D_T}J_{\omega}L\Upsilon Y_1^m d\varphi ds=0
 \end{align}
 for $\Upsilon\in X$ and $m=0,\pm1$,  $X$ is an invariant subspace for the linearized operator $J_{\omega}L$.
 For fixed $k\in\mathbb{Z}$,  recall that $\sigma(\Delta_k)=\{-l(l+1)\}_{l\geq |k|}$. For the eigenvalue $-l(l+1)$, $l\geq |k|$, the corresponding eigenfunction is the associated Legendre polynomial $P_l^k$.
Using such polynomials, we decompose the space $X$ into Fourier modes as follows. We define $X^k=\oplus_{l=|k|}^\infty\text{span}\{ P_l^k(s)\}$ for $|k|\geq2$ and $X^k=\oplus_{l=2}^\infty\text{span}\{ P_l^k(s)\}$ for $k=0,\pm1$. Note that $X^k=X^{-k}$ for $k\in \mathbb{Z}$. Then $X$ has the decomposition
\begin{align*}
X=\oplus_{k\in\mathbb{Z}}e^{ik\varphi} X^k,
\end{align*}
where $e^{ik\varphi} X^k=\{e^{ik\varphi}\Phi|\Phi\in X^k\}$.
We write $\Upsilon$ in the Fourier series $\Upsilon=\sum_{k\in\mathbb{Z}\setminus\{0\}}\Upsilon_k(s)e^{ik\varphi}$.
On $X^k$, the Hamiltonian form  \eqref{Hamiltonian form} can be reduced to
\begin{align*}
\partial_t\Upsilon_k=J_{\omega,k}L_k\Upsilon_k,
\end{align*}
where
\begin{align*}
J_{\omega,k}=-ik\tilde \Upsilon_{\omega}':(X^k)^*\supset D(J_{\omega,k})\to X^k,\quad L_k={1\over 12}+\Delta_k^{-1}:X^k\to (X^k)^*.
\end{align*}
Note that $J_{\omega,k}$ is not a real operator on $X^k$. We now reformulate a real  Hamiltonian system.
Define
the space
 \begin{align*}
 Y^k=e^{ik\varphi}X^k\oplus e^{-ik\varphi} X^{-k}=\{\cos(k\varphi)\Upsilon_{k,1}(s)+\sin(k\varphi)\Upsilon_{k,2}(s),\Upsilon_{k,1},\Upsilon_{k,2}\in X^k \}.
 \end{align*}
For any
$
\Upsilon=\cos(k\varphi)\Upsilon_{k,1}(s)+\sin(k\varphi)\Upsilon_{k,2}(s)\in Y^k$,
we have
\begin{align*}
J_{\omega}L\Upsilon=(\cos(k\varphi),\sin(k\varphi))J_{\omega}^kL^k\begin{pmatrix}\Upsilon_{k,1}\\ \Upsilon_{k,2}\end{pmatrix},
\end{align*}
where
\begin{align*}
J_{\omega}^k=\begin{pmatrix}0&-k\tilde \Upsilon_{\omega}'\\ k\tilde \Upsilon_{\omega}' &0\end{pmatrix},\quad L^k=\begin{pmatrix}L_k&0\\ 0 &L_k\end{pmatrix}.
\end{align*}
Note that $\sigma(J_{\omega,k}L_k|_{X^k})=$ $\overline{\sigma(J_{\omega,-k}L_{-k}|_{X^{-k}})}$  and $\sigma(J_{\omega}^kL^k|_{X^k\times X^k})=$ $\sigma(J_{\omega,k}L_k|_{X^k})\cup\sigma(J_{\omega,-k}$ $L_{-k}|_{X^{-k}})$.
By Theorem 2.3 in \cite{lin2022instability}, we have
\begin{align}\label{index-for real}
2\tilde k_{i,J_{\omega}^kL^k}^{\leq0}+\tilde k_{0,J_{\omega}^kL^k}^{\leq0}+2\tilde k_{c,J_{\omega}^kL^k}+\tilde k_{r,J_{\omega}^kL^k}=n^-(L^k)=2n^-(L_k),
\end{align}
where $n^-(L^k)$ is the negative dimension of the quadratic form $\langle L^k\cdot,\cdot\rangle$, $\tilde k_{r,J_{\omega}^kL^k}$ is the sum of algebraic multiplicities of positive eigenvalues of $J_{\omega}^k L^k$, $\tilde k_{c,J_{\omega}^kL^k}$ is the sum of algebraic multiplicities of eigenvalues of $J_{\omega}^k L^k$ in the first quadrant,
$\tilde{k}_{i,J_{\omega}^kL^k}^{\leq0}$ is the total
number of non-positive dimensions of $\langle L^{k}\cdot,\cdot\rangle$
restricted to the generalized eigenspaces    of purely imaginary
eigenvalues of $J_{\omega}^{k}L^{k}$ with positive imaginary parts, and
$\tilde{k}_{0,J_{\omega}^kL^k}^{\leq0}$ is the number of non-positive dimensions of $\langle
L^{k}\cdot,\cdot\rangle$ restricted to the generalized zero eigenspace of $J_{\omega}^k L^k$ modulo $\ker L^{k}$.
Then
\begin{equation}
2k_{i,J_{\omega,k}L_{k}}^{\leq0}=2\tilde{k}_{i,J_{\omega}^{k}L^{k}}^{\leq0},\;2 k_{0,J_{\omega,k}L_{k}}^{\leq0}=\tilde k_{0,J_{\omega}^{k}L^{k}}^{\leq0},\;2k_{c,J_{\omega,k}L_{k}}=2\tilde{k}_{c,J_{\omega}^{k}L^{k}},\;\;2k_{r,J_{\omega,k}L_{k}}%
=\tilde{k}_{r,J_{\omega}^{k}L^{k}}, \label{219}%
\end{equation}
where 
 ${k}%
_{r,J_{\omega,k}L_{k}}$ is the sum of algebraic multiplicities of positive eigenvalues of
$J_{\omega,k}L_{k}$, ${k}_{c,J_{\omega,k}L_{k}}$ is the sum of algebraic multiplicities
of eigenvalues of $J_{\omega,k}L_{k}$ in the first and the fourth
quadrants, ${k}_{i,J_{\omega,k}L_{k}}^{\leq0}$ is the total number of non-positive dimensions of
$\langle{L}_{k}\cdot,\cdot\rangle$ restricted to the
generalized eigenspaces of nonzero purely imaginary eigenvalues of
$J_{\omega,k}L_{k}$, and ${k}_{0,J_{\omega,k}L_{k}}^{\leq0}$ is the number of non-positive dimensions of $\langle
L_{k}\cdot,\cdot\rangle$ restricted to the generalized zero eigenspace of $J_{\omega,k} L_k$ modulo $\ker L_{k}$.
By \eqref{index-for real}-\eqref{219}, we have the index formula
\begin{align*}
 k_{i,J_{\omega,k}L_{k}}^{\leq0}+ k_{0,J_{\omega,k}L_{k}}^{\leq0}+ k_{c,J_{\omega,k}L_{k}}+ k_{r,J_{\omega,k}L_{k}}=n^-(L_k)
\end{align*}
for the complex operator $J_{\omega,k}L_{k}$.
For $k=\pm1$,
\begin{align*}
\sigma(L_k)=\left\{{1\over12}-{1\over l(l+1)}\bigg|l\geq2\right\}.
\end{align*}
For $|k|\geq2$,
\begin{align*}
\sigma(L_k)=\left\{{1\over12}-{1\over l(l+1)}\bigg|l\geq k\right\}.
\end{align*}
Then $$n^-(L_k)=1\quad\text{ for }k=\pm1,\pm2,$$  and $n^-(L_k)=0$ for $|k|\geq3$.
Thus, we only consider $k=1,2$ and the index formula becomes
\begin{align}\label{index-for complex2}
 k_{i,J_{\omega,k}L_{k}}^{\leq0}+ k_{0,J_{\omega,k}L_{k}}^{\leq0}+ k_{c,J_{\omega,k}L_{k}}+ k_{r,J_{\omega,k}L_{k}}=1.
\end{align}
Let
$$X_e^k=\{\Upsilon\in X^k|\Upsilon\text{ is even}\},\quad X_o^k=\{\Upsilon\in X^k|\Upsilon\text{ is odd}\}.$$
Note that $J_{\omega,k}$ and $L_k$ are invariant for the parity decomposition  in the sense that
\begin{align*}
J_{\omega,k}:(X_e^k)^*\cap D(J_{\omega,k})\to X_e^k,\; L_k:X_e^k\to (X_e^k)^*,\; J_{\omega,k}:(X_o^k)^*\cap D(J_{\omega,k})\to X_o^k,\; L_k:X_o^k\to (X_o^k)^*.
\end{align*}
Since
\begin{align}\label{L1e2ok3nonnegative}
L_1|_{X_e^1}\geq0,\;\; L_2|_{X_o^2}\geq0,\;\;\text{and}\;\; L_k\geq0,\;\;k\geq3,
\end{align}
and
\begin{align}\label{L1o2enegative}
n^-(L_1|_{X_o^1})=1\;\;\text{and}\;\;n^-(L_2|_{X_e^2})=1,
\end{align}
by \eqref{index-for complex2} we get the index formulae
\begin{align}\label{index formula 1o1} k_{i,J_{\omega,1}L_1|_{X_o^1}}^{\leq0}+ k_{0,J_{\omega,1}L_1|_{X_o^1}}^{\leq0}+ k_{c,J_{\omega,1}L_1|_{X_o^1}}+ k_{r,J_{\omega,1}L_1|_{X_o^1}}=1,\\\label{index formula 1o2}
  k_{i,J_{\omega,2}L_2|_{X_e^2}}^{\leq0}+ k_{0,J_{\omega,2}L_2|_{X_e^2}}^{\leq0}+ k_{c,J_{\omega,2}L_2|_{X_e^2}}+ k_{r,J_{\omega,2}L_2|_{X_e^2}}=1\,
\end{align}
 for   $J_{\omega,1}L_1|_{X_o^1}$ and  $J_{\omega,2}L_2|_{X_e^2}$.
 From the index formulae, the linear stability/instability of the 3-jet
 is
reduced to determine $ k_{i,J_{\omega,1}L_1|_{X_o^1}}^{\leq0}+ k_{0,J_{\omega,1}L_1|_{X_o^1}}^{\leq0}$ and
$k_{i,J_{\omega,2}L_2|_{X_e^2}}^{\leq0}+ k_{0,J_{\omega,2}L_2|_{X_e^2}}^{\leq0}$. Namely, if
 \begin{align*}
 \text{both}\quad k_{i,J_{\omega,1}L_1|_{X_o^1}}^{\leq0}+ k_{0,J_{\omega,1}L_1|_{X_o^1}}^{\leq0}=1\quad\text{ and } \quad k_{i,J_{\omega,2}L_2|_{X_e^2}}^{\leq0}+ k_{0,J_{\omega,2}L_2|_{X_e^2}}^{\leq0}=1,
 \end{align*}
 then the zonal flow $\tilde \Psi_{\omega}$   is spectrally stable for  $\omega=0$ and  thus, the $3$-jet
 is  spectrally stable for this $\omega$; if
 \begin{align*}
 \text{either}\quad k_{i,J_{\omega,1}L_1|_{X_o^1}}^{\leq0}+ k_{0,J_{\omega,1}L_1|_{X_o^1}}^{\leq0}=0\quad\text{ or } \quad k_{i,J_{\omega,2}L_2|_{X_e^2}}^{\leq0}+ k_{0,J_{\omega,2}L_2|_{X_e^2}}^{\leq0}=0,
 \end{align*}
  then
   the zonal flow $\tilde \Psi_{\omega}$   is linearly unstable for  $\omega=0$ and  thus, the $3$-jet is  linearly unstable for this $\omega$.

\subsection{Neutral modes: scope of the neutral speeds}\label{Neutral modes scope of the neutral speeds}
It is usually difficult to calculate
 the above indices in \eqref{index formula 1o1}-\eqref{index formula 1o2}, because we need to find the purely imaginary eigenvalues of $J_{\omega,k}L_k$ and calculate the signature of the corresponding energy quadratic form $\langle
L_{k}\cdot,\cdot\rangle$, where $k=1,2$. First, let us look for the purely imaginary eigenvalues of $J_{\omega,k}L_k$. Suppose $\lambda=-ik(c-c_\omega)$ is such an eigenvalue of $J_{\omega,k}L_k$ with corresponding eigenfunction $\Upsilon\in L^2(-1,1)$.
  Then
  \begin{align*}
  J_{\omega,k}L_k\Upsilon=-ik(c-c_\omega)\Upsilon \;\;\Longrightarrow\;\;
  \tilde \Upsilon_{\omega}'\left({1\over 12}+ \Delta_k^{-1}\right)\Upsilon=(c-c_\omega)\Upsilon.
  \end{align*}
  Let $\Phi=\Delta_k^{-1}\Upsilon$. Then direct computation implies that $\Phi$ solves the following Rayleigh equation
 \begin{align}\nonumber
 &\Delta_k\Phi-{\tilde \Upsilon_{\omega}'\over \tilde \Psi_{\omega}'+c}\Phi= ((1-s^2)\Phi')'-{k^2\over 1-s^2}\Phi-{\tilde \Upsilon_{\omega}'\over \tilde \Psi_{\omega}'+c}\Phi\\\label{Rayleigh-type equation}
 =&((1-s^2)\Phi')'-{k^2\over 1-s^2}\Phi-{-12(15s^2-3)+2\omega\over15s^2-3-\omega +c}\Phi=0, \quad \Delta_k\Phi\in L^2(-1,1),
 \end{align}
 where $c\in \mathbb{R}$ since $\lambda\in i\mathbb{R}$.
 By Lemma 2.4.1 and (2.4.9) in \cite{Skiba2017}, we have $\Delta_k\Phi\in L^2(-1,1)\Longrightarrow \nabla_k\Phi\in L^2(-1,1)\Longrightarrow\Phi (\pm1)=0$.
 We call the quadruple $(c,k,\omega,\Phi)$ a neutral mode if $\Delta_k\Phi\in L^2(-1,1)$ and $\Phi$ is a non-trivial solution of \eqref{Rayleigh-type equation} with $c\in \mathbb{R}$. Here, $c$ is called the neutral speed.
To compute the indices   $k_{i,J_{\omega,1}L_1|_{X_o^1}}^{\leq0}+ k_{0,J_{\omega,1}L_1|_{X_o^1}}^{\leq0}$ and  $k_{i,J_{\omega,2}L_2|_{X_e^2}}^{\leq0}+ k_{0,J_{\omega,2}L_2|_{X_e^2}}^{\leq0}$ in \eqref{index formula 1o1}-\eqref{index formula 1o2}, we determine the scope of $c\in \mathbb{R}$ such that $(c,k,\omega,\Phi)$ is a neutral mode.

 \begin{Remark}\label{neutral-imaginary}
 If we find a neutral mode $(c,k,\omega,\Phi)$, then $-ik(c-c_\omega)$ is a purely imaginary eigenvalue of $J_{\omega,k}L_k$ with the eigenfunction $\Delta_k\Phi$. To compute the index  $k_{i,J_{\omega,1}L_1|_{X_o^1}}^{\leq0}+ k_{0,J_{\omega,1}L_1|_{X_o^1}}^{\leq0}$ and  $k_{i,J_{\omega,2}L_2|_{X_e^2}}^{\leq0}+ k_{0,J_{\omega,2}L_2|_{X_e^2}}^{\leq0}$, it is thus a first step to determine for which $c\in \mathbb{R}$, $(c,k,\omega,\Phi)$ is a neutral mode.
 \end{Remark}

 In this subsection, we determine an interval  such that if $(c,k,\omega,\Phi)$ is a neutral mode, then  $c$ must lie in the interval.
 More precisely, we will prove  that
 \begin{itemize}
 \item $\omega\in(-3,12]\Rightarrow c\in\text{Ran}(-\tilde \Psi_{\omega}')^\circ=(-12+\omega,3+\omega)$, see Lemmas \ref{tilde-psi-change-sign} and \ref{case-omega=12} (i).

\item $\omega= -3\Rightarrow c\in (-15,0]$, see Lemma \ref{case-omega=12} (ii).

\item   $\omega\in(-\infty,-3)\Rightarrow c\in\text{Ran}(-\tilde \Psi_{\omega}')^\circ\cup[3+\omega,0]=(-12+\omega,0]$, see Lemma \ref{tilde Psi0 does not change sign}.

 \item $\omega\in(12,\infty)\Rightarrow c\in[0,-12+\omega]\cup\text{Ran}(-\tilde \Psi_{\omega}')^\circ=[0,3+\omega)$, see Lemma \ref{tilde Psi0 does not change sign}.
 \end{itemize}

 We first study the case that $\tilde \Psi_{\omega}'$ changes sign (or equivalently, $\omega\in\text{Ran}(\Psi_0')^\circ=(-3,12)$). Here, we use $\text{Ran}(\Psi_0')^\circ$ to denote the open interval $(\min( \Psi_0'),\max( \Psi_0'))$.
 \begin{Lemma}\label{tilde-psi-change-sign}
 If $\tilde \Psi_{\omega}'$ changes sign $(i.e.\;\omega\in(-3,12))$, then for any neutral mode $(c,k,\omega,\Phi)$ with $k\neq0$, $c$ must lie in $\text{Ran}(-\tilde \Psi_{\omega}')^\circ=(-12+\omega,3+\omega)$.
 \end{Lemma}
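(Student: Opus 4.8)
The plan is to argue by contradiction and exploit the sign structure of $\tilde\Upsilon_\omega'$ on the interval where $c\notin\mathrm{Ran}(-\tilde\Psi_\omega')^\circ$, combined with a standard energy/Wronskian estimate for the Rayleigh equation. First I would record the key arithmetic identity coming from \eqref{c omega k}: since $\tilde\Upsilon_\omega' = -12\,\tilde\Psi_\omega'$, the singular coefficient in \eqref{Rayleigh-type equation} rewrites as
\begin{align*}
-\frac{\tilde\Upsilon_\omega'}{\tilde\Psi_\omega'+c} = \frac{12\,\tilde\Psi_\omega'}{\tilde\Psi_\omega'+c} = 12 - \frac{12c}{\tilde\Psi_\omega'+c}.
\end{align*}
Because $\omega\in(-3,12)$ means $\tilde\Psi_\omega'$ genuinely changes sign on $(-1,1)$, the range of $-\tilde\Psi_\omega'$ is the interval $[-12+\omega,\,3+\omega]$ which straddles $0$; the key observation is that if $c$ were outside the open interval $(-12+\omega,3+\omega)$ — say $c\ge 3+\omega$, the case $c\le -12+\omega$ being symmetric — then $\tilde\Psi_\omega'+c$ has a definite sign (nonnegative) across $[-1,1]$, and moreover $c$ and $\tilde\Psi_\omega'+c$ never vanish simultaneously at an interior point, so the coefficient $\tilde\Upsilon_\omega'/(\tilde\Psi_\omega'+c)$ is a genuinely regular (non-resonant) $L^\infty$ function there.

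The main step is then the integration-by-parts argument. Multiplying \eqref{Rayleigh-type equation} by $\overline\Phi$ and integrating over $(-1,1)$, using $\Phi(\pm1)=0$ (guaranteed by Lemma 2.4.1 of \cite{Skiba2017} as quoted), I would obtain
\begin{align*}
\int_{-1}^1 (1-s^2)|\Phi'|^2\,\mathrm ds + \int_{-1}^1 \frac{k^2}{1-s^2}|\Phi|^2\,\mathrm ds + \int_{-1}^1 \frac{\tilde\Upsilon_\omega'}{\tilde\Psi_\omega'+c}|\Phi|^2\,\mathrm ds = 0.
\end{align*}
Substituting the identity above, the last integral becomes $12\int|\Phi|^2 - 12c\int \frac{|\Phi|^2}{\tilde\Psi_\omega'+c}$, and since $\tilde\Psi_\omega'+c\ge 0$ with a definite sign, the sign of the troublesome term is controlled by the sign of $c$. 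The plan is to show this forces a contradiction: either directly, by checking that all terms have the same sign (so $\Phi\equiv 0$), or — since the $12\int|\Phi|^2$ term may have the "wrong" sign — by a sharper argument using the semicircle-type bound or a comparison with the Legendre operator $\Delta_k$, whose lowest eigenvalue $-k(k+1)$ together with $\tilde\Upsilon_\omega' = -12\tilde\Psi_\omega'$ should close the gap. Concretely, at the endpoint $c=3+\omega$ the coefficient $\tilde\Upsilon_\omega'/(\tilde\Psi_\omega'+c)$ equals $-12$ times $\tilde\Psi_\omega'/(\tilde\Psi_\omega'+3+\omega)$, which is bounded above by $12$ with equality only where $\tilde\Psi_\omega'+3+\omega$ vanishes; pairing $k^2/(1-s^2)\ge k^2$ with the elementary inequality for the Legendre form should yield strict positivity of the left side unless $\Phi\equiv 0$.

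I expect the main obstacle to be precisely the sign of the $+12\int|\Phi|^2$ contribution: the naive energy estimate does not immediately give a definite-sign left-hand side, so one cannot simply conclude $\Phi\equiv 0$ without using finer information — either the precise location of the zeros of $\tilde\Psi_\omega'$ (equivalently the explicit cubic $15s^2-3-\omega$), a Hardy-type inequality bounding $12\int|\Phi|^2$ by $\int(1-s^2)|\Phi'|^2 + k^2\int|\Phi|^2$, or an argument ruling out eigenvalues below the edge of the essential spectrum for $k=1,2$. The cleanest route is likely to note that for $c$ outside $\mathrm{Ran}(-\tilde\Psi_\omega')^\circ$ the operator in \eqref{Rayleigh-type equation} is a regular Sturm–Liouville operator and $0$ being an eigenvalue would contradict a lower bound on its spectrum coming from $L_k\ge 0$ on the relevant parity subspace (cf. \eqref{L1e2ok3nonnegative}); I would reconcile the parities at the end, since a neutral mode of \eqref{Rayleigh-type equation} need not a priori be even or odd, but $\tilde\Upsilon_\omega'/(\tilde\Psi_\omega'+c)$ is even, so $\Phi$ decomposes into even and odd parts each solving the same equation, and the nonzero part lands in a subspace where the sign constraint applies.
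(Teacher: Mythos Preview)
Your proposal has a genuine gap, and it begins with an arithmetic slip: you write $\tilde\Upsilon_\omega' = -12\tilde\Psi_\omega'$, but from \eqref{def-tilde-Upsilon0-derivative} one has $\tilde\Upsilon_\omega' = -12(15s^2-3)+2\omega$ while $-12\tilde\Psi_\omega' = -12(15s^2-3)+12\omega$, so in fact $\tilde\Upsilon_\omega' = -12\tilde\Psi_\omega' - 10\omega$ (equivalently $\tilde\Upsilon_\omega' = -12(\tilde\Psi_\omega'+c_\omega)$, which is \eqref{c omega k}). Your displayed identity for $-\tilde\Upsilon_\omega'/(\tilde\Psi_\omega'+c)$ is therefore incorrect for $\omega\neq 0$; the correct version carries an extra $-10\omega$ in the numerator of the fraction. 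This does not by itself change the qualitative picture, but it does mean the subsequent sign discussion is not quite the one you want.

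More importantly, the approach does not close even after the correction. You yourself identify the obstacle: the naive energy identity contains a term $-12\int|\Phi|^2$, and the Legendre lower bound $\int|\nabla_k\Phi|^2\ge k(k+1)\int|\Phi|^2$ only beats this for $|k|\ge 4$. Your proposed rescue via $L_k\ge 0$ on parity subspaces cannot work: precisely for the dangerous modes $k=1,2$ one has $n^-(L_1|_{X_o^1})=n^-(L_2|_{X_e^2})=1$ by \eqref{L1o2enegative}, so after splitting $\Phi$ into even and odd parts one of the two lands in a subspace where $L_k$ is \emph{not} nonnegative, and no sign conclusion follows. The Hardy/semicircle suggestions are too vague to assess, but I do not see how either yields the needed strict inequality uniformly in $\omega\in(-3,12)$ for $k=1,2$.

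The paper's proof uses a different and sharper device: set $R_\omega=\tilde\Psi_\omega'+c$ and $F_\omega=\Phi/R_\omega$, and use the algebraic identity (due to Thuburn--Haynes)
\[
R_\omega F_\omega((1-s^2)R_\omega)'' - R_\omega((1-s^2)(R_\omega F_\omega)')' + (1-s^2)^{-1/2}\big(((1-s^2)^{-1/2}F_\omega)'(1-s^2)^2R_\omega^2\big)' = -\frac{F_\omega R_\omega^2}{1-s^2}
\]
to transform the Rayleigh equation into an identity whose integrated form is $\int_{-1}^1 (P_\omega R_\omega^2 + 2cR_\omega Q_\omega)\,ds=0$ with $P_\omega, Q_\omega\ge 0$. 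Now the point that $\omega\in(-3,12)$ enters cleanly: since $\tilde\Psi_\omega'$ changes sign, $c\notin[-12+\omega,3+\omega]$ forces $c$ and $R_\omega$ to have the \emph{same} sign, so $2cR_\omega>0$ and $F_\omega\equiv 0$. The endpoints $c=-12+\omega$ and $c=3+\omega$ are then excluded by a separate boundary/singularity analysis (the latter requires showing $\Phi(0)=\Phi'(0)=0$ from $\Delta_k\Phi\in L^2$). This substitution is the missing idea in your proposal.
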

 \begin{proof} We divide the proof into two steps.

 \noindent{\bf Step 1.} Prove that for any neutral mode $(c,k,\omega,\Phi)$, $c$ must lie in $\text{Ran}(-\tilde \Psi_{\omega}')=[-12+\omega,3+\omega]$.

Let $c\notin \text{Ran}(-\tilde \Psi_{\omega}')$ and  define
\begin{align}\label{def-R F}
R_\omega(s)=\tilde \Psi_{\omega}'(s)+c=15s^2-3-\omega+c,\quad F_\omega(s)={\Phi(s)\over R_\omega(s)}={\Phi(s)\over  15s^2-3-\omega+c}
\end{align}
for $s\in[-1,1]$. By  \eqref{Rayleigh-type equation}, we have
\begin{align}\label{Rayleigh-type equation-RF}
-R_\omega\left(((1-s^2)(R_\omega F_\omega)')'-{k^2R_\omega F_\omega\over (1-s^2)}\right)+R_\omega F_\omega((1-s^2)(R_\omega-c))''=0.
\end{align}
Motivated by \cite{Thuburn-Haynes1996}, we have
\begin{align}\label{identity}
&R_\omega F_\omega((1-s^2)R_\omega)''-R_\omega((1-s^2)(R_\omega F_\omega)')'\\
&+(1-s^2)^{-{1\over2}}(((1-s^2)^{-{1\over2}}F_\omega)'(1-s^2)^2R_\omega^2)'
=-{F_\omega R_\omega^2\over 1-s^2}.\nonumber
\end{align}
Inserting \eqref{identity} into \eqref{Rayleigh-type equation-RF}, we have
\begin{align*}
R_\omega F_\omega(-c(1-s^2))''-(1-s^2)^{-{1\over2}}(((1-s^2)^{-{1\over2}}F_\omega)'(1-s^2)^2R_\omega^2)'-{F_\omega R_\omega^2\over 1-s^2}+{k^2R_\omega^2F_\omega\over 1-s^2}=0.
\end{align*}
Thus, in terms of $F_\omega$,  \eqref{Rayleigh-type equation} becomes
 \begin{align}\label{Rayleigh-type equation-F}
 \left\{ \begin{array}{llll}
-(1-s^2)^{-{1\over2}}(((1-s^2)^{-{1\over2}}F_\omega)'(1-s^2)^2R_\omega^2)'+{(k^2-1)R_\omega^2F_\omega\over 1-s^2}+2cR_\omega F_\omega=0,\\
F_\omega(\pm1)=0.
\end{array}\right.
\end{align}
Multiplying \eqref{Rayleigh-type equation-F} by $F_\omega$ and integrating from $-1$ to $1$, we obtain from integration by parts that
\begin{align}\label{inte}
\int_{-1}^1\left(|((1-s^2)^{-{1\over2}}F_\omega)'|^2(1-s^2)^2R_\omega^2+{(k^2-1)R_\omega^2F_\omega^2\over 1-s^2}+2cR_\omega F_\omega^2\right)ds=0,
\end{align}
where the boundary term vanishes, since
\begin{align*}
&(1-s^2)^{-{1\over2}}F_\omega((1-s^2)^{-{1\over2}}F_\omega)'(1-s^2)^2R_\omega^2|_{s=\pm1}\\
=&(1-s^2)^{{3\over2}}R_\omega^2F_\omega\left(-{1\over2}(1-s^2)^{-{3\over2}}(-2s)F_\omega+(1-s^2)^{-{1\over2}}F_\omega'\right)|_{s=\pm1}\\
=&sR_\omega^2F_\omega^2+(1-s^2)R_\omega^2F_\omega F_\omega'|_{s=\pm1}\\
=&s\Phi^2+(1-s^2)\Phi\Phi'-(1-s^2){\Phi^2R_\omega'\over R_\omega}|_{s=\pm1}\\
=&0.
\end{align*}
Let
\begin{align*}
P_\omega=|((1-s^2)^{-{1\over2}}F_\omega)'|^2(1-s^2)^2+{(k^2-1)F_\omega^2\over 1-s^2}\geq0,\quad Q_\omega=F_\omega^2\geq0,
\end{align*}
where $k\neq0$.
Then \eqref{inte} becomes
\begin{align}\label{inte-t}
\int_{-1}^1(P_\omega R_\omega^2+2cR_\omega Q_\omega)ds=0.
\end{align}
Then we divide the discussion into two cases.

Case 1. $c<\min(-\tilde \Psi_{\omega}')$.

Since $\tilde \Psi_{\omega}'$ changes sign, we have $c<0$. Moreover, $R_\omega(s)=c+\tilde \Psi_{\omega}'(s)<0$ for $s\in[-1,1]$. Thus, $2cR_\omega>0$.
By \eqref{inte-t}, we have $F_\omega\equiv0$, which is a contradiction.

Case 2. $c>\max(-\tilde \Psi_{\omega}')$.

Since $\tilde \Psi_{\omega}'$ changes sign, we have $c>0$. Moreover, $R_\omega(s)=c+\tilde \Psi_{\omega}'(s)>0$ for $s\in[-1,1]$. Thus, $2cR_\omega>0$.
This implies that $F_\omega\equiv0$ again.

In sum, we have $c\in \text{Ran}(-\tilde \Psi_{\omega}')$.

 \noindent{\bf Step 2.} Prove that for any  neutral mode $(c,k,\omega,\Phi)$,  $c\neq-12+\omega$ and $c\neq3+\omega$.

 Case 1.  $c\neq-12+\omega$.

 Suppose that there exists a  neutral mode $(c,k,\omega,\Phi)$ with $c=-12+\omega$. We still define $R_\omega(s)$ and $F_\omega(s)$ as in \eqref{def-R F}. Then we have \eqref{Rayleigh-type equation-F}. After  multiplying \eqref{Rayleigh-type equation-F} by $F_\omega$ and integrating from $-1$ to $1$, the difference in analysis is to handle the boundary term when using the integration by parts.
 In this case, the boundary term is
 \begin{align}\nonumber
&(1-s^2)^{-{1\over2}}F_\omega((1-s^2)^{-{1\over2}}F_\omega)'(1-s^2)^2R_\omega^2|_{s=\pm1}\\\nonumber
=&s\Phi^2+(1-s^2)\Phi\Phi'-(1-s^2){\Phi^2R_\omega'\over R_\omega}|_{s=\pm1}\\\label{boundary term pm1}
=&0,
\end{align}
 since $R_\omega(s)=15s^2-15$. Thus, we still have similar contradiction as in Step 1.

 Case 2. $c\neq3+\omega$.

 Suppose that there exists a  neutral mode $(c,k,\omega,\Phi)$ with $c=3+\omega$.
 Note that $R_\omega(s)=15s^2$, which is different from Case 1 since the singularity comes from the point $0$ rather than $\pm1$.
 By \eqref{def-tilde-Upsilon0-derivative}, we have $\tilde \Upsilon_{\omega}'(0)=36+2\omega$. Thus, $\tilde \Upsilon_{\omega}'(0)\neq0$ for $\omega\in(-3,12)$.
 The first two terms in \eqref{Rayleigh-type equation} is in $L^2(-1,1)$, and thus, so does the last term $-{\tilde \Upsilon_{\omega}'\over 15s^2}\Phi$. This means that $\Phi(0)$ and $\Phi'(0)$ have to be $0$.
 After  multiplying \eqref{Rayleigh-type equation-F} by $F$ and now integrating from $0$ to $1$, let us look at  the boundary term at $0$:
 \begin{align*}
&(1-s^2)^{-{1\over2}}F_\omega((1-s^2)^{-{1\over2}}F_\omega)'(1-s^2)^2R_\omega^2|_{s=0}\\
=&s\Phi^2+(1-s^2)\Phi\Phi'-(1-s^2){\Phi^2R_\omega'\over R_\omega}|_{s=0}\\
=&0,
\end{align*}
 since $(1-s^2){\Phi^2R_\omega'\over R_\omega}={30\Phi^2s\over 15s^2}\to0$ as $s\to0+$.  Thus, similar contradiction in Step 1 appears.
 \end{proof}
 We consider the boundary cases $\omega= 12$ and $\omega=-3$.
 \begin{Lemma}\label{case-omega=12}
$(\rm{i})$ Let $\omega= 12$ and $k\neq0$. Then for any neutral mode $(c,k,12,\Phi)$, we have $c\in \text{Ran}(-\tilde \Psi_{\omega}')^\circ$ $=(0,15)$.

$(\rm{ii})$ Let $\omega= -3$ and $k\neq0$. Then for any neutral mode $(c,k,-3,\Phi)$, we have $c\in \text{Ran}(-\tilde \Psi_{\omega}')^\circ\cup\{0\}=(-15,0]$.
 \end{Lemma}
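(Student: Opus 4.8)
The plan is to run the integral--identity argument of Lemma \ref{tilde-psi-change-sign} at the two degenerate rotation rates, being careful at the endpoints. The point is that at $\omega=12$, $\tilde \Psi_{12}'(s)=15s^2-15\le 0$ has its (double) zero at the poles $s=\pm1$, whereas at $\omega=-3$, $\tilde \Psi_{-3}'(s)=15s^2\ge 0$ has its (double) zero at the interior point $s=0$. Given a neutral mode $(c,k,\omega,\Phi)$ with $c\notin\mathrm{Ran}(-\tilde \Psi_{\omega}')$, I would set $R_\omega=\tilde \Psi_{\omega}'+c$, $F_\omega=\Phi/R_\omega$, derive \eqref{Rayleigh-type equation-F}, multiply by $F_\omega$ and integrate to reach the identity \eqref{inte-t}, namely $\int_{-1}^1(P_\omega R_\omega^2+2cR_\omega Q_\omega)\,ds=0$ with $P_\omega\ge0$ and $Q_\omega=F_\omega^2\ge0$; the boundary terms vanish because $\Phi(\pm1)=0$. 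For $\omega=12$: if $c<0$ then $R_\omega\le c<0$ on $[-1,1]$, so $cR_\omega\ge c^2>0$ and the identity forces $F_\omega\equiv0$, a contradiction; if $c>15$ then $R_\omega\ge c-15>0$ on $[-1,1]$, again a contradiction. Hence $c\in[0,15]$, and the same signs give $c\in[-15,0]$ when $\omega=-3$. This step is routine.

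The substance of the proof is the endpoint analysis, where $R_\omega$ vanishes and $F_\omega=\Phi/R_\omega$ is singular there; I would separate the two geometries. At the endpoint where $R_\omega=15s^2$ vanishes only at the interior point $s=0$ --- that is $c=15$ for $\omega=12$ and $c=0$ for $\omega=-3$ --- one uses that $\tilde \Upsilon_{\omega}'(0)=36+2\omega\ne0$ at both rates, so $\Delta_k\Phi\in L^2$ applied to the last term $-\tilde \Upsilon_{\omega}'\Phi/(15s^2)$ of \eqref{Rayleigh-type equation} forces $\Phi(0)=\Phi'(0)=0$; then the boundary contribution at $s=0$ vanishes exactly as in Step 2, Case 2 of Lemma \ref{tilde-psi-change-sign}, and the identity can be applied separately on $[0,1]$ and $[-1,0]$. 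For $c=15$, $\omega=12$ one has $c>0$, $R_\omega\ge0$, hence $F_\omega\equiv0$ on each half, $\Phi\equiv0$, which excludes $c=15$. For $c=0$, $\omega=-3$ the term $2cR_\omega$ is identically zero, so the identity yields only $P_\omega\equiv0$ and no contradiction: $c=0$ genuinely stays in the interval, which is why part (ii) retains its right endpoint.

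At the remaining endpoint, where $R_\omega=15(s^2-1)$ vanishes only at the poles --- that is $c=0$ for $\omega=12$ and $c=-15$ for $\omega=-3$ --- the boundary term at $s=\pm1$ vanishes as in Step 2, Case 1 of Lemma \ref{tilde-psi-change-sign} since $\Phi(\pm1)=0$, so the identity holds on all of $(-1,1)$. For $c=-15$, $\omega=-3$ one has $c\ne0$ and $cR_\omega=225(1-s^2)\ge0$, so $F_\omega\equiv0$, $\Phi\equiv0$, excluding $c=-15$ and completing part (ii). For $c=0$, $\omega=12$, however, $2cR_\omega$ vanishes and one only gets $P_\omega\equiv0$ on $(-1,1)$; for $|k|\ge2$ the angular term $(k^2-1)F_\omega^2/(1-s^2)$ of $P_\omega$ still forces $F_\omega\equiv0$, while for $|k|=1$ one merely obtains $((1-s^2)^{-1/2}F_\omega)'\equiv0$, i.e.\ $F_\omega=C(1-s^2)^{1/2}$ and $\Phi\propto(1-s^2)^{3/2}$. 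To dispose of this last possibility I would compute $\Delta_1\big((1-s^2)^{3/2}\big)=4(3s^2-1)(1-s^2)^{1/2}$ and note that this has a nonzero projection onto $P_1^1$, so its vorticity does not lie in the admissible space $X^1=\oplus_{l\ge2}\mathrm{span}\{P_l^1\}$ and does not give a genuine neutral mode; hence $c=0$ is excluded for $\omega=12$, which is part (i).

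The main obstacle is precisely this endpoint bookkeeping: once $R_\omega$ has zeros one must extract from the $L^2$--regularity of $\Delta_k\Phi$ just enough vanishing of $\Phi$ to kill the boundary terms, and then track whether the sign--definite term $2cR_\omega$ survives. It does whenever $c\ne0$, closing the argument immediately; it degenerates exactly at $c=0$, which forces one to fall back on the angular part of $P_\omega$ for $|k|\ge2$ and, for $|k|=1$, on an explicit identification of the surviving candidate. It is this degeneration of $2cR_\omega$ at $c=0$, together with the location of the zero of $R_\omega$ (poles versus interior point), that accounts for the asymmetry between the two stated intervals $(0,15)$ and $(-15,0]$.
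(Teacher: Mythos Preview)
Your proof is correct and follows essentially the same route as the paper: the integral identity \eqref{inte-t} handles the interior and the nonzero endpoints just as in Lemma \ref{tilde-psi-change-sign}, and at the degenerate endpoint $c=0$, $\omega=12$, $|k|=1$ the paper likewise identifies the surviving candidate $\Phi\propto(1-s^2)^{3/2}=P_3^3/(-15)$ and rules it out by the same orthogonality constraint (phrased there as $\iint\Upsilon Y_1^{-1}\,d\varphi\,ds\ne0$, equivalent to your nonzero projection of $\Delta_1\Phi$ onto $P_1^1$).
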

 \begin{Remark}
For $k=1$ and $\omega= -3$, the point $c=0$ is quite different, since there exists a neutral mode with $c=0$ (see Lemma \ref{ometa=3} for details).
 \end{Remark}
 \begin{proof}
  (i) Note that $\text{Ran}(-\tilde \Psi_{\omega}')=[0,15]$. By a similar argument to Step 1  and Case 2 of Step 2 in the proof of Lemma \ref{tilde-psi-change-sign}, we have
 $c\in[0,15)$  for any neutral mode $(c,k,12,\Phi)$. But the proof of $c\neq0$ is slightly different from Case 1 of Step 2 in the proof of Lemma  \ref{tilde-psi-change-sign}. Indeed, similar to \eqref{inte-t} we have
$
\int_{-1}^1P_\omega R_\omega^2ds=0$ for $c=0$. If $k\neq\pm1$, by the definition of $F_\omega$ we have $F_\omega\equiv0$, which is a contradiction. If $k=\pm1$, we have $ P_\omega(s)=|((1-s^2)^{-{1\over2}}F_\omega)'|^2(1-s^2)^2\equiv0$, which implies  $F_\omega(s)={\Phi(s)\over 15s^2-15}=C_0(1-s^2)^{1\over2}$ for some $C_0\in\mathbb{R}$.
Thus, $\Phi(s)=-15C_0(1-s^2)^{3\over2}=C_0P_3^3(s)$.  Then $\Delta_3\Phi=-12\Phi$ and
\begin{align*}
\Upsilon(\varphi,s)=e^{i\varphi}\Delta_1\Phi(s)=e^{i\varphi}\left({8\over1-s^2}\Phi(s)-12\Phi(s)\right)=-15C_0e^{i\varphi}(8(1-s^2)^{1\over2}-12(1-s^2)^{3\over2}).
\end{align*}
 $\Upsilon\in X$ implies that
\begin{align}\label{vorticity Y1-1 constraint}
\iint_{D_T}\Upsilon Y_1^{-1}d\varphi ds=-15\pi C_0\sqrt{3\over 2\pi}\int_{-1}^1(8(1-s^2)-12(1-s^2)^{2})ds=0.
\end{align}
Since $\int_{-1}^1(8(1-s^2)-12(1-s^2)^{2}) ds\neq0$, we have $C_0=0$ and $\Phi\equiv0$, which finishes the proof of $c\neq0$.

(ii) Note that $\text{Ran}(-\tilde \Psi_{\omega}')=[-15,0]$. The proof of $c\in(-15,0]$ for a neutral mode $(c,k,-3,\Phi)$ is similar as above.
\end{proof}
 Next, we consider  the case that $\tilde \Psi_{\omega}'$ does not change sign (i.e. $\omega\in(-\infty,-3)\cup(12,\infty)$).

\begin{Lemma}\label{tilde Psi0 does not change sign}
 $(\rm{1})$ If $\tilde \Psi_{\omega}'$ is positive $($i.e. $\omega\in(-\infty,-3)$$)$, then for any neutral mode $(c,k,\omega,\Phi)$ with $k\neq0$, $c$ must lie in $\text{Ran}(-\tilde \Psi_{\omega}')^\circ\cup[3+\omega,0]=(-12+\omega,0]$.

\medskip

\begin{center}
 \begin{tikzpicture}[scale=0.58]
 \draw [->](-12, 0)--(10, 0)node[right]{$c$};
       \node (a) at (5,-0.5) {\tiny$0$};
       \node (a) at (-2,-0.5) {\tiny$3+\omega$};
        \node (a) at (-10,-0.5) {\tiny$-12+\omega$};
       \draw  (-2, -0.1).. controls (-2, 0.1) and (-2, 0.1)..(-2, 0.1);
        \draw  (5, -0.1).. controls (5, 0.1) and (5, 0.1)..(5, 0.1);
       \draw  (-10, -0.1).. controls (-10, 0.1) and (-10, 0.1)..(-10, 0.1);
         \draw(-10, -0.1).. controls (-10, 0.1) and (-10, 0.1)..(-10, 0.1);

        \draw  [red][thick] (-2, 0).. controls (-2, 0) and (5, 0)..(5, 0);
       \node (a) at (-6,-0.5) {\tiny$\text{Ran}(-\tilde \Psi_{\omega}')$};
 \end{tikzpicture}
\end{center}
\vspace{-0.7cm}
\begin{figure}[ht]
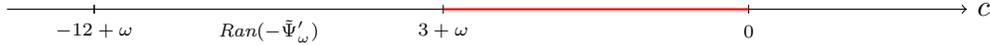

    \centering
    \caption{Neutral speeds for $\omega\in(-\infty,-3)$.
}
\label{Neutral speeds for omega-infty-3)}
\end{figure}
\vspace{-0.2cm}

 $(\rm{2})$ If $\tilde \Psi_{\omega}'$ is negative $($i.e. $\omega\in(12,\infty)$$)$, then for any neutral mode $(c,k,\omega,\Phi)$ with $k\neq0$, $c$ must lie in $[0,-12+\omega]\cup\text{Ran}(-\tilde \Psi_{\omega}')^\circ=[0,3+\omega)$.

\medskip

\begin{center}
 \begin{tikzpicture}[scale=0.58]
 \draw [->](-12, 0)--(10, 0)node[right]{$c$};
       \node (a) at (5,-0.5) {\tiny$3+\omega$};
       \node (a) at (-3,-0.5) {\tiny$-12+\omega$};
        \node (a) at (-10,-0.5) {\tiny$0$};
       \draw  (-3, -0.1).. controls (-3, 0.1) and (-3, 0.1)..(-3, 0.1);
        \draw  (5, -0.1).. controls (5, 0.1) and (5, 0.1)..(5, 0.1);
       \draw  (-10, -0.1).. controls (-10, 0.1) and (-10, 0.1)..(-10, 0.1);
         \draw(-10, -0.1).. controls (-10, 0.1) and (-10, 0.1)..(-10, 0.1);

        \draw  [red][thick] (-10, 0).. controls (-10, 0) and (-3, 0)..(-3, 0);
       \node (a) at (1,-0.5) {\tiny$\text{Ran}(-\tilde \Psi_{\omega}')$};
 \end{tikzpicture}
\end{center}

\vspace{-0.7cm}
\begin{figure}[ht]
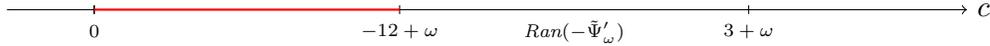

    \centering
    \caption{Neutral speeds for $\omega\in(12,\infty)$.
}
\label{Neutral speeds for omega-in12-infty}
\end{figure}
\vspace{-0.4cm}
\end{Lemma}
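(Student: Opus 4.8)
The plan is to adapt the argument of Lemma~\ref{tilde-psi-change-sign}. Fix a neutral mode $(c,k,\omega,\Phi)$ with $k\neq0$. Since the interval asserted in each part already contains $\text{Ran}(-\tilde\Psi_\omega')^\circ$, it suffices to rule out every $c$ lying \emph{outside} that interval, and for any such $c$ one has $c\notin\text{Ran}(-\tilde\Psi_\omega')$, so the substitution $R_\omega(s)=\tilde\Psi_\omega'(s)+c$, $F_\omega=\Phi/R_\omega$ from \eqref{def-R F} is available. Carrying out exactly the manipulations of Lemma~\ref{tilde-psi-change-sign} — rewrite \eqref{Rayleigh-type equation} via the identity \eqref{identity}, multiply by $F_\omega$, integrate over $(-1,1)$, and use $\Phi(\pm1)=0$ to annihilate the boundary term — produces the identity \eqref{inte-t},
\[
\int_{-1}^1 (P_\omega R_\omega^2+2cR_\omega Q_\omega)\,ds=0,\qquad P_\omega\ge0,\quad Q_\omega=F_\omega^2\ge0.
\]
Hence whenever $2cR_\omega>0$ on $(-1,1)$ we are forced to have $F_\omega\equiv0$, i.e.\ $\Phi\equiv0$, a contradiction. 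Everything thus reduces to inspecting the sign of $2cR_\omega$, which is now straightforward because $\tilde\Psi_\omega'$ does not change sign.

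\emph{Part (1).} For $\omega<-3$ one has $\tilde\Psi_\omega'(s)=15s^2-3-\omega>0$ on $[-1,1]$ and $\text{Ran}(-\tilde\Psi_\omega')=[-12+\omega,3+\omega]\subset(-\infty,0)$, so the complement of $(-12+\omega,0]$ is $\{c>0\}\cup\{c\le-12+\omega\}$. If $c>0$, then $R_\omega=\tilde\Psi_\omega'+c>0$ and $c>0$, so $2cR_\omega>0$. If $c<-12+\omega$, then $R_\omega(s)<\tilde\Psi_\omega'(s)+\min(-\tilde\Psi_\omega')=15s^2-15\le0$, so $R_\omega<0$ on $[-1,1]$, while $c<-12+\omega<0$; thus again $2cR_\omega>0$ on $(-1,1)$. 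The one remaining case — and the only one requiring care — is the endpoint $c=-12+\omega$, where $R_\omega(s)=15(s^2-1)$ vanishes at $s=\pm1$; this is precisely Case~1 of Step~2 in the proof of Lemma~\ref{tilde-psi-change-sign}, and since $\Phi(\pm1)=0$ the boundary term \eqref{boundary term pm1} again vanishes, so the contradiction persists. (In contrast to Lemma~\ref{tilde-psi-change-sign}, the endpoint $c=3+\omega$ need \emph{not} be excluded here, as it already belongs to $(-12+\omega,0]$.) This gives $c\in(-12+\omega,0]=\text{Ran}(-\tilde\Psi_\omega')^\circ\cup[3+\omega,0]$.

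\emph{Part (2).} This is the mirror image. For $\omega>12$ one has $\tilde\Psi_\omega'<0$ on $[-1,1]$ and $\text{Ran}(-\tilde\Psi_\omega')=[-12+\omega,3+\omega]\subset(0,\infty)$, so we must dispose of $\{c<0\}\cup\{c>3+\omega\}$. If $c<0$, then $R_\omega=\tilde\Psi_\omega'+c<0$ and $c<0$, so $2cR_\omega>0$. If $c>3+\omega$, then $R_\omega(s)>\tilde\Psi_\omega'(s)+\max(-\tilde\Psi_\omega')=15s^2\ge0$, so $R_\omega>0$ on $[-1,1]$, while $c>3+\omega>0$, so $2cR_\omega>0$. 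The delicate endpoint is $c=3+\omega$, where $R_\omega(s)=15s^2$ has an \emph{interior} zero at $s=0$: here I would follow Case~2 of Step~2 of Lemma~\ref{tilde-psi-change-sign}, using that $\tilde\Upsilon_\omega'(0)=36+2\omega\neq0$ for $\omega>12$ so that $\Delta_k\Phi\in L^2(-1,1)$ forces $\Phi(0)=\Phi'(0)=0$; then the boundary term at $0$ vanishes and integrating over $(0,1)$ delivers the contradiction. This yields $c\in[0,3+\omega)=[0,-12+\omega]\cup\text{Ran}(-\tilde\Psi_\omega')^\circ$.

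The only genuinely delicate point, exactly as in Lemma~\ref{tilde-psi-change-sign}, is the endpoint of each part at which the zero of $R_\omega$ collides with a singular point of the Rayleigh equation — the poles $s=\pm1$ in Part~(1) and the interior point $s=0$ in Part~(2). Handling these rests on the vanishing of $\Phi$ at the offending point ($\Phi(\pm1)=0$, respectively $\Phi(0)=\Phi'(0)=0$, the latter being where the condition $\tilde\Upsilon_\omega'(0)\neq0$ is used), which makes the boundary term in the integration by parts drop out; since both situations are already settled in Lemma~\ref{tilde-psi-change-sign}, the remainder of the proof is just the sign bookkeeping recorded above.
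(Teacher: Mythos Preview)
Your proof is correct and follows essentially the same approach as the paper: invoke the integral identity \eqref{inte-t} from the proof of Lemma~\ref{tilde-psi-change-sign} and check that $2cR_\omega>0$ on $(-1,1)$ for every $c$ outside the asserted interval, treating the endpoint where $R_\omega$ degenerates via the corresponding Step~2 case of that lemma. Your write-up is in fact more explicit than the paper's, which disposes of Part~(2) with a one-line ``similar to (1).''
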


\begin{Remark}\label{rem tilde Psi0 does not change sign} Lemma \ref{tilde Psi0 does not change sign} can be rewritten in the following way.

 $(\rm{1})$ If  $\omega\in(-\infty,-3)$ and $(c,k,\omega,\Phi)$ is a neutral mode with $c\notin\text{Ran}(-\tilde \Psi_{\omega}')^\circ$, $k\neq0$, then $c\in[3+\omega,0]$.

 $(\rm{2})$ If  $\omega\in(12,\infty)$ and  $(c,k,\omega,\Phi)$ is a neutral mode with $c\notin\text{Ran}(-\tilde \Psi_{\omega}')^\circ$, $k\neq0$, then $c\in[0,-12+\omega]$.
\end{Remark}

\begin{proof}
(1) Let $c\leq\min(-\tilde \Psi_{\omega}')=-12+\omega$ or $c>0$. Suppose that $(c,k,\omega,\Phi)$ is a neutral mode.  Similar to Step 1 (for $c<-12+\omega$ or $c>0$) and Case 1 of Step 2 (for $c=-12+\omega$) in the proof of Lemma \ref{tilde-psi-change-sign}, we have \eqref{inte-t}.

If $c\leq-12+\omega$, then $c<0$. Moreover, $R_\omega(s)=\tilde \Psi_{\omega}'(s)+c<0$ for  $s\in[-1,1]$ if $c<-12+\omega$ and $R_\omega(s)=15s^2-15<0$ for  $s\in(-1,1)$ if $c=-12+\omega$. Thus, $2cR_\omega>0$ for $s\in(-1,1)$ and $F_\omega\equiv0$.

If $c>0$, then $R_\omega(s)=\tilde \Psi_{\omega}'(s)+c>0$ for  $s\in[-1,1]$. Again, we have $2cR_\omega>0$ and $F_\omega\equiv0$.

(2) The proof is similar to (1).
\end{proof}

\subsection{The resonant neutral modes}\label{The neutral modes with c in Ran tilde Psi 0 prime}

Let $\omega\in\mathbb{R}$ and  recall that  $\text{Ran} (-\tilde \Psi_{\omega}')^\circ=(-12+\omega,3+\omega)$.
In this subsection,  we study for which $c\in (-12+\omega,3+\omega)$,  $(c,k,\omega,\Phi)$ is a neutral mode.

Note that $-\tilde \Psi_{\omega}'(\pm1)=-12+\omega$ and $-\tilde \Psi_{\omega}'(0)=3+\omega$ are the endpoints of $\text{Ran} (-\tilde \Psi_{\omega}')$, and $\tilde \Psi_{\omega}''(s)=30s\neq0$ for $s\in(-1,0)\cup(0,1)$. Then for $c\in \text{Ran} (-\tilde \Psi_{\omega}')^\circ$, there exist two points $s_1<s_2$ in $(-1,0)\cup(0,1)$ solving $c+\tilde \Psi_{\omega}'(s_i)=0$ and $\tilde \Psi_{\omega}''(s_i)\neq0$ for $i=1,2$.  Let $s_0=-1$ and $s_{3}=1$. Then we have the following result.

\begin{Lemma}\label{inner c}
Let $\omega\in\mathbb{R}$, $k\neq0$, $c\in \text{Ran} (-\tilde \Psi_{\omega}')^\circ=(-12+\omega,3+\omega)$, $\{s_j\}_{j=0}^{3}$ be defined as above
 and $\Phi$ solves \eqref{Rayleigh-type equation}. Assume that one of the following conditions

$(\rm{i})$
  $\Phi$ is odd if $k=1$;

  $(\rm{ii})$ $\omega\in(-\infty,-3)\cup(12,\infty)$,\\
 holds.  Then  $\Phi$ can not vanish at $s_{i_0-1}$, $s_{i_0}$ and $s_{i_0+1}$ simultaneously unless it vanishes identically on at least one of $(s_{i_0-1},s_{i_0})$ and $(s_{i_0},s_{i_0+1})$, where $i_0=1,2$.
\end{Lemma}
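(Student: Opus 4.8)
The plan is to argue by contradiction. Suppose that $\Phi$ vanishes at $s_{i_0-1}$, $s_{i_0}$ and $s_{i_0+1}$ but is not identically zero on either of $(s_{i_0-1},s_{i_0})$ and $(s_{i_0},s_{i_0+1})$. Write $R_\omega(s)=\tilde\Psi_\omega'(s)+c=15s^2-3-\omega+c$, an upward parabola whose only two zeros are $s_1$ and $s_2=-s_1$, so that $R_\omega<0$ on $(s_1,s_2)$ while $R_\omega>0$ on $(-1,s_1)\cup(s_2,1)$. Consequently, for either value $i_0\in\{1,2\}$ one of the two intervals in question is exactly $(s_1,s_2)$ (where $R_\omega<0$) and the other — namely $(-1,s_1)$ if $i_0=1$ and $(s_2,1)$ if $i_0=2$ — carries $R_\omega>0$; on each of them $\Phi$ vanishes at both endpoints. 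On such an interval $I$ I would introduce $F_\omega=\Phi/R_\omega$ (legitimate since $R_\omega$ has no zero in the interior of $I$) and use the pointwise rearrangement \eqref{Rayleigh-type equation-F} of the Rayleigh equation, multiplying by $F_\omega$ and integrating over $I$ to get the localized counterpart of \eqref{inte}:
\[
\int_I\Big(\big|((1-s^2)^{-1/2}F_\omega)'\big|^2(1-s^2)^2R_\omega^2+\tfrac{(k^2-1)R_\omega^2F_\omega^2}{1-s^2}+2cR_\omega F_\omega^2\Big)\,ds=0 .
\]

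The one genuinely new point, compared with Lemma \ref{tilde-psi-change-sign}, is that the integration by parts now produces a boundary term $s\Phi^2+(1-s^2)\Phi\Phi'-(1-s^2)\Phi^2R_\omega'/R_\omega$ at the interior singular points $s_1,s_2$, and I must check it vanishes. Since $\Delta_k\Phi\in L^2$ forces $\Phi\in H^1(-1,1)$, and since at $s_j$ the Rayleigh equation has a regular singular point with indicial exponents $0$ and $1$, the hypothesis $\Phi(s_j)=0$ rules out the exponent-$0$ branch and gives $\Phi\in C^1$ near $s_j$ with $\Phi(s)=O(s-s_j)$ (when $c=c_\omega=\tfrac56\omega$ one has $\tilde\Upsilon_\omega'(s_j)=0$, the point $s_j$ is in fact regular, and the bound is immediate). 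Combined with $R_\omega'(s_j)=\tilde\Psi_\omega''(s_j)=30s_j\neq0$, this makes each of the three summands of the boundary term tend to $0$ at $s_j$, exactly as the boundary contribution at an interior singular point was treated in Case~2 of Step~2 of the proof of Lemma \ref{tilde-psi-change-sign}; at an endpoint equal to $\pm1$ the boundary term vanishes because $\Phi(\pm1)=0$, as in that same proof. This justifies the displayed identity on each of the two intervals.

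The contradiction then comes from a sign count on a carefully chosen $I_*$. Pick $I_*$ among the two intervals so that $cR_\omega\ge0$ holds throughout it: if $c>0$ take the interval on which $R_\omega>0$, if $c<0$ take $(s_1,s_2)$, and if $c=0$ take $I_*=(s_1,s_2)$ (in every case $\Phi$ vanishes at both endpoints of $I_*$). Then all three integrands above are nonnegative on $I_*$ and therefore vanish identically there. If $|k|\ge2$, the middle term alone yields $F_\omega\equiv0$, hence $\Phi\equiv0$, on $I_*$. If $|k|=1$ and $c\neq0$ — automatic under hypothesis (ii), where $\text{Ran}(-\tilde\Psi_\omega')$ lies entirely in $(-\infty,0)$ or in $(0,\infty)$ — then $cR_\omega>0$ on the interior of $I_*$ and the last term again gives $F_\omega\equiv0$ on $I_*$. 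The only remaining case is $|k|=1$ with $c=0$, which can occur only under hypothesis (i); that hypothesis supplies that $\Phi$ is odd, and on $I_*=(s_1,s_2)$ the first term forces $((1-s^2)^{-1/2}F_\omega)'\equiv0$, so $F_\omega=C(1-s^2)^{1/2}$ and $\Phi=C\,\tilde\Psi_\omega'(1-s^2)^{1/2}$ on $(s_1,s_2)$ — an even function on an interval symmetric about the origin — whence oddness of $\Phi$ forces $C=0$ and $\Phi\equiv0$ on $I_*$. In all cases $\Phi$ vanishes identically on one of the two intervals, contradicting the standing assumption, and the lemma follows.

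The main obstacle is precisely the boundary analysis at the interior critical points $s_1,s_2$: one has to exclude a non-decaying (constant or logarithmic) local component of $\Phi$ there, and it is exactly here that the condition $\Delta_k\Phi\in L^2$ — through the indicial analysis together with $\Phi(s_j)=0$ — is indispensable. Everything else is bookkeeping of the sign of $R_\omega$ on the subintervals $(-1,s_1)$, $(s_1,s_2)$, $(s_2,1)$ and reuse of the integration-by-parts identity already established for Lemma \ref{tilde-psi-change-sign}.
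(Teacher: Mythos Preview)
Your argument is correct and proceeds along a route that is closely related to, but distinct from, the paper's. Both proofs pass to the transformed equation \eqref{Rayleigh-type equation-F} and pick a subinterval on which $cR_\omega\ge0$; the difference is in how the contradiction is extracted. The paper integrates the first-order identity \eqref{F-c} \emph{once} between $s_{i_0}$ and the nearest zero $\tilde s$ of $\Phi$, then compares the sign of the boundary expression \eqref{di-c} against the sign of the integral, exploiting single-signedness of $\Phi$ on $(\tilde s,s_{i_0})$; this produces six cases according to the signs of $c$ and $\tilde\Psi_\omega''(s_{i_0})$. You instead multiply \eqref{Rayleigh-type equation-F} by $F_\omega$ and integrate over the full subinterval $I_*$, obtaining the localized quadratic identity in which every term is nonnegative by the choice of $I_*$; this dispenses with single-signedness of $\Phi$ and collapses the casework to three situations ($|k|\ge2$; $|k|=1$ with $c\neq0$; $|k|=1$ with $c=0$, where your handling coincides with the paper's Case~6). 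The cost of your approach is the boundary analysis at the interior zeros $s_1,s_2$ of $R_\omega$, and here one small remark: the indicial-exponent argument is not strictly needed, since $\Delta_k\Phi\in L^2$ already gives $\Phi\in C^1$ on compact subsets of $(-1,1)$, so $\Phi(s_j)=0$ alone yields $\Phi(s)=O(s-s_j)$, which together with $R_\omega(s)=30s_j(s-s_j)+O((s-s_j)^2)$ makes all three summands of the boundary term vanish.
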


\begin{proof}
Suppose that $\Phi(s_i)=0$ for $i=i_0-1,i_0,i_0+1$. Assume that $\Phi\not\equiv0$ on both intervals $(s_{i_0-1},s_{i_0})$ and $(s_{i_0},s_{i_0+1})$.
Here, if (i) holds, then we argue  by the following $6$ cases. If (ii) holds, then $3+\omega<0$ or $-12+\omega>0$, which implies $c\neq0$. We divide the proof  in Cases 1-4.

Case 1. $c>0$ and $\tilde \Psi_{\omega}''(s_{i_0})<0$.

Consider the interval $(s_{i_0-1},s_{i_0})$. Since $\tilde \Psi_{\omega}''(s_{i_0})<0$, we have $R_\omega(s)=\tilde \Psi_{\omega}'(s)+c>0$ on $(s_{i_0-1},s_{i_0})$. Let $\tilde s\in [s_{i_0-1},s_{i_0})$ be the nearest zero of $\Phi$ to $s_{i_0}$. Without loss of generality, assume that $\Phi>0$ on $(\tilde s,s_{i_0})$, $\Phi'(\tilde s)\geq0$ and $\Phi'(s_{i_0})\leq0$.
Similar to \eqref{Rayleigh-type equation-F}, we have
 \begin{align}\label{F-c}
\left(\left((1-s^2)^{-{1\over2}}{\Phi\over R_\omega}\right)'(1-s^2)^2R_\omega^2\right)'-{(k^2-1)R_\omega\Phi\over (1-s^2)^{1\over 2}}-2c(1-s^2)^{{1\over2}}\Phi=0,
\end{align}
where  $s\in(s_{i_0-1},s_{i_0})$.
Integrating \eqref{F-c} from $\tilde s$ to $s_{i_0}$, we have
 \begin{align}\label{int-F-c}
 \left(\left((1-s^2)^{-{1\over2}}{\Phi\over R_\omega}\right)'(1-s^2)^2R_\omega^2\right)\bigg|_{s=\tilde s}^{s_{i_0}}=\int_{\tilde s}^{s_{i_0}}\left({(k^2-1)R_\omega\Phi\over (1-s^2)^{1\over 2}}+2c(1-s^2)^{{1\over2}}\Phi\right)ds.
 \end{align}
 Direct computation implies
 \begin{align}\label{di-c}
 \left((1-s^2)^{-{1\over2}}{\Phi\over R_\omega}\right)'(1-s^2)^2R_\omega^2=s(1-s^2)^{1\over2}\Phi R_\omega +(1-s^2)^{3\over2}(\Phi'R_\omega-\Phi R_\omega').
 \end{align}
 Since $\Phi(s_{i_0})=R_\omega(s_{i_0})=0$, by \eqref{di-c} we have
 \begin{align}\label{s-i-0}
  \left((1-s^2)^{-{1\over2}}{\Phi\over R_\omega}\right)'(1-s^2)^2R_\omega^2\bigg|_{s=s_{i_0}}=0.
 \end{align}

 If $\tilde s=s_{i_0-1}$, then
 $\Phi(s_{i_0-1})=0$, and moreover, $R_\omega(s_{i_0-1})=0$ if $i_0-1\neq0$ and $1-s_{i_0-1}^2=0$ if $i_0-1=0$. Thus, by \eqref{di-c} we have
  \begin{align*}
  \left((1-s^2)^{-{1\over2}}{\Phi\over R_\omega}\right)'(1-s^2)^2R_\omega^2\bigg|_{s=s_{i_0-1}}=0.
 \end{align*}
 This implies that the LHS of \eqref{int-F-c} is zero. Noting that $k\neq0$, $c>0$, $R_\omega>0$ and $\Phi\geq0$ on $(s_{i_0-1},s_{i_0})$, it then follows from the RHS of \eqref{int-F-c} that $\Phi\equiv0$ on $(s_{i_0-1},s_{i_0})$, which is a contradiction.

If $\tilde s>s_{i_0-1}$, then by the fact that $\Phi(\tilde s)=0$, we infer from \eqref{s-i-0} and \eqref{di-c} that
\begin{align*}
 \text{LHS of \eqref{int-F-c}}=&-\left((1-s^2)^{-{1\over2}}{\Phi\over R_\omega}\right)'(1-s^2)^2R_\omega^2\bigg|_{s=\tilde s}\\
 =&-s(1-s^2)^{1\over2}\Phi R_\omega -(1-s^2)^{3\over2}(\Phi'R_\omega-\Phi R_\omega')\bigg|_{s=\tilde s}\\
= &-(1-s^2)^{3\over2}\Phi'R_\omega\bigg|_{s=\tilde s}\leq0.
\end{align*}
However, since $k\neq0$, $c>0$, $R_\omega>0$ and $\Phi\geq0$ on $(\tilde s,s_{i_0})$, we have
$
 \text{RHS of \eqref{int-F-c}}\geq0$.
Then $\Phi\equiv0$ on $(\tilde s,s_{i_0})$, and thus, on  $(s_{i_0-1},s_{i_0})$, which is a contradiction.

Case 2. $c>0$ and $\tilde \Psi_{\omega}''(s_{i_0})>0$.

In this case, we consider the interval $(s_{i_0},s_{i_0+1})$.  Since $\tilde \Psi_{\omega}''(s_{i_0})>0$, $R_\omega(s)=\tilde \Psi_{\omega}'(s)+c>0$ on $(s_{i_0},s_{i_0+1})$. Let $\tilde s\in (s_{i_0},s_{i_0+1}]$ be the nearest zero of $\Phi$ to $s_{i_0}$. Assume that $\Phi>0$ on $(s_{i_0},\tilde s)$, $\Phi'(s_{i_0})\geq0$ and $\Phi'(\tilde s)\leq0$.
Integrating \eqref{F-c} from $s_{i_0}$ to $\tilde s$, we have
 \begin{align}\label{int-F-c-2}
 \left(\left((1-s^2)^{-{1\over2}}{\Phi\over R_\omega}\right)'(1-s^2)^2R_\omega^2\right)\bigg|_{s=s_{i_0}}^{\tilde s}=\int_{s_{i_0}}^{\tilde s}\left({(k^2-1)R_\omega\Phi\over (1-s^2)^{1\over 2}}+2c(1-s^2)^{{1\over2}}\Phi\right)ds.
 \end{align}
 At the point $s_{i_0}$, similar to \eqref{s-i-0}, we have
 \begin{align}\label{s-i-0-2}
  \left((1-s^2)^{-{1\over2}}{\Phi\over R_\omega}\right)'(1-s^2)^2R_\omega^2\bigg|_{s=s_{i_0}}=0.
 \end{align}

 If $\tilde s=s_{i_0+1}$, then
 $\Phi(s_{i_0+1})=0$, and moreover, $R_\omega(s_{i_0+1})=0$ if $i_0\neq 2$ and $1-s_{i_0+1}^2=0$ if $i_0=2$. Thus, by \eqref{di-c} we have
  \begin{align*}
  \left((1-s^2)^{-{1\over2}}{\Phi\over R_\omega}\right)'(1-s^2)^2R_\omega^2\bigg|_{s=s_{i_0+1}}=0.
 \end{align*}
 This implies that the LHS of \eqref{int-F-c-2} is zero. Noting that $k\neq0$, $c>0$, $R_\omega>0$ and $\Phi\geq0$ on $(s_{i_0},s_{i_0+1})$, we have by \eqref{int-F-c-2} that $\Phi\equiv0$ on $(s_{i_0},s_{i_0+1})$.

If $\tilde s<s_{i_0+1}$, then by the fact that $\Phi(\tilde s)=0$, we infer from \eqref{s-i-0-2} and \eqref{di-c} that
\begin{align*}
 \text{LHS of \eqref{int-F-c-2}}=&\left((1-s^2)^{-{1\over2}}{\Phi\over R_\omega}\right)'(1-s^2)^2R_\omega^2\bigg|_{s=\tilde s}
= (1-s^2)^{3\over2}\Phi'R_\omega\bigg|_{s=\tilde s}\leq0.
\end{align*}
However, the
$
 \text{RHS of \eqref{int-F-c-2}}\geq0$.
Then $\Phi\equiv0$ on   $(s_{i_0},s_{i_0+1})$.

Case 3. $c<0$ and $\tilde \Psi_{\omega}''(s_{i_0})<0$.

In this case, we consider the interval $(s_{i_0},s_{i_0+1})$.  Then $R_\omega(s)=\tilde \Psi_{\omega}'(s)+c<0$ on $(s_{i_0},s_{i_0+1})$. Let $\tilde s\in (s_{i_0},s_{i_0+1}]$ be the nearest zero of $\Phi$ to $s_{i_0}$. Assume that $\Phi>0$ on $(s_{i_0},\tilde s)$, $\Phi'(s_{i_0})\geq0$ and $\Phi'(\tilde s)\leq0$.
Integrating \eqref{F-c} from $s_{i_0}$ to $\tilde s$, we again have \eqref{int-F-c-2}.
If $\tilde s=s_{i_0+1}$, similar to Case 2, the LHS of \eqref{int-F-c-2} is zero. Noting that $k\neq0$, $c<0$, $R_\omega<0$ and $\Phi\geq0$ on $(s_{i_0},s_{i_0+1})$, we have by \eqref{int-F-c-2} that $\Phi\equiv0$ on $(s_{i_0},s_{i_0+1})$.
If $\tilde s<s_{i_0+1}$, then the
$
 \text{LHS of \eqref{int-F-c-2}}= (1-s^2)^{3\over2}\Phi'R_\omega|_{s=\tilde s}\geq0.
$
However, the
$
 \text{RHS of \eqref{int-F-c-2}}\leq0$ since $c<0$, $R_\omega<0$ and $\Phi\geq0$ on $(s_{i_0},\tilde s)$.
Then $\Phi\equiv0$ on   $(s_{i_0},s_{i_0+1})$.

Case 4. $c<0$ and $\tilde \Psi_{\omega}''(s_{i_0})>0$.

In this case, we consider the interval $(s_{i_0-1},s_{i_0})$.  Then $R_\omega(s)=\tilde \Psi_{\omega}'(s)+c<0$ on $(s_{i_0-1},s_{i_0})$. Let $\tilde s\in [s_{i_0-1},s_{i_0})$ be the nearest zero of $\Phi$ to $s_{i_0}$. Assume that $\Phi>0$ on $(\tilde s, s_{i_0})$, $\Phi'(\tilde s)\geq0$ and $\Phi'( s_{i_0})\leq0$.
Integrating \eqref{F-c} from  $\tilde s$ to $s_{i_0}$, we again have \eqref{int-F-c}.
If $\tilde s=s_{i_0-1}$, similar to Case 1, the LHS of \eqref{int-F-c} is zero. Noting that $k\neq0$, $c<0$, $R_\omega<0$ and $\Phi\geq0$ on $(s_{i_0-1},s_{i_0})$, we have by \eqref{int-F-c} that $\Phi\equiv0$ on $(s_{i_0-1},s_{i_0})$.
If $\tilde s>s_{i_0-1}$, then the
$
 \text{LHS of \eqref{int-F-c}}= -(1-s^2)^{3\over2}\Phi'R_\omega|_{s=\tilde s}\geq0.
$
However, the
$
 \text{RHS of \eqref{int-F-c}}\leq0$ since $k\neq0$, $c<0$, $R_\omega<0$ and $\Phi\geq0$ on $(\tilde s,s_{i_0})$.
Then $\Phi\equiv0$ on   $(s_{i_0-1},s_{i_0})$.

Case 5. $c=0$ and  $k\neq1$.

 Since $k\neq1$, the proof is a repetition of Cases $1$-$4$.

Case 6. $c=0$ and $k=1$.

In this case, we consider the interval $[s_1,s_2]$. By \eqref{F-c}, we have
\begin{align*}
\left((1-s^2)^{-{1\over2}}{\Phi\over R_\omega}\right)'(1-s^2)^2R_\omega^2=C_1\quad\text{on}\quad[s_1,s_2]
\end{align*}
for some $C_1\in\mathbb{R}$. By
\eqref{di-c} and the fact that $\Phi(s_{1})=R_\omega(s_{1})=0$, we have $\left((1-s^2)^{-{1\over2}}{\Phi\over R_\omega}\right)'(1-s^2)^2R_\omega^2\bigg|_{s=s_{1}}=0$, and thus, $C_1=0$. Then $(1-s^2)^{-{1\over2}}{\Phi\over R_\omega}=C_2$ on $[s_1,s_2]$ for some $C_2\in\mathbb{R}$. This implies that $\Phi(s)=C_2(15s^2-3-\omega)(1-s^2)^{1\over2}$ on $[s_1,s_2]$. Note that $s_2=-s_1$.  Since $\Phi$ is odd, we have $C_2=0$ and $\Phi\equiv0$ on $[s_1,s_2]$.
\end{proof}

Next, we prove a uniqueness result for the initial value problem of a singular ODE.

\begin{Lemma}\label{initial ode}
Let $\omega\in\mathbb{R}$, $k\neq0$, $c\in \text{Ran} (-\tilde \Psi_{\omega}')^\circ=(-12+\omega,3+\omega)$ and $\{s_j\}_{j=0}^{3}$ be defined as above. Let $1\leq i_0\leq 2$.
 If $\Phi$ solves the equation in \eqref{Rayleigh-type equation} and $\Phi\in C^1(s_{i_0-1},s_{i_0+1})$ satisfies that $\Phi(s_{i_0})=\Phi'(s_{i_0})=0$, then $\Phi\equiv0$ on $(s_{i_0-1},s_{i_0+1})$.
\end{Lemma}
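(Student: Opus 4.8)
The plan is to localize near the singular point $s_{i_0}$, pin down the precise behaviour of the singular coefficient there, and then run a short Gronwall-type contraction that exploits the double vanishing $\Phi(s_{i_0})=\Phi'(s_{i_0})=0$. Since $\tilde\Psi_{\omega}''(s_{i_0})=30s_{i_0}\neq0$, the point $s_{i_0}$ is a \emph{simple} zero of $R_\omega:=\tilde\Psi_{\omega}'+c$; writing $R_\omega(s)=(s-s_{i_0})\rho(s)$ with $\rho$ smooth and $\rho(s_{i_0})=30s_{i_0}\neq0$, one gets
\[
\frac{\tilde\Upsilon_{\omega}'(s)}{R_\omega(s)}=\frac{A}{s-s_{i_0}}+h(s),\qquad A=\frac{\tilde\Upsilon_{\omega}'(s_{i_0})}{30s_{i_0}},
\]
with $h$ continuous on a neighbourhood of $s_{i_0}$. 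This neighbourhood may be taken inside $(s_{i_0-1},s_{i_0+1})$, on which $1-s^2$ stays positive and $s_{i_0}$ is the unique zero of $R_\omega$, so that the remaining coefficients of \eqref{Rayleigh-type equation} are smooth there.

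Next I would rewrite \eqref{Rayleigh-type equation}, with $V(s):=(1-s^2)\Phi'(s)$, as
\[
V'(s)=\Big(\tfrac{k^2}{1-s^2}+h(s)\Big)\Phi(s)+\frac{A}{s-s_{i_0}}\,\Phi(s),\qquad s\neq s_{i_0}.
\]
Because $\Phi\in C^1$ with $\Phi(s_{i_0})=\Phi'(s_{i_0})=0$, the difference quotient $\Phi(s)/(s-s_{i_0})\to0$, so the right-hand side extends continuously across $s_{i_0}$ with value $0$; hence $V\in C^1$ on a full neighbourhood of $s_{i_0}$ with $V(s_{i_0})=V'(s_{i_0})=0$, and the fundamental theorem of calculus is available on both sides of $s_{i_0}$. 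On $[s_{i_0},s_{i_0}+\delta]$ one has $|\Phi(s)|\le C_0\int_{s_{i_0}}^s|V|$ with $C_0=\sup(1-s^2)^{-1}$, and the crucial cancellation is
\[
\frac{1}{t-s_{i_0}}\,|\Phi(t)|\le\frac{C_0}{t-s_{i_0}}\int_{s_{i_0}}^t|V|\le C_0\sup_{[s_{i_0},t]}|V|,
\]
which shows the formally singular term is in fact bounded. Feeding this into $|V(s)|\le\int_{s_{i_0}}^s\big(\tfrac{|A|}{t-s_{i_0}}+C_1\big)|\Phi(t)|\,dt$ gives $\sup_{[s_{i_0},s]}|V|\le C_2(s-s_{i_0})\sup_{[s_{i_0},s]}|V|$ for a constant $C_2$ not depending on $\delta\in(0,1]$; choosing $\delta<1/C_2$ forces $V\equiv0$, hence $\Phi'\equiv0$, hence $\Phi\equiv0$ on $[s_{i_0},s_{i_0}+\delta]$. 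The same argument to the left of $s_{i_0}$ yields $\Phi\equiv0$ on a two-sided neighbourhood of $s_{i_0}$.

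To conclude, I would propagate the vanishing: on each of the open subintervals $(s_{i_0-1},s_{i_0})$ and $(s_{i_0},s_{i_0+1})$ the equation \eqref{Rayleigh-type equation} is a regular linear second-order ODE (its coefficients are continuous there, $R_\omega$ having no zero and $1-s^2$ being positive), so the set where both $\Phi$ and $\Phi'$ vanish is open and closed; combined with $\Phi\equiv0$ near $s_{i_0}$ this gives $\Phi\equiv0$ on all of $(s_{i_0-1},s_{i_0+1})$.

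The main obstacle, and the only genuinely delicate point, is the bookkeeping at $s_{i_0}$: one must check that the hypothesis $\Phi\in C^1$ together with $\Phi(s_{i_0})=\Phi'(s_{i_0})=0$ is exactly the amount of regularity needed both to interpret $V'$ as a continuous function through the singularity and to absorb the $1/(s-s_{i_0})$ factor into the contraction estimate without a leftover singular contribution. As an alternative one could argue via Frobenius theory: $s_{i_0}$ is a regular singular point of \eqref{Rayleigh-type equation} with indicial roots $0$ and $1$, so a fundamental system consists of one solution vanishing like $(s-s_{i_0})$ and one behaving like a constant plus possibly a $(s-s_{i_0})\log|s-s_{i_0}|$ term; a $C^1$ solution cannot carry the logarithmic branch, and the conditions $\Phi(s_{i_0})=\Phi'(s_{i_0})=0$ then kill the remaining two constants.
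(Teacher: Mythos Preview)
Your proof is correct and follows essentially the same approach as the paper: both introduce $V=(1-s^2)\Phi'$ (the paper calls it $\xi$), exploit the double vanishing $\Phi(s_{i_0})=\Phi'(s_{i_0})=0$ together with the simple-zero structure of $\tilde\Psi_\omega'+c$ at $s_{i_0}$ to bound the singular term $\dfrac{\tilde\Upsilon_\omega'}{\tilde\Psi_\omega'+c}\,\Phi$ by $C\sup|V|$, and close with a Gronwall-type contraction on a short interval. Your partial-fractions decomposition of the singular coefficient and your explicit propagation step via regular ODE uniqueness are slightly more detailed than the paper's presentation, but the argument is the same.
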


\begin{proof}
First, we consider the interval  $[s_{i_0},s_{i_0+1}).$ Since $\tilde \Psi_{\omega}''(s_{i_0})\neq0$ and $s_{i_0}\in(-1,1)$, we can choose $\delta_0>0$ satisfying that  there exist  $C_1, C_2, C_3>0$ such that $C_1<| \tilde \Psi_{\omega}''(s)|<C_2$ and  $C_3< |1-s^2|$ for $s\in[s_{i_0},s_{i_0}+\delta_0]$. Let $\xi=(1-s^2)\Phi'$. Then we have by \eqref{Rayleigh-type equation} that
\begin{equation*}
\left\{
\begin{array}
[c]{l}%
\Phi'={1\over 1-s^2}\xi,\\
\xi'={k^2\over 1-s^2}\Phi+{\tilde \Upsilon_{\omega}'\over \tilde \Psi_{\omega}'+c}\Phi,
\end{array}
\right.
\end{equation*}
with the initial data $\Phi(s_{i_0})=\xi(s_{i_0})=0$. Let  $s\in [s_{i_0},s_{i_0}+\delta_0]$. For any $\tilde s\in[s_{i_0},s]$, we have
\begin{equation*}
|\Phi(\tilde s)|\leq\int_{s_{i_0}}^{\tilde s}{1\over 1- \tau^2}|\xi(\tau)|d\tau
\leq {1\over C_3}( \tilde s-s_{i_0})\|\xi\|_{L^{\infty}(s_{i_0},\tilde s)}\leq{\delta_0\over C_3}\|\xi\|_{L^{\infty}(s_{i_0},\tilde s)},
\end{equation*}
and
\begin{equation*}
\left|{\tilde \Upsilon_{\omega}'(\tilde s)\over \tilde \Psi_{\omega}'(\tilde s)+c}\Phi(\tilde s)\right|
\leq C\left|{\Phi(\tilde s)-\Phi(s_{i_0})\over \tilde \Psi_{\omega}'(\tilde s)-\tilde \Psi_{\omega}'(s_{i_0})}\right|\leq{\frac{C\|\Phi'\|_{L^{\infty}(s_{i_0},\tilde s)}}{C_1}}.
\end{equation*}
Then
\begin{align*}
\|\xi\|_{L^{\infty}(s_{i_0},s)}   \leq&\int_{s_{i_0}}^{s}\left( \left|{k^2\over 1-\tilde s^2}\Phi(\tilde s)\right|+\left\vert {\tilde \Upsilon_{\omega}'(\tilde s)\over \tilde \Psi_{\omega}'(\tilde s)+c}\Phi(\tilde s)\right\vert
\right)  d\tilde s  \\
\leq&\int_{s_{i_0}}^{s}\left({k^2\over C_3}|\Phi(\tilde s)|+{\frac{C\|\Phi'\|_{L^{\infty}(s_{i_0},\tilde s)}}{C_1}}\right)d\tilde s\\
\leq &\int_{s_{i_0}}^{s}\left({k^2\delta_0\over C_3^2}\|\xi\|_{L^{\infty}(s_{i_0},\tilde s)}+{\frac{C}{C_1C_3}}\|\xi\|_{L^{\infty}(s_{i_0},\tilde s)}\right)d\tilde s,
\end{align*}
where we used $\|\Phi'\|_{L^{\infty}(s_{i_0},\tilde s)}=\|{1\over 1-s^2}\xi\|_{L^{\infty}(s_{i_0},\tilde s)}\leq {1\over C_3}\|\xi\|_{L^{\infty}(s_{i_0},\tilde s)}$.
By Gr\"{o}nwall inequality, we have $\xi\equiv0$ and $\Phi
\equiv0\ $ on $[s_{i_0},s_{i_0}+\delta_{0}]$ (thus, on $[s_{i_0},s_{i_0+1}]$).

The proof of $\Phi\equiv0$ on $(s_{i_0-1},s_{i_0})$ is similar as above.
\end{proof}

The $3$-jet is spectrally  stable for $\omega\in(-\infty,-18]\cup[72,\infty)$ by the Rayleigh's criterion. So, we only need to consider $\omega\in(-18,72)$.
Note that $c_\omega={5\over6}\omega\in\text{Ran} (-\tilde \Psi_{\omega}')^\circ=(-12+\omega,3+\omega)$ for $\omega\in(-18,72)$. Now, we determine $c$ for a neutral mode $(c,k,\omega,\Phi)$ if $c\in \text{Ran} (-\tilde \Psi_{\omega}')^\circ$.

\begin{Theorem} \label{Psi prime change sign c}
$(\rm{i})$ Let $\omega\in(-18,72)$, $k\neq0$ and  $(c,k,\omega,\Phi)$ is a  neutral mode with $c\in\text{Ran} (-\tilde \Psi_{\omega}')^\circ=(-12+\omega,3+\omega)$. Assume that  $\Phi$ is odd if $k=1$. Then $c$ must be $c_\omega$.

$(\rm{ii})$ Let $\omega\in(-\infty,-18)\cup(72,\infty)$ and  $k\neq0$. Then there exist no neutral modes $(c,k,\omega,\Phi)$ such that $c\in\text{Ran} (-\tilde \Psi_{\omega}')^\circ=(-12+\omega,3+\omega)$.
\end{Theorem}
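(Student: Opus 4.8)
The plan is to reduce both parts of Theorem \ref{Psi prime change sign c} to the single assertion: \emph{if $(c,k,\omega,\Phi)$ is a neutral mode with $k\neq0$, with $c\in\text{Ran}(-\tilde \Psi_{\omega}')^\circ$, and with $\Phi$ odd whenever $k=1$, then necessarily $c=c_\omega$.} Granting this, part $(\rm{i})$ is immediate, and part $(\rm{ii})$ follows at once: for $\omega\in(-\infty,-18)\cup(72,\infty)$ one has $c_\omega=\frac{5}{6}\omega\notin(-12+\omega,3+\omega)$, since the chain $-12+\omega<\frac{5}{6}\omega<3+\omega$ is equivalent to $\omega\in(-18,72)$; hence a neutral mode with $c\in\text{Ran}(-\tilde \Psi_{\omega}')^\circ$ would force the impossible equality $c=c_\omega$, and no parity hypothesis is needed there because hypothesis $(\rm{ii})$ of Lemma \ref{inner c} is in force for such $\omega$.

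The heart of the matter is an algebraic observation that singles out $c_\omega$. First I would record that for $c\in\text{Ran}(-\tilde \Psi_{\omega}')^\circ$ the two zeros $s_1<s_2$ of $\tilde \Psi_{\omega}'+c$ are symmetric about $0$, lie in $(-1,0)\cup(0,1)$, and satisfy $s_i^2=(3+\omega-c)/15$; then, by \eqref{def-tilde-Upsilon0-derivative}, $\tilde \Upsilon_{\omega}'(s_i)=0$ holds exactly when $6(3+\omega-c)=18+\omega$, i.e.\ when $c=\frac{5}{6}\omega=c_\omega$. Consequently, if $c\neq c_\omega$ then $\tilde \Upsilon_{\omega}'(s_1)=\tilde \Upsilon_{\omega}'(s_2)\neq0$, and this is the case I would show is impossible. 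At each $s_i$ the coefficient $\tilde \Upsilon_{\omega}'/(\tilde \Psi_{\omega}'+c)$ in \eqref{Rayleigh-type equation} then has a simple pole (because $\tilde \Psi_{\omega}''(s_i)=30s_i\neq0$), so $s_i$ is a regular singular point with indicial exponents $0$ and $1$: the exponent-$0$ branch is nonzero at $s_i$, which makes $\Delta_k\Phi=\tilde \Upsilon_{\omega}'(\tilde \Psi_{\omega}'+c)^{-1}\Phi$ behave like $(s-s_i)^{-1}$ near $s_i$ and hence fail to lie in $L^2$, while the exponent-$1$ branch is analytic and vanishes at $s_i$. Thus the constraint $\Delta_k\Phi\in L^2(-1,1)$ forces $\Phi$ to coincide near each $s_i$ with a multiple of the exponent-$1$ branch, so $\Phi(s_1)=\Phi(s_2)=0$ and $\Phi$ is $C^1$ across $s_1,s_2$. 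Combining with the boundary vanishing $\Phi(\pm1)=0$ already deduced from $\nabla_k\Phi\in L^2$, the function $\Phi$ vanishes at all four points $s_0=-1<s_1<s_2<s_3=1$.

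Next I would propagate the vanishing. Applying Lemma \ref{inner c} with $i_0=1$ to $\Phi(s_0)=\Phi(s_1)=\Phi(s_2)=0$ shows that $\Phi\equiv0$ on $(s_0,s_1)$ or on $(s_1,s_2)$, and applying it with $i_0=2$ to $\Phi(s_1)=\Phi(s_2)=\Phi(s_3)=0$ shows that $\Phi\equiv0$ on $(s_1,s_2)$ or on $(s_2,s_3)$; in every resulting case $\Phi$ vanishes identically on some interval abutting $s_1$ and on some interval abutting $s_2$ (if $\Phi\equiv0$ on $(s_1,s_2)$ that interval serves both endpoints; otherwise $\Phi\equiv0$ on $(s_0,s_1)$ and on $(s_2,s_3)$). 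Passing to one-sided limits and using the $C^1$-regularity at $s_1,s_2$, one gets $\Phi(s_1)=\Phi'(s_1)=0$ and $\Phi(s_2)=\Phi'(s_2)=0$; Lemma \ref{initial ode} with $i_0=1$ then gives $\Phi\equiv0$ on $(s_0,s_2)$ and with $i_0=2$ gives $\Phi\equiv0$ on $(s_1,s_3)$, hence $\Phi\equiv0$ on $(-1,1)$, contradicting the nontriviality of the neutral mode. This proves the claim, and with it the theorem.

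The genuinely hard work here has already been done in Lemmas \ref{inner c} and \ref{initial ode}, which carry the analytic weight; within the present argument the only delicate points are the Frobenius analysis at the interior singular points $s_1,s_2$ — which relies on $\tilde \Upsilon_{\omega}'(s_i)\neq0$ and on the standard smoothness of solutions of \eqref{Rayleigh-type equation} away from the singular points — and the bookkeeping that the alternatives returned by Lemma \ref{inner c} always leave $\Phi$ identically zero on intervals adjacent to both $s_1$ and $s_2$, so that Lemma \ref{initial ode} can indeed be triggered at each of them.
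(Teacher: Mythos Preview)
Your proof is correct and follows essentially the same approach as the paper. Both arguments hinge on the same two ingredients---Lemma \ref{inner c}, Lemma \ref{initial ode}, and the algebraic fact that $\tilde\Upsilon_\omega'(s_i)=-12(c_\omega-c)$---and differ only in logical packaging: the paper first proves the claim ``$\Phi(s_{i_1})\neq0$ for some $i_1$'' (by contradiction via the two lemmas) and then reads off $\Delta_k\Phi\notin L^2_{loc}$ when $c\neq c_\omega$, whereas you assume $c\neq c_\omega$, use the $L^2$ constraint to force $\Phi(s_1)=\Phi(s_2)=0$, and then invoke the lemmas. Your Frobenius discussion is more detailed than strictly needed (the paper simply notes that $\Phi(s_{i_1})\neq0$ together with a simple zero of $\tilde\Psi_\omega'+c$ and $\tilde\Upsilon_\omega'(s_{i_1})\neq0$ makes $\Delta_k\Phi$ behave like $(s-s_{i_1})^{-1}$), and for part (ii) the paper bypasses the inclusion $c_\omega\notin(-12+\omega,3+\omega)$ by observing directly that $\tilde\Upsilon_\omega'$ has no zeros on $[-1,1]$ when $\omega\notin[-18,72]$; but these are presentational, not substantive, differences.
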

\begin{proof}
We claim that if $c\in \text{Ran} (-\tilde \Psi_{\omega}')^\circ$ satisfies that $(c,k,\omega,\Phi)$ is a  neutral mode, then there exists $1\leq i_1\leq 2$ such that $\Phi(s_{i_1})\neq 0$. Here, $\{s_i\}_{i=1}^{2}$ are defined before Lemma \ref{inner c}. In fact, suppose that $\Phi(s_{i})=0$ for all $1\leq i\leq 2$. Now, we fix $1\leq i_0\leq 2$.
Then Lemma \ref{inner c} tells us that $\Phi\equiv0$ on at least one of $(s_{i_0-1},s_{i_0})$ and $(s_{i_0},s_{i_0+1})$. Since $\Delta_k\Phi\in L^2(-1,1)$, we have $\Phi\in C^1((s_{i_0-1},s_{i_0+1}))$. By Lemma \ref{initial ode}, we have $\Phi\equiv0$ on $(s_{i_0-1},s_{i_0+1})$. By the arbitrary choice of $i_0$, we have $\Phi\equiv0$ on $(-1,1)$.

(i) For the above $s_{i_1}$, if $c\neq c_\omega$, we have by \eqref{c omega k} that
\begin{align*}
\tilde \Upsilon_{\omega}'(s_{i_1})=-12(\tilde \Psi_{\omega}'(s_{i_1})+c_\omega)=-12(-c+c_\omega)\neq 0.
\end{align*}
By the fact that  $\Phi(s_{i_1})\neq 0$ and $\tilde \Upsilon_{\omega}'(s_{i_1})\neq 0$, it follows from \eqref{Rayleigh-type equation} that
\begin{align}\label{Laplace k Phi neq0}
 ((1-s^2)\Phi')'-{k^2\over 1-s^2}\Phi={\tilde \Upsilon_{\omega}'\over \tilde \Psi_{\omega}'+c}\Phi\notin L_{loc}^2
 \end{align}
 near $s_{i_1}$. This contradicts $\Delta_k\Phi\in L^2(-1,1)$. Thus, $c= c_\omega$.

 (ii) Since $\omega\in(-\infty,-18)\cup(72,\infty)$, we have  $\tilde \Upsilon_{\omega}'(s_{i_1})\neq0$. But $\Phi(s_{i_1})\neq 0$ and $\tilde \Psi_{\omega}'(s_{i_0})+c=0$.  Similar to \eqref{Laplace k Phi neq0}, we have $\Delta_k\Phi\notin  L_{loc}^2$. This is a contradiction.
\end{proof}
\if0
\begin{remark}
We use the property of $Y_3^0$ that for every $c\in \text{Ran} (-\tilde \Psi_{\omega}')^\circ$,  $\tilde \Psi_{\omega}''(s_{i})\neq0$ for both $i=1,2$, where   $\{\tilde \Psi_{\omega}'+c=0\}=\{s_1,s_2\}$. Theorem \ref{Psi prime change sign c} may be generalized to such zonal flows that $\tilde \Psi_{\omega}''(s_{i})\neq0$ for all $1\leq i\leq k_c$, where   $\{\tilde \Psi_{\omega}'+c=0\}\cap(-1,1)=\{s_i\}_{i=1}^{k_c}$. If this property is not satisfied, then the neutral  $c$ may be $-\tilde \Psi_{\omega}'(s_0)$, where $s_0$ satisfies  $\tilde \Psi_{\omega}''(s_{0})=0$.
\end{remark}
\fi

\if0
For $\omega\in(-18,72)$, we now explicitly write the only neutral modes $(c_\omega,k,\omega,\Phi_k)$ for $k=1,2$. By \eqref{c omega k} and \eqref{Rayleigh-type equation}, $\Phi_k$ solves
\begin{align*}
((1-s^2)\Phi')'-{k^2\over 1-s^2}\Phi+12\Phi=0, \quad
 \Phi(\pm1)=0.
 \end{align*}
 This implies that
 \begin{align*}
 \Phi_1(s)=P_3^1(s)=(1-5s^2)(1-s^2)^{1\over2},\quad \Phi_2(s)=P_3^2(s)=15s(1-s^2).
 \end{align*}
\fi

For $\omega\in(-18,72)$ and the  neutral modes $(c_\omega,k,\omega,\Phi_k)$ with $k=1,2$, it follows from Remark \ref{neutral-imaginary} that the corresponding imaginary eigenvalue of $J_{\omega,k}L_k$ is $-ik(c-c_\omega)=0$.
To study the indices $k_{0,J_{\omega,1}L_1|_{X_o^1}}^{\leq0}$ and $k_{0,J_{\omega,2}L_2|_{X_e^2}}^{\leq0}$ in
\eqref{index formula 1o1} and \eqref{index formula 1o2}, by the definition we need to study the generalized kernels  of
$J_{\omega,1}L_1|_{X_o^1}$ and $J_{\omega,2}L_2|_{X_e^2}$. It turns out that the kernels of the above two operators are  trivial.

\begin{Lemma}\label{kernel JL}
Let $\omega\in(-18,72)$. Then $\ker (J_{\omega,1}L_1|_{X_o^1})=\ker(J_{\omega,2}L_2|_{X_e^2})=\{0\}$.
\end{Lemma}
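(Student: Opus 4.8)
The plan is to reduce each of the two kernels to $\ker L_k$ and then rule out the surviving candidate by a parity obstruction. The first step is to exploit that $J_{\omega,k}$ is simply multiplication by $-ik\,\tilde\Upsilon_{\omega}'$. By \eqref{def-tilde-Upsilon0-derivative}, $\tilde\Upsilon_{\omega}'(s)=-12(15s^2-3)+2\omega$ is a quadratic polynomial whose only zeros are $s=\pm\sqrt{(18+\omega)/90}$, which lie in $(-1,1)$ when $\omega\in(-18,72)$; in particular $\tilde\Upsilon_{\omega}'$ is nonzero almost everywhere on $(-1,1)$. Hence, if $\Upsilon\in\ker(J_{\omega,1}L_1|_{X_o^1})$ (respectively $\ker(J_{\omega,2}L_2|_{X_e^2})$), then $L_k\Upsilon\in D(J_{\omega,k})$ and $-ik\,\tilde\Upsilon_{\omega}'\,(L_k\Upsilon)=0$ in $L^2(-1,1)$, which forces $L_k\Upsilon=0$. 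Since $0\in D(J_{\omega,k})$ makes the reverse inclusion trivial, this shows $\ker(J_{\omega,1}L_1|_{X_o^1})=\ker(L_1|_{X_o^1})$ and $\ker(J_{\omega,2}L_2|_{X_e^2})=\ker(L_2|_{X_e^2})$.

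Next I would compute $\ker L_k$ using the spectra of $L_k$ recorded above: for $k=1,2$ one has $\sigma(L_k)=\{\tfrac1{12}-\tfrac1{l(l+1)}:l\ge 2\}$, and the only eigenvalue equal to $0$ corresponds to $l(l+1)=12$, i.e.\ $l=3$. Since the eigenvalues of $\Delta_k$ on $X^k$ are simple, $\ker L_k=\mathrm{span}\{P_3^k\}$. Equivalently, writing $\Upsilon=\Delta_k\Phi$, the equation $L_k\Upsilon=0$ reads $\Delta_k\Phi+12\Phi=0$ with $\Phi(\pm1)=0$, which is precisely the Rayleigh equation \eqref{Rayleigh-type equation} evaluated at $c=c_\omega$ (using \eqref{c omega k}), so $\Phi$ must be a multiple of $P_3^k$; thus any element of the kernel is a multiple of the neutral-mode vorticity $\Delta_k P_3^k=-12P_3^k$ singled out in Theorem \ref{Psi prime change sign c}.

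Finally I would invoke the parity of the associated Legendre polynomials, $P_l^m(-s)=(-1)^{l+m}P_l^m(s)$, so that $P_3^1$ is even and $P_3^2$ is odd. Consequently $\mathrm{span}\{P_3^1\}\cap X_o^1=\{0\}$ and $\mathrm{span}\{P_3^2\}\cap X_e^2=\{0\}$, which gives $\ker(J_{\omega,1}L_1|_{X_o^1})=\ker(J_{\omega,2}L_2|_{X_e^2})=\{0\}$ and proves the lemma. I expect the only point demanding a little care to be the reduction in the first paragraph: one must check that the constraints defining $X^k$ (zero mean, and orthogonality to $Y_1^m$ when $k=1$) and the fact that $J_{\omega,k}$ is a priori an unbounded operator out of $(X^k)^*$ do not interfere with concluding $L_k\Upsilon=0$ from $J_{\omega,k}(L_k\Upsilon)=0$. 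This is harmless, since multiplication by $-ik\,\tilde\Upsilon_{\omega}'$ is injective on $L^2(-1,1)$, so the argument goes through independently of these domain subtleties — it is bookkeeping rather than a genuine analytic obstacle.
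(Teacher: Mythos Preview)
Your proof is correct and follows essentially the same approach as the paper. Both arguments reduce $\ker(J_{\omega,k}L_k)$ to $\ker L_k=\mathrm{span}\{P_3^k\}$ (equivalently, to the Rayleigh equation \eqref{Rayleigh-type equation} at $c=c_\omega$, which gives $\Delta_k\Phi+12\Phi=0$) and then use the parity mismatch $P_3^1$ even, $P_3^2$ odd to conclude; the only difference is that you make the injectivity of multiplication by $\tilde\Upsilon_\omega'$ explicit, whereas the paper leaves this implicit in its appeal to \eqref{c omega k}--\eqref{Rayleigh-type equation}.
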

\begin{proof}
Let  $\Upsilon_1 \in\ker (J_{\omega,1}L_1|_{X_o^1})$ and $\Phi_1=\Delta_1^{-1}\Upsilon_1$. By \eqref{c omega k} and \eqref{Rayleigh-type equation},  $\Phi_1$ solves
\begin{align*}
((1-s^2)\Phi')'-{1\over 1-s^2}\Phi+12\Phi=0, \quad
 \Delta_1\Phi\in L^2(-1,1).
 \end{align*}
 This implies that
 \begin{align}\label{kernel 1 odd}
 \Phi_1(s)=C_0P_3^1(s)={3\over2}C_0(1-5s^2)(1-s^2)^{1\over2}
 \end{align}
for some $C_0\in\mathbb{R}$.
Thus, $\Upsilon_1=\Delta_1\Phi_1$ is even. But $\Upsilon_1 \in\ker (J_{\omega,1}L_1|_{X_o^1})$ is also odd. Thus, $C_0=0$ and  $\Upsilon_1\equiv0$.

Let  $\Upsilon_2 \in\ker (J_{\omega,2}L_2|_{X_e^2})$ and $\Phi_2=\Delta_2^{-1}\Upsilon_2$. Then $\Phi_2$ solves
\begin{align*}
((1-s^2)\Phi')'-{4\over 1-s^2}\Phi+12\Phi=0, \quad
 \Delta_2\Phi\in L^2(-1,1).
 \end{align*}
 This implies that
 \begin{align}\label{kernel 2 even}
 \Phi_2(s)=C_1P_3^2(s)=15C_1s(1-s^2)
 \end{align}
for some $C_1\in\mathbb{R}$.
Thus, $\Upsilon_2=\Delta_2\Phi_2\in L^2(-1,1)$ is odd. But $\Upsilon_2 \in\ker (J_{\omega,2}L_2|_{X_e^2})$ is also even. Thus, $C_1=0$ and $\Upsilon_2\equiv0$.
\end{proof}

If we do not restrict $J_{\omega,1}L_1$ in the odd subspace and $J_{\omega,2}L_2$ in the even subspace, then by \eqref{kernel 1 odd}-\eqref{kernel 2 even}, $\ker(J_{\omega,k}L_k)$ is nontrivial for $\omega\in(-18,72)$ and $k=1,2.$ In the following remark,  we, however, prove that the generalized kernels of $J_{\omega,k}L_k$   coincide with  $\ker(L_k)$.

\begin{Remark}\label{kernel-J12L12}
$(1)$ Let $\omega\in(-18,72)$ and $E_0^2$ be the generalized kernel of $J_{\omega,2}L_2$. Then we claim that $E_0^2=\ker (L_2)$.

First, by \eqref{kernel 1 odd} we have $\ker (L_2)=\text{span}\{P_3^2(s)\}$.
Suppose that there exists a generalized eigenfunction  $\Upsilon\in X^2=\oplus_{l=2}^\infty\text{span}\{ P_l^2(s)\}$  such that
\begin{align*}
J_{\omega,2}L_2 \Upsilon=-2i\tilde \Upsilon_{\omega}'\left({1\over 12}+ \Delta_2^{-1}\right)\Upsilon=P_3^2(s)=15s(1-s^2)\in\ker (L_2).
\end{align*}
Then by \eqref{def-tilde-Upsilon0-derivative} and $\omega\in(-18,72)$, we have
\begin{align*}
\left({1\over 12}+ \Delta_2^{-1}\right)\Upsilon={P_3^2(s)\over -2i\tilde \Upsilon_{\omega}'}={15s(1-s^2)\over -2i(-180s^2+36+2\omega)}\notin L_{loc}^2.
\end{align*}

$(2)$ Let $\omega\in(-18,72)$  and $E_0^1$ be the generalized kernel of $J_{\omega,1}L_1$. Then we claim that  $E_0^1=\ker (L_1)$.

By \eqref{kernel 2 even}, we have $\ker (L_1)=\text{span}\{P_3^1(s)\}$.
Suppose that there exists a generalized eigenfunction  $\Upsilon\in X^1=\oplus_{l=2}^\infty\text{span}\{ P_l^1(s)\}$ such that
\begin{align*}
J_{\omega,1}L_1 \Upsilon=-i\tilde \Upsilon_{\omega}'\left({1\over 12}+ \Delta_1^{-1}\right)\Upsilon=P_3^1(s)={3\over2}(1-5s^2)(1-s^2)^{1\over2}\in\ker (L_1).
\end{align*}
Then
\begin{align}\label{gker1locl2}
\left({1\over 12}+ \Delta_1^{-1}\right)\Upsilon={P_3^1(s)\over -i\tilde \Upsilon_{\omega}'}={{3\over2}(1-5s^2)(1-s^2)^{1\over2}\over -i(-180s^2+36+2\omega)}.
\end{align}
If $(-18,72)\ni\omega\neq0$, then by \eqref{gker1locl2}, $\left({1\over 12}+ \Delta_1^{-1}\right)\Upsilon\notin L_{loc}^2$. If $\omega=0$, then
\begin{align}\label{gker1locl2contradict1}
\left({1\over 12}+ \Delta_1^{-1}\right)\Upsilon=-{1\over 24i}(1-s^2)^{1\over2}\in\text{span}\{P_1^1(s)\}.
\end{align}
Since $\Upsilon\in X^1=\oplus_{l=2}^\infty\text{span}\{ P_l^1(s)\}$, it can be written as $\Upsilon(s)=\sum_{l=2}^\infty a_l P_l^1(s)$.  Then
\begin{align}\nonumber
\left({1\over 12}+ \Delta_1^{-1}\right)\Upsilon
=&{1\over 12}\sum_{l=2}^\infty a_l P_l^1(s)-\sum_{l=2}^\infty(l(l+1))^{-1} a_l P_l^1(s)\\
\in& X^1=\oplus_{l=2}^\infty\text{span}\{ P_l^1(s)\}.\label{gker1locl2contradict2}
\end{align}
Then \eqref{gker1locl2contradict1} contradicts \eqref{gker1locl2contradict2}.
\end{Remark}

\section{The critical rotation rate in the positive half-line}\label{The critical rotation rate for the positive half-line}

In this section, we prove Theorem \ref{positive half-line critical rotation rate}, that is, the critical rotation rate  for linear stability/instability of the $3$-jet  in the positive half-line is $\omega_{cr}^+={99\over2}$.

\subsection{Linear instability for $\omega\in(-3,12]$}
With the preparatory work in Subsections \ref{subsection-Hamiltonian structure of the linearized equations and index formulae}-\ref{The neutral modes with c in Ran tilde Psi 0 prime},
 we are ready to show linear instability of the $3$-jet  for  $\omega\in(-3,12]$.
For convenience, here we include the proof of linear instability of the  $3$-jet  for the negative rotation rate $\omega\in(-3,0)$.
\begin{Theorem}\label{linear instability}
Let $k=1,2$.
If $\omega\in(-3,12]$, then the $3$-jet is linearly unstable.
\end{Theorem}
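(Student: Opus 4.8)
The plan is to use the index formulae \eqref{index formula 1o1}--\eqref{index formula 1o2} together with the analysis of neutral modes from Subsections~\ref{Neutral modes scope of the neutral speeds}--\ref{The neutral modes with c in Ran tilde Psi 0 prime} to show that for $\omega\in(-3,12]$ at least one of the two indices
$$
k_{i,J_{\omega,1}L_1|_{X_o^1}}^{\leq0}+k_{0,J_{\omega,1}L_1|_{X_o^1}}^{\leq0}
\qquad\text{and}\qquad
k_{i,J_{\omega,2}L_2|_{X_e^2}}^{\leq0}+k_{0,J_{\omega,2}L_2|_{X_e^2}}^{\leq0}
$$
equals $0$, which (as recorded at the end of Subsection~\ref{subsection-Hamiltonian structure of the linearized equations and index formulae}) forces linear instability of the $3$-jet. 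The key point is that by Lemma~\ref{tilde-psi-change-sign}, Lemma~\ref{case-omega=12} and Theorem~\ref{Psi prime change sign c}(i), for $\omega\in(-3,12]$ \emph{every} neutral mode $(c,k,\omega,\Phi)$ (with $\Phi$ odd when $k=1$, which is exactly the parity class appearing in the restricted operators) has neutral speed $c$ forced to lie in $\mathrm{Ran}(-\tilde\Psi_\omega')^\circ$, and then $c=c_\omega$. Hence the \emph{only} purely imaginary eigenvalue that $J_{\omega,1}L_1|_{X_o^1}$ or $J_{\omega,2}L_2|_{X_e^2}$ can possess is $\lambda=-ik(c-c_\omega)=0$; by Lemma~\ref{kernel JL} the kernels of these restricted operators are trivial, so in fact there is no purely imaginary eigenvalue at all. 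Consequently $k_{i,\cdot}^{\leq0}=0$ and $k_{0,\cdot}^{\leq0}=0$ for both operators.

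The next step is to rule out that the index is nonetheless $=1$ via a positive real eigenvalue already being present — i.e. to conclude instability we simply read off the index formula: with $k_{i}^{\leq0}=k_0^{\leq0}=0$ and $n^-(L_k)=1$, \eqref{index formula 1o1}--\eqref{index formula 1o2} give
$$
k_{c,J_{\omega,1}L_1|_{X_o^1}}+k_{r,J_{\omega,1}L_1|_{X_o^1}}=1
\qquad\text{and}\qquad
k_{c,J_{\omega,2}L_2|_{X_e^2}}+k_{r,J_{\omega,2}L_2|_{X_e^2}}=1
$$
for each $k=1,2$, so $J_{\omega,k}L_k$ restricted to the relevant parity subspace has an eigenvalue with nonzero real part. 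Since $\mathcal L_\omega = J_\omega L + \tfrac16\omega\partial_\varphi$ and the shift $\tfrac16\omega\partial_\varphi$ is skew-adjoint (it only translates the imaginary part of the spectrum on each Fourier mode), an unstable eigenvalue of $J_{\omega,k}L_k$ yields an unstable eigenvalue of $\mathcal L_{\omega,k}$ with the same real part. Therefore the $3$-jet is linearly unstable for every $\omega\in(-3,12]$, for both the first and the second Fourier modes.

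\textbf{Main obstacle.} The only delicate point is the clean identification, \emph{for the whole range $\omega\in(-3,12]$ at once}, of the scope of neutral speeds — in particular the borderline case $\omega=12$, where $\tilde\Psi_\omega'$ degenerates at $s=\pm1$ (the singular point merges with the endpoints) and one must verify via Lemma~\ref{case-omega=12}(i) and the constraint \eqref{vorticity Y1-1 constraint} that $c=0$ is excluded, and the parity/regularity bookkeeping (that the forced eigenfunction $\Phi_k=P_3^k$ has the \emph{wrong} parity relative to $X_o^1$, resp.\ $X_e^2$, which is precisely what Lemma~\ref{kernel JL} exploits). Once the neutral-mode scope is pinned down, the conclusion is immediate from the index formulae; no further estimate is needed. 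I would also remark in passing that the restriction to $k=1,2$ is justified because $n^-(L_k)=0$ for $|k|\ge3$, so those Fourier modes contribute nothing to instability.
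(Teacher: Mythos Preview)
Your proposal is correct and follows essentially the same approach as the paper: use Lemmas~\ref{tilde-psi-change-sign} and~\ref{case-omega=12}(i) to confine any neutral speed to $\mathrm{Ran}(-\tilde\Psi_\omega')^\circ$, then Theorem~\ref{Psi prime change sign c}(i) to force $c=c_\omega$, then Lemma~\ref{kernel JL} to kill the zero eigenvalue on the restricted parity subspaces, and finally read off $k_c+k_r=1$ from the index formulae~\eqref{index formula 1o1}--\eqref{index formula 1o2}. Your additional remarks (the $\tfrac16\omega\partial_\varphi$ shift back to $\mathcal L_\omega$, the triviality of $|k|\ge3$) are correct bookkeeping that the paper establishes elsewhere rather than in this proof.
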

\begin{proof}
Let $\omega\in(-3,12]$.
By Lemmas \ref{tilde-psi-change-sign} and \ref{case-omega=12} (i),  $c\in\text{Ran}(-\tilde \Psi_{\omega}')^\circ=(-12+\omega,3+\omega)$ for any neutral mode $(c,k,\omega,\Phi)$.
By Theorem \ref{Psi prime change sign c}, if $\Phi$ is odd for $k=1$,
  then $c=c_\omega$. This proves that $k_{i,J_{\omega,1}L_1|_{X_o^1}}^{\leq0}=k_{i,J_{\omega,2}L_2|_{X_e^2}}^{\leq0}=0$.
  By Lemma \ref{kernel JL}, $k_{0,J_{\omega,1}L_1|_{X_o^1}}^{\leq0}=k_{0,J_{\omega,2}L_2|_{X_e^2}}^{\leq0}=0$.
By the index formulae \eqref{index formula 1o1} and \eqref{index formula 1o2}, we have
 $ k_{c,J_{\omega,1}L_1|_{X_o^1}}+ k_{r,J_{\omega,1}L_1|_{X_o^1}}=1$ and
 $k_{c,J_{\omega,2}L_2|_{X_e^2}}+ k_{r,J_{\omega,2}L_2|_{X_e^2}}=1$.
This implies that  the $3$-jet  is linearly unstable for both $k=1$ and $k=2$.
\end{proof}

\subsection{Proof of the positive critical rotation rate ${99\over2}$ for the first Fourier mode}
We prove that the  critical rotation rate in the positive half-line  is ${99\over2}$ for the $1$'st Fourier mode.

Let $\omega\in\left(12,72\right)$ and $k\in\{1,2\}$.
By Lemma \ref{tilde Psi0 does not change sign} (2), $c$ must be in $[0,-12+\omega]\cup\text{Ran}(-\tilde \Psi_{\omega}')^\circ$ for any neutral mode $(c,k,\omega,\Phi)$.
By Theorem \ref{Psi prime change sign c},
 if $(c,k,\omega,\Phi)$ is a  neutral mode with $c\in\text{Ran} (-\tilde \Psi_{\omega}')^\circ$ (where $\Phi$ is odd provided that $k=1$), then $c=c_\omega$.
 The neutral modes $(c,k,\omega,\Phi)$ with  $c=c_\omega$ correspond to zero eigenvalues of $J_{\omega,k}L_k$. By Lemma \ref{kernel JL},  $k_{0,J_{\omega,1}L_1|_{X_o^1}}^{\leq0}=k_{0,J_{\omega,2}L_2|_{X_e^2}}^{\leq0}=0$.
Thus, it reduces to study the indices $k_{i,J_{\omega,1}L_1|_{X_o^1}}^{\leq0}$ and $k_{i,J_{\omega,2}L_2|_{X_e^2}}^{\leq0}$ in \eqref{index formula 1o1} and \eqref{index formula 1o2}. So
we need  to determine for which $c\in[0,-12+\omega]$, $(c,k,\omega,\Phi)$ is a neutral mode.
Let us first introduce a spectral parameter $\mu$ in the Rayleigh equation \eqref{Rayleigh-type equation}. By \eqref{def-tilde-Upsilon0-derivative},
\begin{align*}
-{\tilde \Upsilon_{\omega}'\over \tilde \Psi_{\omega}'+c}=-{-12(15s^2-3)+2\omega\over 15s^2-3-\omega+c}
=12-{-10\omega+12c\over 15s^2-3-\omega+c}=12-{2\omega+12\mu\over 15s^2-3+\mu},
 \end{align*}
where
\begin{align*}
\mu=-\omega+c.
\end{align*}
Then \eqref{Rayleigh-type equation} can be rewritten as
 \begin{align}\label{Rayleigh-type equation 12}
 ((1-s^2)\Phi')'-{k^2\over 1-s^2}\Phi-{2\omega+12\mu\over 15s^2-3+\mu}\Phi=-12\Phi, \quad
 \Delta_k\Phi\in L^2(-1,1).
 \end{align}
 Now, we modify the Rayleigh equation \eqref{Rayleigh-type equation} to the ODE eigenvalue problem
\begin{align}\label{Rayleigh-type equation lambda}
 ((1-s^2)\Phi')'-{k^2\over 1-s^2}\Phi-{2\omega+12\mu\over 15s^2-3+\mu}\Phi=\lambda\Phi, \quad
 \Delta_k\Phi\in L^2(-1,1),
 \end{align}
 where $\lambda$ is the spectral parameter (actually, we use $\lambda$ for $k=1$ and $\tilde \lambda$ for $k=2$ in the following). To study the eigenvalues of \eqref{Rayleigh-type equation lambda}, we need the following compact embedding lemma.
 \begin{Lemma}\label{compact embedding}
$(\rm{i})$ The space
 \begin{align}\label{def-tilde X}
 \tilde X=\left\{\Phi\bigg| \int_{-1}^1\left((1-s^2)|\Phi'|^2+{1\over 1-s^2}|\Phi|^2\right) ds<\infty\right\}
 \end{align}
 is compactly embedded in $L^2(-1,1)$.

$(\rm{ii})$ The space
 \begin{align}\label{def-X-omega-mu}
X_{\omega,\mu}=\left\{\Phi\bigg| \int_{-1}^1\left((1-s^2)|\Phi'|^2+{1\over 1-s^2}|\Phi|^2+{2\omega+12\mu\over 15s^2-3+\mu}|\Phi|^2\right) ds<\infty\right\}
 \end{align}
 is compactly embedded in $L^2(-1,1)$, where $\omega\in\mathbb{R}$ and $\mu\leq-12$.
 \end{Lemma}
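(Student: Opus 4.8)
The plan is to prove part (i) by a hands-on compactness argument — splitting $(-1,1)$ into a ``bulk'' part where Rellich--Kondrachov applies and two endpoint strips where the degenerate weights are handled directly — and then to reduce part (ii) to part (i) by examining the size of the potential weight near $s=\pm1$.

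For part (i): the continuous inclusion $\tilde X\hookrightarrow L^2(-1,1)$ is immediate, since $\tfrac{1}{1-s^2}\geq1$ gives $\|\Phi\|_{L^2}\leq\|\Phi\|_{\tilde X}$. For compactness, I would take a sequence $(\Phi_n)$ bounded in $\tilde X$; as $\tilde X$ is a Hilbert space, after passing to a subsequence $\Phi_n\rightharpoonup\Phi$ weakly in $\tilde X$ with $\Phi\in\tilde X$ and $\sup_n\|\Phi_n-\Phi\|_{\tilde X}<\infty$. The key is the endpoint tail bound: on $(1-\delta,1)$ one has $1-s^2=(1-s)(1+s)\leq2\delta$, hence
\[
\int_{1-\delta}^{1}|\Phi_n-\Phi|^2\,ds=\int_{1-\delta}^{1}(1-s^2)\,\frac{|\Phi_n-\Phi|^2}{1-s^2}\,ds\leq 2\delta\,\|\Phi_n-\Phi\|_{\tilde X}^2,
\]
and symmetrically on $(-1,-1+\delta)$, so the two strips contribute $O(\delta)$ uniformly in $n$. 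On $[-1+\delta,1-\delta]$ the weight $1-s^2$ is bounded below by $\delta(2-\delta)>0$, so $\|\cdot\|_{\tilde X}$ dominates the $H^1(-1+\delta,1-\delta)$ norm and Rellich--Kondrachov gives $L^2$-convergence there along a further subsequence. Running this with $\delta=1/m$ and diagonalizing yields a single subsequence converging to $\Phi$ in $L^2(-1,1)$: given $\varepsilon>0$, first pick $m$ so the strip contributions are $<\varepsilon/2$ for all $n$, then use the bulk convergence on $[-1+1/m,1-1/m]$ for the rest. (Equivalently one may phrase (i) as the statement that $-\Delta_1$ restricted to this domain has compact resolvent, but the direct argument is cleanest here.)

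For part (ii): write $V(s)=\tfrac{2\omega+12\mu}{15s^2-3+\mu}$. Since $\mu\leq-12$, the denominator obeys $15s^2-3+\mu\leq12+\mu\leq0$ on $[-1,1]$, so it never vanishes in the interior and only $s=\pm1$ can cause trouble. If $\mu<-12$, then $|15s^2-3+\mu|\geq|\mu+12|>0$ everywhere, so $V\in L^\infty(-1,1)$ and $|V(s)|\,|\Phi|^2\leq\|V\|_{L^\infty}\tfrac{|\Phi|^2}{1-s^2}$. If $\mu=-12$, then $15s^2-3+\mu=-15(1-s^2)$, so $V(s)=\tfrac{144-2\omega}{15(1-s^2)}$ and again $|V(s)|\,|\Phi|^2$ is a constant multiple of $\tfrac{|\Phi|^2}{1-s^2}$ (and, in the range of $\omega$ where the lemma is applied, the coefficient $1+\tfrac{144-2\omega}{15}$ of $\tfrac{|\Phi|^2}{1-s^2}$ in the $X_{\omega,\mu}$-norm stays positive). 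In both cases the finiteness condition defining $X_{\omega,\mu}$ in \eqref{def-X-omega-mu} coincides with the one defining $\tilde X$ in \eqref{def-tilde X}, and the two norms are comparable; hence a bounded sequence in $X_{\omega,\mu}$ is bounded in $\tilde X$, and part (i) furnishes the $L^2$-convergent subsequence.

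The only delicate point — and what the whole argument is built around — is the degeneration of the weights at $s=\pm1$: the factor $(1-s^2)^{-1}$ sitting in the norm is exactly what turns the vanishing of $1-s^2$ into a \emph{uniform} smallness of the $L^2$ tails, and for (ii) the one thing left to check is that the added potential has at worst the same $(1-s^2)^{-1}$ singularity there, which the two displayed cases confirm.
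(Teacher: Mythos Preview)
Your proof is correct. The approach in part (ii) matches the paper's exactly: bound the potential pointwise by $C/(1-s^2)$ so that the defining conditions for $X_{\omega,\mu}$ and $\tilde X$ coincide, then invoke (i). Your case split $\mu<-12$ versus $\mu=-12$ makes explicit what the paper compresses into one inequality; the cleanest uniform route is to note that $|15s^2-3+\mu|=15(1-s^2)-(12+\mu)\ge 15(1-s^2)$ for all $\mu\le-12$, which handles both cases at once.

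The genuine difference is in part (i). The paper does not argue directly but instead invokes Theorem~2.9 in Hebey's monograph on Sobolev spaces on manifolds, implicitly identifying $\tilde X$ with the first Fourier sector of $H_1^2(\mathbb{S}^2)$ (for $\Phi\in\tilde X$, the function $e^{i\varphi}\Phi(s)$ has $|\nabla(e^{i\varphi}\Phi)|^2=(1-s^2)|\Phi'|^2+\tfrac{1}{1-s^2}|\Phi|^2$, so the $\tilde X$-norm is the $H_1^2(\mathbb{S}^2)$-seminorm) and then reading off compactness from the Rellich theorem on the compact manifold $\mathbb{S}^2$. Your hands-on argument---uniform tail control from the weight $\tfrac{1}{1-s^2}$, classical Rellich--Kondrachov on interior intervals, and a diagonal extraction---is more elementary and entirely self-contained, at the cost of being longer. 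The paper's route is shorter and ties the lemma back to the ambient sphere geometry, but requires the reader to accept or look up the manifold embedding theorem. Either is perfectly adequate here.
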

 \begin{proof}  (i) is a direct consequence of Theorem 2.9  in \cite{Hebey2000}.

(ii) The proof  follows from (i)  and
 \begin{align*}
 \left|\int_{-1}^1{2\omega+12\mu\over 15s^2-3+\mu}|\Phi|^2 ds\right|\leq C\int_{-1}^1{1\over 1-s^2}|\Phi|^2 ds
 \end{align*}
 for $\Phi\in X_{\omega,\mu}$ and $\mu\leq-12$.
 \end{proof}

Now, we consider the first Fourier mode and fix $k=1$.
 The main result in this subsection states as follows.
 \begin{Theorem}\label{k=1 positive half-line critical rotation rate}
Let $k=1$. Then the $3$-jet is  linearly unstable  for $\omega\in\left(12,{99\over2}\right)$ and spectrally  stable for $\omega\in\left[{99\over2},72\right)$.
\end{Theorem}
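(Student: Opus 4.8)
For $\omega\in(-3,12]$ the statement is Theorem~\ref{linear instability} with $k=1$, and for $\omega\ge 72$ spectral stability is Rayleigh's criterion, so the content lies in $\omega\in(12,72)$. The plan is to read off both alternatives from the index formula \eqref{index formula 1o1} after a complete description of the nonzero purely imaginary eigenvalues of $J_{\omega,1}L_1|_{X_o^1}$ and of the sign of the energy form $\langle L_1\cdot,\cdot\rangle$ on the corresponding eigenspaces. By Lemma~\ref{tilde Psi0 does not change sign}(2), Theorem~\ref{Psi prime change sign c}(i) and Lemma~\ref{kernel JL}, for $\omega\in(12,72)$ a neutral mode $(c,1,\omega,\Phi)$ with $\Phi$ odd either has $c=c_\omega$ — contributing nothing, since $k_{0,J_{\omega,1}L_1|_{X_o^1}}^{\leq0}=0$ — or has $c\in[0,-12+\omega]$. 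Hence \eqref{index formula 1o1} reduces to the dichotomy: the $3$-jet is spectrally stable at $\omega$ iff there is a neutral mode $(c,1,\omega,\Phi)$ with $\Phi$ odd, $c\in[0,-12+\omega]$ and $\langle L_1\Delta_1\Phi,\Delta_1\Phi\rangle\le0$; otherwise it is linearly unstable.

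Passing to the spectral parameter $\mu=c-\omega$ of \eqref{Rayleigh-type equation 12}, a neutral mode with $c\in[0,-12+\omega]$ corresponds to $\mu\in[-\omega,-12]$, where $15s^2-3+\mu<0$ on $(-1,1)$, so by Lemma~\ref{compact embedding}(ii) the left-hand side of \eqref{Rayleigh-type equation lambda} with $k=1$ defines a self-adjoint operator $\mathcal{R}_{\omega,\mu}$ with compact resolvent on the odd subspace of $X_{\omega,\mu}$, and a neutral mode is exactly the value $-12$ appearing in its spectrum. Since $15s^2-3+\mu<0$, the potential $-\tfrac{2\omega+12\mu}{15s^2-3+\mu}$ is strictly increasing in $\omega$, so every eigenvalue branch $\lambda_j(\mu,\omega)$ of $\mathcal{R}_{\omega,\mu}|_{\mathrm{odd}}$ is strictly increasing in $\omega$ (min--max) and jointly continuous. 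The endpoint $\mu=-12$ is explicit: there $15s^2-3+\mu=-15(1-s^2)$ and \eqref{Rayleigh-type equation lambda} reduces to $((1-s^2)\Phi')'-\tfrac{159-2\omega}{15}\tfrac{1}{1-s^2}\Phi=\lambda\Phi$, an associated-Legendre eigenvalue problem whose largest odd eigenvalue is $-(|m|+1)(|m|+2)$ with $|m|=\sqrt{(159-2\omega)/15}$; this equals $-12$ precisely at $|m|=2$, i.e.\ $\omega=\tfrac{99}{2}$, with eigenfunction $P_3^2(s)=15s(1-s^2)$, exceeds $-12$ for $\omega>\tfrac{99}{2}$, and lies below $-12$ for $\omega<\tfrac{99}{2}$. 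Combining this with the $\omega$-monotonicity and with the behaviour of the branches as $\mu\to-\infty$ (where $\mathcal{R}_{\omega,\mu}\to\Delta_1-12$, whose spectrum avoids $-12$), one shows that a neutral mode with $c\in[0,-12+\omega]$ exists iff $\omega\in[\tfrac{99}{2},72)$, degenerating at $\omega=\tfrac{99}{2}$ to $\mu=-12$, $\Phi=P_3^2$, $c=-12+\omega$ (so $\Delta_1\Phi=45s(4s^2-3)$) and moving into the interior $\mu\in(-\omega,-12)$ for $\omega\in(\tfrac{99}{2},72)$.

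To pin the stability range one checks the energy sign. For a neutral mode with nonzero eigenvalue, \eqref{c omega k} together with $\Delta_1\Phi=\tfrac{\tilde\Upsilon_{\omega}'}{15s^2-3+\mu}\Phi$ gives $L_1\Delta_1\Phi=\tfrac{c-c_\omega}{\tilde\Upsilon_{\omega}'}\Delta_1\Phi$, hence
\begin{align*}
\langle L_1\Delta_1\Phi,\Delta_1\Phi\rangle=(c-c_\omega)\int_{-1}^1\frac{|\Delta_1\Phi|^2}{\tilde\Upsilon_{\omega}'}\,ds=(c-c_\omega)\int_{-1}^1\frac{\tilde\Upsilon_{\omega}'}{(15s^2-3+\mu)^2}\,|\Phi|^2\,ds ,
\end{align*}
which, since $\tfrac{\tilde\Upsilon_{\omega}'}{(15s^2-3+\mu)^2}=\partial_\mu\big(-\tfrac{2\omega+12\mu}{15s^2-3+\mu}\big)$, equals $(c-c_\omega)\,\partial_\mu\lambda_j(\mu,\omega)\,\|\Phi\|_{L^2}^2$ by the Feynman--Hellmann formula. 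As $c\le-12+\omega<c_\omega$ for $\omega<72$, the energy is $\le0$ iff $\partial_\mu\lambda_j\ge0$; at $\omega=\tfrac{99}{2}$, $\mu=-12$, $\Phi=P_3^2$ this derivative is proportional to $\int_{-1}^1(3-4s^2)s^2\,ds=\tfrac25>0$, so the energy is strictly negative there, and it stays negative for $\omega\in(\tfrac{99}{2},72)$ by continuity of the branch. Therefore $k_{i,J_{\omega,1}L_1|_{X_o^1}}^{\leq0}=1$ on $[\tfrac{99}{2},72)$, which by \eqref{index formula 1o1} forces $k_c=k_r=0$, i.e.\ spectral stability; while $k_{i,J_{\omega,1}L_1|_{X_o^1}}^{\leq0}=0$ on $(12,\tfrac{99}{2})$, forcing $k_c+k_r=1$ and hence linear instability. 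The principal obstacle is the global control of the branches $\mu\mapsto\lambda_j(\mu,\omega)$ over all of $[-\omega,-12]$: the $\mu$-derivative of the potential has the sign of $\tilde\Upsilon_{\omega}'$, which changes sign on $(-1,1)$ for $\omega\in(12,72)$, so there is no monotonicity in $\mu$ and the count of branches reaching $-12$, together with the Krein sign attached to each such neutral mode, has to be obtained by a finer case analysis; a secondary point is that the integral $\int|\Delta_1\Phi|^2/\tilde\Upsilon_{\omega}'$ is not sign-definite pointwise and its positivity must be extracted from the structure of the neutral eigenfunction.
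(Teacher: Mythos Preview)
Your overall strategy matches the paper's, and most ingredients are in place; the gap is exactly where you say it is. For stability on $[\tfrac{99}{2},72)$, your ``continuity of the branch'' shortcut is replaceable by a cleaner argument the paper uses: since $\lambda_1(-12,\omega)>-12$ for $\omega>\tfrac{99}{2}$ (your associated-Legendre computation) while $\lambda_1(\mu,\omega)\to-18$ as $\mu\to-\infty$, the intermediate value theorem (plus a separate lemma on left-continuity of $\lambda_1(\cdot,\omega)$ at $\mu=-12$, which needs care because the potential becomes singular there) yields some $\mu_{1,\omega}<-12$ with $\lambda_1(\mu_{1,\omega},\omega)=-12$ and $\partial_\mu\lambda_1(\mu_{1,\omega},\omega)\ge0$; your Feynman--Hellmann identity then gives $\langle L_1\Delta_1\Phi,\Delta_1\Phi\rangle\le0$ directly, with no branch to track. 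A minor inconsistency: you confine $\mu$ to $[-\omega,-12]$ but then invoke $\mu\to-\infty$; the paper simply studies the eigenvalue problem on all of $(-\infty,-12]$, and the bound $\mu\ge-\omega$ from Lemma~\ref{tilde Psi0 does not change sign}(2) holds a posteriori for any neutral mode.

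The real hole is instability on $(12,\tfrac{99}{2})$. By $\omega$-monotonicity this reduces to $\lambda_1(\mu,\tfrac{99}{2})\le-12$ for all $\mu\le-12$, and you correctly observe that $\mu$-monotonicity is unavailable because $\partial_\mu$ of the potential carries the sign of $\tilde\Upsilon_\omega'$, which changes on $(-1,1)$. The paper closes this not by a finer ODE analysis but by turning the index formula \eqref{index formula 1o1} back on itself. Suppose $\lambda_1(\tilde\mu_1,\tfrac{99}{2})\ge-12$ for some $\tilde\mu_1<-12$. Since $\lambda_1(\mu,\tfrac{99}{2})\to-18$ as $\mu\to-\infty$, there is $\tilde\mu_2\le\tilde\mu_1$ with $\lambda_1(\tilde\mu_2,\tfrac{99}{2})=-12$ and $\partial_\mu\lambda_1(\tilde\mu_2,\tfrac{99}{2})\ge0$, producing a second odd neutral mode with energy $\le0$. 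Together with the one at $\mu=-12$ (whose energy you computed to be strictly negative), this forces $k_{i,J_{99/2,1}L_1|_{X_o^1}}^{\le0}\ge2$, contradicting the right-hand side $1$ in \eqref{index formula 1o1}. So the ``finer case analysis'' you anticipate is in fact a single contradiction argument, and the index count is both the goal and the tool.
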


By Theorem \ref{linear instability}, the $3$-jet is  linearly unstable  for $\omega\in\left(0,12\right]$. By Rayleigh's criterion, the $3$-jet is spectrally stable  for $\omega\in\left[72,\infty\right)$. Combining the above results and Theorem \ref{k=1 positive half-line critical rotation rate}, we  rigorously prove that the critical rotation rate of the $3$-jet for the positive half-line for $k=1$ is
$\omega={99\over2}$, which confirms the numerical result in \cite{Sasaki-Takehiro-Yamada2012}.

Instead of directly providing in the proof, we first discuss the main ideas of the approach in the following remark.

\begin{Remark}\label{ideas thm k=1 positive half-line critical rotation rate and k=2 positive half-line critical rotation rate}
 The ideas in the proof of the above  theorem
  are as follows.
 First,  since $\omega>12$,  by Lemma \ref{tilde Psi0 does not change sign} we have $\lambda_{1}(\mu,\omega)\neq-12$ for $\mu\geq 3$. This means that we only need to study  $\lambda_{1}(\mu,\omega)$  with $\mu\leq -12$. For the endpoint case $\mu=-12$, the Rayleigh equation \eqref{Rayleigh-type equation lambda} has  no singularity in $(-1,1)$ when $\omega\in[12,72]$.  For $\omega\in[12,72]$, the principal eigenvalues  $\lambda_{1}(-12,\omega)$  can be solved explicitly using a transformation \eqref{transformation mu=-12} and Gegenbauer polynomials. Moreover, $\lambda_1(-12,{99\over2})=-12$ and $\lambda_1(-12,\omega)$ is increasing and continuous  on $\omega\in[12,72]$, see Lemmas \ref{ometa=12}-\ref{principla eigenvalue monotonicity omega}. We further need a delicate analysis  to study the spectral left-continuity of $\lambda_1(\cdot,\omega)$ at $\mu=-12$, see Lemma \ref{continuity of the principal eigenvalue mu -12}.
This, along with $\lim_{\mu\to-\infty}\lambda_1(\mu,\omega)=-18$ in Lemma \ref{asymptotic behavior principal eigenvalue lim mu -infty}, essentially gives a neutral mode with desired signature of the quadratic form $\langle L_1\cdot,\cdot\rangle$. Thus, $k_{i,J_{\omega,1}L_1|_{X_o^1}}^{\leq0}=1$ for $\omega\in\left[{99\over2},72\right)$. This proves  spectral stability for $\omega\in\left[{99\over2},72\right)$.
For the instability part, a key observation is that $\lambda_1(\mu,{99\over2})<-12$ for $\mu<-12$. This, combining with the monotonicity of $\lambda_1(\mu,\cdot)$ on $\omega$, implies that there are no neutral modes for $\omega\in(12,{99\over2})$.
\end{Remark}
Let $\mu\in(-\infty,-12]$ and $\omega\in\mathbb{R}$.
By \eqref{L1e2ok3nonnegative}-\eqref{L1o2enegative} and the index formula \eqref{index formula 1o1}, we only need to consider the space of odd functions
 \begin{align}\label{def-X-omega-mu-o}
X_{\omega,\mu,o}=\left\{\Phi\in X_{\omega,\mu}| \Phi \text{ is odd}\right\}.
 \end{align}
 By Lemma \ref{compact embedding}, all the eigenvalues of the eigenvalue problem
  \begin{align}\label{Rayleigh-type equation lambda k=1}
  ((1-s^2)\Phi')'-{1\over 1-s^2}\Phi-{2\omega+12\mu\over 15s^2-3+\mu}\Phi=\lambda\Phi, \quad\Delta_1\Phi\in L^2(-1,1),
 \end{align}
 (restricted to the space $X_{\omega,\mu,o}$) are arranged in a sequence  $-\infty<\cdots\leq \lambda_{n}(\mu,\omega)\leq \cdots \leq \lambda_{1}(\mu,\omega)$, which can be defined by
 \begin{align}\label{def-lambda-n}
\lambda_{n}(\mu,\omega)=& \sup_{\Phi \in X_{\omega,\mu,o}, (\Phi, \Phi_{i})_{L^2} = 0, i = 1, 2, \cdots, n-1}{\int_{-1}^1\left(-(1-s^2)|\Phi'|^2-{1\over 1-s^2}|\Phi|^2-{2\omega+12\mu\over 15s^2-3+\mu}|\Phi|^2\right) ds\over\int_{-1}^1|\Phi|^2ds},
\end{align}
where the supremum for $\lambda_{n}(\mu,\omega)$ is attained at $\Phi_{n} \in  X_{\omega,\mu,o}$.
Here, $\lambda_{1}(\mu,\omega)$ is called the principal (i.e. maximal) eigenvalue of \eqref{Rayleigh-type equation lambda k=1}.
\begin{Remark}
One can define the minimal eigenvalue of
\begin{align*}
  -((1-s^2)\Phi')'+{1\over 1-s^2}\Phi+{2\omega+12\mu\over 15s^2-3+\mu}\Phi=\nu\Phi, \quad\Delta_1\Phi\in L^2(-1,1)
 \end{align*}
 to be the principal eigenvalue, and  its  variational expression is given by
 \begin{align*}
\nu_{n}(\mu,\omega)=& \inf_{\Phi \in X_{\omega,\mu,o}, (\Phi, \Phi_{i})_{L^2} = 0, i = 1, 2, \cdots, n-1}{\int_{-1}^1\left((1-s^2)|\Phi'|^2+{1\over 1-s^2}|\Phi|^2+{2\omega+12\mu\over 15s^2-3+\mu}|\Phi|^2\right) ds\over\int_{-1}^1|\Phi|^2ds}.
\end{align*}
 This is more commonly used in the Sturm-Liouville theory. Our choice of $\lambda_n(\mu,\omega)=- \nu_{n}(\mu,\omega)$ is just to facilitate the analysis of our problem.
\end{Remark}

Let $\omega\in(12,72)$. Then $(c,1,\omega,\Phi)$ is a neutral mode with $c\in[0,-12+\omega]$ if and only if the eigenvalue problem \eqref{Rayleigh-type equation lambda k=1} has an eigenvalue $\lambda_{n_0}(\mu,\omega)=-12$ for some $n_0\geq1$ and   $\Phi$ is a corresponding eigenfunction, where $\mu=-\omega+c$. Thus, to look for a neutral mode with $\omega\in(12,72)$, we study whether  $\lambda_{n}(\mu,\omega)=-12$  has a solution $\mu\in[-\omega,-12]$ and $n\geq1$.
Once we can find  $\mu_1\in[-\omega,-12]$ so that $(c_1,1,\omega,\Phi_{\mu_1,\omega})$ is a neutral mode with $\mu_1=-\omega+c_1$, we obtain an imaginary eigenvalue $-i(c_1-c_\omega)$ for $J_{\omega,1}L_1$ with a corresponding eigenfunction $\Upsilon_{\mu_1,\omega}=\Delta_1\Phi_{\mu_1,\omega}$. To compute $k_{i,J_{\omega,1}L_1|_{X_o^1}}^{\leq0}$, by its definition
 we need to compute the quadratic form $\langle L_1\cdot,\cdot\rangle$ restricted to the
generalized eigenspace of $-i(c_1-c_\omega)$. We give a method to compute  $\langle L_1\Upsilon_{\mu_1,\omega},\Upsilon_{\mu_1,\omega}\rangle$.

\begin{Lemma}\label{quadratic form-computation}
$(\rm{i})$ Let $\omega\geq12$ and  $(c_{1},1,\omega,\Phi_{\mu_1,\omega})$ be
a  neutral mode, where $c_1\leq-12+\omega$ and $\mu_1=-\omega+c_{1}$. Then $\lambda_{n_0}(\mu_1,\omega)=-12$
for  some $n_{0}\geq1$ and
\begin{align}\label{equality-L-form-derivative}%
\langle L_{1}\Upsilon_{\mu_{1},\omega},\Upsilon_{\mu_{1},\omega}\rangle
=&(c_1-c_\omega)\int_{-1}^1{\tilde \Upsilon_{\omega}'\over (\tilde \Psi_{\omega}'+c_1)^2}|\Phi_{\mu_{1},\omega}|^2ds\\\nonumber
=&(c_1-c_\omega)\int_{-1}^1{-12(15s^2-3)+2\omega \over (15s^2-3+\mu_1)^2}|\Phi_{\mu_{1},\omega}|^2ds,
\end{align}
where $\Upsilon_{\mu_{1},\omega}=\Delta_1\Phi_{\mu_{1},\omega}$ and $c_\omega={5\over 6}\omega$.

$(\rm{ii})$ Under the assumptions of $(\rm{i})$, if $c_1<-12+\omega$ and $\|\Phi_{\mu_{1},\omega}\|_{L^2(-1,1)}=1$, then
\begin{equation}
\langle L_{1}\Upsilon_{\mu_{1},\omega},\Upsilon_{\mu_{1},\omega}\rangle
=(c_1-c_\omega)\partial_\mu \lambda_{n_0}(\mu_1,\omega).
\label{equality-L-form-derivative2}%
\end{equation}
\end{Lemma}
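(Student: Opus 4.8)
The plan is to derive part (i) by substituting the Rayleigh equation satisfied by the neutral mode into the quadratic form $\langle L_1\cdot,\cdot\rangle$ and then invoking the algebraic identity \eqref{c omega k}; and to derive part (ii) from (i) by analytic perturbation theory for the eigenvalue $\lambda_{n_0}(\cdot,\omega)$ (a Feynman--Hellmann computation). For part (i), put $\Phi:=\Phi_{\mu_1,\omega}$ (an odd function, being the neutral mode that enters the index $k_{i,J_{\omega,1}L_1|_{X_o^1}}^{\leq 0}$) and $\Upsilon:=\Upsilon_{\mu_1,\omega}=\Delta_1\Phi$. Since $(c_1,1,\omega,\Phi)$ is a neutral mode, $\Phi$ solves \eqref{Rayleigh-type equation} with $k=1$, $c=c_1$, i.e. $\Delta_1\Phi=\frac{\tilde\Upsilon_{\omega}'}{\tilde\Psi_{\omega}'+c_1}\Phi$; rewriting this with $\mu_1=-\omega+c_1$ exhibits $\Phi$ as an eigenfunction of \eqref{Rayleigh-type equation lambda k=1} with spectral value $-12$. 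As $c_1\le -12+\omega$ gives $\mu_1\le -12$, Lemma \ref{compact embedding}(ii) applies, so the eigenvalues of \eqref{Rayleigh-type equation lambda k=1} on the odd subspace are the decreasing sequence in \eqref{def-lambda-n} and (the endpoints $s=\pm1$ being in the limit-point case, whence the operator is essentially self-adjoint with compact resolvent) they are simple; hence $-12=\lambda_{n_0}(\mu_1,\omega)$ for some $n_0\ge 1$. Then I would use $L_1=\frac{1}{12}+\Delta_1^{-1}$ from \eqref{def-ham-JL}, $\Delta_1^{-1}\Upsilon=\Phi$, and $\Upsilon=\frac{\tilde\Upsilon_{\omega}'}{\tilde\Psi_{\omega}'+c_1}\Phi$ to write
\begin{align*}
\langle L_1\Upsilon,\Upsilon\rangle &= \frac{1}{12}\int_{-1}^1\Upsilon^2\,ds+\int_{-1}^1\Phi\,\Upsilon\,ds\\
&= \int_{-1}^1\frac{\tilde\Upsilon_{\omega}'}{\tilde\Psi_{\omega}'+c_1}\Big(\frac{\tilde\Upsilon_{\omega}'}{12(\tilde\Psi_{\omega}'+c_1)}+1\Big)\Phi^2\,ds.
\end{align*}
By \eqref{c omega k}, $\tilde\Upsilon_{\omega}'=-12(\tilde\Psi_{\omega}'+c_\omega)$, so the parenthesis equals $\frac{c_1-c_\omega}{\tilde\Psi_{\omega}'+c_1}$, giving the first equality of \eqref{equality-L-form-derivative}; substituting $\tilde\Psi_{\omega}'+c_1=15s^2-3+\mu_1$ and $\tilde\Upsilon_{\omega}'=-12(15s^2-3)+2\omega$ from \eqref{def-tilde-Upsilon0-derivative} gives the second.

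For part (ii), the aim is to recognize the integral in \eqref{equality-L-form-derivative}, now with $\|\Phi\|_{L^2(-1,1)}=1$, as $\partial_\mu\lambda_{n_0}(\mu_1,\omega)$. Write \eqref{Rayleigh-type equation lambda k=1} as $A(\mu)\Phi=\lambda\Phi$ with $A(\mu)\Phi=((1-s^2)\Phi')'-\frac{1}{1-s^2}\Phi-V(\cdot,\mu)\Phi$ and $V(s,\mu)=\frac{2\omega+12\mu}{15s^2-3+\mu}$. Since now $c_1<-12+\omega$, i.e. $\mu_1<-12$, one has $15s^2-3+\mu\le \mu+12<0$ uniformly on $[-1,1]$ for $\mu$ near $\mu_1$, so $V(\cdot,\mu)$ is a bounded multiplication operator depending real-analytically on $\mu$; thus $\{A(\mu)\}$ is a self-adjoint analytic family of type (A) with compact resolvent (Lemma \ref{compact embedding}(ii)). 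Since $\lambda_{n_0}(\mu_1,\omega)$ is simple, Kato--Rellich theory produces an analytic branch $\mu\mapsto(\lambda_{n_0}(\mu,\omega),\Phi_{\mu,\omega})$ with $\|\Phi_{\mu,\omega}\|_{L^2}=1$, and the Feynman--Hellmann formula yields $\partial_\mu\lambda_{n_0}(\mu_1,\omega)=-\int_{-1}^1\partial_\mu V(s,\mu_1)\Phi^2\,ds$. A short computation gives $\partial_\mu V(s,\mu)=\frac{180s^2-36-2\omega}{(15s^2-3+\mu)^2}=-\frac{\tilde\Upsilon_{\omega}'}{(15s^2-3+\mu)^2}$, hence $\partial_\mu\lambda_{n_0}(\mu_1,\omega)=\int_{-1}^1\frac{\tilde\Upsilon_{\omega}'}{(15s^2-3+\mu_1)^2}\Phi^2\,ds$, which equals $(c_1-c_\omega)^{-1}\langle L_1\Upsilon,\Upsilon\rangle$ by \eqref{equality-L-form-derivative}; this is \eqref{equality-L-form-derivative2}.

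The algebra in (i) and the derivative $\partial_\mu V$ are routine. The main obstacle is the perturbation-theoretic step in (ii): one must verify that $\lambda_{n_0}(\cdot,\omega)$ remains a simple, isolated eigenvalue of a genuinely analytic family near $\mu_1$. This rests on the singular Sturm--Liouville structure of \eqref{Rayleigh-type equation lambda k=1} (limit-point endpoints at $s=\pm1$, forced behavior $\Phi(\pm1)=0$, hence compact resolvent and simple discrete spectrum) together with the boundedness and analyticity of $V(\cdot,\mu)$ for $\mu$ near $\mu_1<-12$ --- which is precisely why the strict inequality $c_1<-12+\omega$ is imposed in (ii): at $\mu_1=-12$ the potential $V$ develops an endpoint singularity of the same order as $\frac{1}{1-s^2}$, and analytic dependence on $\mu$ breaks down.
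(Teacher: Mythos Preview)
Your proposal is correct and follows essentially the same approach as the paper. For (i) both you and the paper substitute the Rayleigh relation $\Upsilon=\frac{\tilde\Upsilon_\omega'}{\tilde\Psi_\omega'+c_1}\Phi$ into $\langle L_1\Upsilon,\Upsilon\rangle$ and reduce via \eqref{c omega k}; for (ii) the paper carries out the Feynman--Hellmann computation by hand (subtracting the weak forms at $\mu$ and $\mu_1$ and passing to the limit, justified by uniform $H_2^2$ bounds), whereas you invoke Kato--Rellich for an analytic type~(A) family, but the content is the same derivative formula $\partial_\mu\lambda_{n_0}(\mu_1,\omega)=\int_{-1}^1\frac{\tilde\Upsilon_\omega'}{(15s^2-3+\mu_1)^2}\Phi^2\,ds$.
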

\begin{proof} (i)
By the definition of $L_1$, we have
\begin{align*}
\langle L_{1}\Upsilon_{\mu_{1},\omega},\Upsilon_{\mu_{1},\omega}\rangle=\int_{-1}^1\left({1\over 12}\Upsilon_{\mu_{1},\omega}+\Phi_{\mu_{1},\omega} \right)\Upsilon_{\mu_{1},\omega}ds.
\end{align*}
By \eqref{Rayleigh-type equation}, $\Phi_{\mu_1,\omega}$ satisfies
\begin{align*}
\Upsilon_{\mu_1,\omega}-{\tilde \Upsilon_{\omega}'\over \tilde \Psi_{\omega}'+c_1}\Phi_{\mu_1,\omega}=\Delta_1\Phi_{\mu_1,\omega}-{\tilde \Upsilon_{\omega}'\over \tilde \Psi_{\omega}'+c_1}\Phi_{\mu_1,\omega}=0.
\end{align*}
Then
\begin{align*}
{\Phi_{\mu_1,\omega}\over \Upsilon_{\mu_1,\omega}}={ \tilde \Psi_{\omega}'+c_1\over\tilde \Upsilon_{\omega}'}={ \tilde \Psi_{\omega}'+c_\omega-c_\omega+c_1\over\tilde \Upsilon_{\omega}'}=-{1\over12}-{ c_\omega-c_1\over\tilde \Upsilon_{\omega}'}.
\end{align*}
Thus,
$
{1\over12}\Upsilon_{\mu_1,\omega}+{\Phi_{\mu_1,\omega}}=-{ c_\omega-c_1\over\tilde \Upsilon_{\omega}'}\Upsilon_{\mu_1,\omega}
$
 and
 \begin{align}\label{comp-lk}
\langle L_{1}\Upsilon_{\mu_1,\omega},\Upsilon_{\mu_1,\omega}\rangle=(c_1-c_\omega)\int_{-1}^1{1\over\tilde \Upsilon_{\omega}'}\Upsilon_{\mu_1,\omega}^2ds=(c_1-c_\omega)\int_{-1}^1{\tilde \Upsilon_{\omega}'\over (\tilde \Psi_{\omega}'+c_1)^2}\Phi_{\mu_1,\omega}^2ds.
\end{align}

(ii)
For the $n_0$-th eigenvalue $\lambda=\lambda_{n_0}(\mu,\omega)$ of \eqref{Rayleigh-type equation lambda k=1} with
$\mu$ near $\mu_1$ (i.e. $c$ near $c_1$), there exists an eigenfunction   $\Phi_{\mu,\omega}$
with $\Vert\Phi_{\mu,\omega}\Vert_{L^{2}(-1,1)}=1$.
Then
\begin{align*}
&\int_{-1}^1\left(-(1-s^2)\Phi_{\mu,\omega}'\Phi_{\mu_1,\omega}'-{1\over 1-s^2}\Phi_{\mu,\omega}\Phi_{\mu_1,\omega}-{2\omega+12\mu\over 15s^2-3+\mu}\Phi_{\mu,\omega}\Phi_{\mu_1,\omega}\right)ds\\
=&\lambda_{n_0}(\mu,\omega)\int_{-1}^1\Phi_{\mu,\omega}\Phi_{\mu_1,\omega}ds,\\
&\int_{-1}^1\left(-(1-s^2)\Phi_{\mu_1,\omega}'\Phi_{\mu,\omega}'-{1\over 1-s^2}\Phi_{\mu_1,\omega}\Phi_{\mu,\omega}-{2\omega+12\mu_1\over 15s^2-3+\mu_1}\Phi_{\mu_1,\omega}\Phi_{\mu,\omega}\right)ds\\
=&\lambda_{n_0}(\mu_1,\omega)\int_{-1}^1\Phi_{\mu_1,\omega}\Phi_{\mu,\omega}ds.
\end{align*}
Thus,
\begin{align*}
(\lambda_{n_0}(\mu,\omega)-\lambda_{n_0}(\mu_1,\omega))\int_{-1}^1\Phi_{\mu_1,\omega}\Phi_{\mu,\omega}ds=\int_{-1}^1\left(-{2\omega+12\mu\over 15s^2-3+\mu}+{2\omega+12\mu_1\over 15s^2-3+\mu_1}\right)\Phi_{\mu,\omega}\Phi_{\mu_1,\omega} ds.
\end{align*}
Since $\mu_1<-12$, it follows from \eqref{Rayleigh-type equation lambda k=1} that $e^{i\varphi}\Phi_{\mu,\omega}$, $|\mu-\mu_1|\ll1$, have a uniform $H_2^2(\mathbb{S}^2)$ bound, and thus,
$\Phi_{\mu,\omega}\to\Phi_{\mu_1,\omega}$ in $L^2(-1,1)$ as $\mu\to\mu_1$ (see Lemma \ref{lem-differential calculus} (ii)).
Then
\begin{align}\label{comp-eigenvalue derivative}
\partial_\mu\lambda_{n_0}(\mu_1,\omega)=\int_{-1}^1{\tilde \Upsilon_{\omega}'\over (\tilde \Psi_{\omega}'+c_1)^2}|\Phi_{\mu_1,\omega}|^2 ds.
\end{align}
Combining \eqref{comp-lk} and \eqref{comp-eigenvalue derivative}, we get \eqref{equality-L-form-derivative2}.
\end{proof}

By the numerical result in \cite{Sasaki-Takehiro-Yamada2012}, the critical rotation rate  of linear instability/stability for the positive half-line is $\omega_{cr}^+={99\over2}$.
In the next lemma, we show that when $k=1$, the $3$-jet is spectrally stable for  the critical point ${99\over2}$.

\begin{Lemma}\label{ometa=12}
$(\rm{i})$ Let $k=1$. Then the $3$-jet is spectrally stable for $\omega={99\over2}$.

$(\rm{ii})$ For the eigenvalue problem \eqref{Rayleigh-type equation lambda k=1}, the principal eigenvalue is $\lambda_1\left(-12,{99\over2}\right)=-12$ with a corresponding eigenfunction $P_3^2$ satisfying $ \langle L_{1}\Delta_1P_3^2,\Delta_1P_3^2\rangle<0$.

\end{Lemma}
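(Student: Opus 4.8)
The plan is to prove part (ii) first and then deduce part (i) from it via the index formula \eqref{index formula 1o1}. For part (ii), the key observation is that the singular potential in \eqref{Rayleigh-type equation lambda k=1} degenerates exactly at the endpoint $\mu=-12$ when $\omega=\tfrac{99}{2}$: there $15s^2-3+\mu=15(s^2-1)$ and $2\omega+12\mu=99-144=-45$, so $-\tfrac{2\omega+12\mu}{15s^2-3+\mu}=-\tfrac{3}{1-s^2}$ and \eqref{Rayleigh-type equation lambda k=1} collapses to $((1-s^2)\Phi')'-\tfrac{4}{1-s^2}\Phi=\Delta_2\Phi=\lambda\Phi$. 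Since $\mu=-12$ is still covered by Lemma \ref{compact embedding}(ii), the Rayleigh quotient in \eqref{def-lambda-n} at $(\mu,\omega)=(-12,\tfrac{99}{2})$ is the quotient of $\Delta_2$ restricted to the odd subspace $X_{\omega,\mu,o}$ of \eqref{def-X-omega-mu-o}, whose eigenfunctions are the associated Legendre polynomials $P_l^2$ with $l\ge3$ odd (recall $P_l^2$ has parity $(-1)^l$) and eigenvalue $-l(l+1)$; moreover $P_3^2(s)=15s(1-s^2)$ lies in $X_{\omega,\mu,o}$ because it vanishes to first order at $s=\pm1$, so all three weighted integrals in \eqref{def-X-omega-mu} converge. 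Hence $\lambda_1(-12,\tfrac{99}{2})=-3\cdot4=-12$, attained at $\Phi=P_3^2$. Consequently, setting $c_1:=-12+\omega=\tfrac{75}{2}$ so that $\mu_1:=-\omega+c_1=-12$, the quadruple $(c_1,1,\tfrac{99}{2},P_3^2)$ solves \eqref{Rayleigh-type equation} with $\Delta_1P_3^2$ a polynomial (hence in $L^2(-1,1)$), i.e.\ it is a neutral mode, lying on the boundary $c_1=-12+\omega$ of $\text{Ran}(-\tilde\Psi_\omega')$.

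To obtain the sign of the energy form I would invoke Lemma \ref{quadratic form-computation}(i), whose formula \eqref{equality-L-form-derivative} already covers the boundary case $c_1=-12+\omega$, and evaluate it explicitly. By \eqref{def-tilde-Upsilon0-derivative}, at $\omega=\tfrac{99}{2}$, $\mu_1=-12$ one has $\tilde\Upsilon_\omega'(s)=-180s^2+135$, $\tilde\Psi_\omega'(s)+c_1=15s^2-15$, $|P_3^2(s)|^2=225s^2(1-s^2)^2$, hence
\[
\int_{-1}^{1}\frac{\tilde\Upsilon_\omega'}{(\tilde\Psi_\omega'+c_1)^2}\,|P_3^2|^2\,ds=\int_{-1}^{1}(-180s^2+135)\,s^2\,ds=-72+90=18>0,
\]
while $c_1-c_\omega=\tfrac{75}{2}-\tfrac56\cdot\tfrac{99}{2}=-\tfrac{15}{4}<0$ by \eqref{c omega k}. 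Therefore $\langle L_1\Delta_1P_3^2,\Delta_1P_3^2\rangle=-\tfrac{15}{4}\cdot18<0$, which completes part (ii).

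For part (i): by Remark \ref{neutral-imaginary} the neutral mode just found produces a nonzero purely imaginary eigenvalue $-i(c_1-c_\omega)=\tfrac{15}{4}i$ of $J_{\omega,1}L_1$ at $\omega=\tfrac{99}{2}$, with eigenfunction $\Delta_1P_3^2$; this eigenfunction is odd, so it lies in $X_o^1$, and by part (ii) the form $\langle L_1\cdot,\cdot\rangle$ is negative on it. Hence $k_{i,J_{\omega,1}L_1|_{X_o^1}}^{\leq0}\geq1$. Since Lemma \ref{kernel JL} gives $k_{0,J_{\omega,1}L_1|_{X_o^1}}^{\leq0}=0$, the index formula \eqref{index formula 1o1} forces $k_{i,J_{\omega,1}L_1|_{X_o^1}}^{\leq0}=1$ and $k_{c,J_{\omega,1}L_1|_{X_o^1}}=k_{r,J_{\omega,1}L_1|_{X_o^1}}=0$; together with $L_1|_{X_e^1}\geq0$ from \eqref{L1e2ok3nonnegative} (so the even part carries no unstable spectrum either), $J_{\omega,1}L_1$, equivalently $\mathcal{L}_{\omega,1}$, has purely imaginary spectrum at $\omega=\tfrac{99}{2}$, i.e.\ the $3$-jet is spectrally stable for the first Fourier mode.

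The computations above are elementary; the point that requires genuine care is the spectral assertion in part (ii) — that at $(\mu,\omega)=(-12,\tfrac{99}{2})$ the number $-12$ is not merely \emph{an} eigenvalue but the \emph{maximal} one on the odd subspace — since this is precisely what singles out $(c_1,1,\tfrac{99}{2},P_3^2)$ as \emph{the} relevant neutral mode and, in the surrounding argument, pins the instability boundary at $\omega_{cr,1}^+=\tfrac{99}{2}$. This rests on the collapse of the Rayleigh operator to $\Delta_2$ at this parameter value, on the parity $(-1)^l$ of $P_l^2$ (so that the lowest admissible odd mode is $l=3$), and on Lemma \ref{compact embedding}(ii) remaining valid at the endpoint $\mu=-12$, which is what legitimizes the variational characterization \eqref{def-lambda-n} there.
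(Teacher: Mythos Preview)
Your proof is correct and follows essentially the same route as the paper's: identify that at $(\mu,\omega)=(-12,\tfrac{99}{2})$ the Rayleigh operator collapses to $\Delta_2$, read off the principal odd eigenpair $(-12,P_3^2)$, evaluate $\langle L_1\Delta_1P_3^2,\Delta_1P_3^2\rangle$ via Lemma \ref{quadratic form-computation}(i), and close with the index formula \eqref{index formula 1o1}. The only difference is organizational---you prove (ii) first and deduce (i), whereas the paper embeds the quadratic-form computation inside (i) and records (ii) afterward---and your numerics $-\tfrac{15}{4}\cdot18=-\tfrac{135}{2}$ match the paper's value.
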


\begin{proof} (i)
Note that for any neutral mode $(c,k,\omega,\Phi)$ with $c\neq c_\omega$, we have $c\in[0,-12+\omega]$ by Lemma \ref{tilde Psi0 does not change sign} and Theorem \ref{Psi prime change sign c}, where $\Phi$ is odd.
 The motivation is from investigating for which $\omega$, $c=-12+\omega$ (i.e. $\mu=-12$) is neutral.
Putting  $\mu=-12$ and  $\lambda=-12$ into \eqref{Rayleigh-type equation lambda k=1}, we have
\begin{align}\label{c-12+omega}
   ((1-s^2)\Phi')'+{-1-{144-2\omega\over15}\over 1-s^2}\Phi+12\Phi=0, \quad
\Delta_1 \Phi\in L^2(-1,1).
 \end{align}
If $-1-{144-2\omega\over15}=-m^2$ with $m=0,1,2,3$, then \eqref{c-12+omega} can be solvable.

For $m=0\Rightarrow\omega={159\over 2}$ and
$m=1\Rightarrow\omega=72$, the $3$-jet is spectrally stable by Rayleigh's criterion.

For $m=2$, we have $\omega={99\over2}$, $c={75\over2}$ and $c_\omega={165\over4}$. The equation \eqref{c-12+omega} has a solution $P_3^2(s)=15s(1-s^2)$. Since it is an odd function, $\Upsilon(\varphi,s)=e^{i\varphi}\Delta_1P_3^2(s)$ satisfies the constraints $\iint_{D_T} \Upsilon d\varphi ds=0$ and $ \iint_{D_T}\Upsilon Y_1^md\varphi ds=0,  m=0,\pm1$.
Moreover,
by Lemma \ref{quadratic form-computation} (i), we have
\begin{equation*}
\langle L_{1}\Delta_1P_3^2,\Delta_1P_3^2\rangle
=225\left({75\over2}-{165\over4}\right)\int_{-1}^1{-180s^2+135\over (15s^2-15)^2}s^2(1-s^2)^2ds=-{135\over2}<0.
\end{equation*}
Then
 $k_{i,J_{\omega,1}L_1|_{X_o^1}}^{\leq0} =1$ and by the index formula  \eqref{index formula 1o1}, we have
 $ k_{c,J_{\omega,1}L_1|_{X_o^1}}+ k_{r,J_{\omega,1}L_1|_{X_o^1}}=0.$ Thus, the $3$-jet is spectrally stable for $k=1$.

 For $m=3$, we have $\omega=12$, $c=0$ and $c_\omega=10$. The equation \eqref{c-12+omega} has a solution $\Phi(s)=P_3^3(s)=-15(1-s^2)^{3\over2}$. By \eqref{vorticity Y1-1 constraint}, this solution does not satisfy $\iint_{D_T}\Upsilon Y_1^{-1}d\varphi ds=0$ for $\Upsilon=e^{i\varphi}\Delta_1P_3^3$.

 (ii) When $\mu=-12$ and $\omega={99\over2}$, the eigenvalue problem \eqref{Rayleigh-type equation lambda k=1} becomes
\begin{align*}
 ((1-s^2)\Phi')'-{4\over 1-s^2}\Phi=\lambda\Phi, \quad
\Delta_1 \Phi\in L^2(-1,1).
 \end{align*}
 This is the general Legendre equation of order $2$. Since the eigenvalue problem is restricted to the odd space $X_{\omega,\mu,o}$,
 the principal eigenvalue is $\lambda_1\left(-12,{99\over2}\right)=-12$ with a corresponding eigenfunction $P_3^2$.
\end{proof}
We regard the  the principal eigenvalue $\lambda_1(\mu,\omega)$ as a function of $\mu\in(-\infty,-12]$ and $\omega\in[12,72]$. Now, we study some  properties of this function $\lambda_1(\mu,\omega)$ as mentioned in Remark \ref{ideas thm k=1 positive half-line critical rotation rate and k=2 positive half-line critical rotation rate}, which play an important role in computing the index $k_{i,J_{\omega,1}L_1|_{X_o^1}}^{\leq0}$.  The first property is the monotonicity of  $\lambda_1(\mu,\omega)$ with respect to $\omega$, and explicit eigenpairs for $\mu=-12$.
\begin{Lemma}\label{principla eigenvalue monotonicity omega}
$(\rm{i})$  For $\mu\in (-\infty,-12)$,
\begin{align*}
\partial_\omega\lambda_1(\mu,\omega)=-\int_{-1}^1{2\over15s^2-3+\mu}|\Phi_{\mu,\omega}|^2 ds>0, \quad \omega\in[12,72],
\end{align*}
where $\Phi_{\mu,\omega}$ is a $L^2$ normalized eigenfunction of $\lambda_1(\mu,\omega)$ for the eigenvalue problem \eqref{Rayleigh-type equation lambda k=1}.

$(\rm{ii})$  For $\mu=-12$,
\begin{align}\label{principal eigenvalue -12 omega}
\lambda_1(-12,\omega)=-\left(1+\sqrt{1-{2\omega-144\over 15}}\right)\left(2+\sqrt{1-{2\omega-144\over 15}}\right)\in[-20,-6]
\end{align}
with $\omega\in[12,72]$,
and a corresponding eigenfunction is given by
\begin{align}\label{eigenfunction of principal eigenvalue -12 omega}
\Phi_{-12,\omega}(s)=(1-s^2)^{{\sqrt{1-{2\omega-144\over 15}}\over2}}s, \quad s\in [-1,1].
\end{align}

$(\rm{iii})$  For $\mu\in (-\infty,-12]$,
$\lambda_1(\mu,\cdot)$ is increasing on $\omega\in[12,72]$.
\end{Lemma}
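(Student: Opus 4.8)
The plan is to view $\lambda_1(\mu,\cdot)$, for each fixed $\mu\in(-\infty,-12]$, as the top of the discrete spectrum of a self-adjoint operator on a Hilbert space that does not depend on $\omega$, and to differentiate it in $\omega$ by the Feynman--Hellmann formula; this gives (i), and together with the closed-form solution of the $\mu=-12$ equation it gives (iii) as well, while (ii) is obtained by solving the limiting equation explicitly.

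\emph{Part (i).} Fix $\mu<-12$. Since $15s^2-3+\mu\le 12+\mu<0$ on $[-1,1]$, the coefficient $\tfrac{2\omega+12\mu}{15s^2-3+\mu}$ is bounded and continuous there, and $\bigl|\int_{-1}^1\tfrac{2\omega+12\mu}{15s^2-3+\mu}|\Phi|^2\,ds\bigr|\le C\int_{-1}^1\tfrac{1}{1-s^2}|\Phi|^2\,ds$ as in the proof of Lemma~\ref{compact embedding}(ii); hence \eqref{def-X-omega-mu-o} coincides as a set, with equivalent norm, with the fixed space $\tilde X_o:=\{\Phi\in\tilde X:\ \Phi\ \text{odd}\}$ ($\tilde X$ being \eqref{def-tilde X}) uniformly in $\omega\in[12,72]$. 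Let $A_\omega$ be the positive self-adjoint operator on the odd part of $L^2(-1,1)$ associated with the form $\Phi\mapsto\int_{-1}^1\bigl((1-s^2)|\Phi'|^2+\tfrac{1}{1-s^2}|\Phi|^2+\tfrac{2\omega+12\mu}{15s^2-3+\mu}|\Phi|^2\bigr)\,ds$ on $\tilde X_o$; by Lemma~\ref{compact embedding} its resolvent is compact, so its spectrum is discrete and, by \eqref{def-lambda-n}, $\lambda_1(\mu,\omega)=-\min\sigma(A_\omega)$. By oddness each eigenfunction vanishes at $s=0$, so the problem is equivalent to a finite-energy Dirichlet Sturm--Liouville problem on $(0,1)$, whose lowest eigenvalue is simple with a positive eigenfunction; thus $\min\sigma(A_\omega)$ is simple. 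The $\omega$-dependence of $A_\omega$ is only through the bounded multiplication by $\tfrac{2}{15s^2-3+\mu}$, linear in $\omega$, so $\{A_\omega\}$ is a holomorphic family of type (B); Kato's perturbation theory then yields a real-analytic branch $\omega\mapsto\lambda_1(\mu,\omega)$ on $[12,72]$ with a real-analytic branch of $L^2$-normalized eigenfunctions $\Phi_{\mu,\omega}$. Differentiating $A_\omega\Phi_{\mu,\omega}=-\lambda_1(\mu,\omega)\Phi_{\mu,\omega}$ in $\omega$, pairing with $\Phi_{\mu,\omega}$ and using $\|\Phi_{\mu,\omega}\|_{L^2}=1$ (the $\partial_\omega\Phi_{\mu,\omega}$ terms cancel by self-adjointness), one gets $\partial_\omega\lambda_1(\mu,\omega)=-\langle(\partial_\omega A_\omega)\Phi_{\mu,\omega},\Phi_{\mu,\omega}\rangle=-\int_{-1}^1\tfrac{2}{15s^2-3+\mu}|\Phi_{\mu,\omega}|^2\,ds$, which is $>0$ since the integrand is negative on $[-1,1]$ for $\mu<-12$. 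This proves (i).

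\emph{Part (ii).} Putting $\mu=-12$ in \eqref{Rayleigh-type equation lambda k=1}, the singular coefficients merge: $\tfrac{1}{1-s^2}+\tfrac{2\omega-144}{15(s^2-1)}=\tfrac{m^2}{1-s^2}$ with $m^2:=1-\tfrac{2\omega-144}{15}=\tfrac{159-2\omega}{15}$, so $m\in[1,3]$ for $\omega\in[12,72]$ and \eqref{Rayleigh-type equation lambda k=1} becomes the associated Legendre equation $((1-s^2)\Phi')'-\tfrac{m^2}{1-s^2}\Phi=\lambda\Phi$. A direct substitution shows $\Phi_{-12,\omega}(s):=(1-s^2)^{m/2}s$ — which is \eqref{eigenfunction of principal eigenvalue -12 omega}, as $\tfrac{m}{2}=\tfrac12\sqrt{1-\tfrac{2\omega-144}{15}}$ — is an odd solution with $\lambda=-(m+1)(m+2)$; for $m\in[1,3]$ one has $\Phi_{-12,\omega}\in\tilde X_o$ and $\Delta_1\Phi_{-12,\omega}\in L^2(-1,1)$, so it is an admissible eigenfunction. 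Being positive on $(0,1)$, it is the ground state of the finite-energy Dirichlet problem on $(0,1)$ and hence, after odd reflection, the principal eigenfunction of \eqref{Rayleigh-type equation lambda k=1} restricted to \eqref{def-X-omega-mu-o}; therefore $\lambda_1(-12,\omega)=-(m+1)(m+2)=-\bigl(1+\sqrt{1-\tfrac{2\omega-144}{15}}\bigr)\bigl(2+\sqrt{1-\tfrac{2\omega-144}{15}}\bigr)$, which sweeps $[-20,-6]$ as $\omega$ sweeps $[12,72]$ ($-20$ at $\omega=12$, $-6$ at $\omega=72$), i.e.\ \eqref{principal eigenvalue -12 omega}.

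\emph{Part (iii), and the main difficulty.} For $\mu\in(-\infty,-12)$, part (i) gives $\partial_\omega\lambda_1(\mu,\cdot)>0$ on $[12,72]$, so $\lambda_1(\mu,\cdot)$ is increasing there; for $\mu=-12$, $\omega\mapsto 1-\tfrac{2\omega-144}{15}$ is strictly decreasing, hence so is $m(\omega)$, so $\omega\mapsto-(m(\omega)+1)(m(\omega)+2)=\lambda_1(-12,\omega)$ is increasing. As $(-\infty,-12]$ is exhausted by these two cases, (iii) follows. The one genuinely delicate ingredient is the perturbation-theoretic input in (i): that for $\mu\le-12$ the form domain \eqref{def-X-omega-mu-o} is truly $\omega$-independent (so $\{A_\omega\}$ is a bona fide holomorphic family of type (B)), that the principal eigenvalue is simple, and that Kato's theorem then delivers a real-analytic eigenbranch together with a real-analytic normalized eigenfunction — only once this is in place is the Feynman--Hellmann differentiation justified. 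The explicit work in (ii) and the nodal identification of the ground state are routine by comparison.
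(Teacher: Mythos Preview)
Your proof is correct and lands on the same conclusions as the paper, but the packaging differs in two small ways worth noting. For (i), the paper does not invoke Kato's type-(B) machinery or the Feynman--Hellmann theorem by name; instead it points to the computation of \eqref{comp-eigenvalue derivative} (writing the eigenvalue identities at $\omega$ and $\omega'$, subtracting, dividing by $\omega-\omega'$ and passing to the limit), which is of course exactly Feynman--Hellmann proved by hand. Your route is cleaner in that it isolates why the differentiation is legitimate (fixed form domain, simple eigenvalue), whereas the paper treats this as routine. For (ii), the paper does not recognize the $\mu=-12$ equation as the associated Legendre equation directly; it first substitutes $\Phi=(1-s^2)^{m/2}\phi$ to obtain a Gegenbauer equation and then reads off the answer from Gegenbauer polynomials. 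Your observation that the $\mu=-12$ equation is already $((1-s^2)\Phi')'-\tfrac{m^2}{1-s^2}\Phi=\lambda\Phi$ with $m^2=1-\tfrac{2\omega-144}{15}$, together with the explicit odd solution $(1-s^2)^{m/2}s$ and the positivity argument for principality, is more direct and avoids the Gegenbauer detour. Part (iii) is handled identically in both.
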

\begin{proof}
The proof of (i) is similar to that of \eqref{comp-eigenvalue derivative}, and thus we omit it.

(ii) For $\mu=-12$, the eigenvalue problem \eqref{Rayleigh-type equation lambda k=1} becomes
\begin{align}\label{c-12+omega lambda}
 ((1-s^2)\Phi')'-{1-{2\omega-144\over15}\over 1-s^2}\Phi=\lambda\Phi, \quad \Delta_1\Phi\in L^2(-1,1).
 \end{align}
 Let
 \begin{align}\label{transformation mu=-12}
 \Phi(s)=(1-s^2)^{{\sqrt{1-{2\omega-144\over 15}}\over2}}\phi(s), \quad s\in[-1,1].
 \end{align}
 By \eqref{c-12+omega lambda},  $\phi$ solves the equation
 \begin{align*}
 (1-s^2)\phi''-2\left(\sqrt{1-{2\omega-144\over 15}}+1\right)s\phi'+\left(-1+{2\omega-144\over15}-\sqrt{1-{2\omega-144\over15}}-\lambda\right)\phi=0.
 \end{align*}
 Let
 \begin{align*}
  \lambda=-\left(n+\sqrt{1-{2\omega-144\over 15}}\right)\left(n+\sqrt{1-{2\omega-144\over 15}} +1\right),
\end{align*}
where $n\geq0$.
Then this is the   Gegenbauer
 differential equation
\begin{equation}\label{Gegenbauer differential equation}
(1-s^2)\phi''-(2\beta+1)s\phi'+n\left(n+2\beta\right)\phi =0
\end{equation}
with $\beta=\sqrt{1-{2\omega-144\over 15}}+{1\over2}\in[{3\over2},{7\over2}]$.  Using the
   the
Gegenbauer polynomials (see, e.g.  \cite{Suetin2001}), which are the solutions of \eqref{Gegenbauer differential equation}, we obtain that
   the first eigenvalue of \eqref{c-12+omega lambda} is given by
\eqref{principal eigenvalue -12 omega}
with a corresponding eigenfunction
\begin{align*}
  (1-s^2)^{{\sqrt{1-{2\omega-144\over 15}}\over2}}C_1^\beta(s), \quad s\in[-1,1],
\end{align*}
where $C_1^\beta(s)$ is the Gegenbauer polynomial of the first order and we use the fact that the eigenfunction is  odd.
By the definition of  $C_1^\beta(s)$, up to a constant we have
$C_1^\beta(s)=(1-s^2)^{-\beta+{1\over2}}{d\over ds}((1-s^2)^{\beta+{1\over2}})=-(2\beta+1)s,$ which gives \eqref{eigenfunction of principal eigenvalue -12 omega}.

(iii) is a direct consequence of (i)-(ii).
\end{proof}

Next, we study the left-continuity of the principal eigenvalue $\lambda_1(\mu,\omega)$ of \eqref{Rayleigh-type equation lambda k=1} as $\mu\to-12^-$.

\begin{Lemma}\label{continuity of the principal eigenvalue mu -12}
For $\omega\in(12,72)$, we have
\begin{align}\label{mu lim -12}
\lim_{\mu\to-12^-}\lambda_1(\mu,\omega)=\lambda_1(-12,\omega).
\end{align}
\end{Lemma}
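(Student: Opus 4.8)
The plan is to establish separately the two inequalities
$\liminf_{\mu\to-12^-}\lambda_1(\mu,\omega)\ge\lambda_1(-12,\omega)$ and
$\limsup_{\mu\to-12^-}\lambda_1(\mu,\omega)\le\lambda_1(-12,\omega)$, the only real difficulty being that the coefficient $q_\mu(s):=-\frac{2\omega+12\mu}{15s^2-3+\mu}$ in \eqref{Rayleigh-type equation lambda k=1} develops a singularity at $s=\pm1$ as $\mu\to-12^-$. I first record the elementary facts about $q_\mu$ that I will use, valid for $\omega\in(12,72)$ and $\mu\in[-15,-12)$: since $2\omega+12\mu<2\omega-144<0$ and $15s^2-3+\mu\le 12+\mu<0$ on $[-1,1]$, one has $q_\mu<0$ there; moreover $3-15s^2-\mu=(3-15s^2)+(-\mu)>15(1-s^2)$, so $|q_\mu(s)|\le\frac{180-2\omega}{15(1-s^2)}$ \emph{uniformly} in $\mu\in[-15,-12)$; and $q_\mu(s)\to q_{-12}(s):=-\frac{144-2\omega}{15(1-s^2)}$ pointwise on $(-1,1)$. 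Because $\omega<72$, for each fixed $\mu<-12$ the weight $\frac{2\omega+12\mu}{15s^2-3+\mu}$ is bounded and $\frac{144-2\omega}{15}>0$ controls the $\frac{1}{1-s^2}$ weight, so that $X_{\omega,\mu,o}=X_{\omega,-12,o}=\tilde X_o$ as sets, where $\tilde X_o$ denotes the odd subspace of $\tilde X$ from \eqref{def-tilde X}. Finally I will use the explicit principal eigenpair at $\mu=-12$ from Lemma \ref{principla eigenvalue monotonicity omega}(ii): $\Phi_*:=\Phi_{-12,\omega}(s)=(1-s^2)^{\gamma/2}s$ with $\gamma=\sqrt{1-\frac{2\omega-144}{15}}\in(1,3)$, for which $\int_{-1}^1(1-s^2)|\Phi_*'|^2\,ds<\infty$ and, crucially, $\int_{-1}^1\frac{|\Phi_*|^2}{1-s^2}\,ds=\int_{-1}^1(1-s^2)^{\gamma-1}s^2\,ds<\infty$.

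For the lower bound, using $\Phi_*\in\tilde X_o=X_{\omega,\mu,o}$ as a test function in the variational characterization \eqref{def-lambda-n} of $\lambda_1(\mu,\omega)$ gives
$\lambda_1(\mu,\omega)\ge\big(-\!\int(1-s^2)|\Phi_*'|^2-\int\frac{|\Phi_*|^2}{1-s^2}+\int q_\mu|\Phi_*|^2\big)\big/\!\int|\Phi_*|^2$. Since $|q_\mu|\,|\Phi_*|^2\le\frac{180-2\omega}{15}\cdot\frac{|\Phi_*|^2}{1-s^2}\in L^1(-1,1)$ and $q_\mu\to q_{-12}$ pointwise, dominated convergence yields $\int q_\mu|\Phi_*|^2\to\int q_{-12}|\Phi_*|^2$; as $\Phi_*$ realizes the supremum defining $\lambda_1(-12,\omega)$, the right–hand side tends to $\lambda_1(-12,\omega)$, proving $\liminf_{\mu\to-12^-}\lambda_1(\mu,\omega)\ge\lambda_1(-12,\omega)$. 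The same bound, together with $\big|\int q_\mu|\Phi_*|^2\big|\le\frac{180-2\omega}{15}\int\frac{|\Phi_*|^2}{1-s^2}$, also shows $\lambda_1(\mu,\omega)\ge-M(\omega)$ uniformly for $\mu\in[-15,-12)$.

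For the upper bound, let $\Phi_\mu\in\tilde X_o$ be a principal eigenfunction with $\|\Phi_\mu\|_{L^2(-1,1)}=1$. Multiplying \eqref{Rayleigh-type equation lambda k=1} by $\Phi_\mu$ and integrating by parts (the boundary terms vanish because $\Phi_\mu(\pm1)=0$) gives the identity $\int(1-s^2)|\Phi_\mu'|^2+\int\frac{|\Phi_\mu|^2}{1-s^2}+\int|q_\mu|\,|\Phi_\mu|^2=-\lambda_1(\mu,\omega)\le M(\omega)$, using $q_\mu\le0$ and the uniform lower bound above. Hence $\{\Phi_\mu\}$ is bounded in $\tilde X$, and by the compact embedding $\tilde X\hookrightarrow\hookrightarrow L^2(-1,1)$ of Lemma \ref{compact embedding}(i) there is a sequence $\mu_j\to-12^-$ along which $\lambda_1(\mu_j,\omega)\to\limsup_{\mu\to-12^-}\lambda_1(\mu,\omega)$, $\Phi_{\mu_j}\rightharpoonup\Phi_\infty$ weakly in $\tilde X$, and $\Phi_{\mu_j}\to\Phi_\infty$ strongly in $L^2$ and a.e.\ on $(-1,1)$; then $\Phi_\infty\in\tilde X_o$ and $\|\Phi_\infty\|_{L^2}=1$, so $\Phi_\infty\ne0$ (and $\int|q_{-12}|\,|\Phi_\infty|^2<\infty$ since $\Phi_\infty\in\tilde X$). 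Weak lower semicontinuity of the convex functional $\Phi\mapsto\int(1-s^2)|\Phi'|^2+\int\frac{|\Phi|^2}{1-s^2}$ and Fatou's lemma applied to the nonnegative integrands $(-q_{\mu_j})|\Phi_{\mu_j}|^2\to(-q_{-12})|\Phi_\infty|^2$ a.e.\ give
\begin{align*}
\limsup_{\mu\to-12^-}\lambda_1(\mu,\omega)
&=\lim_{j}\Big(-\!\int(1-s^2)|\Phi_{\mu_j}'|^2-\int\tfrac{|\Phi_{\mu_j}|^2}{1-s^2}+\int q_{\mu_j}|\Phi_{\mu_j}|^2\Big)\\
&\le-\!\int(1-s^2)|\Phi_\infty'|^2-\int\tfrac{|\Phi_\infty|^2}{1-s^2}+\int q_{-12}|\Phi_\infty|^2\;\le\;\lambda_1(-12,\omega),
\end{align*}
the last inequality because $\Phi_\infty\in\tilde X_o\setminus\{0\}=X_{\omega,-12,o}\setminus\{0\}$ and $\lambda_1(-12,\omega)$ is the supremum of that Rayleigh quotient. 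Combining the two inequalities proves \eqref{mu lim -12}. The main obstacle is precisely this upper bound: one must extract compactness of the eigenfunctions uniformly as $\mu\to-12^-$ and control the boundary–singular term $\int q_{\mu_j}|\Phi_{\mu_j}|^2$; the key point is that this term has a fixed sign, so Fatou's lemma suffices and one avoids the delicate no–concentration–at–$s=\pm1$ estimate that a dominated–convergence argument would require.
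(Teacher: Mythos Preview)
Your proof is correct and follows essentially the same approach as the paper: a dominated-convergence argument with the limiting eigenfunction $\Phi_{-12,\omega}$ gives the $\liminf$ bound, a uniform $\tilde X$-bound on the normalized principal eigenfunctions yields compactness, and then weak lower semicontinuity plus Fatou's lemma (on the nonnegative potential term) give the $\limsup$ bound. The only cosmetic difference is that the paper obtains pointwise convergence of $|q_{\mu_j}|\,|\Phi_{\mu_j}|^2$ via $C^0$ convergence on compact subintervals (from $H^1\hookrightarrow C^0$), whereas you use a.e.\ convergence extracted from strong $L^2$ convergence; both suffice for Fatou.
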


\begin{proof} Let $\omega\in(12,72)$.
For any $\Phi\in \tilde X$ (defined in \eqref{def-tilde X}), we have
\begin{align*}
&\left|\left({2\omega+12\mu\over 15s^2-3+\mu}-{2\omega-144\over 15s^2-15}\right)\Phi^2\right|=\left|{(12+\mu)(12(15s^2-3)-2\omega)\over (15s^2-3+\mu)(15s^2-15)}\Phi^2\right|\\
=&{|(12+\mu)(12(15s^2-3)-2\omega)|\over ((15-15s^2)-(12+\mu))(15-15s^2)}\Phi^2\leq {|12(15s^2-3)-2\omega|\over 15-15s^2}\Phi^2
\end{align*}
for  $\mu\in(-\infty,-12)$.
Since
\begin{align*}
\int_{-1}^1 {|12(15s^2-3)-2\omega|\over 15-15s^2}\Phi^2ds\leq C \int_{-1}^1 {1\over 1-s^2}\Phi^2ds,
\end{align*}
by the Lebesgue's Dominated Convergence Theorem we have
\begin{align}\label{potential limit}
\lim_{\mu\to-12^-}\int_{-1}^1{2\omega+12\mu\over 15s^2-3+\mu}\Phi^2ds=\int_{-1}^1{2\omega-144\over 15s^2-15}\Phi^2ds.
\end{align}
Let $\Phi_{\mu,\omega}$ be a $L^2$ normalized eigenfunction of $\lambda_1(\mu,\omega)$ for $\mu\in(-\infty,-12]$. By \eqref{def-lambda-n}, we have
\begin{align*}
\lambda_1(\mu,\omega)\geq \int_{-1}^1\left(-(1-s^2)|\Phi_{-12,\omega}'|^2-{1\over 1-s^2}|\Phi_{-12,\omega}|^2-{2\omega+12\mu\over 15s^2-3+\mu}|\Phi_{-12,\omega}|^2\right) ds
\end{align*}
for  $\mu\in(-\infty,-12)$. By \eqref{potential limit}, we have
\begin{align}\nonumber
\liminf_{\mu\to-12^-}\lambda_1(\mu,\omega)\geq& \int_{-1}^1\left(-(1-s^2)|\Phi_{-12,\omega}'|^2-{1\over 1-s^2}|\Phi_{-12,\omega}|^2-{2\omega-144\over 15s^2-15}|\Phi_{-12,\omega}|^2\right) ds\\\label{liminf}
=&\lambda_1(-12,\omega).
\end{align}
On the other hand, we will prove that
\begin{align}\label{limsup}
\lambda_1(-12,\omega)\geq\limsup_{\mu\to-12^-}\lambda_1(\mu,\omega).
\end{align}
To this end, we first prove that there exist $\delta, C>0$ such that
\begin{align}\label{H1bound}
\int_{-1}^1\left((1-s^2)|\Phi_{\mu,\omega}'|^2+{1\over 1-s^2}|\Phi_{\mu,\omega}|^2\right)ds\leq C
\end{align}
uniformly for $\mu\in(-12-\delta,-12)$.
In fact, by \eqref{def-lambda-n} and \eqref{liminf}, there exists $\delta>0$ such that
\begin{align}\label{principal eigenvalue bound}
\lambda_1(-12,\omega)-{1\over2}<\lambda_1(\mu,\omega)\leq0,\quad \forall\; \mu\in(-12-\delta,-12).
\end{align}
Since $\Phi_{\mu,\omega}$ solves
\eqref{Rayleigh-type equation lambda k=1} with $\lambda=\lambda_1(\mu,\omega)$, we have
\begin{align*}
\int_{-1}^1\left((1-s^2)|\Phi_{\mu,\omega}'|^2+{1\over 1-s^2}|\Phi_{\mu,\omega}|^2+{2\omega+12\mu\over 15s^2-3+\mu}|\Phi_{\mu,\omega}|^2\right) ds=-\lambda_1(\mu,\omega),
\end{align*}
which, along with \eqref{principal eigenvalue bound} and ${2\omega+12\mu\over 15s^2-3+\mu}>0$ for $\omega\in(12,72)$ and $\mu<-12$, gives \eqref{H1bound}.
Then
there exists $\Phi_{*,\omega}\in \tilde X$ such that $\Phi_{\mu,\omega}\rightharpoonup \Phi_{*,\omega}$ in $\tilde X$ and $\Phi_{\mu,\omega}\to \Phi_{*,\omega}$ in $L^2(-1,1)$ as $\mu\to-12^{-}$ by Lemma 2.4.6 in \cite{Skiba2017}. Thus, $\|\Phi_{*,\omega}\|_{L^2(-1,1)}=\lim_{\mu\to-12^-}\|\Phi_{\mu,\omega}\|_{L^2(-1,1)}=1$.
For any subinterval $[a,b]\subset(-1,1)$, $\|\Phi_{\mu,\omega}\|_{H^1(a,b)}\leq C$ uniformly for $\mu\in(-12-\delta,-12)$. Then  $\Phi_{\mu,\omega}\to\Phi_{*,\omega}$  in $C^0([a,b])$ as $\mu\to-12^{-}$ by the compactness of $H^1(a,b)\hookrightarrow C^0([a,b])$. Thus,
$
{2\omega+12\mu\over 15s^2-3+\mu}|\Phi_{\mu,\omega}(s)|^2\to{2\omega-144\over 15s^2-15}|\Phi_{*,\omega}(s)|^2$ pointwise on $(-1,1)$ as $\mu\to-12^-$.
 Since
${2\omega+12\mu\over 15s^2-3+\mu}|\Phi_{\mu,\omega}(s)|^2\geq0$ on $(-1,1)$ for $\mu\in(-12-\delta,-12)$, by Fatou's Lemma we have
\begin{align}\label{potential term mu liminf -12}
\int_{-1}^1{2\omega-144\over 15s^2-15}|\Phi_{*,\omega}(s)|^2 ds\leq \liminf_{\mu\to-12^-}\int_{-1}^1{2\omega+12\mu\over 15s^2-3+\mu}|\Phi_{\mu,\omega}(s)|^2 ds.
\end{align}
By \eqref{H1bound}, we have
\begin{align}\nonumber
&\int_{-1}^1\left((1-s^2)|\Phi_{*,\omega}'|^2+{1\over 1-s^2}|\Phi_{*,\omega}|^2\right)ds\\\label{weak convergence H1}
\leq& \liminf_{\mu\to-12^-}\int_{-1}^1\left((1-s^2)|\Phi_{\mu,\omega}'|^2+{1\over 1-s^2}|\Phi_{\mu,\omega}|^2\right)ds.
\end{align}
By \eqref{potential term mu liminf -12} and \eqref{weak convergence H1}, we have
\begin{align*}
\lambda_1(-12,\omega)\geq&\int_{-1}^1\left(-(1-s^2)|\Phi_{*,\omega}'|^2-{1\over 1-s^2}|\Phi_{*,\omega}|^2-{2\omega-144\over 15s^2-15}|\Phi_{*,\omega}|^2\right) ds\\
\geq&\limsup_{\mu\to-12^-}\int_{-1}^1\left(-(1-s^2)|\Phi_{\mu,\omega}'|^2-{1\over 1-s^2}|\Phi_{\mu,\omega}|^2\right)ds\\
&+\limsup_{\mu\to-12^-}\int_{-1}^1-{2\omega+12\mu\over 15s^2-3+\mu}|\Phi_{\mu,\omega}|^2 ds\\
\geq&\limsup_{\mu\to-12^-}\int_{-1}^1\left(-(1-s^2)|\Phi_{\mu,\omega}'|^2-{1\over 1-s^2}|\Phi_{\mu,\omega}|^2-{2\omega+12\mu\over 15s^2-3+\mu}|\Phi_{\mu,\omega}|^2\right) ds\\
=&\limsup_{\mu\to-12^-}\lambda_1(\mu,\omega).
\end{align*}
This proves \eqref{limsup}. Combining \eqref{liminf} and \eqref{limsup}, we obtain \eqref{mu lim -12}.
\end{proof}
Then we consider the asymptotic behavior of the principal eigenvalue $\lambda_1(\mu,\omega)$ as $\mu\to-\infty$.
\begin{Lemma}\label{asymptotic behavior principal eigenvalue lim mu -infty}
For $\omega\in(12,72)$, we have
\begin{align}\label{principal eigenvalue lim mu -infty}
\lim_{\mu\to-\infty}\lambda_1(\mu,\omega)=-18.
\end{align}
\end{Lemma}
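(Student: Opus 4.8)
The plan is to exploit that, as $\mu\to-\infty$, the singular potential $\frac{2\omega+12\mu}{15s^2-3+\mu}$ in \eqref{Rayleigh-type equation lambda k=1} degenerates to the constant $12$, so that the eigenvalue problem converges to an associated Legendre equation whose principal eigenvalue on the odd subspace is explicit. First I would record the uniform convergence of the potential: since
\[
\frac{2\omega+12\mu}{15s^2-3+\mu}-12=\frac{2\omega+36-180s^2}{15s^2-3+\mu},
\]
the numerator is bounded on $[-1,1]$ while the denominator tends to $-\infty$ uniformly in $s$, so $\bigl\|\frac{2\omega+12\mu}{15s^2-3+\mu}-12\bigr\|_{C^0([-1,1])}\to0$ as $\mu\to-\infty$. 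In particular, for $|\mu|$ large the potential is bounded, and using $\int_{-1}^1|\Phi|^2\,ds\le\int_{-1}^1\frac{1}{1-s^2}|\Phi|^2\,ds$ the space $X_{\omega,\mu,o}$ coincides, with uniformly equivalent norms, with the fixed space $\tilde X_o:=\{\Phi\in\tilde X:\Phi\text{ odd}\}$, where $\tilde X$ is defined in \eqref{def-tilde X}.

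Next I would compare Rayleigh quotients. Writing $Q_\mu(\Phi)$ for the numerator in \eqref{def-lambda-n} with $k=1$, and $Q_\infty(\Phi)$ for the same quadratic form with $\frac{2\omega+12\mu}{15s^2-3+\mu}$ replaced by $12$, one has for every $\Phi\in\tilde X_o$
\[
|Q_\mu(\Phi)-Q_\infty(\Phi)|\le\Bigl\|\tfrac{2\omega+12\mu}{15s^2-3+\mu}-12\Bigr\|_{C^0([-1,1])}\,\|\Phi\|_{L^2(-1,1)}^2 .
\]
Dividing by $\|\Phi\|_{L^2(-1,1)}^2$ and taking the supremum over $\Phi\in\tilde X_o\setminus\{0\}$, the variational characterization \eqref{def-lambda-n} yields
\[
\bigl|\lambda_1(\mu,\omega)-\Lambda\bigr|\le\Bigl\|\tfrac{2\omega+12\mu}{15s^2-3+\mu}-12\Bigr\|_{C^0([-1,1])}\to 0\quad\text{as }\mu\to-\infty,
\]
where $\Lambda=\sup_{\Phi\in\tilde X_o\setminus\{0\}}Q_\infty(\Phi)/\|\Phi\|_{L^2(-1,1)}^2$ is the principal eigenvalue of $((1-s^2)\Phi')'-\frac{1}{1-s^2}\Phi-12\Phi=\Lambda\Phi$ with $\Delta_1\Phi\in L^2(-1,1)$, restricted to odd functions; this problem has purely discrete spectrum by the compact embedding in Lemma \ref{compact embedding}(i), so $\Lambda$ is an honest eigenvalue.

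Finally I would identify $\Lambda$. The operator $((1-s^2)\Phi')'-\frac{1}{1-s^2}\Phi$ has eigenvalues $-l(l+1)$ with eigenfunctions $P_l^1$, $l\ge1$, and $P_l^1$ is odd precisely when $l$ is even; hence its largest eigenvalue on $\tilde X_o$ is $-2\cdot3=-6$, attained at $P_2^1$. Therefore $\Lambda=-6-12=-18$, which gives \eqref{principal eigenvalue lim mu -infty}. I do not anticipate a genuine obstacle here; the only points needing care are the $\mu$-independence of the form domain (so that the supremum is taken over a fixed space as $\mu\to-\infty$) and the parity bookkeeping for the associated Legendre functions $P_l^1$.
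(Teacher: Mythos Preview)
Your proof is correct and follows essentially the same approach as the paper: both use the uniform convergence of the potential $\frac{2\omega+12\mu}{15s^2-3+\mu}\to 12$ together with the variational characterization \eqref{def-lambda-n}, and both identify the limiting principal eigenvalue via the odd associated Legendre function $P_2^1$. Your presentation is slightly more streamlined in that you bound $|\lambda_1(\mu,\omega)-\Lambda|$ directly by the $C^0$-norm of the potential difference, whereas the paper splits into separate $\limsup$ and $\liminf$ arguments, but the content is the same.
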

\begin{proof}
Let $\Phi_{\mu,\omega}$ be a $L^2$ normalized eigenfunction of $\lambda_1(\mu,\omega)$ for $\mu<-12$.
For any $\epsilon>0$, there exists $M>0$ such that ${1\over |15s^2-3+\mu|}\leq {\epsilon\over 180}$ for $\mu<-M$ and $s\in[-1,1]$.
Then
\begin{align*}
\left|\int_{-1}^1\left({2\omega+12\mu\over 15s^2-3+\mu}-12\right)|\Phi_{\mu,\omega}|^2ds\right|\leq&\int_{-1}^1{|-180s^2+36+2\omega|\over |15s^2-3+\mu|}|\Phi_{\mu,\omega}|^2ds
\leq\epsilon\int_{-1}^1|\Phi_{\mu,\omega}|^2ds=\epsilon
\end{align*}
for  $\mu<-M$. Thus,
\begin{align}\label{potential term lim mu -infty}
\lim_{\mu\to-\infty}\int_{-1}^1{2\omega+12\mu\over 15s^2-3+\mu}|\Phi_{\mu,\omega}|^2ds=12.
\end{align}
By the definition of $X_{\omega,\mu,o}$ in \eqref{def-X-omega-mu-o}, we have
\begin{align*}
 \inf_{\Phi \in X_{\omega,\mu,o},\|\Phi\|_{L^2(-1,1)}=1}\int_{-1}^1\left((1-s^2)|\Phi'|^2+{1\over 1-s^2}|\Phi|^2\right) ds\geq6.
 \end{align*}
 Then
 \begin{align*}
 \lambda_1(\mu,\omega)=&\int_{-1}^1\left(-(1-s^2)|\Phi_{\mu,\omega}'|^2-{1\over 1-s^2}|\Phi_{\mu,\omega}|^2-{2\omega+12\mu\over 15s^2-3+\mu}|\Phi_{\mu,\omega}|^2\right) ds\\
 \leq&-6-\int_{-1}^1{2\omega+12\mu\over 15s^2-3+\mu}|\Phi_{\mu,\omega}|^2 ds,
 \end{align*}
which, along with \eqref{potential term lim mu -infty}, implies that
 \begin{align*}
 \limsup_{\mu\to-\infty}\lambda_1(\mu,\omega)
 \leq&-6-12=-18.
 \end{align*}
 Let $\Phi_{-\infty}={1\over \|P_2^1\|_{L^2(-1,1)}}P_2^1$.
 Then
 \begin{align*}
 \lambda_1(\mu,\omega)\geq&\int_{-1}^1\left(-(1-s^2)|\Phi_{-\infty}'|^2-{1\over 1-s^2}|\Phi_{-\infty}|^2-{2\omega+12\mu\over 15s^2-3+\mu}|\Phi_{-\infty}|^2\right) ds\\
 =&-6-\int_{-1}^1{2\omega+12\mu\over 15s^2-3+\mu}|\Phi_{-\infty}|^2 ds.
 \end{align*}
 Thus,
  \begin{align*}
\liminf_{\mu\to-\infty} \lambda_1(\mu,\omega)\geq-6-\lim_{\mu\to-\infty}\int_{-1}^1{2\omega+12\mu\over 15s^2-3+\mu}|\Phi_{-\infty}|^2 ds
 =-6-12=-18.
 \end{align*}
 This proves \eqref{principal eigenvalue lim mu -infty}.
\end{proof}
Now, we are ready to prove Theorem \ref{k=1 positive half-line critical rotation rate}.

\begin{proof}[Proof of Theorem \ref{k=1 positive half-line critical rotation rate}]
First, we prove that the $3$-jet is spectrally stable for $\omega\in\left[{99\over2},72\right)$. The case of $\omega={99\over2}$ has been proved in Lemma \ref{ometa=12} (i). Thus, we  consider $\omega\in\left({99\over2},72\right)$.
By Lemma \ref{ometa=12} (ii)  and Lemma \ref{principla eigenvalue monotonicity omega} (iii), we have $\lambda_1(-12,\omega)>\lambda_1(-12,{99\over2})=-12$ for $\omega\in\left({99\over2},72\right)$. This, along with Lemmas \ref{continuity of the principal eigenvalue mu -12} and \ref{asymptotic behavior principal eigenvalue lim mu -infty}, implies that there exists $\mu_{1,\omega}\in(-\infty,-12)$ such that
\begin{align}\label{lambda1-mu1-omega-12}
\lambda_1(\mu_{1,\omega},\omega)=-12 \quad \text{and}\quad \partial_\mu\lambda_1(\mu_{1,\omega},\omega)\geq0,
\end{align}
where  $\lambda_1(\cdot,\omega)$ is differentiable on $\mu\in(-\infty,-12)$. See the blue eigenvalue curves in Fig. \ref{fig-eigenvalue curves1}. Thus, there exists a neutral mode $(c_{1,\omega},1,\omega,\Phi_{\mu_{1,\omega},\omega,1})$ with $c_{1,\omega}=\mu_{1,\omega}+\omega<-12+\omega$, where $\|\Phi_{\mu_{1,\omega},\omega,1}\|_{L^2(-1,1)}=1$.
Note that $c_{1,\omega}-c_\omega<-12+\omega-{5\over6}\omega=-12+{1\over 6}\omega<0$ for $\omega\in({99\over 2},72)$.
By Lemma \ref{quadratic form-computation} (ii), we have
\begin{equation*}
\langle L_{1}\Upsilon_{\mu_{1,\omega},\omega,1},\Upsilon_{\mu_{1,\omega},\omega,1}\rangle
=(c_{1,\omega}-c_\omega)\partial_\mu \lambda_{1}(\mu_{1,\omega},\omega)\leq 0,
\end{equation*}
where $\Upsilon_{\mu_{1,\omega},\omega,1}=\Delta_1\Phi_{\mu_{1,\omega},\omega,1}$.
Thus, $k_{i,J_{\omega,1}L_1|_{X_o^1}}^{\leq0}=1$. By the index formula
\eqref{index formula 1o1}, we have $k_{c,J_{\omega,1}L_1|_{X_o^1}}+ k_{r,J_{\omega,1}L_1|_{X_o^1}}=0$. This proves
 spectral stability of the $3$-jet  for $\omega\in\left({99\over2},72\right)$.

 Next, we prove that the $3$-jet is linearly unstable for $\omega\in\left(12,{99\over2}\right)$. For $\omega={99\over2}$, we claim that
 \begin{align}\label{lambda1-omega-99over2}
 \lambda_1\left(\mu,{99\over2}\right)<-12\quad\text{for}\quad \mu\in(-\infty,-12)\quad\text{and}\quad
 \lambda_1\left(-12,{99\over2}\right)=-12.
 \end{align}
 See the red eigenvalue curve in Fig. \ref{fig-eigenvalue curves1}.
 In fact, it follows from Lemma \ref{ometa=12} and its proof that
  $\lambda_1(-12,{99\over2})=-12$,  and $P_3^2$ is one of its eigenfunctions, which yields   a neutral mode $({75\over2},1,{99\over2},P_3^2)$ with  the quadratic form satisfying
 \begin{align}\label{signature of quadratic form mu -12}
\langle L_{1}\Delta_1P_3^2,\Delta_1P_3^2\rangle
<0.\end{align}
 Suppose that there exists $\tilde \mu_1\in(-\infty,-12)$ such that  $\lambda_1(\tilde \mu_1,{99\over2})\geq-12$.
Since $
\lim_{\mu\to-\infty}\lambda_1(\mu,{99\over2})$ $=-18$ by Lemma \ref{asymptotic behavior principal eigenvalue lim mu -infty}, there exists $\tilde\mu_2\in(-\infty,\tilde \mu_1]$ such that
\begin{align*}
\lambda_1\left(\tilde\mu_2,{99\over2}\right)=-12\quad\text{and}\quad\partial_\mu\lambda_1\left(\tilde\mu_2,{99\over2}\right)\geq0.
\end{align*}
\begin{center}
 \begin{tikzpicture}[scale=0.58]
 \draw [->](-15, 0)--(1, 0)node[right]{$\mu$};
 \draw [->](0,-10)--(0,2) node[above]{$\lambda_1(\mu,\omega)$};
\draw [blue] (-15, -7.75).. controls (-10, -6.8) and (-7, -4)..(-5,-3.5);
\draw [blue] (-15, -7.8).. controls (-10, -7) and (-7, -4.8)..(-5,-4.4);
\draw [blue] (-15, -7.85).. controls (-10, -7.5) and (-7, -5.2)..(-5,-4.7);
\draw [red] (-15, -7.9).. controls (-10, -7.8) and (-7, -5.1)..(-5,-5);
\draw [green] (-15, -7.92).. controls (-10, -7.85) and (-7, -5.6)..(-5,-5.4);
\draw [green] (-15, -7.95).. controls (-10, -7.9) and (-7, -6.2)..(-5,-6);
\draw [green] (-15, -7.98).. controls (-10, -7.95) and (-7, -7)..(-5,-6.7);
\path   (0, -5)  edge [-,dotted](-15,-5) [line width=0.4pt];
\path   (0, -8)  edge [-,dotted](-15,-8) [line width=0.4pt];
\path   (-5, 2)  edge [-,dotted](-5,-10) [line width=0.4pt];
\node[red] (a) at (-5,-5) {$\bullet$};
\node[blue] (a) at (-5.82,-5) {$\bullet$};
\node[blue] (a) at (-6.68,-5) {$\bullet$};
\node[blue] (a) at (-8,-5) {$\bullet$};
         \node (a) at (0.5,-5) {\tiny$-12$};
        \node (a) at (0.5,-8) {\tiny$-18$};
        \node (a) at (-5,0.2) {\tiny$-12$};
 \end{tikzpicture}
\end{center}\vspace{-0.5cm}
\begin{figure}[ht]
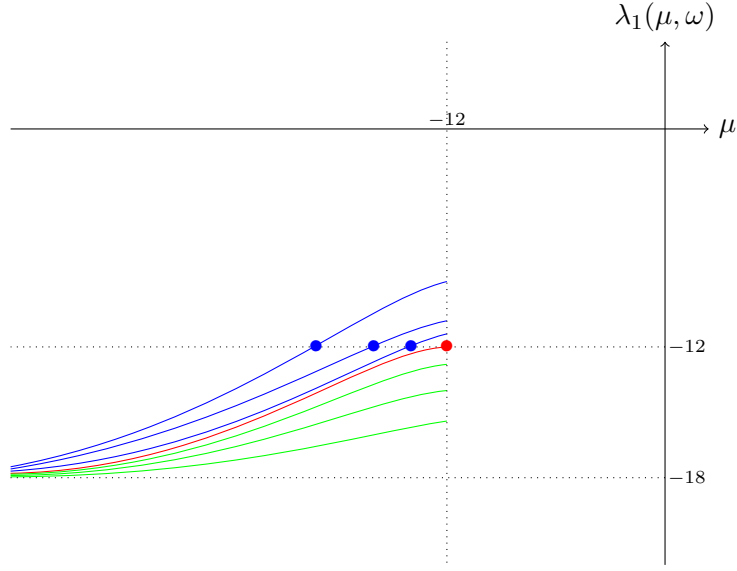

    \centering
 \caption{
The red eigenvalue curve is $\lambda_1(\cdot,\omega)$ with $\omega={99\over2}$, the blue eigenvalue curves are $\lambda_1(\cdot,\omega)$ with $\omega\in({99\over2},72)$, and the green eigenvalue curves are $\lambda_1(\cdot,\omega)$ with $\omega\in(12,{99\over2})$. The blue bold points are $(\mu_{1,\omega},\lambda_1(\mu_{1,\omega},\omega))$ for different $\omega\in({99\over2},72)$. The red bold point is $\left((-12,\lambda_1\left(-12,{99\over2}\right)=-12\right)$.
}
\label{fig-eigenvalue curves1}
\end{figure}
\vspace{-0.1cm}
Note that $\tilde c_2-c_\omega<-12+\omega-{5\over6}\omega=-12+{1\over6}\omega<0$, where $\tilde c_2=\tilde\mu_2+\omega$. Let
$\Phi_{\tilde\mu_2,{99\over2}}$ be a $L^2$ normalized eigenfunction of  $\lambda_1\left(\tilde\mu_2,{99\over2}\right)$. By Lemma \ref{quadratic form-computation} (ii), the quadratic form has signature
\begin{align}\label{signature of quadratic form mu1}
\langle L_{1}\Upsilon_{\tilde\mu_{2},{99\over2}},\Upsilon_{\tilde\mu_{2},{99\over2}}\rangle
=(\tilde c_2-c_\omega)\partial_\mu\lambda_1\left(\tilde\mu_2,{99\over2}\right)\leq 0,
\end{align}
 where $\Upsilon_{\tilde\mu_{2},{99\over2}}=\Delta_1\Phi_{\tilde\mu_2,{99\over2}}$.
Combining \eqref{signature of quadratic form mu -12} and \eqref{signature of quadratic form mu1}, we have
$ k_{i,J_{\omega,1}L_1|_{X_o^1}}^{\leq0}\geq2$, which contradicts
 \eqref{index formula 1o1}. Thus, $
 \lambda_1(\mu,{99\over2})<-12$ for  $\mu\in(-\infty,-12)$.

By Lemma \ref{principla eigenvalue monotonicity omega} (iii),
$\lambda_1(\mu,\cdot)$ is increasing on $\omega\in(12,{99\over2}]$ for $\mu\in (-\infty,-12]$. This, along with \eqref{lambda1-omega-99over2}, yields
\begin{align}\label{lambda1muomegalessthan-12}
\lambda_1(\mu,\omega)<\lambda_1\left(\mu,{99\over2}\right)\leq -12,\quad \forall\quad \mu\in(-\infty,-12],\;\;\omega\in\left(12,{99\over2}\right).
\end{align}
See the green eigenvalue curves in Fig. \ref{fig-eigenvalue curves1}.
Thus, $\lambda_1(-\omega+c,\omega)< -12$ for $c\in(-\infty,-12+\omega]$ and $\omega\in(12,{99\over2})$. Since $\lambda_1(-\omega+c,\omega)$ is the maximal eigenvalue of \eqref{Rayleigh-type equation lambda k=1} with $\mu=-\omega+c$, we obtain that there exist no neutral modes $(c,1,\omega,\Phi)$ with $c\leq-12+\omega$ for $\omega\in(12,{99\over2})$. This, along with  Lemma \ref{tilde Psi0 does not change sign} (2), implies that $c\in\text{Ran} (-\tilde \Psi_{\omega}')^\circ$ for any neutral mode $(c,1,\omega,\Phi)$. It then follows from  Theorem \ref{Psi prime change sign c} that $c=c_\omega$. Thus, $k_{i,J_{\omega,1}L_1|_{X_o^1}}^{\leq0}=0$. By Lemma \ref{kernel JL}, we have $k_{0,J_{\omega,1}L_1|_{X_o^1}}^{\leq0}=0$.
By the index formula \eqref{index formula 1o1}, we have
$ k_{c,J_{\omega,1}L_1|_{X_o^1}}+ k_{r,J_{\omega,1}L_1|_{X_o^1}}=1$. This proves  linear instability of the $3$-jet for $\omega\in\left(12,{99\over2}\right)$.
\end{proof}
\begin{Corollary}\label{uniqueness1}
Let $\omega\in\left({99\over2},72\right)$ and $k=1$. Then there exists
a unique $\mu_{1,\omega}\in(-\infty,-12)$ such that  $(c_{1,\omega},1,\omega,\Phi_{\mu_{1,\omega},\omega,1})$ is a  neutral mode, where $c_{1,\omega}=\mu_{1,\omega}+\omega$.
Moreover, $\langle L_{1}\Upsilon_{\mu_{1,\omega},\omega,1},$ $\Upsilon_{\mu_{1,\omega},\omega,1}\rangle$ $
\leq 0$, where $\Upsilon_{\mu_{1,\omega},\omega,1}=\Delta_1\Phi_{\mu_{1,\omega},\omega,1}$.
\end{Corollary}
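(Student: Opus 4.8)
The plan is to read off the conclusion from the spectral picture already assembled for Theorem \ref{k=1 positive half-line critical rotation rate}. That proof establishes, via the index formula \eqref{index formula 1o1}, the triviality of $\ker(J_{\omega,1}L_1|_{X_o^1})$ from Lemma \ref{kernel JL}, and the neutral mode it produces, that $k_{i,J_{\omega,1}L_1|_{X_o^1}}^{\leq0}=1$ for $\omega\in\left({99\over2},72\right)$. I will combine this with three facts about the principal eigenvalue curve $\mu\mapsto\lambda_1(\mu,\omega)$ of \eqref{Rayleigh-type equation lambda k=1} on $X_{\omega,\mu,o}$: it is continuous on $(-\infty,-12]$ and differentiable on $(-\infty,-12)$ (Lemmas \ref{continuity of the principal eigenvalue mu -12} and \ref{quadratic form-computation}); $\lim_{\mu\to-\infty}\lambda_1(\mu,\omega)=-18$ (Lemma \ref{asymptotic behavior principal eigenvalue lim mu -infty}); and $\lambda_1(-12,\omega)>-12$ for such $\omega$ (Lemmas \ref{ometa=12} and \ref{principla eigenvalue monotonicity omega}).

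A quadruple $(\mu+\omega,1,\omega,\Phi_{\mu,\omega,1})$ is a neutral mode precisely when $\mu\in(-\infty,-12)$ satisfies $\lambda_1(\mu,\omega)=-12$, so I introduce the level set $Z=\{\mu\in(-\infty,-12):\lambda_1(\mu,\omega)=-12\}$. It is nonempty by Theorem \ref{k=1 positive half-line critical rotation rate}; by continuity of $\lambda_1(\cdot,\omega)$ up to $\mu=-12$, together with $\lambda_1\to-18$ at $-\infty$ and $\lambda_1(-12,\omega)>-12$, the set $Z$ is bounded, stays away from the endpoint $\mu=-12$, and is closed, hence compact, so $\mu_-:=\min Z$ and $\mu_+:=\max Z$ exist in $(-\infty,-12)$.

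The core of the argument is a Krein-type counting step. For every $\mu\in Z$ one has $c=\mu+\omega<-12+\omega<{5\over6}\omega=c_\omega$ since $\omega<72$, so Lemma \ref{quadratic form-computation}(ii) gives $\langle L_1\Delta_1\Phi_{\mu,\omega,1},\Delta_1\Phi_{\mu,\omega,1}\rangle=(c-c_\omega)\,\partial_\mu\lambda_1(\mu,\omega)$, which is $\leq0$ as soon as $\partial_\mu\lambda_1(\mu,\omega)\geq0$. Now $\lambda_1(\cdot,\omega)<-12$ on $(-\infty,\mu_-)$ (no zeros there, and $\lambda_1\to-18$), which forces $\partial_\mu\lambda_1(\mu_-,\omega)\geq0$; symmetrically $\lambda_1(\cdot,\omega)>-12$ on $(\mu_+,-12]$ (no zeros there, and $\lambda_1(-12,\omega)>-12$), which forces $\partial_\mu\lambda_1(\mu_+,\omega)\geq0$. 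Distinct points of $Z$ produce distinct values $c-c_\omega$, hence distinct nonzero purely imaginary eigenvalues of $J_{\omega,1}L_1|_{X_o^1}$; on the generalized eigenspace of each, the eigenvector $\Delta_1\Phi_{\mu,\omega,1}$ spans a one-dimensional subspace on which $\langle L_1\cdot,\cdot\rangle\leq0$, so each such eigenvalue contributes at least $1$ to $k_{i,J_{\omega,1}L_1|_{X_o^1}}^{\leq0}$. If $\mu_-\neq\mu_+$, this would give $k_{i,J_{\omega,1}L_1|_{X_o^1}}^{\leq0}\geq2$, contradicting the value $1$. Hence $\mu_-=\mu_+$, $Z=\{\mu_{1,\omega}\}$ is a singleton, and with $c_{1,\omega}=\mu_{1,\omega}+\omega$ and $\Upsilon_{\mu_{1,\omega},\omega,1}=\Delta_1\Phi_{\mu_{1,\omega},\omega,1}$ we get $\langle L_1\Upsilon_{\mu_{1,\omega},\omega,1},\Upsilon_{\mu_{1,\omega},\omega,1}\rangle=(c_{1,\omega}-c_\omega)\,\partial_\mu\lambda_1(\mu_{1,\omega},\omega)\leq0$, exactly as in the proof of Theorem \ref{k=1 positive half-line critical rotation rate}.

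The step I expect to require the most care is extracting the sign conditions $\partial_\mu\lambda_1(\mu_\pm,\omega)\geq0$ from the one-sided behavior of the eigenvalue curve at its extremal zeros, which genuinely uses the boundary continuity of Lemma \ref{continuity of the principal eigenvalue mu -12} and the $\mu\to-\infty$ asymptotics of Lemma \ref{asymptotic behavior principal eigenvalue lim mu -infty}; and checking that a borderline contribution with $\partial_\mu\lambda_1=0$ (so that the energy form vanishes on the eigenvector) still counts as a non-positive dimension for $k_i^{\leq0}$---this is immediate from the definition, since a one-dimensional subspace on which $\langle L_1\cdot,\cdot\rangle\leq0$ already makes that count at least one. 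The differentiability of $\mu\mapsto\lambda_1(\mu,\omega)$ presupposed by Lemma \ref{quadratic form-computation}(ii) is guaranteed by the simplicity of the principal eigenvalue of the singular Sturm--Liouville problem on $X_{\omega,\mu,o}$.
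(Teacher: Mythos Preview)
Your argument is correct and follows essentially the same strategy as the paper: exploit the constraint $k_{i,J_{\omega,1}L_1|_{X_o^1}}^{\leq0}=1$ from the index formula \eqref{index formula 1o1} together with the sign of $\partial_\mu\lambda_1$ at the crossings of the level $-12$. Your organization via the extremal points $\mu_\pm$ of $Z$ is slightly more direct than the paper's, which instead assumes a second neutral mode $\hat\mu_{1,\omega}$, deduces from the index that its $L_1$-signature must be strictly positive (hence $\partial_\mu\lambda_1(\hat\mu_{1,\omega},\omega)<0$), and then uses Lemmas \ref{continuity of the principal eigenvalue mu -12} and \ref{asymptotic behavior principal eigenvalue lim mu -infty} to locate a \emph{third} crossing with $\partial_\mu\lambda_1\geq0$, reaching the same contradiction. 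One small point you pass over but the paper records (in one line, via \eqref{L1e2ok3nonnegative}) is that the uniqueness is claimed among \emph{all} neutral modes, not only odd ones; this, together with your implicit assertion that no $\lambda_n$ with $n\geq2$ on $X_{\omega,\mu,o}$ can hit $-12$, follows at once from the strict positivity of the potential $\tfrac{2\omega+12\mu}{15s^2-3+\mu}$ on $[-1,1]$ for $\omega\in(12,72)$ and $\mu<-12$, which pushes every eigenvalue of \eqref{Rayleigh-type equation lambda k=1} strictly below the corresponding eigenvalue of $\Delta_1$ (so the even principal eigenvalue is $<-12$ and the second odd eigenvalue is $<-20$).
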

\begin{proof}
The existence of $\mu_{1,\omega}$ is proved in \eqref{lambda1-mu1-omega-12}. Now we prove the uniqueness. If there exists another $\hat \mu_{1,\omega}\in(-\infty,-12)$ such that $\hat\mu_{1,\omega}\neq\mu_{1,\omega}$ and
$(\hat c_{1,\omega},1,\omega,\Phi_{\hat \mu_{1,\omega},\omega,1})$ is a  neutral mode with $\hat c_{1,\omega}=\hat \mu_{1,\omega}+\omega$, $\|\Phi_{\hat \mu_{1,\omega},\omega,1}\|_{L^2(-1,1)}=1$, then we will get a contradiction.
  In fact, if $\Phi_{\hat \mu_{1,\omega},\omega,1}$ is odd, by the  index formula \eqref{index formula 1o1} and Lemma \ref{quadratic form-computation} (ii), we have
\begin{equation*}
\langle L_{1}\Upsilon_{\hat\mu_{1,\omega},\omega,1},\Upsilon_{\hat\mu_{1,\omega},\omega,1}\rangle
=(\hat c_{1,\omega}-c_\omega)\partial_\mu \lambda_{1}(\hat \mu_{1,\omega},\omega)> 0,
\end{equation*}
where $\Upsilon_{\hat\mu_{1,\omega},\omega,1}=\Delta_1\Phi_{\hat\mu_{1,\omega},\omega,1}$.
Since
$\hat c_{1,\omega}-c_\omega<0$, we have $\partial_\mu \lambda_{1}(\hat \mu_{1,\omega},\omega)<0$. If $\hat \mu_{1,\omega}<\mu_{1,\omega}$, then
$\lambda_{1}( \mu,\omega)>-12$ for $\mu<\hat \mu_{1,\omega}$ sufficiently close to $\hat\mu_{1,\omega}$. Since
$
\lim_{\mu\to-\infty}\lambda_1(\mu,\omega)=-18$ by Lemma \ref{asymptotic behavior principal eigenvalue lim mu -infty}, there exists $\hat\mu_{2,\omega}\in (-\infty,\hat\mu_{1,\omega})$ such that $\lambda_1(\hat\mu_{2,\omega},\omega)=-12$ and  $\partial_\mu\lambda_1(\hat\mu_{2,\omega},\omega)\geq0$. Then by Lemma \ref{quadratic form-computation} (ii), we have $k_{i,J_{\omega,1}L_1|_{X_o^1}}^{\leq0}\geq2$, which contradicts the index formula \eqref{index formula 1o1}. If $\hat \mu_{1,\omega}>\mu_{1,\omega}$, then
$\lambda_{1}( \mu,\omega)<-12$ for $\mu>\hat \mu_{1,\omega}$ sufficiently close to $\hat\mu_{1,\omega}$. Since
$
\lim_{\mu\to-12^-}\lambda_1(\mu,\omega)=\lambda_1(-12,\omega)>-12$
by Lemma \ref{continuity of the principal eigenvalue mu -12}, there exists $\hat\mu_{3,\omega}\in (\hat\mu_{1,\omega},-12)$ such that $\lambda_1(\hat\mu_{3,\omega},\omega)=-12$ and  $\partial_\mu\lambda_1(\hat\mu_{3,\omega},\omega)\geq0$,  which again contradicts  \eqref{index formula 1o1}. If $\Phi_{\hat \mu_{1,\omega},\omega,1}$ is even, similarly we will get a contradiction  due to \eqref{L1e2ok3nonnegative}.
\end{proof}
\subsection{Proof of the positive critical rotation rate ${69\over2}$  for the second Fourier mode}
We prove that the  critical rotation rate in the positive half-line is ${69\over2}$ for the $2$'nd Fourier mode.
 \begin{Theorem}\label{k=2 positive half-line critical rotation rate}
Let $k=2$. Then the $3$-jet is  linearly unstable  for $\omega\in\left(12,{69\over2}\right)$ and spectrally  stable for $\omega\in\left[{69\over2},72\right)$.
\end{Theorem}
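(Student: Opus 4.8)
The plan is to reproduce, for the second Fourier mode, the argument that proved Theorem \ref{k=1 positive half-line critical rotation rate}, carried out now on the even subspace $X_e^2$ in accordance with the index formula \eqref{index formula 1o2} (recall $n^-(L_2|_{X_e^2})=1$ while $L_2|_{X_o^2}\geq0$, so only even vorticities can be responsible for instability of this mode). Since $\omega\in(12,72)$ lies outside $\text{Ran}(\Psi_0')^\circ$, Lemma \ref{tilde Psi0 does not change sign} (2) confines any neutral mode $(c,2,\omega,\Phi)$ to $c\in[0,-12+\omega]\cup\text{Ran}(-\tilde\Psi_\omega')^\circ$, and by Theorem \ref{Psi prime change sign c} (i) — which carries no parity restriction when $k=2$ — a neutral mode with $c\in\text{Ran}(-\tilde\Psi_\omega')^\circ$ forces $c=c_\omega$; its eigenfunction is $P_3^2$, which is odd, so this mode contributes nothing in $X_e^2$, and by Lemma \ref{kernel JL}, $k_{0,J_{\omega,2}L_2|_{X_e^2}}^{\leq0}=0$. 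Everything thus reduces to locating the neutral speeds $c\in[0,-12+\omega]$: writing $\mu=-\omega+c\in[-\omega,-12]$ and passing, as in \eqref{Rayleigh-type equation lambda}, to the modified eigenvalue problem with $k=2$ restricted to the even subspace $X_{\omega,\mu,e}$, whose eigenvalues $\tilde\lambda_1(\mu,\omega)\geq\tilde\lambda_2(\mu,\omega)\geq\cdots$ are discrete by Lemma \ref{compact embedding}, a neutral mode with $c\in[0,-12+\omega]$ exists exactly when $\tilde\lambda_{n_0}(\mu,\omega)=-12$ for some $n_0$.

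I would then establish the $k=2$ versions of the four lemmas used for $k=1$. At $\mu=-12$ the interior pole disappears and the substitution $\Phi=(1-s^2)^{\sigma/2}\phi$, $\sigma=\sqrt{4-(2\omega-144)/15}$, turns the equation into a Gegenbauer equation with parameter $\beta=\sigma+\tfrac12$; its even solutions $(1-s^2)^{\sigma/2}C_{2m}^\beta$ have eigenvalues $-(2m+\sigma)(2m+\sigma+1)$, so $\tilde\lambda_1(-12,\omega)=-\sigma(\sigma+1)$, which is continuous and strictly increasing on $[12,72]$ and equals $-12$ precisely at $\omega=\tfrac{69}{2}$, with eigenfunction $P_3^3\propto(1-s^2)^{3/2}$ (here $\Delta_2P_3^3\in L^2$, and $e^{2i\varphi}\Delta_2P_3^3\in X$ automatically since the angular-momentum constraints concern only $k=0,\pm1$; note $P_3^1$ is \emph{not} a competitor, as it would need $\omega=\tfrac{189}{2}>72$ and moreover $\Delta_2P_3^1\notin L^2$ near $s=\pm1$). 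For $\mu<-12$, differentiating the eigenrelation gives $\partial_\omega\tilde\lambda_1(\mu,\omega)=-\int_{-1}^1\tfrac{2}{15s^2-3+\mu}|\Phi_{\mu,\omega}|^2\,ds>0$, so $\tilde\lambda_1(\mu,\cdot)$ is increasing for every fixed $\mu\in(-\infty,-12]$; the left-continuity $\lim_{\mu\to-12^-}\tilde\lambda_1(\mu,\omega)=\tilde\lambda_1(-12,\omega)$ ($\omega\in(12,72)$) and the limit $\lim_{\mu\to-\infty}\tilde\lambda_1(\mu,\omega)=-18$ (the potential tends uniformly to $12$ and the largest even eigenvalue of $\Delta_2$ is $-6$, attained at $P_2^2$) follow by the arguments of Lemmas \ref{continuity of the principal eigenvalue mu -12} and \ref{asymptotic behavior principal eigenvalue lim mu -infty}. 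Finally the identity of Lemma \ref{quadratic form-computation} transfers: $\langle L_2\Delta_2\Phi_{\mu,\omega},\Delta_2\Phi_{\mu,\omega}\rangle=(c-c_\omega)\,\partial_\mu\tilde\lambda_{n_0}(\mu,\omega)$ for a normalised neutral eigenfunction with $c<-12+\omega$, and a direct computation gives $\langle L_2\Delta_2P_3^3,\Delta_2P_3^3\rangle<0$ for the endpoint mode $(\tfrac{45}{2},2,\tfrac{69}{2},P_3^3)$ (where $c-c_\omega=\tfrac{45}{2}-\tfrac{115}{4}<0$).

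With these facts the two halves follow as for $k=1$. For \emph{spectral stability on} $[\tfrac{69}{2},72)$: one first shows $\tilde\lambda_1(\mu,\tfrac{69}{2})<-12$ for $\mu<-12$ — otherwise the $\mu\to-\infty$ limit and an intermediate crossing with $\partial_\mu\tilde\lambda_1\ge0$ would produce a second neutral mode of nonpositive energy sign, and together with the $P_3^3$ mode that gives $k_i^{\leq0}\ge2$, contradicting \eqref{index formula 1o2} — so at $\omega=\tfrac{69}{2}$ the only even neutral mode is $P_3^3$, whence $k_{i,J_{\omega,2}L_2|_{X_e^2}}^{\leq0}=1$ and $k_c=k_r=0$; for $\omega\in(\tfrac{69}{2},72)$, monotonicity gives $\tilde\lambda_1(-12,\omega)>-12$, so left-continuity, the $-18$ limit and the intermediate value theorem yield $\mu_{1,\omega}\in(-\infty,-12)$ with $\tilde\lambda_1(\mu_{1,\omega},\omega)=-12$ and $\partial_\mu\tilde\lambda_1(\mu_{1,\omega},\omega)\ge0$, and since $c_{1,\omega}-c_\omega=\mu_{1,\omega}+\tfrac16\omega<-12+12=0$ the associated imaginary eigenvalue again has nonpositive energy sign, forcing $k_i^{\leq0}=1$ (the uniqueness of $\mu_{1,\omega}$ needed to exclude $k_i^{\leq0}\ge2$ follows from the index bound exactly as in Corollary \ref{uniqueness1}). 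For \emph{linear instability on} $(12,\tfrac{69}{2})$: monotonicity in $\omega$ together with $\tilde\lambda_1(\mu,\tfrac{69}{2})<-12$ gives $\tilde\lambda_1(\mu,\omega)<-12$ for all $\mu\in(-\infty,-12]$, hence no even neutral mode with $c\le-12+\omega$; with Lemma \ref{tilde Psi0 does not change sign} (2) and Theorem \ref{Psi prime change sign c} (i), $J_{\omega,2}L_2|_{X_e^2}$ then has no purely imaginary eigenvalue at all, so $k_i^{\leq0}=k_0^{\leq0}=0$ and \eqref{index formula 1o2} forces $k_c+k_r=1$: an unstable eigenvalue.

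The step I expect to be the main obstacle is the endpoint spectral continuity $\lim_{\mu\to-12^-}\tilde\lambda_1(\mu,\omega)=\tilde\lambda_1(-12,\omega)$: as $\mu\to-12^-$ the poles of $\tfrac{2\omega+12\mu}{15s^2-3+\mu}$ migrate to $s=\pm1$ and collide with the intrinsic singularity of the Legendre-type operator, so no standard resolvent continuity applies, and — as in Lemma \ref{continuity of the principal eigenvalue mu -12} — one must combine a uniform weighted $H^1$ bound on normalised eigenfunctions (using positivity of the potential for $\omega>12$, $\mu<-12$), weak compactness in the weighted space, a one-sided dominated-convergence estimate for the $\liminf$, and Fatou's lemma for the $\limsup$. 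A lesser but genuinely new point, with no analogue in the $k=1$ case, is that the critical eigenfunction at $\mu=-12$ is $P_3^3$ rather than $P_3^1$, together with verifying $\Delta_2P_3^3\in X_e^2$ and the sign of its energy form.
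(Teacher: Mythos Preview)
Your approach is correct and, for the stability half, essentially matches the paper. The paper likewise computes $\tilde\lambda_1(-12,\omega)$ via the Gegenbauer substitution (Lemma \ref{explicit values of the principal eigenvalues}), checks the $P_3^3$ mode at $\omega=\tfrac{69}{2}$ directly (Lemma \ref{critical rotation rate 69 over 2 ometa=-12}), and for $\omega\in(\tfrac{69}{2},72)$ uses the intermediate value theorem between $\mu\to -12^-$ and $\mu\to-\infty$. One simplification you can take: the paper only invokes the easy direction $\liminf_{\mu\to-12^-}\tilde\lambda_1(\mu,\omega)\ge\tilde\lambda_1(-12,\omega)$ (the analogue of \eqref{liminf}), which already suffices to produce $\mu_{2,\omega}$ with the required sign of $\partial_\mu\tilde\lambda_1$; the full left-continuity you flag as the main obstacle is not actually needed for the stability conclusion.

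The genuine divergence is in the \emph{instability} half. You reproduce the $k=1$ strategy: establish $\tilde\lambda_1(\mu,\tfrac{69}{2})<-12$ for $\mu<-12$ via an index-formula contradiction, then push down by monotonicity in $\omega$. The paper instead gives a direct, elementary argument (Lemma \ref{instability part k=2}) that exploits the integral identity \eqref{inte}: for $k=2$ the coefficient of $R_\omega^2F_\omega^2/(1-s^2)$ is $k^2-1=3$, and since $R_\omega\le 15s^2-15$ one bounds $\int(3R_\omega^2/(1-s^2)+2cR_\omega)F_\omega^2\,ds\ge(3-\tfrac{2c}{15})\int\Phi^2/(1-s^2)\,ds$, which is strictly positive whenever $c<\tfrac{45}{2}$ --- and $c\le -12+\omega<\tfrac{45}{2}$ precisely for $\omega<\tfrac{69}{2}$. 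This rules out neutral modes in $[0,-12+\omega]$ in one stroke, bypasses any spectral continuity or index-contradiction step, and makes transparent why $\tfrac{69}{2}$ is the threshold. Your route has the virtue of uniformity with the $k=1$ case; the paper's is shorter and more self-contained, and incidentally shows that for $k=2$ (unlike $k=1$) the instability range can be read off from an algebraic inequality rather than a spectral one.
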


By Theorem \ref{linear instability} and Rayleigh's criterion, the $3$-jet is  linearly unstable  for $\omega\in\left(0,12\right]$ and spectrally stable  for $\omega\in\left[72,\infty\right)$ for $k=2$. This, along with Theorem \ref{k=2 positive half-line critical rotation rate}, implies that the critical rotation rate for the positive half-line for $k=2$ is ${69\over2}$.

Let us first consider the instability part, the  motivation  of which is from the integral identity \eqref{inte}.

\begin{Lemma}\label{instability part k=2}
Let $k=2$. Then the $3$-jet is  linearly unstable  for $\omega\in\left(12,{69\over2}\right)$.
\end{Lemma}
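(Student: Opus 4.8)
The argument will parallel the instability half of the proof of Theorem \ref{k=1 positive half-line critical rotation rate}, with the roles of the two parities interchanged: here the barrier at $\mu=-12$ is realized by the \emph{even} associated Legendre function $P_3^3$ (rather than by the odd $P_3^2$ used for $k=1$), and the relevant index formula is \eqref{index formula 1o2} for $J_{\omega,2}L_2|_{X_e^2}$.

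\textbf{Reduction.} First I would reduce Lemma \ref{instability part k=2} to a statement about the principal eigenvalue of a modified Rayleigh problem. Fix $\omega\in(12,72)$. By Lemma \ref{tilde Psi0 does not change sign}(2) any neutral mode $(c,2,\omega,\Phi)$ has $c\in[0,-12+\omega]\cup\text{Ran}(-\tilde\Psi_\omega')^\circ$, and by Theorem \ref{Psi prime change sign c}(i) a neutral mode with $c\in\text{Ran}(-\tilde\Psi_\omega')^\circ$ has $c=c_\omega$; for such a mode \eqref{c omega k} shows $\Phi$ lies in the $(-12)$-eigenspace of $\Delta_2$, i.e. $\Phi\propto P_3^2$, which is \emph{odd}, so $\Delta_2\Phi\notin X_e^2$ (and its eigenvalue under $J_{\omega,2}L_2$ is $0$, hence irrelevant for $k_i$). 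Since the coefficients of \eqref{Rayleigh-type equation} are even in $s$, it therefore suffices to rule out, for $\omega\in(12,\tfrac{69}{2})$, the existence of an \emph{even} solution of \eqref{Rayleigh-type equation} with $k=2$ and $c\in[0,-12+\omega]$. Writing $\mu=-\omega+c\in[-\omega,-12]$ and letting $\tilde\lambda_1(\mu,\omega)$ denote the principal (maximal) eigenvalue of \eqref{Rayleigh-type equation lambda} with $k=2$ on the even space $X_{\omega,\mu,e}$ (defined in \eqref{def-X-omega-mu-e k=2}; compare \eqref{def-X-omega-mu-o}), the claim becomes
\begin{align*}
\tilde\lambda_1(\mu,\omega)<-12\qquad\text{for all }\mu\le-12,\ \omega\in\Big(12,\tfrac{69}{2}\Big),
\end{align*}
because then $k_{i,J_{\omega,2}L_2|_{X_e^2}}^{\le0}=0$, while $k_{0,J_{\omega,2}L_2|_{X_e^2}}^{\le0}=0$ by Lemma \ref{kernel JL}, so \eqref{index formula 1o2} gives $k_{c,J_{\omega,2}L_2|_{X_e^2}}+k_{r,J_{\omega,2}L_2|_{X_e^2}}=1$, i.e. linear instability for $k=2$.

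\textbf{Properties of $\tilde\lambda_1$.} I would then establish the $k=2$ analogues of Lemmas \ref{ometa=12}, \ref{principla eigenvalue monotonicity omega}(i)--(iii) and \ref{asymptotic behavior principal eigenvalue lim mu -infty}. At $\mu=-12$ the problem \eqref{Rayleigh-type equation lambda} becomes $((1-s^2)\Phi')'-\frac{\tilde m^2}{1-s^2}\Phi=\tilde\lambda\Phi$ with $\tilde m^2=4-\frac{2\omega-144}{15}=\frac{204-2\omega}{15}\in(4,12)$ for $\omega\in(12,72)$; the substitution $\Phi=(1-s^2)^{\tilde m/2}\phi$ and the Gegenbauer polynomials (as in the proof of Lemma \ref{principla eigenvalue monotonicity omega}(ii)) give the maximal eigenvalue among even functions
\begin{align*}
\tilde\lambda_1(-12,\omega)=-\tilde m(\tilde m+1),\qquad\tilde m=\sqrt{\tfrac{204-2\omega}{15}},
\end{align*}
with eigenfunction $(1-s^2)^{\tilde m/2}$, which for $\tilde m=3$ is $\propto P_3^3$. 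Hence $\tilde\lambda_1(-12,\cdot)$ is strictly increasing on $[12,72]$, equals $-12$ exactly at $\omega=\tfrac{69}{2}$, and is $<-12$ for $\omega\in(12,\tfrac{69}{2})$. Moreover $\partial_\omega\tilde\lambda_1(\mu,\omega)=-\int_{-1}^1\frac{2}{15s^2-3+\mu}|\Phi_{\mu,\omega}|^2\,ds>0$ for $\mu<-12$ (as in Lemma \ref{principla eigenvalue monotonicity omega}(i)), so $\tilde\lambda_1(\mu,\cdot)$ is increasing on $[12,72]$ for every fixed $\mu\le-12$; and $\lim_{\mu\to-\infty}\tilde\lambda_1(\mu,\omega)=-18$ (verbatim as in Lemma \ref{asymptotic behavior principal eigenvalue lim mu -infty}, using that the smallest Dirichlet eigenvalue of $-\Delta_2$ among even functions is $6$, attained at $P_2^2$, and $\frac{2\omega+12\mu}{15s^2-3+\mu}\to12$). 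Note that, unlike the stability part of Theorem \ref{k=2 positive half-line critical rotation rate}, here we do not need the left-continuity at $\mu=-12$ (the analogue of Lemma \ref{continuity of the principal eigenvalue mu -12}).

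\textbf{The global bound and conclusion.} The main obstacle is to promote the value $\tilde\lambda_1(-12,\tfrac{69}{2})=-12$ to the global bound $\tilde\lambda_1(\mu,\tfrac{69}{2})<-12$ for \emph{every} $\mu\in(-\infty,-12)$ --- i.e. to exclude any ``bump'' of the eigenvalue curve above level $-12$ --- which cannot be done by purely local ODE analysis but uses the global constraint \eqref{index formula 1o2}. I would argue by contradiction exactly as in the instability half of the proof of Theorem \ref{k=1 positive half-line critical rotation rate}: at $\omega=\tfrac{69}{2}$ the eigenfunction $P_3^3$ of $\tilde\lambda_1(-12,\tfrac{69}{2})=-12$ gives a neutral mode $(\tfrac{45}{2},2,\tfrac{69}{2},P_3^3)$ whose vorticity $\Delta_2P_3^3$ is \emph{even}, hence lies in $X_e^2$, and by the (parity-independent) computation of Lemma \ref{quadratic form-computation}(i),
\begin{align*}
\langle L_2\Delta_2P_3^3,\Delta_2P_3^3\rangle=\Big(\tfrac{45}{2}-\tfrac{5}{6}\cdot\tfrac{69}{2}\Big)\int_{-1}^1\frac{-12(15s^2-3)+69}{(15s^2-15)^2}|P_3^3|^2\,ds=-\tfrac{25}{4}\cdot 92<0,
\end{align*}
so this mode contributes a non-positive direction to $k_{i,J_{\omega,2}L_2|_{X_e^2}}^{\le0}$ at the nonzero imaginary eigenvalue $\tfrac{25i}{2}$. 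If some $\tilde\mu_1\in(-\infty,-12)$ had $\tilde\lambda_1(\tilde\mu_1,\tfrac{69}{2})\ge-12$, then, since $\tilde\lambda_1(\mu,\tfrac{69}{2})\to-18$ as $\mu\to-\infty$, there would be $\tilde\mu_2<-12$ with $\tilde\mu_2\le\tilde\mu_1$ such that $\tilde\lambda_1(\tilde\mu_2,\tfrac{69}{2})=-12$ and $\partial_\mu\tilde\lambda_1(\tilde\mu_2,\tfrac{69}{2})\ge0$; its neutral speed $\tilde c_2=\tilde\mu_2+\tfrac{69}{2}$ satisfies $\tilde c_2<\tfrac{45}{2}<\tfrac{115}{4}=c_{69/2}$, so $\tilde c_2-c_{69/2}<0$, and by the $k=2$ analogue of Lemma \ref{quadratic form-computation}(ii) the quadratic form is $\le0$ on its one-dimensional eigenspace, which is distinct from that of $\Delta_2P_3^3$ (the imaginary eigenvalues differ since $\tilde c_2\ne\tfrac{45}{2}$). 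This gives $k_{i,J_{\omega,2}L_2|_{X_e^2}}^{\le0}\ge2$, contradicting \eqref{index formula 1o2}; hence $\tilde\lambda_1(\mu,\tfrac{69}{2})<-12$ for all $\mu<-12$. Combining this with the strict $\omega$-monotonicity from the previous step yields $\tilde\lambda_1(\mu,\omega)<-12$ for all $\mu\le-12$ and $\omega\in(12,\tfrac{69}{2})$, which is the reduced claim, and the proof is complete.
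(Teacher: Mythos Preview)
Your proof is correct, but it takes a different route from the paper's. You mirror the $k=1$ instability argument of Theorem \ref{k=1 positive half-line critical rotation rate}: compute $\tilde\lambda_1(-12,\omega)$ via Gegenbauer polynomials (as in Lemma \ref{explicit values of the principal eigenvalues}), use strict monotonicity in $\omega$ and the limit $\tilde\lambda_1(\mu,\omega)\to-18$ as $\mu\to-\infty$, and then invoke the index formula \eqref{index formula 1o2} at $\omega=\tfrac{69}{2}$ to rule out any bump of $\tilde\lambda_1(\cdot,\tfrac{69}{2})$ above level $-12$. The paper, by contrast, gives a short self-contained argument based on the integral identity \eqref{inte}: for a hypothetical neutral mode with $c\in[0,-12+\omega]$ one drops the nonnegative term $|((1-s^2)^{-1/2}F_\omega)'|^2(1-s^2)^2R_\omega^2$ and is left with $\int_{-1}^1\big(\tfrac{(k^2-1)R_\omega^2}{1-s^2}+2cR_\omega\big)F_\omega^2\,ds\le 0$, whereas the pointwise bound $R_\omega\le 15s^2-15$ together with $k^2-1=3$ and $c\le-12+\omega<\tfrac{45}{2}$ forces the same integral to be $\ge(3-\tfrac{2c}{15})\int\tfrac{\Phi^2}{1-s^2}\,ds>0$. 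This makes the threshold $\tfrac{69}{2}=12+\tfrac{45}{2}$ appear as a purely algebraic consequence of $k=2$ and avoids any spectral machinery. Your approach has the virtue of being uniform with the $k=1$ treatment and reusing lemmas already needed for the stability half of Theorem \ref{k=2 positive half-line critical rotation rate}; the paper's approach is more elementary and pinpoints the numerology directly.
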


\begin{proof}
For any neutral mode $(c,2,\omega,\Phi)$, we have $c\in [0,3+\omega)$ by Lemma \ref{tilde Psi0 does not change sign} (2). If $c\in\text{Ran}(-\tilde \Psi_{\omega}')^\circ=(-12+\omega,3+\omega)$, we have $c=c_\omega$ by Theorem \ref{Psi prime change sign c}.
Then $k_{0,J_{\omega,2}L_2|_{X_e^2}}^{\leq0}=0$ by Lemma \ref{kernel JL}. Thus, it suffices to prove that  there exist no $c\in[0,-12+\omega]$ such that  $(c,2,\omega,\Phi)$ is a neutral mode.

Suppose that there exists a neutral mode $(c,2,\omega,\Phi)$ with $c\in[0,-12+\omega]$ for $\omega\in\left(12,{69\over2}\right)$.  We define $R_\omega(s)$ and $F_\omega(s)$ as in \eqref{def-R F}.
Then we get the ODE system \eqref{Rayleigh-type equation-F} and the integral identity \eqref{inte}, where we use \eqref{boundary term pm1} to handle the boundary terms from integration by parts for $c=-12+\omega$. On the one hand, by \eqref{inte}, we have
\begin{align}\label{integrate contr1}
\int_{-1}^1\left({3R_\omega^2F_\omega^2\over 1-s^2}+2cR_\omega F_\omega^2\right)ds\leq 0.
\end{align}
Noting that $0\leq c\leq -12+\omega<{45\over2}$, we have $3-{2c\over15}>3-{2\over15}\cdot{45\over2}=0.$ Since $R_\omega(s)=15s^2-3-\omega+c\leq 15s^2-15$,   we have, on the other hand, that
\begin{align}\nonumber
\int_{-1}^1\left({3R_\omega^2F_\omega^2\over 1-s^2}+2cR_\omega F_\omega^2\right)ds=&\int_{-1}^1\left({3\over 1-s^2}+{2c\over R}\right)R_\omega^2F_\omega^2ds\\\nonumber
\geq&\int_{-1}^1\left({3\over 1-s^2}+{2c\over 15s^2-15}\right)R_\omega^2F_\omega^2ds\\\label{integrate contr2}
=&\left({3}-{2c\over 15}\right)\int_{-1}^1{\Phi^2\over 1-s^2}ds>0,
\end{align}
where we use $\Phi\not\equiv0$. Then \eqref{integrate contr1} contradicts \eqref{integrate contr2}.

Thus, $k_{i,J_{\omega,2}L_2|_{X_e^2}}^{\leq0}=0$ and $k_{c,J_{\omega,2}L_2|_{X_e^2}}+ k_{r,J_{\omega,2}L_2|_{X_e^2}}=1$ by \eqref{index formula 1o2}. This proves linear instability of the $3$-jet for $\omega\in\left(12,{69\over2}\right)$.
\end{proof}

To prove   the stability part  for $\omega\in\left[{69\over2},72\right)$,
we study the eigenvalue problem
  \begin{align}\label{Rayleigh-type equation lambda k=2}
  ((1-s^2)\Phi')'-{4\over 1-s^2}\Phi-{2\omega+12\mu\over 15s^2-3+\mu}\Phi=\tilde\lambda\Phi,\quad \Delta_2\Phi\in L^2(-1,1).
 \end{align}
restricted to the space
 \begin{align}\label{def-X-omega-mu-e k=2}
X_{\omega,\mu,e}=\left\{\Phi\in X_{\omega,\mu}| \Phi \text{ is even}\right\}.
 \end{align}
Here, $X_{\omega,\mu}$ is defined in \eqref{def-X-omega-mu}, and  we note that the $3$-jet is spectrally stable  for $k=2$ when restricting to the  space of odd functions
by \eqref{L1e2ok3nonnegative}.

 By Lemma \ref{compact embedding}, the eigenvalue problem
 \eqref{Rayleigh-type equation lambda k=2} has a sequence eigenvalues $-\infty<\cdots\leq\tilde \lambda_{n}(\mu,\omega)\leq \cdots \leq \tilde\lambda_{1}(\mu,\omega)$, which can be defined by
 \begin{align}\label{def-tilde-lambda-n}
\tilde\lambda_{n}(\mu,\omega)=& \sup_{\Phi \in X_{\omega,\mu,e}, (\Phi, \tilde\Phi_{i})_{L^2} = 0, i = 1, 2, \cdots, n-1}{\int_{-1}^1\left(-(1-s^2)|\Phi'|^2-{4\over 1-s^2}|\Phi|^2-{2\omega+12\mu\over 15s^2-3+\mu}|\Phi|^2\right) ds\over\int_{-1}^1|\Phi|^2ds},
\end{align}
where the supremum for $\tilde\lambda_{n}(\mu,\omega)$ is attained at $\tilde\Phi_{n} \in  X_{\omega,\mu,e}$.
Similar  to Lemma  \ref{quadratic form-computation},
    we give a formula to compute  $\langle L_2\Upsilon,\Upsilon\rangle$ for a neutral mode $(c,2,\omega,\Phi)$ with $c\in[0,-12+\omega]$, where  $\Upsilon=\Delta_2\Phi$.

\begin{Lemma}\label{quadratic form-computation k=2}
$(\rm{i})$ Let $\omega\geq12$ and  $(c_{2},2,\omega,\Phi_{\mu_2,\omega})$ be
a  neutral mode, where $c_2\leq-12+\omega$ and $\mu_2=-\omega+c_{2}$. Then $\tilde \lambda_{n_0}(\mu_2,\omega)=-12$
for  some $n_{0}\geq1$ and
\begin{equation}
\langle L_{2}\Upsilon_{\mu_{2},\omega},\Upsilon_{\mu_{2},\omega}\rangle
=(c_2-c_\omega)\int_{-1}^1{-12(15s^2-3)+2\omega \over (15s^2-3+\mu_2)^2}|\Phi_{\mu_{2},\omega}|^2ds,
\label{equality-L-form-derivative k=2}%
\end{equation}
where $\Upsilon_{\mu_{2},\omega}=\Delta_2\Phi_{\mu_{2},\omega}$ and $c_\omega={5\over6} \omega$.

$(\rm{ii})$ Under the assumptions of $(\rm{i})$, if $c_2<-12+\omega$ and $\|\Phi_{\mu_{2},\omega}\|_{L^2(-1,1)}=1$, then
\begin{equation}
\langle L_{2}\Upsilon_{\mu_{2},\omega},\Upsilon_{\mu_{2},\omega}\rangle
=(c_2-c_\omega)\partial_\mu \tilde \lambda_{n_0}(\mu_2,\omega),
\label{equality-L-form-derivative2 k=2}%
\end{equation}
\end{Lemma}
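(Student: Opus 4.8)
The plan is to transcribe, essentially verbatim, the argument used for Lemma~\ref{quadratic form-computation}, replacing $k=1$ by $k=2$, the odd space $X_o^1$ by the even space $X_e^2$, the eigenvalue problem \eqref{Rayleigh-type equation lambda k=1} by \eqref{Rayleigh-type equation lambda k=2}, and $\lambda_n$ by $\tilde\lambda_n$. First I would record that $\tilde\lambda_{n_0}(\mu_2,\omega)=-12$ for some $n_0\ge1$: since $c_2\le-12+\omega$ we have $\mu_2=-\omega+c_2\le-12$, so Lemma~\ref{compact embedding}~(ii) applies and \eqref{Rayleigh-type equation lambda k=2} has a discrete spectrum $\{\tilde\lambda_n(\mu_2,\omega)\}$; by \eqref{c omega k} the Rayleigh equation \eqref{Rayleigh-type equation} at the neutral speed $c_2$ coincides with \eqref{Rayleigh-type equation 12}, i.e.\ with \eqref{Rayleigh-type equation lambda k=2} at $\tilde\lambda=-12$. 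Because the coefficients of \eqref{Rayleigh-type equation lambda k=2} are even in $s$ and $L_2|_{X_o^2}\ge0$, the neutral modes contributing to the energy form are even, so $\Phi_{\mu_2,\omega}\in X_{\omega,\mu_2,e}$ and $-12$ indeed appears in the list $\{\tilde\lambda_n(\mu_2,\omega)\}$.

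For part $(\mathrm{i})$ I would use $L_2=\tfrac1{12}+\Delta_2^{-1}$ to write $\langle L_2\Upsilon_{\mu_2,\omega},\Upsilon_{\mu_2,\omega}\rangle=\int_{-1}^1\big(\tfrac1{12}\Upsilon_{\mu_2,\omega}+\Phi_{\mu_2,\omega}\big)\Upsilon_{\mu_2,\omega}\,ds$, with $\Phi_{\mu_2,\omega}=\Delta_2^{-1}\Upsilon_{\mu_2,\omega}$. From \eqref{Rayleigh-type equation} one has $\Upsilon_{\mu_2,\omega}=\tfrac{\tilde \Upsilon_{\omega}'}{\tilde \Psi_{\omega}'+c_2}\Phi_{\mu_2,\omega}$, hence $\tfrac{\Phi_{\mu_2,\omega}}{\Upsilon_{\mu_2,\omega}}=\tfrac{\tilde \Psi_{\omega}'+c_2}{\tilde \Upsilon_{\omega}'}=-\tfrac1{12}-\tfrac{c_\omega-c_2}{\tilde \Upsilon_{\omega}'}$ by \eqref{c omega k}, so that $\tfrac1{12}\Upsilon_{\mu_2,\omega}+\Phi_{\mu_2,\omega}=-\tfrac{c_\omega-c_2}{\tilde \Upsilon_{\omega}'}\Upsilon_{\mu_2,\omega}$ and
\[
\langle L_2\Upsilon_{\mu_2,\omega},\Upsilon_{\mu_2,\omega}\rangle=(c_2-c_\omega)\int_{-1}^1\frac{\Upsilon_{\mu_2,\omega}^2}{\tilde \Upsilon_{\omega}'}\,ds=(c_2-c_\omega)\int_{-1}^1\frac{\tilde \Upsilon_{\omega}'}{(\tilde \Psi_{\omega}'+c_2)^2}|\Phi_{\mu_2,\omega}|^2\,ds .
\]
Substituting $\tilde \Upsilon_{\omega}'=-12(15s^2-3)+2\omega$ and $\tilde \Psi_{\omega}'+c_2=15s^2-3+\mu_2$ gives \eqref{equality-L-form-derivative k=2}.

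For part $(\mathrm{ii})$, with $c_2<-12+\omega$ we now have $\mu_2<-12$ strictly. I would perturb $\mu$ near $\mu_2$, take an $L^2$-normalized eigenfunction $\Phi_{\mu,\omega}$ of $\tilde\lambda_{n_0}(\mu,\omega)$, pair the weak formulation for $(\mu,\Phi_{\mu,\omega})$ against $\Phi_{\mu_2,\omega}$ and the one for $(\mu_2,\Phi_{\mu_2,\omega})$ against $\Phi_{\mu,\omega}$, and subtract to get
\[
\big(\tilde\lambda_{n_0}(\mu,\omega)-\tilde\lambda_{n_0}(\mu_2,\omega)\big)\int_{-1}^1\Phi_{\mu_2,\omega}\Phi_{\mu,\omega}\,ds=\int_{-1}^1\Big(-\frac{2\omega+12\mu}{15s^2-3+\mu}+\frac{2\omega+12\mu_2}{15s^2-3+\mu_2}\Big)\Phi_{\mu,\omega}\Phi_{\mu_2,\omega}\,ds .
\]
Since $\mu_2<-12$, equation \eqref{Rayleigh-type equation lambda k=2} furnishes a uniform $H_2^2(\mathbb{S}^2)$ bound on $e^{2i\varphi}\Phi_{\mu,\omega}$ for $|\mu-\mu_2|\ll1$, whence $\Phi_{\mu,\omega}\to\Phi_{\mu_2,\omega}$ in $L^2(-1,1)$ as $\mu\to\mu_2$ (Lemma~\ref{lem-differential calculus}~(ii)). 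Dividing the displayed identity by $\mu-\mu_2$, using $\partial_\mu\big(-\tfrac{2\omega+12\mu}{15s^2-3+\mu}\big)=\tfrac{\tilde \Upsilon_{\omega}'}{(15s^2-3+\mu)^2}$, and letting $\mu\to\mu_2$ yields $\partial_\mu\tilde\lambda_{n_0}(\mu_2,\omega)=\int_{-1}^1\tfrac{\tilde \Upsilon_{\omega}'}{(\tilde \Psi_{\omega}'+c_2)^2}|\Phi_{\mu_2,\omega}|^2\,ds$, which combined with \eqref{equality-L-form-derivative k=2} gives \eqref{equality-L-form-derivative2 k=2}. The only delicate step, exactly as in Lemma~\ref{quadratic form-computation}, is this $L^2$-convergence of eigenfunctions; it is legitimate precisely because $\mu_2$ stays strictly below $-12$, so $15s^2-3+\mu$ never vanishes for $\mu$ near $\mu_2$ and $s\in[-1,1]$, and the elliptic estimates for \eqref{Rayleigh-type equation lambda k=2} hold uniformly. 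Everything else is a routine copy of the $k=1$ computation.
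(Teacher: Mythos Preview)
Your proposal is correct and follows exactly the approach the paper intends: the paper does not write out a separate proof for this lemma but simply prefaces it with ``Similar to Lemma~\ref{quadratic form-computation}'', and what you have written is precisely that transcription of the $k=1$ argument to $k=2$, with the obvious replacements $\lambda_n\to\tilde\lambda_n$, $X_{\omega,\mu,o}\to X_{\omega,\mu,e}$, and $\Delta_1\to\Delta_2$. Your added remark on parity (that the relevant $\Phi_{\mu_2,\omega}$ lies in $X_{\omega,\mu_2,e}$ so that $-12$ is genuinely one of the $\tilde\lambda_n$) is a small clarification the paper leaves implicit, but it is consistent with how the lemma is actually applied.
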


Then we prove the spectral stability of the critical rotation rate $\omega={69\over2}$.

\begin{Lemma}\label{critical rotation rate 69 over 2 ometa=-12}
Let $k=2$. Then the $3$-jet  is spectrally  stable for $\omega={69\over2}$.
\end{Lemma}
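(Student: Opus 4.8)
The plan is to follow the template of Lemma~\ref{ometa=12}: at $\omega={69\over2}$ we exhibit a neutral mode for the second Fourier mode sitting at the right endpoint $c=-12+\omega$ of the interval supplied by Lemma~\ref{tilde Psi0 does not change sign}~(2), we check that the associated energy quadratic form $\langle L_2\cdot,\cdot\rangle$ is negative, and we then read off spectral stability from the index formula \eqref{index formula 1o2}. This is the exact $k=2$ analogue of the $m=2$, $\omega={99\over2}$ computation in Lemma~\ref{ometa=12}~(i).

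First I would insert $\mu=-12$ (equivalently $c=-12+\omega$) together with the target value $-12$ for the spectral parameter into \eqref{Rayleigh-type equation lambda k=2}. Since $15s^2-3+\mu=-15(1-s^2)$, this reduces to $((1-s^2)\Phi')'-{m^2\over1-s^2}\Phi+12\Phi=0$ with $m^2=4-{2\omega-144\over15}$; at $\omega={69\over2}$ one has $m^2=9$, so the problem becomes the general Legendre equation of order three. Its largest eigenvalue carrying an \emph{even} eigenfunction is $-3\cdot4=-12$, attained at $\Phi=P_3^3(s)=-15(1-s^2)^{3/2}$, which is even because $P_l^m$ has parity $(-1)^{l+m}$; moreover $\Delta_2 P_3^3(s)=-15(1-s^2)^{1/2}(12s^2-7)$ is bounded, hence in $L^2(-1,1)$. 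Therefore $\big({45\over2},2,{69\over2},P_3^3\big)$ is a neutral mode (note ${45\over2}=-12+{69\over2}\in[0,-12+\omega]$), and since $k=2$ the vorticity $\Upsilon=e^{2i\varphi}\Delta_2 P_3^3$ automatically lies in $X$: its azimuthal mode is orthogonal to $Y_1^m$ for $m=0,\pm1$ and it has zero mean, so no parity argument is even needed for the constraints.

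Next I would compute the energy. Because $c_2=-12+\omega$ is the endpoint case, part~(i) of Lemma~\ref{quadratic form-computation k=2} applies (rather than part~(ii), which needs $c_2<-12+\omega$). With $\mu_2=-12$ one has $(15s^2-3+\mu_2)^2=225(1-s^2)^2$, $-12(15s^2-3)+2\omega=-180s^2+105$, $|P_3^3|^2=225(1-s^2)^3$, and $c_\omega={5\over6}\cdot{69\over2}={115\over4}$, whence
\[
\langle L_2\Delta_2 P_3^3,\Delta_2 P_3^3\rangle=\Big({45\over2}-{115\over4}\Big)\int_{-1}^1(-180s^2+105)(1-s^2)\,ds=-{25\over4}\cdot92=-575<0,
\]
using $\int_{-1}^1(180s^4-285s^2+105)\,ds=72-190+210=92$.

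Finally, the neutral mode corresponds to the nonzero purely imaginary eigenvalue $-2i(c_2-c_\omega)={25\over2}i$ of $J_{\omega,2}L_2|_{X_e^2}$ with $\omega={69\over2}$, and the displayed inequality gives a strictly negative direction of $\langle L_2\cdot,\cdot\rangle$ on its generalized eigenspace, so the index $k^{\leq0}_{i}$ of $J_{\omega,2}L_2|_{X_e^2}$ satisfies $k^{\leq0}_{i}\geq1$. By Lemma~\ref{kernel JL} (together with Remark~\ref{kernel-J12L12}~(1)) the generalized zero eigenspace contributes nothing in $X_e^2$, i.e.\ $k^{\leq0}_{0}=0$, so the index formula \eqref{index formula 1o2} forces $k^{\leq0}_{i}=1$ and $k_{c}=k_{r}=0$. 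Since $L_2|_{X_o^2}\geq0$ by \eqref{L1e2ok3nonnegative}, there are no unstable eigenvalues on the odd part either, so $J_{\omega,2}L_2$ has no eigenvalue of positive real part, i.e.\ the $3$-jet is spectrally stable for the second Fourier mode at $\omega={69\over2}$. As in Lemma~\ref{ometa=12}, the only genuine obstacle is the bookkeeping: confirming $\Delta_2 P_3^3\in L^2(-1,1)$ so that we really have a neutral mode, checking that $P_3^3$ is even so that it lives in $X_e^2$ where \eqref{index formula 1o2} is in force, and verifying the sign of the single explicit integral.
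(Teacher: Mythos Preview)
Your proof is correct and mirrors the paper's own argument almost line for line: both exhibit the neutral mode $\big({45\over2},2,{69\over2},P_3^3\big)$ at the endpoint $\mu=-12$, compute $\langle L_2\Delta_2 P_3^3,\Delta_2 P_3^3\rangle=-575<0$ via Lemma~\ref{quadratic form-computation k=2}~(i), and read off $k_{c}+k_{r}=0$ from the index formula \eqref{index formula 1o2}. Your only additions---explicitly verifying $\Delta_2 P_3^3\in L^2$, the parity of $P_3^3$, and that the odd subspace contributes no instability via \eqref{L1e2ok3nonnegative}---are helpful confirmations but do not depart from the paper's route.
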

\begin{proof}
We determine for which $\omega$, $c=-12+\omega$ is neutral.
Putting  $\mu=-\omega+c=-12$ and  $k=2$ into \eqref{Rayleigh-type equation 12}, we have
\begin{align}\label{c-12+omega k=2}
  ((1-s^2)\Phi')'+{-4-{144-2\omega\over15}\over 1-s^2}\Phi+12\Phi=0, \quad \Delta_2\Phi\in L^2(-1,1).
 \end{align}
If $-4-{144-2\omega\over15}=-m^2$ and $m=0,1,2,3$, then \eqref{c-12+omega k=2} can be solvable.

Noting that for $m=0\Rightarrow\omega={102}>72$, $m=1\Rightarrow\omega={189\over2}>72$ and $m=2\Rightarrow\omega=72$, the $3$-jet is spectrally stable  by Rayleigh's criterion.

For $m=3\Rightarrow\omega={69\over2}<72$,  $c={45\over2}$ and $c_\omega={115\over4}$. The equation \eqref{c-12+omega k=2} has a solution $P_3^3(s)=-15(1-s^2)^{3\over2}$.  Let $\Upsilon(\varphi,s)=e^{i2\varphi}\Delta_2P_3^3(s)$. Due to different frequencies, $\Upsilon$ satisfies the constraints $\iint_{D_T} \Upsilon d\varphi ds=0, \iint_{D_T}\Upsilon Y_1^md\varphi ds=0,  m=0,\pm1$.
By Lemma \ref{quadratic form-computation k=2} (i), we have
\begin{equation*}
\langle L_{2}\Delta_2P_3^3,\Delta_2P_3^3\rangle
=225\cdot\left({45\over2}-{115\over4}\right)\int_{-1}^1{-180s^2+105\over (15s^2-15)^2}(1-s^2)^3ds=-575<0.
\end{equation*}
Then
 $k_{i,J_{\omega,2}L_2|_{X_e^2}}^{\leq0} =1$ since $\Upsilon$ is even.  By the index formula  \eqref{index formula 1o2}, we have
 $ k_{c,J_{\omega,2}L_2|_{X_e^2}}+ k_{r,J_{\omega,2}L_2|_{X_e^2}}=0.$ Thus, the $3$-jet is spectrally stable for $\omega={69\over2}$ and $k=2$.
\end{proof}

For $\mu=-12$, we compute the explicit values of the principal eigenvalues $\tilde\lambda_1(\mu,\omega)$ with corresponding eigenfunctions.
\begin{Lemma}\label{explicit values of the principal eigenvalues}
The principal eigenvalue $\tilde \lambda_1(-12,\omega)$ is
\begin{align}\label{principal eigenvalue -12 omega k=2}
\tilde\lambda_1(-12,\omega)=-\sqrt{4-{2\omega-144\over 15}}\left(1+\sqrt{4-{2\omega-144\over 15}}\right)\in[-2\sqrt{3}-12,-6], \quad\omega\in[12,72],
\end{align}
and a corresponding eigenfunction is given by
\begin{align}\label{eigenfunction of principal eigenvalue -12 omega k=2}
\tilde\Phi_{-12,\omega}(s)=(1-s^2)^{{\sqrt{4-{2\omega-144\over 15}}\over2}}, \quad s\in [-1,1].
\end{align}
Consequently,
$\tilde\lambda_1(-12,\cdot)$ is increasing on $\omega\in[12,72]$.
\end{Lemma}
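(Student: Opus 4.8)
The plan is to observe that at $\mu=-12$ the interior singular term in \eqref{Rayleigh-type equation lambda k=2} merges into the boundary singularity $\frac{1}{1-s^2}$, so that the problem becomes a general Legendre equation with a non‑integer order, and then to run the Gegenbauer‑polynomial reduction exactly as in the proof of Lemma \ref{principla eigenvalue monotonicity omega}(ii), keeping this time the \emph{even} eigenfunctions.

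\textbf{Step 1: reduction at $\mu=-12$.} Since $15s^2-3+\mu=-15(1-s^2)$ when $\mu=-12$, one has $-\frac{2\omega+12\mu}{15s^2-3+\mu}=\frac{2\omega-144}{15(1-s^2)}$, so \eqref{Rayleigh-type equation lambda k=2} becomes
\[
((1-s^2)\Phi')'-\frac{m^2}{1-s^2}\,\Phi=\tilde\lambda\Phi,\qquad \Delta_2\Phi\in L^2(-1,1),\qquad m:=\sqrt{4-\frac{2\omega-144}{15}},
\]
the general Legendre equation of order $m$. For $\omega\in[12,72]$ we have $\frac{2\omega-144}{15}\in[-8,0]$, hence $m\in[2,2\sqrt3]$; in particular $m\ge2$, the equation is regular‑singular only at $s=\pm1$, the potential $\frac{2\omega-144}{15(1-s^2)}$ is nonnegative so that $X_{\omega,-12}$ is comparable to $\tilde X$, and Lemma \ref{compact embedding} still gives a discrete complete spectrum.

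\textbf{Step 2: Gegenbauer substitution and parity.} Following the proof of Lemma \ref{principla eigenvalue monotonicity omega}(ii) with ``$1$'' replaced by ``$4$'' and with a substitution of the type \eqref{transformation mu=-12}, write $\Phi(s)=(1-s^2)^{m/2}\phi(s)$ and $\tilde\lambda=-(n+m)(n+m+1)$; then $\phi$ solves the Gegenbauer equation \eqref{Gegenbauer differential equation} with $\beta=m+\frac12$. The solutions keeping $\Delta_2\Phi\in L^2(-1,1)$ are precisely the Gegenbauer polynomials $\phi=C_n^\beta$, $n\ge0$, whose eigenvalues $-(n+m)(n+m+1)$ strictly decrease in $n$. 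Since $C_n^\beta(-s)=(-1)^nC_n^\beta(s)$ and $(1-s^2)^{m/2}$ is even, $(1-s^2)^{m/2}C_n^\beta\in X_{\omega,-12,e}$ iff $n$ is even; hence the principal eigenvalue on $X_{\omega,-12,e}$ is attained at $n=0$, where $C_0^\beta\equiv1$. This yields \eqref{eigenfunction of principal eigenvalue -12 omega k=2} together with $\tilde\lambda_1(-12,\omega)=-m(m+1)$, i.e. \eqref{principal eigenvalue -12 omega k=2}; moreover $(1-s^2)^{m/2}$ indeed lies in $X_{\omega,-12,e}$, since $\frac{4}{1-s^2}(1-s^2)^{m/2}=4(1-s^2)^{m/2-1}\in L^2(-1,1)$ as $m\ge2>1$.

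\textbf{Step 3: range, monotonicity, and the main obstacle.} The map $\omega\mapsto m$ is continuous and strictly decreasing from $2\sqrt3$ to $2$ on $[12,72]$, and $t\mapsto-t(t+1)$ is strictly decreasing, so $\tilde\lambda_1(-12,\cdot)$ is strictly increasing on $[12,72]$ with range $[-2\sqrt3(1+2\sqrt3),-2\cdot3]=[-2\sqrt3-12,-6]$, as claimed. The only point that needs genuine care is the exclusion, in Step 2, of the second (non‑polynomial) solution of the Gegenbauer equation from the form domain $X_{\omega,-12,e}$ — equivalently, that the eigenfunctions coming from $C_n^\beta$ exhaust the spectrum. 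This is handled by the same mechanism already used for $k=1$ in Lemma \ref{principla eigenvalue monotonicity omega}(ii), combined with the compactness in Lemma \ref{compact embedding}, so it is routine rather than a new difficulty; everything else is direct computation.
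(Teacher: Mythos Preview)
Your proof is correct and follows essentially the same route as the paper's: both set $\mu=-12$ so that the potential collapses into a single $\frac{m^2}{1-s^2}$ term with $m=\sqrt{4-\frac{2\omega-144}{15}}$, then use the substitution $\Phi=(1-s^2)^{m/2}\phi$ to reduce to the Gegenbauer equation \eqref{Gegenbauer differential equation} with $\beta=m+\tfrac12$, and finally select $n=0$ by the parity constraint in $X_{\omega,-12,e}$. Your Step~3 spells out the monotonicity and range computation a bit more explicitly than the paper, and your remark on excluding the non‑polynomial Gegenbauer solution is a point the paper leaves implicit, but there is no substantive difference in method.
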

\begin{proof}
 Let
 \begin{align*}
 \Phi(s)=(1-s^2)^{{\sqrt{4-{2\omega-144\over 15}}\over2}}\phi(s), \quad s\in[-1,1].
 \end{align*}
If $\Phi$ solves \eqref{Rayleigh-type equation lambda k=2} with $\mu=-12$, then  $\phi$ solves the equation
 \begin{align*}
 (1-s^2)\phi''-2\left(\sqrt{4-{2\omega-144\over 15}}+1\right)s\phi'+\left(-4+{2\omega-144\over15}-\sqrt{4-{2\omega-144\over15}}-\tilde\lambda\right)\phi=0.
 \end{align*}
This is a
Gegenbauer equation. Using the Gegenbauer polynomials, we know that
 \begin{align*}
  \tilde\lambda=-\left(n+\sqrt{4-{2\omega-144\over 15}}\right)\left(n+\sqrt{4-{2\omega-144\over 15}} +1\right),
\end{align*}
and $\Phi(s)=(1-s^2)^{{\sqrt{4-{2\omega-144\over 15}}\over2}}C_n^\beta(s)$ solves  \eqref{Rayleigh-type equation lambda k=2} with $\mu=-12$,
where $C_n^\beta(s)$ is the Gegenbauer polynomial with $n\geq0$, $\beta=\sqrt{4-{2\omega-144\over 15}}+{1\over2}\in[{5\over2},2\sqrt{3}+{1\over2}]$.  Since the eigenfunction is even, \eqref{principal eigenvalue -12 omega k=2} and \eqref{eigenfunction of principal eigenvalue -12 omega k=2} are obtained by taking $n=0$.
\end{proof}

Now, we prove   Theorem \ref{k=2 positive half-line critical rotation rate}.
\begin{proof}[Proof of Theorem \ref{k=2 positive half-line critical rotation rate}]
The instability part is proved in Lemma \ref{instability part k=2}. Now, we consider the stability part.
By Lemma \ref{critical rotation rate 69 over 2 ometa=-12}, it suffices to prove spectral stability  of the $3$-jet for $\omega\in\left({69\over2},72\right)$.
By Lemma \ref{explicit values of the principal eigenvalues}, we have $\tilde\lambda_1(-12,\omega)>\tilde\lambda_1(-12,{69\over2})=-12$ for $\omega\in\left({69\over2},72\right)$.
 Similar to \eqref{liminf}, we have
$
\liminf_{\mu\to-12^-}\tilde \lambda_1(\mu,\omega)\geq\tilde \lambda_1(-12,\omega)>-12.
$
Similar to \eqref{principal eigenvalue lim mu -infty}, we have
$
\lim_{\mu\to-\infty}\tilde\lambda_1(\mu,\omega)=-18.
$
Thus, there exists $\mu_{2,\omega}\in(-\infty,-12)$ such that
$
\tilde \lambda_1(\mu_{2,\omega},\omega)=-12 $ and  $\partial_\mu\tilde\lambda_1(\mu_{2,\omega},\omega)\geq0.
$
So, there exists a neutral mode $(c_{2,\omega},2,\omega,\Phi_{\mu_{2,\omega},\omega,2})$ with $c_{2,\omega}=\mu_{2,\omega}+\omega<-12+\omega$, where $\|\Phi_{\mu_{2,\omega},\omega,2}\|_{L^2(-1,1)}=1$.
Note that $c_{2,\omega}-c_\omega<-12+\omega-{5\over6}\omega=-12+{1\over 6}\omega<0$ for $\omega\in({69\over 2},72)$.
By Lemma \ref{quadratic form-computation k=2} (ii), we have
$
\langle L_{2}\Upsilon_{\mu_{2,\omega},\omega,2},\Upsilon_{\mu_{2,\omega},\omega,2}\rangle
=(c_{2,\omega}-c_\omega)\partial_\mu \tilde\lambda_{1}(\mu_{2,\omega},\omega)\leq 0,
$
where $\Upsilon_{\mu_{2,\omega},\omega,2}=\Delta_2\Phi_{\mu_{2,\omega},\omega,2}$.
Thus, $k_{i,J_{\omega,2}L_2|_{X_e^2}}^{\leq0}=1$. By the index formula
\eqref{index formula 1o2}, we have $k_{c,J_{\omega,2}L_2|_{X_e^2}}+ k_{r,J_{\omega,2}L_2|_{X_e^2}}=0$. This proves
 spectral stability of the $3$-jet  for $\omega\in\left({69\over2},72\right)$.
 \end{proof}
Similar to Corollary \ref{uniqueness1}, we have the following result.
 \begin{Corollary}\label{uniqueness3}
Let $\omega\in\left({69\over2},72\right)$ and $k=2$. Then there exists
a unique $\mu_{2,\omega}\in(-\infty,-12)$ such that  $(c_{2,\omega},2,\omega,\Phi_{\mu_{2,\omega},\omega,2})$ is a  neutral mode, where $c_{2,\omega}=\mu_{2,\omega}+\omega$. Moreover,  $
\langle L_{2}\Upsilon_{\mu_{2,\omega},\omega,2},$ $\Upsilon_{\mu_{2,\omega},\omega,2}\rangle$ $
\leq 0$, where $\Upsilon_{\mu_{2,\omega},\omega,2}=\Delta_2\Phi_{\mu_{2,\omega},\omega,2}$.
\end{Corollary}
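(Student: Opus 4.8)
The plan is to follow the proof of Corollary \ref{uniqueness1} closely, performing the substitutions $k=1\rightsquigarrow k=2$, $X_o^1\rightsquigarrow X_e^2$, $\lambda_n\rightsquigarrow\tilde\lambda_n$, the index formula \eqref{index formula 1o1}$\rightsquigarrow$\eqref{index formula 1o2}, and Lemma \ref{quadratic form-computation}$\rightsquigarrow$Lemma \ref{quadratic form-computation k=2}, with the monotonicity/continuity/asymptotics inputs replaced by their $k=2$ analogues, all of which were already established inside the proof of Theorem \ref{k=2 positive half-line critical rotation rate}: $\tilde\lambda_1(-12,\cdot)$ is increasing on $[12,72]$ with $\tilde\lambda_1\big(-12,{69\over2}\big)=-12$ (Lemma \ref{explicit values of the principal eigenvalues}), $\liminf_{\mu\to-12^-}\tilde\lambda_1(\mu,\omega)\ge\tilde\lambda_1(-12,\omega)$, and $\lim_{\mu\to-\infty}\tilde\lambda_1(\mu,\omega)=-18$. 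The existence of $\mu_{2,\omega}\in(-\infty,-12)$ with $\tilde\lambda_1(\mu_{2,\omega},\omega)=-12$, $\partial_\mu\tilde\lambda_1(\mu_{2,\omega},\omega)\ge0$ and $c_{2,\omega}=\mu_{2,\omega}+\omega$ was obtained there, and the ``moreover'' inequality $\langle L_2\Upsilon_{\mu_{2,\omega},\omega,2},\Upsilon_{\mu_{2,\omega},\omega,2}\rangle\le0$ is precisely what was shown for this $\mu_{2,\omega}$ in that proof (using $c_{2,\omega}-c_\omega=\mu_{2,\omega}+{1\over6}\omega<0$ for $\omega<72$, recall $c_\omega={5\over6}\omega$ from \eqref{c omega k}). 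Hence only the uniqueness of $\mu_{2,\omega}$ must be added here.

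For uniqueness I would argue by contradiction: suppose there is a second $\hat\mu_{2,\omega}\in(-\infty,-12)$, $\hat\mu_{2,\omega}\ne\mu_{2,\omega}$, for which $(\hat c_{2,\omega},2,\omega,\Phi_{\hat\mu_{2,\omega},\omega,2})$ is a neutral mode with $\hat c_{2,\omega}=\hat\mu_{2,\omega}+\omega$ and $\|\Phi_{\hat\mu_{2,\omega},\omega,2}\|_{L^2(-1,1)}=1$, and split according to the parity of $\Phi_{\hat\mu_{2,\omega},\omega,2}$. In the even case, the neutral mode at $\mu_{2,\omega}$ already contributes one non-positive dimension of $\langle L_2\cdot,\cdot\rangle$ on a nonzero purely imaginary generalized eigenspace of $J_{\omega,2}L_2|_{X_e^2}$, and $n^-(L_2|_{X_e^2})=1$, so \eqref{index formula 1o2} forces $\langle L_2\hat\Upsilon,\hat\Upsilon\rangle>0$ for $\hat\Upsilon=\Delta_2\Phi_{\hat\mu_{2,\omega},\omega,2}$; by Lemma \ref{quadratic form-computation k=2}(ii) this reads $(\hat c_{2,\omega}-c_\omega)\partial_\mu\tilde\lambda_1(\hat\mu_{2,\omega},\omega)>0$, and since $\hat c_{2,\omega}-c_\omega=\hat\mu_{2,\omega}+{1\over6}\omega<-12+{1\over6}\omega<0$ (as $\omega<72$) we get $\partial_\mu\tilde\lambda_1(\hat\mu_{2,\omega},\omega)<0$. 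If $\hat\mu_{2,\omega}<\mu_{2,\omega}$, then $\tilde\lambda_1(\mu,\omega)>-12$ just to the left of $\hat\mu_{2,\omega}$ and $\tilde\lambda_1(\mu,\omega)\to-18$ as $\mu\to-\infty$, so a further crossing $\hat\mu^{*}\in(-\infty,\hat\mu_{2,\omega})$ with $\tilde\lambda_1(\hat\mu^{*},\omega)=-12$ and $\partial_\mu\tilde\lambda_1(\hat\mu^{*},\omega)\ge0$ appears; this gives a third even neutral mode with energy $\le0$, hence $k_{i,J_{\omega,2}L_2|_{X_e^2}}^{\le0}\ge2$ together with $\mu_{2,\omega}$, contradicting \eqref{index formula 1o2}. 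If $\hat\mu_{2,\omega}>\mu_{2,\omega}$, then $\tilde\lambda_1(\mu,\omega)<-12$ just to the right of $\hat\mu_{2,\omega}$ while $\tilde\lambda_1(\mu,\omega)\to\tilde\lambda_1(-12,\omega)>-12$ as $\mu\to-12^-$, so a crossing $\hat\mu^{**}\in(\hat\mu_{2,\omega},-12)$ with $\tilde\lambda_1(\hat\mu^{**},\omega)=-12$ and $\partial_\mu\tilde\lambda_1(\hat\mu^{**},\omega)\ge0$ gives the same contradiction.

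In the odd case, $\hat\Upsilon=\Delta_2\Phi_{\hat\mu_{2,\omega},\omega,2}\in X_o^2$, on which $L_2\ge0$ by \eqref{L1e2ok3nonnegative}, so $\langle L_2\hat\Upsilon,\hat\Upsilon\rangle\ge0$; running the same analysis with the principal eigenvalue $\tilde\lambda_1^{o}$ of the odd eigenvalue problem in place of $\tilde\lambda_1$, one has $\tilde\lambda_1^{o}(-12,\omega)<-12$ for $\omega\in({69\over2},72)$ (the Gegenbauer computation of Lemma \ref{explicit values of the principal eigenvalues} with the odd Gegenbauer polynomial) and $\lim_{\mu\to-\infty}\tilde\lambda_1^{o}(\mu,\omega)<-12$, so the curve $\mu\mapsto\tilde\lambda_1^{o}(\mu,\omega)$ can reach $-12$ on $(-\infty,-12)$ only at a point where the associated energy vanishes; but a zero-energy odd neutral mode would force $\hat\Upsilon\in\ker(L_2|_{X_o^2})=\mathrm{span}\{P_3^2\}$, hence $\Phi_{\hat\mu_{2,\omega},\omega,2}\propto P_3^2$, and $P_3^2$ solves \eqref{Rayleigh-type equation} only for $c=c_\omega$, i.e. $\mu=-{1\over6}\omega\notin(-\infty,-12)$ — a contradiction. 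Thus no such $\hat\mu_{2,\omega}$ exists, proving uniqueness.

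I expect the odd-eigenfunction subcase to be the main obstacle, since in Corollary \ref{uniqueness1} the analogous ``wrong parity'' case was dismissed in one line, whereas here it requires genuinely excluding odd neutral modes with $\mu<-12$ via the sign $L_2|_{X_o^2}\ge0$, the description of $\ker(L_2|_{X_o^2})$, and the incompatibility of $P_3^2$ with \eqref{Rayleigh-type equation} for $\mu\ne-{1\over6}\omega$. One further point to be careful about in the even case is that when the eigenvalue equal to $-12$ at $\hat\mu_{2,\omega}$ is $\tilde\lambda_{n_0}$ with $n_0\ge2$ one has $\tilde\lambda_1(\hat\mu_{2,\omega},\omega)>-12$, and then the same monotone-asymptotics argument applied directly to $\tilde\lambda_1$ (together with the simplicity of the one-dimensional spectrum and the non-crossing of the lower eigenvalues, established exactly as for $\tilde\lambda_1^{o}$ above) again manufactures the extra non-positive-energy neutral mode, so the contradiction with \eqref{index formula 1o2} persists.
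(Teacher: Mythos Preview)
Your approach is correct and follows the paper's, which simply says ``Similar to Corollary~\ref{uniqueness1}'' and leaves the adaptation to the reader; your treatment of the even-parity case and of the $n_0\ge 2$ subtlety is exactly the intended one. One small correction in the odd-parity case: the assertion ``the curve $\mu\mapsto\tilde\lambda_1^{o}(\mu,\omega)$ can reach $-12$ only at a point where the associated energy vanishes'' is not literally what follows from $\tilde\lambda_1^{o}(-12,\omega)<-12$ and $\lim_{\mu\to-\infty}\tilde\lambda_1^{o}<-12$; what follows is that \emph{some} crossing (the leftmost one) has $\partial_\mu\tilde\lambda_1^{o}\ge 0$ and hence energy $\le 0$, and this already contradicts the index formula on $X_o^2$ directly (since $n^-(L_2|_{X_o^2})=0$ gives $k_{i}^{\le 0}=0$ there), with no need for the detour through $\ker(L_2|_{X_o^2})$.
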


 \section{The critical rotation rate for the negative half-line}\label{The critical rotation rate in the negative half-line}

In this section, we consider the critical rotation rate in the negative half-line, and  prove Theorem \ref{negative half-line critical rotation rate}. 
Note that linear instability of the  $3$-jet in the case of $\omega\in(-3,0]$ is proved in Theorem \ref{linear instability} for $k=1, 2$. Thus, we only need to consider $\omega\in\left(-18,-3\right]$.

\subsection{Proof of the negative critical rotation rate $-3$ for the first Fourier mode} The main result in this subsection states as follows.

 \begin{Theorem}\label{k=1 negative half-line}
Let $k=1$. Then the $3$-jet is  spectrally  stable for $\omega\in\left(-18,-3\right]$.
\end{Theorem}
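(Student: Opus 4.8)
The plan is to run, for $k=1$ and $\omega\in(-18,-3]$, the same index-theoretic argument used on the positive half-line (Theorem~\ref{k=1 positive half-line critical rotation rate}), but with the spectral parameter $\mu=c-\omega$ ranging in $[3,-\omega]$ rather than in $(-\infty,-12]$. By the index formula \eqref{index formula 1o1}, spectral stability is equivalent to $k_{i,J_{\omega,1}L_1|_{X_o^1}}^{\leq0}+k_{0,J_{\omega,1}L_1|_{X_o^1}}^{\leq0}=1$, and since the four non-negative integers there sum to $1$ it is enough to produce a single purely imaginary eigenvalue of $J_{\omega,1}L_1|_{X_o^1}$ whose generalized eigenspace carries one non-positive direction of $\langle L_1\cdot,\cdot\rangle$; the formula then forces $k_{i,J_{\omega,1}L_1|_{X_o^1}}^{\leq0}=1$ and $k_{c,J_{\omega,1}L_1|_{X_o^1}}=k_{r,J_{\omega,1}L_1|_{X_o^1}}=0$. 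By Lemma~\ref{kernel JL} and Remark~\ref{kernel-J12L12} the generalized kernel of $J_{\omega,1}L_1|_{X_o^1}$ is trivial (the neutral mode with $c=c_\omega$ has the \emph{even} vorticity $\Delta_1P_3^1$), so $k_{0,J_{\omega,1}L_1|_{X_o^1}}^{\leq0}=0$ and the eigenvalue we look for must be nonzero.

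First I would localize the admissible neutral speeds: for $\omega\in(-18,-3)$ by Lemma~\ref{tilde Psi0 does not change sign}(1) and for $\omega=-3$ by Lemma~\ref{case-omega=12}(ii), any neutral mode $(c,1,\omega,\Phi)$ satisfies $c\in(-12+\omega,0]$, and by Theorem~\ref{Psi prime change sign c}(i) those with $c\in\text{Ran}(-\tilde\Psi_\omega')^\circ$ and odd $\Phi$ have $c=c_\omega$, i.e. they give only the zero eigenvalue already handled. Hence a nonzero purely imaginary eigenvalue comes from a neutral mode with $c\in[3+\omega,0]$, equivalently $\mu=c-\omega\in[3,-\omega]\subseteq[3,18)$, and then $\Phi$ solves the modified Rayleigh equation \eqref{Rayleigh-type equation 12} with $\lambda=-12$. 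I would then study the eigenvalue problem \eqref{Rayleigh-type equation lambda k=1} on $X_{\omega,\mu,o}$: for $\mu\in(3,-\omega]$ it is a regular Sturm--Liouville problem with a bounded added potential, while $\mu=3$ (where $c=3+\omega$ and $15s^2-3+\mu$ vanishes at $s=0$) and $\mu=-\omega$ (where $c=0$; for $\omega=-3$ this is precisely Lemma~\ref{ometa=3}) are the singular endpoints. Reducing the $\mu=3$ problem by a power substitution to a Gegenbauer equation as in Lemmas~\ref{principla eigenvalue monotonicity omega} and \ref{explicit values of the principal eigenvalues} gives $\lambda_1(3,\omega)$ explicitly; combining this with the one-sided continuity (and possible ``lift-up'' jump) of $\lambda_1(\cdot,\omega)$ at $\mu=3$, the value at the endpoint $\mu=-\omega$, and the monotonicity of $\lambda_1(\mu,\cdot)$ in $\omega$, I would locate $\mu_\ast\in[3,-\omega]$ and $n_0\ge1$ with $\lambda_{n_0}(\mu_\ast,\omega)=-12$, i.e. a neutral mode $(c_\ast,1,\omega,\Phi_\ast)$ with odd $\Phi_\ast$ and $c_\ast=\mu_\ast+\omega$; alternatively one may anchor a continuous family of such modes at the explicit one of Lemma~\ref{ometa=3} and propagate it over $(-18,-3]$.

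Finally I would compute the energy signature by the analogue of Lemma~\ref{quadratic form-computation}: $\langle L_1\Delta_1\Phi_\ast,\Delta_1\Phi_\ast\rangle=(c_\ast-c_\omega)\int_{-1}^1\frac{\tilde\Upsilon_\omega'}{(\tilde\Psi_\omega'+c_\ast)^2}\Phi_\ast^2\,ds$, which in the non-resonant case equals $(c_\ast-c_\omega)\,\partial_\mu\lambda_{n_0}(\mu_\ast,\omega)$. Since $c_\ast-c_\omega=\mu_\ast+\tfrac16\omega\ge3+\tfrac16\omega>0$ for $\omega>-18$, the sign of the signature is that of $\partial_\mu\lambda_{n_0}(\mu_\ast,\omega)$, and the endpoint analysis must be arranged so that the crossing of $-12$ occurs with $\partial_\mu\lambda_{n_0}(\mu_\ast,\omega)\le0$ (equivalently, so that the sign-indefinite weighted integral of $\tilde\Upsilon_\omega'$ is $\le0$), whence $\langle L_1\Delta_1\Phi_\ast,\Delta_1\Phi_\ast\rangle\le0$ and we conclude. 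I expect the main obstacle to be precisely this coupled existence-and-sign step: the parameter interval $[3,-\omega]$ is bounded and singular at both ends, so the clean monotone/asymptotic picture available for $\mu\le-12$ on the positive half-line is lost, $\mu\mapsto\lambda_1(\mu,\omega)$ may fail to be monotone and may jump as $\mu\downarrow3$, and one must control its global shape uniformly over the whole range $\omega\in(-18,-3]$ while keeping in mind that $\tilde\Upsilon_\omega'$ changes sign on $(-1,1)$, so the favourable signature is not automatic but has to be extracted from the fine behaviour of the eigenvalue curves near the resonant point $s=0$, where $\Delta_1\Phi\in L^2$ becomes a genuine constraint (it fails at $\mu=3$ once $\omega\le-\tfrac{99}{8}$).
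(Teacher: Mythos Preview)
Your overall strategy---use the index formula \eqref{index formula 1o1}, find a neutral mode with odd $\Phi$ and non-positive $\langle L_1\cdot,\cdot\rangle$-signature via the analogue of Lemma~\ref{quadratic form-computation}---is correct and matches the paper. However, two concrete technical steps in your plan would not work as stated, and a third choice makes life unnecessarily hard.

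First, the reduction at $\mu=3$ is not a Gegenbauer equation. For $\mu=-12$ the singular factor $15s^2-3+\mu=15(s^2-1)$ merges with the geometric weight and one lands in the Gegenbauer family (Lemmas~\ref{principla eigenvalue monotonicity omega}, \ref{explicit values of the principal eigenvalues}); but for $\mu=3$ the factor $15s^2$ is singular at $s=0$, not at $s=\pm1$, so no such reduction is available. The paper instead uses the power-law ansatz $\Phi(s)=\operatorname{sign}(s)|s|^{a}(1-s^2)^{1/2}$ and matches coefficients, obtaining the closed form
\[
\lambda_1(3,\omega)=-\tfrac{2\omega+96}{15}-2\sqrt{\tfrac{8\omega+159}{15}}\in(-12,-6]\quad\text{for }\omega\in(-18,-3),
\]
with $\lambda_1(3,-3)=-12$ (Lemma~\ref{principal eigenvalues of Rayleigh-type equation lambda k=1 negative}). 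Second, there is no ``lift-up jump'' for $k=1$: the formula above is continuous on $[-18,-3]$ and equals $-6$ at $\omega=-18$; the jump phenomenon you allude to is specific to $k=2$ (Lemma~\ref{principal eigenvalues mu=3}).

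Third, and most importantly, you artificially confine $\mu$ to the bounded interval $[3,-\omega]$ and then correctly diagnose this as the ``main obstacle''. The paper sidesteps this entirely: it studies $\lambda_1(\mu,\omega)$ on the whole ray $\mu\in(3,\infty)$, where the potential is regular, and uses the easy asymptotic $\lim_{\mu\to\infty}\lambda_1(\mu,\omega)=-18$ together with $\liminf_{\mu\to3^+}\lambda_1(\mu,\omega)\ge\lambda_1(3,\omega)>-12$ (Lemma~\ref{asymptotic behavior of the principal eigenvalues as mu-to 3 or infty lemma}) to obtain, by the intermediate value theorem, some $\mu_{1,\omega}\in(3,\infty)$ with $\lambda_1(\mu_{1,\omega},\omega)=-12$ and $\partial_\mu\lambda_1(\mu_{1,\omega},\omega)\le0$. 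Since $\mu_{1,\omega}>3$ the equation is regular there, so $\Delta_1\Phi\in L^2$ is automatic and the signature formula \eqref{equality-L-form-derivative2} applies verbatim; your sign observation $c_\ast-c_\omega=\mu_{1,\omega}+\tfrac16\omega>0$ then finishes the argument. (A posteriori Lemma~\ref{tilde Psi0 does not change sign}(1) forces $\mu_{1,\omega}\le-\omega$, but this is not needed for the proof.) Thus the difficulty you anticipated---controlling the eigenvalue curve on a bounded interval singular at both ends---simply does not arise.
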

Before going into the details,
let us first discuss the ideas in the proof of the above theorem.
\begin{Remark}\label{ideas in the proof of k=1 negative half-line}
The  ideas in the proofs of
Theorems
\ref{k=1 positive half-line critical rotation rate} and
\ref{k=2 positive half-line critical rotation rate}
 can not be applied to prove Theorems
\ref{k=1 negative half-line}.
 First,  by Lemma \ref{tilde Psi0 does not change sign} (1) and Lemma \ref{case-omega=12} (ii), it suffices to  study  $\lambda_{1}(\mu,\omega)$ and $\tilde\lambda_{1}(\mu,\omega)$ with $\mu\geq 3$. An important difference from Theorems
\ref{k=1 positive half-line critical rotation rate} and
\ref{k=2 positive half-line critical rotation rate} is that  for the endpoint case $\mu=3$, the Rayleigh equation \eqref{Rayleigh-type equation lambda} has   singularity at $s=0$ when $\omega\in(-18,-3]$, and  \eqref{Rayleigh-type equation lambda} has  no singularity in $(-1,1)$ when $\omega=-18$.
 For $\omega\in(-18,-3]$, the principal eigenvalues  $\lambda_{1}(3,\omega)$ and $\tilde\lambda_{1}(3,\omega)$ are subtle to  be solved due to the singularity at $s=0$.
For $\omega=-3$, we benefit again from \eqref{inte} with $k=1$ and $c=0$, which gives a nontrivial solution
$\text{sign}(s)s^2(1-s^2)^{1\over2}$ with $\lambda_1(3,-3)=-12$. Moreover, $\lambda_1(3,-18)=-6$ and    $s(1-s^2)^{1\over2}$ is a corresponding eigenfunction. This motivates us to conjecture that the eigenfunction for $\lambda_{1}(3,\omega)$ with $\omega\in[-18,-3]$ has the form of $s^a(1-s^2)^{1\over2}$, $a\in[1,2]$.
 Using this form, we explicitly solve the eigenvalue $\lambda_{1}(3,\omega)$  in \eqref{eigenvalue-3-omega} for $\omega\in[-18,-3]$.
 Note that $\lambda_1(3,\omega)>\lambda_1(3,-3)=-12$ for $\omega\in[-18,-3)$.
  This, along with the
 asymptotic behavior of $\lambda_{1}(\cdot,\omega)$ as $\mu\to 3^+$ or $\infty$ in Lemma \ref{asymptotic behavior of the principal eigenvalues as mu-to 3 or infty lemma},
 proves spectral stability for $\omega\in\left[-18,-3\right]$ ($k=1$).
\end{Remark}

We first consider $k=1$ and $\omega=-3$. By the numerical result in Fig. 3 of \cite{Taylor2016}, the critical rotation rate  is $\omega=-3$ in our notation. Now,
we give a rigorous proof for $\omega=-3$. To this end, we need the following lemma, the proof of which is similar to Lemma \ref{quadratic form-computation}.
 \begin{Lemma}\label{quadratic form-computation k=1 negative}
$(\rm{i})$ Let $\omega\leq-3$ and  $(c_{1},1,\omega,\Phi_{\mu_1,\omega})$ be
a  neutral mode, where $c_1\geq3+\omega$ and $\mu_1=-\omega+c_{1}$. Then $\lambda_{n_0}(\mu_1,\omega)=-12$
for  some $n_{0}\geq1$ and
\eqref{equality-L-form-derivative}
holds.

$(\rm{ii})$ Under the assumptions of $(\rm{i})$, if $c_1>3+\omega$ and $\|\Phi_{\mu_{1},\omega}\|_{L^2(-1,1)}=1$, then
\eqref{equality-L-form-derivative2}
 holds.
\end{Lemma}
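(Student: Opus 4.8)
The plan is to follow the proof of Lemma \ref{quadratic form-computation} essentially verbatim; the only structural change is bookkeeping about the range of the spectral parameter. Here the hypothesis $c_1\geq 3+\omega$ forces $\mu_1=-\omega+c_1\geq 3$, so whereas in the positive half-line we worked with $\mu\leq -12$, we now consider the modified eigenvalue problem \eqref{Rayleigh-type equation lambda k=1} for $\mu\geq 3$. For $\mu>3$ (with $\omega\leq -3$) the weight $\frac{2\omega+12\mu}{15s^2-3+\mu}$ is bounded on $[-1,1]$, so the form domain coincides with $\tilde X$ from \eqref{def-tilde X} with an equivalent norm and Lemma \ref{compact embedding}(i) gives compact embedding into $L^2(-1,1)$; at the endpoint $\mu=3$ there is a single interior singularity at $s=0$, and one argues as in Lemma \ref{compact embedding} to keep the variational framework. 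Hence the eigenvalues $\lambda_n(\mu,\omega)$ defined by \eqref{def-lambda-n} (on $X_{\omega,\mu,o}$ from \eqref{def-X-omega-mu-o}) remain well defined and ordered for $\mu\geq 3$.

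For part (i): a neutral mode $(c_1,1,\omega,\Phi_{\mu_1,\omega})$ solves \eqref{Rayleigh-type equation} with $k=1$ and $c=c_1$, which in the $\mu$-variable is exactly \eqref{Rayleigh-type equation lambda k=1} with $\lambda=-12$; since $\Phi_{\mu_1,\omega}$ lies in the form domain, $-12=\lambda_{n_0}(\mu_1,\omega)$ for some $n_0\geq 1$. For the quadratic form I repeat the computation in the proof of Lemma \ref{quadratic form-computation}(i): from the Rayleigh equation $\Upsilon_{\mu_1,\omega}=\Delta_1\Phi_{\mu_1,\omega}=\frac{\tilde\Upsilon_{\omega}'}{\tilde\Psi_{\omega}'+c_1}\Phi_{\mu_1,\omega}$, so $\frac{\Phi_{\mu_1,\omega}}{\Upsilon_{\mu_1,\omega}}=\frac{\tilde\Psi_{\omega}'+c_1}{\tilde\Upsilon_{\omega}'}=-\frac{1}{12}+\frac{c_1-c_\omega}{\tilde\Upsilon_{\omega}'}$ by the identity \eqref{c omega k}; therefore $\frac{1}{12}\Upsilon_{\mu_1,\omega}+\Phi_{\mu_1,\omega}=\frac{c_1-c_\omega}{\tilde\Upsilon_{\omega}'}\Upsilon_{\mu_1,\omega}$, and $\langle L_1\Upsilon_{\mu_1,\omega},\Upsilon_{\mu_1,\omega}\rangle=\int_{-1}^1\big(\frac{1}{12}\Upsilon_{\mu_1,\omega}+\Phi_{\mu_1,\omega}\big)\Upsilon_{\mu_1,\omega}\,ds=(c_1-c_\omega)\int_{-1}^1\frac{\tilde\Upsilon_{\omega}'}{(\tilde\Psi_{\omega}'+c_1)^2}\Phi_{\mu_1,\omega}^2\,ds$, which is \eqref{equality-L-form-derivative}. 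Note that this computation never uses the sign of $\mu_1$, so it is unchanged from the positive half-line case.

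For part (ii): imposing $c_1>3+\omega$, i.e. $\mu_1>3$, puts us in the regular regime (no interior singularity). For $\mu$ near $\mu_1$ the $n_0$-th eigenvalue has an $L^2$-normalized eigenfunction $\Phi_{\mu,\omega}$; writing the weak formulations for $\mu$ and $\mu_1$, pairing with $\Phi_{\mu_1,\omega}$ and $\Phi_{\mu,\omega}$ respectively, subtracting, and dividing by $\int_{-1}^1\Phi_{\mu_1,\omega}\Phi_{\mu,\omega}\,ds$ isolates the difference quotient of $\lambda_{n_0}$ against $-\int_{-1}^1\frac{w_\mu-w_{\mu_1}}{\mu-\mu_1}\Phi_{\mu,\omega}\Phi_{\mu_1,\omega}\,ds$, with $w_\mu=\frac{2\omega+12\mu}{15s^2-3+\mu}$. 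Since $\mu_1>3$, \eqref{Rayleigh-type equation lambda k=1} yields a uniform $H_2^2(\mathbb{S}^2)$ bound on $e^{i\varphi}\Phi_{\mu,\omega}$ for $|\mu-\mu_1|\ll 1$, hence $\Phi_{\mu,\omega}\to\Phi_{\mu_1,\omega}$ in $L^2(-1,1)$ as $\mu\to\mu_1$ (Lemma \ref{lem-differential calculus}(ii)); letting $\mu\to\mu_1$ and using $\partial_\mu w_{\mu_1}=-\frac{\tilde\Upsilon_{\omega}'}{(\tilde\Psi_{\omega}'+c_1)^2}$ gives $\partial_\mu\lambda_{n_0}(\mu_1,\omega)=\int_{-1}^1\frac{\tilde\Upsilon_{\omega}'}{(\tilde\Psi_{\omega}'+c_1)^2}\Phi_{\mu_1,\omega}^2\,ds$ as in \eqref{comp-eigenvalue derivative}, and combining with part (i) gives \eqref{equality-L-form-derivative2}. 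The only genuinely new (and minor) obstacle compared with Lemma \ref{quadratic form-computation} is verifying that the variational and compactness structure of $\lambda_n(\cdot,\omega)$ survives on the range $\mu\geq 3$ — in particular handling the interior singularity at the endpoint $\mu=3$ for part (i), while part (ii) is deliberately confined to the regular regime $\mu>3$.
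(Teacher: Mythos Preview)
Your proposal is correct and takes essentially the same approach as the paper, which simply states that the proof is similar to Lemma~\ref{quadratic form-computation} and omits the details. Your discussion of the variational framework for $\mu\geq 3$ (including the interior singularity at $\mu=3$) is a bit more explicit than the paper, which defers that setup to the paragraphs following Lemma~\ref{ometa=3}, but the substance is identical.
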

For $\omega=-3$, we have the following result.
\begin{Lemma}\label{ometa=3}
Let $k=1$. Then the $3$-jet is spectrally  stable for $\omega=-3$.
\end{Lemma}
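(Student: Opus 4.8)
The plan is to pin down the unique odd neutral mode at $\omega=-3$ in the first Fourier mode, show it carries a nonzero purely imaginary eigenvalue whose energy quadratic form is negative, and then read off spectral stability from the index formula \eqref{index formula 1o1}.

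First I would determine all odd neutral modes $(c,1,-3,\Phi)$. By Lemma \ref{case-omega=12}(ii) one has $c\in(-15,0]$; for $c$ in the open interval $(-15,0)$ Theorem \ref{Psi prime change sign c}(i) forces $c=c_{-3}=\tfrac{5}{6}(-3)=-\tfrac{5}{2}$, but at $c=c_\omega$ the Rayleigh equation \eqref{Rayleigh-type equation} reduces, via \eqref{c omega k}, to $((1-s^2)\Phi')'-\tfrac{1}{1-s^2}\Phi+12\Phi=0$, whose $L^2$-solutions are multiples of the \emph{even} function $P_3^1$, so there is no odd neutral mode with $c\neq0$. For the one remaining value $c=0$ I would exhibit, motivated by the integral identity \eqref{inte} with $k=1$ and $c=0$, the candidate $\Phi(s)=\mathrm{sign}(s)\,s^2(1-s^2)^{1/2}$. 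This $\Phi$ is odd; its two one-sided branches $\pm s^2(1-s^2)^{1/2}$ match to first order at $s=0$, so $\Phi\in C^1(-1,1)$ with $\Phi(0)=\Phi'(0)=0$; and since $\tilde\Psi_{-3}'(s)+0=15s^2$ and $\tilde\Upsilon_{-3}'(s)=-180s^2+30$ by \eqref{def-tilde-Upsilon0-derivative}, the singular coefficient $\tilde\Upsilon_{-3}'/(\tilde\Psi_{-3}'+0)=-12+2s^{-2}$ multiplied by $\Phi$ stays bounded near $0$ (because $\Phi/s^2$ is bounded there), whence $\Delta_1\Phi\in L^2(-1,1)$ and $\Phi$ genuinely solves \eqref{Rayleigh-type equation}. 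I would also check that $\Upsilon:=\Delta_1\Phi\in X$: the only nontrivial constraint, the one involving $Y_1^{-1}$, reduces after integration in $\varphi$ to $\int_{-1}^1\Phi\,P_1^1\,ds=\int_{-1}^1\mathrm{sign}(s)\,s^2(1-s^2)\,ds=0$, which holds by parity. Thus $(0,1,-3,\Phi)$ is a neutral mode in $X_o^1$, corresponding to the purely imaginary eigenvalue $-ik(c-c_\omega)=-\tfrac{5i}{2}\neq0$ of $J_{-3,1}L_1|_{X_o^1}$ with eigenfunction $\Upsilon$.

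Next I would compute the energy sign. By Lemma \ref{quadratic form-computation k=1 negative}(i), applied to the neutral mode $(0,1,-3,\Phi)$ with $c_1=0=3+\omega$ and $\mu_1=3$,
\[
\langle L_{1}\Upsilon,\Upsilon\rangle=(c_1-c_\omega)\int_{-1}^1\frac{-12(15s^2-3)+2\omega}{(15s^2-3+\mu_1)^2}\,\Phi^2\,ds=\frac{5}{2}\int_{-1}^1\frac{(-180s^2+30)\,s^4(1-s^2)}{225\,s^4}\,ds,
\]
in which the integrand collapses to the polynomial $\tfrac{1}{90}(180s^4-210s^2+30)$, giving $\langle L_1\Upsilon,\Upsilon\rangle=\tfrac{1}{90}(72-140+60)=-\tfrac{4}{45}<0$. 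Hence $\langle L_1\cdot,\cdot\rangle$ has a negative direction on the generalized eigenspace of the nonzero imaginary eigenvalue $-\tfrac{5i}{2}$, so $k_{i,J_{-3,1}L_1|_{X_o^1}}^{\leq0}\geq1$. Since $n^-(L_1|_{X_o^1})=1$ by \eqref{L1o2enegative}, the index formula \eqref{index formula 1o1} forces $k_{i,J_{-3,1}L_1|_{X_o^1}}^{\leq0}=1$ and $k_{0,J_{-3,1}L_1|_{X_o^1}}^{\leq0}=k_{c,J_{-3,1}L_1|_{X_o^1}}=k_{r,J_{-3,1}L_1|_{X_o^1}}=0$; in particular $J_{-3,1}L_1|_{X_o^1}$ has no eigenvalue with nonzero real part. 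On the complementary parity piece, $L_1|_{X_e^1}\geq0$ by \eqref{L1e2ok3nonnegative}, so $n^-(L_1|_{X_e^1})=0$ and the analogous index formula forces the corresponding indices to vanish as well, i.e.\ $J_{-3,1}L_1|_{X_e^1}$ has no unstable eigenvalue. Combining the two parity pieces yields spectral stability of the $3$-jet in the first Fourier mode at $\omega=-3$.

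The step I expect to be the real obstacle is the construction in the second paragraph: justifying that $\Phi(s)=\mathrm{sign}(s)\,s^2(1-s^2)^{1/2}$ is an honest neutral mode across the singular point $s=0$, where the Rayleigh equation genuinely degenerates because $\tilde\Psi_{-3}'(0)=-3-\omega=0$. The key is to identify the admissible local behaviour there: $\Delta_1\Phi\in L^2$ near $0$ is equivalent to $\Phi(0)=\Phi'(0)=0$, which is precisely what makes the two one-sided solution branches combine into a $C^1$ function, and — crucially — it is the odd branch combination, not the even one governing $P_3^1$ in the non-singular regime, that survives. This mechanism is what singles out $\omega=-3$ as the critical value; once it is in place, the remainder is the routine energy computation above together with a bookkeeping step through \eqref{index formula 1o1}.
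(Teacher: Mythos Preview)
Your proposal is correct and follows essentially the same route as the paper: you construct the odd neutral mode $\Phi(s)=\mathrm{sign}(s)\,s^2(1-s^2)^{1/2}$ at $c=0$, verify that $\Delta_1\Phi\in L^2$ and $\Upsilon\in X$, compute $\langle L_1\Upsilon,\Upsilon\rangle=-\tfrac{4}{45}<0$ via Lemma~\ref{quadratic form-computation k=1 negative}(i), and invoke the index formula \eqref{index formula 1o1} to conclude $k_{c}+k_{r}=0$. The only difference is that you spell out the exclusion of other odd neutral modes and the even-parity sector explicitly, whereas the paper simply exhibits the one negative direction and appeals directly to \eqref{index formula 1o1}; your extra steps are correct but not strictly needed, since $k_{i}^{\le 0}\ge 1$ together with the index sum equal to $1$ already forces all unstable indices to vanish.
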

\begin{proof}
The proof is motivated by \eqref{inte} for $k=1$, $c=0$, $R_\omega(s)=\tilde \Psi_{\omega}'(s)=15s^2$ and $F_\omega(s)={\Phi(s)\over \tilde \Psi_{\omega}'(s)+c}={\Phi(s)\over 15s^2}.$
In this case, \eqref{Rayleigh-type equation-F} becomes
\begin{align}\label{Rayleigh-type equation-F-omega-3kc}
-(1-s^2)^{-{1\over2}}(((1-s^2)^{-{1\over2}}F_\omega)'(1-s^2)^2R_\omega^2)'=0.
\end{align}
Multiplying \eqref{Rayleigh-type equation-F-omega-3kc} by $F_\omega$ and integrating from $0$ to $1$, we have
\begin{align*}
\int_{0}^1\left(|((1-s^2)^{-{1\over2}}F_\omega)'|^2(1-s^2)^2R_\omega^2\right)ds=0.
\end{align*}
This implies that $F_\omega(s)={\Phi(s)\over 15s^2}=C_0(1-s^2)^{1\over2}$ on $(0,1)$, where $C_0\in\mathbb{R}$. Taking $C_0={1\over 15}$, we have $\Phi(s)=s^2(1-s^2)^{1\over2}$ on $(0,1)$. Then by \eqref{Rayleigh-type equation-F-omega-3kc}, $\Phi$ solves \eqref{Rayleigh-type equation} on $(0,1)$ for $k=1$, $c=0$ and $\omega=-3$. This can also be checked directly. Indeed,
 \begin{align*}
& ((1-s^2)\Phi')'=2(1-s^2)^{3\over2}-9s^2(1-s^2)^{1\over2}+s^4(1-s^2)^{-{1\over2}},\\
 &-{1\over 1-s^2}\Phi-{\tilde \Upsilon_{\omega}'\over \tilde \Psi_{\omega}'}\Phi=-s^2(1-s^2)^{-{1\over2}}-(-12s^2+2)(1-s^2)^{1\over2}\\
 &=-s^2(1-s^2)^{-{1\over2}}-12(1-s^2)^{3\over2}+10(1-s^2)^{3\over2}+10s^2(1-s^2)^{1\over2}\\
 &=-2(1-s^2)^{3\over2}+9s^2(1-s^2)^{1\over2}+s^2(1-s^2)^{1\over2}-s^2(1-s^2)^{-{1\over2}}\\
 &=-2(1-s^2)^{3\over2}+9s^2(1-s^2)^{1\over2}-s^4(1-s^2)^{-{1\over2}}.
 \end{align*}
 Now, we construct a function
 \begin{align*}
 \Phi_1(s)=\text{sign}(s) |s|^2(1-s^2)^{1\over2}, \quad s\in[-1,1].
 \end{align*}
 Then   $\Phi_1(0)=\Phi_1(0+)=\Phi_1'(0)=\Phi_1'(0+)=0$, and
 \begin{align*}
 \Upsilon_1(\varphi,s)=e^{i\varphi}\Delta_1\Phi_1(s)=
e^{i\varphi}\text{sign}(s)(-12s^2+2)(1-s^2)^{1\over2}, \quad s\in[-1,1].
 \end{align*}
 Then $\Upsilon_1\in L^2(D_T)$.
 Since $\Phi_1$ is an odd function, we have
 \begin{align*}
\iint_{D_T} \Upsilon_1 d\varphi ds=0,
\iint_{D_T}\Upsilon_1 Y_1^md\varphi ds=0, \quad m=0,\pm1.
\end{align*}
For $\omega=-3$, we have $c_\omega=-{5\over2}$. By Lemma \ref{quadratic form-computation k=1 negative}, for the neutral mode $(0,1,\omega,\Phi_1)$, we have
\begin{equation*}
\langle L_{1}\Delta_1\Phi_1,\Delta_1\Phi_1\rangle
={5\over2}\int_{-1}^1{-180s^2+30\over 225s^4}s^4(1-s^2)ds=-{4\over45}<0.
\end{equation*}
This implies that
$ k_{i,J_{\omega,1}L_1|_{X_o^1}}^{\leq0}=1$, and according to the index formula \eqref{index formula 1o1}, we have
 $k_{c,J_{\omega,1}L_1|_{X_o^1}}+ k_{r,J_{\omega,1}L_1|_{X_o^1}}=0.$ Thus, the $3$-jet is spectrally stable for $\omega=-3$ and $k=1$.
\end{proof}

Now, we  consider $k=1$ and $\omega\in(-18,-3)$.
To compute the indices $k_{i,J_{\omega,1}L_1|_{X_o^1}}^{\leq0}$  in \eqref{index formula 1o1},
we need  to determine for which $c\in[3+\omega,0]$, $(c,1,\omega,\Phi)$ is a neutral mode. To this end, we study the eigenvalues of the
 Rayleigh system
  \begin{align}\label{Rayleigh-type equation lambda k=1 negative}
 ((1-s^2)\Phi')'-{1\over 1-s^2}\Phi-{2\omega+12\mu\over 15s^2-3+\mu}\Phi=\lambda\Phi, \quad
 \Delta_1\Phi\in L^2(-1,1)
 \end{align}
in $
X_{\omega,\mu,o}=\left\{\Phi\in X_{\omega,\mu}| \Phi \text{ is odd}\right\}
$
for $\mu\in[3,\infty)$, where $X_{\omega,\mu}$ is defined in \eqref{def-X-omega-mu}.
Since $X_{\omega,\mu,o}$ is compactly embedded in $L^2(-1,1)$, all the eigenvalues of the eigenvalue problem
\eqref{Rayleigh-type equation lambda k=1 negative}
 (restricted to the space $X_{\omega,\mu,o}$) are arranged in a sequence  $-\infty<\cdots\leq \lambda_{n}(\mu,\omega)\leq \cdots \leq \lambda_{1}(\mu,\omega)$, which has the expressions \eqref{def-lambda-n}.

For $\mu=3$, we give the exact values of the principal eigenvalues of \eqref{Rayleigh-type equation lambda k=1 negative}.
\begin{Lemma}\label{principal eigenvalues of Rayleigh-type equation lambda k=1 negative}
For $\omega\in[-18,-3]$, we have
\begin{align}\label{eigenvalue-3-omega}
\lambda_1(3,\omega)=-{2\omega+96\over 15}-2\sqrt{{8\omega+159\over 15}}\in[-12,-6],
\end{align}
with a corresponding eigenfunction
\begin{align}\label{eigenfunction-3-omega}
\Phi_{3,\omega}(s)={\rm{sign}}(s)|s|^{1+\sqrt{{8\omega+159\over 15}}\over2}(1-s^2)^{1\over2},\quad s\in[-1,1].
\end{align}
In particular, $\lambda_1(3,-3)=-12$, $\lambda_1(3,-18)=-6$, and $\lambda_1(3,\omega)$ is decreasing on $\omega\in[-18,-3]$.
\end{Lemma}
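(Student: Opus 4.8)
The plan is to solve the $\mu=3$ eigenvalue problem by hand. Putting $\mu=3$ into \eqref{Rayleigh-type equation lambda k=1 negative} gives $15s^2-3+\mu=15s^2$ and $2\omega+12\mu=2\omega+36$, so on $(0,1)$ the equation reads $((1-s^2)\Phi')'-\frac{1}{1-s^2}\Phi-\frac{2\omega+36}{15s^2}\Phi=\lambda\Phi$, where $2\omega+36\in(0,30]$ for $\omega\in(-18,-3]$ and $2\omega+36=0$ for $\omega=-18$. The form of the eigenfunction is suggested by the two endpoints: at $\omega=-18$ the potential is regular and the principal (odd) eigenfunction is $s(1-s^2)^{1/2}$, a multiple of $P_2^1$, with eigenvalue $-6$; while at $\omega=-3$ the integral identity \eqref{inte} with $k=1$ and $c=0$ --- exactly as exploited in the proof of Lemma \ref{ometa=3} --- forces the principal eigenfunction to be $\mathrm{sign}(s)\,s^2(1-s^2)^{1/2}$ with eigenvalue $-12$. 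Interpolating, I would look for $\Phi_{3,\omega}(s)=\mathrm{sign}(s)\,|s|^{a}(1-s^2)^{1/2}$ with an exponent $a\in[1,2]$ to be determined.

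First I would insert this ansatz into the equation. A routine differentiation (which I will not write out) gives, for $s\in(0,1)$, an identity of the shape $((1-s^2)\Phi_{3,\omega}')'-\frac{1}{1-s^2}\Phi_{3,\omega}=\bigl(-(a+1)(a+2)+\tfrac{a(a-1)}{s^2}\bigr)\Phi_{3,\omega}$. Matching the $s^{-2}$ coefficient to $\tfrac{2\omega+36}{15}$ forces $a$ to be a root of $a^2-a-\tfrac{2\omega+36}{15}=0$; I take the larger root $a=\tfrac12\bigl(1+\sqrt{\tfrac{8\omega+159}{15}}\bigr)$, which is real and lies in $[1,2]$ on $[-18,-3]$ since $\tfrac{8\omega+159}{15}\in[1,9]$ there. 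With this $a$ the equation holds with $\lambda=-(a+1)(a+2)$, and using $a^2=a+\tfrac{2\omega+36}{15}$ together with $2a=1+\sqrt{\tfrac{8\omega+159}{15}}$ this reduces to $\lambda=-\tfrac{2\omega+96}{15}-2\sqrt{\tfrac{8\omega+159}{15}}$, i.e. \eqref{eigenvalue-3-omega}.

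It then remains to argue that this $\lambda$ is the \emph{principal} eigenvalue and $\Phi_{3,\omega}$ the corresponding eigenfunction, which is where the only delicate point lies: the regular singular point at $s=0$. Since $\tfrac{2\omega+36}{15s^2}\ge 0$ for $\omega\in[-18,-3]$, the space $X_{\omega,3,o}$ sits inside $\tilde X$ with a stronger norm, hence is still compactly embedded in $L^2(-1,1)$ by Lemma \ref{compact embedding}(i), so \eqref{def-lambda-n} really defines a discrete spectrum whose top is simple and attained. Because $a\ge 1>\tfrac12$, the function $\Phi_{3,\omega}$ belongs to $X_{\omega,3,o}$ (the three weighted integrands are $O(s^{2a-2})$ near $0$ and $O(1)$ near $\pm1$) and is odd; integrating by parts against an arbitrary test function in $X_{\omega,3,o}$, the boundary contributions at $\pm1$ vanish and those at $0^{\pm}$ cancel (every element of $X_{\omega,3,o}$ is $H^1$ near $0$, hence vanishes at $0$), so $\Phi_{3,\omega}$ is a genuine weak eigenfunction. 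As $\Phi_{3,\omega}>0$ on $(0,1)$, uniqueness of the positive ground state (Krein--Rutman / Harnack) identifies it with the principal eigenfunction, yielding \eqref{eigenvalue-3-omega}--\eqref{eigenfunction-3-omega}. The subtlety I must be careful about is that the form domain $X_{\omega,3,o}$ selects precisely the $s^{a}$ Frobenius branch at $0$ (the other indicial exponent $1-a\le 0$ fails the Hardy integrability), and that $\Phi_{3,\omega}$ is the legitimate variational maximizer even though $\Delta_1\Phi_{3,\omega}\notin L^2$ once $a\le\tfrac32$, i.e. once $\omega\le-\tfrac{99}{8}$ --- this is exactly why the argument is run through the quadratic form \eqref{def-lambda-n} rather than through the strong ODE.

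Finally, the remaining assertions are immediate from \eqref{eigenvalue-3-omega}: substituting $\omega=-3$ and $\omega=-18$ gives $\lambda_1(3,-3)=-\tfrac{90}{15}-6=-12$ and $\lambda_1(3,-18)=-\tfrac{60}{15}-2=-6$, while $\frac{d}{d\omega}\lambda_1(3,\omega)=-\tfrac{2}{15}-\tfrac{8/15}{\sqrt{(8\omega+159)/15}}<0$ on $[-18,-3]$ shows that $\lambda_1(3,\cdot)$ is decreasing there, so its range is $[\lambda_1(3,-3),\lambda_1(3,-18)]=[-12,-6]$.
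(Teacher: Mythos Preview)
Your argument follows the same line as the paper's: the same ansatz $\mathrm{sign}(s)\,|s|^{a}(1-s^{2})^{1/2}$, the same algebraic determination of $a$ and $\lambda$, and the same recognition that principality must be checked at the level of the quadratic form \eqref{def-lambda-n}. The one substantive difference is how principality is established. You invoke positivity of the ground state (a Krein--Rutman/Harnack argument on the half-interval $(0,1)$); the paper instead writes down the Picone-type identity
\[
\Bigl\|\sqrt{1-s^{2}}\,\Phi'-\frac{\sqrt{1-s^{2}}\,\Phi_{3,\omega}'}{\Phi_{3,\omega}}\,\Phi\Bigr\|_{L^{2}(-1,1)}^{2}
=\int_{-1}^{1}\Bigl[(1-s^{2})|\Phi'|^{2}+\Bigl(\tfrac{1}{1-s^{2}}+\tfrac{2\omega+36}{15s^{2}}+\lambda\Bigr)|\Phi|^{2}\Bigr]\,ds\ \ge\ 0
\]
for every $\Phi\in X_{\omega,3,o}$ (checking that the boundary term $(1-s^{2})\Phi_{3,\omega}'|\Phi|^{2}/\Phi_{3,\omega}$ vanishes at $0^{\pm}$ via $|\Phi(s)|^{2}/s\to 0$). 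This gives the variational bound $\lambda_{1}(3,\omega)\le\lambda$ directly and is entirely self-contained: no abstract positivity machinery has to be verified for the singular endpoint $s=0$. Your route is valid but leans on more background; the Picone identity is the cleaner choice here. Incidentally, your observation that $\Delta_{1}\Phi_{3,\omega}\notin L^{2}$ once $a\le 3/2$ (i.e.\ $\omega\le -99/8$) is correct---the paper's passing claim to the contrary is a slip---but since both arguments run through the form \eqref{def-lambda-n} rather than the strong ODE, this does not affect either proof.
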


\begin{proof}
Let $\mu=3$. Then
  \eqref{Rayleigh-type equation lambda k=1 negative} becomes
  \begin{align}\label{Rayleigh-type equation lambda k=1 negative mu=3}
  ((1-s^2)\Phi')'-{1\over 1-s^2}\Phi-{2\omega+36\over 15s^2}\Phi=\lambda\Phi, \quad
 \Delta_1\Phi\in L^2(-1,1).
 \end{align}
By the proof of Lemma \ref{ometa=3}, for $\omega=-3$, $-12$ is an eigenvalue of \eqref{Rayleigh-type equation lambda k=1 negative mu=3} with a corresponding eigenfunction ${\rm{sign}}(s)s^2(1-s^2)^{1\over 2}$, $s\in[-1,1]$. For $\omega=-18$, $-6$ is  an eigenvalue of \eqref{Rayleigh-type equation lambda k=1 negative mu=3} with a corresponding eigenfunction $s(1-s^2)^{1\over 2}$, $s\in[-1,1]$. This motivates us to insert $\Phi(s)=s^a(1-s^2)^{1\over2}$, $s\in(0,1)$, into \eqref{Rayleigh-type equation lambda k=1 negative mu=3} with $a\in[1,2]$. By comparing the coefficients of $s^{a-2}(1-s^2)^{1\over2}$ and $s^{a}(1-s^2)^{1\over2}$, we have
 \begin{align*}
 \left\{ \begin{array}{llll} a^2-a={2\omega+36\over15}, \\
 a^2+3a+\lambda+2=0. \end{array}\right.
 \end{align*}
Thus,
\begin{align*}a={1+\sqrt{1+{4(2\omega+36)\over15}}\over2}={1+\sqrt{{8\omega+159\over 15}}\over2},
\end{align*}
and
\begin{align}\label{principal-eigenvalue-3-omega-computation}
\lambda=-a^2-3a-2=-{2\omega+36\over 15}-4a-2=-{2\omega+96\over 15}-2\sqrt{{8\omega+159\over 15}}.
 \end{align}
 Since we only consider odd functions, we choose $\Phi_{3,\omega}$ as given in \eqref{eigenfunction-3-omega}. By the equation in \eqref{Rayleigh-type equation lambda k=1 negative mu=3} and $a\in(1,2]$, we have
 $\Delta_1\Phi_{3,\omega}\in L^2(-1,1)$  for $\omega\in(-18,-3]$. Thus, $\lambda$ in \eqref{principal-eigenvalue-3-omega-computation}
 is an eigenvalue of \eqref{Rayleigh-type equation lambda k=1 negative mu=3} with an eigenfunction $\Phi_{3,\omega}$. For $\omega=-18$, $-6$ is clearly the principal eigenvalue of \eqref{Rayleigh-type equation lambda k=1 negative mu=3}.  To prove that $\lambda$ in \eqref{principal-eigenvalue-3-omega-computation} is the principal eigenvalue of \eqref{Rayleigh-type equation lambda k=1 negative mu=3} for $\omega\in(-18,-3]$,  we first note that ${|\Phi(s)|^2\over s}\big|_{s=0}=0$ for any $\Phi\in X_{\omega,3,o}$. Integrating by parts implies
\begin{align}\label{proof-principal eigenvalue}
&\left\|\sqrt{1-s^2}\Phi'-\Phi{\sqrt{1-s^2}\Phi_{3,\omega}'\over \Phi_{3,\omega}}\right\|_{L^2(-1,1)}^2\\\nonumber
=&\int_{-1}^1\left( (1-s^2)|\Phi'|^2 -2\Phi\Phi'{(1-s^2)\Phi_{3,\omega}'\over \Phi_{3,\omega}}+|\Phi|^2{(1-s^2){|\Phi_{3,\omega}'|}^2\over |\Phi_{3,\omega}|^2}\right)ds\\\nonumber
=&\int_{-1}^1\left( (1-s^2)|\Phi'|^2 -(1-s^2)\Phi_{3,\omega}'\left({|\Phi|^2\over \Phi_{3,\omega}}\right)'\right)ds
\\\nonumber
=&\int_{-1}^1 (1-s^2)|\Phi'|^2 ds-(1-s^2){\Phi_{3,\omega}'|\Phi|^2\over\Phi_{3,\omega}}\bigg|_{-1}^0-(1-s^2){\Phi_{3,\omega}'|\Phi|^2\over\Phi_{3,\omega}}\bigg|_{0}^1+\int_{-1}^1{((1-s^2)\Phi_{3,\omega}')'\over \Phi_{3,\omega}}|\Phi|^2ds\\\nonumber
=&\int_{-1}^1\left( (1-s^2)|\Phi'|^2 +{((1-s^2)\Phi_{3,\omega}')'\over \Phi_{3,\omega}}|\Phi|^2\right)ds\\\nonumber
=&\int_{-1}^1\left( (1-s^2)|\Phi'|^2 +\left({1\over1-s^2}+{2\omega+36\over 15s^2}+\lambda\right)|\Phi|^2\right)ds\geq0
\end{align}
for any $\Phi\in X_{\omega,3,o}$.
Thus,
\begin{align*}
\lambda_1(3,\omega)=\lambda=-a^2-3a-2=\sup_{\Phi \in X_{\omega,3,o}}{\int_{-1}^1\left(-(1-s^2)|\Phi'|^2-{1\over 1-s^2}|\Phi|^2-{2\omega+36\over 15s^2}|\Phi|^2\right) ds\over\int_{-1}^1|\Phi|^2ds}
\end{align*}
is the principal eigenvalue of \eqref{Rayleigh-type equation lambda k=1 negative mu=3} for $\omega\in(-18,-3]$
and the supremum can be attained at $\Phi_{3,\omega}$.
\end{proof}

Next, we study the asymptotic behavior of the principal eigenvalues as $\mu\to 3^+$ or $\infty$.

\begin{Lemma}\label{asymptotic behavior of the principal eigenvalues as mu-to 3 or infty lemma}
Let $\omega\in(-18,-3)$. Then
\begin{align}\label{asymptotic behavior of the principal eigenvalues as mu-to 3 or infty}
\liminf_{\mu\to3^+}\lambda_1(\mu,\omega)\geq\lambda_1(3,\omega)>-12,\quad \lim_{\mu\to\infty}\lambda_1(\mu,\omega)=-18.
\end{align}
\end{Lemma}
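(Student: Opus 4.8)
The plan is to prove the two asymptotics separately, in each case exploiting the variational formula \eqref{def-lambda-n} for $\lambda_1(\mu,\omega)$ together with a well-chosen fixed test function, in the spirit of Lemmas \ref{continuity of the principal eigenvalue mu -12} and \ref{asymptotic behavior principal eigenvalue lim mu -infty}.

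\emph{Behavior as $\mu\to 3^+$.} I would use the explicit eigenfunction $\Phi_{3,\omega}$ of \eqref{eigenfunction-3-omega} as a competitor. Since $\omega\in(-18,-3)$, the exponent $a=\tfrac12\big(1+\sqrt{(8\omega+159)/15}\big)$ lies in $(1,2)$, so $\Phi_{3,\omega}(s)\sim\text{sign}(s)|s|^{a}$ near $s=0$ and $\Phi_{3,\omega}(s)\sim(1-s^2)^{1/2}$ near $s=\pm1$; a direct check gives $\Phi_{3,\omega}\in\tilde X$ (defined in \eqref{def-tilde X}), and since the weight $\frac{2\omega+12\mu}{15s^2-3+\mu}$ is continuous and bounded on $[-1,1]$ for every $\mu>3$, one has $\Phi_{3,\omega}\in X_{\omega,\mu,o}$. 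Then \eqref{def-lambda-n} gives
\[
\lambda_1(\mu,\omega)\ \ge\ \frac{\displaystyle\int_{-1}^1\Big(-(1-s^2)|\Phi_{3,\omega}'|^2-\tfrac{1}{1-s^2}|\Phi_{3,\omega}|^2-\tfrac{2\omega+12\mu}{15s^2-3+\mu}|\Phi_{3,\omega}|^2\Big)\,ds}{\displaystyle\int_{-1}^1|\Phi_{3,\omega}|^2\,ds}.
\]
Only the potential term depends on $\mu$, and for $\mu\ge3$ one has $15s^2-3+\mu\ge15s^2$, so for $\mu$ in a fixed right-neighborhood $(3,\mu_0)$ of $3$ the integrand $\frac{2\omega+12\mu}{15s^2-3+\mu}|\Phi_{3,\omega}|^2$ is dominated by $\frac{2\omega+12\mu_0}{15s^2}|\Phi_{3,\omega}|^2\sim|s|^{2a-2}$ near $0$, which is integrable (indeed bounded, as $a>1$). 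The Dominated Convergence Theorem therefore yields $\int_{-1}^1\frac{2\omega+12\mu}{15s^2-3+\mu}|\Phi_{3,\omega}|^2\,ds\to\int_{-1}^1\frac{2\omega+36}{15s^2}|\Phi_{3,\omega}|^2\,ds$ as $\mu\to3^+$, and by Lemma \ref{principal eigenvalues of Rayleigh-type equation lambda k=1 negative} the right-hand side of the displayed inequality converges to the value of the quotient realising $\lambda_1(3,\omega)$, since $\Phi_{3,\omega}$ attains the supremum in \eqref{def-lambda-n} for $\mu=3$. Hence $\liminf_{\mu\to3^+}\lambda_1(\mu,\omega)\ge\lambda_1(3,\omega)$. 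Finally, \eqref{eigenvalue-3-omega} shows that $\lambda_1(3,\cdot)$ is decreasing on $[-18,-3]$ with $\lambda_1(3,-3)=-12$, so $\lambda_1(3,\omega)>-12$ for $\omega\in(-18,-3)$.

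\emph{Behavior as $\mu\to\infty$.} This is entirely parallel to the proof of Lemma \ref{asymptotic behavior principal eigenvalue lim mu -infty}, with $\mu\to-\infty$ replaced by $\mu\to\infty$. The key elementary estimate is
\[
\Big|\frac{2\omega+12\mu}{15s^2-3+\mu}-12\Big|=\frac{|180s^2-2\omega-36|}{15s^2-3+\mu}\le\frac{180+|2\omega+36|}{\mu-3}\to0
\]
uniformly on $[-1,1]$. Writing a normalised eigenfunction $\Phi_{\mu,\omega}$ for $\lambda_1(\mu,\omega)$ and using $\inf_{\Phi\in X_{\omega,\mu,o},\,\|\Phi\|_{L^2(-1,1)}=1}\int_{-1}^1\big((1-s^2)|\Phi'|^2+\tfrac{1}{1-s^2}|\Phi|^2\big)\,ds\ge6$ gives $\lambda_1(\mu,\omega)\le-6-\int_{-1}^1\frac{2\omega+12\mu}{15s^2-3+\mu}|\Phi_{\mu,\omega}|^2\,ds\to-18$, so $\limsup_{\mu\to\infty}\lambda_1(\mu,\omega)\le-18$. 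For the reverse inequality I would test with $\Phi_{-\infty}=\|P_2^1\|_{L^2(-1,1)}^{-1}P_2^1\in X_{\omega,\mu,o}$ (admissible for $\mu>3$ since the weight is bounded), obtaining $\lambda_1(\mu,\omega)\ge-6-\int_{-1}^1\frac{2\omega+12\mu}{15s^2-3+\mu}|\Phi_{-\infty}|^2\,ds\to-18$. Combining the two bounds yields $\lim_{\mu\to\infty}\lambda_1(\mu,\omega)=-18$.

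The only genuine difficulty lies in the first assertion: one must ensure both that the singular $\mu=3$ eigenfunction $\Phi_{3,\omega}$ is an admissible test function for the nearby ($\mu>3$) problems, and that the singular weight $\frac{2\omega+36}{15s^2}$ at $s=0$ does not obstruct the limit. Both are settled by the monotonicity $15s^2-3+\mu\ge15s^2$ for $\mu\ge3$, which supplies a $\mu$-independent integrable majorant near $s=0$ (the restriction to odd functions, which forces the vanishing $|\Phi_{3,\omega}(s)|^2/s^2\sim|s|^{2a-2}$ with $a>1$, is precisely what keeps this majorant integrable). Once this is in place the argument reduces to dominated convergence and the variational principle, and the $\mu\to\infty$ part requires nothing beyond the template of Lemma \ref{asymptotic behavior principal eigenvalue lim mu -infty}.
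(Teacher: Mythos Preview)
Your proposal is correct and follows essentially the same approach as the paper: for $\mu\to3^+$ you use $\Phi_{3,\omega}$ as a test function in the variational characterization and pass to the limit in the potential term via dominated convergence (the paper bounds the \emph{difference} of the weights by $C/s^2$ rather than the weight itself, but both majorants reduce to the same integrability check $|\Phi_{3,\omega}|^2/s^2\sim|s|^{2a-2}$ with $a>1$), and for $\mu\to\infty$ you reproduce the template of Lemma~\ref{asymptotic behavior principal eigenvalue lim mu -infty}, which is exactly what the paper invokes.
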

\begin{proof}
We normalize  $\Phi_{3,\omega}$ in \eqref{eigenfunction-3-omega}  such that $\|\Phi_{3,\omega}\|_{L^2(-1,1)}=1$.
Since $0<\mu-3\leq 15s^2-3+\mu$, and
\begin{align*}
\left|\left({2\omega+12\mu\over 15s^2-3+\mu}-{2\omega+36\over 15s^2}\right)|\Phi_{3,\omega}|^2\right|=\left|{(\mu-3)(12(15s^2-3)-2\omega)\over (15s^2-3+\mu)15s^2}|\Phi_{3,\omega}|^2\right|\leq {C\over s^2}|\Phi_{3,\omega}|^2
\end{align*}
with $C$ independent of  $\mu\in(3,\infty)$, we have
\begin{align*}
\lim_{\mu\to3^+}\int_{-1}^1{2\omega+12\mu\over 15s^2-3+\mu}|\Phi_{3,\omega}|^2 ds=\int_{-1}^1{2\omega+36\over 15s^2}|\Phi_{3,\omega}|^2 ds.
\end{align*}
Taking the infimum limit as $\mu\to3^+$ in
\begin{align*}
\lambda_1(\mu,\omega)\geq \int_{-1}^1\left(-(1-s^2)|\Phi_{3,\omega}'|^2-{1\over 1-s^2}|\Phi_{3,\omega}|^2-{2\omega+12\mu\over 15s^2-3+\mu}|\Phi_{3,\omega}|^2 \right)ds,
\end{align*}
by Lemma \ref{principal eigenvalues of Rayleigh-type equation lambda k=1 negative} we have
\begin{align*}
&\liminf_{\mu\to3^+}\lambda_1(\mu,\omega)\geq \int_{-1}^1\left(-(1-s^2)|\Phi_{3,\omega}'|^2-{1\over 1-s^2}|\Phi_{3,\omega}|^2-{2\omega+36\over 15s^2}|\Phi_{3,\omega}|^2 \right)ds\\
=&\lambda_1(3,\omega)>\lambda_1(3,-3)=-12.
\end{align*}
Similar to \eqref{principal eigenvalue lim mu -infty}, we obtain the second limit in \eqref{asymptotic behavior of the principal eigenvalues as mu-to 3 or infty}.
\end{proof}

Now, we are ready to prove  Theorem \ref{k=1 negative half-line}.
\begin{proof}[Proof of Theorem \ref{k=1 negative half-line}]
For $\omega=-3$, spectral stability is proved in Lemma \ref{ometa=3}.
Let $\omega\in(-18,-3)$.
By Lemma \ref{asymptotic behavior of the principal eigenvalues as mu-to 3 or infty lemma}, there exists $\mu_{1,\omega}\in(3,\infty)$ such that $\lambda_1(\mu_{1,\omega},\omega)=-12$ and
$\partial_\mu\lambda_1(\mu_{1,\omega},\omega)\leq0$. Let $\Phi_{\mu_{1,\omega},\omega,1}$ be a $L^2$ normalized eigenfunction of $\lambda_1(\mu_{1,\omega},\omega)$ and $\Upsilon_{\mu_{1,\omega},\omega,1}=\Delta\Phi_{\mu_{1,\omega},\omega,1}$.
Since $c_{1,\omega}=\mu_{1,\omega}+\omega>3+\omega$, we have $c_{1,\omega}-c_{\omega}>3+\omega-{5\over 6}\omega=3+{1\over 6}\omega>0$ for $-18<\omega<-3$.
By Lemma \ref{quadratic form-computation k=1 negative} (ii), we have
\begin{align*}
\langle L_{1}\Upsilon_{\mu_{1,\omega},\omega,1},\Upsilon_{\mu_{1,\omega},\omega,1}\rangle
=(c_{1,\omega}-c_\omega)\partial_\mu \lambda_{1}(\mu_{1,\omega},\omega)\leq0.
\end{align*}
Thus, $k_{i,J_{\omega,1}L_1|_{X_o^1}}^{\leq0}=1$. By the index formula
\eqref{index formula 1o1}, the $3$-jet is
 spectrally stable    for $\omega\in(-18,-3)$.
\end{proof}
By the asymptotic behavior of $\lambda_1(\mu,\omega)$  as $\mu\to 3^+$ or $\infty$, and the index formula \eqref{index formula 1o1}, we have the following result.
\begin{Corollary}\label{uniqueness2}
Let $\omega\in\left(-18,-3\right)$ and $k=1$. Then there exists
a unique $\mu_{1,\omega}\in(3,\infty)$ such that  $(c_{1,\omega},1,\omega,\Phi_{\mu_{1,\omega},\omega,1})$ is a  neutral mode, where $c_{1,\omega}=\mu_{1,\omega}+\omega$. Moreover, $\langle L_{1}\Upsilon_{\mu_{1,\omega},\omega,1},$ $\Upsilon_{\mu_{1,\omega},\omega,1}\rangle
\leq0$, where $\Upsilon_{\mu_{1,\omega},\omega,1}=\Delta\Phi_{\mu_{1,\omega},\omega,1}$.
\end{Corollary}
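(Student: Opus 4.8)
The plan is to note that existence of $\mu_{1,\omega}$ together with the inequality $\langle L_1\Upsilon_{\mu_{1,\omega},\omega,1},\Upsilon_{\mu_{1,\omega},\omega,1}\rangle\le0$ is already contained in the proof of Theorem \ref{k=1 negative half-line}, so the only new content is uniqueness, and I would obtain it by transposing the argument of Corollary \ref{uniqueness1} to the present setting. For the record of what is being recalled: on $(3,\infty)$ the principal eigenvalue $\lambda_1(\cdot,\omega)$ of \eqref{Rayleigh-type equation lambda k=1 negative} in the odd subspace is continuous, with $\liminf_{\mu\to3^+}\lambda_1(\mu,\omega)\ge\lambda_1(3,\omega)>-12$ (Lemmas \ref{principal eigenvalues of Rayleigh-type equation lambda k=1 negative} and \ref{asymptotic behavior of the principal eigenvalues as mu-to 3 or infty lemma}) and $\lim_{\mu\to\infty}\lambda_1(\mu,\omega)=-18<-12$, so it crosses the level $-12$ at some $\mu_{1,\omega}$ with $\partial_\mu\lambda_1(\mu_{1,\omega},\omega)\le0$; since $c_{1,\omega}-c_\omega=\mu_{1,\omega}+\tfrac16\omega>3+\tfrac16\omega>0$ for $\omega\in(-18,-3)$, Lemma \ref{quadratic form-computation k=1 negative}(ii) gives $\langle L_1\Upsilon_{\mu_{1,\omega},\omega,1},\Upsilon_{\mu_{1,\omega},\omega,1}\rangle=(c_{1,\omega}-c_\omega)\partial_\mu\lambda_1(\mu_{1,\omega},\omega)\le0$. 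The difference from Corollary \ref{uniqueness1} is that here $c-c_\omega>0$, so the sign of the energy form now agrees with the sign of $\partial_\mu\lambda_1$, and the roles of the two endpoints of the $\mu$-interval (where $\lambda_1>-12$ near $3^+$ and $\lambda_1\to-18<-12$ as $\mu\to\infty$) are swapped relative to the endpoints $\mu\to-\infty$ and $\mu\to-12^-$ used there.

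For uniqueness I would suppose that some $\hat\mu_{1,\omega}\ne\mu_{1,\omega}$ in $(3,\infty)$ also produces a neutral mode, with $L^2$-normalized eigenfunction $\Phi_{\hat\mu_{1,\omega},\omega,1}$ and $\hat c_{1,\omega}=\hat\mu_{1,\omega}+\omega$, and derive a contradiction. If $\Phi_{\hat\mu_{1,\omega},\omega,1}$ is odd, then $\hat\mu_{1,\omega}$ is a root of $\lambda_1(\cdot,\omega)=-12$; since \eqref{index formula 1o1} forces $k_{i,J_{\omega,1}L_1|_{X_o^1}}^{\le0}\le1$ while $\mu_{1,\omega}$ already contributes one non-positive direction, the energy form of the new mode must be strictly positive, hence $\partial_\mu\lambda_1(\hat\mu_{1,\omega},\omega)>0$ by Lemma \ref{quadratic form-computation k=1 negative}(ii) and $\hat c_{1,\omega}-c_\omega>0$. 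Then $\lambda_1(\cdot,\omega)$ is $<-12$ just to the left of $\hat\mu_{1,\omega}$ and $>-12$ just to the right, so if $\hat\mu_{1,\omega}<\mu_{1,\omega}$ one uses $\liminf_{\mu\to3^+}\lambda_1(\mu,\omega)\ge\lambda_1(3,\omega)>-12$ to find a further root $\hat\mu_{2,\omega}\in(3,\hat\mu_{1,\omega})$ with $\partial_\mu\lambda_1(\hat\mu_{2,\omega},\omega)\le0$, while if $\hat\mu_{1,\omega}>\mu_{1,\omega}$ one uses $\lim_{\mu\to\infty}\lambda_1(\mu,\omega)=-18$ to find $\hat\mu_{2,\omega}\in(\hat\mu_{1,\omega},\infty)$ with the same sign of the derivative; in either case $\hat\mu_{2,\omega}$ lies strictly on the far side of $\hat\mu_{1,\omega}$ from $\mu_{1,\omega}$, and Lemma \ref{quadratic form-computation k=1 negative}(ii) shows its energy form is $\le0$, so $\mu_{1,\omega}$ and $\hat\mu_{2,\omega}$ together give $k_{i,J_{\omega,1}L_1|_{X_o^1}}^{\le0}\ge2$, contradicting \eqref{index formula 1o1}. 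If instead $\Phi_{\hat\mu_{1,\omega},\omega,1}$ is even, the contradiction is obtained exactly as in the even case of Corollary \ref{uniqueness1}, using that $L_1$ is nonnegative on $X_e^1$ (cf.\ \eqref{L1e2ok3nonnegative}). This gives uniqueness.

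The step I expect to be the main obstacle is producing the extra root near the endpoint $\mu=3$: unlike the positive-half-line situation, the equation \eqref{Rayleigh-type equation lambda k=1 negative} acquires a genuine singularity at $s=0$ in the limit $\mu\to3^+$, so Lemma \ref{asymptotic behavior of the principal eigenvalues as mu-to 3 or infty lemma} only yields the one-sided bound $\liminf_{\mu\to3^+}\lambda_1(\mu,\omega)\ge\lambda_1(3,\omega)$ rather than two-sided continuity up to $\mu=3$, and the argument must be run on this $\liminf$ together with the strict inequality $\lambda_1(3,\omega)>-12$ — which is precisely why the explicit value of $\lambda_1(3,\omega)$ from Lemma \ref{principal eigenvalues of Rayleigh-type equation lambda k=1 negative}, its strict monotonicity in $\omega$, and the boundary value $\lambda_1(3,-3)=-12$ are needed. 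A minor accompanying point is to check that an odd neutral mode is always attached to the principal index $n_0=1$, so that the whole argument can legitimately be phrased through $\lambda_1$.
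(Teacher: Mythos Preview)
Your proposal is correct and follows essentially the same approach as the paper, whose proof is a one-line reference to the asymptotic behavior of $\lambda_1(\mu,\omega)$ as $\mu\to3^+$ and $\mu\to\infty$ together with the index formula \eqref{index formula 1o1}, i.e.\ a transposition of the argument of Corollary~\ref{uniqueness1}. You have spelled out this transposition accurately, including the sign reversal ($c-c_\omega>0$ here) and the swapped roles of the two endpoints; your remark that only the one-sided bound $\liminf_{\mu\to3^+}\lambda_1(\mu,\omega)\ge\lambda_1(3,\omega)>-12$ is available, and that this suffices, is exactly the point. The ``minor accompanying point'' that any odd neutral mode attaches to $n_0=1$ follows because the potential $\frac{2\omega+12\mu}{15s^2-3+\mu}$ is strictly positive for $\mu>3$ and $\omega\in(-18,-3)$, so by min-max $\lambda_2(\mu,\omega)\le-20<-12$.
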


\subsection{Proof of the negative critical rotation  rate $g^{-1}(-12)$ for the  second Fourier mode} Recall that the function $g$ is defined in \eqref{def-g}. In this subsection, we prove that the critical rotation rate  for the negative half-line  is $g^{-1}(-12)$ for the $2$'nd Fourier mode.

 \begin{Theorem}\label{k=2 negative half-line}
Let $k=2$. Then the $3$-jet is   linearly unstable  for $\omega\in\left(g^{-1}(-12),-3\right]$ and spectrally stable for  $\omega\in\left(-18,g^{-1}(-12)\right]$.
\end{Theorem}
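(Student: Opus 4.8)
The plan is to run the index-formula machinery \eqref{index formula 1o2} for $J_{\omega,2}L_2|_{X_e^2}$, exactly as in the proofs of Theorems \ref{k=2 positive half-line critical rotation rate} and \ref{k=1 negative half-line}, but now locating the relevant neutral speeds on the negative side. Since linear instability of the $3$-jet for $\omega\in(-3,0]$ is covered by Theorem \ref{linear instability} and spectral stability for $\omega\le-18$ by Rayleigh's criterion, it suffices to treat $\omega\in(-18,-3]$. By Lemma \ref{tilde Psi0 does not change sign}(1) (together with Lemma \ref{case-omega=12}(ii) at $\omega=-3$), every neutral mode $(c,2,\omega,\Phi)$ has $c\in\text{Ran}(-\tilde \Psi_{\omega}')^\circ\cup[3+\omega,0]$; those with $c\in\text{Ran}(-\tilde \Psi_{\omega}')^\circ$ force $c=c_\omega$ by Theorem \ref{Psi prime change sign c}(i), while $\ker(J_{\omega,2}L_2|_{X_e^2})=\{0\}$ by Lemma \ref{kernel JL}, so $k_{0,J_{\omega,2}L_2|_{X_e^2}}^{\le0}=0$. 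Hence everything reduces to the neutral modes with $c\in[3+\omega,0]$, i.e. (after the substitution $\mu=-\omega+c$ as in \eqref{Rayleigh-type equation 12}) to the solutions $\mu\in[3,-\omega]$ of $\tilde\lambda_n(\mu,\omega)=-12$ for the eigenvalue problem \eqref{Rayleigh-type equation lambda k=2} restricted to the even space $X_{\omega,\mu,e}$. Each such neutral mode yields a purely imaginary eigenvalue $-2i(c-c_\omega)$ of $J_{\omega,2}L_2$, and, since $c-c_\omega\ge 3+\omega-\frac{5}{6}\omega=3+\frac{1}{6}\omega>0$ on $[3+\omega,0]$ for $\omega\in(-18,-3)$, the sign of $\langle L_2\Upsilon,\Upsilon\rangle$ is controlled by the negative-half-line analog of Lemma \ref{quadratic form-computation k=2}: it equals $(c-c_\omega)\,\partial_\mu\tilde\lambda_n(\mu,\omega)$ when $\mu>3$, and $(c-c_\omega)\int_{-1}^1\frac{-12(15s^2-3)+2\omega}{(15s^2-3+\mu)^2}|\Phi|^2\,ds$ when $\mu=3$.

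For the instability statement, let $\omega\in(g^{-1}(-12),-3]$. Since $g$ is decreasing with $g(g^{-1}(-12))=-12$, and since $g(-3)<-12$ directly (Lemma \ref{properties of g}), we have $g(\omega)<-12$; as $-\omega<18<183$, the interval $[3,-\omega]$ lies inside $[3,183]$, so by the definition \eqref{def-g} we get $\tilde\lambda_1(\mu,\omega)\le g(\omega)<-12$, hence $\tilde\lambda_n(\mu,\omega)<-12$ for all $n\ge1$, for every $\mu\in[3,-\omega]$. Therefore no neutral mode with $c\in[3+\omega,0]$ exists, so $k_{i,J_{\omega,2}L_2|_{X_e^2}}^{\le0}=k_{0,J_{\omega,2}L_2|_{X_e^2}}^{\le0}=0$ and \eqref{index formula 1o2} forces $k_{c,J_{\omega,2}L_2|_{X_e^2}}+k_{r,J_{\omega,2}L_2|_{X_e^2}}=1$; the $3$-jet is linearly unstable.

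For the stability statement, let $\omega\in(-18,g^{-1}(-12)]$, so $g(\omega)\ge-12$. I would first show that $\max_{\mu\in[3,183]}\tilde\lambda_1(\mu,\omega)$ is attained in $[3,-\omega]$: Lemma \ref{tilde Psi0 does not change sign}(1) rules out neutral modes with $c>0$, so no eigenvalue of \eqref{Rayleigh-type equation lambda k=2} equals $-12$ for $\mu>-\omega$; combining this with the continuity of $\tilde\lambda_1(\cdot,\omega)$ on $(3,\infty)$ and $\lim_{\mu\to\infty}\tilde\lambda_1(\mu,\omega)=-18$ (the analog of Lemma \ref{asymptotic behavior principal eigenvalue lim mu -infty}) gives $\tilde\lambda_1(\mu,\omega)<-12\le g(\omega)$ for $\mu>-\omega$, whence $g(\omega)=\max_{\mu\in[3,-\omega]}\tilde\lambda_1(\mu,\omega)$, attained at some $\mu^\ast\in[3,-\omega]$. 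If $\omega<g^{-1}(-12)$, then $g(\omega)>-12$, so $\tilde\lambda_1(\mu^\ast,\omega)>-12$ while $\tilde\lambda_1(\mu,\omega)\le-12$ for $\mu\ge-\omega$; continuity then produces a largest crossing $\mu_{2,\omega}\in(\mu^\ast,-\omega]\subset(3,-\omega]$ with $\tilde\lambda_1(\mu_{2,\omega},\omega)=-12$ and $\partial_\mu\tilde\lambda_1(\mu_{2,\omega},\omega)\le0$. The associated neutral mode $(c_{2,\omega},2,\omega,\Phi)$, with $c_{2,\omega}=\mu_{2,\omega}+\omega\in(3+\omega,0]$, then has $\langle L_2\Upsilon,\Upsilon\rangle=(c_{2,\omega}-c_\omega)\,\partial_\mu\tilde\lambda_1(\mu_{2,\omega},\omega)\le0$, so $k_{i,J_{\omega,2}L_2|_{X_e^2}}^{\le0}\ge1$; by \eqref{index formula 1o2} it equals $1$ and $k_c+k_r=0$, giving spectral stability. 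At the endpoint $\omega=g^{-1}(-12)$ one has $\tilde\lambda_1(\mu^\ast,\omega)=-12$ with $\mu^\ast$ an interior maximum, hence $\partial_\mu\tilde\lambda_1(\mu^\ast,\omega)=0$ and $\langle L_2\Upsilon,\Upsilon\rangle=0$; here I would invoke the Jordan-block/Krein-signature analysis at the collision of two purely imaginary eigenvalues of opposite signature (cf. Remark \ref{krein signature-rem}) to conclude that the generalized eigenspace of $-2i(c_{2,\omega}-c_\omega)$ is two-dimensional with signature $(1,1)$, so it still contributes $1$ to $k_{i,J_{\omega,2}L_2|_{X_e^2}}^{\le0}$, and \eqref{index formula 1o2} again yields spectral stability.

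The hard part will be the precise behavior of the principal eigenvalue $\tilde\lambda_1(\mu,\omega)$ near the singular endpoint $\mu=3$, where the potential $\frac{2\omega+12\mu}{15s^2-3+\mu}$ develops the interior singularity at $s=0$: one needs the lift-up jump of the principal eigenvalues of the Rayleigh equations, together with an explicit (or semi-explicit) evaluation of $\tilde\lambda_1(3,\omega)$ and of $\lim_{\mu\to3^+}\tilde\lambda_1(\mu,\omega)$ — an analog of Lemma \ref{continuity of the principal eigenvalue mu -12}, but now genuinely discontinuous — and the monotonicity $\partial_\omega\tilde\lambda_1(\mu,\omega)=-\int_{-1}^1\frac{2}{15s^2-3+\mu}|\Phi_{\mu,\omega}|^2\,ds<0$ feeding into Lemma \ref{properties of g}. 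The second delicate point is the signature count at $\omega=g^{-1}(-12)$: one must verify that the maximum of $\tilde\lambda_1(\cdot,\omega)$ there is non-degenerate ($\partial_\mu^2\tilde\lambda_1(\mu^\ast,\omega)<0$), so that the collision produces a size-two Jordan block and not a larger one, and that $\langle L_2\cdot,\cdot\rangle$ restricted to it has exactly one non-positive direction; it is this $-12$-crossing of $\tilde\lambda_1(\cdot,\omega)$ disappearing through the top of its graph that pins down the analytic value $\omega_{cr}^-=g^{-1}(-12)$.
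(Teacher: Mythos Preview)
Your approach is essentially the paper's, and the instability half is verbatim the same. For stability you take a small detour: instead of using Lemma~\ref{principal eigenvalue mu larger than 183} to get $\tilde\lambda_1(183,\omega)\le-17$ and locating the crossing $\mu_{2,\omega}\in[\mu_\omega,183]$ directly (as the paper does, uniformly for all $\omega\in(-18,g^{-1}(-12)]$), you first argue that the maximum of $\tilde\lambda_1(\cdot,\omega)$ over $[3,183]$ is attained in $[3,-\omega]$ and then find the crossing in $(\mu^\ast,-\omega]$. Both routes work, but the paper's is shorter and does not require the separate observation that $\tilde\lambda_1(\mu,\omega)<-12$ for $\mu>-\omega$.

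The one place where you go astray is the endpoint $\omega=g^{-1}(-12)$. You worry that $\langle L_2\Upsilon,\Upsilon\rangle=0$ forces a Jordan-block/Krein-signature analysis and that you must verify non-degeneracy $\partial_\mu^2\tilde\lambda_1(\mu^\ast,\omega)<0$. None of this is needed: the index $k_{i,J_{\omega,2}L_2|_{X_e^2}}^{\le0}$ counts \emph{non-positive} directions of $\langle L_2\cdot,\cdot\rangle$ on the generalized eigenspace, so $\langle L_2\Upsilon,\Upsilon\rangle=0\le0$ already yields $k_i^{\le0}\ge1$, and the index formula \eqref{index formula 1o2} forces $k_i^{\le0}=1$ and $k_c+k_r=0$. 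The paper handles the whole interval $(-18,g^{-1}(-12)]$ in one stroke via the inequality $\langle L_2\Upsilon_{\mu_{2,\omega},\omega,2},\Upsilon_{\mu_{2,\omega},\omega,2}\rangle=(c_{2,\omega}-c_\omega)\partial_\mu\tilde\lambda_1(\mu_{2,\omega},\omega)\le0$. The non-degeneracy $\partial_\mu^2\tilde\lambda_1<0$ that you flag is discussed in the paper only in Remark~\ref{krein signature-rem}, and there it is a computer-assisted statement used to interpret the Krein-collision picture of Fig.~\ref{Mechanism causing  instability on the negative half-line}; it plays no role in the proof of Theorem~\ref{k=2 negative half-line}. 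Your identification of the ``hard part'' (the singular behavior of $\tilde\lambda_1(\mu,\omega)$ at $\mu=3$ and the lift-up jump) is accurate, but those ingredients are all packaged into Lemma~\ref{properties of g}, which you are already citing.
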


Let us first discuss the ideas in the proof of Theorem \ref{k=2 negative half-line}.

\begin{Remark}\label{ideas in the proof of k=2 negative half-line}
For $k=2$, we can compute the eigenvalue $\tilde\lambda_{1}(3,\omega)$  in
 \eqref{eigenvalue-tilde-3-omega} for $\omega\in(-18,-3]$ like what we did for $k=1$ in Lemma \ref{principal eigenvalues of Rayleigh-type equation lambda k=1 negative}. For $\omega=-18$, $\tilde\lambda_{1}(3,-18)=-6$ can be solved directly using the Legendre polynomials.
 However, $\tilde \lambda_1(3,\omega)<-12$ for $\omega\in (-18,-3]$. This means that we can not obtain stability  using  the spectral right-continuity of $\tilde \lambda_1(\cdot,\omega)$ at $\mu=3$ and the asymptotic behavior near $\mu=\infty$. A key point is that since $\tilde \lambda_1(3,-18)=-6$ but $\lim_{\omega\to-18^+}\tilde \lambda_1(3,\omega)=-12$,  there is a lift-up jump of $\tilde \lambda_1(3,\omega)$ at $\omega=-18$. After a careful study on $\tilde\lambda_{1}(\mu,\omega)$, we obtain that $\sup_{\mu\in[3,\infty)}\tilde\lambda_{1}(\mu,\omega)>-12$  if $\omega$ is close to $-18$. This implies spectral stability for $\omega$ near $-18$ and motivates us to define the function $g$ in \eqref{def-g}, which is   decreasing and continuous on $\omega\in[-18,-3]$. Here, we use
$
\tilde \lambda_1(\mu,\omega)<-17
$ for  $\mu>183$ and $\omega\in[-18,-3]$ by Lemma \ref{principal eigenvalue mu larger than 183}.
Since $g(-18)=-6$ and $g(-3)<-12$, we have $g^{-1}(-12)\in(-18,-3)$. For $\omega\in(-18,g^{-1}(-12)]$, there exists a neutral mode  ($c>3+\omega$)  with desired signature of the quadratic form $\langle L_2\cdot,\cdot\rangle$, which implies
 $k_{i,J_{\omega,2}L_2|_{X_e^2}}^{\leq0}=1$ and
 spectral stability for $k=2$.  For $\omega\in (g^{-1}(-12),-3)$, there exist no neutral modes  with $c\geq3+\omega$, which implies instability for $k=2$.
 \end{Remark}

For $k=2$, we study the eigenvalues of the
 Rayleigh system
  \begin{align}\label{Rayleigh-type equation lambda k=2 negative}
  ((1-s^2)\Phi')'-{4\over 1-s^2}\Phi-{2\omega+12\mu\over 15s^2-3+\mu}\Phi=\tilde \lambda\Phi, \quad
 \Delta_2\Phi\in L^2(-1,1)
 \end{align}
in $
X_{\omega,\mu,e}=\left\{\Phi\in X_{\omega,\mu}| \Phi \text{ is even}\right\}
$
for $\mu\in[3,\infty)$, where $X_{\omega,\mu}$ is defined in \eqref{def-X-omega-mu}.
By the  compact embeddedness   $X_{\omega,\mu,e}\hookrightarrow L^2$, all the eigenvalues of the eigenvalue problem
\eqref{Rayleigh-type equation lambda k=2 negative}
  are arranged in a sequence  $-\infty<\cdots\leq \tilde \lambda_{n}(\mu,\omega)\leq \cdots \leq \tilde\lambda_{1}(\mu,\omega)$, which has the expressions \eqref{def-tilde-lambda-n}.

  For $\mu=3$, the principal eigenvalues of \eqref{Rayleigh-type equation lambda k=2 negative} for $\omega\in(-18,-3]$ is quite different from those for $\omega=-18$.

\begin{Lemma}\label{principal eigenvalues mu=3}
$($Lift-up jump of $\tilde \lambda_1(3,\omega)$ at $\omega=-18$$)$
$({\rm i})$ For $\omega\in(-18,-3]$, we have
\begin{align}\label{eigenvalue-tilde-3-omega}
\tilde\lambda_1(3,\omega)=-{2\omega+171\over 15}-3\sqrt{{8\omega+159\over 15}}\in[-20,-12),
\end{align}
with a corresponding eigenfunction
\begin{align}\label{def-tilde-Phi-3-omega}
\tilde\Phi_{3,\omega}(s)=|s|^{1+\sqrt{{8\omega+159\over 15}}\over2}(1-s^2), \quad s\in[-1,1].
\end{align}
In particular, $\tilde\lambda_1(3,-3)=-20$, $\lim_{\omega\to-18^+}\tilde\lambda_1(3,\omega)=-12$, and $\tilde\lambda_1(3,\omega)$ is decreasing on $\omega\in(-18,-3]$.

$({\rm ii})$ Consider $\omega=-18$. Then
\begin{align*}
\tilde\lambda_1(3,-18)=-6,
\end{align*}
with a corresponding eigenfunction
\begin{align}\label{eigenfunction tilde lambda13-18}
\tilde\Phi_{3,-18}(s)=1-s^2, \quad s\in[-1,1].
\end{align}

Consequently,  $\lim_{\omega\to-18^+}\tilde \lambda_1(3,\omega)=-12$ and $\tilde \lambda_1(3,-18)=-6$  yield a lift-up jump of $\tilde \lambda_1(3,\omega)$ at $\omega=-18$.
\end{Lemma}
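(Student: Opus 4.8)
The statement has two parts: (i) for $\omega\in(-18,-3]$, solve the principal eigenvalue $\tilde\lambda_1(3,\omega)$ of the singular equation \eqref{Rayleigh-type equation lambda k=2 negative} with $\mu=3$ explicitly, and (ii) handle the non-singular endpoint case $\omega=-18$. I would mirror exactly the strategy used for $k=1$ in Lemma \ref{principal eigenvalues of Rayleigh-type equation lambda k=1 negative}, adjusting the exponents to the second Fourier mode.

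Let me think about the structure. For the paper's $k=1$ lemma, the key observation was that the singular equation at $\mu=3$ has a pole $\tfrac{2\omega+36}{15s^2}$ near $s=0$, and an even/odd power-times-$(1-s^2)^{1/2}$ ansatz solves it. Here the boundary term $\tfrac{4}{1-s^2}$ replaces $\tfrac{1}{1-s^2}$, so the natural ansatz on $(0,1)$ is $\Phi(s)=s^a(1-s^2)$ (the factor $(1-s^2)$ rather than $(1-s^2)^{1/2}$ because $P_l^2$ vanishes to second order at $\pm1$). Plugging $\Phi=s^a(1-s^2)$ into $((1-s^2)\Phi')'-\tfrac{4}{1-s^2}\Phi-\tfrac{2\omega+36}{15s^2}\Phi=\tilde\lambda\Phi$ and comparing the coefficients of $s^{a-2}(1-s^2)$ and $s^a(1-s^2)$ (the $s^{a+2}$ terms should cancel after using the first relation) yields a pair of equations; solving the first gives $a=\tfrac{1+\sqrt{(8\omega+159)/15}}{2}$ (same $a$ as in the $k=1$ case, since $a^2-a=\tfrac{2\omega+36}{15}$ depends only on the pole), and the second gives $\tilde\lambda$ in closed form. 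Matching this against \eqref{eigenvalue-tilde-3-omega} is then a routine algebraic check. Since we restrict to the even space $X_{\omega,3,e}$, the eigenfunction is $|s|^a(1-s^2)$, i.e. \eqref{def-tilde-Phi-3-omega}; one checks $\Delta_2\tilde\Phi_{3,\omega}\in L^2(-1,1)$ from $a\in(1,2]$ exactly as before. To confirm this eigenvalue is the \emph{principal} one, I would repeat the integration-by-parts/squares argument of \eqref{proof-principal eigenvalue}: write $\big\|\sqrt{1-s^2}\,\Phi' - \Phi\,\tfrac{\sqrt{1-s^2}\,\tilde\Phi_{3,\omega}'}{\tilde\Phi_{3,\omega}}\big\|_{L^2}^2\ge 0$, expand, integrate by parts using $((1-s^2)\tilde\Phi_{3,\omega}')'/\tilde\Phi_{3,\omega}=\tfrac{4}{1-s^2}+\tfrac{2\omega+36}{15s^2}+\tilde\lambda$, and check the boundary terms at $s=0$ vanish because $|\Phi(s)|^2/\tfrac{\tilde\Phi_{3,\omega}}{\Phi_{3,\omega}'}\to 0$ (here one uses $|\Phi(s)|^2/s^{2a-1}\to 0$ for $\Phi\in X_{\omega,3,e}$, valid since $2a-1\in(1,3]$ and even functions in $X_{\omega,3,e}$ vanish to appropriate order by the finiteness of $\int \tfrac{1}{1-s^2}|\Phi|^2$ — actually I should double-check the $s\to 0$ behavior of a generic test function more carefully, which is the one genuinely delicate point). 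Monotonicity of $\tilde\lambda_1(3,\cdot)$ on $(-18,-3]$ and the endpoint values $\tilde\lambda_1(3,-3)=-20$, $\lim_{\omega\to-18^+}\tilde\lambda_1(3,\omega)=-12$ follow by inspection of the explicit formula \eqref{eigenvalue-tilde-3-omega}.

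For part (ii), when $\omega=-18$ the coefficient $\tfrac{2\omega+12\mu}{15s^2-3+\mu}$ at $\mu=3$ becomes $\tfrac{-36+36}{15s^2}=0$, so \eqref{Rayleigh-type equation lambda k=2 negative} reduces to the associated Legendre equation $((1-s^2)\Phi')'-\tfrac{4}{1-s^2}\Phi=\tilde\lambda\Phi$ with no singularity in $(-1,1)$. The relevant eigenfunctions are the $P_l^2$, $l\ge 2$, with eigenvalues $-l(l+1)$; restricting to even functions, the top one is $P_2^2(s)=3(1-s^2)$ with eigenvalue $-6$, giving $\tilde\lambda_1(3,-18)=-6$ and \eqref{eigenfunction tilde lambda13-18} (up to a constant). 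The final ``Consequently'' sentence is then immediate from comparing $-6$ with $-12$.

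The main obstacle I anticipate is not the algebra — that parallels the $k=1$ case — but making the variational/principal-eigenvalue argument airtight at the singular point $s=0$ for \emph{even} test functions: one must verify that every $\Phi\in X_{\omega,3,e}$ has enough vanishing at $0$ for the boundary term in the integration by parts to drop, using only $\int_{-1}^1\big((1-s^2)|\Phi'|^2+\tfrac{4}{1-s^2}|\Phi|^2+\tfrac{2\omega+36}{15s^2}|\Phi|^2\big)ds<\infty$. The $\tfrac{1}{s^2}$ weight forces $\Phi(0)=0$; a short Cauchy–Schwarz/Hardy-type estimate then controls $|\Phi(s)|^2/s^{2a-1}$ near $0$, and this should be spelled out rather than asserted. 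Everything else is bookkeeping analogous to Lemma \ref{principal eigenvalues of Rayleigh-type equation lambda k=1 negative}.
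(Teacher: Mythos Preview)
Your proposal is correct and mirrors the paper's proof essentially step for step: the ansatz $\Phi(s)=s^a(1-s^2)$, the coefficient-matching to get $a^2-a=\tfrac{2\omega+36}{15}$ and $\tilde\lambda=-a^2-5a-6$, the even extension $|s|^a(1-s^2)$, the squares argument \eqref{proof-principal eigenvalue} for principality, and the reduction to the Legendre equation at $\omega=-18$ are exactly what the paper does. One small correction to save you time: the boundary term in the integration by parts at $s=0^+$ is $(1-s^2)\,\tfrac{\tilde\Phi_{3,\omega}'}{\tilde\Phi_{3,\omega}}\,|\Phi|^2\sim \tfrac{a\,|\Phi|^2}{s}$ (not $|\Phi|^2/s^{2a-1}$), and its vanishing comes from the weight $\int\tfrac{|\Phi|^2}{s^2}\,ds<\infty$ built into $X_{\omega,3,e}$ (not the $\tfrac{1}{1-s^2}$ weight), which forces $\Phi(0)=0$ and then $\tfrac{|\Phi(s)|^2}{s}\le \int_0^s|\Phi'|^2\to 0$ by Cauchy--Schwarz.
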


\begin{proof}
(i) The
 Rayleigh equation \eqref{Rayleigh-type equation lambda k=2 negative} with $\mu=3$ is
  \begin{align}\label{Rayleigh-type equation lambda k=2 negative mu=3}
 ((1-s^2)\Phi')'-{4\over 1-s^2}\Phi-{2\omega+36\over 15s^2}\Phi=\tilde\lambda\Phi, \quad
 \Delta_2\Phi\in L^2(-1,1).
 \end{align}
For $\omega=-3$, the principal eigenvalue of \eqref{Rayleigh-type equation lambda k=2 negative mu=3} is $\tilde \lambda_1(3,-3)=-20$ with a corresponding eigenfunction to be $\tilde \Phi_{3,-3}(s)=s^2(1-s^2)$, $s\in[-1,1]$. Inserting $\Phi(s)=s^a(1-s^2)$, $a>0$, into \eqref{Rayleigh-type equation lambda k=2 negative mu=3} and comparing the coefficients of $s^{a-2}(1-s^2)$ and $s^a(1-s^2)$,
we have
 \begin{align*}
 \left\{ \begin{array}{llll} a^2-a={2\omega+36\over15}, \\
 a^2+5a+\tilde\lambda+6=0. \end{array}\right.
 \end{align*}
Thus,
\begin{align*}a={1+\sqrt{{8\omega+159\over 15}}\over2},
\end{align*}
and
\begin{align}\label{tilde lambda a}
\tilde\lambda=-a^2-5a-6=-{2\omega+36\over 15}-6a-6=-{2\omega+171\over 15}-3\sqrt{{8\omega+159\over 15}}.
 \end{align}
 Since we only consider even functions, we choose $\tilde \Phi_{3,\omega}$ as given in \eqref{def-tilde-Phi-3-omega}.
 By the equation in \eqref{Rayleigh-type equation lambda k=2 negative mu=3} and $a\in(1,2]$, we have $\Delta_2\tilde \Phi_{3,\omega}\in L^2(-1,1)$. Then
 $\tilde \lambda$ in \eqref{tilde lambda a} is an eigenvalue of \eqref{Rayleigh-type equation lambda k=2 negative mu=3} with an eigenfunction
 $\tilde\Phi_{3,\omega}$.
 For $\omega\in(-18,-3]$ and $\Phi\in X_{\omega,\mu,e}$, similar to \eqref{proof-principal eigenvalue} we have
\begin{align*}
&\left\|\sqrt{1-s^2}\Phi'-\Phi{\sqrt{1-s^2}\tilde\Phi_{3,\omega}'\over \tilde \Phi_{3,\omega}}\right\|_{L^2(-1,1)}^2
=\int_{-1}^1\left( (1-s^2)|\Phi'|^2 +{((1-s^2)\tilde\Phi_{3,\omega}')'\over \tilde\Phi_{3,\omega}}|\Phi|^2\right)ds\\
=&\int_{-1}^1\left( (1-s^2)|\Phi'|^2 +\left({4\over1-s^2}+{2\omega+36\over 15s^2}+\tilde\lambda\right)|\Phi|^2\right)ds\geq0.
\end{align*}
Thus,
$\tilde\lambda_1(3,\omega)=\tilde \lambda=-a^2-5a-6$
is the principal eigenvalue of \eqref{Rayleigh-type equation lambda k=2 negative mu=3} for $\omega\in(-18,-3]$.

(ii) For $\omega=-18$, \eqref{Rayleigh-type equation lambda k=2 negative mu=3} becomes
 \begin{align*}
 \left\{ \begin{array}{llll} ((1-s^2)\Phi')'-{4\over 1-s^2}\Phi=\tilde\lambda\Phi, \\
 \Phi(\pm1)=0. \end{array}\right.
 \end{align*}
 This is the general Legendre equation restricted in the space $X_{-18,3,e}$. The principal eigenvalue is clearly $\tilde \lambda_1(3,-18)=-6$.
\end{proof}
  For $\omega=-18$, we study the asymptotic behavior of $\tilde \lambda_1(\mu,-18)$ as $\mu\to 3^+$ or $\mu\to\infty$.

  \begin{Lemma}\label{asymptotic behavior of tilde lambda1 mu-18 lemma}
  For  $\omega=-18$, we have
  \begin{align}\label{asymptotic behavior of tilde lambda1 mu-18}
  \lim_{\mu\to3^+}\tilde \lambda_1(\mu,-18)=\tilde \lambda_1(3,-18)=-6,\quad\lim_{\mu\to\infty}\tilde \lambda_1(\mu,-18)=-18.
  \end{align}
  Moreover, $\partial_\mu\tilde\lambda_1(\mu,-18)<0$ for $\mu\in(3,\infty)$, and consequently,  $\tilde \lambda_1(\cdot,-18)$ is decreasing on $[3,\infty)$.
  \end{Lemma}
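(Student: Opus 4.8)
The plan is to reduce everything to the observation that for $\omega=-18$ the potential in \eqref{Rayleigh-type equation lambda k=2 negative} degenerates to $V_\mu(s):=\dfrac{12(\mu-3)}{15s^2-3+\mu}$, which for every $\mu\ge 3$ is nonnegative, continuous on all of $[-1,1]$ (the $s=0$ singularity of the general Rayleigh equation is removable here), and uniformly bounded by $V_\mu\le 12$; moreover $V_\mu(s)\to 0$ pointwise on $(-1,1)\setminus\{0\}$ as $\mu\to 3^+$, while $V_\mu(s)\to 12$ uniformly on $[-1,1]$ as $\mu\to\infty$ since $|V_\mu(s)-12|=\dfrac{180 s^2}{15s^2-3+\mu}\le\dfrac{180}{\mu-3}$. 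All three assertions of the lemma follow from this together with the variational formula \eqref{def-tilde-lambda-n}.

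For $\lim_{\mu\to 3^+}$ I would squeeze. For the upper bound, since $V_\mu\ge 0$, dropping the $V_\mu$-term in \eqref{def-tilde-lambda-n} only increases the quotient, so $\tilde\lambda_1(\mu,-18)\le\tilde\lambda_1(3,-18)=-6$ for every $\mu\ge 3$, because the operator at $(\mu,\omega)=(3,-18)$ is exactly the bare general Legendre operator $((1-s^2)\Phi')'-\frac{4}{1-s^2}\Phi$ on the even subspace, whose principal eigenvalue $-6$ with eigenfunction $1-s^2$ is recorded in Lemma \ref{principal eigenvalues mu=3}(ii). For the lower bound, I would insert the fixed test function $\tilde\Phi_{3,-18}(s)=1-s^2\in X_{-18,\mu,e}$ into the quotient at parameter $\mu$; by dominated convergence ($0\le V_\mu\le 12$, $V_\mu\to 0$ off $s=0$) the term $\int_{-1}^1 V_\mu|\tilde\Phi_{3,-18}|^2\,ds\to 0$, so the quotient converges to $\tilde\lambda_1(3,-18)=-6$, whence $\liminf_{\mu\to 3^+}\tilde\lambda_1(\mu,-18)\ge -6$. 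The two bounds give the first limit.

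For $\lim_{\mu\to\infty}$ the argument parallels the proof of \eqref{principal eigenvalue lim mu -infty}. The upper bound uses that the bottom eigenvalue of $-\Delta_2$ on the even subspace is $6$ (attained at $P_2^2$), i.e. $\int_{-1}^1\big((1-s^2)|\Phi'|^2+\frac{4}{1-s^2}|\Phi|^2\big)\,ds\ge 6\int_{-1}^1|\Phi|^2\,ds$, so $\tilde\lambda_1(\mu,-18)\le -6-\min_{s\in[-1,1]}V_\mu(s)=-6-\frac{12(\mu-3)}{\mu+12}\to -18$; the lower bound uses $\Phi=P_2^2$ as a test function, giving, via integration by parts and $-\Delta_2 P_2^2=6P_2^2$, that $\tilde\lambda_1(\mu,-18)\ge -6-\int_{-1}^1 V_\mu|P_2^2|^2\,ds / \int_{-1}^1|P_2^2|^2\,ds\to -18$ (because $V_\mu\to 12$ uniformly).

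For the monotonicity I would use first-order perturbation theory exactly as in the derivation of \eqref{comp-eigenvalue derivative}: for $\mu\in(3,\infty)$ the weight $V_\mu$ is smooth and bounded, the $L^2$-normalized principal eigenfunctions $\tilde\Phi_{\mu,-18}$ of \eqref{Rayleigh-type equation lambda k=2 negative} enjoy uniform $H_2^2(\mathbb{S}^2)$ bounds and converge in $L^2(-1,1)$ (cf. Lemma \ref{lem-differential calculus}(ii)), and $\tilde\lambda_1(\cdot,-18)$ is differentiable with
\begin{align*}
\partial_\mu\tilde\lambda_1(\mu,-18)=-\int_{-1}^1(\partial_\mu V_\mu)(s)\,|\tilde\Phi_{\mu,-18}(s)|^2\,ds,\qquad \partial_\mu V_\mu(s)=\frac{180\,s^2}{(15s^2-3+\mu)^2}\ge 0.
\end{align*}
Since $\partial_\mu V_\mu(s)>0$ for $s\ne 0$ and $\tilde\Phi_{\mu,-18}\not\equiv 0$, the integral is strictly positive, so $\partial_\mu\tilde\lambda_1(\mu,-18)<0$ on $(3,\infty)$; integrating and appealing to the left-continuity at $\mu=3$ just proved shows that $\tilde\lambda_1(\cdot,-18)$ is strictly decreasing on $[3,\infty)$. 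The only step requiring care is this perturbation-theoretic bookkeeping in the presence of the singular weight $\frac{4}{1-s^2}$, but it is verbatim the argument already carried out for \eqref{comp-eigenvalue derivative}, so I anticipate no genuine obstacle.
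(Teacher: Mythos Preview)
Your proof is correct and, for the $\mu\to 3^+$ limit, shorter than the paper's. The difference lies in the upper bound $\limsup_{\mu\to 3^+}\tilde\lambda_1(\mu,-18)\le -6$. You obtain it in one line from the pointwise inequality $V_\mu\ge 0=V_3$ (which holds precisely at $\omega=-18$, where the $s=0$ singularity of $V_3$ disappears), giving $\tilde\lambda_1(\mu,-18)\le\tilde\lambda_1(3,-18)=-6$ directly from the variational quotient. The paper instead runs a compactness/Fatou argument: it uses the already established $\liminf$ bound to get uniform $\tilde X$-bounds on the normalized eigenfunctions $\tilde\Phi_{\mu,-18}$, extracts a weak limit $\tilde\Phi_0$, applies Fatou to the nonnegative integrand $V_\mu|\tilde\Phi_{\mu,-18}|^2$, and combines this with weak lower semicontinuity of the Dirichlet form to conclude $\tilde\lambda_1(3,-18)\ge\limsup\tilde\lambda_1(\mu,-18)$. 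Your shortcut is more elementary but does not generalize to $\omega\in(-18,-3]$, where $V_3(s)=\frac{2\omega+36}{15s^2}$ blows up at $s=0$ and the simple comparison $V_\mu\ge V_3$ fails; the paper's compactness approach, on the other hand, is exactly the template reused in Lemma~\ref{asymptotic behavior of tilde lambda1 mu-18-3 lemma} for general $\omega\in(-18,-3]$. The treatments of $\lim_{\mu\to\infty}$ and of $\partial_\mu\tilde\lambda_1(\mu,-18)<0$ are essentially identical to the paper's.
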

  \begin{proof}
  For $\Phi\in X_{\omega,\mu,e}$ and $\mu>3$, since
$
  \left|{-36+12\mu\over 15s^2-3+\mu}\Phi^2\right|\leq 12|\Phi|^2,
 $
  we have
  \begin{align*}
  \lim_{\mu\to3^+}\int_{-1}^1{-36+12\mu\over 15s^2-3+\mu}|\Phi|^2ds=0.
  \end{align*}
  Then by \eqref{def-tilde-lambda-n} we have
  \begin{align*}
  &\liminf_{\mu\to3^+}\tilde\lambda_1(\mu,-18)\\
  \geq&\liminf_{\mu\to3^+}\int_{-1}^1\left(-(1-s^2)|\tilde\Phi_{3,-18}'|^2-{4\over 1-s^2}|\tilde\Phi_{3,-18}|^2-{-36+12\mu\over 15s^2-3+\mu}|\tilde\Phi_{3,-18}|^2\right)ds\\
  =&\int_{-1}^1\left(-(1-s^2)|\tilde\Phi_{3,-18}'|^2-{4\over 1-s^2}|\tilde\Phi_{3,-18}|^2\right)ds=\tilde \lambda_1(3,-18)=-6,
  \end{align*}
  where we normalize  $\tilde\Phi_{3,-18}$ in \eqref{eigenfunction tilde lambda13-18} such that $\|\tilde\Phi_{3,-18}\|_{L^2(-1,1)}=1$.
 This implies that there exists $\delta>0$ such that
\begin{align*}
-{13\over2}<\tilde\lambda_1(\mu,\omega)<0,\quad \forall\; \mu\in(3,3+\delta).
\end{align*}
Since $\tilde\Phi_{\mu,-18}$ (which is a $L^2$ normalized eigenfunction of  $\tilde \lambda_1(\mu,-18)$)  solves
\eqref{Rayleigh-type equation lambda k=2 negative} with $\omega=-18$, $\tilde\lambda=\tilde\lambda_1(\mu,-18)$, we have
\begin{align*}
\int_{-1}^1\left((1-s^2)|\tilde\Phi_{\mu,-18}'|^2+{4\over 1-s^2}|\tilde\Phi_{\mu,-18}|^2+{-36+12\mu\over 15s^2-3+\mu}|\tilde\Phi_{\mu,-18}|^2\right) ds=-\tilde\lambda_1(\mu,-18).
\end{align*}
Noting that ${-36+12\mu\over 15s^2-3+\mu}>0$ for $\mu>3$, we have $
\int_{-1}^1\left((1-s^2)|\tilde\Phi_{\mu,-18}'|^2+{1\over 1-s^2}|\tilde\Phi_{\mu,-18}|^2\right) ds\leq C
$ uniformly for $\mu\in(3,3+\delta)$.
Then there exists $\tilde\Phi_0\in X_{3,-18,e}$ such that   $\tilde \Phi_{\mu,-18}\rightharpoonup\tilde \Phi_0$ in $X_{3,-18,e}$ and $\tilde \Phi_{\mu,-18}\to\tilde \Phi_0$ in $L^2(-1,1)$ by Lemma 2.4.6 in \cite{Skiba2017}. Thus, $\|\tilde\Phi_0\|_{L^2(-1,1)}=\lim_{\mu\to3^+}\|\tilde\Phi_{\mu,-18}\|_{L^2(-1,1)}=1$.
 Moreover, $\tilde \Phi_{\mu,-18}\to\tilde \Phi_0$
 in $C^0([a,b])$ by the compactness of $H^1([a,b])\hookrightarrow C^0([a,b])$ for any subinterval $[a,b]\subset(-1,1)$. Then
${-36+12\mu\over 15s^2-3+\mu}|\tilde\Phi_{\mu,-18}(s)|^2\to0$ for $[-1,1]\setminus\{0\}$ as $\mu\to3^+$.
By Fatou's Lemma, we have
\begin{align*}
0\leq \liminf_{\mu\to3^+}\int_{-1}^1{-36+12\mu\over 15s^2-3+\mu}|\tilde\Phi_{\mu,-18}(s)|^2 ds.
\end{align*}
Then
\begin{align*}
-6=\tilde\lambda_1(3,-18)\geq&\int_{-1}^1\left(-(1-s^2)|\tilde\Phi_{0}'|^2-{4\over 1-s^2}|\tilde \Phi_{0}|^2\right) ds\\
\geq&\limsup_{\mu\to-12^-}\int_{-1}^1\left(-(1-s^2)|\tilde\Phi_{\mu,-18}'|^2-{4\over 1-s^2}|\tilde\Phi_{\mu,-18}|^2-{-36+12\mu\over 15s^2-3+\mu}|\tilde\Phi_{\mu,-18}|^2\right) ds\\
=&\limsup_{\mu\to-12^-}\tilde\lambda_1(\mu,-18).
\end{align*}
This proves the first limit in \eqref{asymptotic behavior of tilde lambda1 mu-18}.

The proof of the second limit in \eqref{asymptotic behavior of tilde lambda1 mu-18} is  similar  to Lemma \ref{asymptotic behavior principal eigenvalue lim mu -infty}.

Direct computation implies that
\begin{align*}
\partial_\mu\tilde \lambda_1(\mu,-18)=-\int_{-1}^1{180s^2\over (15s^2-3+\mu)^2} |\tilde\Phi_{\mu,-18}|^2ds<0, \quad \mu\in(3,\infty),
\end{align*}
and thus, $\tilde \lambda_1(\cdot,-18)$ is decreasing on $[3,\infty)$.
  \end{proof}
For $\omega\in(-18,-3]$, we consider  the asymptotic behavior of $\tilde \lambda_1(\mu,\omega)$ as $\mu\to 3^+$ or $\mu\to\infty$.
  \begin{Lemma}\label{asymptotic behavior of tilde lambda1 mu-18-3 lemma}
  For $\omega\in(-18,-3]$, we have
  \begin{align}\label{upper bound for tilde lambda1}
  \tilde \lambda_1(\mu,\omega)<-6,\quad\mu\in(3,\infty),
  \end{align}
   and moreover,
  \begin{align}\label{limit tilde lambda 1 mu to 3 omega larger than -18}
  \lim_{\mu\to3^+}\tilde \lambda_1(\mu,\omega)=\tilde \lambda_1(3,\omega)\in[-20,-12),\quad \lim_{\mu\to\infty}\tilde \lambda_1(\mu,\omega)=-18.
  \end{align}
  \end{Lemma}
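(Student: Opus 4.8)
The plan is to run the variational machinery already used for Lemmas~\ref{continuity of the principal eigenvalue mu -12}, \ref{asymptotic behavior principal eigenvalue lim mu -infty} and \ref{asymptotic behavior of tilde lambda1 mu-18 lemma}; the only genuinely new feature is the $s^{-2}$-type singularity of the limiting potential at the endpoint $\mu=3$. First I would prove \eqref{upper bound for tilde lambda1}. Since $X_{\omega,\mu,e}\hookrightarrow L^2(-1,1)$ is compact, the supremum in \eqref{def-tilde-lambda-n} is attained at a normalized eigenfunction $\tilde\Phi_{\mu,\omega}$. For every even $\Phi$ one has $\int_{-1}^1\big((1-s^2)|\Phi'|^2+\tfrac{4}{1-s^2}|\Phi|^2\big)\,ds\ge 6\int_{-1}^1|\Phi|^2\,ds$, the least eigenvalue of $-\Delta_2$ on even functions being $6$ (attained only on multiples of $P_2^2\propto 1-s^2$). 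For $\omega\in(-18,-3]$ and $\mu>3$ the weight $\tfrac{2\omega+12\mu}{15s^2-3+\mu}$ is strictly positive on $(-1,1)$ ($2\omega+12\mu>0$ because $\mu>3\ge-\omega/6$, and $15s^2-3+\mu\ge\mu-3>0$), so \eqref{def-tilde-lambda-n} gives $\tilde\lambda_1(\mu,\omega)\le -6-\int_{-1}^1\tfrac{2\omega+12\mu}{15s^2-3+\mu}|\tilde\Phi_{\mu,\omega}|^2\,ds<-6$.

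For $\mu\to\infty$ I would repeat the proof of Lemma~\ref{asymptotic behavior principal eigenvalue lim mu -infty}: the weight $\tfrac{2\omega+12\mu}{15s^2-3+\mu}$ tends to $12$ uniformly on $[-1,1]$, so testing \eqref{def-tilde-lambda-n} with the fixed function $P_2^2/\|P_2^2\|_{L^2(-1,1)}$ yields $\liminf_{\mu\to\infty}\tilde\lambda_1(\mu,\omega)\ge-6-12=-18$, while the inequality $\tilde\lambda_1(\mu,\omega)\le-6-\int_{-1}^1\tfrac{2\omega+12\mu}{15s^2-3+\mu}|\tilde\Phi_{\mu,\omega}|^2\,ds$ together with this uniform convergence gives $\limsup_{\mu\to\infty}\tilde\lambda_1(\mu,\omega)\le-18$; hence $\lim_{\mu\to\infty}\tilde\lambda_1(\mu,\omega)=-18$.

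The substantive part is right-continuity at $\mu=3$, the value $\tilde\lambda_1(3,\omega)\in[-20,-12)$ being furnished by Lemma~\ref{principal eigenvalues mu=3}(i). For the lower bound I would test \eqref{def-tilde-lambda-n} against the explicit eigenfunction $\tilde\Phi_{3,\omega}(s)=|s|^{a}(1-s^2)$ with $a=\tfrac12\big(1+\sqrt{(8\omega+159)/15}\big)>1$ (so that $|\tilde\Phi_{3,\omega}|^2/s^2$ is integrable, and $\tilde\Phi_{3,\omega}\in X_{\omega,\mu,e}$ for $\mu>3$): the algebraic identity $\tfrac{2\omega+12\mu}{15s^2-3+\mu}-\tfrac{2\omega+36}{15s^2}=\tfrac{(\mu-3)(12(15s^2-3)-2\omega)}{(15s^2-3+\mu)\,15s^2}$ gives $\big|\big(\tfrac{2\omega+12\mu}{15s^2-3+\mu}-\tfrac{2\omega+36}{15s^2}\big)|\tilde\Phi_{3,\omega}|^2\big|\le\tfrac{C}{s^2}|\tilde\Phi_{3,\omega}|^2$ with $C$ independent of $\mu$, whence by dominated convergence $\int_{-1}^1\tfrac{2\omega+12\mu}{15s^2-3+\mu}|\tilde\Phi_{3,\omega}|^2\,ds\to\int_{-1}^1\tfrac{2\omega+36}{15s^2}|\tilde\Phi_{3,\omega}|^2\,ds$, so $\liminf_{\mu\to3^+}\tilde\lambda_1(\mu,\omega)\ge\tilde\lambda_1(3,\omega)$. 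For the matching upper bound I would argue exactly as in Lemma~\ref{continuity of the principal eigenvalue mu -12}: this lower bound and \eqref{upper bound for tilde lambda1} keep $-\tilde\lambda_1(\mu,\omega)$ bounded for $\mu$ near $3^+$, so (the weight being positive) $\|\tilde\Phi_{\mu,\omega}\|_{\tilde X}\le C$ uniformly; one extracts $\tilde\Phi_{\mu,\omega}\rightharpoonup\tilde\Phi_{*,\omega}$ in $\tilde X$, $\tilde\Phi_{\mu,\omega}\to\tilde\Phi_{*,\omega}$ in $L^2(-1,1)$ (so $\|\tilde\Phi_{*,\omega}\|_{L^2(-1,1)}=1$) and in $C^0$ on every compact subinterval of $(-1,1)$, and then weak lower semicontinuity of the Dirichlet part, Fatou's lemma for the nonnegative term $\tfrac{2\omega+12\mu}{15s^2-3+\mu}|\tilde\Phi_{\mu,\omega}|^2\to\tfrac{2\omega+36}{15s^2}|\tilde\Phi_{*,\omega}|^2$ (which in particular shows $\tilde\Phi_{*,\omega}\in X_{\omega,3,e}$), together with the variational definition of $\tilde\lambda_1(3,\omega)$, yield $\tilde\lambda_1(3,\omega)\ge\limsup_{\mu\to3^+}\tilde\lambda_1(\mu,\omega)$. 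The main obstacle is this last step — making sure the weak limit $\tilde\Phi_{*,\omega}$ still lies in the space $X_{\omega,3,e}$ carrying the singular weight $1/s^2$ — which the Fatou bound on the positive potential term resolves; combining all parts gives \eqref{limit tilde lambda 1 mu to 3 omega larger than -18}.
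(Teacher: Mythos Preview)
Your argument is correct and, for the right-continuity at $\mu=3$ and the limit $\mu\to\infty$, matches the paper's proof essentially line by line (dominated convergence against the explicit eigenfunction $\tilde\Phi_{3,\omega}$ for the $\liminf$, uniform $\tilde X$-bounds plus Fatou for the $\limsup$, and the uniform convergence of the potential to $12$ at infinity).

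For the strict bound \eqref{upper bound for tilde lambda1} your route is different and in fact more elementary. The paper introduces an auxiliary rotation rate $\omega_0=-6\mu<-18$, for which the potential vanishes identically so that $\tilde\lambda_1(\mu,\omega_0)=-6$ exactly, and then invokes the strict monotonicity $\partial_\omega\tilde\lambda_1(\mu,\cdot)<0$ to conclude $\tilde\lambda_1(\mu,\omega)<\tilde\lambda_1(\mu,\omega_0)=-6$ since $\omega>\omega_0$. Your argument bypasses this detour: you simply note that for $\omega>-18$ and $\mu>3$ the potential $\tfrac{2\omega+12\mu}{15s^2-3+\mu}$ is strictly positive (the minor quibble is that $-\omega/6<3$ strictly here, not $\le$), so plugging the normalized principal eigenfunction into the Rayleigh quotient and using the spectral bound $-\Delta_2\ge 6$ on even functions gives $\tilde\lambda_1(\mu,\omega)\le-6-\int\tfrac{2\omega+12\mu}{15s^2-3+\mu}|\tilde\Phi_{\mu,\omega}|^2\,ds<-6$ directly. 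Both arguments are short; yours avoids the $\omega$-derivative computation, while the paper's has the minor advantage of identifying the borderline case $\omega_0=-6\mu$ explicitly.
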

  \begin{proof}
  First, we prove \eqref{upper bound for tilde lambda1}. Let $\mu>3$ and $\omega_0=-6\mu$. Then $\omega_0<-18$ and \eqref{Rayleigh-type equation lambda k=2 negative} becomes
  \begin{align}\nonumber
 &((1-s^2)\Phi')'-{4\over 1-s^2}\Phi-{2\omega_0+12\mu\over 15s^2-3+\mu}\Phi\\\label{modified Rayleigh equation omega=-6 mu}
 =&((1-s^2)\Phi')'-{4\over 1-s^2}\Phi=\tilde \lambda\Phi, \quad
\Delta_2 \Phi\in L^2(-1,1).
 \end{align}
  Thus, the principal eigenvalue  of \eqref{modified Rayleigh equation omega=-6 mu} is  $\tilde \lambda_1(\mu,\omega_0)=-6$. For $\mu>3$, we have
  \begin{align*}
  \partial_\omega\tilde \lambda_1(\mu,\omega)=-\int_{-1}^1{2\over 15s^2-3+\mu}|\tilde\Phi_{\mu,\omega}|^2ds<0,
  \end{align*}
  and thus, $\tilde \lambda_1(\mu,\cdot)$ is decreasing on $\omega\in\mathbb{R}$, where $\tilde\Phi_{\mu,\omega}$ is a $L^2$ normalized eigenfunction of $\tilde \lambda_1(\mu,\omega)$.
  Since $\omega>-18>\omega_0$, we have $-6=\tilde \lambda_1(\mu,\omega_0)>\tilde \lambda_1(\mu,\omega)$. This proves  \eqref{upper bound for tilde lambda1}.

  Then we prove \eqref{limit tilde lambda 1 mu to 3 omega larger than -18}. Let us recall that $\tilde\Phi_{3,\omega}$ is given in \eqref{def-tilde-Phi-3-omega} and we normalize it such that $\|\tilde\Phi_{3,\omega}\|_{L^2(-1,1)}=1$. Then $0<\mu-3\leq 15s^2-3+\mu$ and
  \begin{align*}
  \left|{2\omega+12\mu\over 15s^2-3+\mu}|\tilde\Phi_{3,\omega}|^2-{2\omega+36\over 15s^2}|\tilde\Phi_{3,\omega}|^2\right|=\left|{(\mu-3)(12(15s^2-3)-2\omega)\over (15s^2-3+\mu)15s^2}|\tilde\Phi_{3,\omega}|^2\right|\leq {C\over s^2} |\tilde\Phi_{3,\omega}|^2,
  \end{align*}
  where $C$ is independent of $\mu\in(3,\infty)$.
Moreover, $\int_{-1}^1{1\over s^2} |\tilde\Phi_{3,\omega}|^2ds<\infty$ since ${1+\sqrt{{8\omega+159\over 15}}\over2}\in(1,2]$. Thus, $\lim_{\mu\to3^+}\int_{-1}^1{2\omega+12\mu\over 15s^2-3+\mu}|\tilde\Phi_{3,\omega}|^2ds=\int_{-1}^1{2\omega+36\over 15s^2}|\tilde\Phi_{3,\omega}|^2ds$ and
  \begin{align*}
  \tilde \lambda_1(\mu,\omega)\geq\int_{-1}^1\left(-(1-s^2)|\tilde\Phi_{3,\omega}'|^2-{4\over1-s^2}|\tilde\Phi_{3,\omega}|^2-{2\omega+12\mu\over 15s^2-3+\mu}|\tilde\Phi_{3,\omega}|^2\right)ds.
  \end{align*}
  Thus, $\liminf_{\mu\to3^+}\tilde \lambda_1(\mu,\omega)\geq\tilde \lambda_1(3,\omega)$.
  Choose $\delta>0$ such that $\tilde \lambda_1(3,\omega)-{1\over2}<\tilde\lambda_1(\mu,\omega)< -6$ for $\mu\in(3,3+\delta)$.
   Then
\begin{align*}
\int_{-1}^1\left((1-s^2)|\tilde\Phi_{\mu,\omega}'|^2+{4\over 1-s^2}|\tilde\Phi_{\mu,\omega}|^2+{2\omega+12\mu\over 15s^2-3+\mu}|\tilde\Phi_{\mu,\omega}|^2\right) ds=-\tilde\lambda_1(\mu,\omega)\leq C
\end{align*}
 uniformly for $\mu\in(3,3+\delta)$.
Then there exists $\tilde\Phi_{*,\omega}\in X_{3,\omega,e}$ such that   $\tilde \Phi_{\mu,\omega}\to\tilde \Phi_{*,\omega}$ in $L^2(-1,1)$ and $C^0([a,b])$ for any subinterval $[a,b]\subset(-1,1)$.
Thus, $\|\tilde\Phi_{*,\omega}\|_{L^2(-1,1)}=1$.
Then
${2\omega+12\mu\over 15s^2-3+\mu}|\tilde\Phi_{\mu,\omega}(s)|^2\to{2\omega+36\over 15s^2}|\tilde\Phi_{*,\omega}(s)|^2$ for $[-1,1]\setminus\{0\}$ as $\mu\to3^+$. By Fatou's Lemma, we have
\begin{align*}
\int_{-1}^1{2\omega+36\over 15s^2}|\tilde\Phi_{*,\omega}|^2 ds\leq \liminf_{\mu\to3^+}\int_{-1}^1{2\omega+12\mu\over 15s^2-3+\mu}|\tilde\Phi_{\mu,\omega}|^2 ds.
\end{align*}
Then
\begin{align*}
\tilde\lambda_1(3,\omega)\geq&\int_{-1}^1\left(-(1-s^2)|\tilde\Phi_{*,\omega}'|^2-{4\over 1-s^2}|\tilde\Phi_{*,\omega}|^2-{2\omega+36\over 15s^2}|\tilde\Phi_{*,\omega}|^2\right) ds\\
\geq&\limsup_{\mu\to3^+}\int_{-1}^1\left(-(1-s^2)|\tilde\Phi_{\mu,\omega}'|^2-{4\over 1-s^2}|\tilde\Phi_{\mu,\omega}|^2-{2\omega+12\mu\over 15s^2-3+\mu}|\tilde\Phi_{\mu,\omega}|^2\right) ds\\
=&\limsup_{\mu\to3^+}\tilde\lambda_1(\mu,\omega).
\end{align*}
The proof of the second limit in \eqref{limit tilde lambda 1 mu to 3 omega larger than -18} is similar to Lemma \ref{asymptotic behavior principal eigenvalue lim mu -infty}.
  \end{proof}
Now, we study the properties of the function
\begin{align*}
g(\omega)=\max_{\mu\in[3,183]}\tilde\lambda_1(\mu,\omega),\quad \omega\in[-18,-3].
\end{align*}
By Lemmas \ref{asymptotic behavior of tilde lambda1 mu-18 lemma}-\ref{asymptotic behavior of tilde lambda1 mu-18-3 lemma}, $g(\omega)$ is well-defined for $\omega\in[-18,-3]$. The definition of $g$ also relies on the following estimates of the principal eigenvalues.

\begin{Lemma}\label{principal eigenvalue mu larger than 183}
$({\rm i})$
 For $\mu>183$ and $\omega\in[-18,-3]$, we have
\begin{align*}
\tilde \lambda_1(\mu,\omega)<-17.
\end{align*}

$({\rm ii})$ For $\mu>33$ and $\omega\in[-18,-3]$, we have
\begin{align*}
\tilde \lambda_1(\mu,\omega)<-12.
\end{align*}

\end{Lemma}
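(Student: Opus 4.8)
The plan is to bound $\tilde\lambda_1(\mu,\omega)$ from above directly through its variational characterization \eqref{def-tilde-lambda-n} with $n=1$, estimating the two contributions in the numerator separately and uniformly in $\omega\in[-18,-3]$. For any $\Phi\in X_{\omega,\mu,e}$ with $\|\Phi\|_{L^2(-1,1)}=1$ the numerator of the Rayleigh quotient is
\begin{align*}
-\int_{-1}^1\Big((1-s^2)|\Phi'|^2+\frac{4}{1-s^2}|\Phi|^2\Big)\,ds-\int_{-1}^1\frac{2\omega+12\mu}{15s^2-3+\mu}|\Phi|^2\,ds ,
\end{align*}
and $\tilde\lambda_1(\mu,\omega)$ is the supremum of this expression over all such $\Phi$, so I would bound each integral from below.

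For the first integral, I would use that $6=2(2+1)$ is the bottom of the spectrum of the associated Legendre operator $-((1-s^2)\Phi')'+\frac{4}{1-s^2}\Phi$ on $(-1,1)$, whose eigenfunctions are $P_l^2$ ($l\geq2$) with eigenvalues $l(l+1)$; hence $\int_{-1}^1\big((1-s^2)|\Phi'|^2+\frac{4}{1-s^2}|\Phi|^2\big)\,ds\geq 6$ for every admissible $\Phi$. For the second integral, note that when $\mu>3$ one has $2\omega+12\mu\geq 12\mu-36>0$ for all $\omega\in[-18,-3]$, while $0<15s^2-3+\mu\leq 12+\mu$ for $s\in[-1,1]$, so
\begin{align*}
\frac{2\omega+12\mu}{15s^2-3+\mu}\geq\frac{12\mu-36}{\mu+12}=12-\frac{180}{\mu+12}
\end{align*}
uniformly in $s$ and $\omega$, giving $\int_{-1}^1\frac{2\omega+12\mu}{15s^2-3+\mu}|\Phi|^2\,ds\geq 12-\frac{180}{\mu+12}$. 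Combining the two estimates and taking the supremum over $\Phi$ yields
\begin{align*}
\tilde\lambda_1(\mu,\omega)\leq -6-\Big(12-\frac{180}{\mu+12}\Big)=-18+\frac{180}{\mu+12}\qquad(\mu>3,\ \omega\in[-18,-3]).
\end{align*}

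The two assertions then follow by elementary arithmetic: for $\mu>183$ one has $\mu+12>195$, hence $\frac{180}{\mu+12}<\frac{180}{195}<1$ and $\tilde\lambda_1(\mu,\omega)<-17$, which is (i); for $\mu>33$ one has $\mu+12>45$, hence $\frac{180}{\mu+12}<4$ and $\tilde\lambda_1(\mu,\omega)<-14<-12$, which is (ii). I do not expect a genuine obstacle here; the only points requiring minor care are that the coefficient $\frac{2\omega+12\mu}{15s^2-3+\mu}$ is positive only for $\mu>3$ (which is why both thresholds are well above $3$) and that the pointwise comparison must be carried out sharply — replacing $15s^2-3+\mu$ by its maximum $12+\mu$ and $2\omega+12\mu$ by its minimum $12\mu-36$ over $s\in[-1,1]$, $\omega\in[-18,-3]$ — so that the resulting bound $-18+\frac{180}{\mu+12}$ actually clears $-17$ at $\mu=183$ and $-12$ at $\mu=33$.
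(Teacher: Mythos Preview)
Your proof is correct and follows essentially the same approach as the paper: both split the Rayleigh quotient into the kinetic part (bounded below by $6$ via the bottom of the spectrum of the order-$2$ Legendre operator) and the potential part, then bound the potential $\frac{2\omega+12\mu}{15s^2-3+\mu}$ pointwise from below by a constant close to $12$ when $\mu$ is large. The paper writes the potential as $12+\frac{2\omega-12(15s^2-3)}{15s^2-3+\mu}$ and bounds the correction by $\frac{180}{\mu-3}$, whereas you bound numerator and denominator separately to get $12-\frac{180}{\mu+12}$; your estimate is marginally sharper, but the structure and the arithmetic thresholds ($\mu>183$ and $\mu>33$) work out identically.
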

\begin{proof}
 (i) There exists $\delta_\mu>0$ small enough such that
\begin{align*}
\left|{2\omega+12\mu\over 15s^2-3+\mu}-12\right|=\left|{2\omega-12(15s^2-3)\over 15s^2-3+\mu}\right|\leq{36+144\over \mu-3}<1-\delta_\mu, \quad s\in[-1,1]
\end{align*}
for $\mu>183$ and $\omega\in[-18,-3]$, which implies
\begin{align*}
-{2\omega+12\mu\over 15s^2-3+\mu}<-11-\delta_\mu,\quad s\in[-1,1].
\end{align*}
Thus,
\begin{align*}
\tilde \lambda_1 (\mu,\omega)=&
\sup_{\Phi \in X_{\omega,\mu,e},\|\Phi\|_{L^2(-1,1)}=1}\int_{-1}^1\left(-(1-s^2)|\Phi'|^2-{4\over 1-s^2}|\Phi|^2-{2\omega+12\mu\over 15s^2-3+\mu}|\Phi|^2\right) ds\\
\leq &\sup_{\Phi \in X_{\omega,\mu,e},\|\Phi\|_{L^2(-1,1)}=1}\int_{-1}^1\left(-(1-s^2)|\Phi'|^2-{4\over 1-s^2}|\Phi|^2\right) ds\\
&+\sup_{\Phi \in X_{\omega,\mu,e},\|\Phi\|_{L^2(-1,1)}=1}\int_{-1}^1\left(-{2\omega+12\mu\over 15s^2-3+\mu}|\Phi|^2 \right)ds\\
\leq& -6-11-\delta_\mu<-17
\end{align*}
for $\mu>183$ and $\omega\in[-18,-3]$.

 The proof of (ii) is similar  by observing that
there exists $\tilde \delta_\mu>0$ small enough such that
\begin{align*}
\left|{2\omega+12\mu\over 15s^2-3+\mu}-12\right|=\left|{2\omega-12(15s^2-3)\over 15s^2-3+\mu}\right|\leq{36+144\over \mu-3}<6-\tilde \delta_\mu
\end{align*}
for $\mu>33$ and $\omega\in[-18,-3]$.
\end{proof}
Similar to Lemma \ref{quadratic form-computation k=2}, we have the following result.
\begin{Lemma}\label{quadratic form-computation k=2 omega less than 3}
$(\rm{i})$ Let $\omega\leq -3$ and  $(c_{2},2,\omega,\Phi_{\mu_2,\omega})$ be
a  neutral mode, where $c_2\geq3+\omega$ and $\mu_2=-\omega+c_{2}$. Then $\tilde \lambda_{n_0}(\mu_2,\omega)=-12$
for  some $n_{0}\geq1$ and
\eqref{equality-L-form-derivative k=2} holds.

$(\rm{ii})$ Under the assumptions of $(\rm{i})$, if $c_2>3+\omega$ and $\|\Phi_{\mu_{2},\omega}\|_{L^2(-1,1)}=1$, then
\eqref{equality-L-form-derivative2 k=2} holds.
\end{Lemma}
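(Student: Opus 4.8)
The statement to prove is Lemma~\ref{quadratic form-computation k=2 omega less than 3}, which is an exact analogue of Lemma~\ref{quadratic form-computation k=2} with the inequalities $c_2 \le -12+\omega$ replaced by $c_2 \ge 3+\omega$ and $\omega \ge 12$ replaced by $\omega \le -3$. Accordingly, my plan is to mirror the proof of Lemma~\ref{quadratic form-computation k=2}, itself modeled on Lemma~\ref{quadratic form-computation}, checking only the two places where the sign change or the location of the singularity matters.

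\textbf{Part (i).} Let $(c_2,2,\omega,\Phi_{\mu_2,\omega})$ be a neutral mode with $c_2 \ge 3+\omega$, so $\mu_2 = -\omega+c_2 \ge 3$. From the definition of a neutral mode and \eqref{Rayleigh-type equation 12} (with $k=2$), $\Phi_{\mu_2,\omega}$ solves the Rayleigh equation with eigenvalue parameter $-12$, hence $\tilde\lambda_{n_0}(\mu_2,\omega) = -12$ for the $n_0$ with $\Phi_{\mu_2,\omega}$ in the $n_0$-th eigenspace; here one uses the compact embedding of $X_{\omega,\mu_2,e}$ in $L^2(-1,1)$ from Lemma~\ref{compact embedding}, valid since $\mu_2 \ge 3 > 0$ keeps $15s^2-3+\mu_2$ bounded away from $0$ except possibly at $s=0$ when $\mu_2=3$ — but in that boundary case the neutral mode is still well-defined because $\Delta_2\Phi \in L^2$ is part of the definition. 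Then, exactly as in the proof of Lemma~\ref{quadratic form-computation}(i): writing $\Upsilon_{\mu_2,\omega} = \Delta_2\Phi_{\mu_2,\omega}$, from $\Upsilon_{\mu_2,\omega} = \frac{\tilde\Upsilon_\omega'}{\tilde\Psi_\omega'+c_2}\Phi_{\mu_2,\omega}$ one obtains $\frac{\Phi_{\mu_2,\omega}}{\Upsilon_{\mu_2,\omega}} = -\frac{1}{12} - \frac{c_\omega - c_2}{\tilde\Upsilon_\omega'}$ using \eqref{c omega k}, whence $\frac{1}{12}\Upsilon_{\mu_2,\omega} + \Phi_{\mu_2,\omega} = -\frac{c_\omega-c_2}{\tilde\Upsilon_\omega'}\Upsilon_{\mu_2,\omega}$ and therefore $\langle L_2\Upsilon_{\mu_2,\omega},\Upsilon_{\mu_2,\omega}\rangle = (c_2-c_\omega)\int_{-1}^1 \frac{1}{\tilde\Upsilon_\omega'}\Upsilon_{\mu_2,\omega}^2\,ds = (c_2-c_\omega)\int_{-1}^1 \frac{\tilde\Upsilon_\omega'}{(\tilde\Psi_\omega'+c_2)^2}\Phi_{\mu_2,\omega}^2\,ds$, which by \eqref{def-tilde-Upsilon0-derivative} is exactly \eqref{equality-L-form-derivative k=2}. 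The only subtlety relative to $\omega \ge 12$: the integral $\int_{-1}^1 \frac{\tilde\Upsilon_\omega'}{(\tilde\Psi_\omega'+c_2)^2}\Phi_{\mu_2,\omega}^2\,ds$ converges because $\tilde\Upsilon_\omega' = -12(\tilde\Psi_\omega' + c_\omega)$ vanishes to first order at the zeros of $\tilde\Psi_\omega'+c_\omega$, but $\tilde\Psi_\omega'+c_2$ with $c_2 \ne c_\omega$ (for $c_2 = 3+\omega$ the only zero is $s=0$) could vanish — one must check, as in Theorem~\ref{Psi prime change sign c}, that $\Phi_{\mu_2,\omega}$ vanishes at such points fast enough that the integrand is locally $L^1$; this is guaranteed by $\Delta_2\Phi_{\mu_2,\omega} \in L^2$.

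\textbf{Part (ii).} Assume further $c_2 > 3+\omega$ and $\|\Phi_{\mu_2,\omega}\|_{L^2(-1,1)} = 1$. The strict inequality $\mu_2 = -\omega+c_2 > 3$ ensures $15s^2-3+\mu > 0$ on all of $[-1,1]$ for $\mu$ near $\mu_2$, so the potential $\frac{2\omega+12\mu}{15s^2-3+\mu}$ is smooth and bounded, the eigenfunctions $e^{i2\varphi}\Phi_{\mu,\omega}$ of \eqref{Rayleigh-type equation lambda k=2 negative} have a uniform $H^2_2(\mathbb{S}^2)$ bound for $|\mu-\mu_2|\ll 1$, and $\Phi_{\mu,\omega} \to \Phi_{\mu_2,\omega}$ in $L^2(-1,1)$ as $\mu\to\mu_2$ (via Lemma~\ref{lem-differential calculus}(ii)). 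Then the standard Feynman–Hellmann-type differentiation carried out in the proof of Lemma~\ref{quadratic form-computation}(ii) — subtract the two weak formulations at $\mu$ and $\mu_2$, divide by $\mu-\mu_2$, and pass to the limit — gives $\partial_\mu\tilde\lambda_{n_0}(\mu_2,\omega) = \int_{-1}^1 \frac{\tilde\Upsilon_\omega'}{(\tilde\Psi_\omega'+c_2)^2}|\Phi_{\mu_2,\omega}|^2\,ds$, and combining with \eqref{equality-L-form-derivative k=2} yields \eqref{equality-L-form-derivative2 k=2}. The main obstacle is not any single computation but making the convergence argument in (ii) rigorous at a neutral speed $c_2$ that may sit inside $\mathrm{Ran}(-\tilde\Psi_\omega')^\circ$ (so the limiting equation has a singularity at the two turning points $s_1,s_2$); however, Theorem~\ref{Psi prime change sign c} already rules out neutral modes with $c \in \mathrm{Ran}(-\tilde\Psi_\omega')^\circ$ other than $c = c_\omega$, and the hypothesis $c_2 > 3+\omega$ together with the companion analysis (Lemmas~\ref{tilde Psi0 does not change sign}, \ref{case-omega=12}) confines the relevant neutral speeds to $c_2 \in (3+\omega, 0]$ where $\tilde\Psi_\omega' + c_2$ does not vanish on $(-1,1)$, so the limiting problem is in fact regular and the convergence is routine. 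I would state this confinement explicitly at the start of the proof and then refer to the proof of Lemma~\ref{quadratic form-computation k=2}, indicating only the sign bookkeeping that differs.
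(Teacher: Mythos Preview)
Your proposal is correct and follows exactly the approach implicit in the paper, which simply states that the proof is similar to Lemma~\ref{quadratic form-computation k=2} (itself modeled on Lemma~\ref{quadratic form-computation}). Your closing discussion in part~(ii) about neutral speeds possibly lying in $\mathrm{Ran}(-\tilde\Psi_\omega')^\circ$ is unnecessary, since the hypothesis $c_2>3+\omega=\max(-\tilde\Psi_\omega')$ already places $c_2$ outside that range and hence $15s^2-3+\mu_2>0$ on $[-1,1]$; with that redundancy trimmed, the argument is clean.
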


The properties of the function $g$ are listed as follows.
\begin{Lemma}\label{properties of g}
$({\rm i})$ $g$ is decreasing  on $[-18,-3]$,

$({\rm ii})$ $g$ is continuous on $[-18,-3]$,

$({\rm iii})$  $g(-18)=-6$ and $g(-3)<-12$.\\
Consequently, there exists a unique $\omega_*\in(-18,-3)$ such that $\omega_*=g^{-1}(-12)$.
\end{Lemma}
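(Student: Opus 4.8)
\textbf{Proof proposal for Lemma \ref{properties of g}.}

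The plan is to establish (i), (ii), (iii) in order, since each part feeds into the next, and the final conclusion (existence and uniqueness of $\omega_*$) then follows by the intermediate value theorem combined with strict monotonicity.

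\emph{Part (i): monotonicity.} The key observation is that for each fixed $\mu\in[3,\infty)$ the map $\omega\mapsto\tilde\lambda_1(\mu,\omega)$ is strictly decreasing. This was already noted inside the proofs above: differentiating the variational characterization \eqref{def-tilde-lambda-n} and using $\tfrac{2}{15s^2-3+\mu}>0$ (valid since $\mu\geq3$, so $15s^2-3+\mu>0$ on $[-1,1]$) gives
\begin{align*}
\partial_\omega\tilde\lambda_1(\mu,\omega)=-\int_{-1}^1\frac{2}{15s^2-3+\mu}|\tilde\Phi_{\mu,\omega}|^2\,ds<0,
\end{align*}
where $\tilde\Phi_{\mu,\omega}$ is an $L^2$-normalized principal eigenfunction. (At the endpoint $\mu=3$ with $\omega>-18$ the singularity at $s=0$ is integrable, as used in Lemma \ref{asymptotic behavior of tilde lambda1 mu-18-3 lemma}; the case $\mu=3,\omega=-18$ is non-singular.) Now if $\omega<\omega'$ in $[-18,-3]$, then for every $\mu\in[3,183]$ we have $\tilde\lambda_1(\mu,\omega)>\tilde\lambda_1(\mu,\omega')$, hence taking the maximum over $\mu$ on both sides yields $g(\omega)\geq g(\omega')$; strictness follows because the maximizing $\mu$ for $\omega'$ gives a strict inequality at that point. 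Thus $g$ is strictly decreasing.

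\emph{Part (ii): continuity.} Here I would argue that $(\mu,\omega)\mapsto\tilde\lambda_1(\mu,\omega)$ is continuous on the compact rectangle $[3,183]\times[-18,-3]$; then $g$, being the maximum of a continuous function over the compact first factor, is continuous by the standard parametrized-maximum argument. Joint continuity away from $\mu=3$ follows from analytic/perturbation-theoretic stability of the isolated principal eigenvalue of the compact-resolvent operator in \eqref{Rayleigh-type equation lambda k=2 negative}, together with the compact embedding $X_{\omega,\mu,e}\hookrightarrow L^2$ (Lemma \ref{compact embedding}) which makes the form-domain vary continuously. The only delicate point is continuity up to the boundary $\mu=3$: for $\omega>-18$ the right-continuity $\lim_{\mu\to3^+}\tilde\lambda_1(\mu,\omega)=\tilde\lambda_1(3,\omega)$ is exactly Lemma \ref{asymptotic behavior of tilde lambda1 mu-18-3 lemma}, and for $\omega=-18$ it is Lemma \ref{asymptotic behavior of tilde lambda1 mu-18 lemma}; one must also check that these limits are attained uniformly in $\omega$ near a given base point, which follows from the dominated-convergence estimate $\big|\tfrac{2\omega+12\mu}{15s^2-3+\mu}-\tfrac{2\omega+36}{15s^2}\big||\Phi|^2\leq \tfrac{C}{s^2}|\Phi|^2$ with $C$ uniform in $(\mu,\omega)$, already appearing in those lemmas. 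Combining, $g$ is continuous on $[-18,-3]$.

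\emph{Part (iii) and conclusion.} For $g(-18)$: by Lemma \ref{principal eigenvalues mu=3}(ii), $\tilde\lambda_1(3,-18)=-6$, and by Lemma \ref{asymptotic behavior of tilde lambda1 mu-18 lemma} the function $\tilde\lambda_1(\cdot,-18)$ is strictly decreasing on $[3,\infty)$ with $\lim_{\mu\to\infty}\tilde\lambda_1(\mu,-18)=-18$; hence the maximum over $[3,183]$ is attained at $\mu=3$ and equals $-6$, so $g(-18)=-6$. For $g(-3)$: by Lemma \ref{principal eigenvalues mu=3}(i), $\tilde\lambda_1(3,-3)=-20<-12$; for $\mu\in(3,33]$ we use $\tilde\lambda_1(\mu,-3)<\tilde\lambda_1(\mu,-18)\leq\tilde\lambda_1(3,-18)=-6$... this bound is not sharp enough, so instead I would combine $\tilde\lambda_1(\mu,-3)<-6$ from \eqref{upper bound for tilde lambda1} with the sharper estimate: by Lemma \ref{principal eigenvalue mu larger than 183}(ii), $\tilde\lambda_1(\mu,-3)<-12$ for $\mu>33$, and for $\mu\in[3,33]$ a direct monotonicity/quantitative argument (e.g. testing the Rayleigh quotient, or using $\partial_\mu\tilde\lambda_1<0$ which forces $\tilde\lambda_1(\mu,-3)\leq\tilde\lambda_1(3,-3)=-20$ once we know $\tilde\lambda_1(\cdot,-3)$ is decreasing) gives $\tilde\lambda_1(\mu,-3)<-12$ throughout. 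Hence $g(-3)<-12$. Finally, since $g$ is continuous and strictly decreasing on $[-18,-3]$ with $g(-18)=-6>-12>g(-3)$, the intermediate value theorem gives a unique $\omega_*\in(-18,-3)$ with $g(\omega_*)=-12$, i.e. $\omega_*=g^{-1}(-12)$.

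The main obstacle I expect is \emph{Part (ii)}, specifically establishing joint continuity of $\tilde\lambda_1(\mu,\omega)$ at the singular boundary $\mu=3$ \emph{uniformly} in $\omega$ (so that the maximum-function $g$ inherits continuity there), since the eigenvalue problem degenerates — the potential $\tfrac{2\omega+12\mu}{15s^2-3+\mu}$ develops a $1/s^2$ singularity as $\mu\to3^+$ — and one must carefully control the form-domains $X_{\omega,\mu,e}$, which shrink in the limit. A secondary technical point is the monotonicity of $\tilde\lambda_1(\cdot,-3)$ needed to pin down $g(-3)<-12$ without appealing to the (not-yet-established) decreasing behavior in $\omega$; using $\partial_\mu\tilde\lambda_1(\mu,\omega)=-\int_{-1}^1\tfrac{180s^2}{(15s^2-3+\mu)^2}|\tilde\Phi_{\mu,\omega}|^2\,ds<0$ (the analogue of the computation in Lemma \ref{asymptotic behavior of tilde lambda1 mu-18 lemma}) resolves this cleanly.
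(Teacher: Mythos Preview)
Your overall strategy for (i) and the final conclusion is correct and matches the paper. However, there are two genuine gaps.

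\textbf{Part (ii), continuity at $\omega=-18$.} You propose to establish joint continuity of $(\mu,\omega)\mapsto\tilde\lambda_1(\mu,\omega)$ on $[3,183]\times[-18,-3]$ and then deduce continuity of $g$ as a parametrized maximum. But joint continuity \emph{fails} at the corner $(3,-18)$: Lemma \ref{principal eigenvalues mu=3} records the lift-up jump $\tilde\lambda_1(3,-18)=-6$ while $\lim_{\omega\to-18^+}\tilde\lambda_1(3,\omega)=-12$. So the uniformity you hope for cannot hold near $\omega=-18$. The paper treats $\omega_0=-18$ separately: for small $\varepsilon>0$ it picks $\mu_0>3$ close to $3$ with $\tilde\lambda_1(\mu_0,-18)>-6-\varepsilon/2$ (possible by Lemma \ref{asymptotic behavior of tilde lambda1 mu-18 lemma}), then uses continuity of $\tilde\lambda_1(\mu_0,\cdot)$ in $\omega$ (no singularity since $\mu_0>3$) together with the already-established monotonicity of $g$ to squeeze $g(\omega)$ between $-6-\varepsilon$ and $-6$ for $\omega$ near $-18$.

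\textbf{Part (iii), the bound $g(-3)<-12$.} Your proposed argument relies on $\tilde\lambda_1(\cdot,-3)$ being decreasing in $\mu$, justified via $\partial_\mu\tilde\lambda_1(\mu,\omega)=-\int_{-1}^1\tfrac{180s^2}{(15s^2-3+\mu)^2}|\tilde\Phi_{\mu,\omega}|^2\,ds<0$. But that formula is specific to $\omega=-18$; in general (cf.\ \eqref{comp-eigenvalue derivative} and Lemma \ref{quadratic form-computation k=2 omega less than 3}) one has
\[
\partial_\mu\tilde\lambda_1(\mu,\omega)=\int_{-1}^1\frac{-180s^2+36+2\omega}{(15s^2-3+\mu)^2}\,|\tilde\Phi_{\mu,\omega}|^2\,ds,
\]
and for $\omega=-3$ the numerator $-180s^2+30$ changes sign on $(-1,1)$, so no sign conclusion follows. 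In fact $\tilde\lambda_1(\cdot,-3)$ is \emph{not} monotone (see the green curves in Fig.~\ref{fig-eigenvalue curves2}): it increases from $-20$ at $\mu=3$ to a maximum before decreasing toward $-18$. The paper's argument is entirely different: by Lemma \ref{case-omega=12}(ii) there are no neutral modes $(c,2,-3,\Phi)$ with $c>0$, hence $\tilde\lambda_1(\mu,-3)\neq-12$ for all $\mu>3$; combined with $\lim_{\mu\to\infty}\tilde\lambda_1(\mu,-3)=-18$ and continuity in $\mu$, this forces $\tilde\lambda_1(\mu,-3)<-12$ on $(3,\infty)$, and the right-continuity at $\mu=3$ together with $\tilde\lambda_1(3,-3)=-20$ then gives $g(-3)<-12$.
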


\begin{proof}
(i) Let $-18\leq\omega_2<\omega_1\leq -3$. There exists $\mu_1\in[3,183]$ such that $g(\omega_1)=\tilde \lambda_1(\mu_1,\omega_1)$.
Note that if $\mu_1\in(3,183]$, then
  \begin{align*}
  \partial_\omega\tilde \lambda_1(\mu_1,\omega)=-\int_{-1}^1{2\over 15s^2-3+\mu_1}|\tilde\Phi_{\mu_1,\omega}|^2ds<0, \quad\omega\in[-18,-3],
  \end{align*}
  and thus, $\tilde \lambda_1(\mu_1,\cdot)$ is decreasing on $\omega\in [-18,-3]$, where $\tilde\Phi_{\mu_1,\omega}$ is a $L^2$ normalized eigenfunction of $\tilde \lambda_1(\mu_1,\omega)$. If $\mu_1=3$, then by Lemma \ref{principal eigenvalues mu=3}, $\tilde \lambda_1(\mu_1,\cdot)$ is also decreasing on $\omega\in [-18,-3]$.
Then $g(\omega_1)=\tilde \lambda_1(\mu_1,\omega_1)<\tilde \lambda_1(\mu_1,\omega_2)\leq g(\omega_2)$.

  (ii) By a similar argument to the first limit in \eqref{limit tilde lambda 1 mu to 3 omega larger than -18}, we have
  $ \lim_{(\mu,\omega)\to(3,\omega_0)}\tilde \lambda_1(\mu,\omega)=\tilde \lambda_1(3,\omega_0) $ for $\omega_0\in(-18,-3]$, where $\mu>3$.
  This implies that $\tilde \lambda_1$ is continuous on $(\mu,\omega)\in[3,183]\times (-18,-3]$.

  First, we prove that $g$ is continuous at $\omega_0\in(-18,-3]$. Choose $\delta_1>0$ such that $\omega_0\in(-18+\delta_1,-3]$. Then $\tilde \lambda_1$ is uniformly continuous on $(\mu,\omega)\in[3,183]\times [-18+\delta_1,-3]$. For any $\varepsilon>0$, there exists $\delta_2>0$ such that $|\tilde \lambda_1(\mu_1,\omega_1)-\tilde \lambda_1(\mu_2,\omega_2)|\leq \varepsilon$ for any $(\mu_1,\omega_1),(\mu_2,\omega_2)\in[3,183]\times [-18+\delta_1,-3]$ and $|\mu_1-\mu_2|+|\omega_1-\omega_2|<\delta_2$.
  Then for $\omega\in[-18+\delta_1,-3]$, $|\omega-\omega_0|<\delta_2$ and $\omega>\omega_0$, by the monotonicity of $g$ we have
  \begin{align*}
  0< g(\omega_0)-g(\omega)=\tilde \lambda_1(\mu_{\omega_0},\omega_0)-g(\omega)\leq \tilde \lambda_1(\mu_{\omega_0},\omega_0)-\tilde \lambda_1(\mu_{\omega_0},\omega)\leq \varepsilon,
  \end{align*}
  where $\mu_{\omega_0}\in[3,183]$, $g(\omega_0)= \tilde \lambda_1(\mu_{\omega_0},\omega_0)$, and we use $g(\omega)\geq\tilde \lambda_1(\mu_{\omega_0},\omega)$. For  $\omega\in[-18+\delta_1,-3]$, $|\omega-\omega_0|<\delta_2$ and $\omega<\omega_0$, we have
  \begin{align*}
  0< g(\omega)-g(\omega_0)=\tilde \lambda_1(\mu_\omega,\omega)-g(\omega_0)\leq \tilde \lambda_1(\mu_{\omega},\omega)-\tilde \lambda_1(\mu_{\omega},\omega_0)\leq \varepsilon,
  \end{align*}
   where $\mu_{\omega}\in[3,183]$, $g(\omega)= \tilde \lambda_1(\mu_{\omega},\omega)$, and we use the uniform continuity of $\tilde \lambda_1$ on $(\mu,\omega)\in[3,183]\times [-18+\delta_1,-3]$. This proves the continuity of $g$ on $(-18,-3]$.

   Then we prove that $g$ is continuous at $\omega_0=-18$. By Lemma \ref{asymptotic behavior of tilde lambda1 mu-18 lemma}, $\tilde \lambda_1(\cdot,-18)$ is decreasing and continuous on $\mu\in[3,\infty)$. For any $\varepsilon>0$, there exists $\mu_0\in(3,183)$ such that
   \begin{align*}
   0<-6-\tilde \lambda_1(\mu_0,-18)\leq {\varepsilon\over2}.
    \end{align*}
    Note that $\tilde \lambda_1(\mu_0,\cdot)$ is continuous on $\omega\in\mathbb{R}$. Then there exists $\omega_1\in(-18,-3)$ such that
    \begin{align*}|\tilde \lambda_1(\mu_0,-18)-\tilde \lambda_1(\mu_0,\omega_1)|\leq {\varepsilon\over 2}.\end{align*}
Thus,
\begin{align*}
   0<-6-\tilde \lambda_1(\mu_0,\omega_1)\leq {\varepsilon},
    \end{align*}
where we use  $\tilde \lambda_1(\mu_0,\omega_1)<-6$  by Lemma \ref{asymptotic behavior of tilde lambda1 mu-18-3 lemma}. Thus,
    \begin{align*}
   0<-6-g(\omega_1)\leq {\varepsilon},
    \end{align*}
where we use the monotonicity of $g$ and $g(-18)=-6$ (see (iii)). Let $\tilde\delta_1=\omega_1+18>0$. By the monotonicity of $g$ again, we have
\begin{align*}
0<-6-g(\omega)<-6-g(\omega_1)\leq {\varepsilon}
\end{align*}
for $-18<\omega<\omega_1\Longleftrightarrow0<\omega+18<\tilde\delta_1=\omega_1+18$. This proves the right continuity of $g$ at $-18$.

Finally, we prove (iii). By Lemma \ref{asymptotic behavior of tilde lambda1 mu-18 lemma}, we have $g(-18)=\tilde \lambda_1(3,-18)=-6$. To prove that $g(-3)<-12$, we infer from Lemma \ref{case-omega=12} (ii) that there are no neutral modes $(c,2,\omega,\Phi)$ with $c\in(0,\infty)$ and $\omega=-3$. Thus, $\tilde \lambda_1(\mu,-3)\neq-12$ for $\mu=3+c\in(3,\infty)$. Since $\lim_{\mu\to\infty}\tilde \lambda_1(\mu,-3)=-18$ by Lemma \ref{asymptotic behavior of tilde lambda1 mu-18-3 lemma}, we have
$\tilde \lambda_1(\mu,-3)<-12$ for $\mu\in(3,\infty)$. By Lemma \ref{principal eigenvalues mu=3}, we have $\tilde \lambda_1(3,-3)=-20$. Moreover,
$\lim_{\mu\to3^+}\tilde \lambda_1(\mu,-3)=\tilde \lambda_1(3,-3)=-20$ by Lemma \ref{asymptotic behavior of tilde lambda1 mu-18-3 lemma}. Thus, $g(-3)=\max_{\mu\in[3,183]}\tilde\lambda_1(\mu,-3)<-12$.
\end{proof}

Now, we are in a  position to prove Theorem \ref{k=2 negative half-line}.

\begin{proof}[Proof of Theorem \ref{k=2 negative half-line}]
Let $\omega\in\left(-18,g^{-1}(-12)\right]$. Then $g(\omega)\geq-12$ by Lemma \ref{properties of g} (i). There exists $\mu_\omega\in[3,183]$ such that $g(\omega)=\tilde \lambda_1(\mu_\omega,\omega)\geq-12$. Moreover, $\tilde \lambda_1(183,\omega)\leq -17$ by Lemma \ref{principal eigenvalue mu larger than 183} (i). Thus, there exists $ \mu_{2,\omega}\in[\mu_\omega,183]$ such that
\begin{align}\label{tilde-lambda-1-mu_2-omega}
\tilde \lambda_1(  \mu_{2,\omega},\omega)=-12 \quad \text{and}\quad \partial_\mu\tilde\lambda_1(  \mu_{2,\omega},\omega)\leq0.
\end{align}
For $\omega\in(-18,g^{-1}(-12))$, see the blue eigenvalue curves in Fig. \ref{fig-eigenvalue curves2}, where the brown bold points are $(\mu_{2,\omega},\tilde \lambda_1(  \mu_{2,\omega},\omega))$. For $\omega=g^{-1}(-12)$, see the red eigenvalue curve in Fig. \ref{fig-eigenvalue curves2}, where the red bold point is $(\mu_{2,\omega},\tilde \lambda_1(  \mu_{2,\omega},\omega))$ and $\mu_{2,\omega}=\mu_\omega$.
Let $c_{2,\omega}=\omega+ \mu_{2,\omega}$. Then $ c_{2,\omega}-c_\omega=\omega+ \mu_{2,\omega}-{5\over6}\omega={1\over 6}\omega+\mu_{2,\omega}>-3+3=0.$
Let $\Phi_{ \mu_{2,\omega},\omega,2}$ be a $L^2$ normalized eigenfunction of $\tilde \lambda_1( \mu_{2,\omega},\omega)$ and $\Upsilon_{ \mu_{2,\omega},\omega,2}=\Delta\Phi_{\mu_{2,\omega},\omega,2}$. By Lemma \ref{quadratic form-computation k=2 omega less than 3} (ii), we have
\begin{equation*}
\langle L_{2}\Upsilon_{ \mu_{2,\omega},\omega,2},\Upsilon_{ \mu_{2,\omega},\omega,2}\rangle
=( c_{2,\omega}-c_\omega)\partial_\mu \tilde \lambda_{1}( \mu_{2,\omega},\omega)\leq 0.
\end{equation*}
Thus, $k_{i,J_{\omega,2}L_2|_{X_e^2}}^{\leq0}=1$. By the index formula
\eqref{index formula 1o2}, the $3$-jet is
 spectrally stable    for $\omega\in\left(-18,g^{-1}(-12)\right]$.

 Let $\omega\in\left(g^{-1}(-12),-3\right]$. Then $g(\omega)<-12$ by Lemma \ref{properties of g} (i). Thus,  $\tilde \lambda_1(\mu,\omega)<-12$ for $\mu\in[3,183]$. This, along with Lemma \ref{principal eigenvalue mu larger than 183} (i), implies that
 \begin{align*}
 \tilde \lambda_1(\mu,\omega)<-12, \quad\forall\;\;\; \mu\in[3,\infty).
 \end{align*}
 See the green eigenvalue curves in Fig. \ref{fig-eigenvalue curves2}.

\begin{center}
 \begin{tikzpicture}[scale=0.58]
 \draw [->](-1, 0)--(15, 0)node[right]{$\mu$};
 \draw [->](0,-12)--(0,2) node[above]{$\tilde\lambda_1(\mu,\omega)$};
\draw [black] (1.58, -3.16).. controls (4, -6.8) and (12, -9)..(15,-9.1);
\draw [blue] (1.58, -6.7).. controls (1.6, -0.95) and (4, -9)..(15,-9.15);
\draw [blue] (1.58, -7.4).. controls (2.5, -1.5) and (4, -9.15)..(15,-9.21);
\draw [blue] (1.58, -7.95).. controls (2.9, -2) and (4, -9.24)..(15,-9.26);
\draw [red] (1.58, -8.4).. controls (2.8, -3.25) and (4.9, -9.3)..(15,-9.32);
\draw [green] (1.58, -9).. controls (3.2, -3.8) and (5.3, -9.37)..(15,-9.38);
\draw [green] (1.58, -9.5).. controls (3.6, -4.5) and (5.8, -9.43)..(15,-9.44);
\draw [green] (1.58, -10).. controls (4, -5) and (6.2, -9.46)..(15,-9.47);
\node[brown] (a) at (5.15,-6.32) {$\bullet$};
\node[brown] (a) at (4.84,-6.32) {$\bullet$};
\node[brown] (a) at (4.5,-6.32) {$\bullet$};
\node[red] (a) at (3.08,-6.32) {$\bullet$};
\node[blue] (a) at (2.1,-6.32) {$\bullet$};
\node[blue] (a) at (1.84,-6.32) {$\bullet$};
\node[blue] (a) at (1.58,-6.32) {$\bullet$};
\path   (0, -3.16)  edge [-,dotted](15,-3.16) [line width=0.4pt];
\path   (1.58, 2)  edge [-,dotted](1.58,-12) [line width=0.4pt];
\path   (0, -6.32)  edge [-,dotted](15,-6.32) [line width=0.4pt];
\path   (0, -9.48)  edge [-,dotted](15,-9.48) [line width=0.4pt];
         \node (a) at (-0.5,-3.16) {\tiny$-6$};
        \node (a) at (-0.5,-6.32) {\tiny$-12$};
        \node (a) at (-0.5,-9.48) {\tiny$-18$};
        \node (a) at (1.72,0.2) {\tiny$3$};
 \end{tikzpicture}
\end{center}\vspace{-0.5cm}
\begin{figure}[ht]
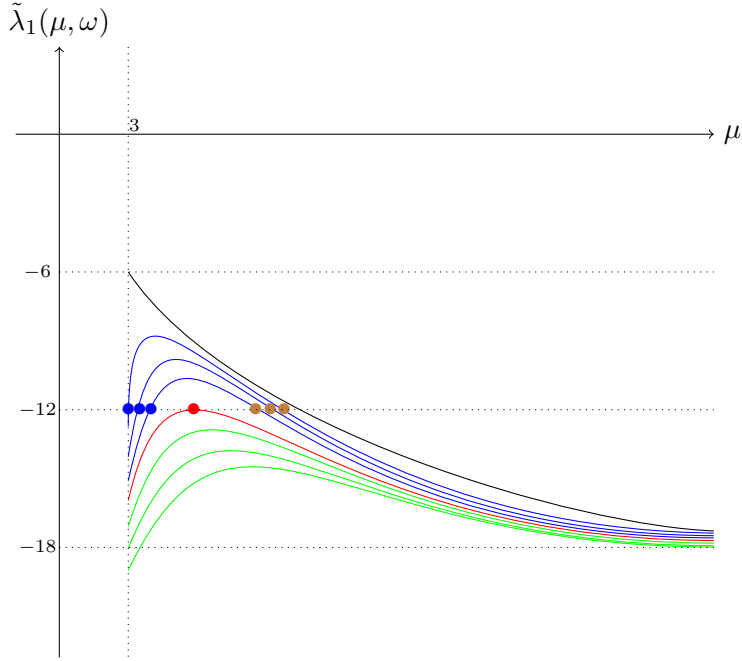

    \centering
 \caption{
The black eigenvalue curve is $\tilde\lambda_1(\cdot,\omega)$ with $\omega=-18$, the red eigenvalue curve is $\tilde\lambda_1(\cdot,\omega)$ with $\omega=g^{-1}(-12)$, the blue eigenvalue curves are $\tilde\lambda_1(\cdot,\omega)$ with $\omega\in(-18,g^{-1}(-12))$, and the green eigenvalue curves are $\tilde\lambda_1(\cdot,\omega)$ with $\omega\in(g^{-1}(-12),-3]$. The brown bold points and the blue bold points are $(\mu_{2,\omega},\tilde\lambda_1(\mu_{2,\omega},\omega))$ and $(\mu_{3,\omega},\tilde\lambda_1(\mu_{3,\omega},\omega))$ for different $\omega\in(-18,g^{-1}(-12))$ in Corollary \ref{uniqueness4}. The red bold point is $(\mu_{g^{-1}(-12)},\tilde\lambda_1(  \mu_{g^{-1}(-12)},g^{-1}(-12)))$.
}
\label{fig-eigenvalue curves2}
\end{figure}
\vspace{-0.2cm}
 Then there exist no neutral modes $(c,2,\omega,\Phi)$ with $c\geq3+\omega$. By  Lemma \ref{tilde Psi0 does not change sign} (1) for $\omega\in\left(g^{-1}(-12),-3\right)$ and Lemma \ref{case-omega=12} (ii) for $\omega=-3$, we have $c\in\text{Ran} (-\tilde \Psi_{\omega}')^\circ$ for any neutral mode $(c,2,\omega,\Phi)$. Then   $c=c_\omega$ by Theorem \ref{Psi prime change sign c}. Thus, $k_{i,J_{\omega,2}L_2|_{X_e^2}}^{\leq0}=0$. By Lemma \ref{kernel JL}, we have $k_{0,J_{\omega,2}L_2|_{X_e^2}}^{\leq0}=0$.
By the index formula \eqref{index formula 1o2}, we have
$ k_{c,J_{\omega,2}L_2|_{X_e^2}}+ k_{r,J_{\omega,2}L_2|_{X_e^2}}=1$. This proves  linear instability of the $3$-jet for $\omega\in\left(g^{-1}(-12),-3\right]$.
\end{proof}
\begin{Corollary}\label{uniqueness4}
Let $\omega\in(-18,$ $g^{-1}(-12))$ and $k=2$. Then there exist
exactly two $\mu_{2,\omega},\mu_{3,\omega}\in(3,\infty)$ such that $\mu_{2,\omega}\neq\mu_{3,\omega}$ and  $(c_{j,\omega},2,\omega,\Phi_{\mu_{j,\omega},\omega,2})$ is a  neutral mode, where $c_{j,\omega}=\mu_{j,\omega}+\omega$,  $j=2,3$.
Moreover,
\begin{align}\label{L-signature23}
\langle L_{2}\Upsilon_{ \mu_{2,\omega},\omega,2},\Upsilon_{ \mu_{2,\omega},\omega,2}\rangle\leq 0,\quad\langle L_{2}\Upsilon_{ \mu_{3,\omega},\omega,2},\Upsilon_{ \mu_{3,\omega},\omega,2}\rangle> 0
\end{align}
where $\Upsilon_{ \mu_{j,\omega},\omega,2}=\Delta\Phi_{\mu_{j,\omega},\omega,2}$.
\end{Corollary}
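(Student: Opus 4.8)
The plan is to follow the template of Corollaries \ref{uniqueness1}--\ref{uniqueness3}, but now to exhibit \emph{two} neutral modes with opposite energy signs by reading everything off the graph of the principal eigenvalue curve $\tilde\lambda_1(\cdot,\omega)$ on $[3,\infty)$ for the problem \eqref{Rayleigh-type equation lambda k=2 negative}. Fix $\omega\in(-18,g^{-1}(-12))$. First I would pin down where neutral modes can live: by Lemma \ref{tilde Psi0 does not change sign}(1) every neutral mode $(c,2,\omega,\Phi)$ has $c\in(-12+\omega,0]$, and by Theorem \ref{Psi prime change sign c} the resonant ones have $c=c_\omega$, which by \eqref{c omega k} corresponds to $\mu=-\omega+c_\omega=-\omega/6<3$ (with the \emph{odd} eigenfunction $P_3^2$). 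Since $\tilde\lambda_1(3,\omega)<-12$ by Lemma \ref{principal eigenvalues mu=3}(i), $-12$ is not an eigenvalue at $\mu=3$, so every neutral mode with $c\neq c_\omega$ has $\mu=-\omega+c\in(3,-\omega]$. On that range the potential $\tfrac{2\omega+12\mu}{15s^2-3+\mu}$ is strictly positive (because $\mu>3>-\omega/6$), so by eigenvalue monotonicity under a positive potential, together with $\sigma(\Delta_2)=\{-l(l+1):l\ge2\}$ (even eigenfunctions for $l$ even, odd for $l$ odd), one gets $\tilde\lambda_n(\mu,\omega)\le-20$ for $n\ge2$ and the top \emph{odd} eigenvalue strictly below $-12$; hence every neutral mode with $c\neq c_\omega$ is even and comes from $\tilde\lambda_1(\mu,\omega)=-12$ with $\mu\in(3,-\omega]$.

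Next I would locate the two crossings of $\tilde\lambda_1(\cdot,\omega)$ with the level $-12$. One of them, $\mu_{2,\omega}$, is exactly the mode produced in the proof of Theorem \ref{k=2 negative half-line}: it lies in $[\mu_\omega,183]$ (where $g(\omega)=\tilde\lambda_1(\mu_\omega,\omega)>-12$, strict by Lemma \ref{properties of g}(i)), satisfies $\tilde\lambda_1(\mu_{2,\omega},\omega)=-12$, and $\partial_\mu\tilde\lambda_1(\mu_{2,\omega},\omega)\le0$. For the other, $\tilde\lambda_1(\cdot,\omega)$ is $C^1$ on $(3,\infty)$ and continuous up to $\mu=3$ (Lemma \ref{asymptotic behavior of tilde lambda1 mu-18-3 lemma}) with $\tilde\lambda_1(3,\omega)<-12<\tilde\lambda_1(\mu_\omega,\omega)$, so $\mu_{3,\omega}:=\inf\{\mu>3:\tilde\lambda_1(\mu,\omega)\ge-12\}$ satisfies $3<\mu_{3,\omega}<\mu_\omega\le\mu_{2,\omega}$, $\tilde\lambda_1(\mu_{3,\omega},\omega)=-12$ and $\partial_\mu\tilde\lambda_1(\mu_{3,\omega},\omega)\ge0$. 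Both $\mu_{j,\omega}\in(3,-\omega]$, so for $c_{j,\omega}=\mu_{j,\omega}+\omega$ we have $c_{j,\omega}-c_\omega=\mu_{j,\omega}+\tfrac{\omega}{6}>3+\tfrac{-18}{6}=0$, and \eqref{equality-L-form-derivative2 k=2} of Lemma \ref{quadratic form-computation k=2 omega less than 3}(ii) gives $\langle L_2\Upsilon_{\mu_{j,\omega},\omega,2},\Upsilon_{\mu_{j,\omega},\omega,2}\rangle=(c_{j,\omega}-c_\omega)\,\partial_\mu\tilde\lambda_1(\mu_{j,\omega},\omega)$, which is $\le0$ for $j=2$.

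Finally I would use the index formula to sharpen the $j=3$ sign and to establish ``exactly two''. Since $\omega$ lies in the spectrally stable range (Theorem \ref{k=2 negative half-line}), $k_{c,J_{\omega,2}L_2|_{X_e^2}}=k_{r,J_{\omega,2}L_2|_{X_e^2}}=0$, and $k_{0,J_{\omega,2}L_2|_{X_e^2}}^{\le0}=0$ by Lemma \ref{kernel JL}, so \eqref{index formula 1o2} forces $k_{i,J_{\omega,2}L_2|_{X_e^2}}^{\le0}=1$. Each even neutral mode with $c\neq c_\omega$ gives a distinct nonzero purely imaginary eigenvalue $-2i(c-c_\omega)$ of $J_{\omega,2}L_2|_{X_e^2}$ whose contribution to $k_i^{\le0}$ is $\ge1$ when $\langle L_2\Upsilon,\Upsilon\rangle\le0$ and $0$ when $\langle L_2\Upsilon,\Upsilon\rangle>0$; as $\mu_{2,\omega}$ already accounts for a contribution $1$, every other such mode must have strictly positive energy form, so in particular $\langle L_2\Upsilon_{\mu_{3,\omega},\omega,2},\Upsilon_{\mu_{3,\omega},\omega,2}\rangle>0$, hence $\partial_\mu\tilde\lambda_1(\mu_{3,\omega},\omega)>0$; this proves \eqref{L-signature23}. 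The same count forces $\partial_\mu\tilde\lambda_1\neq0$ at every zero of $\tilde\lambda_1(\cdot,\omega)+12$, so these zeros are isolated (hence finite on $(3,33]$), and since $\tilde\lambda_1(\mu,\omega)<-12$ both for $\mu$ near $3^+$ and for $\mu>33$ (Lemma \ref{principal eigenvalue mu larger than 183}(ii), and $\tilde\lambda_1\to-18$ as $\mu\to\infty$), the crossings alternate, beginning with an up-crossing and ending with a down-crossing; the number of down-crossings equals $k_i^{\le0}=1$, so there are precisely two, namely $\mu_{3,\omega}<\mu_{2,\omega}$. I expect the last bookkeeping step to be the main obstacle: one must verify that a zero of $\tilde\lambda_1(\cdot,\omega)+12$ with vanishing $\mu$-derivative — including a possible Jordan block of the associated imaginary eigenvalue — still contributes at least $1$ to $k_i^{\le0}$, and one must handle the singular endpoint $\mu=3$ so that it contributes no spurious neutral mode.
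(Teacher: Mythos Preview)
Your proposal is correct and follows essentially the paper's approach: locate the two crossings of $\tilde\lambda_1(\cdot,\omega)$ with $-12$ via the intermediate value theorem (using Lemmas~\ref{principal eigenvalues mu=3} and~\ref{asymptotic behavior of tilde lambda1 mu-18-3 lemma} together with the construction in the proof of Theorem~\ref{k=2 negative half-line}), read off the energy signs from Lemma~\ref{quadratic form-computation k=2 omega less than 3}, and invoke the index formula~\eqref{index formula 1o2} to force strict positivity at $\mu_{3,\omega}$ and to rule out a third crossing. For the ``exactly two'' part the paper simply defers to the contradiction template of Corollary~\ref{uniqueness1} (assume a third zero, use the index to force $\partial_\mu>0$ there, then find a fourth zero with $\partial_\mu\le0$ by the asymptotics), which sidesteps the tangent-point/Jordan-block concern you flag at the end.
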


\begin{proof} The existence of $\mu_{2,\omega}$ is proved in \eqref{tilde-lambda-1-mu_2-omega}. By Lemma \ref{properties of g} (i), we have
$g(\omega)>-12$ and there exists $\mu_\omega\in[3,183]$ such that $g(\omega)=\tilde \lambda_1(\mu_\omega,\omega)>-12$.
By Lemma \ref{asymptotic behavior of tilde lambda1 mu-18-3 lemma}, $
  \lim_{\mu\to3^+}\tilde \lambda_1(\mu,\omega)$ $=\tilde \lambda_1(3,\omega)<-12$. Thus, there exists $ \mu_{3,\omega}\in(3,\mu_\omega)$ such that
$
\tilde \lambda_1(  \mu_{3,\omega},\omega)=-12$.  This gives a neutral mode $(c_{3,\omega},2,\omega,\Phi_{\mu_{3,\omega},\omega,2})$. Note that $\langle L_{2}\Upsilon_{ \mu_{3,\omega},\omega,2},\Upsilon_{ \mu_{3,\omega},\omega,2}\rangle> 0$ and $\partial_\mu\tilde\lambda_1(  \mu_{3,\omega},\omega)>0$  due to the index formula \eqref{index formula 1o2}.
The proof of no other neutral modes with $\mu\in(3,\infty)$ is similar to that of  the uniqueness in Corollary \ref{uniqueness1}.
\end{proof}

Finally, we emphasize the geometric curvature effects on the stability of zonal flows on the sphere, and this leads to some differences   with the flat geometry (the $\beta$-plane approximations). Recall that
the $\beta$-plane equation in the vorticity form is
\begin{align*}
\partial_{t}\gamma+(-\partial_y\psi\partial_x+\partial_x\psi\partial_y)\gamma+\beta \partial_x\psi%
=0
\end{align*}
 in a channel $\bbT_{2\pi}\times[-1,1]$
with non-permeable  boundary condition on $y=\pm1$, where $\beta\in\mathbb{R}$, $\psi$ is the stream function and $\gamma=\Delta\psi$ is  the vorticity.

\begin{Remark}\label{geometric curvature effects and differences with the flat geometry}
Let us first present two examples.
\begin{itemize}
 \item
{\it Spherical geometry:} Consider the fixed sphere (i.e. $\omega=0$) and the zonal flow with stream function $\Psi_*(s)=5s^3-3s-a s$, where $a\in(-18,-3)\cup({99\over 2},72)$. Let  $\mathcal{L}_{*,\omega}$ be the linearized vorticity operator around the zonal flow and  $\mathcal{L}_{*,\omega,k}$ be  the projection of $\mathcal{L}_{*,\omega}$ on the $k$'th Fourier mode.
By Corollaries \ref{uniqueness1} and  \ref{uniqueness2}, $\mathcal{L}_{*,0,1}$ has a purely imaginary  isolated  eigenvalue $-ic_{1,a}=-i(\mu_{1,a}+a)\notin  Ran(i\Psi_*')=\sigma_{e}(\mathcal{L}_{*,0,1})$.
In other words, non-resonant neutral modes  do  exist even if the sphere is fixed.
Moreover, by Theorem \ref{k=2 negative half-line}, the stability boundary at $a=g^{-1}(-12)$ is composed of a  non-resonant neutral mode,
see Fig. \ref{fig-eigenvalue curves2}.
 \item
{\it Flat geometry:} Consider the non-rotational case (i.e. $\beta=0$) and any shear flow with stream function $\psi_*\in C^3$.
 Let  $\mathbb{L}_{*,\beta}$ be the linearized vorticity operator around the zonal flow and  $\mathbb{L}_{*,\beta,k}$ be  the projection of $\mathbb{L}_{*,\beta}$ on the $k$'th Fourier mode.
It is known that $\mathbb{L}_{*,0,k}$ has only essential spectra $\sigma_{e}(\mathbb{L}_{*,0,k})=Ran(ik\psi_*')$ and no isolated eigenvalues in the imaginary axis for   $k\neq0$ (see, for example, \cite{Howard1961}). This means that
 non-resonant neutral modes do not exist in the non-rotational case $\beta=0$. 
\end{itemize}
In  the flat geometry, only in the rotational case
($\beta \neq 0$), non-resonant neutral modes exist and do serve as some parts of the stability boundary  for a Kolmogorov flow \cite{lin-yang-zhu20}. The rotation effects trigger the emergence of non-resonant neutral modes and their role as the stability boundary in the planar $\beta$-plane model.
In the spherical geometry,
however,
even on a fixed sphere, non-resonant neutral modes do
 appear and  act as the stability boundary in the above example.
The geometric curvature effects, rather than the rotational effects,
are responsible for the appearance of non-resonant neutral modes and their role as stability boundary in the spherical model.
\end{Remark}

\subsection{Consistency  with previous numerical calculations}
By Theorem \ref{negative half-line critical rotation rate}, the critical rotation rate is $\omega_{cr}^-=g^{-1}(-12)$  for the negative half-line.
The function $g$ is defined in \eqref{def-g}, which is based on the principal eigenvalues of a modified Rayleigh equation
\eqref{Rayleigh-type equation thm 1.2}
in the space $X_{\omega,\mu,e}$.
In this subsection, we use Matlab to calculate the principal eigenvalues of  \eqref{Rayleigh-type equation thm 1.2} to find the value of $g^{-1}(-12)$.
 Our calculation reveals that $g^{-1}(-12)\approx-16.0735$, which is very close to the numerical critical rotation rate $-16.0732$ in \cite{Sasaki-Takehiro-Yamada2012}. This shows that
  our analytical  critical rotation rate $g^{-1}(-12)$ is consistent with the previous numerical results in \cite{Sasaki-Takehiro-Yamada2012}.

 Indeed, we compute $\tilde \lambda_1(\mu,\omega_0)$ for $\omega_0=-16.07354$. By Lemma \ref{principal eigenvalue mu larger than 183} (ii), we have $\tilde \lambda_1(\mu,\omega_0)<-12$ if $\mu>33$. Thus, we only need to consider $\tilde \lambda_1(\mu,\omega_0)$ with $\mu\in[3,33]$. The tool we use to compute the principal  eigenvalues $\tilde \lambda_1(\mu,\omega_0)$ of \eqref{Rayleigh-type equation thm 1.2} numerically is  Matslise (a Matlab package). The graph of $\tilde \lambda_1(\cdot,\omega_0)$ as a function of $\mu\in[3,33]$ is given in Fig. \ref{Numerical computations of the principal eigenvalues} $(a)$ and the $\mu$-value such that the maximum of $\tilde \lambda_1(\cdot,\omega_0)$ is attained is between  $3.2$ and $3.3$.
  So we focus on the interval $\mu\in[3.1,3.5]$ and calculate the eigenvalues $\tilde \lambda_1(\mu,\omega_0)$ with better accuracy, see Fig. \ref{Numerical computations of the principal eigenvalues} $(b)$. The error is almost within $10^{-7}$.
  
  \begin{figure}[ht]
      \centering
	\includegraphics[scale = 0.25]{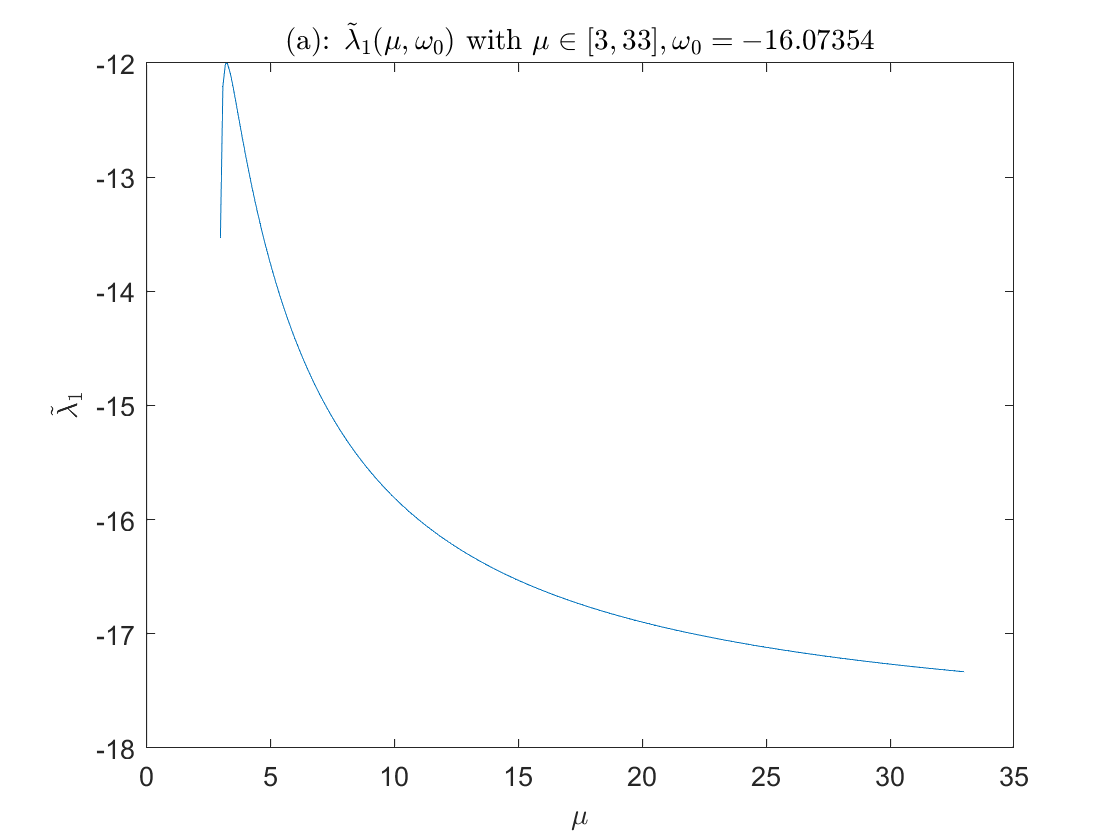}
        \includegraphics[scale = 0.25]{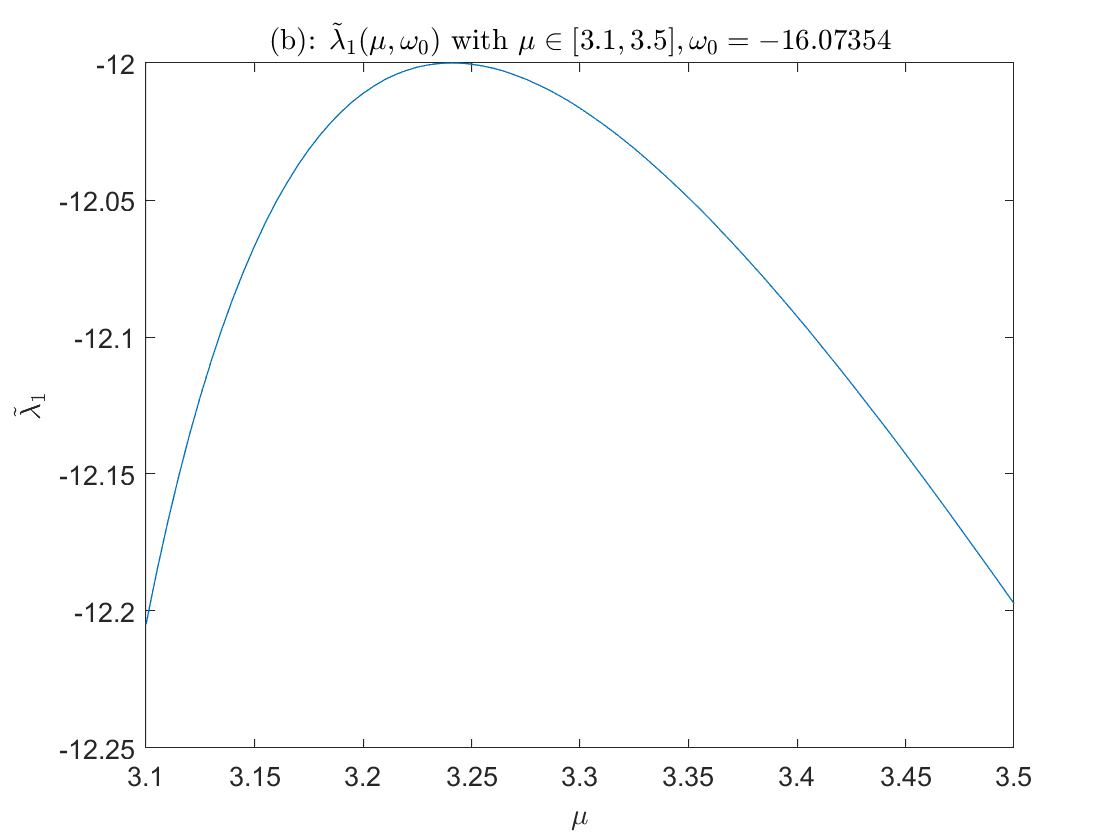}
        \caption{Numerical computations of the principal eigenvalues $\tilde \lambda_1$.}
	\label{Numerical computations of the principal eigenvalues}
     \end{figure}

 We list the values of $\tilde \lambda_1(\mu,\omega_0)$ with $\mu\in[3.233,3.248]$ in Table \ref{tbl:Tbl1}.
 
 \begin{table}[h!]
\centering

\begin{tabular}{| c | c|c|c|}
\hline
  $\mu$ &   $\tilde \lambda_1(\mu,\omega_0)$ & $\mu$ &   $\tilde \lambda_1(\mu,\omega_0)$ \\
\hline
3.233&-12.00038017&3.241&-12.00000589\\
3.234  &-12.00029226&3.242&-12.00001148\\
3.235&-12.00021565&3.243&-12.00002835\\
3.236&-12.00015114&3.244&-12.00005638\\
3.237&-12.00009860&3.245&-12.00009546\\
3.238&-12.00005793&3.246&-12.00014551\\
3.239&-12.00002900&3.247&-12.00020638\\
3.24&-12.00001169&3.248&-12.00027800\\
\hline
\end{tabular}
\vspace{0.2cm}
\caption{$\tilde \lambda_1(\mu,\omega_0)$ with $\mu\in[3.233,3.248]$, $\omega_0=-16.07354$.}
\label{tbl:Tbl1}
\end{table}

On the one hand, by Table \ref{tbl:Tbl1} we numerically conclude that
 \begin{align*}
g(-16.07354)=\max_{\mu\in[3,183]}\tilde\lambda_1(\mu,-16.07354)\approx\tilde\lambda_1(3.241,-16.07354)\approx-12.00000589<-12.
  \end{align*}
  On the other hand,  we have $\tilde\lambda_1(3.241,-16.07355)\approx-11.99999399$ by numerical computation, and thus, \begin{align*}g(-16.07355)=\max_{\mu\in[3,183]}\tilde\lambda_1(\mu,-16.07355)\geq\tilde\lambda_1(3.241,-16.07355)\approx-11.99999399>-12.
  \end{align*}
Since $g$ is decreasing and continuous on $\omega\in[-18,-3]$ by Lemma \ref{properties of g},  the critical rotation rate $g^{-1}(-12)$ is between $-16.07354$ and $-16.07355$. This shows that $g^{-1}(-12)\approx-16.0735$ if we  keep four digits after the decimal point.

Next, we give an application of the above eigenvalue computations.
By Remark \ref{neutral-imaginary}, existence of the  two neutral modes  $(\mu_{j,\omega}+\omega,2,\omega,\Phi_{\mu_{j,\omega},\omega,2}), j=2,3,$ in Corollary \ref{uniqueness4}
implies that there  are two purely imaginary isolated eigenvalues $-2i(\mu_{j,\omega}+{1\over 6}\omega)$, $j=2,3$  of $J_{\omega,2}L_2$ with  eigenfunctions $\Upsilon_{ \mu_{j,\omega},\omega,2}=\Delta_2\Phi_{\mu_{j,\omega},\omega,2}$. Correspondingly, $-2i\mu_{j,\omega}$, $j=2,3$  are two purely imaginary isolated eigenvalues of
$\mathcal{L}_{\omega,2}$, where $\mathcal{L}_{\omega,2}$ 
is the projection of  $\mathcal{L}_{\omega}$ on the second Fourier mode. 
In the following remark, we study the Krein signatures   of the two eigenvalues.

\begin{Remark}\label{krein signature-rem}
For $\omega$ smaller than but close enough to $g^{-1}(-12)$, we provide a computer-assistant proof for the opposite  Krein signatures   of the two purely imaginary isolated eigenvalues $-2i\mu_{j,\omega}$, $j=2,3$ of $\mathcal{L}_{\omega,2}$. Indeed, based on \eqref{L-signature23}, it suffices to show that
\begin{align}\label{L2neq0}
\langle L_{2}\Upsilon_{ \mu_{2,\omega},\omega,2},\Upsilon_{ \mu_{2,\omega},\omega,2}\rangle\neq 0
\end{align}
for any $\omega$ smaller than but close enough to $g^{-1}(-12)$. Assume that \eqref{L2neq0} holds true. By Lemma 3.4 in \cite{lin2022instability}, we have $\Upsilon_{ \mu_{j,\omega},\omega,2}\notin Ran(\mathcal{L}_{\omega,2}+2i\mu_{j,\omega})$ and thus $-2i\mu_{j,\omega}$, $j=2,3$ are simple eigenvalues of $\mathcal{L}_{\omega,2}$. For the  simple non-zero eigenvalue $-2i\mu_{j,\omega}$, the Krein  signature is defined by the sign of $\langle L_{2}\Upsilon_{ \mu_{j,\omega},\omega,2},\Upsilon_{ \mu_{j,\omega},\omega,2}\rangle$ for every $j=2,3$ (see \cite{MacKay1987,Kapitula-Kevrekidis-Sandstede2004}). Then \eqref{L-signature23}--\eqref{L2neq0} imply that the two purely imaginary isolated eigenvalues $-2i\mu_{j,\omega}$, $j=2,3$ of $\mathcal{L}_{\omega,2}$ have opposite  Krein signatures.

Now, we prove \eqref{L2neq0}. Suppose that there exist a sequence $\{\omega_n\}_{n=1}^\infty$ with $\omega_n\to (g^{-1}(-12))^-$ such that
$\langle L_{2}\Upsilon_{ \mu_{2,\omega_n},\omega_n,2},\Upsilon_{ \mu_{2,\omega_n},\omega_n,2}\rangle=0$ for $n\geq1$. By Lemma \ref{quadratic form-computation k=2 omega less than 3} (ii) and $ c_{2,\omega_n}-c_{\omega_n}=\mu_{2,\omega_n}+\omega_n-{5\over6}\omega_n=\mu_{2,\omega_n}+{1\over6}\omega_n\neq0$, we have $\partial_\mu\tilde\lambda_1(  \mu_{2,\omega_n},\omega_n)=0$. By \eqref{limit tilde lambda 1 mu to 3 omega larger than -18} and Corollary \ref{uniqueness4}, we have $\tilde\lambda_1(  \mu,\omega_n)\leq-12$ for $\mu\in[3,\mu_{3,\omega_n}]\cup[\mu_{2,\omega_n},\infty)$. By Lemma \ref{properties of g} (i), we have $g(\omega_n)>-12$ and thus there exists $\mu_{\omega_n}\in(\mu_{3,\omega_n},\mu_{2,\omega_n})$ such that $\tilde\lambda_1(\mu_{\omega_n},\omega_n)=g(\omega_n)$ and $\partial_\mu\tilde\lambda_1(\mu_{\omega_n},\omega_n)=0$. Then
\begin{align*}
&\partial_\mu\tilde\lambda_1(  \mu_{2,\omega_n},\omega_n)=\partial_\mu\tilde\lambda_1(\mu_{\omega_n},\omega_n)=0\\
\Longrightarrow&\exists\;\;\mu_{4,\omega_n}\in(\mu_{\omega_n},\mu_{2,\omega_n})
\;\;\text{such that}\;\;\partial_\mu^2\tilde\lambda_1(  \mu_{4,\omega_n},\omega_n)=0
\end{align*}
since $\tilde \lambda_1\in C^2((3,\infty)\times\mathbb{R})$.
For $\omega=g^{-1}(-12)$, the index formula \eqref{index formula 1o2} ensures that there exists a unique $\mu_{g^{-1}(-12)}\in(3,\infty)$ such that $\tilde\lambda_1(  \mu_{g^{-1}(-12)},g^{-1}(-12))=-12$. Moreover, $\mu_{3,\omega_n},\mu_{2,\omega_n}\to\mu_{g^{-1}(-12)}$ and thus $\mu_{4,\omega_n}\to\mu_{g^{-1}(-12)}$  as $n\to\infty$. Therefore,
\begin{align}\label{second mu derivative of tilde lambda 1}\partial_\mu^2\tilde\lambda_1(  \mu_{g^{-1}(-12)},g^{-1}(-12))=\lim_{n\to\infty}\partial_\mu^2\tilde\lambda_1(  \mu_{4,\omega_n},\omega_n)=0.\end{align}

On the other hand, with the numeric data in Table \ref{tbl:Tbl1}, we compute the second  $\mu$-derivative of $\tilde\lambda_1$ at $(  \mu_{g^{-1}(-12)},g^{-1}(-12))$ by the finite difference approximations.
Noting that $g^{-1}(-12)$ is between $-16.07354$ and $-16.07355$, we take $g^{-1}(-12)\approx-16.07354$, and $ \mu_{g^{-1}(-12)}\approx3.241$ by Table \ref{tbl:Tbl1}. The finite difference approximations of $\partial_\mu^2\tilde\lambda_1(  \mu_{g^{-1}(-12)},g^{-1}(-12))$ is then given by
\begin{align}\nonumber
&a(\mu)\triangleq\delta_\mu^2[\tilde \lambda_1](3.241,-16.07354)\\\label{finite difference approximations of second derivative}
=&{\tilde \lambda_1(3.241+\mu,-16.07354)+\tilde \lambda_1(3.241-\mu,-16.07354)-2\tilde \lambda_1(3.241,-16.07354)\over \mu^2}.
\end{align}
By the formula \eqref{finite difference approximations of second derivative} and the data in Table \ref{tbl:Tbl1}, we compute the finite difference approximations $a(\mu)$ and list them in Table \ref{tbl:Tbl2}.
 \begin{table}[h!]
\centering
\begin{tabular}{| c | c|c|c|c|c|c|c|c|}
\hline
  $\mu$ &   $0.007$ &$0.006$ &$0.005$ &$0.004$ &$0.003$ &$0.002$ &$0.001$\\
\hline
$a(\mu)$&   $-11.3976$ &$-11.3958$ &$-11.3948$ &$-11.3925$ &$-11.3922$ &$-11.3925$ &$-11.39$\\
\hline
\end{tabular}
\vspace{0.2cm}
\caption{Finite difference approximations of $\partial_\mu^2\tilde\lambda_1(  \mu_{g^{-1}(-12)},g^{-1}(-12))$.}
\label{tbl:Tbl2}
\end{table}
Thus, $\partial_\mu^2\tilde\lambda_1(  \mu_{g^{-1}(-12)},g^{-1}(-12))\approx-11.39$, which contradicts \eqref{second mu derivative of tilde lambda 1}. This proves \eqref{L2neq0}.
\end{Remark}

\section{Invariant subspace decomposition and exponential trichotomy}

It is natural to ask what exact role of $E_1$ plays in the spectral analysis of $J_{\omega}L$. This is not straightforward as one can verify that $E_1$ is not an invariant subspace for $J_{\omega}L$ in the case of $\omega\neq0$. In fact, we prove that
a combination of $Y_1^{\pm1}$ with $Y_3^{\pm1}$  provide two purely imaginary eigenvalues $\pm i\omega$ of $\mathcal{L}_\omega$.
This also leads to an invariant subspace decomposition  for the operator $J_{\omega}L$.

Instead of restricting into the space $X$ (defined in \eqref{def-space-X}), we consider the linearized operator $J_{\omega}L$ (defined in \eqref{def-ham-JL}) in the whole space  $L_0^2(\mathbb{S}^2)$, which consists of functions in $L^2(\mathbb{S}^2)$ with zero mean.
For a subspace $Z\subset  L_0^2(\mathbb{S}^2)$, we denote $Z_{-}=\{\Upsilon\in Z:\langle L\Upsilon,\Upsilon \rangle<0\}$.
Direct computation gives
\begin{align*}
L_0^2(\mathbb{S}^2)_{-}&=\text{span}\{Y_1^0,  Y_1^{\pm1}, Y_2^0,Y_2^{\pm1},Y_2^{\pm2}\},
\end{align*}
and
\begin{align*}
 X_-&=
\text{span}\{Y_2^0,Y_2^{\pm1},Y_2^{\pm2}\}.
\end{align*}
Thus,
\begin{align*}
n^-(L)=8\quad\text{and}\quad n^-(L|_{X})=5.
\end{align*}
Moreover,
\begin{align*}
\ker(L)=\ker(L|_{X})&=\text{span}\{Y_3^0, Y_3^{\pm1},Y_3^{\pm2},Y_3^{\pm3}\}.
\end{align*}
Thus,
\begin{align*}
\dim\ker(L)=\dim\ker(L|_{X})=7.
\end{align*}
Recall that $E_1=\text{span}\{Y_1^0, Y_1^{\pm1}\}$, $L_0^2(\mathbb{S}^2)=E_1\oplus X$ and $X$ is invariant for the linearized operator $J_{\omega}L$ by \eqref{invariant-1}-\eqref{invariant-2}.
However, $E_1$ is not invariant for the operator $J_{\omega}L$ since
\begin{align}\label{JLE1nonzonal}
J_{\omega}L (Y_1^{\pm1})=&-( \Upsilon_0'+2\omega)\partial_\varphi\left({1\over 12}+ \Delta^{-1}\right)(Y_1^{\pm1})\\\nonumber
=&-( -12(15s^2-3)+2\omega)(\pm i)\left({1\over12}-{1\over2}\right)(Y_1^{\pm1})\\\nonumber
=&-{5i\over24}\sqrt{3\over 2\pi}e^{\pm i\varphi}( -180s^2+36+2\omega)(1-s^2)^{1\over2}\notin E_1
\end{align}
for any $\omega\in \mathbb{R}$, where we recall that $Y_1^{\pm1}(\varphi,s)={\mp}{1\over2}\sqrt{3\over2\pi}e^{\pm i\varphi}\sqrt{1-s^2}$. To obtain a suitable invariant subspace decomposition of  $L_0^2(\mathbb{S}^2)$ for the operator $J_{\omega}L$, we study what kind of role  $Y_1^{\pm1}$ play. 
In the next lemma, we will see that a combination of $Y_1^{\pm1}$ with $Y_3^{\pm1}$ will provide two exact eigenvalues of $J_{\omega}L$  on the imaginary axis.

\begin{Lemma}\label{E1JL exact eigenvalues}
Consider $\omega\neq0$ and $\omega=0$ separately.
\begin{itemize}
 \item[(i)]
If $\omega\neq0$, then $\pm{5\over6}\omega i$ are a pair of eigenvalues of $J_{\omega}L$ with
corresponding eigenfunctions $Y_1^{\pm1}-{72\over\omega}\sqrt{1\over14}Y_3^{\pm1}$.
Consequently, the pair of eigenvalues $\pm{5\over6}\omega i$ merges to zero as $\omega\to0^+$. Moreover,  an invariant subspace decomposition for $J_{\omega}L$ is
\begin{align*}
L_0^2(\mathbb{S}^2)=\text{span}\left\{Y_1^0,Y_1^{\pm1}-{72\over\omega}\sqrt{1\over14}Y_3^{\pm1}\right\}\dot{+}X.
\end{align*}
\item [(ii)]
If $\omega=0$, then
$Y_1^{\pm1}$ are in the  generalized kernel of $J_{\omega}L$.
Moreover, an invariant subspace decomposition for $J_{\omega}L$ is
 \begin{align*}
 L_0^2(\mathbb{S}^2)=E_1\dot{+}X,
 \end{align*}
  where $X$ is defined in \eqref{def-space-X}.
\end{itemize}
\end{Lemma}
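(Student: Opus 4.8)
The plan is to make the computation already begun in \eqref{JLE1nonzonal} fully explicit and then reduce the lemma to a $2\times2$ linear-algebra statement on the spaces $\mathrm{span}\{Y_1^{\pm1},Y_3^{\pm1}\}$. Starting from \eqref{JLE1nonzonal}, I would use the elementary identity $-180s^2+36+2\omega=36(1-5s^2)+2\omega$ to split $J_\omega L(Y_1^{\pm1})$ into a multiple of $e^{\pm i\varphi}(1-s^2)^{1/2}$ and a multiple of $e^{\pm i\varphi}(1-5s^2)(1-s^2)^{1/2}$. Since $Y_1^{\pm1}=\mp\tfrac12\sqrt{\tfrac3{2\pi}}e^{\pm i\varphi}(1-s^2)^{1/2}$ and, by $P_3^1(s)=\tfrac32(1-5s^2)(1-s^2)^{1/2}$, the harmonic $Y_3^{\pm1}$ is a fixed normalization constant times $e^{\pm i\varphi}(1-5s^2)(1-s^2)^{1/2}$, this rewrites as $J_\omega L(Y_1^{\pm1})=\pm\tfrac56\omega i\,Y_1^{\pm1}+b\,Y_3^{\pm1}$ with an explicit nonzero constant $b$ that does not depend on $\omega$; the coefficient $\pm\tfrac56\omega i$ is exactly the one dictated by the traveling speed $c_\omega=\tfrac56\omega$ from \eqref{c omega k}. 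The only place where care is needed is keeping track of the normalization constants of $Y_1^{\pm1}$ and $Y_3^{\pm1}$ so as to recover the precise value $b/(\pm\tfrac56\omega i)=-\tfrac{72}{\omega}\sqrt{\tfrac1{14}}$ asserted in the statement; everything else is formal.

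Next I would record that $Y_3^{\pm1}$ lies in the third eigenspace of $\Delta$, on which $L=\tfrac1{12}+\Delta^{-1}$ vanishes, so $LY_3^{\pm1}=0$ and hence $J_\omega L(Y_3^{\pm1})=0$. Therefore $\mathrm{span}\{Y_1^{\pm1},Y_3^{\pm1}\}$ is $J_\omega L$-invariant, with matrix $\left(\begin{smallmatrix}\pm\frac56\omega i&0\\ b&0\end{smallmatrix}\right)$ in the basis $(Y_1^{\pm1},Y_3^{\pm1})$. When $\omega\ne0$ this has the two eigenvalues $0$ (eigenvector $Y_3^{\pm1}$) and $\pm\tfrac56\omega i$; solving $(J_\omega L\mp\tfrac56\omega i)(\alpha Y_1^{\pm1}+\beta Y_3^{\pm1})=0$ forces $\beta/\alpha=b/(\pm\tfrac56\omega i)=-\tfrac{72}{\omega}\sqrt{\tfrac1{14}}$, which gives the stated eigenfunction, and letting $\omega\to0^+$ sends $\pm\tfrac56\omega i\to0$. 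For the decomposition, I would observe that $J_\omega L(Y_1^0)=0$ because $Y_1^0$ is $\varphi$-independent, and that $Y_3^{\pm1}\in X$ (being $L^2$-orthogonal to $1,Y_1^0,Y_1^{\pm1}$), so passing from $\{Y_1^0,Y_1^{\pm1}\}$ to $\{Y_1^0,\,Y_1^{\pm1}-\tfrac{72}{\omega}\sqrt{\tfrac1{14}}Y_3^{\pm1}\}$ is a triangular, hence invertible, modification of the complement of $X$ in $L_0^2(\mathbb S^2)=E_1\oplus X$. The new three-dimensional summand is $J_\omega L$-invariant by the eigenvalue computation above, while $X$ is $J_\omega L$-invariant by \eqref{invariant-1}--\eqref{invariant-2}; this yields (i).

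Part (ii) is the $\omega=0$ specialization of the same computation: there $J_0L(Y_1^{\pm1})=b\,Y_3^{\pm1}$ with $b\ne0$ and $Y_3^{\pm1}\in\ker L$, so $(J_0L)^2Y_1^{\pm1}=J_0L(bY_3^{\pm1})=0$ while $J_0L(Y_1^{\pm1})\ne0$; hence $Y_1^{\pm1}$ are order-two generalized null vectors of $J_0L$, i.e.\ they lie in its generalized kernel. The decomposition $L_0^2(\mathbb S^2)=E_1\dot+X$ is then the orthogonal one with invariant summand $X$, again by \eqref{invariant-1}--\eqref{invariant-2}. I expect the main (and essentially the only) obstacle to be the normalization bookkeeping required to pin down the constant $\tfrac{72}{\omega}\sqrt{\tfrac1{14}}$ and the consistency of the $\pm$ cases; the conceptual content—why $E_1$ must be replaced by the modified space once $\omega\ne0$—is already laid bare by \eqref{JLE1nonzonal} together with the vanishing of $L$ on the third eigenspace.
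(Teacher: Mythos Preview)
Your proposal is correct and follows essentially the same route as the paper: split $J_\omega L(Y_1^{\pm1})$ via $-180s^2+36+2\omega=36(1-5s^2)+2\omega$ to obtain $J_\omega L(Y_1^{\pm1})=\pm\tfrac56\omega i\,Y_1^{\pm1}\mp 60\sqrt{\tfrac1{14}}\,i\,Y_3^{\pm1}$, then use $Y_3^{\pm1}\in\ker L$ to read off the eigenpair (the paper verifies $\pm\tfrac56\omega i\cdot(-\tfrac{72}{\omega}\sqrt{\tfrac1{14}})=\mp60\sqrt{\tfrac1{14}}\,i$ directly rather than via a $2\times2$ matrix, but this is purely cosmetic). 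Your treatment of the decomposition and of part (ii) via $(J_0L)^2Y_1^{\pm1}=0$ is likewise the paper's argument, stated slightly more explicitly.
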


\begin{Remark}\label{E1JL exact eigenvalues-rem}
Let $\omega\neq0$. By a direct computation, the linearized equation of ${\rm(\mathcal{E}_\omega)}$ around $\Upsilon_0(s)=\Delta \Psi_0(s)$ in the original frame  $(\varphi,s)$ is
$
\mathcal{L}_\omega=J_{\omega}L+{1\over 6}\omega\partial_\varphi.
$
By \eqref{eigenvalueJL5over6omegai}, we have
\begin{align*}
\mathcal{L}_\omega\left(Y_1^{\pm1}-{72\over\omega}\sqrt{1\over14}Y_3^{\pm1}\right)=&\left(J_{\omega}L\pm{1\over 6}\omega i\right)\left(Y_1^{\pm1}-{72\over\omega}\sqrt{1\over14}Y_3^{\pm1}\right)\\
=&\pm\omega i\left(Y_1^{\pm1}-{72\over\omega}\sqrt{1\over14}Y_3^{\pm1}\right).
\end{align*}
That is, $\pm\omega i$ are a pair of eigenvalues of $\mathcal{L}_\omega$, which also  merges to zero as $\omega\to0^+$.
\end{Remark}
\begin{proof}
$({\rm i})$
It suffices to  prove that
\begin{align}\label{JLY1pm1}
J_{\omega}L(Y_1^{\pm1})=\mp60\sqrt{1\over 14}iY_3^{\pm1}\pm{5\omega\over 6}i Y_1^{\pm1}
\end{align}
and
\begin{align}\label{eigenvalueJL5over6omegai}
J_{\omega}L\left(Y_1^{\pm1}-{72\over\omega}\sqrt{1\over14}Y_3^{\pm1}\right)=\pm{5\over6}\omega i\left(Y_1^{\pm1}-{72\over\omega}\sqrt{1\over14}Y_3^{\pm1}\right).
\end{align}
That is, $\pm{5\over6}\omega i$ are a pair of eigenvalues of $J_{\omega}L$.

Recall that $Y_3^{\pm1}(\varphi,s)=\pm{1\over8}\sqrt{21\over \pi}e^{\pm i\varphi}\sqrt{1-s^2}(1-5s^2)$. By \eqref{JLE1nonzonal}, we have
\begin{align*}
J_{\omega}L (Y_1^{\pm1})=&-{15\over 2}\sqrt{3\over 2\pi}ie^{\pm i\varphi}( 1-5s^2)(1-s^2)^{1\over2}-{5\over 12}\sqrt{3\over2\pi}\omega ie^{\pm i\varphi}(1-s^2)^{1\over2}\\
=&\mp60\sqrt{1\over 14}iY_3^{\pm1}\pm{5\omega\over 6}i Y_1^{\pm1}.
\end{align*}
Since $Y_3^{\pm1}\in\ker(L)$, we have
\begin{align*}
J_{\omega}L\left(Y_1^{\pm1}-{72\over\omega}\sqrt{1\over14}Y_3^{\pm1}\right)=&J_{\omega}L\left(Y_1^{\pm1}\right)=\pm{5\over6}\omega i\left(Y_1^{\pm1}-{72\over\omega}\sqrt{1\over14}Y_3^{\pm1}\right).
\end{align*}

$({\rm ii})$
In fact, we have
\begin{align*}
J_{\omega}L(Y_1^{\pm1})=\mp60\sqrt{1\over 14}iY_3^{\pm1}\quad {and}\quad (J_{\omega}L)^2(Y_1^{\pm1})=0,
\end{align*}
where the first equality is similar to  \eqref{JLY1pm1} and the second equality is due to the fact that $Y_3^{\pm1}\in \ker (J_{\omega}L)$.
\end{proof}

 Under a full perturbation which takes $E_1$ into account,
we now prove the exponential trichotomy of the semigroup $e^{tJ_{\omega}L}$.

 \begin{Proposition}\label{exponential trichotomy}
The linearized operator $J_{\omega}L$  generates a $C^0$ group $e^{tJ_{\omega}L}$ on $L_0^2(\mathbb{S}^2)$ and there exists a decomposition
\begin{align*}
L_0^2(\mathbb{S}^2)=E^u\oplus E^c\oplus E^s
\end{align*}
of closed subspaces $E^{u,s,c}$ with the following properties:

$(\rm{i})$ $E^c$, $E^u$ and $E^s$ are invariant under $e^{tJ_{\omega}L}$;

$(\rm{ii})$ $E^u$ ($E^s$) only consists of eigenfunctions corresponding to eigenvalues of $J_{\omega}L$ with   positive (negative) real part and
\begin{align*}
\dim(E^u)=\dim(E^s)=\left\{
\begin{array}{llll}
4,&\omega\in(-3,{69\over2}),\\
2,&\omega\in(g^{-1}(-12),-3]\cup[{69\over2},{99\over2}),\\
0,&\omega\notin(g^{-1}(-12),{99\over2});
\end{array}
\right.
\end{align*}

$(\rm{iii})$ The quadratic form $\langle L\cdot,\cdot\rangle$ vanishes on $E^{u,s}$, but is
non-degenerate on $E^u \oplus E^s$, and
\begin{align*}
E^c = \{u \in L_0^2(\mathbb{S}^2) | \langle Lu, v\rangle =0, \;\; \forall\; v \in E^s \oplus E^u\};
\end{align*}

$(\rm{iv})$ For $\lambda_u=\min\{Re (\lambda)|\lambda\in\sigma(J_{\omega}L),Re(\lambda)>0\}$, there exist $C>0$ and $0\leq k_0\leq 1+2(n^-(L)-\dim (E^u))$ such that
\begin{align*}
&|e^{tJ_{\omega}L}|_{E^s}|\leq C(1+t^{\dim(E^s)-1}) e^{-\lambda_ut},\quad t\geq0,\\
&|e^{tJ_{\omega}L}|_{E^u}|\leq C(1+|t|^{\dim(E^u)-1}) e^{\lambda_ut},\quad t\leq0,\\
&|e^{tJ_{\omega}L}|_{E^c}|\leq C(1+|t|^{k_0}), \quad  t\in\mathbb{R}.
\end{align*}
\if0
{\color{red}
For $0\neq\omega\in(-18,72)$, there exists $M > 0$ such
that
\begin{align*}
|e^{tJ_{\omega}L}|_{E^c}|\leq M ,\quad t\in\mathbb{R},
\end{align*}
which implies linear Lyapunov stability on the space $E^c$. For $\omega=0$, there exists $M > 0$ such
that
\begin{align*}
|e^{tJ_{\omega}L}|_{E^c}|\leq M (1+|t|),\quad t\in\mathbb{R}.
\end{align*}}
\fi
\end{Proposition}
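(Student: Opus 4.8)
\textbf{Proof plan for Proposition \ref{exponential trichotomy}.}

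The plan is to deduce this proposition from the general structural decomposition theorem for linear Hamiltonian systems in \cite{lin2022instability} (the same reference that produced the index formulae \eqref{index formula 1o1}--\eqref{index formula 1o2}), applied to the Hamiltonian pair $(J_\omega, L)$ on $L_0^2(\mathbb{S}^2)$. The essential point is that $L = \frac{1}{12} + \Delta^{-1}$ is a bounded self-adjoint operator with finitely many negative directions (we have computed $n^-(L) = 8$ above) and a finite-dimensional kernel ($\dim\ker L = 7$), $J_\omega = -(\Upsilon_0' + 2\omega)\partial_\varphi$ is anti-self-adjoint with dense domain, and $J_\omega L$ generates a $C^0$-group because it is a bounded perturbation of a skew-adjoint generator up to the compact/relatively-bounded term coming from $\Delta^{-1}$; more precisely, one checks that $L$ is invertible modulo a finite-rank operator, so the generation of the group and the exponential trichotomy follow from the abstract theorem in \cite{lin2022instability}. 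Parts (i), (iii), and the estimates in (iv) are then \emph{verbatim} consequences of that abstract theorem: the decomposition $L_0^2 = E^u \oplus E^c \oplus E^s$ with the stated invariance, the vanishing and non-degeneracy of $\langle L\cdot,\cdot\rangle$, the characterization of $E^c$ as the $\langle L\cdot,\cdot\rangle$-orthocomplement of $E^u\oplus E^s$, and the polynomial-times-exponential bounds with the polynomial degree controlled by $\dim E^{u,s}$ on the hyperbolic parts and by $k_0 \le 1 + 2(n^-(L) - \dim E^u)$ on the center.

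The only genuinely new content is the count $\dim E^u = \dim E^s$ in part (ii), and this is where the work of Sections 3--4 enters. First I would reduce the count to the invariant subspace $X = \oplus_{n\ge 2} E_n$ together with the extra two-dimensional piece identified in Lemma \ref{E1JL exact eigenvalues}: by Lemma \ref{E1JL exact eigenvalues}(i) the space $\operatorname{span}\{Y_1^0, Y_1^{\pm1} - \frac{72}{\omega}\sqrt{\frac{1}{14}}Y_3^{\pm1}\}$ is $J_\omega L$-invariant with purely imaginary (or zero) spectrum, so it contributes nothing to $E^u$ or $E^s$; likewise for $\omega = 0$ the space $E_1$ is a generalized kernel, again contributing nothing. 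Hence $\dim E^u$ (computed on all of $L_0^2$) equals the number of unstable eigenvalues, counted with algebraic multiplicity, of $J_\omega L|_X$, which by the Fourier decomposition $X = \oplus_{k} e^{ik\varphi}X^k$ and the reductions \eqref{L1e2ok3nonnegative}--\eqref{L1o2enegative} equals $k_{r,J_{\omega,1}L_1|_{X_o^1}} + k_{c,J_{\omega,1}L_1|_{X_o^1}} + k_{r,J_{\omega,2}L_2|_{X_e^2}} + k_{c,J_{\omega,2}L_2|_{X_e^2}}$ (all other Fourier modes have $n^-(L_k) = 0$, hence no instability by the index formula). Then I invoke the computations already carried out: Theorems \ref{k=1 positive half-line critical rotation rate}, \ref{k=2 positive half-line critical rotation rate}, \ref{k=1 negative half-line}, \ref{k=2 negative half-line} together with Theorem \ref{linear instability} give, for each Fourier mode $k=1,2$, exactly when the corresponding index $k_c + k_r$ equals $1$ versus $0$. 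Assembling: for $\omega \in (-3, \tfrac{69}{2})$ both modes are unstable so the per-mode contribution is $1$ each, but each unstable eigenvalue of the complex operator $J_{\omega,k}L_k$ on $X^k$ together with its conjugate on $X^{-k}$ contributes $2$ to $\dim E^u$ on the real space, giving $4$; for $\omega \in (g^{-1}(-12), -3] \cup [\tfrac{69}{2}, \tfrac{99}{2})$ exactly one of the two modes is unstable, giving $2$; and for $\omega \notin (g^{-1}(-12), \tfrac{99}{2})$ neither mode is unstable, giving $0$. The equality $\dim E^u = \dim E^s$ is automatic from the Hamiltonian symmetry of the spectrum of $J_\omega L$ about the imaginary axis.

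The main obstacle I anticipate is not conceptual but bookkeeping: one must be careful about the passage between the complex operators $J_{\omega,k}L_k$ on $X^k$ and the real operator $J_\omega^k L^k$ on $X^k \times X^k$ (equivalently $Y^k = e^{ik\varphi}X^k \oplus e^{-ik\varphi}X^{-k}$), using the relations $\sigma(J_\omega^k L^k) = \sigma(J_{\omega,k}L_k) \cup \overline{\sigma(J_{\omega,k}L_k)}$ recorded before \eqref{index-for real}, so that ``$k_c + k_r = 1$ for the complex operator'' correctly translates into ``two unstable real eigenvalues'' and thus a contribution of $2$ per active Fourier mode. A secondary point requiring care is verifying that $J_\omega L$ does generate a $C^0$-group on all of $L_0^2(\mathbb{S}^2)$ and not merely on $X$ — but this is handled by the bounded/relatively-bounded structure noted above, and the abstract theorem of \cite{lin2022instability} is stated precisely to cover such $(J,L)$ pairs, so I would simply cite it. Finally, the bound $k_0 \le 1 + 2(n^-(L) - \dim E^u)$ follows by substituting $n^-(L) = 8$ and the values of $\dim E^u$ from (ii) into the general estimate; I would remark that this is where the discrepancy with the restricted operator $J_\omega L|_X$ (for which $n^-(L|_X) = 5$) shows up, reflecting the extra center directions contributed by $E_1$.
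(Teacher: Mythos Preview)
Your proposal is correct and follows essentially the same approach as the paper: apply the abstract trichotomy theorem from \cite{lin2022instability} to the pair $(J_\omega,L)$ on $L_0^2(\mathbb{S}^2)$, with the only substantive input being the dimension count in (ii), which you correctly assemble from Theorems \ref{linear instability}, \ref{k=1 positive half-line critical rotation rate}, \ref{k=2 positive half-line critical rotation rate}, \ref{k=1 negative half-line}, \ref{k=2 negative half-line} together with \eqref{L1e2ok3nonnegative}--\eqref{L1o2enegative}. The paper's proof is terser---it dispatches the ``unstable eigenfunctions lie in $X$'' step by recalling the momentum constraints rather than invoking Lemma \ref{E1JL exact eigenvalues} as you do---but your route via the invariant decomposition is equally valid and arguably cleaner.
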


\begin{proof} [Proof of Proposition \ref{exponential trichotomy}]
Note that all the unstable eigenfunctions of $J_{\omega}L$ satisfy the constraints in $X$ and are therefore in $X$.
Theorems \ref{linear instability}, \ref{k=1 positive half-line critical rotation rate} \ref{k=2 positive half-line critical rotation rate},
\ref{k=1 negative half-line} and \ref{k=2 negative half-line}, along with \eqref{L1e2ok3nonnegative}-\eqref{L1o2enegative}, imply that
$\dim(E^u)=\dim(E^s)=4$
for $\omega\in(-3,{69\over2})$, $\dim(E^u)=\dim(E^s)=2$ for $\omega\in(g^{-1}(-12),-3]\cup[{69\over2},{99\over2})$, and
$\dim(E^u)=\dim(E^s)=0$ for $\omega\notin(g^{-1}(-12),{99\over2})$.
Then Proposition \ref{exponential trichotomy} follows from Theorem 2.2 in \cite{lin2022instability}.
\end{proof}

\if0
\begin{lemma}
 For $\omega\in(-18,72)$, we have
\begin{align*}
\ker(J_{\omega}L)=\{\Upsilon(s):\Upsilon\in L^2(-1,1)\}\oplus\text{span}\{ e^{\pm i\varphi}P_3^1(s),e^{\pm 2i\varphi}P_3^2(s),e^{\pm 3i\varphi}P_3^3(s)\}.
\end{align*}
If $0\neq\omega\in(-18,72)$, then there are no generalized eigenfunctions of $0$. If $\omega=0$, then the only generalized eigenfunctions of $0$ is, up to a non-zero constant, $e^{\pm i\varphi}P_1^1(s)$.
\end{lemma}
\begin{proof}
By Remark \ref{kernel-J12L12} and Lemma \ref{E1JL}, we only need to prove that the generalized kernel of $J_{\omega,1}L_1$ is $\text{span}\{ e^{i\varphi}P_3^1(s),e^{ i\varphi}P_1^1(s)\}$ and  the generalized kernel of $J_3L_3$ is $\text{span}\{e^{3i\varphi}P_3^3(s)\}$.
\end{proof}
\fi

From the perspective of dynamical systems, it is natural to ask  whether there are local stable/unstable manifolds  near the linearly unstable $3$-jet, which will
provide a more accurate characterization of the nonlinear local dynamics. Proposition \ref{exponential trichotomy} can be viewed as such a  characterization   at the linear level. On a flat geometry, local invariant  manifolds near a linearly unstable shear flow is constructed in \cite{lin-zeng13}.  Pursuit of local unstable manifolds on the setting of a general rotating surface is a problem  for future work.

 \section{Nonlinear orbital instability of general steady flows}
 In this section, we prove that linear instability implies nonlinear orbital instability   for general steady flows.
 
 First, we revisit  the Euler equations on the global sphere.
 We regard $\mathbb{S}^2=\{\mathbf{x}=(x,y,z)\in\mathbb{R}^3|x^2+y^2+z^2=1\}$ as a Riemannian manifold equipped with the Riemannian metric induced by the
Euclidean metric of $\mathbb{R}^3$.
The Euler equation in the velocity form (see \cite{Pedlosky1971}) is
\begin{align}\label{velocity equation-original form}
\partial_t\mathbf{v}+\nabla_{\mathbf{v}}\mathbf{v}+2\omega\chi J\mathbf{v}+\nabla p=0,\quad {\rm div}(\mathbf{v})=0,
\end{align}
where the vector field $\mathbf{v}$ is the velocity, $p$ is the pressure, $\chi(\mathbf{x})=\mathbf{e}_3\cdot\nu(\mathbf{x})=z$, $\nu(\mathbf{x})$ is the unit outward pointing normal to $\mathbb{S}^2$ at $\mathbf{x}$, and $J:T_{\mathbf{x}}\mathbb{S}^2\to T_{\mathbf{x}}\mathbb{S}^2$  is the  counterclockwise rotation by $\pi/2$.
Then the vorticity is $\Omega=curl(\mathbf{v})$ and the Euler equation in the vorticity form is
\begin{align}\label{vorticity equation-original form}
\partial_t(\Omega+2\omega\chi)+\nabla_{\mathbf{v}}(\Omega+2\omega\chi)=0.
\end{align}
The stream function  $\psi$ has zero mean and satisfies  $\mathbf{v}=J\nabla\psi=\nabla^{\bot}\psi$ and $\Omega=\Delta\psi$.
Consider the  coordinates $(\varphi, s)$  with a chart the  chart $(\mathbb{S}^2\setminus\Gamma,\zeta)$: 
 \begin{align}\label{chart1}
 \begin{array}{rrrl}
 \zeta:&\mathbf{x}=(\cos(\varphi)\sqrt{1-s^2},\sin(\varphi)\sqrt{1-s^2},s)&\mapsto&(\varphi,s),\\
 &\mathbb{S}^2\setminus\Gamma&\to&(-\pi,\pi)\times(-1,1),
 \end{array}
 \end{align}
where $\Gamma=\{\mathbf{x}=(-\sqrt{1-s^2},0,s)|s\in[-1,1]\}$. As a natural extension, we supplementarily define
$
 \zeta(-\sqrt{1-s^2},0,s)=(\pi,s)$ for
$\mathbf{x}=(-\sqrt{1-s^2},0,s)\in\Gamma\setminus\{N,S\}$ (i.e. $\{\varphi=\pi, s\neq\pm1\}$), where $N, S$ denote the North and South poles.
In the  coordinates $(\varphi,s)$, the poles $N, S$ are stretched into two boundary lines $\{s=\pm1\}$.
Let
$
\Omega(\mathbf{x},t)=\Upsilon(\varphi,s,t)$ and $ \psi(\mathbf{x},t)=\Psi(\varphi,s,t).
$
Then the vorticity equation  \eqref{vorticity equation-original form} becomes
$
{\rm(\mathcal{E}_\omega)}$.

\subsection{Differential calculus on $\mathbb{S}^2$ and   the  averaging Lyapunov exponent} To study nonlinear orbital instability of general steady flows, we need the following geometrical  preparations about differential calculus on $\mathbb{S}^2$ and  the degeneracy of the  averaging Lyapunov exponent of the flow generated by the steady velocity field.
\subsubsection{Differential calculus on $\mathbb{S}^2$}\label{Differential calculus on S2}
We identify a point $\mathbf{x}\in\mathbb{S}^2$ with its Cartesian coordinates $(x,y,z)$ in $\mathbb{R}^3$.
The  chart $(\mathbb{S}^2\setminus\Gamma,\zeta)$ in \eqref{chart1},
along with another geographic coordinates $(\tilde\varphi,\tilde s)$  with a chart $(\mathbb{S}^2\setminus\tilde\Gamma,\tilde\zeta)$
 \begin{align}\label{chart2}
 \begin{array}{rrrl}
 \tilde\zeta:&(-\cos(\tilde\varphi)\sqrt{1-\tilde s^2},\tilde s,\sin(\tilde\varphi)\sqrt{1-\tilde s^2})&\mapsto&(\tilde\varphi,\tilde s),\\
 &\mathbb{S}^2\setminus\tilde \Gamma&\to&(-\pi,\pi)\times(-1,1),
 \end{array}
 \end{align}
 gives a smooth manifold structure of $\mathbb{S}^2$,
 where $\tilde \Gamma=\{\mathbf{x}=(\sqrt{1-\tilde s^2},\tilde s,0)|\tilde s\in[-1,1]\}$.
  On the one hand, to avoid the singularity at the  poles $N, S$, we instead use two restricted charts $(\zeta^{-1}((-\pi,\pi)\times(-1+\kappa_0,1-\kappa_0)),\zeta)$ and $ (\tilde\zeta^{-1}((-\pi,\pi)\times(-1+\kappa_0,1-\kappa_0)),\tilde\zeta) $ to cover $\mathbb{S}^2$ for sufficiently small $\kappa_0>0$. On the other hand, when we consider the whole sphere and  the poles $N, S$  could be regarded as singular points, we use the full chart $(\mathbb{S}^2\setminus\Gamma,\zeta)$.
  We discuss  the  chart $(\mathbb{S}^2\setminus\Gamma,\zeta)$ below, and the chart $(\mathbb{S}^2\setminus\tilde \Gamma, \tilde \zeta)$ can be considered similarly.  The Riemannian metric of $\mathbb{S}^2$
is given by $g|_{(\mathbb{S}^2\setminus\Gamma,\zeta)}={1\over 1-s^2}ds^2+(1-s^2)d\varphi^2$.
Since $g_{11}=g(\partial_s,\partial_s)={1\over 1-s^2}$ and $g_{22}=g(\partial_\varphi,\partial_\varphi)=1-s^2$, we obtain  an orthonormal
basis $\{\mathbf{e}_s=\sqrt{1-s^2}\partial_s,\mathbf{e}_\varphi={1\over\sqrt{1-s^2}}\partial_\varphi\}$ of the tangent space $T\mathbb{S}^2$.
  The Riemannian volume is given by $d\sigma_g=dsd\varphi$ since $(g_{ij})= \left( \begin{array}{cc} {1\over1-s^2} & 0 \\ 0& { 1-s^2} \end{array} \right)$. For a vector field $\mathbf{u}=u^1\mathbf{e}_s+u^2\mathbf{e}_\varphi=u^1\sqrt{1-s^2}\partial_s+{u^2\over \sqrt{1-s^2}}\partial_\varphi$, the directional derivative along $\mathbf{u}$ of a scalar-valued function $f$ is $\mathbf{u}\cdot\nabla f=\nabla_{\mathbf{u}}f=u^1\sqrt{1-s^2}\partial_s f+{u^2\over \sqrt{1-s^2}}\partial_\varphi f$.
   Let $D$ be the Levi-Civita connection on the Riemannian manifold $(\mathbb{S}^2,g)$.
   For a vector field $\mathbf{u}$, the divergence of $\mathbf{u}$ is defined as ${\rm{div}}(\mathbf{u})={\rm trace}(D\mathbf{u})$.
   In the local coordinates $(\varphi,s)$,
 ${\rm{div}}(\mathbf{u})={1\over \det(g_{ij})}\partial_s(\det(g_{ij})\sqrt{1-s^2}u^1)+{1\over \det(g_{ij})}\partial_\varphi\left(\det(g_{ij}){u^2\over \sqrt{1-s^2}}\right)=\partial_s(\sqrt{1-s^2}u^1)+\partial_\varphi\left({u^2\over \sqrt{1-s^2}}\right)$ for
 $\mathbf{u} =u^1\mathbf{e}_s+u^2\mathbf{e}_\varphi=u^1\sqrt{1-s^2}\partial_s+{u^2\over \sqrt{1-s^2}}\partial_\varphi$.
    The gradient of a scalar-valued function $f$ is defined as a vector field, denoted by $\nabla f$, satisfying  $g(\nabla f, \mathbf{w})=df(\mathbf{w})$
   for any smooth vector field $\mathbf{w}$, where $df$ is the differential defined as $df(\xi)=\xi(f)$ for any $\xi\in T\mathbb{S}^2$.
    Since $(g^{ij})= \left( \begin{array}{cc} 1-s^2 & 0 \\ 0& {1\over 1-s^2} \end{array} \right)$ in the local coordinates $(\varphi,s)$,  $\nabla f=\partial_sfg^{11}\partial_s+\partial_\varphi f g^{22}\partial_\varphi=(1-s^2)\partial_s f\partial_s+{1\over 1-s^2}\partial_\varphi f\partial_\varphi=\sqrt{1-s^2}\partial_s f\mathbf{e}_s+{1\over \sqrt{1-s^2}}\partial_\varphi f\mathbf{e}_\varphi$.
     The  orthogonal of  gradient is
     $\nabla^{\bot}f=J\nabla f$,
    where $J=\left( \begin{array}{cc} 0& 1 \\ -1& 0 \end{array} \right)$ is the counterclockwise rotation by $\pi/2$ in the basis  $(\mathbf{e}_s,\mathbf{e}_\varphi)$.
     The Laplace-Beltrami operator is
  $\Delta f=({\rm div}\circ\nabla) f$ and $\Delta f=\partial_s((1-s^2)\partial_s f)+\partial_\varphi^2\left({f\over 1-s^2}\right)$ in the local coordinates $(\varphi,s)$. The Christoffel symbols are explicitly given by $\Gamma_{11}^1={s\over 1-s^2}$, $\Gamma_{22}^1=s(1-s)^2$,
 $\Gamma_{21}^1=\Gamma_{12}^1=\Gamma_{11}^2=\Gamma_{22}^2=0$ and $\Gamma_{21}^2=\Gamma_{12}^2=-{s\over 1-s^2}$.

 Then we introduce the Sobolev spaces on $\mathbb{S}^2$. For the theory of Sobolev spaces on general compact Riemannian manifold, the readers are referred to \cite{Aubin1982,Aubin1998,Hebey1996,Hebey2000}. For  $f\in C^\infty(\mathbb{S}^2)$, we define
 $|\nabla^0f|=|f|, |\nabla^1f|=\left(g^{11}(\partial_s f)^2+g^{22}(\partial_\varphi f)^2\right)^{1\over2}=\left((1-s^2)(\partial_s f)^2+{1\over 1-s^2}(\partial_\varphi f)^2\right)^{1\over2},$ and $|\nabla^2f|=\left((g^{11}(\nabla^2 f)_{11})^2+2g^{11}g^{22}((\nabla^2 f)_{12})^2+(g^{22}(\nabla^2 f)_{22})^2\right)^{1\over2}=\bigg((1-s^2)^2(\partial_{s}^2f-{s\over 1-s^2}\partial_s f)^2+2(\partial_s\partial_\varphi f+{s\over 1-s^2}\partial_\varphi f)^2+{1\over (1-s^2)^2}(\partial_\varphi^2 f-s(1-s)^2\partial_s f)^2\bigg)^{1\over2}$, where
 $\nabla^k f$ denotes the $k$-th covariant derivative of $f$ for $k=0,1,2$, $(\nabla^2 f)_{11}=\partial_{s}^2f-\Gamma_{11}^1\partial_s f$, $(\nabla^2 f)_{12}=(\nabla^2 f)_{21}=\partial_s\partial_\varphi f-\Gamma_{12}^2\partial_\varphi f$ and $(\nabla^2 f)_{22}=\partial_\varphi^2 f-\Gamma_{22}^1\partial_s f$. For $f\in C^\infty(\mathbb{S}^2)$, set $\|\nabla^k f\|_{L^p(\mathbb{S}^2)}= \bigg(\int_{\mathbb{S}^2}|\nabla^kf|^pd\sigma_g\bigg)^{1\over p}$ and $\|f\|_{H_k^p(\mathbb{S}^2)}=\sum_{j=0}^k\|\nabla^j f\|_{L^p(\mathbb{S}^2)}$ for $p\geq1$ and $k=0,1,2$.
The Sobolev space $H_{k}^p(\mathbb{S}^2)$ is defined by the completion of $C^{\infty}(\mathbb{S}^2)$ with respect to $\|\cdot\|_{H_k^p}$ for $k=0,1,2$.
For a vector field $\mathbf{u}=u^1\mathbf{e}_s+u^2\mathbf{e}_\varphi$, we define a scalar-valued function $|\mathbf{u}|=(g(\mathbf{u},\mathbf{u}))^{1\over2}=(|u^1|^2+|u^2|^2)^{1\over2}$, and we say that $\mathbf{u}\in H_k^p(T\mathbb{S}^2)$ if $|\mathbf{u}|\in H_k^p(\mathbb{S}^2)$. Define
 $\|\mathbf{u}\|_{H_k^p(T\mathbb{S}^2)}=\big\||\mathbf{u}|\big\|_{H_k^p(\mathbb{S}^2)}$.
For $f\in C^m(\mathbb{S}^2)$, the norm is $\|f\|_{C^m(\mathbb{S}^2)}=\sum_{j=0}^m\max_{\mathbf{x}\in\mathbb{S}^2}|(\nabla^j f)(\mathbf{x})|$.
We need the following lemma.
\begin{Lemma}\label{lem-differential calculus}
{\rm(i)} (Sobolev embedding) $H_2^2(\mathbb{S}^2)$ is embedded in $H_1^p(\mathbb{S}^2)$ for any $1\leq p<\infty$.

{\rm(ii)} (Compact embedding) Let $q_0\geq2$. Then the embedding of $H_2^{q_0}(\mathbb{S}^2)$ in $H_1^p(\mathbb{S}^2)$ is compact for any $p\geq1$.

{\rm(iii)} (Poincar${\rm\acute{e}}$ inequality) Let $p\geq1$. Then for any $\psi\in H_1^p(\mathbb{S}^2)$ satisfying $\int_{\mathbb{S}^2}\psi d\sigma_{g}=0$, we have
\begin{align}\label{Poincare inequality nonlinear instability}
\|\psi\|_{L^p(\mathbb{S}^2)}\leq C\|\nabla \psi\|_{L^p(T\mathbb{S}^2)}.
\end{align}

{\rm(iv)} (Vector field version of Hodge decomposition) $L^p(T\mathbb{S}^2)$ has the following direct sum decomposition:
\begin{align*}
L^p(T\mathbb{S}^2)=H_p(\mathbb{S}^2)\oplus G_p(\mathbb{S}^2),
\end{align*}
where $H_p(\mathbb{S}^2)=\{\nabla^\bot\psi|\psi\in H_{1}^p(\mathbb{S}^2)\}$ and $G_p(\mathbb{S}^2)=\{\nabla\phi|\phi\in H_{1}^p(\mathbb{S}^2)\}$.

{\rm(v)} ($L^p$ boundedness of the Riesz transform and the Leray projection) For any $1<p<\infty$, the Riesz transform $\nabla\Delta^{-{1\over2}}$ is a bounded operator from $\{\psi\in L^p(\mathbb{S}^2)|\int_{\mathbb{S}^2}\psi d\sigma_g=0\}$ to $L^p(T\mathbb{S}^2)$, and the Leray projection $P_p=Id-\nabla\Delta^{-1}\nabla\cdot$ is a bounded operator from $L^p(T\mathbb{S}^2)$ to $H_p(\mathbb{S}^2)$.

{\rm (vi)} (Estimate $L^p$ norm of a solenoidal vector field by duality)
Let $\nabla^{\bot}\hat\psi\in H_p(\mathbb{S}^2)$ for $1<p<\infty$. If $curl (\nabla^{\perp}\hat\psi)=-{\rm div} (J\nabla^{\bot}\hat\psi)=\Delta\hat\psi\in L^p(\mathbb{S}^2)$, then
\begin{align*}
\|\nabla^{\bot}\hat\psi\|_{L^p(T\mathbb{S}^2)}\leq C_1\sup_{\psi\in H_1^{p'}(\mathbb{S}^2),\|\psi\|_{ H_1^{p'}(\mathbb{S}^2)}=1}\left|\int_{\mathbb{S}^2}\psi\Delta\hat\psi d\sigma_g\right|
\end{align*}
for some $C_1>0$, where $p'$ is the   H\"{o}lder conjugate number  of $p$.
\end{Lemma}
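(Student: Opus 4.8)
Each of the six assertions is a manifestation of the well-developed theory of Sobolev spaces and of elliptic (pseudo)differential operators on a closed Riemannian manifold, specialised to the standard sphere $\mathbb{S}^2$; the plan is to reduce every item to this theory together with a short, essentially two-dimensional, computation. Parts (i)--(iii) I would dispatch first. Since $\dim\mathbb{S}^2=2$, the Sobolev embedding theorem on a compact manifold (see \cite{Aubin1982,Hebey2000}) gives $H_2^2(\mathbb{S}^2)\hookrightarrow H_1^p(\mathbb{S}^2)$ for every $p<\infty$: control of $\nabla^2\psi$ in $L^2$ forces $\nabla\psi\in W^{1,2}$, and $W^{1,2}$ embeds into every $L^p$, $p<\infty$, in dimension two, which is (i). For (ii), the embedding $H_2^{q_0}\hookrightarrow H_1^p$ with $q_0\ge 2$ is subcritical for every finite $p$ (when $q_0>2$ one already has $H_2^{q_0}\hookrightarrow H_1^\infty$), so the Rellich--Kondrachov theorem on compact manifolds yields compactness; and (iii) is the Poincar\'e inequality for zero-mean functions, which follows from (ii) by the standard compactness--contradiction argument (or directly from the spectral gap of $-\Delta$ together with the $L^p$ bound on $\Delta^{-1}$ coming from (v)).

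\textbf{Hodge decomposition and Calder\'on--Zygmund bounds.} For (iv) and (v) I would use that $-\Delta$, and the Hodge Laplacian on $1$-forms, are positive elliptic operators with smooth coefficients on the closed manifold $\mathbb{S}^2$. Because $\mathbb{S}^2$ is simply connected, $H^1_{\mathrm{dR}}(\mathbb{S}^2)=0$, so there are no nontrivial harmonic $1$-forms; the $L^p$ Hodge decomposition for $1$-forms (see \cite{Aubin1998,Hebey2000}) then reads $\omega=d f + d^* h$ in $L^p$, which under the musical identification of $1$-forms with vector fields is exactly $L^p(T\mathbb{S}^2)=H_p(\mathbb{S}^2)\oplus G_p(\mathbb{S}^2)$ with $H_p=\{\nabla^{\perp}\psi\}$ and $G_p=\{\nabla\phi\}$, $\psi,\phi\in H_1^p(\mathbb{S}^2)$. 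For (v): on the mean-zero subspace, $\nabla\Delta^{-1/2}$ is a classical pseudodifferential operator of order $0$ on $\mathbb{S}^2$ (composition of $\nabla$, order $1$, with $\Delta^{-1/2}$, order $-1$), hence bounded on $L^p(\mathbb{S}^2)$ for $1<p<\infty$ by the standard Calder\'on--Zygmund / pseudodifferential $L^p$ theory on compact manifolds (Seeley, H\"ormander). Since $P_p=\mathrm{Id}-\nabla\Delta^{-1}\nabla\cdot=\mathrm{Id}-(\nabla\Delta^{-1/2})(\nabla\Delta^{-1/2})^{*}$, the boundedness of the Leray projection on $L^p(T\mathbb{S}^2)$ follows, and its range is $H_p(\mathbb{S}^2)$ by (iv).

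\textbf{The duality estimate (vi).} Here I would argue as follows. Since $|\nabla^{\perp}\psi|=|\nabla\psi|$ pointwise and one may always pass to the zero-mean representative of $\psi$, Poincar\'e (iii) and the $L^p$ boundedness of the Leray projection (v) show that if $\mathbf w\in L^{p'}(T\mathbb{S}^2)$ has Hodge decomposition $\mathbf w=\nabla^{\perp}\psi+\nabla\phi$, then $\|\psi\|_{H_1^{p'}(\mathbb{S}^2)}\le C\|\mathbf w\|_{L^{p'}(T\mathbb{S}^2)}$. Testing $\nabla^{\perp}\hat\psi$ against such $\mathbf w$, using $\mathrm{div}(\nabla^{\perp}\hat\psi)=0$ (so the $\nabla\phi$ component contributes nothing after integration by parts, which is justified by elliptic regularity $\Delta\hat\psi\in L^p\Rightarrow\hat\psi\in W^{2,p}$ and density) together with $\int_{\mathbb{S}^2}\nabla^{\perp}\psi\cdot\nabla^{\perp}\hat\psi\,d\sigma_g=\int_{\mathbb{S}^2}\nabla\psi\cdot\nabla\hat\psi\,d\sigma_g=-\int_{\mathbb{S}^2}\psi\,\Delta\hat\psi\,d\sigma_g$, one obtains
\[
\Big|\int_{\mathbb{S}^2}\mathbf w\cdot\nabla^{\perp}\hat\psi\,d\sigma_g\Big|=\Big|\int_{\mathbb{S}^2}\psi\,\Delta\hat\psi\,d\sigma_g\Big|\le C\|\mathbf w\|_{L^{p'}(T\mathbb{S}^2)}\sup_{\psi\in H_1^{p'}(\mathbb{S}^2),\,\|\psi\|_{H_1^{p'}(\mathbb{S}^2)}=1}\Big|\int_{\mathbb{S}^2}\psi\,\Delta\hat\psi\,d\sigma_g\Big|,
\]
and taking the supremum over $\|\mathbf w\|_{L^{p'}(T\mathbb{S}^2)}=1$ gives the claimed bound, by $L^p$--$L^{p'}$ duality for $\nabla^{\perp}\hat\psi\in H_p(\mathbb{S}^2)\subset L^p(T\mathbb{S}^2)$.

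\textbf{Main obstacle.} None of the steps is genuinely deep, but the part requiring the most care is (v) together with (vi): one must make sure the operators $\nabla\Delta^{-1/2}$, $\Delta^{-1}\nabla\cdot$ and $P_p$ are interpreted on the correct mean-zero subspaces, that the restriction $1<p<\infty$ is honoured everywhere the Calder\'on--Zygmund/pseudodifferential $L^p$ theory is invoked (the endpoints genuinely fail), and that the Hodge projections are controlled in $H_1^{p'}$ and not merely in $L^{p'}$ --- this last point is exactly what makes the duality bound in (vi) usable in the nonlinear instability argument of Section~6.
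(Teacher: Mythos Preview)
Your proposal is correct and follows essentially the same route as the paper: parts (i)--(iii) are deduced from the standard Sobolev, Rellich--Kondrachov and Poincar\'e/Sobolev--Poincar\'e theorems on compact manifolds (the paper cites specific results in \cite{Hebey2000}); part (iv) from the $L^p$ Hodge decomposition together with the vanishing of harmonic $1$-forms on $\mathbb{S}^2$ (the paper cites \cite{Scott1995,Li2009} and phrases this via the first Betti number); part (v) from Seeley/Strichartz-type boundedness of $d\Delta^{-1/2}$ and its adjoint; and part (vi) via exactly your duality argument, Hodge-decomposing the test field, using (v) to bound the $\nabla^{\perp}\psi$ piece, (iii) to pass from $\|\nabla\psi\|_{L^{p'}}$ to $\|\psi\|_{H_1^{p'}}$, and integrating by parts. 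The only cosmetic differences are that the paper handles (iii) by splitting $p<2$ versus $p\ge 2$ rather than by compactness--contradiction, and gives more precise literature pointers.
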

\begin{proof}
(i) For $1\leq q<2$, we have $H_2^2(\mathbb{S}^2)\subset H_2^q(\mathbb{S}^2)$.  By Theorem 2.6 in \cite{Hebey2000}, $H_2^q(\mathbb{S}^2)\subset H_1^p(\mathbb{S}^2)$ for $1\leq q<2$ and ${1\over p}={1\over q}-{1\over2}$. If $1\leq q<2$ runs through $[1,2)$, then $p$ runs through $[2,\infty)$. Thus,  $H_2^2(\mathbb{S}^2)\subset H_2^q(\mathbb{S}^2)\subset H_1^p(\mathbb{S}^2)$ for $1\leq p<\infty$.

(ii) By Theorem 2.9 (i) in \cite{Hebey2000} (with $j=m=1$, $q=2$), the embedding of $H_2^2(\mathbb{S}^2)$ in $H_1^p(\mathbb{S}^2)$ is compact for any $p\geq1$. Moreover, $H_2^{q_0}(\mathbb{S}^2)\subset H_2^2(\mathbb{S}^2)$ since $q_0>2$. Thus, the embedding of $H_2^{q_0}(\mathbb{S}^2)$ in $H_1^p(\mathbb{S}^2)$ is compact.

(iii) First, the inequality  \eqref{Poincare inequality nonlinear instability} for $p\in[1,2)$ is obtained  by the Poincar${\rm\acute{e}}$ inequality in Theorem 2.10 of  \cite{Hebey2000}. Then for $p\geq2$, by the Sobolev-Poincar${\rm\acute{e}}$ inequality in Theorem 2.11 of \cite{Hebey2000},  we have
$
\|\psi\|_{L^p(\mathbb{S}^2)}\leq C\|\nabla \psi\|_{L^q(T\mathbb{S}^2)}
$, where ${1\over q}={1\over2}+{1\over p}\Rightarrow q\in[1,2)$.
Thus, \eqref{Poincare inequality nonlinear instability} holds true due to the fact that $q<2\leq p$.

(iv) Let $d$ be the exterior differential operator, $d^*$ be the $L^2$-adjoint of $d$ with respect to the Riemannian volume measure $d\sigma_g$,  $L^p(\wedge^k T^*\mathbb{S}^2)$ be the $L^p$ space of $k$-forms on $(\mathbb{S}^2,g)$, ${\rm H}_{k,p}(\mathbb{S}^2)$ be the space of $L^p$-harmonic $k$-forms on $(\mathbb{S}^2,g)$, and $W^{1,p}(\wedge^jT^*\mathbb{S}^2)=\{\varpi\in L^p(\wedge^j T^*\mathbb{S}^2)|\|\varpi\|_{L^p}$ $+\|d\varpi\|_{L^p}+\|d^*\varpi\|_{L^p}<\infty\}$ for $j=0,2$.
 By Proposition 6.5 in \cite{Scott1995} (see also Theorem 1.2 in \cite{Li2009}),  $L^p(\wedge^1 T^*\mathbb{S}^2)$ has the Hodge direct sum decomposition $L^p(\wedge^1 T^*\mathbb{S}^2)={\rm H}_{1,p}(\mathbb{S}^2)\oplus dW^{1,p}(\mathbb{S}^2)\oplus d^*W^{1,p}(\wedge^2T^*\mathbb{S}^2)$.
 For the cohomology class containing $\mathbb{S}^2$, the $1$st Betti number is $0$, where the definition of  Betti number can be found in (8.49) of \cite{Taylor1996}. By the Hodge theory, the dimension of ${\rm H}_{1,p}(\mathbb{S}^2)$ is the same with the $1$st Betti number $0$
 (see \cite{Cheng-Mahalov2013}), that is, ${\rm H}_{1,p}(\mathbb{S}^2)=\{0\}$.
 Thus, for any $1$-form $\varpi\in L^p(\wedge^1 T^*\mathbb{S}^2)$, there exist two scalar-valued functions $\phi_\varpi, \psi_\varpi\in H_1^p(\mathbb{S}^2)$ such that $\varpi=d\phi_\varpi-*d\psi_\varpi$, where $*$ is the Hodge star operator.
 Using the musical isomorphism between $T\mathbb{S}^2$ and $T^*\mathbb{S}^2$ in (5.9)-(5.10) of \cite{Cheng-Mahalov2013}, we get the vector field version of Hodge decomposition: for any $\mathbf{u}\in L^p(T\mathbb{S}^2)$, there exist $\nabla\phi_{\mathbf{u}}\in G_p(\mathbb{S}^2)$ and $\nabla^\bot\psi_{\mathbf{u}}\in H_p(\mathbb{S}^2)$ such that $\mathbf{u}=\nabla\phi_{\mathbf{u}}+\nabla^\bot\psi_{\mathbf{u}}$.

 (v) Let $1< p< \infty$. The  boundedness of $\nabla\Delta^{-{1\over2}}$ follows  from the boundedness of $d\Delta^{-{1\over2}}$ from $L^p(\mathbb{S}^2)$ to $L^p(\wedge^1 T^*\mathbb{S}^2)$ in \cite{Seeley1967,Strichartz1983}. The adjoint of $d\Delta^{-{1\over2}}$ is $ \Delta^{-{1\over2}}d^*$, which is bounded from $L^p(\wedge^1 T^*\mathbb{S}^2)$ to $L^p(\mathbb{S}^2)$. Thus, $d\Delta d^*$ is bounded from $L^p(\wedge^1 T^*\mathbb{S}^2)$ to $L^p(\wedge^1 T^*\mathbb{S}^2)$. Combining with the musical isomorphism between $T\mathbb{S}^2$ and $T^*\mathbb{S}^2$, we infer that the Leray projection $P_p=Id-\nabla\Delta^{-1}\nabla\cdot$ is a bounded operator from $L^p(T\mathbb{S}^2)$ to $H_p(\mathbb{S}^2)$.

 (vi) By (iv), for any vector field $\mathbf{u}\in L^{p'}(T\mathbb{S}^2)$ with $\|\mathbf{u}\|_{L^{p'}(T\mathbb{S}^2)}=1$,
 there exist $\nabla^{\bot}\psi_{\mathbf{u}}\in H_{p'}(\mathbb{S}^2)$ and $\nabla\phi_{\mathbf{u}}\in G_{p'}(\mathbb{S}^2)$ such that
$\mathbf{u}=\nabla^{\bot}\psi_{\mathbf{u}}+\nabla\phi_{\mathbf{u}}$. By (v), $\|\nabla^{\bot}\psi_{\mathbf{u}}\|_{L^{p'}(T\mathbb{S}^2)}=\|P_{p'}\mathbf{u}\|_{L^{p'}(T\mathbb{S}^2)}\leq \|P_{p'}\|_{op}$. Moreover, for
$\psi\in  H_1^{p'}(\mathbb{S}^2)$ satisfying $\|\nabla^{\bot}\psi\|_{L^{p'}(T\mathbb{S}^2)}=1$,
by (iii) we have $\|\psi\|_{L^{p'}(\mathbb{S}^2)}\leq  C \|\nabla^{\bot}\psi\|_{L^{p'}(T\mathbb{S}^2)}\leq C$ and thus, $\|\psi\|_{H_1^{p'}(\mathbb{S}^2)}\leq \tilde C_1$ for some $\tilde C_1>0$.
 Combining these facts, we have
\begin{align*}
\|\nabla^{\bot}\hat\psi\|_{L^p(T\mathbb{S}^2)}=&\sup_{\mathbf{u}\in L^{p'}(T\mathbb{S}^2),
\|\mathbf{u}\|_{ L^{p'}(T\mathbb{S}^2)}=1}\int_{\mathbb{S}^2}\nabla^{\bot}\hat\psi\cdot\mathbf{u} d\sigma_g\\
=&\sup_{\mathbf{u}\in L^{p'}(T\mathbb{S}^2),\|\mathbf{u}\|_{ L^{p'}(T\mathbb{S}^2)}=1}\int_{\mathbb{S}^2}\nabla^{\bot}\hat\psi\cdot\nabla^{\bot}\psi_{\mathbf{u}} d\sigma_g\\
\leq&\sup_{\nabla^{\bot}\psi\in H_{p'}(\mathbb{S}^2),\|\nabla^{\bot}\psi\|_{ L^{p'}(T\mathbb{S}^2)}\leq \|P_{p'}\|_{op}}\int_{\mathbb{S}^2}\nabla^{\bot}\hat\psi\cdot\nabla^{\bot}\psi d\sigma_g\\
=&\|P_{p'}\|_{op}\sup_{\nabla^{\bot}\psi\in H_{p'}(\mathbb{S}^2),\|\nabla^{\bot}\psi\|_{ L^{p'}(T\mathbb{S}^2)}=1}\int_{\mathbb{S}^2}\nabla^{\bot}\hat\psi\cdot\nabla^{\bot}\psi d\sigma_g\\
\leq&\|P_{p'}\|_{op}\sup_{\psi\in H_1^{p'}(\mathbb{S}^2),\|\psi\|_{H_1^{p'}(\mathbb{S}^2)}\leq \tilde C_1}\int_{\mathbb{S}^2}\nabla\hat\psi\cdot\nabla\psi d\sigma_g\\
\leq&\tilde C_1\|P_{p'}\|_{op}\sup_{\psi\in H_1^{p'}(\mathbb{S}^2),\|\psi\|_{H_1^{p'}(\mathbb{S}^2)}=1}\left|\int_{\mathbb{S}^2}\Delta\hat\psi\psi d\sigma_g\right|,
\end{align*}
where we used $\int_{\mathbb{S}^2}\nabla^\bot\hat\psi\cdot\nabla\phi_{\mathbf{u}} d\sigma_g=-\int_{\mathbb{S}^2}{\rm div}(\nabla^\bot\hat\psi)\phi_{\mathbf{u}} d\sigma_g=0$ in the second equality.
The proof is complete by setting $C_1=\tilde C_1\|P_{p'}\|_{op}$.
 \end{proof}
\subsubsection{The averaging Lyapunov exponent}
Let ${\mathbf{u}}_G$ be a $C^1$ steady flow with finite
 stagnation points.
 The corresponding  stream function and vorticity are denoted by $\psi_G$ and $\Omega_G$.
Let ${\mathbf{X}}_G(t,\mathbf{x})$ be the particle  trajectory induced by the steady velocity field ${\mathbf{u}}_G$ satisfying
\begin{align}\label{the trajectory induced by the steady velocity field}
\left\{ \begin{array}{rcl} {d{\mathbf{X}}_G\over dt}&=&{\mathbf{u}}_G({\mathbf{X}}_G(t,\mathbf{x})),  \\ {\mathbf{X}}_G(0,\mathbf{x})&=&\mathbf{x}.\end{array} \right.
\end{align}
For the $C^1$ mapping ${\mathbf{X}}_G(t,\cdot):\mathbb{S}^2\to\mathbb{S}^2$, we denote its tangent mapping at $\mathbf{x}$ by $d{\mathbf{X}}_G(t,\mathbf{x}):T_{\mathbf{x}}\mathbb{S}^2\to T_{{\mathbf{X}}_G(t,\mathbf{x})}\mathbb{S}^2$. For $\xi\in T_{\mathbf{x}}\mathbb{S}^2$, it is defined by $(d{\mathbf{X}}_G(t,\mathbf{x})\xi)(f)=\xi(f\circ{\mathbf{X}}_G(t,\mathbf{x}))$, $\forall f\in C_{{\mathbf{X}}_G(t,\mathbf{x})}^\infty$.
The adjoint map  of $d{\mathbf{X}}_G(t,\mathbf{x})$ is denoted by $(d{\mathbf{X}}_G(t,\mathbf{x}))^*: T_{{\mathbf{X}}_G(t,\mathbf{x})}\mathbb{S}^2\to T_{\mathbf{x}}\mathbb{S}^2$, which is characterized by the relation
\begin{align*}
g_{{\mathbf{X}}_G(t,\mathbf{x})}(\xi_2,d{\mathbf{X}}_G(t,\mathbf{x})\xi_1)
=g_{\mathbf{x}}((d{\mathbf{X}}_G(t,\mathbf{x}))^*\xi_2,\xi_1)
\end{align*}
for any $\xi_1\in T_\mathbf{x}\mathbb{S}^2$ and $\xi_2\in T_{{\mathbf{X}}_G(t,\mathbf{x})}\mathbb{S}^2$.
Let
\begin{align}\label{def-tangent-mapping-norm}
|(d{\mathbf{X}}_G(t,\mathbf{x}))^*|_{op}=\sup_{0\neq\xi\in T_{{\mathbf{X}}_G(t,\mathbf{x})}\mathbb{S}^2}{\left(g_{\mathbf{x}}((d{\mathbf{X}}_G(t,\mathbf{x}))^*\xi,(d{\mathbf{X}}_G(t,\mathbf{x}))^*\xi)\right)^{1\over2}\over \left(g_{{\mathbf{X}}_G(t,\mathbf{x})}(\xi,\xi)\right)^{1\over2}}
\end{align}
for $\mathbf{x}\in\mathbb{S}^2$.
The Lyapunov exponent of the flow \eqref{the trajectory induced by the steady velocity field} generated by the steady flow ${\mathbf{u}}_G$
is defined by
\begin{align}\label{def-Lyapunov exponent}
\mu=&\sup_{\mathbf{x}\in\mathbb{S}^2}\lim_{|t|\to\infty}{1\over |t|}\ln\left(|(d{\mathbf{X}}_G(t,\mathbf{x}))^*|_{op}\right),
\end{align}
and the averaging Lyapunov exponent is defined by
\begin{align}\label{def-averaging Lyapunov exponent}
\mu_{\rm av}=\lim_{|t|\to\infty}{1\over |t|}\ln\left(\int_{\mathbb{S}^2}|(d{\mathbf{X}}_G(t,\mathbf{x}))^*|_{op}d\sigma_g\right).
\end{align}
Note that   $\mu>0$ only if the stream function $\psi_G$ has at least one nondegenerate saddle point. Moreover, $|(d{\mathbf{X}}_G(t,\mathbf{x}))^*|_{op}$ has exponent growth only on the set of all nondegenerate saddle points of $\psi_G$ and the trajectories connecting them, while has only linear growth on other trajectories. Then we prove that the averaging Lyapunov exponent is zero.
\begin{Lemma}\label{averaging Lyapunov exponent is zero}
For a $C^1$ steady flow ${\mathbf{u}}_G$ 
with finite
 stagnation points,
we have $\mu_{\rm av}=0$.
\end{Lemma}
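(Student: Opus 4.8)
<br>

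The plan is to show that the integral $\int_{\mathbb{S}^2}|(d\mathbf{X}_G(t,\mathbf{x}))^*|_{op}\,d\sigma_g$ grows at most polynomially in $|t|$, so that after taking $\frac{1}{|t|}\ln(\cdot)$ and letting $|t|\to\infty$ we obtain $\mu_{\rm av}=0$. The starting point is the structural dichotomy already noted in the text: since $\mathbf{u}_G$ is $C^1$ with finitely many stagnation points, the operator norm $|(d\mathbf{X}_G(t,\mathbf{x}))^*|_{op}$ grows exponentially in $t$ only for those $\mathbf{x}$ lying on the (measure-zero) set $\mathcal{S}$ consisting of the nondegenerate saddle points of $\psi_G$ together with the stable and unstable separatrices connecting them; for every other $\mathbf{x}$ it grows at most linearly. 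First I would make this precise: away from $\mathcal{S}$ the trajectory $\mathbf{X}_G(t,\mathbf{x})$ either is periodic (lying on a closed streamline surrounding a center) or limits onto a saddle along a separatrix; in both cases the linearization of the area-preserving flow along the orbit is conjugate to a shear, giving $|(d\mathbf{X}_G(t,\mathbf{x}))^*|_{op}\le C(\mathbf{x})(1+|t|)$. The issue is that $C(\mathbf{x})$ blows up as $\mathbf{x}\to\mathcal{S}$, so the bound must be made quantitative near $\mathcal{S}$ and then integrated.

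The key technical device is to use the intrinsic, coordinate-free formulation together with a well-chosen coordinate chart. Here is where the ``choose the poles away from the dangerous region'' remark from the introduction enters: the set $\mathcal{S}$ is compact and has empty interior, so I can select the two charts $(\mathbb{S}^2\setminus\Gamma,\zeta)$ and $(\mathbb{S}^2\setminus\tilde\Gamma,\tilde\zeta)$ of Subsection~\ref{Differential calculus on S2} so that the exceptional meridians $\Gamma,\tilde\Gamma$ (hence the stretched boundary lines $\{s=\pm1\}$, $\{\tilde s=\pm1\}$) do not meet $\mathcal{S}$ and do not meet any separatrix. In such a chart the flow is an autonomous planar Hamiltonian system with Hamiltonian $\psi_G$, the metric coefficients $g_{11},g_{22}$ are bounded above and below on a neighborhood of $\mathcal{S}$, and $|(d\mathbf{X}_G(t,\mathbf{x}))^*|_{op}$ is comparable (up to the bounded metric factors) to the Euclidean operator norm of the Jacobian matrix $\partial\mathbf{X}_G/\partial\mathbf{x}$. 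I would then quote or reprove the classical planar fact for area-preserving flows with finitely many nondegenerate saddles: there are constants $C,\mu_0>0$ such that $\|D\mathbf{X}_G(t,\mathbf{x})\|\le C\,e^{\mu_0\,\mathrm{dist}(\mathbf{x},\mathcal{S})^{-1}}(1+|t|)$ uniformly (or, more carefully, a bound of the form $C\,(1+|t|)^{1+\epsilon/\mathrm{dist}(\mathbf{x},\mathcal{S})}$ coming from the logarithmic time a near-separatrix orbit spends near each saddle). The crucial point is only that the growth is \emph{sub-exponential in $t$ for each fixed $\mathbf{x}\notin\mathcal{S}$}, and that for fixed $t$ the function $\mathbf{x}\mapsto|(d\mathbf{X}_G(t,\mathbf{x}))^*|_{op}$ is integrable with integral $\le C(1+|t|)^{N}$ for some fixed $N$ independent of $t$.

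Concretely, the steps in order would be: (1) isolate the nondegenerate saddles $p_1,\dots,p_m$ of $\psi_G$ and small disjoint geodesic balls $B_\rho(p_j)$ on which $\psi_G$ is, after a $C^1$ change of coordinates, the standard saddle $xy$; on $B_\rho(p_j)$ the passage-time and the amplification of $D\mathbf{X}_G$ are estimated explicitly in terms of how close the entering streamline is to the level $\{\psi_G=\psi_G(p_j)\}$. (2) Outside $\bigcup_j B_\rho(p_j)$ and outside a tubular neighborhood of the separatrices, the speed $|\mathbf{u}_G|$ is bounded below, streamlines are uniformly transverse, and a Gronwall estimate on the variational equation $\dot V=(D\mathbf{u}_G)(\mathbf{X}_G)V$ gives $\|V(t)\|\le \|V(0)\|\,e^{\|D\mathbf{u}_G\|_{L^\infty}\,\tau_{\rm danger}(t,\mathbf{x})}$, where $\tau_{\rm danger}$ is the total time the orbit spends in the saddle balls; combining with (1) shows $\tau_{\rm danger}\le C\log(1+|t|)$ away from $\mathcal{S}$, hence polynomial-in-$t$ growth. (3) Integrate over $\mathbb{S}^2$: splitting the integral into the saddle balls, a thin tube $\mathcal{T}_\delta$ around the separatrices of width $\delta$, and the rest, the contribution of the rest is $O((1+|t|)^N)$ by (2), the contribution of $\mathcal{T}_\delta$ is $O(\delta)\cdot e^{C|t|}$ but with $\delta=\delta(t)=e^{-C|t|}$ chosen shrinking (legitimate since the integrand is bounded pointwise for fixed $t$, the orbit through any $\mathbf{x}\notin\mathcal{S}$ being sub-exponential) it is $O(1)$, and the saddle balls contribute polynomially as well. (4) Therefore $\int_{\mathbb{S}^2}|(d\mathbf{X}_G(t,\mathbf{x}))^*|_{op}\,d\sigma_g\le C(1+|t|)^{N}$, so $\frac{1}{|t|}\ln(\cdot)\to 0$ and $\mu_{\rm av}=0$.

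The main obstacle, as in the flat-space argument of \cite{lin2004}, is step (3)/(1): controlling the near-separatrix region. One must quantify, uniformly in the (finitely many) saddles and uniformly for orbits whose energy is within $\delta$ of a saddle level, both the dwell time near the saddle ($\sim\log(1/\delta)$) and the resulting amplification of the linearized flow ($\sim\delta^{-c}$ for some controlled $c$), and then verify that these estimates interact with the shrinking-tube trick so that the exponentially large pointwise bound on $\mathcal{T}_\delta$ is killed by the exponentially small measure $|\mathcal{T}_{\delta(t)}|$. Handling this intrinsically on $\mathbb{S}^2$ rather than on a fixed rectangle requires the freedom to place $\Gamma,\tilde\Gamma$ off of $\mathcal{S}$ and to patch the two charts with the uniform metric comparability from Subsection~\ref{Differential calculus on S2}; once that bookkeeping is in place the estimate reduces to the planar one. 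Everything else — the Gronwall bound, the compactness of $\mathcal{S}$, the measure-zero of $\mathcal{S}$ — is routine.
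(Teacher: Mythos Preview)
Your overall architecture---place coordinate poles away from the separatrix set $\mathcal{S}$, reduce to a planar Hamiltonian flow, and show that the integral of $|(d\mathbf{X}_G)^*|_{op}$ grows sub-exponentially---matches the paper's. But steps (2)--(3) as written do not close. The Gronwall estimate $\|V(t)\|\le\|V(0)\|\,e^{\|D\mathbf{u}_G\|_{L^\infty}\tau_{\rm danger}}$ is not what the variational equation gives: the exponent is $\int_0^t\|D\mathbf{u}_G(\mathbf{X}_G(s))\|\,ds$, which is of order $t$ since $D\mathbf{u}_G$ does not vanish outside the saddle balls. The claim $\tau_{\rm danger}\le C\log(1+|t|)$ is also false: for a periodic orbit at energy $\delta$ from a saddle level, the period is $\sim\log(1/\delta)$ and a bounded-below fraction of it is spent inside the saddle ball, so $\tau_{\rm danger}$ is linear in $t$. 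And the shrinking-tube trick in step (3) fails for the same reason: the correct linear-in-$t$ bound on $|(d\mathbf{X}_G(t,\mathbf{x}))^*|_{op}$ for $\mathbf{x}$ at distance $\delta$ from $\mathcal{S}$ carries a prefactor blowing up like a power of $1/\delta$ (from the monodromy shear $\sim T'(\delta)$), so with $\delta(t)=e^{-C|t|}$ ``the rest'' contributes $e^{C'|t|}(1+|t|)$, not $(1+|t|)^N$.

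The paper sidesteps all of this by a \emph{fixed} ($t$-independent) energy-band decomposition $D_{\varepsilon_i}^i=\{|\psi_G-\psi_G(\mathbf{x}_i)|<\varepsilon_i\}$, one per saddle. For each $i$ it chooses rotated poles $\pm\mathbf{z}_i$ lying outside the flow-invariant set $\overline{D_{\varepsilon_i}^i}$, so that on $\overline{D_{\varepsilon_i}^i}$ the metric factors are uniformly bounded and $|(d\mathbf{X}_G)^*|_{op}$ is comparable to the Euclidean Jacobian norm $|\mathcal{J}(t,\hat\varphi,\hat s)|$; it then simply \emph{cites} estimate (2.1) of \cite{lin2004}, which already gives $\int_{D_{\varepsilon_i}^i}|\mathcal{J}(t)|\le C|t|+C$ for planar incompressible flows. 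Outside $\cup_i D_{\varepsilon_i}^i$ the pointwise Lyapunov exponent is zero, so for every $\alpha>0$ one has a uniform bound $C_\alpha e^{\alpha|t|}$ there. Adding the two pieces and letting $\alpha\downarrow 0$ gives $\mu_{\rm av}=0$. The non-trivial content of your steps (1)--(3) is exactly Lin's planar integral estimate; you should invoke it rather than attempt an ad hoc substitute, and use the $\alpha$-trick (not a polynomial bound) on the complement.
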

\begin{proof}
If ${\mathbf{u}}_G$ is  zonal, then $\psi_G$ has no nondegenerate saddle points, and thus, $|(d{\mathbf{X}}_G(t,\mathbf{x}))^*|_{op}$ has only linear growth on all trajectories. This implies $\mu_{\rm av}=\mu=0$.

If ${\mathbf{u}}_G$ is non-zonal, then we only need to study the growth of the integral $  \int|(d{\mathbf{X}}_G(t,\mathbf{x}))^*|_{op}d\sigma_g$ in the region near nondegenerate saddle points and the trajectories connecting them. The number of nondegenerate saddle points of $\psi_G$ is finite since such points are isolated and $\mathbb{S}^2$ is compact. We denote the nondegenerate saddle points by $\mathbf{x}_1,\cdots,\mathbf{x}_q$. We only consider the point $\mathbf{x}_1$ below since other points can be treated similarly.
 Define $D_{\varepsilon_1}^1=\{\mathbf{x}\in\mathbb{S}^2|\psi_G(\mathbf{x}_1)-\varepsilon_1<\psi_G(\mathbf{x})<\psi_G(\mathbf{x}_1)+\varepsilon_1\}$, where $\varepsilon_1>0$ is small enough so that $\psi_G(\mathbf{x}_1)$ is the only critical value in the interval   $[\psi_G(\mathbf{x}_1)-\varepsilon_1,\psi_G(\mathbf{x}_1)+\varepsilon_1]$ and there exists $\mathbf{z}_1\in \mathbb{S}^2$ such that $\mathbf{z}_1,-\mathbf{z}_1\notin \overline{\{{\mathbf{X}}_G(t,\mathbf{x})|\mathbf{x}\in\overline{D_{\varepsilon_1}^1},t\in\mathbb{R}\}}$.
We rotate the original Cartesian coordinate system $O$-$x y z$ so that in the new coordinate system $O$-$\hat x \hat y \hat z$,  the direction from $-\mathbf{z}_1$ to $\mathbf{z}_1$ is the positive direction of the vertical axis $\hat z$.
  With $\mathbf{z}_1$ and $-\mathbf{z}_1$ as the North and South poles, we can establish the latitude-longitude spherical coordinates $(\hat\varphi,\hat\theta)\in(-\pi,\pi)\times(-{\pi\over2},{\pi\over2})$ on $\mathbb{S}^2$. By setting $\hat s=\sin(\hat\theta)$, we instead use  the geographic coordinates $(\hat\varphi,\hat s)\in(-\pi,\pi]\times(-{\pi\over2},{\pi\over2})$, where we supplementarily define $\{\hat\varphi=\pi,\hat s\neq\pm1\}$  by a natural extension.
 Define $\hat\zeta(\mathbf{x})=\hat\zeta((\hat x,\hat y,\hat z))=\hat\zeta((\cos(\hat\varphi)\sqrt{1-\hat s^2},\sin(\hat \varphi)\sqrt{1-\hat s^2},\hat s))\triangleq(\hat \varphi,\hat s)$ from $\mathbb{S}^2\setminus\{\mathbf{z}_1,-\mathbf{z}_1\}$ to $(-\pi,\pi]\times(-1,1)$.
   For $\mathbf{x}=\hat\zeta^{-1}((\hat\varphi,\hat s))\in \overline{D_{\varepsilon_1}^1}$, the particle trajectory $\hat\zeta({\mathbf{X}}_G(t,\mathbf{x}))=(\hat \varphi_L(t;\hat\varphi,\hat s),\hat s_L(t;\hat\varphi,\hat s))$, $t\in\mathbb{R}$, does not touch the poles $\mathbf{z}_1,-\mathbf{z}_1$ and fully lies  inside $(-\pi,\pi]\times(-{\pi\over2},{\pi\over2})$.
 For $\mathbf{x}=\hat\zeta^{-1}((\hat\varphi,\hat s))\in \overline{D_{\varepsilon_1}^1}$, $\hat\xi=\hat\xi^1\partial_{\hat s}+\hat\xi^2\partial_{\hat\varphi}\in T_{\mathbf{x}}\mathbb{S}^2$ and $d{\mathbf{X}}_G(t,\mathbf{x})\hat\xi=(d{\mathbf{X}}_G(t,\mathbf{x})\hat\xi)^1\partial_{\hat s_L}+(d{\mathbf{X}}_G(t,\mathbf{x})\hat \xi)^2\partial_{\hat\varphi_L}\in T_{{\mathbf{X}}_G(t,\mathbf{x})}\mathbb{S}^2$, by Lemma 1.29 in \cite{Grigoryan2024} we have
\begin{align*}
\left( \begin{array}{cc} (d{\mathbf{X}}_G(t,\mathbf{x})\hat\xi)^1 \\  (d{\mathbf{X}}_G(t,\mathbf{x})\hat\xi)^2 \end{array} \right)=\left( \begin{array}{cc}
 \partial_{\hat s} \hat s_L(t,\hat \varphi,\hat s)&\partial_{\hat\varphi} \hat s_L(t,\hat\varphi, \hat s) \\  \partial_{\hat s}\hat \varphi_L(t,\hat\varphi,\hat s) &\partial_{\hat\varphi}{\hat\varphi}_L(t,\hat\varphi,\hat s) \end{array} \right)\left( \begin{array}{cc} \hat\xi^1\\ \hat\xi^2 \end{array} \right)\triangleq\mathcal{J}(t,\hat\varphi,\hat s)\left( \begin{array}{cc} \hat\xi^1\\ \hat\xi^2 \end{array} \right).
\end{align*}
By the local coordinate expression $\mathbf{x}=(\hat x,\hat y,\hat z)=( \cos(\hat\varphi)\sqrt{1-\hat s^2},\sin(\hat\varphi)\sqrt{1-\hat s^2},\hat s)$, we can compute the Riemannian metric of $\mathbb{S}^2$ to be
$\hat g|_{\mathbb{S}^2\setminus\{\hat\varphi=\pi\}}={1\over 1-\hat s^2}d\hat s^2+(1-\hat s^2)d\hat \varphi^2$. The orthonormal
basis of the tangent space $T\mathbb{S}^2$ is  $\{\mathbf{e}_{\hat s}=\sqrt{1-\hat s^2}\partial_{\hat s},\mathbf{e}_{\hat \varphi}={1\over\sqrt{1-\hat s^2}}\partial_{\hat \varphi}\}$.
Since
\begin{align*}
\hat g_{{\mathbf{X}}_G(t,\mathbf{x})}(\hat\xi_2,d{\mathbf{X}}_G(t,\mathbf{x})\hat\xi_1)
=\hat g_{\mathbf{x}}((d{\mathbf{X}}_G(t,\mathbf{x}))^*\hat\xi_2,\hat\xi_1)
\end{align*}
 for any $\hat\xi_1=\hat\xi_1^1\partial_{\hat s}+\hat\xi_1^2\partial_{\hat\varphi}={1\over \sqrt{1-\hat s^2}}\hat\xi_1^1 \mathbf{e}_{\hat s}+\sqrt{1-\hat s^2}\hat\xi_1^2\mathbf{e}_{\hat \varphi}\in T_\mathbf{x}\mathbb{S}^2$ and $\hat\xi_2=\hat\xi_2^1\partial_{\hat s_L}+\hat\xi_2^2\partial_{\hat\varphi_L}={1\over \sqrt{1-\hat s_L^2}}\hat\xi_2^1 \mathbf{e}_{\hat s_L}+\sqrt{1-\hat s_L^2}\hat\xi_2^2\mathbf{e}_{\hat \varphi_L}\in T_{{\mathbf{X}}_G(t,\mathbf{x})}\mathbb{S}^2$,
in the geographic coordinates $(\hat\varphi,\hat s)$ we have
\begin{align*}
&\left( \begin{array}{cc}  {1\over \sqrt{1-\hat s_L^2}}\hat\xi_2^1\\\sqrt{1-\hat s_L^2}\hat\xi_2^2 \end{array} \right)\cdot\left( \begin{array}{cc} {1\over\sqrt{1-\hat s_L^2}}&0\\0&\sqrt{1-\hat s_L^2}  \end{array} \right)\mathcal{J}(t,\hat\varphi,\hat s)\left( \begin{array}{cc} \hat\xi_1^1\\ \hat\xi_1^2\end{array} \right)\\
=&\left( \begin{array}{cc} {\sqrt{1-\hat s^2}}&0\\0&{1\over\sqrt{1-\hat s^2}}  \end{array} \right)(\mathcal{J}(t,\hat\varphi,\hat s))^*\left( \begin{array}{cc} {1\over{1-\hat s_L^2}}&0\\0&{1-\hat s_L^2}  \end{array} \right)\left( \begin{array}{cc} \hat\xi_2^1\\ \hat\xi_2^2\end{array} \right)\cdot\left( \begin{array}{cc}{1\over \sqrt{1-\hat s^2}}\hat\xi_1^1\\ \sqrt{1-\hat s^2}\hat\xi_1^2\end{array} \right),
\end{align*}
 where $(\hat\varphi_L,\hat s_L)=(\hat\varphi_L(t),\hat s_L(t))$, the left hand side is in the basis $\{\mathbf{e}_{\hat s_L}, \mathbf{e}_{\hat\varphi_L}\}$, and the right hand side is in the basis $\{\mathbf{e}_{\hat s}, \mathbf{e}_{\hat \varphi}\}$. Thus,
\begin{align*}
&(d{\mathbf{X}}_G(t,\mathbf{x}))^*\xi\\
=&\left( \begin{array}{cc} {\sqrt{1-\hat s^2}}&0\\0&{1\over\sqrt{1-\hat s^2}}  \end{array} \right)(\mathcal{J}(t,\hat\varphi,\hat s))^*\left( \begin{array}{cc} {1\over{1-\hat s_L^2}}&0\\0&{1-\hat s_L^2}  \end{array} \right)\left( \begin{array}{cc} \xi^1\\ \xi^2\end{array} \right)\cdot
\left( \begin{array}{cc}\mathbf{e}_{\hat s}\\\mathbf{e}_{\hat\varphi}\end{array} \right)\\
=&\left( \begin{array}{cc} {\sqrt{1-\hat s^2}}&0\\0&{1\over\sqrt{1-\hat s^2}}  \end{array} \right)(\mathcal{J}(t,\hat\varphi,\hat s))^*\left( \begin{array}{cc} {1\over{\sqrt{1-\hat s_L^2}}}&0\\0&\sqrt{1-\hat s_L^2}  \end{array} \right)\left( \begin{array}{cc} {1\over \sqrt{1-\hat s_L^2}}\xi^1\\ \sqrt{1-\hat s_L^2}\xi^2\end{array} \right)\cdot
\left( \begin{array}{cc}\mathbf{e}_{\hat s}\\\mathbf{e}_{\hat\varphi}\end{array} \right)
\end{align*}
for $\xi=\xi^1\partial_{\hat s_L}+\xi^2\partial_{\hat\varphi_L}={1\over \sqrt{1-\hat s_L^2}}\xi^1 \mathbf{e}_{\hat s_L}+\sqrt{1-\hat s_L^2}\xi^2\mathbf{e}_{\hat \varphi_L}\in T_{{\mathbf{X}}_G(t,\mathbf{x})}\mathbb{S}^2$. Since $\mathbb{S}^2$ is compact, there exists $C>1$ such that
${1\over C}g\leq \hat g \leq C g$ in the sense of bilinear forms.
By the choice of the North and South poles, we know that the distance between the two poles and $ \overline{\{{\mathbf{X}}_G(t,\mathbf{x})|\mathbf{x}\in\overline{D_{\varepsilon_1}^1},t\in\mathbb{R}\}}$ is positive. This implies that there exists $C>1$ such that
\begin{align*}
C^{-1}<{1\over {\sqrt{1-\hat s^2}}},{\sqrt{1-\hat s^2}},{1\over {\sqrt{1-\hat s_L(t)^2}}},{\sqrt{1-\hat s_L(t)^2}}<C
\end{align*}
uniformly for $\mathbf{x}=\hat\zeta^{-1}((\hat \varphi,\hat s))\in\overline{D_{\varepsilon_1}^1}$ and $t\in\mathbb{R}$,
where $(\hat \varphi_L(t),\hat s_L(t))=\hat\zeta({\mathbf{X}}_G(t,\mathbf{x}))$. Define $|\mathcal{J}(t,\hat\varphi,\hat s)|\triangleq
\left(| \partial_{\hat s} \hat s_L(t,\hat \varphi,\hat s)|^2+|\partial_{\hat\varphi} \hat s_L(t,\hat\varphi, \hat s)|^2 +|\partial_{\hat s}\hat \varphi_L(t,\hat\varphi,\hat s)|^2+|\partial_{\hat\varphi}{\hat\varphi}_L(t,\hat\varphi,\hat s)|^2\right)^{1\over2}$ for $\mathbf{x}=\hat\zeta^{-1}((\hat \varphi,\hat s))\in\mathbb{S}^2\setminus\{\mathbf{z}_1,-\mathbf{z}_1\}$ and $t\in\mathbb{R}$.
 Then
\begin{align*}
&\left(g_{\mathbf{x}}((d{\mathbf{X}}_G(t,\mathbf{x}))^*\xi,(d{\mathbf{X}}_G(t,\mathbf{x}))^*\xi)\right)^{1\over2}\leq C \left(\hat g_{\mathbf{x}}((d{\mathbf{X}}_G(t,\mathbf{x}))^*\xi,(d{\mathbf{X}}_G(t,\mathbf{x}))^*\xi)\right)^{1\over2}\\
=&C\bigg(\left( \begin{array}{cc} {\sqrt{1-\hat s^2}}&0\\0&{1\over\sqrt{1-\hat s^2}}  \end{array} \right)(\mathcal{J}(t,\hat\varphi,\hat s))^*\left( \begin{array}{cc} {1\over{1-\hat s_L^2}}&0\\0&{1-\hat s_L^2}  \end{array} \right)\left( \begin{array}{cc} \xi^1\\ \xi^2\end{array} \right)\cdot\\
&\left( \begin{array}{cc} {\sqrt{1-\hat s^2}}&0\\0&{1\over\sqrt{1-\hat s^2}}  \end{array} \right)(\mathcal{J}(t,\hat\varphi,\hat s))^*\left( \begin{array}{cc} {1\over{1-\hat s_L^2}}&0\\0&{1-\hat s_L^2}  \end{array} \right)\left( \begin{array}{cc} \xi^1\\ \xi^2\end{array} \right)\bigg)^{1\over2}\\
\leq&C\bigg((\mathcal{J}(t,\hat\varphi,\hat s))^*\left( \begin{array}{cc} \xi^1\\ \xi^2\end{array} \right)\cdot(\mathcal{J}(t,\hat\varphi,\hat s))^*\left( \begin{array}{cc} \xi^1\\ \xi^2\end{array} \right)\bigg)^{1\over2}
\leq C|\mathcal{J}(t,\hat\varphi,\hat s)|(|\xi^1|^2+|\xi^2|^2)^{1\over2}\\
\leq& C|\mathcal{J}(t,\hat\varphi,\hat s)|\left(\hat g_{{\mathbf{X}}_G(t,\mathbf{x})}(\xi,\xi)\right)^{1\over2}
\leq C|\mathcal{J}(t,\hat\varphi,\hat s)|\left( g_{{\mathbf{X}}_G(t,\mathbf{x})}(\xi,\xi)\right)^{1\over2}
\end{align*}
for any $\xi=\xi^1\partial_{\hat s_L}+\xi^2\partial_{\hat\varphi_L}\in T_{{\mathbf{X}}_G(t,\mathbf{x})}\mathbb{S}^2$, where the constant $C$ is independent of $\mathbf{x}=\hat\zeta^{-1}((\hat \varphi,\hat s))\in\overline{D_{\varepsilon_1}^1}$ and $t\in\mathbb{R}$.
Thus, by \eqref{def-tangent-mapping-norm} we have
\begin{align*}
|(d{\mathbf{X}}_G(t,\mathbf{x}))^*|_{op}\leq C|\mathcal{J}(t,\hat\varphi,\hat s)|
\end{align*}
uniformly for $\mathbf{x}=\hat\zeta^{-1}((\hat \varphi,\hat s))\in\overline{D_{\varepsilon_1}^1}$ and $t\in\mathbb{R}$.
In the geographic coordinates $(\hat\varphi,\hat s)$, by \eqref{the trajectory induced by the steady velocity field}  we know that the particle trajectory $\hat\zeta({\mathbf{X}}_G(t,\mathbf{x}))=(\hat\varphi_L(t),\hat s_L(t))$ satisfies
\begin{align}\label{the trajectory induced by the steady velocity field2}
\left\{ \begin{array}{rcl} {d\hat\varphi_L(t)\over dt}&=&-\partial_{\hat s}\psi_G(\hat\varphi_L(t),\hat s_L(t)),  \\
{d\hat s_L(t)\over dt}&=&\partial_{\hat \varphi}\psi_G(\hat\varphi_L(t),\hat s_L(t)),\\
(\hat\varphi_L(0),\hat s_L(0))&=&(\hat\varphi,\hat s)\end{array} \right.
\end{align}
for $\mathbf{x}=\hat\zeta^{-1}((\hat \varphi,\hat s))\in\overline{D_{\varepsilon_1}^1}$ and $t\in\mathbb{R}$.
Applying (2.1) in \cite{lin2004} to the trajectory $(\hat\varphi_L(t),\hat s_L(t))$ ruled by \eqref{the trajectory induced by the steady velocity field2}  in  $\overline{D_{\varepsilon_1}^1}$, we have
\begin{align*}
&\int_{\overline{D_{\varepsilon_1}^1}}|(d{\mathbf{X}}_G(t,\mathbf{x}))^*|_{op}d\sigma_{ g}\leq C\int_{\overline{D_{\varepsilon_1}^1}}|(d{\mathbf{X}}_G(t,\mathbf{x}))^*|_{op}d\sigma_{\hat g}\\
\leq& C\int_{\hat\zeta(\overline{D_{\varepsilon_1}^1})}\left|\mathcal{J}(t,\hat \varphi,\hat s)\right|d\hat sd\hat \varphi\leq C|t|+C
\end{align*}
for $t\in\mathbb{R}$.
For the other nondegenerate saddle points $\mathbf{x}_i$, $2\leq i\leq q$, we can similarly define  $D_{\varepsilon_i}^i$ as above for $\varepsilon_i>0$ small enough, and prove that
\begin{align*}
&\int_{\overline{D_{\varepsilon_i}^i}}|(d{\mathbf{X}}_G(t,\mathbf{x}))^*|_{op}d\sigma_{ g}\leq C|t|+C
\end{align*}
for $t\in\mathbb{R}$.
Since $\sup_{\mathbf{x}\in\mathbb{S}^2\setminus\cup_{i=1}^q\overline{D_{\varepsilon_i}^i}}\lim_{|t|\to\infty}{1\over |t|}\ln\left(|(d{\mathbf{X}}_G(t,\mathbf{x}))^*|_{op}\right)=0$, for any $\alpha>0$ there exists $C_\alpha>0$ such that $|(d{\mathbf{X}}_G(t,\mathbf{x}))^*|_{op}\leq C_\alpha e^{\alpha |t|}$ uniformly for $\mathbf{x}\in\mathbb{S}^2\setminus\cup_{i=1}^q\overline{D_{\varepsilon_i}^i}$ and $t\in\mathbb{R}$.
Then
\begin{align*}
\int_{\mathbb{S}^2}|(d{\mathbf{X}}_G(t,\mathbf{x}))^*|_{op}d\sigma_g=&\int_{\cup_{i=1}^q\overline{D_{\varepsilon_i}^i}}|(d{\mathbf{X}}_G(t,\mathbf{x}))^*|_{op}d\sigma_g+
\int_{\mathbb{S}^2\setminus\cup_{i=1}^q\overline{D_{\varepsilon_i}^i}}|(d{\mathbf{X}}_G(t,\mathbf{x}))^*|_{op}d\sigma_g\\
\leq& C|t|+C+C_\alpha |\mathbb{S}^2| e^{\alpha |t|},
\end{align*}
which implies $\mu_{{\rm av}}=0$ by the arbitrary choice of $\alpha>0$.
\end{proof}
\subsection{Regularity of the unstable mode}
The linearized vorticity equation around $\Omega_G$ is
\begin{align*}
\partial_t\Omega+\nabla_{\mathbf{v}}\Omega_G+\nabla_{{\mathbf{u}}_G}\Omega+\nabla_{\mathbf{v}}(2\omega\chi)=0,
\end{align*}
and the linearized velocity equation around ${\mathbf{u}}_G$ is
\begin{align*}
\partial_t\mathbf{v}+\nabla_{\mathbf{v}}{\mathbf{u}}_G+\nabla_{{\mathbf{u}}_G}\mathbf{v}+2\omega\chi J\mathbf{v}+\nabla p=0,
\end{align*}
where $\chi(\mathbf{x})=\mathbf{e}_3\cdot\nu(\mathbf{x})=z$, and $\nu(\mathbf{x})$ is the unit outward pointing normal to $\mathbb{S}^2$ at $\mathbf{x}$.
The linearized vorticity operator and the linearized velocity operator are denoted by
\begin{align}\label{linearized vorticity operator M0}
{\mathcal{L}}_G\Omega=-\nabla_{\mathbf{v}}\Omega_G-\nabla_{{\mathbf{u}}_G}\Omega-\nabla_{\mathbf{v}}(2\omega\chi)
\end{align}
with $\mathbf{v}=curl^{-1}\Omega=J\nabla\Delta^{-1}\Omega$, and
\begin{align*}
\mathcal{M}_G\mathbf{v}=-\nabla_{\mathbf{v}}{\mathbf{u}}_G-\nabla_{{\mathbf{u}}_G}\mathbf{v}-2\omega\chi J\mathbf{v}-\nabla p,
\end{align*}
respectively. Then we improve regularity of the unstable mode in the next lemma. This is an extension of the planar counterpart  in Theorem 1 (i) of \cite{lin2004} to the  case of sphere.

\begin{Lemma}\label{lem-regularity of eigenfunction}
For a $C^1$ steady flow $\mathbf{u}_{G}$ with finite
 stagnation points,, if the linearized velocity operator  $\mathcal{M}_G$ has an unstable eigenvalue $\lambda_1$ (${\rm Re}(\lambda_1)>0$) with an eigenfunction $\mathbf{v}_1\in L^2(T\mathbb{S}^2)$, then
$\Omega_1\in H_1^p\cap L^{p_1}(\mathbb{S}^2)$ for $1\leq p<b_0$ and $ p_1\geq1$, where $\Omega_1=curl\mathbf{v}_1$, and $b_0$ is defined in Theorem \ref{thm-nonlinear instability}.
\end{Lemma}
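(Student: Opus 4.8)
The starting point is the vorticity form of the eigenvalue equation. Writing $\psi_1=\Delta^{-1}\Omega_1$, so that $\mathbf v_1=J\nabla\psi_1=\nabla^{\bot}\psi_1$ and $\Omega_1=\Delta\psi_1=curl\,\mathbf v_1$, and taking $curl$ of $\lambda_1\mathbf v_1=\mathcal M_G\mathbf v_1$ (equivalently, reading off $\lambda_1\Omega_1=\mathcal L_G\Omega_1$ from \eqref{linearized vorticity operator M0}), one obtains
\[
(\lambda_1+\nabla_{\mathbf u_G})\Omega_1=-\nabla_{\mathbf v_1}(\Omega_G+2\omega\chi)=-{\rm div}\!\big((\Omega_G+2\omega\chi)\mathbf v_1\big),
\]
the second equality by ${\rm div}\,\mathbf v_1=0$, to be read weakly since a priori only $\Omega_G\in C^0(\mathbb S^2)$ and $\mathbf v_1\in L^2(T\mathbb S^2)$. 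Since $\mathbf u_G$ is a divergence-free $C^1$ field on the compact manifold $\mathbb S^2$, its flow $\mathbf X_G(t,\cdot)$ is a volume-preserving $C^1$ diffeomorphism; hence $\nabla_{\mathbf u_G}$ is skew-symmetric on $L^2(\mathbb S^2)$ and generates the strongly continuous group of isometries $e^{t\nabla_{\mathbf u_G}}f=f\circ\mathbf X_G(t,\cdot)$ on every $L^q(\mathbb S^2)$. Because ${\rm Re}(\lambda_1)>0$, this yields the trajectory representation
\[
\Omega_1=-\int_0^{\infty}e^{-\lambda_1\tau}\,\Big[\,{\rm div}\big((\Omega_G+2\omega\chi)\mathbf v_1\big)\Big]\!\circ\!\mathbf X_G(-\tau,\cdot)\,d\tau ,
\]
on which the whole argument rests; the fact that, for a steady flow, $\Omega_G+2\omega\chi$ is constant along streamlines of $\mathbf u_G$ is used to rewrite the forcing whenever a pointwise rather than divergence form is preferable.

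The quantitative engine is the $L^r$-growth bound
\[
\big\||(d\mathbf X_G(\tau,\cdot))^{*}|_{op}\big\|_{L^r(\mathbb S^2)}\le C_{r,\alpha}\,e^{((1-1/r)\mu+\alpha)|\tau|}\qquad(\tau\in\mathbb R,\ 1\le r\le\infty,\ \alpha>0),
\]
obtained by interpolating the uniform pointwise bound $|(d\mathbf X_G(\tau,\mathbf x))^{*}|_{op}\le C_\alpha e^{(\mu+\alpha)|\tau|}$ (which follows from the analysis in the proof of Lemma \ref{averaging Lyapunov exponent is zero}) against the subexponential $L^1$ bound $\||(d\mathbf X_G(\tau,\cdot))^{*}|_{op}\|_{L^1(\mathbb S^2)}\le C_\alpha e^{\alpha|\tau|}$, which is precisely the content of $\mu_{\rm av}=0$ in Lemma \ref{averaging Lyapunov exponent is zero}. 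With this, one bootstraps. First, $\Omega_1\in L^{p_1}(\mathbb S^2)$ for every $p_1\ge1$: if $\Omega_1\in L^q$ then $\mathbf v_1=J\nabla\Delta^{-1}\Omega_1\in H_1^q(T\mathbb S^2)$ by the boundedness of the Riesz transform (Lemma \ref{lem-differential calculus}(v)), hence $\mathbf v_1\in L^{q_1}$ with $q_1>q$ (and any finite $q_1$ once $q\ge2$) by the Sobolev embedding of Lemma \ref{lem-differential calculus}(i), so $(\Omega_G+2\omega\chi)\mathbf v_1\in L^{q_1}$; testing the representation against $\psi\in L^{q_1'}$, changing variables along the volume-preserving flow, integrating by parts so the derivative falls on $\psi\circ\mathbf X_G(\tau,\cdot)$, and using H\"older with the above bound (with $1/r$ close to $1$), the integral $\int_0^{\infty}e^{-{\rm Re}(\lambda_1)\tau}e^{((1-1/r)\mu+\alpha)\tau}d\tau$ converges and delivers $\Omega_1\in L^{q_1}$; iterating finitely many times gives all $p_1$, and then $\mathbf v_1\in H_1^{p_1}$ for all $p_1$. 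Second, running the same scheme for $\|\Omega_1\|_{H_1^p}$ — now using the duality estimate of Lemma \ref{lem-differential calculus}(vi) to express a Sobolev norm of $\mathbf v_1$ (equivalently of $\Omega_1$) as a supremum of pairings $\langle\Omega_1,\text{test}\rangle$ — one power of $(d\mathbf X_G)^{*}$ survives in an $L^r$ norm with $r$ forced close to $p$, so the $\tau$-integral converges exactly when $(1-1/r)\mu<{\rm Re}(\lambda_1)$, i.e. $r<\mu/(\mu-{\rm Re}\lambda_1)=b_0$; letting $\alpha\to0$ gives $\Omega_1\in H_1^p(\mathbb S^2)$ for every $p<b_0$, and for all finite $p$ when $\mu\le{\rm Re}(\lambda_1)$ (then the integral converges for all $r$), which is exactly the value of $b_0$ from Theorem \ref{thm-nonlinear instability}. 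Throughout one keeps $1<p,p_1<\infty$ in the intermediate steps so that Lemma \ref{lem-differential calculus}(v) applies, recovering the endpoints $p_1=1,\infty$ a posteriori by embedding.

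The main obstacle is the Jacobian estimate together with the stagnation points: the uniform exponential bound $C_\alpha e^{(\mu+\alpha)|\tau|}$ by itself is too lossy — it would only give regularity when ${\rm Re}(\lambda_1)>\mu$ — so one genuinely needs the interpolation against $\mu_{\rm av}=0$, i.e. the geometric fact proved in Lemma \ref{averaging Lyapunov exponent is zero} that exponential stretching of $(d\mathbf X_G)^{*}$ is confined to shrinking neighbourhoods of the finitely many non-degenerate saddle points of $\psi_G$ and the heteroclinic trajectories joining them, in order to reach the sharp exponent $(1-1/r)\mu$ and hence the sharp range $p<b_0$. A second, more technical difficulty is that the entire argument has to be run intrinsically on $\mathbb S^2$ (or in the charts of Subsection \ref{Differential calculus on S2}): the pullback of the divergence under the flow, the changes of variables, and the Hodge/Leray machinery of Lemma \ref{lem-differential calculus} must be handled with care, and — since $\mathbf u_G$ is only $C^1$, so $\Omega_G$ is merely continuous — the forcing $\nabla_{\mathbf v_1}(\Omega_G+2\omega\chi)$ must be kept in divergence form (or rewritten through the streamline structure of the steady flow), which is exactly where the high $L^{p_1}$-integrability of $\mathbf v_1$, hence of $(\Omega_G+2\omega\chi)\mathbf v_1$, is used to absorb the low regularity of $\Omega_G$.
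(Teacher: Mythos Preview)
Your overall strategy---the trajectory representation together with the interpolated Jacobian bound $\||(d\mathbf X_G(\tau))^*|_{op}\|_{L^r}\le C_{r,\alpha}e^{((1-1/r)\mu+\alpha)|\tau|}$ coming from $\mu_{\rm av}=0$---matches the paper. The genuine gap is in your $L^{p_1}$ step. After integrating by parts onto $\psi\circ\mathbf X_G(\tau)$ you control $\langle\Omega_1,\psi\rangle$ by $\|\nabla\psi\|_{L^s}$, not by $\|\psi\|_{L^{q_1'}}$; by duality this yields only $\Omega_1\in W^{-1,s'}$, never $\Omega_1\in L^{q_1}$, and no choice of $r$ close to $1$ repairs this (the H\"older constraint is $1/q_1+1/r+1/s=1$, so the derivative on $\psi$ cannot be absorbed). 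In particular your bootstrap never gets started, since a priori one only has $\Omega_1=curl\,\mathbf v_1\in H^{-1}$. The paper handles this step quite differently and more simply: it keeps the forcing in pointwise form and writes $\Omega_1(\mathbf x)=-\int_0^\infty e^{-\lambda_1\tau}[\nabla_{\mathbf v_1}(\Omega_G+2\omega\chi)](\mathbf X_G(-\tau,\mathbf x))\,d\tau$; volume preservation alone then gives $\|\Omega_1\|_{L^{p_1}}\le({\rm Re}\,\lambda_1)^{-1}\|\Omega_G+2\omega\chi\|_{C^1}\|\mathbf v_1\|_{L^{p_1}}$, with \emph{no} Jacobian factor. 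From $\mathbf v_1\in L^2$ this gives $\Omega_1\in L^2$, hence $\Psi_1\in H_2^2\hookrightarrow H_1^{p_1}$ for every finite $p_1$ by Lemma~\ref{lem-differential calculus}(i), hence $\mathbf v_1\in L^{p_1}$ for all $p_1$, and one more application of the same estimate yields $\Omega_1\in L^{p_1}$ for all $p_1$---two direct uses of the representation, no iterative loop.

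For the $H_1^p$ step the paper does not use Lemma~\ref{lem-differential calculus}(vi) either (that lemma estimates a \emph{velocity} $L^p$-norm, not $\|\nabla\Omega_1\|_{L^p}$). Instead it differentiates the pointwise representation directly, producing a single factor $(d\mathbf X_G(-\tau))^*$ acting on $\nabla[\nabla_{\mathbf v_1}(\Omega_G+2\omega\chi)]$; H\"older then places the Jacobian in $L^{p_2}$ with $p_2\in(1,b_0)$ and the remaining factor in $L^{p_3}$, which is finite since by this stage $\mathbf v_1\in H_1^{p_3}$ for every $p_3$. Your concern about the regularity of $\Omega_G$ is legitimate---the paper's argument in fact invokes $\|\Omega_G\|_{C^1}$ and even $\|\Omega_G\|_{C^2}$ despite the stated $C^1$ hypothesis on $\mathbf u_G$---but the divergence-form/duality workaround you propose does not recover the $L^{p_1}$ conclusion; one should either read the hypothesis as requiring more smoothness of $\Omega_G$, or carry out the streamline-structure rewriting you only allude to.
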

\begin{proof}
Since the fluid is incompressible, we have $curl \mathcal{M}_G={\mathcal{L}}_G curl$. Thus,
$\lambda_1\Omega_1=curl(\lambda_1\mathbf{v}_1)=curl\mathcal{M}_G\mathbf{v}_1={\mathcal{L}}_G curl\mathbf{v}_1={\mathcal{L}}_G\Omega_1$.
Then $\lambda_1$ is an eigenvalue of ${\mathcal{L}}_G$ with an eigenfunction $\Omega_1$.
In the vorticity form, the eigenpair $(\lambda_1,\Omega_1)$ satisfies
\begin{align}\label{vorticity-eigenfunction equation}
\lambda_1\Omega_1+\nabla_{{\mathbf{u}}_G}\Omega_1=-\nabla_{\mathbf{v}_1}\Omega_G-\nabla_{\mathbf{v}_1}(2\omega\chi).
\end{align}
Let $t>0$. Multiplying \eqref{vorticity-eigenfunction equation} by $e^{\lambda_1t}$, integrating  \eqref{vorticity-eigenfunction equation} along ${\mathbf{X}}_G(t)$  from $-\infty$ to $0$ and then changing the time variable $t$ to $-t$, we have
\begin{align}\label{Omega1}
\Omega_1(\mathbf{x})=-\int_0^\infty\left(e^{-\lambda_1t}\nabla_{\mathbf{v}_1}\Omega_G({\mathbf{X}}_G(-t))
+e^{-\lambda_1t}\nabla_{\mathbf{v}_1}(2\omega\chi({\mathbf{X}}_G(-t)))\right)dt.
\end{align}

We claim that for any $f\in L^1(\mathbb{S}^2)$,
 \begin{align}\label{integration change variableX0}
 \int_{\mathbb{S}^2}f({\mathbf{X}}_G(t,\mathbf{x}))d\sigma_g= \int_{\mathbb{S}^2}f(\mathbf{x})d\sigma_g,
 \end{align}
 where $t\in\mathbb{R}$.
In fact,
since ${\mathbf{u}}_G$ is  solenoidal,  we have $\det(d{\mathbf{X}}_G(t,\mathbf{x}))=1$, where $\det(d{\mathbf{X}}_G(t,\mathbf{x}))$ can be calculated  by local coordinates as follows but is independent of the coordinates. For example,
WLOG, we assume that $\mathbf{x}=\zeta^{-1}((\varphi,s))\in \mathbb{S}^2\setminus \Gamma$ and ${\mathbf{X}}_G(t,\mathbf{x})=\tilde\zeta^{-1}((\tilde \varphi_L(t,\varphi,s),\tilde s_L(t,$ $\varphi,s)))\in \mathbb{S}^2\setminus \tilde\Gamma$, where $\zeta$ and $\tilde \zeta$ are defined in \eqref{chart1} and \eqref{chart2}.
For $\xi=\xi^1\partial_s+\xi^2\partial_\varphi\in T_{\mathbf{x}}\mathbb{S}^2$ and $d{\mathbf{X}}_G(t,\mathbf{x})\xi=(d{\mathbf{X}}_G(t,\mathbf{x})\xi)^1\partial_{\tilde s_L}+(d{\mathbf{X}}_G(t,\mathbf{x})\xi)^2\partial_{\tilde \varphi_L}\in T_{{\mathbf{X}}_G(t,\mathbf{x})}\mathbb{S}^2$, by Lemma 1.29 in \cite{Grigoryan2024} we have
\begin{align*}
\left( \begin{array}{cc} (d{\mathbf{X}}_G(t,\mathbf{x})\xi)^1 \\  (d{\mathbf{X}}_G(t,\mathbf{x})\xi)^2 \end{array} \right)=\left( \begin{array}{cc}
 \partial_{s} \tilde s_L(t, \varphi, s)&\partial_{\varphi} \tilde s_L(t,\varphi,  s) \\  \partial_{ s}\tilde \varphi_L(t,\varphi, s) &\partial_{\varphi}{\tilde\varphi}_L(t,\varphi, s) \end{array} \right)\left( \begin{array}{cc} \xi^1\\ \xi^2 \end{array} \right)=
 \mathcal{\tilde J}(t, \varphi,s)\left( \begin{array}{cc} \xi^1\\ \xi^2 \end{array} \right).
\end{align*}
\if0
 Then the pullback $\mathbf{X}_*(t)_*g$ of the Riemannian metric $g$ (as bilinear form) is given by
$(\mathbf{X}_{0}(t)_*g)_{\mathbf{x}}(\xi_1,\xi_2)=g_{\mathbf{X}_{0}(t,\mathbf{x})}(d\mathbf{X}_*(t)\xi_1,d\mathbf{X}_*(t)\xi_2)$ for $\xi_1,\xi_2\in T_{\mathbf{x}}\mathbb{S}^2$.
 It follows from Lemma 1.31  in \cite{Grigoryan2024} that $(((\mathbf{X}_{0}(t)_*g)_{\mathbf{x}})_{ij})=\mathcal{\tilde J}(t)^*(( g_{\mathbf{X}_{0}(t,\mathbf{x})})_{ij})$ $\mathcal{\tilde J}(t)$, where
 $g$ and $ \mathbf{X}_{0}(t)_*g$ are regarded as metric matrices in the geographic coordinates $(\tilde \varphi,\tilde s)$ and  $(\varphi,s)$, respectively, see also \cite{Grigoryan2009,Petersen2006}. Thus, $d\sigma_{\mathbf{X}_{0}(t)_*g}=\sqrt{\det(\mathcal{\tilde J}(t)^*(( g_{\mathbf{X}_{0}(t,\mathbf{x})})_{ij})\mathcal{\tilde J}(t))}dsd\varphi$ $= d\sigma_{g_{\mathbf{X}_{0}(t,\mathbf{x})}}$.
 \fi
 Then
$\det(d{\mathbf{X}}_G(t,\mathbf{x}))=\det(\mathcal{\tilde J}(t, \varphi,s))=1$.
Thus, for any $f\in L^1(\mathbb{S}^2)$,
\begin{align*}
 \int_{\mathbb{S}^2}f(\mathbf{y})d\sigma_{g_\mathbf{y}}
 = \int_{{\mathbf{X}}_G(t,\mathbb{S}^2)}f({\mathbf{X}}_G(t,\mathbf{x}))|\det(d{\mathbf{X}}_G(t,\mathbf{x}))|d\sigma_{g_\mathbf{x}}= \int_{\mathbb{S}^2}f({\mathbf{X}}_G(t,\mathbf{x}))d\sigma_{g_\mathbf{x}},
\end{align*}
where $\mathbf{y}={\mathbf{X}}_G(t,\mathbf{x})$. This proves \eqref{integration change variableX0}.

Since $\mathbf{v}_1\in L^2(T\mathbb{S}^2)$, by \eqref{Omega1}-\eqref{integration change variableX0} we have
\begin{align}\label{vorticityL2}
\|\Omega_1\|_{L^2(\mathbb{S}^2)}\leq&\int_0^\infty e^{-{\rm Re}(\lambda_1)t}\|\nabla_{\mathbf{v}_1}\Omega_G({\mathbf{X}}_G(-t))
+\nabla_{\mathbf{v}_1}(2\omega\chi({\mathbf{X}}_G(-t)))\|_{L^2(\mathbb{S}^2)}dt\\\nonumber
\leq&\|\Omega_G+2\omega\chi\|_{C^1(\mathbb{S}^2)}\int_0^\infty e^{-{\rm Re}(\lambda_1)t}\|\mathbf{v}_1\|_{L^2(T\mathbb{S}^2)}dt\\\nonumber
=&{\|\Omega_G+2\omega\chi\|_{C^1(\mathbb{S}^2)}\over{\rm Re}(\lambda_1)}\|\mathbf{v}_1\|_{L^2(T\mathbb{S}^2)}.
\end{align}
Since $\Omega_1\in L^2(\mathbb{S}^2)$, by Lemma 3.1 in \cite{Cao-Wang-Zuo2023} we have $\Psi_1=\Delta^{-1}\Omega_1\in H_2^2(\mathbb{S}^2)$. By Lemma \ref{lem-differential calculus} (i), we have $\Psi_1\in H_1^{p_1}(\mathbb{S}^2)$ for $1\leq p_1<\infty$.
By a similar estimate in \eqref{vorticityL2}, we have
\begin{align*}
\|\Omega_1\|_{L^{p_1}(\mathbb{S}^2)}
\leq{\|\Omega_G+2\omega\chi\|_{C^1(\mathbb{S}^2)}\over{\rm Re}(\lambda_1)}\|\mathbf{v}_1\|_{L^{p_1}(T\mathbb{S}^2)}\leq{\|\Omega_G+2\omega\chi\|_{C^1(\mathbb{S}^2)}\over{\rm Re}(\lambda_1)}\|\Psi_1\|_{H_1^{p_1}(\mathbb{S}^2)}.
\end{align*}
Thus, $\Psi_1\in H_2^{p_1}(\mathbb{S}^2)$  for $1<p_1<\infty$ by Lemma 3.1 in \cite{Cao-Wang-Zuo2023}, and thus also for $p_1=1$. By Kato's inequality, we have $|\nabla|\nabla\Psi_1||\leq |\nabla^2\Psi_1|$, and thus,
\begin{align}\label{v1H1p1}
\|\mathbf{v}_1\|_{H_1^{p_1}(T\mathbb{S}^2)}=\|J\nabla\Psi_1\|_{H_1^{p_1}(T\mathbb{S}^2)}\leq \|\Psi_1\|_{H_2^{p_1}(\mathbb{S}^2)}.
\end{align}
By \eqref{Omega1}, we have
\begin{align}\label{grad-Omega1}
&\nabla\Omega_1=-\int_0^\infty\left(e^{-\lambda_1t}\nabla(\nabla_{\mathbf{v}_1}\Omega_G\circ{\mathbf{X}}_G(-t))+2\omega e^{-\lambda_1t}\nabla(\nabla_{\mathbf{v}_1}\chi\circ{\mathbf{X}}_G(-t))\right)dt\\\nonumber
=-\int_0^\infty&\left(e^{-\lambda_1t}(d{\mathbf{X}}_G(-t))^*\nabla\nabla_{\mathbf{v}_1}\Omega_G({\mathbf{X}}_G(-t))+2\omega e^{-\lambda_1t}(d{\mathbf{X}}_G(-t))^*\nabla\nabla_{\mathbf{v}_1}\chi({\mathbf{X}}_G(-t))\right)dt,
\end{align}
where $(d{\mathbf{X}}_G(-t))^*$ is adjoint of the tangent map $d{\mathbf{X}}_G(-t)$ at $\mathbf{x}$ with respect to the metric $g$.
First, we estimate $\|(d{\mathbf{X}}_G(-t))^*\nabla\nabla_{\mathbf{v}_1}\Omega_G({\mathbf{X}}_G(-t))\|_{L^p(T\mathbb{S}^2)}$.
For $p\in [1,b_0)$, choose $p_2\in(1,b_0)$ and $p_3\in(1,\infty) $ such that ${1\over p}={1\over p_2}+{1\over p_3}$. Then by \eqref{def-tangent-mapping-norm} we have
\begin{align}\label{estimate dX0gradgrad}
&\|(d{\mathbf{X}}_G(-t))^*\nabla\nabla_{\mathbf{v}_1}\Omega_G({\mathbf{X}}_G(-t))\|_{L^p(T\mathbb{S}^2)}\\\nonumber
\leq&\left\||(d{\mathbf{X}}_G(-t))^*|_{op}\left(g_{{\mathbf{X}}_G(-t)}(\nabla\nabla_{\mathbf{v}_1}\Omega_G({\mathbf{X}}_G(-t)),
\nabla\nabla_{\mathbf{v}_1}\Omega_G({\mathbf{X}}_G(-t)))\right)^{1\over2}\right\|_{L^p(\mathbb{S}^2)}\\\nonumber
\leq&\bigg\||(d{\mathbf{X}}_G(-t))^*|_{op}\bigg\|_{L^{p_2}(\mathbb{S}^2)}
\left\|\left(g_{{\mathbf{X}}_G(-t)}(\nabla\nabla_{\mathbf{v}_1}\Omega_G({\mathbf{X}}_G(-t)),
\nabla\nabla_{\mathbf{v}_1}\Omega_G({\mathbf{X}}_G(-t)))\right)^{1\over2}\right\|_{L^{p_3}(\mathbb{S}^2)}.
\end{align}
Note that $\mathbb{S}^2$ can be covered by the two charts $(M_1,\zeta_1)\triangleq(\zeta^{-1}((-\pi,\pi)\times(-1+\kappa_0,1-\kappa_0)),\zeta)$ and $ (M_2,\zeta_2)\triangleq (\tilde\zeta^{-1}((-\pi,\pi)\times(-1+\kappa_0,1-\kappa_0)),\tilde\zeta)$ for $\kappa_0>0$ small enough.
The components $g_{ij}^m$ of $g$ in $(M_m,\zeta_m)$ satisfy $d_1\delta_{ij}\leq g_{ij}^m\leq d_2\delta_{ij}$, $m=1,2$, as bilinear forms for some $d_2>d_1>0$.
Let $(\eta_m)$, $m=1,2$, be a smooth partition of unity subordinate to the covering $\{M_m\}_{m=1}^1$. Then by \eqref{integration change variableX0} and \eqref{v1H1p1} we have
\begin{align}\label{estimate dX0gradgrad1}
&\left\|\left(g_{{\mathbf{X}}_G(-t)}(\nabla\nabla_{\mathbf{v}_1}\Omega_G({\mathbf{X}}_G(-t)),
\nabla\nabla_{\mathbf{v}_1}\Omega_G({\mathbf{X}}_G(-t)))\right)^{1\over2}\right\|_{L^{p_3}(\mathbb{S}^2)}\\\nonumber
=&\|\nabla\nabla_{\mathbf{v}_1}\Omega_G(\mathbf{x})\|_{L^{p_3}(T\mathbb{S}^2)}
\leq C_{d_1,d_2}\sum_{m=1}^2\|\nabla((\eta_m
\nabla_{\mathbf{v}_1}\Omega_G)\circ\zeta_m^{-1})\|_{L^{p_3}(\zeta_m(M_m))}\\\nonumber
\leq& C_{d_1,d_2}\sum_{m=1}^2\|\nabla(\eta_m\circ\zeta_m^{-1})((
\nabla_{\mathbf{v}_1}\Omega_G)\circ\zeta_m^{-1})\|_{L^{p_3}(\zeta_m(M_m))}\\\nonumber
&+ C_{d_1,d_2}\sum_{m=1}^2\|\eta_m\circ\zeta_m^{-1}\nabla(\nabla_{\mathbf{v}_1}\Omega_G\circ\zeta_m^{-1})\|_{L^{p_3}(\zeta_m(M_m))}\\\nonumber
\leq& C_{d_1,d_2}\left(\|
\nabla_{\mathbf{v}_1}\Omega_G\|_{L^{p_3}(\mathbb{S}^2)}+\|\mathbf{v}_1\|_{H_1^{p_3}(T\mathbb{S}^2)}\|\Omega_G\|_{C^1(\mathbb{S}^2)}
+\|\mathbf{v}_1\|_{L^{p_3}(T\mathbb{S}^2)}\|\Omega_G\|_{C^2(\mathbb{S}^2)}\right)\\\nonumber
\leq&C_{d_1,d_2}\left(\|\Psi_1\|_{H_1^{p_3}(\mathbb{S}^2)}
\|\Omega_G\|_{C^1(\mathbb{S}^2)}+\|\Psi_1\|_{H_2^{p_3}(\mathbb{S}^2)}\|\Omega_G\|_{C^1(\mathbb{S}^2)}
+\|\Psi_1\|_{H_1^{p_3}(\mathbb{S}^2)}\|\Omega_G\|_{C^2(\mathbb{S}^2)}\right)\\\nonumber
\leq&C_{d_1,d_2}\|\Psi_1\|_{H_2^{p_3}(\mathbb{S}^2)}\|\Omega_G\|_{C^2(\mathbb{S}^2)}.
\end{align}
By the definition of $b_0$ and $p_2\in(1,b_0)$, we have $\epsilon_1\triangleq{\rm Re}(\lambda_1)-\mu\left(1-{1\over p_2}\right)>0$. Let $\epsilon_2,\epsilon_3>0$ such that $\epsilon_2+\epsilon_3<{\epsilon_1\over2}$. By \eqref{def-Lyapunov exponent}-\eqref{def-averaging Lyapunov exponent} and Lemma \ref{averaging Lyapunov exponent is zero}, there exist $C_{\epsilon_1}, C_{\epsilon_2}>0$ such that
\begin{align*}
\bigg||(d{\mathbf{X}}_G(-t,\mathbf{x}))^*|_{op}\bigg|\leq C_{\epsilon_2} e^{(\mu+\epsilon_2)t},
\end{align*}
and
\begin{align*}
\int_{\mathbb{S}^2}\bigg||(d{\mathbf{X}}_G(-t,\mathbf{x}))^*|_{op}\bigg|d\sigma_g\leq C_{\epsilon_3} e^{\epsilon_3t}
\end{align*}
for $t>0$.
Thus,
\begin{align}\label{estimate dX0gradgrad2}
\bigg\||(d{\mathbf{X}}_G(-t))^*|_{op}\bigg\|_{L^{p_2}(\mathbb{S}^2)}
\leq(C_{\epsilon_2} e^{(\mu+\epsilon_2)t})^{1-{1\over p_2}}(C_{\epsilon_3} e^{\epsilon_3t})^{{1\over p_2}}\leq C_{\ep_2,\ep_3}e^{({\rm Re}(\lambda_1)-{\epsilon_1\over 2})t}.
\end{align}
By \eqref{estimate dX0gradgrad}-\eqref{estimate dX0gradgrad2}, we have
\begin{align*}
\|(d{\mathbf{X}}_G(-t))^*\nabla\nabla_{\mathbf{v}_1}\Omega_G({\mathbf{X}}_G(-t))\|_{L^p(T\mathbb{S}^2)}\leq C_{d_1,d_2,\ep_2,\ep_3}e^{({\rm Re}(\lambda_1)-{\epsilon_1\over 2})t}\|\Psi_1\|_{H_2^{p_3}(\mathbb{S}^2)}\|\Omega_G\|_{C^2(\mathbb{S}^2)}.
\end{align*}
Similar to \eqref{estimate dX0gradgrad}-\eqref{estimate dX0gradgrad2}, we have
\begin{align*}
\|2\omega(d{\mathbf{X}}_G(-t))^*\nabla\nabla_{\mathbf{v}_1}\chi({\mathbf{X}}_G(-t))\|_{L^p(T\mathbb{S}^2)}\leq C_{\omega,d_1,d_2,\ep_2,\ep_3}e^{({\rm Re}(\lambda_1)-{\epsilon_1\over 2})t}\|\Psi_1\|_{H_2^{p_3}(\mathbb{S}^2)}\|\chi\|_{C^2(\mathbb{S}^2)}.
\end{align*}
Combining the above two inequalities and \eqref{grad-Omega1}, we have
\begin{align*}
\|\nabla\Omega_1\|_{L^p(T\mathbb{S}^2)}\leq&\int_0^\infty e^{-{\rm Re}(\lambda_1)t}\bigg(\|(d{\mathbf{X}}_G(-t))^*\nabla\nabla_{\mathbf{v}_1}\Omega_G({\mathbf{X}}_G(-t))\|_{L^p(T\mathbb{S}^2)}\\
&+\|2\omega(d{\mathbf{X}}_G(-t))^*\nabla\nabla_{\mathbf{v}_1}\chi({\mathbf{X}}_G(-t))\|_{L^p(T\mathbb{S}^2)}\bigg)dt\\
\leq&C_{\omega,d_1,d_2,\ep_1,\ep_2,\ep_3}\|\Psi_1\|_{H_2^{p_3}(\mathbb{S}^2)}(\|\Omega_G\|_{C^2(\mathbb{S}^2)}+\|\chi\|_{C^2(\mathbb{S}^2)}).
\end{align*}
Thus, $\Omega_1\in H_1^{p}(\mathbb{S}^2)$.
\end{proof}
\subsection{Proof of nonlinear instability of general steady flows}
 We  prove that linear instability implies
  nonlinear instability  for general steady flows.
  Orbital instability will be  discussed in the next subsection.

\begin{Theorem}[Linear to nonlinear instability of general steady flows] \label{thm-nonlinear instability-general steady flows} 
 For a $C^1$ steady flow $\mathbf{u}_{G}$ with finite
 stagnation points,
if it is linearly unstable in $L^2(T\mathbb{S}^2)$, then it is nonlinearly  unstable in the sense that
there exists $\epsilon_0>0$ such that for any $\delta>0$, there exists a solution $ \mathbf{u}_{\delta,G}$ to the nonlinear Euler equation and $t_{0}=O(|\ln\delta|)$ satisfying
\begin{align*}
\|\Omega_{\delta,G}(0)-\Omega_{G}\|_{L^{p_2}(\mathbb{S}^2)}+\|\nabla( \Omega_{\delta,G}(0)-\Omega_{G})\|_{L^{p_1}(T\mathbb{S}^2)}\leq \delta\quad\text{and}\quad
\|{\mathbf{u}}_{\delta,G}(t_0)-{\mathbf{u}}_G\|_{L^{p_0}(T\mathbb{S}^2)}\geq\epsilon_0,
\end{align*}
where  $\Omega_{G}=curl(\mathbf{u}_{G})$, $\Omega_{\delta,G}=curl(\mathbf{u}_{\delta,G})$, $p_0\in(1,\infty)$, $p_1\in[1,b_0)$, $p_2\in[1,\infty)$,
and
 $b_0$ is defined in Theorem \ref{thm-nonlinear instability}.
\end{Theorem}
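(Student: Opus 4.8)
\textbf{Proof proposal for Theorem \ref{thm-nonlinear instability-general steady flows}.}
The plan is to run the bootstrap (nonlinear iteration) argument of Friedlander--Strauss--Vishik and Lin, adapted to the Riemannian setting of $\mathbb{S}^2$, using the linear machinery already prepared. First I would fix an unstable eigenvalue $\lambda_1$ of $\mathcal{M}_G$ of maximal real part, with eigenfunction $\mathbf{v}_1\in L^2(T\mathbb{S}^2)$; by Lemma \ref{lem-regularity of eigenfunction} the associated vorticity $\Omega_1=curl(\mathbf{v}_1)$ lies in $H_1^p\cap L^{p_1}(\mathbb{S}^2)$ for all $1\le p<b_0$, $p_1\ge1$, so $\mathbf{v}_1$ itself is as regular as needed to serve as an approximate growing solution in the norms appearing in the statement. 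For each small $\delta>0$ I would take the initial datum $\mathbf{u}_{\delta,G}(0)=\mathbf{u}_G+\delta\,\mathrm{Re}(\mathbf{v}_1)$ (after rescaling $\mathbf{v}_1$ so that $\|\Omega_1\|_{L^{p_2}}+\|\nabla\Omega_1\|_{L^{p_1}}=1$), which automatically satisfies the smallness hypothesis; then I would let $\mathbf{u}_{\delta,G}$ be the corresponding global smooth solution of the Euler equation, whose existence is guaranteed by the Beale--Kato--Majda-type continuation statement quoted in the introduction.

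The core of the argument is an energy/duality estimate for the difference $w=\mathbf{u}_{\delta,G}-\mathbf{u}_G$ and its vorticity $q=curl(w)=\Omega_{\delta,G}-\Omega_G$. The equation for $q$ has the schematic form $\partial_t q=\mathcal{L}_G q+\mathcal{N}(q)$ where $\mathcal{L}_G$ is the linearized vorticity operator of \eqref{linearized vorticity operator M0} and $\mathcal{N}(q)=-\nabla_{J\nabla\Delta^{-1}q}\,q$ is the quadratic nonlinearity. The strategy is to compare $w(t)$ with $\delta e^{\mathrm{Re}(\lambda_1)t}\,\mathrm{Re}(\mathbf{v}_1)$ via Duhamel's formula, controlling the nonlinear remainder in the transport-robust $L^{p_0}$ velocity norm. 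Here I would use Lemma \ref{lem-differential calculus}: part (v) gives $L^{p_0}$-boundedness of the Biot--Savart operator $q\mapsto J\nabla\Delta^{-1}q$, parts (iii)--(iv) (Poincar\'e inequality and Hodge decomposition) let me recover $\|w\|_{L^{p_0}}$ from $\|q\|_{L^{p_0}}$ up to constants, and part (vi) estimates the $L^p$ norm of the solenoidal field by duality against $H_1^{p'}$ stream functions, which is what makes the loss-of-derivative in $\mathcal{N}$ harmless. The only place where the exponential stretching of the background flow could spoil the estimate is in propagating the $\nabla\Omega$-norm along particle trajectories, and this is precisely where Lemma \ref{averaging Lyapunov exponent is zero} enters: since the \emph{averaging} Lyapunov exponent $\mu_{\rm av}$ vanishes, the integral $\int_{\mathbb{S}^2}|(d\mathbf{X}_G(t,\mathbf{x}))^*|_{op}\,d\sigma_g$ grows subexponentially, so an interpolation between the pointwise bound ($e^{(\mu+\epsilon)t}$) and the integrated bound ($e^{\epsilon t}$), exactly as in \eqref{estimate dX0gradgrad2}, keeps the nonlinear term at order $\delta^2 e^{2\mathrm{Re}(\lambda_1)t}$ against a linear growth of order $\delta e^{\mathrm{Re}(\lambda_1)t}$, with the exponent $b_0$ in the hypothesis being exactly the threshold for which $p_1$-derivatives survive this interpolation.

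From there the conclusion is the standard escape-time argument: set $\epsilon_0$ to be a fixed fraction of the $L^{p_0}$-norm of the leading profile, let $t_0$ be the first time at which $\|w(t)\|_{L^{p_0}}=\epsilon_0$, and show via the Duhamel comparison and a continuity (bootstrap) argument that $\|w(t)-\delta e^{\mathrm{Re}(\lambda_1)t}\mathrm{Re}(\mathbf{v}_1)\|_{L^{p_0}}\le C\delta^2 e^{2\mathrm{Re}(\lambda_1)t}\ll \delta e^{\mathrm{Re}(\lambda_1)t}$ for $t\le t_0$, whence $t_0=\mathrm{Re}(\lambda_1)^{-1}\ln(\epsilon_0/(2\delta))+O(1)=O(|\ln\delta|)$ and $\|\mathbf{u}_{\delta,G}(t_0)-\mathbf{u}_G\|_{L^{p_0}}\ge\epsilon_0$. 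I would carry out the steps in this order: (1) choose and normalize $(\lambda_1,\mathbf{v}_1)$ and invoke Lemma \ref{lem-regularity of eigenfunction} for regularity; (2) set up Duhamel for $q$ and estimate $e^{t\mathcal{L}_G}$ on the relevant $L^p$/$H_1^p$ spaces using the exponential trichotomy and Lemma \ref{lem-differential calculus}(v); (3) estimate $\mathcal{N}(q)$ in the weak (dual) norm, using Lemma \ref{lem-differential calculus}(vi) to absorb the derivative loss and Lemma \ref{averaging Lyapunov exponent is zero} to control trajectory stretching; (4) run the bootstrap and read off $t_0$ and $\epsilon_0$. The main obstacle I expect is Step (3)--(4) together: making the nonlinear estimate uniform in $\delta$ while simultaneously tracking the $H_1^{p_1}$-norm of $q$ along the flow requires the precise interpolation tied to $b_0$, and one has to be careful that the trichotomy projections of Proposition \ref{exponential trichotomy} (in particular the polynomially-growing central part) do not contaminate the unstable direction — this is handled by projecting onto $E^u$ and noting all unstable eigenfunctions lie in the constraint space $X$, but the bookkeeping is delicate.
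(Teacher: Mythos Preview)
Your overall architecture --- initial data along the unstable eigenfunction, Duhamel, bootstrap in the velocity $L^{p_0}$ norm, duality via Lemma~\ref{lem-differential calculus}(vi) to absorb the derivative loss, the averaging-Lyapunov input, and the escape-time computation --- matches the paper's proof closely. There is, however, one genuine mismatch that would not go through as written: in step (2) you plan to control $e^{t\mathcal{L}_G}$ via the exponential trichotomy of Proposition~\ref{exponential trichotomy}, and in step (4) to project onto $E^u$ and invoke the constraint space $X$. That proposition is specific to the $3$-jet --- it comes from the Hamiltonian factorization $J_\omega L$ and the index formula \eqref{index formula 1o1}--\eqref{index formula 1o2} --- and is simply not available for a general $C^1$ steady flow $\mathbf{u}_G$; there is no reason to expect an invariant decomposition with polynomial center bounds in that generality.

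The paper circumvents this by passing to the \emph{adjoint}. After the duality step, the nonlinear velocity is an integral of $\nabla(e^{(t-\tau)\mathcal{L}_G'}\psi)\cdot(\mathbf{u}_\delta\Omega_\delta)$, and the key linear estimate (paper's Step~2) is on $\|\nabla(e^{t\mathcal{L}_G'}\psi)\|_{L^{p_3}}$. Here one needs only a semigroup bound on $\mathcal{L}_G'$ in $L^{p_0'}$, which follows because $\mathcal{L}_G'=\mathbf{u}_G\cdot\nabla-\Delta^{-1}(\nabla^\perp(\Omega_G+2\omega\chi))\cdot\nabla$ is a \emph{compact perturbation} of the transport $\mathbf{u}_G\cdot\nabla$ (an isometry group); hence $\sigma_{\mathrm{ess}}(\mathcal{L}_G')\subset i\mathbb{R}$ and \eqref{group estimate} holds with no trichotomy needed. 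The averaging-Lyapunov input (Lemma~\ref{averaging Lyapunov exponent is zero}) is then applied to the Duhamel representation of $\nabla(e^{t\mathcal{L}_G'}\psi)$ along the \emph{background} trajectories $\mathbf{X}_G$, not to $\nabla\Omega_\delta$ along the perturbed flow as you suggest. Finally, the paper's Step~3 closes the bootstrap by lifting $\|\mathbf{u}_\delta\|_{L^{p_0}}$ to $\|\Omega_\delta\|_{L^{p_4}}$ and $\|\mathbf{u}_\delta\|_{L^{p_4}}$ via the Lagrangian formula \eqref{Omega delta exp-tra-per-flow} along the \emph{perturbed} trajectories together with the compact-embedding interpolation \eqref{estimate u delta Lp4}; this step is missing from your outline and is what makes the H\"older pairing in $\mathbf{u}_\delta\Omega_\delta$ close.
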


\begin{proof}
Since $\mathbb{S}^2$ is compact, we have $L^{q_2}(T\mathbb{S}^2)\subset L^{q_1}(T\mathbb{S}^2)$ for $1\leq q_1<q_2$, and thus, it suffices to prove Theorem \ref{thm-nonlinear instability-general steady flows} for $p_0>1$ sufficiently close to $1$. Let   $\mathbf{v}_1\in L^2(T\mathbb{S}^2)$ be an eigenfunction of $\lambda_1$. Since $\mathbf{v}_1\in L^2(T\mathbb{S}^2)$,  by Lemma \ref{lem-regularity of eigenfunction} we have that $\Omega_1=curl\mathbf{v}_1\in  H_1^{p_1}\cap L^{p_2}(\mathbb{S}^2)$ for $1\leq p_1<b_0$ and $ p_2\geq1$. Let $\Omega_1=\Omega_1^r+i\Omega_1^i$ and $\mathbf{v}_1=\mathbf{v}_1^r+i\mathbf{v}_1^i$, where  $ \Omega_1^r$, $\mathbf{v}_1^r$ and $\Omega_1^i$, $\mathbf{v}_1^i$ are the real and imaginary parts of $\Omega_1$, $\mathbf{v}_1$, respectively.
We normalize $\Omega_1^r$ such that $\|\Omega_1^r\|_{L^{p_2}(\mathbb{S}^2)}+\|\nabla\Omega_1^r\|_{L^{p_1}(T\mathbb{S}^2)}=1$.
Here, the eigenfunction  $\Omega_1$ is chosen to be real-valued if $\lambda_1\in\mathbb{R}$.
Let $\Omega_{\delta,G}(t)$ and ${\mathbf{u}}_{\delta,G} (t)$ be the perturbed vorticity and velocity solving the nonlinear Euler equation  with the initial data $ \Omega_{\delta,G}(0)=\Omega_G+\delta\Omega_1^r$ and ${\mathbf{u}}_{\delta,G}(0)={\mathbf{u}}_G+\delta\mathbf{v}_1^r$, where $\delta\in(0,1)$ and $t>0$. Let $\Omega_\delta(t)=\Omega_{\delta,G}(t)-\Omega_G$, $ { \mathbf{u}}_\delta(t)={\mathbf{u}}_{\delta,G}(t)-{\mathbf{u}}_G$ and $\Psi_\delta(t)=\Delta^{-1}\Omega_\delta(t)$ be the perturbation of vorticity, velocity and stream function, respectively.  Then
$ { \mathbf{u}}_\delta(t)=curl^{-1}\Omega_\delta(t)$ and $\Omega_\delta(t)$ solves
\begin{align}\label{perturbation equation}
\partial_t\Omega_\delta=-\nabla_{{ \mathbf{u}}_\delta}\Omega_G-\nabla_{{\mathbf{u}}_G}\Omega_\delta-\nabla_{ { \mathbf{u}}_\delta}(2\omega\chi)-\nabla_{ { \mathbf{u}}_\delta}\Omega_\delta={\mathcal{L}}_G\Omega_\delta-\nabla_{{ \mathbf{u}}_\delta}\Omega_\delta,
\end{align}
where the linearized vorticity operator ${\mathcal{L}}_G$ is defined in \eqref{linearized vorticity operator M0}. By Duhamel's principle, we have
\begin{align*}
\Omega_\delta(t)=e^{t{\mathcal{L}}_G}\Omega_\delta(0)-\int_0^te^{(t-\tau) {\mathcal{L}}_G} (\nabla_{ { \mathbf{u}}_\delta}\Omega_\delta)(\tau)d\tau=\Omega_{\delta,L}(t)+\Omega_{\delta,N}(t)
\end{align*}
and
\begin{align*}
\mathbf{u}_\delta(t)=curl^{-1}\Omega_\delta(t)
=\nabla^{\bot}\Delta^{-1}\Omega_\delta(t)=\nabla^{\bot}\Delta^{-1}\Omega_{\delta,L}(t)+\nabla^{\bot}\Delta^{-1}\Omega_{\delta,N}(t)
=\mathbf{u}_{\delta,L}(t)+\mathbf{u}_{\delta,N}(t).
\end{align*}
By \eqref{v1H1p1}, $c_1\triangleq2\|\mathbf{v}_1\|_{L^{p_0}(T\mathbb{S}^2)}<\infty$.
We define
\begin{align}\label{def-T}
T\triangleq\sup\{t\geq0|\|\mathbf{u}_\delta(t)\|_{L^{p_0}(T\mathbb{S}^2)}\leq c_1\delta e^{{\rm Re}(\lambda_1) t}\}.
\end{align}
Then $\|\mathbf{u}_\delta(0)\|_{L^{p_0}(T\mathbb{S}^2)}=\delta\|\mathbf{v}_1^r\|_{L^{p_0}(T\mathbb{S}^2)}<c_1\delta$, and thus, $T>0$.
We divide into three steps to estimate $\|\mathbf{u}_{\delta,N}(t)\|_{L^{p_0}(T\mathbb{S}^2)}$.\\
{\bf Step 1.} Estimate $\|\mathbf{u}_{\delta,N}(t)\|_{L^{p_0}(T\mathbb{S}^2)}$ by duality for $0\leq t<T$.

First, we show that $\Omega_{\delta,N}(t)=curl\mathbf{u}_{\delta,N}(t)\in L^{p_0}(\mathbb{S}^2)$ for $0\leq t< T$. We rewrite \eqref{perturbation equation} as
\begin{align*}
\partial_t\Omega_\delta+\nabla_{{\mathbf{u}}_G+{ \mathbf{u}}_\delta}\Omega_\delta=-\nabla_{{ \mathbf{u}}_\delta}(\Omega_G+2\omega\chi).
\end{align*}
Let $\hat{\mathbf{X}}(t,\mathbf{x})$ be the trajectory induced by the perturbed negative velocity field $-({\mathbf{u}}_G+{ \mathbf{u}}_\delta)$ satisfying
\begin{align*}
\left\{ \begin{array}{rcl} {d\hat{\mathbf{X}}\over dt}&=&-({\mathbf{u}}_G+{ \mathbf{u}}_\delta)(t,\hat{\mathbf{X}}(t,\mathbf{x})),  \\ \hat{\mathbf{X}}(0,\mathbf{x})&=&\mathbf{x}.\end{array} \right.
\end{align*}
Then $e^{-t\nabla_{{\mathbf{u}}_G+{ \mathbf{u}}_\delta}}\Omega_{\delta}(0,\mathbf{x})=\Omega_{\delta}(0,\hat{\mathbf{X}}(t,\mathbf{x}))$ and by Duhamel's principle we have
\begin{align}\label{Omega delta exp-tra-per-flow}
\Omega_\delta(t,\mathbf{x})=\Omega_{\delta}(0,\hat{\mathbf{X}}(t,\mathbf{x}))-\int_0^t(\nabla_{{ \mathbf{u}}_\delta}(\Omega_G+2\omega\chi))(\tau,\hat{\mathbf{X}}(t-\tau,\mathbf{x}))d\tau.
\end{align}
Thus, by \eqref{def-T}, for $0\leq t<T$ we have
\begin{align}\label{Omega-delta-t-L-p0}
\|\Omega_\delta(t)\|_{L^{p_0}(\mathbb{S}^2)}\leq\|\Omega_{\delta}(0)\|_{L^{p_0}(\mathbb{S}^2)}+\|\Omega_G+2\omega\chi\|_{C^1(\mathbb{S}^2)}\int_0^t\|{ \mathbf{u}}_\delta(\tau)\|_{L^{p_0}(T\mathbb{S}^2)} d\tau<\infty.
\end{align}
The real part of  $e^{t{\mathcal{L}}_G}\Omega_1=e^{t\lambda_1}\Omega_1$ is $e^{t{\mathcal{L}}_G}\Omega_1^r=e^{t{\rm Re(\lambda_1)}}(\cos(t {\rm Im}(\lambda_1))\Omega_1^r-\sin(t{\rm Im}(\lambda_1))\Omega_1^i)$. Thus,
\begin{align}\label{Omega-delta-t-linear-L-p0}
&\|\Omega_{\delta,L}(t)\|_{L^{p_0}(\mathbb{S}^2)}=\|e^{t{\mathcal{L}}_G}\Omega_\delta(0)\|_{L^{p_0}(\mathbb{S}^2)}=\delta\| e^{t{\mathcal{L}}_G}\Omega_1^r\|_{L^{p_0}(\mathbb{S}^2)}\\\nonumber
=&e^{t{\rm Re(\lambda_1)}}\delta\|(\cos(t {\rm Im}(\lambda_1))\Omega_1^r-\sin(t{\rm Im}(\lambda_1))\Omega_1^i)\|_{L^{p_0}(\mathbb{S}^2)}\leq \delta e^{t{\rm Re(\lambda_1)}}\|\Omega_1\|_{L^{p_0}(\mathbb{S}^2)}<\infty
 \end{align}
 for $0\leq t<T$. Combining \eqref{Omega-delta-t-L-p0} and \eqref{Omega-delta-t-linear-L-p0}, we have $\Omega_{\delta,N}(t)\in L^{p_0}(\mathbb{S}^2)$ for $0\leq t< T$.

Since $\Omega_{\delta,N}(t)\in L^{p_0}(\mathbb{S}^2)$, we have by Lemma \ref{lem-differential calculus} (vi) that
\begin{align*}
&\|\mathbf{u}_{\delta,N}(t)\|_{L^{p_0}(T\mathbb{S}^2)}=\|\nabla^{\bot}\Delta^{-1}\Omega_{\delta,N}(t)\|_{L^{p_0}(T\mathbb{S}^2)}\\
\leq&C_1\sup_{\psi\in H_1^{p_0'}(\mathbb{S}^2),\|\psi\|_{ H_1^{p_0'}(\mathbb{S}^2)}=1}\left|\int_{\mathbb{S}^2}\psi\Omega_{\delta,N}(t) d\sigma_g\right|\\
=&C_1\sup_{\psi\in H_1^{p_0'}(\mathbb{S}^2),\|\psi\|_{ H_1^{p_0'}(\mathbb{S}^2)}=1}\left|\int_{\mathbb{S}^2}\psi\int_0^te^{(t-\tau) {\mathcal{L}}_G} (\nabla_{ { \mathbf{u}}_\delta}\Omega_\delta)(\tau)d\tau d\sigma_g\right|\\
=&C_1\sup_{\psi\in H_1^{p_0'}(\mathbb{S}^2),\|\psi\|_{ H_1^{p_0'}(\mathbb{S}^2)}=1}\left| \int_0^t\int_{\mathbb{S}^2}\psi e^{(t-\tau) {\mathcal{L}}_G} {\rm div}( \mathbf{u}_\delta\Omega_\delta)(\tau)d\sigma_gd\tau \right|\\
=&C_1\sup_{\psi\in H_1^{p_0'}(\mathbb{S}^2),\|\psi\|_{ H_1^{p_0'}(\mathbb{S}^2)}=1}\left| \int_0^t\int_{\mathbb{S}^2}\nabla(e^{(t-\tau) {\mathcal{L}}_G'}\psi) \cdot ( \mathbf{u}_\delta\Omega_\delta)(\tau)d\sigma_gd\tau \right|\\
\leq&C_1\sup_{\psi\in H_1^{p_0'}(\mathbb{S}^2),\|\psi\|_{ H_1^{p_0'}(\mathbb{S}^2)}=1} \int_0^t\|\nabla(e^{(t-\tau) {\mathcal{L}}_G'}\psi)\|_{L^{p_3}(T\mathbb{S}^2)}  \|\mathbf{u}_\delta(\tau)\|_{L^{p_4}(T\mathbb{S}^2)}\|\Omega_\delta(\tau)\|_{L^{p_4}(\mathbb{S}^2)}d\tau
\end{align*}
for $0\leq t< T$, where ${\mathcal{L}}_G'$ is the dual operator of ${\mathcal{L}}_G$, $p_3\in(1,b_0)$, $p_4$ satisfies ${1\over p_3}+{2\over p_4}=1$ and we used  ${\rm div}( \mathbf{u}_\delta\Omega_\delta)=\nabla_{\mathbf{u}_\delta}\Omega_\delta+\Omega_\delta{\rm div}(\mathbf{u}_\delta)=\nabla_{\mathbf{u}_\delta}\Omega_\delta$ in the second equality since $\mathbf{u}_\delta$ is solenoidal.\\
{\bf Step 2}.  Estimate  $\|\nabla(e^{t {\mathcal{L}}_G'}\psi)\|_{L^{p_3}(T\mathbb{S}^2)}$ by degeneracy of the averaging Lyapunov exponent.

We will prove that there exists $C_4>0$ such that
\begin{align*}
\|\nabla (e^{t{\mathcal{L}}_G'}\psi)\|_{L^{p_3}(T\mathbb{S}^2)}
\leq C_4e^{{3\over2}{\rm Re}(\lambda_1)t}\|\psi\|_{H_1^{p_0'}(\mathbb{S}^2)}
\end{align*}
for $\psi\in H_1^{p_0'}(\mathbb{S}^2)$, where $0\leq t<T$.

By \eqref{linearized vorticity operator M0},
 ${\mathcal{L}}_G=-{\mathbf{u}}_G\cdot\nabla+(\nabla^\bot(\Omega_G+2\omega\chi))\cdot\nabla\Delta^{-1}$ and ${\mathcal{L}}_G'={\mathbf{u}}_G\cdot\nabla-\Delta^{-1}(\nabla^\bot(\Omega_G+2\omega\chi))\cdot\nabla$.
For $\psi\in H_1^{p_0'}(\mathbb{S}^2)$, $\tilde \psi(t)\triangleq e^{t{\mathcal{L}}_G'}\psi$ solves $\partial_t\tilde \psi={\mathbf{u}}_G\cdot\nabla\tilde \psi-\Delta^{-1}(\nabla^\bot(\Omega_G+2\omega\chi))\cdot\nabla\tilde \psi$. By Duhamel's principle and $e^{t{\mathbf{u}}_G\cdot\nabla}\psi(\mathbf{x})=\psi({\mathbf{X}}_G(t,\mathbf{x}))$, we have
\begin{align*}
\tilde \psi(t,\mathbf{x})=\psi({\mathbf{X}}_G(t,\mathbf{x}))-\int_0^t\Delta^{-1}(\nabla^\bot(\Omega_G+2\omega\chi)\cdot\nabla\tilde \psi)(\tau,{\mathbf{X}}_G(t-\tau,\mathbf{x}))d\tau,
\end{align*}
and
\begin{align}\label{grad-tilde psi t x}
\nabla\tilde \psi(t,\mathbf{x})=&(d{\mathbf{X}}_G(t))^*\nabla\psi({\mathbf{X}}_G(t,\mathbf{x}))\\\nonumber
&-\int_0^t(d{\mathbf{X}}_G(t-\tau))^*\nabla\Delta^{-1}(\nabla^\bot(\Omega_G+2\omega\chi)\cdot\nabla\tilde \psi)(\tau,{\mathbf{X}}_G(t-\tau,\mathbf{x}))d\tau.
\end{align}
Since ${\mathcal{L}}_G'$ is a compact perturbation of ${\mathbf{u}}_G\cdot\nabla$, ${\mathcal{L}}_G'$ generates an isometry group in  $L^{p_0'}(\mathbb{S}^2)$ and   $\sigma_{\rm ess}({\mathcal{L}}_G')\subset i\mathbb{R}$. Moreover, $\bar{\lambda}_1$ is an eigenvalue of ${\mathcal{L}}_G'$ with the largest real part. Thus,
\begin{align}\label{group estimate}
\|\tilde \psi(t)\|_{L^{p_0'}(\mathbb{S}^2)}=\|e^{t{\mathcal{L}}_G'}\psi\|_{L^{p_0'}(\mathbb{S}^2)}\leq C_2e^{{3\over 2}{\rm Re}(\lambda_1)t}\|\psi\|_{L^{p_0'}(\mathbb{S}^2)}
\end{align}
for some $C_2>0$. By Lemma \ref{lem-differential calculus} (v) and $\tilde \psi(t)\in L^{p_0'}(\mathbb{S}^2)$, we have
$\nabla^{\bot}\Delta^{-1}(\nabla^\bot(\Omega_G+2\omega\chi)\cdot\nabla)\tilde \psi(t)=J(\nabla\Delta^{-{1\over2}})(\Delta^{-{1\over2}}$ $\nabla^\bot(\Omega_G+2\omega\chi)\cdot\nabla)\tilde \psi(t)\in L^{p_0'}(T\mathbb{S}^2)\cap H_{p_0'}(\mathbb{S}^2)$. Moreover, we infer from \eqref{grad-tilde psi t x} and $\psi\in H_1^{p_0'}(\mathbb{S}^2)$ that
\begin{align*}
\|\nabla\tilde \psi(t)\|_{L^{p_0'}(T\mathbb{S}^2)}\leq&\|(d{\mathbf{X}}_G(t))^*\nabla\psi({\mathbf{X}}_G(t))\|_{L^{p_0'}(T\mathbb{S}^2)}\\\nonumber
&+\int_0^t\|(d{\mathbf{X}}_G(t-\tau))^*\nabla\Delta^{-1}(\nabla^\bot(\Omega_G+2\omega\chi)\cdot\nabla\tilde \psi)(\tau,{\mathbf{X}}_G(t-\tau))\|_{L^{p_0'}(T\mathbb{S}^2)}d\tau\\
\leq& C_t\|\psi\|_{H_1^{p_0'}(\mathbb{S}^2)},
\end{align*}
and thus,
$curl(\nabla^{\bot}\Delta^{-1}(\nabla^\bot(\Omega_G+2\omega\chi)\cdot\nabla)\tilde \psi(t))=(\nabla^\bot(\Omega_G+2\omega\chi)\cdot\nabla)\tilde \psi(t)\in L^{p_0'}(\mathbb{S}^2)$.
Since $\nabla^{\bot}\Delta^{-1}(\nabla^\bot(\Omega_G+2\omega\chi)\cdot\nabla)\tilde \psi(t)\in H_{p_0'}(\mathbb{S}^2)$ and $curl(\nabla^{\bot}\Delta^{-1}(\nabla^\bot(\Omega_G+2\omega\chi)\cdot\nabla)\tilde \psi(t))\in L^{p_0'}(\mathbb{S}^2)$,
by Lemma \ref{lem-differential calculus} (vi) we have
\begin{align}\label{integrand1}
&\|\nabla\Delta^{-1}(\nabla^\bot(\Omega_G+2\omega\chi)\cdot\nabla)\tilde \psi(t)\|_{L^{p_0'}(T\mathbb{S}^2)}\\\nonumber
=&\|\nabla^{\bot}\Delta^{-1}(\nabla^\bot(\Omega_G+2\omega\chi)\cdot\nabla)\tilde \psi(t)\|_{L^{p_0'}(T\mathbb{S}^2)}\\\nonumber
\leq&C_1\sup_{\phi\in H_1^{p_0}(\mathbb{S}^2),\|\phi\|_{ H_1^{p_0}(\mathbb{S}^2)}=1}\left|\int_{\mathbb{S}^2}\phi(\nabla^\bot(\Omega_G+2\omega\chi)\cdot\nabla)\tilde \psi(t) d\sigma_g\right|\\\nonumber
=&C_1\sup_{\phi\in H_1^{p_0}(\mathbb{S}^2),\|\phi\|_{ H_1^{p_0}(\mathbb{S}^2)}=1}\left|\int_{\mathbb{S}^2}(\nabla^\bot(\Omega_G+2\omega\chi)\cdot\nabla)\phi\tilde \psi(t) d\sigma_g\right|\\\nonumber
\leq& C_1\sup_{\phi\in H_1^{p_0}(\mathbb{S}^2),\|\phi\|_{ H_1^{p_0}(\mathbb{S}^2)}=1}\|\Omega_G+2\omega\chi\|_{C^1(\mathbb{S}^2)}\|\nabla\phi\|_{L^{p_0}(T\mathbb{S}^2)}\|\tilde \psi(t) \|_{L^{p_0'}(\mathbb{S}^2)}\\\nonumber
\leq& C_1C_2e^{{3\over 2}{\rm Re}(\lambda_1)t}\|\Omega_G+2\omega\chi\|_{C^1(\mathbb{S}^2)}\|\psi\|_{L^{p_0'}(\mathbb{S}^2)},
\end{align}
where we used \eqref{group estimate} in the last inequality.
Since $p_3\in (1,b_0)$ and $p_0>1$ is sufficiently close to $1$, we can choose $p_5\in(1,b_0)$ such that ${1\over p_3}={1\over p_0'}+{1\over p_5}$. Using the degeneracy of the averaging Lyapunov exponent and
by a similar argument  to \eqref{estimate dX0gradgrad} and \eqref{estimate dX0gradgrad2}, we have
\begin{align}\label{integrand2}
&\|(d{\mathbf{X}}_G(t-\tau))^*\nabla\Delta^{-1}(\nabla^\bot(\Omega_G+2\omega\chi)\cdot\nabla\tilde \psi)(\tau,{\mathbf{X}}_G(t-\tau))\|_{L^{p_3}(T\mathbb{S}^2)}\\\nonumber
\leq&
\bigg\||(d{\mathbf{X}}_G(t-\tau))^*|_{op}\bigg\|_{L^{p_5}(\mathbb{S}^2)}\|\nabla\Delta^{-1}(\nabla^\bot(\Omega_G+2\omega\chi)\cdot\nabla\tilde \psi)(\tau)\|_{L^{p_0'}(T\mathbb{S}^2)}\\\nonumber
\leq& C_{3}e^{({\rm Re}(\lambda_1)-{\epsilon_1\over 2})(t-\tau)}\|\nabla\Delta^{-1}(\nabla^\bot(\Omega_G+2\omega\chi)\cdot\nabla\tilde \psi)(\tau)\|_{L^{p_0'}(T\mathbb{S}^2)},
\end{align}
where $\epsilon_1={\rm Re}(\lambda_1)-\mu(1-{1\over p_5})>0$. Similarly, we have
\begin{align}\label{integrand3}
&\|(d{\mathbf{X}}_G(t))^*\nabla\psi({\mathbf{X}}_G(t))\|_{L^{p_3}(T\mathbb{S}^2)}\leq C_{3}e^{({\rm Re}(\lambda_1)-{\epsilon_1\over 2})t}\|\psi\|_{H_1^{p_0'}(\mathbb{S}^2)}.
\end{align}
Combining \eqref{grad-tilde psi t x} and \eqref{integrand1}-\eqref{integrand3}, we have
\begin{align*}
&\|\nabla(e^{t {\mathcal{L}}_G'}\psi)\|_{L^{p_3}(T\mathbb{S}^2)}
=\|\nabla\tilde \psi(t)\|_{L^{p_3}(T\mathbb{S}^2)}\\
\leq&\|(d{\mathbf{X}}_G(t))^*\nabla\psi({\mathbf{X}}_G(t))\|_{L^{p_3}(T\mathbb{S}^2)}\\\nonumber
&+\int_0^t\|(d{\mathbf{X}}_G(t-\tau))^*\nabla\Delta^{-1}(\nabla^\bot(\Omega_G+2\omega\chi)\cdot\nabla\tilde \psi)(\tau,{\mathbf{X}}_G(t-\tau))\|_{L^{p_3}(T\mathbb{S}^2)}d\tau\\
\leq&C_{3}e^{({\rm Re}(\lambda_1)-{\epsilon_1\over 2})t}\|\psi\|_{H_1^{p_0'}(\mathbb{S}^2)}\\
&+C_1C_2C_{3}e^{({\rm Re}(\lambda_1)-{\epsilon_1\over 2})t}\int_0^te^{-({\rm Re}(\lambda_1)-{\epsilon_1\over 2})\tau+{{3\over 2}{\rm Re}(\lambda_1)\tau}}\|\Omega_G+2\omega\chi\|_{C^1(\mathbb{S}^2)}\|\psi\|_{L^{p_0'}(\mathbb{S}^2)}d\tau\\
\leq&C_4e^{{3\over2}{\rm Re}(\lambda_1)t}\|\psi\|_{H_1^{p_0'}(\mathbb{S}^2)}
\end{align*}
for some $C_4>0$.
\\
{\bf Step 3.} Estimate  $\|\mathbf{u}_\delta(t)\|_{L^{p_4}(T\mathbb{S}^2)}$ and $\|\Omega_\delta(t)\|_{L^{p_4}(\mathbb{S}^2)}$ via bootstrap from velocity to vorticity for $0\leq t<T$.

Note that $p_4>p_0$ since $p_0>1$ is sufficiently close to $1$. By bootstrap from $\|\mathbf{u}_\delta(t)\|_{L^{p_0}(T\mathbb{S}^2)}\leq c_1\delta e^{{\rm Re}(\lambda_1) t}$ (see \eqref{def-T}), we will prove that
\begin{align}\label{bootstrap}
\|\Omega_\delta(t)\|_{L^{p_4}(\mathbb{S}^2)},\;\; \|\mathbf{u}_\delta(t)\|_{L^{p_4}(T\mathbb{S}^2)}\leq C_5\delta e^{{\rm Re}(\lambda_1)t}
\end{align}
for some $C_5>0$, where  $0\leq t<T$.

By Lemma \ref{lem-differential calculus} (v),
 $curl^{-1}=J\nabla\Delta^{-1}:\Omega_\delta(t)\to\mathbf{u}_\delta(t)$ is bounded from $L_0^p(\mathbb{S}^2)$ to $L^p(T\mathbb{S}^2)$. Thus, it suffices to prove the part for $\|\Omega_\delta(t)\|_{L^{p_4}(\mathbb{S}^2)}$ in \eqref{bootstrap}.
 By \eqref{Omega delta exp-tra-per-flow}, we have
 \begin{align}\label{basic estimate Omega delta Lp4}
 \|\Omega_\delta(t)\|_{L^{p_4}(\mathbb{S}^2)}\leq\|\Omega_{\delta}(0)\|_{L^{p_4}(\mathbb{S}^2)}+\|\Omega_G+2\omega\chi\|_{C^1(\mathbb{S}^2)}\int_0^t\|{ \mathbf{u}}_\delta(\tau)\|_{L^{p_4}(T\mathbb{S}^2)} d\tau,
 \end{align}
 where $0\leq t<T$.
 By Lemma  \ref{lem-differential calculus} (ii) and $p_4={2\over 1-{1\over p_3}}>2$, the embedding of $H_2^{p_4}(\mathbb{S}^2)$ in $H_1^{p_4}(\mathbb{S}^2)$ is compact. Thus, for any $\alpha>0$, there exists $C_\alpha>0$ such that
 \begin{align}\label{estimate u delta Lp4}
 &\|\mathbf{u}_\delta(t)\|_{L^{p_4}(T\mathbb{S}^2)}=\|\nabla\Psi_\delta(t)\|_{L^{p_4}(T\mathbb{S}^2)}\leq \|\Psi_\delta(t)\|_{H_1^{p_4}(\mathbb{S}^2)}\\\nonumber
 \leq&\alpha\|\Psi_\delta(t)\|_{H_2^{p_4}(\mathbb{S}^2)}+C_\alpha\|\Psi_\delta(t)\|_{H_1^{p_0}(\mathbb{S}^2)}\leq \alpha C_6\|\Omega_\delta(t)\|_{L^{p_4}(\mathbb{S}^2)}+C_\alpha C \|\nabla\Psi_\delta(t)\|_{L^{p_0}(T\mathbb{S}^2)}\\\nonumber
 =&\alpha C_6\|\Omega_\delta(t)\|_{L^{p_4}(\mathbb{S}^2)}+C_\alpha C \|\mathbf{u}_\delta(t)\|_{L^{p_0}(T\mathbb{S}^2)},\end{align}
 where $0\leq t<T$, and we used $p_4>p_0$ in the second inequality, and used  Lemma 3.1 of \cite{Cao-Wang-Zuo2023}  and Lemma \ref{lem-differential calculus} (iii) in the third inequality. Inserting \eqref{estimate u delta Lp4} into \eqref{basic estimate Omega delta Lp4} and noticing $\|\mathbf{u}_\delta(t)\|_{L^{p_0}(T\mathbb{S}^2)}\leq c_1\delta e^{{\rm Re}(\lambda_1) t}$ for $0\leq t<T$, we have
 \begin{align*}
 \|\Omega_\delta(t)\|_{L^{p_4}(\mathbb{S}^2)}\leq&\|\Omega_{\delta}(0)\|_{L^{p_4}(\mathbb{S}^2)}+\alpha C_7\int_0^t\|\Omega_\delta(\tau)\|_{L^{p_4}(\mathbb{S}^2)} d\tau+C_8\delta e^{{\rm Re }\lambda_1t},
 \end{align*}
 where $C_7=C_6\|\Omega_G+2\omega\chi\|_{C^1(\mathbb{S}^2)}$ and $C_8=C_\alpha C c_1\|\Omega_G+2\omega\chi\|_{C^1(\mathbb{S}^2)}{1\over {\rm Re(\lambda_1)}}$. Fix $\kappa_1\in(0, {\rm Re}(\lambda_1))$. Let $\alpha>0$ be small enough such that $\alpha C_7< {\rm Re}(\lambda_1)-\kappa_1$. Then for $0\leq t<T$, we have
 \begin{align*}
 \|\Omega_\delta(t)\|_{L^{p_4}(\mathbb{S}^2)}\leq&(\delta\|\Omega_1^r\|_{L^{p_4}(\mathbb{S}^2)}+C_8\delta e^{{\rm Re }\lambda_1t})+({\rm Re}(\lambda_1)-\kappa_1)\int_0^t\|\Omega_\delta(\tau)\|_{L^{p_4}(\mathbb{S}^2)} d\tau.
 \end{align*}
 By Gr${\rm \ddot{o}}$nwall's inequality, we have
 \begin{align*}
 \|\Omega_\delta(t)\|_{L^{p_4}(\mathbb{S}^2)}\leq&(\delta\|\Omega_1^r\|_{L^{p_4}(\mathbb{S}^2)}+C_8\delta e^{{\rm Re }\lambda_1t})+
 ({\rm Re}(\lambda_1)-\kappa_1)\delta\|\Omega_1^r\|_{L^{p_4}(\mathbb{S}^2)}\int_0^te^{({\rm Re}(\lambda_1)-\kappa_1)(t-\tau)} d\tau\\
 &+({\rm Re}(\lambda_1)-\kappa_1)C_8\delta\int_0^te^{{\rm Re }\lambda_1\tau}e^{({\rm Re}(\lambda_1)-\kappa_1)(t-\tau)} d\tau\\
 \leq&(\delta\|\Omega_1^r\|_{L^{p_4}(\mathbb{S}^2)}+C_8\delta e^{{\rm Re }\lambda_1t})+
 \delta\|\Omega_1^r\|_{L^{p_4}(\mathbb{S}^2)}e^{({\rm Re}(\lambda_1)-\kappa_1)t}
 +{{\rm Re}(\lambda_1)-\kappa_1\over \kappa_1}C_8\delta e^{{\rm Re}(\lambda_1) t}\\
 \leq & C_5\delta e^{{\rm Re}(\lambda_1)t}
 \end{align*}
 for some $C_5>0$, where $0\leq t<T$.

 In summary, by Steps 1-3 we have
 \begin{align}\label{nonlinear estimate 0 T}
&\|\mathbf{u}_{\delta,N}(t)\|_{L^{p_0}(T\mathbb{S}^2)}\\\nonumber
\leq& C_1\sup_{\psi\in H_1^{p_0'}(\mathbb{S}^2),\|\psi\|_{ H_1^{p_0'}(\mathbb{S}^2)}=1} \int_0^t\|\nabla(e^{(t-\tau) {\mathcal{L}}_G'}\psi)\|_{L^{p_3}(T\mathbb{S}^2)}  \|\mathbf{u}_\delta(\tau)\|_{L^{p_4}(T\mathbb{S}^2)}\|\Omega_\delta(\tau)\|_{L^{p_4}(\mathbb{S}^2)}d\tau\\\nonumber
\leq &C_1 C_4C_5^2\sup_{\psi\in H_1^{p_0'}(\mathbb{S}^2),\|\psi\|_{ H_1^{p_0'}(\mathbb{S}^2)}=1}\int_0^te^{{3\over2}{\rm Re}(\lambda_1)(t-\tau)}\|\psi\|_{H_1^{p_0'}(\mathbb{S}^2)} (\delta e^{{\rm Re}(\lambda_1)\tau})^2 d\tau\\\nonumber
\leq& C_9(\delta e^{{\rm Re}(\lambda_1)t})^2
\end{align}
for some $C_9>0$, where $0\leq t<T$.
\if0
Note that $e^{t\mathcal{L}_0}curl=curle^{t\mathcal{G}_0}$ since $curl \mathcal{G}_0=\mathcal{L}_0curl$. Thus, $e^{t\mathcal{G}_0}\mathbf{v}_1=curl^{-1}e^{t\mathcal{L}_0}\Omega_1=curl^{-1}e^{t\lambda_1}\Omega_1=e^{t\lambda_1}\mathbf{v}_1$.
\fi
If $\lambda_1$ is real, then $\|e^{t\mathcal{M}_G}\mathbf{v}_1\|_{L^{p_0}(T\mathbb{S}^2)}=e^{t{\lambda_1}}c_0$, where we define $c_0\triangleq\|\mathbf{v}_1\|_{L^{p_0}(T\mathbb{S}^2)}$ in this case. If $\lambda_1$ is non-real, then
the real part of  $e^{t\mathcal{M}_G}\mathbf{v}_1=e^{t\lambda_1}\mathbf{v}_1$ is $e^{t\mathcal{M}_G}\mathbf{v}_1^r
=e^{t{\rm Re(\lambda_1)}}(\cos(t {\rm Im}(\lambda_1))\mathbf{v}_1^r-\sin(t{\rm Im}(\lambda_1))\mathbf{v}_1^i)$, and
 $\|e^{t\mathcal{M}_G}\mathbf{v}_1^r\|_{L^{p_0}(T\mathbb{S}^2)}\geq  e^{t{\rm Re(\lambda_1)}}$ $c_0$, where we define $c_0\triangleq\min_{\theta\in[0,2\pi)}\|\cos(\theta)\mathbf{v}_1^r-\sin(\theta)\mathbf{v}_1^i\|_{L^{p_0}(T\mathbb{S}^2)}$ in this case. Moreover,
we have
$c_0>0$, since otherwise, $\lambda_1$ is real. Thus, in any case,
\begin{align}\label{etLOv1rc0}\|e^{t\mathcal{M}_G}\mathbf{v}_1^r\|_{L^{p_0}(T\mathbb{S}^2)}\geq c_0e^{t{\rm Re(\lambda_1)}}>0,\quad t\geq0.\end{align}
Let $\tilde\epsilon_0\triangleq{\min\{c_0,c_1\}\over 4C_9}$, where we recall that $c_1=2\|\mathbf{v}_1\|_{L^{p_0}(T\mathbb{S}^2)}$. Define $t_0={1\over {\rm Re}(\lambda_1)}\ln{\tilde\epsilon_0\over \delta}>0$ for $\delta\in(0,\tilde \epsilon_0)$. Then $t_0=O(|\ln \delta|)$ and $\tilde\ep_0=\delta e^{t_0{\rm Re}(\lambda_1)}$. We claim that
\begin{align}\label{t0 less than T}
t_0<T,
\end{align}
 where $T$ is defined in \eqref{def-T}. Suppose $t_0\geq T$.
 Since $curl \mathcal{M}_G={\mathcal{L}}_G curl$ implies $e^{t{\mathcal{L}}_G}curl=curl e^{t\mathcal{M}_G}$, we have
 \begin{align}\label{u delta linear term}
 &{ \mathbf{u}}_{\delta,L}(T)=curl^{-1}\Omega_{\delta,L}(T)=curl^{-1}e^{T{\mathcal{L}}_G}\delta\Omega_1^r\\\nonumber
 =&\delta e^{T\mathcal{M}_G}\mathbf{v}_1^r=\delta e^{T{\rm Re(\lambda_1)}}(\cos(T {\rm Im}(\lambda_1))\mathbf{v}_1^r-\sin(T{\rm Im}(\lambda_1))\mathbf{v}_1^i)
  \end{align}
  and thus, $\|{ \mathbf{u}}_{\delta,L}(T)\|_{L^{p_0}(T\mathbb{S}^2)}\leq \delta e^{T{\rm Re(\lambda_1)}}$ $\|\mathbf{v}_1\|_{L^{p_0}(T\mathbb{S}^2)}$.
 Then
\begin{align*}&\|{ \mathbf{u}}_\delta(T)\|_{L^{p_0}(T\mathbb{S}^2)}\leq\|{ \mathbf{u}}_{\delta,L}(T)\|_{L^{p_0}(T\mathbb{S}^2)}+\|{ \mathbf{u}}_{\delta,N}(T)\|_{L^{p_0}(T\mathbb{S}^2)}\\
 \leq& \delta e^{T{\rm Re}(\lambda_1) }\|\mathbf{v}_1\|_{L^{p_0}(T\mathbb{S}^2)}+C_9(\delta e^{T{\rm Re}(\lambda_1)})^2\leq \delta e^{T{\rm Re}(\lambda_1) }({1\over 2} c_1+C_9\tilde\ep_0)\leq {3\over 4}c_1\delta e^{T{\rm Re}(\lambda_1) }.\end{align*}
 This contradicts \eqref{def-T}. Thus, we can apply \eqref{nonlinear estimate 0 T}, along with \eqref{etLOv1rc0},  to obtain
\begin{align*}
\|{ \mathbf{u}}_\delta(t_0)\|_{L^{p_0}(T\mathbb{S}^2)}\geq&\|{ \mathbf{u}}_{\delta,L}(t_0)\|_{L^{p_0}(T\mathbb{S}^2)}-\|{ \mathbf{u}}_{\delta,N}(t_0)\|_{L^{p_0}(T\mathbb{S}^2)}\geq c_0\delta e^{t_0{\rm Re(\lambda_1)}}-C_9(\delta e^{t_0{\rm Re}(\lambda_1)})^2
\geq \ep_0
\end{align*}
with $\ep_0={3\over4}\tilde\ep_0c_0$.
\end{proof}

\subsection{Proof of nonlinear orbital instability of general steady flows}
Based on  nonlinear instability  in Theorem \ref{thm-nonlinear instability-general steady flows}, we turn to prove nonlinear orbital instability in Theorem \ref{thm-nonlinear instability}.

The classical  orbital instability of (relative) equilibria governed by a one-parameter group for an abstract Hamiltonian system is developed in \cite{gss1990}. The nonlinear terms of the Hamiltonian system in \cite{gss1990} have no loss of derivative, and the basic setting is on the Hilbert space, so it can not be applied to our problem.

 \if0
 We will prove nonlinear orbital instability  for a general travelling wave $T(-ct)\mathbf{u}_{G}$. 
Theorem \ref{thm-nonlinear instability} 
follows simply by setting the travelling speed $c$ to zero.

\begin{Theorem}\label{thm-nonlinear instability-tw-version}
For a $C^1$ travelling wave $T(-ct)\mathbf{u}_{G}$ with  finite
 stagnation points,
if it is linearly unstable in $L^2(T\mathbb{S}^2)$, then it is nonlinearly orbitally unstable in the sense of
Theorem \ref{thm-nonlinear instability}, where the perturbed unstable velocity and vorticity are replaced by $\mathbf{u}_{\delta,TW}$ and $\Omega_{\delta,TW}$.
\end{Theorem}
\fi

\begin{proof}[Proof of Theorem \ref{thm-nonlinear instability}]
As in the proof of Theorem  \ref{thm-nonlinear instability-general steady flows},
we only need to prove Theorem \ref{thm-nonlinear instability} for $p_0>1$ sufficiently close to $1$.
 Since the translational orbit of a zonal flow is itself, we can assume that $\mathbf{u}_{G}$ is non-zonal.
Let $\mathbf{v}_{1}\in L^2(T\mathbb{S}^2)$ be an eigenfunction of  $\lambda_1$. By Lemma \ref{lem-regularity of eigenfunction}, $\Omega_{1}=curl(\mathbf{v}_{1})\in H_1^{p_1}\cap L^{p_2}(\mathbb{S}^2)$ for $1\leq p_1<b_0$ and $p_2\geq1$. 
The normalization of $\Omega_{1}^r$ and the definitions of $\Omega_{\delta,G}(t)$, ${\mathbf{u}}_{\delta,G} (t)$, $\Omega_{\delta}(t)$, $ { \mathbf{u}}_{\delta}(t)$, $ { \mathbf{u}}_{\delta,L}(t)$ and $ { \mathbf{u}}_{\delta,N}(t)$ are  the same  as in the proof of Theorem \ref{thm-nonlinear instability-general steady flows}.

Define
\begin{align*}
c_2\triangleq\left\{ \begin{array}{lll}
\min\limits_{\theta\in[0,2\pi)}\inf\limits_{\mathbf{w}\in\text{span}\{T'(0)\mathbf{u}_{G}\}}
\|(\cos(\theta)\mathbf{v}_{1}^r-\sin(\theta)\mathbf{v}_{1}^i)-\mathbf{w}\|_{L^{p_0}(T\mathbb{S}^2)},& \text{$\lambda_1$ is non-real},
 \\\inf\limits_{\mathbf{w}\in\text{span}\{T'(0)\mathbf{u}_{G}\}}\|\mathbf{v}_{1}-\mathbf{w}\|_{L^{p_0}(T\mathbb{S}^2)},&\text{$\lambda_1$ is real}.
 \end{array} \right.
\end{align*}
We claim that \begin{align}\label{c2 positive}
c_2>0.
\end{align}
In fact, if $\lambda_1$ is non-real,  acting $T'(0)$ on the steady-state equation
\begin{align*}
\nabla_{{\mathbf{u}}_{G}}\mathbf{u}_{G}+2\omega\chi J\mathbf{u}_{G}+\nabla p=0,
\end{align*}
we have
\begin{align}\label{kernel-G-RH}
\mathcal{M}_{G} T'(0)\mathbf{u}_{G}=0.
\end{align}
On the other hand, by separating the real and imaginary parts of the equation $\mathcal{M}_{G} \mathbf{v}_{1}=\lambda_1\mathbf{v}_{1}$, we have $\mathcal{M}_{G} \mathbf{v}_{1}^r={\rm Re}(\lambda_1) \mathbf{v}_{1}^r-{\rm Im}(\lambda_1)\mathbf{v}_{1}^i$ and $\mathcal{M}_{G}\mathbf{v}_{1}^i={\rm Im}(\lambda_1)\mathbf{v}_{1}^r+{\rm Re}(\lambda_1)\mathbf{v}_{1}^i$. Then
\begin{align*}
&\mathcal{M}_{G}(\cos(\theta)\mathbf{v}_{1}^r-\sin(\theta)\mathbf{v}_{1}^i)\\
=&(\cos(\theta){\rm Re}(\lambda_1)-\sin(\theta){\rm Im}(\lambda_1))\mathbf{v}_{1}^r-(\cos(\theta){\rm Im}(\lambda_1)+\sin(\theta){\rm Re}(\lambda_1))\mathbf{v}_{1}^i.
\end{align*}
Since $\mathbf{v}_{1}^r\neq0$ is linearly independent of $\mathbf{v}_{1}^i\neq0$ and $(\cos(\theta){\rm Re}(\lambda_1)-\sin(\theta){\rm Im}(\lambda_1))^2+(\cos(\theta){\rm Im}(\lambda_1)+\sin(\theta){\rm Re}(\lambda_1))^2\neq0$, we have $\mathcal{M}_{G}(\cos(\theta)\mathbf{v}_{1}^r-\sin(\theta)\mathbf{v}_{1}^i)\neq0$ for $\theta\in[0,2\pi)$.
This, along with \eqref{kernel-G-RH}, gives $\cos(\theta)\mathbf{v}_{1}^r-\sin(\theta)\mathbf{v}_{1}^i\notin\text{span}\{T'(0)\mathbf{u}_{G}\}$ for $\theta\in[0,2\pi)$ and proves \eqref{c2 positive} in the case that $\lambda_1$ is non-real.
If $\lambda_1$ is real, by \eqref{kernel-G-RH} and $\mathcal{M}_{G} \mathbf{v}_{1}=\lambda_1\mathbf{v}_{1}$ we have  $c_2>0.$

Define
\begin{align}\label{def-tilde c3-c3}
\tilde c_3\triangleq \min\limits_{\tau\in\left[-{T_{G}\over2},{T_{G}\over2}\right)}\left\|\int_0^1T(\xi\tau)T'(0)\mathbf{u}_{G}d\xi\right\|_{L^{p_0}(T\mathbb{S}^2)},
\quad c_3\triangleq{1\over\|T'(0)^2\mathbf{u}_{G}\|_{L^{p_0}(T\mathbb{S}^2)}}\tilde c_3^2,
\end{align}
where $T_{G}$ is the minimal zonal period of $\mathbf{u}_{G}$.
We prove that $\tilde c_3, c_3>0.$
It suffices to show that $\left\|\int_0^1T(\xi\tau)T'(0)\mathbf{u}_{G}d\xi\right\|_{L^{p_0}(T\mathbb{S}^2)}>0$ for all $\tau\in\left[-{T_{G}\over2},{T_{G}\over2}\right]$.
If $\tau=0$, then $\bigg\|\int_0^1T(\xi\tau)T'(0)$ $\mathbf{u}_{G}d\xi\bigg\|_{L^{p_0}(T\mathbb{S}^2)}=\left\|T'(0)\mathbf{u}_{G}\right\|_{L^{p_0}(T\mathbb{S}^2)}>0$. If $\tau\neq0$, suppose $\left\|\int_0^1T(\xi\tau)T'(0)\mathbf{u}_{G}d\xi\right\|_{L^{p_0}(T\mathbb{S}^2)}$ $=0$, then $|\tau|\left\|\int_0^1T(\xi\tau)T'(0)\mathbf{u}_{G}d\xi\right\|_{L^{p_0}(T\mathbb{S}^2)}=\|\mathbf{u}_{G}-T(\tau)\mathbf{u}_{G}\|_{L^{p_0}(T\mathbb{S}^2)}=0.$
Then the minimal zonal period of $\mathbf{u}_{G}$ is less than $T_{G}$, which is a contradiction. This proves $\tilde c_3, c_3>0$.

Let $\tilde\epsilon_1=\min\left\{{c_0\over 4C_9},{c_1\over 4C_9},{c_2\over 4C_9}, {c_2c_3\over 8c_1^2}\right\}>0$ and $t_1={1\over {\rm Re}(\lambda_1)}\ln{\tilde\epsilon_1\over \delta}>0$ for $\delta\in(0,\tilde \epsilon_1)$, where $c_1\triangleq2\|\mathbf{v}_1\|_{L^{p_0}(T\mathbb{S}^2)}$; $C_9$ is determined in \eqref{nonlinear estimate 0 T}; and $c_0=\|\mathbf{v}_{1}\|_{L^{p_0}(T\mathbb{S}^2)}$ if $\lambda_1$ is real, and
$c_0=\min_{\theta\in[0,2\pi)}\|\cos(\theta)\mathbf{v}_{1}^r-\sin(\theta)\mathbf{v}_{1}^i\|_{L^{p_0}(T\mathbb{S}^2)}$ if $\lambda_1$ is non-real.
Then  $\tilde\ep_1=\delta e^{t_1{\rm Re}(\lambda_1)}$. By  \eqref{t0 less than T} and $t_1\leq t_0$, we have
\begin{align}\label{u delta tf t1 lp0 estimate}
\|{ \mathbf{u}}_{\delta}(t)\|_{L^{p_0}(T\mathbb{S}^2)}\leq c_1\delta e^{{\rm Re}(\lambda_1)t}\leq c_1\tilde\epsilon_1,\quad 0\leq t\leq t_1.
\end{align}
Moreover, by \eqref{nonlinear estimate 0 T} we have
\begin{align}\label{u delta nonlinear term tf}
\|\mathbf{u}_{\delta,N}(t_1)\|_{L^{p_0}(T\mathbb{S}^2)}\leq C_9\delta e^{{\rm Re}(\lambda_1) t_1}\tilde\epsilon_1 \leq {c_2\over4}\delta e^{{\rm Re}(\lambda_1) t_1}.
\end{align}
Since $\cos(t_1{\rm Im}(\lambda_1))\mathbf{v}_{1}^r-\sin(t_1{\rm Im}(\lambda_1))\mathbf{v}_{1}^i\notin\text{span}\{T'(0)\mathbf{u}_{G}\}$, by Hahn-Banach Theorem and the fact that the dual space of $L^{p_0}(T\mathbb{S}^2)$ is $L^{p_0'}(T\mathbb{S}^2)$, there exists a function, namely, $(\cos(t_1{\rm Im}(\lambda_1)$ $)\mathbf{v}_{1}^r-\sin(t_1{\rm Im}(\lambda_1))\mathbf{v}_{1}^i)^{\bot}$, in $L^{p_0'}(T\mathbb{S}^2)$ such that
\begin{align}\label{cos vr sin vi perpendicular span derivative T0uRH}
\left\langle
\left(\cos(t_1{\rm Im}(\lambda_1))\mathbf{v}_{1}^r-\sin(t_1{\rm Im}(\lambda_1))\mathbf{v}_{1}^i\right)^{\bot},\mathbf{w}\right\rangle=0,\quad\forall\;\mathbf{w}\in\text{span}\{T'(0)\mathbf{u}_{G}\},
\end{align}
and
\begin{align}\label{cos vr sin vi perpendicular cos vr sin vi dual}
&\left\langle
\left(\cos(t_1{\rm Im}(\lambda_1))\mathbf{v}_{1}^r-\sin(t_1{\rm Im}(\lambda_1))\mathbf{v}_{1}^i\right)^{\bot},\cos(t_1{\rm Im}(\lambda_1))\mathbf{v}_{1}^r-\sin(t_1{\rm Im}(\lambda_1))\mathbf{v}_{1}^i\right\rangle\\
=&\inf\limits_{\mathbf{w}\in\text{span}\{T'(0)\mathbf{u}_{G}\}}
\|(\cos(t_1{\rm Im}(\lambda_1))\mathbf{v}_{1}^r-\sin(t_1{\rm Im}(\lambda_1))\mathbf{v}_{1}^i)-\mathbf{w}\|_{L^{p_0}(T\mathbb{S}^2)}\geq c_2,\nonumber
\end{align}
where
\begin{align}\label{cos vr sin vi perpendicular norm}
\left\|\left(\cos(t_1{\rm Im}(\lambda_1))\mathbf{v}_{1}^r-\sin(t_1{\rm Im}(\lambda_1))\mathbf{v}_{1}^i\right)^{\bot}\right\|_{L^{p_0'}(T\mathbb{S}^2)}=1,
\end{align}
and $\langle\cdot,\cdot\rangle$ is the pairing between $L^{p_0}(T\mathbb{S}^2)$ and $L^{p_0'}(T\mathbb{S}^2)$.
Similar to \eqref{u delta linear term}, we have ${ \mathbf{u}}_{\delta,L}(t_1)=\delta e^{{\rm Re}(\lambda_1)t_1}\left(\cos(t_1 {\rm Im}(\lambda_1))\mathbf{v}_{1}^r-\sin(t_1{\rm Im}(\lambda_1))\mathbf{v}_{1}^i\right)$.
Then by \eqref{cos vr sin vi perpendicular cos vr sin vi dual}, \eqref{u delta nonlinear term tf} and \eqref{cos vr sin vi perpendicular norm} we have
\begin{align}\label{u delta tf t1 cos vr sin vi perpendicular}
&\left|\left\langle \mathbf{u}_{\delta}(t_1),\left(\cos(t_1{\rm Im}(\lambda_1))\mathbf{v}_{1}^r-\sin(t_1{\rm Im}(\lambda_1))\mathbf{v}_{1}^i\right)^{\bot}\right\rangle\right|\\\nonumber
\geq&\left|\left\langle \mathbf{u}_{\delta,L}(t_1),\left(\cos(t_1{\rm Im}(\lambda_1))\mathbf{v}_{1}^r-\sin(t_1{\rm Im}(\lambda_1))\mathbf{v}_{1}^i\right)^{\bot}\right\rangle\right|\\\nonumber
&-\left|\left\langle \mathbf{u}_{\delta,N}(t_1),\left(\cos(t_1{\rm Im}(\lambda_1))\mathbf{v}_{1}^r-\sin(t_1{\rm Im}(\lambda_1))\mathbf{v}_{1}^i\right)^{\bot}\right\rangle\right|\\\nonumber
\geq&
\delta e^{{\rm Re}(\lambda_1)t_1}\bigg|\bigg\langle \cos(t_1 {\rm Im}(\lambda_1))\mathbf{v}_{1}^r-\sin(t_1{\rm Im}(\lambda_1))\mathbf{v}_{1}^i,\\\nonumber
&\left(\cos(t_1{\rm Im}(\lambda_1))\mathbf{v}_{1}^r-\sin(t_1{\rm Im}(\lambda_1))\mathbf{v}_{1}^i\right)^{\bot}\bigg\rangle\bigg|\\\nonumber
&- \|\mathbf{u}_{\delta,N}(t_1)\|_{L^{p_0}(T\mathbb{S}^2)}\left\|\left(\cos(t_1{\rm Im}(\lambda_1))\mathbf{v}_{1}^r-\sin(t_1{\rm Im}(\lambda_1))\mathbf{v}_{1}^i\right)^{\bot}\right\|_{L^{p_0'}(T\mathbb{S}^2)}\\\nonumber
\geq& c_2\delta e^{{\rm Re}(\lambda_1)t_1}-{c_2\over4}\delta e^{{\rm Re}(\lambda_1) t_1}\\\nonumber
=&{3\over 4}c_2\tilde \epsilon_1.
\end{align}
Recall that ${\mathbf{u}}_{\delta,G}(t)$ is the perturbed velocity solving \eqref{velocity equation-original form}. Then 
\begin{align*}
\Theta(t)\triangleq&\inf_{\tau\in\mathbb{R}}\|{\mathbf{u}}_{\delta,G}(t)-T(\tau){\mathbf{u}}_{G}\|_{L^{p_0}(T\mathbb{S}^2)}
=\inf_{\tau\in\mathbb{R}}\|{\mathbf{u}}_{\delta}(t)+\mathbf{u}_{G}-T(\tau){\mathbf{u}}_{G}\|_{L^{p_0}(T\mathbb{S}^2)}\\
=&\inf_{\tau\in\left[-{T_{G}\over2},{T_{G}\over2}\right)}\|{\mathbf{u}}_{\delta}(t)+\mathbf{u}_{G}-T(\tau){\mathbf{u}}_{G}\|_{L^{p_0}(T\mathbb{S}^2)}
\end{align*}
for $t\geq0$, where $T_{G}$ is the minimal zonal period of $\mathbf{u}_{G}$. For $t=t_1$, there exists $\tau_{t_1}\in\left[-{T_{G}\over2},{T_{G}\over2}\right)$ such that $\Theta(t_1)=\|{\mathbf{u}}_{\delta}(t_1)+\mathbf{u}_{G}-T(\tau_{t_1}){\mathbf{u}}_{G}\|_{L^{p_0}(T\mathbb{S}^2)}$.
By the definition of $\Theta(t_1)$ and \eqref{u delta tf t1 lp0 estimate}  we have
\begin{align*}
\Theta(t_1)\leq \|{\mathbf{u}}_{\delta}(t_1)\|_{L^{p_0}(T\mathbb{S}^2)}\leq c_1\tilde\epsilon_1,
\end{align*}
and thus
\begin{align}\label{uRH-T-tau-t1-uRH-upper bound}
\|\mathbf{u}_{G}-T(\tau_{t_1}){\mathbf{u}}_{G}\|_{L^{p_0}(T\mathbb{S}^2)}\leq2 c_1\tilde\epsilon_1.
\end{align}
Direct computation implies
\begin{align}\label{uRH-T-tau-t1-uRH-lower bound}
\|\mathbf{u}_{G}-T(\tau_{t_1}){\mathbf{u}}_{G}\|_{L^{p_0}(T\mathbb{S}^2)}=&|\tau_{t_1}|\left\|\int_0^1T(\xi\tau_{t_1})T'(0)\mathbf{u}_{G} d\xi\right\|_{L^{p_0}(T\mathbb{S}^2)}
\geq |\tau_{t_1}| \tilde c_3,
\end{align}
where $\tilde c_3$ is defined in \eqref{def-tilde c3-c3}.
Combining \eqref{uRH-T-tau-t1-uRH-upper bound} and \eqref{uRH-T-tau-t1-uRH-lower bound} we have
\begin{align}\label{tau-t1-bound}
|\tau_{t_1}| \leq {2c_1\tilde\epsilon_1\over \tilde c_3}.
\end{align}
By \eqref{cos vr sin vi perpendicular norm}, \eqref{cos vr sin vi perpendicular span derivative T0uRH}, \eqref{u delta tf t1 cos vr sin vi perpendicular}, \eqref{tau-t1-bound} and the definitions of $c_3, \tilde \epsilon_1,$   we have
\begin{align*}
&\Theta(t_1)=\|{\mathbf{u}}_{\delta}(t_1)+\mathbf{u}_{G}-T(\tau_{t_1}){\mathbf{u}}_{G}\|_{L^{p_0}(T\mathbb{S}^2)}\\
\geq&\left|\left\langle {\mathbf{u}}_{\delta}(t_1)+\mathbf{u}_{G}-T(\tau_{t_1}){\mathbf{u}}_{G},\left(\cos(t_1{\rm Im}(\lambda_1))\mathbf{v}_{1}^r-\sin(t_1{\rm Im}(\lambda_1))\mathbf{v}_{1}^i\right)^{\bot}\right\rangle\right|\\
=&\bigg|\bigg\langle {\mathbf{u}}_{\delta}(t_1)-\tau_{t_1}T'(0)\mathbf{u}_{G}-\tau_{t_1}^2\int_0^1T(\xi\tau_{t_1})T'(0)^2\mathbf{u}_{G}(1-\xi)d\xi,\\
&\left(\cos(t_1{\rm Im}(\lambda_1))\mathbf{v}_{1}^r-\sin(t_1{\rm Im}(\lambda_1))\mathbf{v}_{1}^i\right)^{\bot}\bigg\rangle\bigg|\\
=&\bigg|\bigg\langle {\mathbf{u}}_{\delta}(t_1)-\tau_{t_1}^2\int_0^1T(\xi\tau_{t_1})T'(0)^2\mathbf{u}_{G}(1-\xi)d\xi,
\left(\cos(t_1{\rm Im}(\lambda_1))\mathbf{v}_{1}^r-\sin(t_1{\rm Im}(\lambda_1))\mathbf{v}_{1}^i\right)^{\bot}\bigg\rangle\bigg|\\
\geq&{3\over 4}c_2\tilde \epsilon_1-\tau_{t_1}^2\bigg|\bigg\langle\int_0^1T(\xi\tau_{t_1})T'(0)^2\mathbf{u}_{G}(1-\xi)d\xi,
\left(\cos(t_1{\rm Im}(\lambda_1))\mathbf{v}_{1}^r-\sin(t_1{\rm Im}(\lambda_1))\mathbf{v}_{1}^i\right)^{\bot}\bigg\rangle\bigg|\\
\geq&{3\over 4}c_2\tilde \epsilon_1-\tau_{t_1}^2\left\|\int_0^1T(\xi\tau_{t_1})T'(0)^2\mathbf{u}_{G}(1-\xi)d\xi\right\|_{L^{p_0}(T\mathbb{S}^2)}\\
\geq&{3\over 4}c_2\tilde \epsilon_1-\tau_{t_1}^2\int_0^1\left\|T(\xi\tau_{t_1})T'(0)^2\mathbf{u}_{G}(1-\xi)\right\|_{L^{p_0}(T\mathbb{S}^2)}d\xi\\
\geq&{3\over 4}c_2\tilde \epsilon_1-{4c_1^2\tilde\epsilon_1^2\over \tilde c_3^2}\left\|T'(0)^2\mathbf{u}_{G}\right\|_{L^{p_0}(T\mathbb{S}^2)}\\
=&{3\over 4}c_2\tilde \epsilon_1-{4c_1^2\tilde\epsilon_1^2\over  c_3}\\
\geq&{3\over 4}c_2\tilde \epsilon_1-{4c_1^2\tilde\epsilon_1\over  c_3}\cdot{c_2c_3\over 8c_1^2}\\
=&{1\over 4} c_2\tilde \epsilon_1\triangleq\epsilon_1,
\end{align*}
where we use the integral remainder of the Taylor expansion of $\mathbf{u}_{G}-T(\tau_{t_1}){\mathbf{u}}_{G}$ in the first equality.
\end{proof}

\begin{Remark}
For a general  travelling wave $T(-ct) \mathbf{u}$, it appears to be a steady flow in a travelling frame of reference moving with the wave. So it is not difficult to extend Theorem \ref{thm-nonlinear instability} to general  travelling waves. Typical examples include Rossby-Haurwitz travelling waves and the Stuart vortices on the sphere (see \cite{Crowdy04, Constantin-Crowdy-Krishnamurthy-Wheeler2021}).
\end{Remark}

\section{Description of streamline patterns of travelling waves near the $3$-jet}

\subsection{Construction of travelling waves  near the $3$-jet}
In this subsection, we prove Theorem $\ref{travelling wave solutions cat eye unidirectional thm}$. We construct  unidirectional travelling waves near the $3$-jet  for
$\omega\in(-18,-3)\cup({69\over2},72)$ in Theorem \ref{travelling wave solutions thm}. Then we construct  cat's eyes travelling waves for $\omega\in(-18,72)$
and unidirectional travelling waves  for $\omega\in(-\infty,-18)\cup(72,\infty)$ in Lemma  \ref{travelling waves c omega}.

The   cat's eyes and unidirectional  travelling waves are defined as follows.
\begin{itemize}
\item A cat's eyes  travelling wave means that its streamlines  have at least a cat's eyes structure.

\item A unidirectional  travelling wave means that all its streamlines  are unidirectional.
\end{itemize}
For  a cat's eyes (resp. unidirectional) travelling wave $\Psi(\varphi-ct,s)$, the travelling speed $c$ is located  in the interior of $Ran(-\partial_s\Psi)$ (reps. outside $Ran(-\partial_s\Psi)$).

For $\omega\in \left(-18,-3\right)\cup\left({69\over2},72\right)$,
to ensure that the unidirectional  travelling waves form  curves, we have to study the bifurcation directly at the $3$-jet, not at its nearby zonal flows. This requires a delicate spectral analysis on the kernel of the linearization of the nonlinear functional and a  weak transversal condition, which is  the major difficulty in the construction.

\begin{Theorem}\label{travelling wave solutions thm}
{\rm(i)}
Let $\omega\in \left(-18,-3\right)\cup\left({99\over2},72\right)$. Then there exists a curve of unidirectional  travelling waves
$\{\Psi_{(\gamma),1}(\varphi-c_1(\gamma)t,s)||\gamma|\ll1\}$
  such that $c_1(\cdot)\in C^0$ and
\begin{align}\label{a curve of traveling wave solution 1 mode}
\|\Psi_{(\gamma),1}-\Psi_0(s)-\gamma\Phi_{\mu_{1,\omega},\omega,1}(s)\cos(\varphi)\|_{H_2^4(\mathbb{S}^2)}=o(|\gamma|),
\end{align}
where $\Phi_{\mu_{1,\omega},\omega,1}$  is the neutral solution in Corollaries \ref{uniqueness1} and \ref{uniqueness2}. Moreover, 
 if $\omega\in\left(-18,-3\right)$, then $ c_1(\gamma)>\max(-\Psi_0')$;
 if $\omega\in\left({99\over2},72\right)$, then $ c_1(\gamma)<\min(-\Psi_0')$.

{\rm(ii)}
Let $\omega\in \left(-18,g^{-1}(-12)\right)\cup\left({69\over2},72\right)$. Then there exists another curve of unidirectional  travelling waves
$\{\Psi_{(\gamma),2}(\varphi-c_2(\gamma)t,s)||\gamma|\ll1\}$
 such that $c_2(\cdot)\in C^0$ and
\begin{align}\label{a curve of traveling wave solution 2 mode0}
\|\Psi_{(\gamma),2}-\Psi_0(s)-\gamma\Phi_{\mu_{2,\omega},\omega,2}(s)\cos(2\varphi)\|_{H_2^4(\mathbb{S}^2)}=o(|\gamma|),
\end{align}
where $\Phi_{\mu_{2,\omega},\omega,2}$  is the  neutral solution in Corollaries \ref{uniqueness3} and \ref{uniqueness4}.
Moreover, 
 if $\omega\in\left(-18,g^{-1}(-12)\right)$, then $ c_2(\gamma)>\max(-\Psi_0')$;
 if $\omega\in\left({69\over2},72\right)$, then $ c_2(\gamma)<\min(-\Psi_0')$.

{\rm(iii)}
Let $\omega\in \left(-18,g^{-1}(-12)\right)$. Then there exists one more curve of travelling waves
$\{\Psi_{(\gamma),3}(\varphi-c_3(\gamma)t,s)||\gamma|\ll1\}$
  such that
 $c_3(\cdot)\in C^0$, $c_3(\gamma)>\max(-\Psi_0')$ and
\begin{align}\label{a curve of traveling wave solution 2 mode}
\|\Psi_{(\gamma),3}-\Psi_0(s)-\gamma\Phi_{\mu_{3,\omega},\omega,2}(s)\cos(2\varphi)\|_{H_2^4(\mathbb{S}^2)}=o(|\gamma|),
\end{align}
where $\Phi_{\mu_{3,\omega},\omega,2}$  is the neutral solution in Corollary \ref{uniqueness4}.
\end{Theorem}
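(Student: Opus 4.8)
\textbf{Plan of proof for Theorem \ref{travelling wave solutions thm}.}
The strategy is to recast the travelling-wave equation as a bifurcation problem and apply degenerate bifurcation theory (Kielh\"ofer's theorem) at the $3$-jet itself, exactly at the parameter values where the linearized operator $\mathcal{L}_{\omega,k}$ possesses an isolated purely imaginary eigenvalue corresponding to a neutral mode with $c\neq c_\omega$. Concretely, a travelling wave $\Psi(\varphi-ct,s)$ of $(\mathcal{E}_\omega)$ must satisfy, in the moving frame, $\{\Delta\Psi+2\omega s,\Psi+cs\}=0$, i.e. $\Delta\Psi+2\omega s = G(\Psi+cs)$ for some profile function $G$ along streamlines; linearizing about $\Psi_0$ and using that $\Delta\Psi_0+2\omega s = -12\Psi_0+2\omega s$ is an affine function of $\Psi_0+cs$ only when we allow $G$ to vary, one writes the perturbation $\phi=\Psi-\Psi_0$ as solving a quasilinear elliptic equation whose linearization at $\phi=0$ is (a conjugate of) the Rayleigh operator $\Delta_k\phi - \tfrac{\tilde\Upsilon_\omega'}{\tilde\Psi_\omega'+c}\phi$ restricted to the $k$-th Fourier mode. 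I would set this up as $F(\phi,c)=0$ in the scale of spaces $H_2^4(\mathbb{S}^2)$ (or a Gevrey space for part of (iii)), with $F$ of class $C^1$ (or better) in $\phi$ near $0$, and $F(0,c)=0$ for all $c$ since $\Psi_0$ is a steady zonal flow.

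The key input is the spectral picture already established: by Corollaries \ref{uniqueness1}, \ref{uniqueness2}, \ref{uniqueness3}, \ref{uniqueness4}, for the relevant $\omega$ there is a \emph{unique} $\mu_{j,\omega}$ (hence a unique $c_{j,\omega}=\mu_{j,\omega}+\omega$) outside $\mathrm{Ran}(-\tilde\Psi_\omega')$ giving a neutral mode $(c_{j,\omega},k,\omega,\Phi_{\mu_{j,\omega},\omega,k})$, and the corresponding eigenfunction $\Phi_{\mu_{j,\omega},\omega,k}$ is the principal eigenfunction, so the kernel of the linearization $D_\phi F(0,c_{j,\omega})$ restricted to the $k$-th mode is one-dimensional, spanned by $\Phi_{\mu_{j,\omega},\omega,k}(s)\cos(k\varphi)$. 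The regularity of this eigenfunction (it solves a Rayleigh equation with \emph{no} singularity in $(-1,1)$ since $c_{j,\omega}\notin\mathrm{Ran}(-\tilde\Psi_\omega')$, plus Lemma 6.3-type bootstrap / Lemma \ref{lem-differential calculus}) puts it in $H_2^4(\mathbb{S}^2)$. First I would verify the Fredholm property of $D_\phi F(0,c_{j,\omega})$ (index $0$, closed range) using the compact embedding Lemma \ref{compact embedding} and the fact that the essential spectrum of $\mathcal{L}_{\omega,k}$ is an interval disjoint from the isolated eigenvalue; then identify $\ker$ and $\mathrm{coker}$ and check the transversality/genericity hypotheses of the degenerate bifurcation theorem. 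Because the isolated eigenvalue is \emph{simple} (uniqueness of $\mu_{j,\omega}$ and simplicity as the top eigenvalue), a one-parameter Crandall--Rabinowitz-type branch emerges, but the transversality condition $D^2_{\phi c}F(0,c_{j,\omega})[\Phi]\notin\mathrm{Ran}\,D_\phi F$ may \emph{fail} (this is precisely the degeneracy flagged in Remark \ref{remark on the proof of travelling wave solutions thm}); there one falls back to Kielh\"ofer's higher-order version, which still yields a continuous (not necessarily $C^1$) curve $\gamma\mapsto(\Psi_{(\gamma),j},c_j(\gamma))$ with the stated expansion \eqref{a curve of traveling wave solution 1 mode}--\eqref{a curve of traveling wave solution 2 mode}.

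Finally, the qualitative conclusions about streamline patterns follow from the location of $c_j(\gamma)$: since $c_{j,\omega}\notin\mathrm{Ran}(-\tilde\Psi_\omega')=\mathrm{Ran}(-\Psi_0')-\omega$ shifted appropriately — more precisely $c_{j,\omega}$ lies strictly above $\max(-\Psi_0')$ when $\mu_{j,\omega}>3$ (the negative-$\omega$ cases, where $c_{j,\omega}>3+\omega$) and strictly below $\min(-\Psi_0')$ when $\mu_{j,\omega}<-12$ (the positive-$\omega$ cases, where $c_{j,\omega}<-12+\omega$) — and since $c_j(\gamma)\to c_{j,\omega}$ as $\gamma\to0$ by continuity, for $|\gamma|$ small we still have $c_j(\gamma)$ outside $\mathrm{Ran}(-\partial_s\Psi_{(\gamma),j})$ (using $\|\partial_s\Psi_{(\gamma),j}-\Psi_0'\|_{C^0}\to0$ from the $H_2^4$ convergence and Sobolev embedding). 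Hence no stagnation points arise and the travelling wave is unidirectional; this gives (i) and (ii), and (iii) is the second branch at $\mu_{3,\omega}$ with $c_3(\gamma)>\max(-\Psi_0')$. The main obstacle I anticipate is verifying the precise hypotheses of the degenerate bifurcation theorem — in particular controlling whether the classical transversality holds or must be replaced by a higher-order condition, and showing that the branch is genuinely a curve rather than a more complicated local set; this requires the careful spectral analysis of $D_\phi F$ near the bifurcation point alluded to in Remark \ref{remark on the proof of travelling wave solutions thm}, including computing the relevant higher $\gamma$-derivatives of $F$ restricted to the complement of the kernel.
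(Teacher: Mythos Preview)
Your overall strategy is correct and matches the paper: set up a nonlinear functional $F(\phi,\mu)=0$ whose linearization at $(0,\mu_{j,\omega})$ is the Rayleigh operator, verify a one-dimensional kernel, and apply Crandall--Rabinowitz or Kielh\"ofer's degenerate bifurcation theorem. However, there are two substantive gaps.

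\textbf{The one-dimensional kernel is not automatic.} You only argue that the kernel \emph{restricted to the $k$-th Fourier mode} is one-dimensional, but the linearization $\partial_\phi F(0,\mu_{j,\omega})=\Delta-\tfrac{\Upsilon_0'+2\omega}{\Psi_0'+\mu_{j,\omega}}$ acts on all Fourier modes simultaneously, and you must rule out additional kernel from every other mode. The paper handles this in two ways you do not mention. First, it restricts the domain of $F$ to symmetry subspaces (odd in $s$ and even in $\varphi$ for part (i); $\pi$-periodic and even in $\varphi$ for (ii)--(iii)) and checks that the \emph{nonlinear} map preserves these subspaces --- for (i) this uses that the implicitly defined function $\mathbf s(z,\mu)$ is odd in $z$. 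These restrictions kill the $\sin(k\varphi)$ directions and, in (i), force the $|k|=2$ Sturm--Liouville problem to start at eigenvalue $-12$ rather than $-6$, eliminating that mode. Second, and more delicately, one must exclude kernel from the zonal mode $k=0$: in (i) this is done by a comparison argument between the $k=0$ and $k=1$ principal eigenfunctions (both positive on $(0,1)$), while in (ii)--(iii) the paper needs a genuinely nontrivial argument combining the Frobenius method at the regular singular point $s=1$ with variation of parameters to show that any putative $k=0$ eigenfunction forces the $k=2$ neutral solution to blow up at $s=\pm1$. None of this follows from the uniqueness corollaries you cite, which concern only a single Fourier mode.

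\textbf{Analyticity and the odd-order condition for Kielh\"ofer.} You correctly anticipate that transversality may fail and that Kielh\"ofer's theorem is the fallback, but the hypotheses of that theorem require $F$ to be \emph{analytic} near the bifurcation point, not merely $C^1$. The paper secures this by constructing $h(z,\mu)$ via the analytic implicit function theorem (Lemma \ref{analytic version of Implicit Function Theorem}). Moreover, Kielh\"ofer's theorem needs the first nonvanishing $\mu$-derivative $\partial_\mu^{m_0}\lambda_1(\mu_{j,\omega},\omega)$ to be of \emph{odd} order $m_0$; the paper proves this by contradiction, showing that an even $m_0$ would produce a second value of $\mu$ with $\lambda_1=-12$, violating the uniqueness in Corollaries \ref{uniqueness1}--\ref{uniqueness4}. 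Your proposal gestures at ``higher-order conditions'' but does not identify this odd-order requirement or how to verify it.
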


We need the following analytic version of Implicit Function Theorem in \cite{Hormander07}.

\begin{Lemma}\label{analytic version of Implicit Function Theorem}
Let $f(w,z,r)$ be an analytic function in a neighborhood of $(w_0,z_0,r_0)\in \mathbb{C}^3$, $f(w_0,z_0,r_0)=0$ and $\partial_wf(w_0,z_0,r_0)\neq0$. Then $f(w,z,r)=0$ has a uniquely determined analytic solution ${\bf{w}}(z,r)$ in a neighborhood of $(z_0,r_0)$ such that ${\bf{w}}(z_0,r_0)=w_0$.
\end{Lemma}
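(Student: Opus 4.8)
The plan is to prove this as a several-complex-variables version of the argument principle. First I would normalize by a translation, assuming without loss of generality that $(w_0,z_0,r_0)=(0,0,0)$. Since $f(\cdot,0,0)$ is holomorphic in one complex variable near $0$ with $f(0,0,0)=0$ and $\partial_w f(0,0,0)\neq 0$, the point $w=0$ is a simple, hence isolated, zero of $w\mapsto f(w,0,0)$; choose $\rho>0$ so that $f(w,0,0)\neq 0$ for $0<|w|\le\rho$, and write $\gamma=\{|w|=\rho\}$. By joint continuity of $f$ (built into analyticity), there is $\delta>0$ with $f(w,z,r)\neq 0$ for all $w\in\gamma$ and $|z|,|r|<\delta$.

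Next, for $(z,r)$ in the polydisc $D=\{|z|<\delta\}\times\{|r|<\delta\}$ I would consider the zero-counting integral
\[
N(z,r)=\frac{1}{2\pi i}\oint_\gamma \frac{\partial_w f(w,z,r)}{f(w,z,r)}\,dw .
\]
The integrand has no pole on $\gamma$ and is jointly continuous, so $N$ is continuous on $D$; it is integer-valued by the argument principle, and $N(0,0)=1$ because $0$ is a simple zero. Hence $N\equiv 1$ on $D$, so for each $(z,r)\in D$ the function $w\mapsto f(w,z,r)$ has exactly one zero $\mathbf{w}(z,r)$ inside $\gamma$, and it is simple. The standard residue computation then gives the explicit representation
\[
\mathbf{w}(z,r)=\frac{1}{2\pi i}\oint_\gamma \frac{w\,\partial_w f(w,z,r)}{f(w,z,r)}\,dw ,\qquad (z,r)\in D .
\]
In particular $\mathbf{w}(0,0)=0=w_0$, and $f(\mathbf{w}(z,r),z,r)=0$.

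It remains to show $\mathbf{w}$ is holomorphic on $D$. For each fixed $w\in\gamma$ the integrand $\dfrac{w\,\partial_w f(w,z,r)}{f(w,z,r)}$ is a quotient of functions holomorphic in $(z,r)$ with non-vanishing denominator, hence holomorphic in $(z,r)$; it is also jointly continuous in $(w,z,r)$, so $\mathbf{w}$ is continuous on $D$, and applying Fubini together with Morera's theorem in the $z$-slices and in the $r$-slices shows $\mathbf{w}$ is separately holomorphic. A locally bounded separately holomorphic function is holomorphic (Osgood's lemma), so $\mathbf{w}$ is holomorphic on $D$; equivalently one may differentiate under the integral sign to obtain $\partial_{\bar z}\mathbf{w}=\partial_{\bar r}\mathbf{w}=0$ directly. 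Uniqueness is immediate: if $\tilde{\mathbf{w}}$ is any continuous solution of $f(\tilde{\mathbf{w}}(z,r),z,r)=0$ near $(0,0)$ with $\tilde{\mathbf{w}}(0,0)=0$, then by continuity $\tilde{\mathbf{w}}(z,r)$ lies inside $\gamma$ for $(z,r)$ in a smaller polydisc, where it must coincide with the unique interior zero $\mathbf{w}(z,r)$.

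The only genuinely delicate point is the passage from ``separately holomorphic and continuous'' to ``jointly holomorphic'' for the contour integral; this is Osgood's lemma (an easy special case of Hartogs' theorem, not requiring its full strength), and I would include a one-line justification via the iterated Cauchy integral formula. An alternative route, which I would mention as a remark, is to first invoke the real implicit function theorem (viewing $\mathbb{C}^3\cong\mathbb{R}^6$) to get a $C^1$ local solution and then verify the Cauchy--Riemann equations by differentiating $f(\mathbf{w}(z,r),z,r)=0$ and using $\partial_w f\neq 0$; the contour-integral argument is cleaner, however, since it produces existence, holomorphy and uniqueness simultaneously. Since the statement is classical (it is contained in the cited monograph \cite{Hormander07}), in the final write-up I would keep the proof to the short self-contained form sketched above.
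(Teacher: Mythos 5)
Your proof is correct. There is, however, no in-paper argument to compare it against: the paper states this lemma as a classical fact and simply attributes it to H\"{o}rmander's monograph \cite{Hormander07}, giving no proof. Your contour-integral route is the standard self-contained proof of the holomorphic implicit function theorem, and each step is sound: isolating the simple zero of $w\mapsto f(w,z_0,r_0)$, choosing $\rho$ and then $\delta$ by compactness of the circle $\gamma$, noting that the zero-counting integral $N(z,r)$ is continuous and integer-valued on the connected polydisc and hence identically $1$, recovering the unique zero via the residue identity $\mathbf{w}(z,r)=\frac{1}{2\pi i}\oint_\gamma w\,\partial_w f(w,z,r)/f(w,z,r)\,dw$ (the simple pole at the zero has residue equal to the zero itself), obtaining holomorphy in $(z,r)$ by differentiation under the integral sign or by Morera plus Osgood, and proving uniqueness by localizing any continuous solution inside $\gamma$. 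The only points worth making explicit in a final write-up are that $\rho$ and $\delta$ must also be taken small enough that the closed set $\{|w|\le\rho\}\times\{|z|\le\delta\}\times\{|r|\le\delta\}$ lies inside the domain of analyticity of $f$, and the one-line justification of Osgood's lemma that you already flag; neither is a gap. What your approach buys over the paper's bare citation is a short, fully self-contained argument that also yields the explicit integral representation of $\mathbf{w}$; what the citation buys the authors is brevity for a result they treat as standard background.
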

Before proving  Theorem \ref{travelling wave solutions thm}, we briefly discuss some ideas in the following remark.

\begin{Remark}\label{remark on the proof of travelling wave solutions thm}
 Our approach to construct the travelling waves  near the $3$-jet relies on two components.

 {\rm ($ 1$)}
  The nonlinear functional \eqref{def-F-B-C} used in the bifurcation is analytic near the bifurcation point, which is important
 when the
 transversal crossing condition fails.
  Another important point is that in this case,
  we can get a weak transversal condition based on the index formula. The strong regularity of the nonlinear functional, together with the weak transversal condition, allows us to apply a degenerate local bifurcation theorem due to Kielh\"{o}fer \cite{Kielhofer1980} to construct nearby travelling waves, once the linearized operator has $1$-dimensional kernel.
In particular, this bifurcation ensures the travelling waves to form a curve.

    {\rm ($ 2 $)}
The kernel can be obtained from the neutral solutions in the $1$'st and  $2$'nd Fourier modes by Corollaries \ref{uniqueness1}, \ref{uniqueness3}, \ref{uniqueness2} and \ref{uniqueness4}.   It is, however, non-trivial to ensure that the dimension of the kernel can not be larger than one. In   Theorem \ref{travelling wave solutions thm} {\rm(i)}, the kernel is induced from the $1$'st mode, and we restrict the space of functions to be odd in $s$ and even in $\varphi$, which is proved to be preserved by the nonlinear functional. In  Theorem \ref{travelling wave solutions thm} {\rm(ii)}-{\rm(iii)},  the  kernel is induced from the $2$'nd mode, and we restrict the space of functions to be $\pi$-periodic and even in $\varphi$. Nevertheless,
it is still subtle to rule out kernels from the $0$'th mode  of the linearized operator. To this end, we use a method    based on the Frobenius method and the variation of parameters technique for  solving  ODEs.
\end{Remark}

Now, we prove Theorem \ref{travelling wave solutions thm}.
\begin{proof}[Proof of Theorem \ref{travelling wave solutions thm}]
Let $\{\Upsilon_1,\Upsilon_2\}=\partial_\varphi\Upsilon_1\partial_s\Upsilon_2-\partial_s\Upsilon_1\partial_\varphi\Upsilon_2$ be the Poisson bracket.
Then the nonlinear Euler equation  ${\rm(\mathcal{E}_\omega)}$ is rewritten as $\partial_t\Upsilon+\{\Psi,\Upsilon+2\omega s\}=0$, where $\Upsilon=\Delta\Psi$.
Thus,  $\Psi(\varphi-ct,s)$ is a solution of ${\rm(\mathcal{E}_\omega)}$  if and only if
\begin{align*}
\{\Psi+cs,\Upsilon+2\omega s\}=0.
\end{align*}
This means that if there exists a function $g\in C^1$ such that $\Upsilon+2\omega s=g(\Psi+cs)$, then $\Psi(\varphi-ct,s)$ solves ${\rm(\mathcal{E}_\omega)}$.
Let $\mu_{0,\omega}\in\{\mu_{1,\omega},\mu_{2,\omega},\mu_{3,\omega}\}$, where $\mu_{1,\omega},\mu_{2,\omega},\mu_{3,\omega}$ are given in Corollaries \ref{uniqueness1}, \ref{uniqueness3}, \ref{uniqueness2} and \ref{uniqueness4}.
For the $3$-jet, since $\mu_{0,\omega}\notin\text{Ran}(- \Psi_0')$, there exists $\delta_0>0$ such that $\Psi_0'(s)+\mu=15s^2-3+\mu\neq0 $ on $[-1,1]$ for $\mu\in[\mu_{0,\omega}-\delta_0,\mu_{0,\omega}+\delta_0]$. Consider the function $f(s,z,\mu)=z-\Psi_0(s)-\mu s=z-5s^3+3s-\mu s$ on the complex region $s\in\cup_{\tilde s\in[-1,1]}\{s:|s-\tilde s|<\tilde\delta_0 \}$, $\mu\in \cup_{\tilde\mu\in[\mu_{0,\omega}-\delta_0,\mu_{0,\omega}+\delta_0]}\{\mu:|\mu-\tilde \mu|<\tilde\delta_0 \}$ and $z\in
\cup_{\tilde z\in \{\Psi_0(s)+\mu s|s\in[-1,1]\}}\{z:|z-\tilde z|<\tilde\delta_0 \}$
for some $\tilde \delta_0>0$ small enough. Since $f(\hat s,\hat z,\hat \mu)=0$ and $\partial_s f(\hat s,\hat z,\hat \mu)=-\Psi_0'(\hat s)-\hat \mu\neq0$ for $\hat s\in[-1,1]$, $\hat \mu\in[\mu_{0,\omega}-\delta_0,\mu_{0,\omega}+\delta_0]$ and $\hat z=\Psi_0(\hat s)+\hat \mu \hat s$,  we infer from Lemma \ref{analytic version of Implicit Function Theorem} that there exists a unique  analytic  function $\mathbf{s}(z,\mu)$ defined on an open subset $\mathcal{U}$ in $\mathbb{C}^2$ containing $\{\Psi_0(s)+\mu s:s\in[-1,1]\}\times[\mu_{0,\omega}-\delta_0,\mu_{0,\omega}+\delta_0]$ such that
$z-5\mathbf{s}(z,\mu)^3+3\mathbf{s}(z,\mu)-\mu \mathbf{s}(z,\mu)=0$ for $(z,\mu)\in\mathcal{U}$, and $\mathbf{s}(\hat z,\hat\mu)=\hat s$.
Define
\begin{align*}
h(z,\mu)\triangleq & \Upsilon_0(\mathbf{s}(z,\mu))+2\omega\mathbf{s}(z,\mu) \quad(z,\mu)\in \mathcal{U}.
\end{align*}
Then $h$ is analytic in $\mathcal{U}$ and
\begin{align*}
h(z,\mu)=-12(5\mathbf{s}(z,\mu)^3-3\mathbf{s}(z,\mu))+2\omega\mathbf{s}(z,\mu)\\
=
-12z+(12\mu +2\omega) \mathbf{s}(z,\mu),\quad(z,\mu)\in \mathcal{U}.
\end{align*}
Since $s=\mathbf{s}(\Psi_0(s)+\mu s,\mu)$, we have
\begin{align}\nonumber
&\Delta\Psi_0(s)+2\omega s=\Upsilon_0(s)+2\omega s=\Upsilon_0(\mathbf{s}(\Psi_0(s)+\mu s,\mu))+2\omega\mathbf{s}(\Psi_0(s)+\mu s,\mu)\\\nonumber
=&-12(5\mathbf{s}(\Psi_0(s)+\mu s,\mu)^3-3\mathbf{s}(\Psi_0(s)+\mu s,\mu))+2\omega\mathbf{s}(\Psi_0(s)+\mu s,\mu)\\\label{3jetflowstream function vorticity}
=&h(\Psi_0(s)+\mu s,\mu)
\end{align}
for $ s\in[-1,1]$ and $ \mu\in[\mu_{0,\omega}-\delta_0,\mu_{0,\omega}+\delta_0]$.
To construct the curve of travelling waves near the $3$-jet, we study the elliptic equations
 \begin{align}\label{3jetflow perturbation of stream function vorticity}
&\Delta\Psi+2\omega s= h(\Psi+\mu s,\mu).
\end{align}
Let $\phi(\varphi,s)=\Psi(\varphi,s)-\Psi_0(s)$ be the perturbation of the stream function. By \eqref{3jetflowstream function vorticity}-\eqref{3jetflow perturbation of stream function vorticity}, we have
\begin{align}\label{perturbation elliptic equation}
\Delta \phi=h(\phi+\Psi_0+\mu s,\mu)-h(\Psi_0+\mu s,\mu).
\end{align}
\\
{\it Proof of (i)}. In this case, $\mu_{0,\omega}=\mu_{1,\omega}$.
Define the mapping
\begin{align}\label{def-F-B-C}
&F_o:B_o\times[\mu_{1,\omega}-\delta_0,\mu_{1,\omega}+\delta_0]\rightarrow C_o,\\\nonumber
 &\;\;\;\;\;\;\;\;\;\;\;\;\;\;\;\;\;\;\;\;\;\;\;\;\;\;\;\;\;\;\;\;\;(\phi,\mu)\mapsto\Delta \phi-(h(\phi+\Psi_0+\mu s,\mu)-h(\Psi_0+\mu s,\mu)),
\end{align}
where
\begin{align*}
B_o=&\{\phi\in H_2^4(\mathbb{S}^2)|
 \phi(2\pi-\varphi,s)=\phi(\varphi,s), \phi\text{ is odd in } s \text{ and } 2\pi\text{-periodic in }\varphi\},
\end{align*}
and
\begin{align*}
C_o=\left\{\phi\in H_2^2(\mathbb{S}^2)| \phi(2\pi-\varphi,s)=\phi(\varphi,s), \phi\text{ is odd in } s \text{ and } 2\pi\text{-periodic in }\varphi\right\}.
\end{align*}
Here, we do not distinguish between $\phi(\varphi,s)$ and $\phi(\zeta(\mathbf{x}))$.
Note that $F_o$ is well-defined since if $\phi$ is odd in $s$, then by the oddness of $\mathbf{s}$ on $z$, we have
\begin{align*}
h(\phi+\Psi_0+\mu s,\mu)-h(\Psi_0+\mu s,\mu)=-12\phi+(12\mu+2\omega)(\mathbf{s}(\phi+\Psi_0+\mu s,\mu)-\mathbf{s}(\Psi_0+\mu s,\mu))
\end{align*}
is also odd in $s$.
Moreover, $F_o(0,\mu)=0$ for $\mu\in[\mu_{1,\omega}-\delta_0,\mu_{1,\omega}+\delta_0]$. To look for the solutions of \eqref{perturbation elliptic equation} near $\phi=0$, we study the bifurcation of $F_o(\phi,\mu)=0$ near the solutions $(0,\mu_{1,\omega})$. By \eqref{3jetflowstream function vorticity}, the Fr\'{e}chet derivative of $F_o$ near $\phi=0$ is
\begin{align*}
\partial_\phi F_o(0,\mu)=\Delta-\partial_zh(\Psi_0+\mu s,\mu)=\Delta-{\Upsilon_0'+2\omega\over \Psi_0'+\mu}.
\end{align*}
To study the dimension of $\ker(\partial_\phi F_o(0,\mu_{1,\omega}))$, we decompose $\partial_\phi F_o(0,\mu_{1,\omega})$ into the Fourier modes
\begin{align}\label{kernel-fourier modes-decom}
\Delta_k-{\Upsilon_0'+2\omega\over \Psi_0'+\mu_{1,\omega}}=\Delta_k -{2\omega+12\mu_{1,\omega}\over 15s^2-3+\mu_{1,\omega}}+12,
\end{align}
where $k\in\mathbb{Z}$.
If $\omega\in({99\over2},72)$, then $\mu_{1,\omega}<-12$ by Corollary \ref{uniqueness1}, and $\min_{s\in[-1,1]}{2\omega+12\mu_{1,\omega}\over 15s^2-3+\mu_{1,\omega}}={2\omega+12\mu_{1,\omega}\over -3+\mu_{1,\omega}}>0$. If $\omega\in(-18,-3)$, then $\mu_{1,\omega}>3$ by Corollary \ref{uniqueness2}, and $\min_{s\in[-1,1]}{2\omega+12\mu_{1,\omega}\over 15s^2-3+\mu_{1,\omega}}={2\omega+12\mu_{1,\omega}\over 12+\mu_{1,\omega}}>0$.
Thus, in any case, there exists $C_0>0$ independent of $s\in[-1,1]$ such that
 \begin{align}\label{low bound 2 omega+12mu1over 15s2-3+mu_1}
 {2\omega+12\mu_{1,\omega}\over 15s^2-3+\mu_{1,\omega}}> C_0.
 \end{align}
For $|k|\geq3$, we have $-\Delta_k\geq |k|(|k|+1)\geq12$ and thus, by \eqref{kernel-fourier modes-decom}-\eqref{low bound 2 omega+12mu1over 15s2-3+mu_1}, we have
\begin{align*}
\Delta_k-{\Upsilon_0'+2\omega\over \Psi_0'+\mu_{1,\omega}}\leq - {2\omega+12\mu_{1,\omega}\over 15s^2-3+\mu_{1,\omega}}< -C_0<0.
\end{align*}
For $|k|=2$, since the functions in $B_o$ are odd in $s$, we have $-\Delta_k\geq 12$ and $\Delta_k-{\Upsilon_0'+2\omega\over \Psi_0'+\mu_{1,\omega}}\leq- {2\omega+12\mu_{1,\omega}\over 15s^2-3+\mu_{1,\omega}}< -C_0<0$. For $|k|=1$, by the neutral mode $(c_{1,\omega},1,\omega,\Phi_{\mu_{1,\omega},\omega,1})$ in Corollaries \ref{uniqueness1} and \ref{uniqueness2}, we have $\Phi_{\mu_{1,\omega},\omega,1}(s)\cos(\varphi)\in \ker(\partial_\phi F_o(0,\mu_{1,\omega}))$, where $c_{1,\omega}=\omega+\mu_{1,\omega}$. For $|k|=1$, there are no more contributions to $\ker(\partial_\phi F_o(0,\mu_{1,\omega}))$ since the second eigenvalue of the operator $\Delta_k-{2\omega+12\mu_{1,\omega}\over 15s^2-3+\mu_{1,\omega}}$ is less than $-20-C_0$, where we used \eqref{low bound 2 omega+12mu1over 15s2-3+mu_1} and $\Phi$ is odd in $s$.
For $k=0$, since the second eigenvalue of the operator $\Delta_0-{2\omega+12\mu_{1,\omega}\over 15s^2-3+\mu_{1,\omega}}$ is less than $-12-C_0$, we only need to consider the principal eigenvalue. Suppose that the principal eigenvalue of $\Delta_0-{2\omega+12\mu_{1,\omega}\over 15s^2-3+\mu_{1,\omega}}$ is $-12$, then its eigenfunction $\Phi_1\in H_2^1(\mathbb{S}^2)$ satisfies
\begin{align}\label{0 mode principal eigenvalue 12}
\int_{-1}^1\left((1-s^2)\Phi_1'\Phi_{\mu_{1,\omega},\omega,1}'+{2\omega+12\mu_{1,\omega}\over 15s^2-3+\mu_{1,\omega}}\Phi_1\Phi_{\mu_{1,\omega},\omega,1}\right)ds=12\int_{-1}^1\Phi_1\Phi_{\mu_{1,\omega},\omega,1}ds,
\end{align}
 $\Phi_1>0$ for $s\in(0,1]$, $\Phi_1<0$ for $s\in[-1,0)$ and $\Phi_1(0)=0$, where $\Phi_{\mu_{1,\omega},\omega,1}$ is the neutral solution in Corollaries \ref{uniqueness1} and \ref{uniqueness2}. Moreover,
 \begin{align}\label{1 mode principal eigenvalue 12}
&\int_{-1}^1\left((1-s^2)\Phi_{\mu_{1,\omega},\omega,1}'\Phi_1'+{1\over1-s^2}\Phi_{\mu_{1,\omega},\omega,1}\Phi_1+{2\omega+12\mu_{1,\omega}\over 15s^2-3+\mu_{1,\omega}}\Phi_{\mu_{1,\omega},\omega,1}\Phi_1\right)ds\\\nonumber
=&12\int_{-1}^1\Phi_{\mu_{1,\omega},\omega,1}\Phi_1ds,
\end{align}
 $\Phi_{\mu_{1,\omega},\omega,1}>0$ for $s\in(0,1)$, $\Phi_{\mu_{1,\omega},\omega,1}<0$ for $s\in(-1,0)$ and $\Phi_{\mu_{1,\omega},\omega,1}(0)=0$.
 Combining \eqref{0 mode principal eigenvalue 12}-\eqref{1 mode principal eigenvalue 12}, we have
 \begin{align*}
 \int_{-1}^1{1\over 1-s^2}\Phi_1\Phi_{\mu_{1,\omega},\omega,1}ds=0,
 \end{align*}
 which contradicts $\Phi_1\Phi_{\mu_{1,\omega},\omega,1}>0$ on $s\in(-1,0)\cup(0,1)$. Thus, the principal eigenvalue of $\Delta_0-{\Upsilon_0'+2\omega\over \Psi_0'+\mu_{1,\omega}}$ is not $-12$. In summary,
 \begin{align*}
\ker(\partial_\phi F_o(0,\mu_{1,\omega}))=\text{span}\{\Phi_{\mu_{1,\omega},\omega,1}(s)\cos(\varphi)\}.
 \end{align*}
 Direct computation implies that
 \begin{align*}
 \partial_\mu\partial_\phi F_o(0,\mu_{1,\omega})(\Phi_{\mu_{1,\omega},\omega,1}(s)\cos(\varphi))={-12(15s^2-3)+2\omega\over (15s^2-3+\mu_{1,\omega})^2}\Phi_{\mu_{1,\omega},\omega,1}(s)\cos(\varphi)
 \end{align*}
 and by \eqref{comp-eigenvalue derivative}, we have
 \begin{align}\nonumber
&\bigg(\Phi_{\mu_{1,\omega},\omega,1}(s)\cos(\varphi), \partial_\mu\partial_\phi F_o(0,\mu_{1,\omega})(\Phi_{\mu_{1,\omega},\omega,1}(s)\cos(\varphi))\bigg)_{L^2}\\\label{inner product phi cos partial mu phi Fo}
=&\int_{0}^{2\pi}\int_{-1}^1 {-12(15s^2-3)+2\omega\over (15s^2-3+\mu_{1,\omega})^2}(\Phi_{\mu_{1,\omega},\omega,1}(s)\cos(\varphi))^2dsd\varphi=\pi\partial_\mu\lambda_1(\mu_{1,\omega},\omega),
 \end{align}
 where $\lambda_1(\mu_{1,\omega},\omega)$ is the principal eigenvalue of \eqref{Rayleigh-type equation lambda k=1} with $\mu=\mu_{1,\omega}$.
 Then we divide the discussion into two cases.
In the case that $\partial_\mu\lambda_1(\mu_{1,\omega},\omega)\neq0$, by \eqref{inner product phi cos partial mu phi Fo} we have $\partial_\mu\partial_\phi F_o(0,\mu_{1,\omega})$ $(\Phi_{\mu_{1,\omega},\omega,1}(s)$ $\cos(\varphi))\notin Ran(\partial_\phi F_o(0,\mu_{1,\omega}))$. Then by Crandall-Rabinowitz local bifurcation theorem in \cite{CR71}, there exists a $C^1$ bifurcating curve
\begin{align*}
\{(\phi_{(\gamma),1},c_1(\gamma))|\gamma\in(-\delta,\delta),(\phi_{(0),1},c_1(0))=(0,\mu_{1,\omega})\}
\end{align*}
for some $\delta>0$ such that
 $F_o(\phi_{(\gamma),1},c(\gamma))=0, \gamma\in(-\delta,\delta)$, and
 \begin{align*}
 \phi_{(\gamma),1}(\varphi,s)=\gamma\Phi_{\mu_{1,\omega},\omega,1}(s)\cos(\varphi)+o(|\gamma|),
 \end{align*}
 which implies \eqref{a curve of traveling wave solution 1 mode}.

 In the other case that $\partial_\mu\lambda_1(\mu_{1,\omega},\omega)=0$, since $\lambda_1(\cdot,\omega)$ is analytic on a neighborhood of $\mu_{1,\omega}$ and $\lambda_1(\cdot,\omega)$ is not a constant,  there exists  $m_0>1$ such that $\partial_\mu^{j}\lambda_1(\mu_{1,\omega},\omega)=0$ for $1\leq j<m_0$ and $\partial_\mu^{m_0}\lambda_1(\mu_{1,\omega},\omega)\neq0$.
 We claim that $m_0$ is odd and only give its proof for $\omega\in\left({99\over2},72\right)$ since the proof for $\omega\in(-18,-3)$ is similar. Suppose that $m_0$ is even. If $\partial_\mu^{m_0}\lambda_1(\mu_{1,\omega},\omega)<0$, noting that $\lambda_1(\mu_{1,\omega},\omega)=-12$ by \eqref{lambda1-mu1-omega-12}, we have $\lambda_1(\mu,\omega)<-12$ for $\mu\neq\mu_{1,\omega}$ sufficiently close to $\mu_{1,\omega}$. This, along with
$
\lim_{\mu\to-12^-}\lambda_1(\mu,\omega)=\lambda_1(-12,\omega)>-12$ due to Lemma \ref{continuity of the principal eigenvalue mu -12} and \eqref{principal eigenvalue -12 omega}, implies that there exists $\tilde \mu_{1,\omega}\in(\mu_{1,\omega},-12)$ such that
$
\lambda_1(\tilde\mu_{1,\omega},\omega)=-12$.
This gives  a new neutral mode $(\tilde c_{1,\omega},1,\omega,\Phi_{\tilde \mu_{1,\omega},\omega,1})$ with
$\tilde c_{1,\omega}=\omega+\tilde\mu_{1,\omega}$. Then
 $\mu_{1,\omega}\neq\tilde \mu_{1,\omega}<-12$ contradicts the uniqueness in Corollary \ref{uniqueness1}. If $\partial_\mu^{m_0}\lambda_1(\mu_{1,\omega},\omega)>0$, then $\lambda_1(\mu,\omega)>-12$ for $\mu\neq\mu_{1,\omega}$ sufficiently close to $\mu_{1,\omega}$. This, along with
$
\lim_{\mu\to-\infty}\lambda_1(\mu,\omega)=-18$ due to Lemma \ref{asymptotic behavior principal eigenvalue lim mu -infty}, implies that there exists $\hat \mu_{1,\omega}\in(-\infty,\mu_{1,\omega})$ such that
$
\lambda_1(\hat\mu_{1,\omega},\omega)=-12$.
This gives  another new neutral mode $(\hat c_{1,\omega},1,\omega,\Phi_{\hat \mu_{1,\omega},\omega,1})$ with
$\hat c_{1,\omega}=\omega+\hat\mu_{1,\omega}$. Then
 $\mu_{1,\omega}\neq\hat \mu_{1,\omega}<-12$ again contradicts the uniqueness in  Corollary \ref{uniqueness1}.
Thus, $m_0$ is odd.

Moreover, $F_o$ is analytic in a neighborhood of $(0,\mu_{1,\omega})$. By Kielh\"{o}fer's degenerate local bifurcation theorem (see Theorems 5.2-5.3 in \cite{Kielhofer1980} or Theorem I.16.4 in \cite{Kielhofer2012}),
 there exists a $C^0$ bifurcating curve
\begin{align*}
\{(\phi_{(\gamma),1},c_1(\gamma))|\gamma\in(-\delta,\delta),(\phi_{(0),1},c_1(0))=(0,\mu_{1,\omega})\}
\end{align*}
for some $\delta>0$ such that
 $F_o(\phi_{(\gamma),1},c_1(\gamma))=0, \gamma\in(-\delta,\delta)$, and
$
 \phi_{(\gamma),1}(\varphi,s)=\gamma\Phi_{\mu_{1,\omega},\omega,1}(s)$ $\cos(\varphi)+o(|\gamma|).
 $
 \\
 {\it Proof of (ii)}.
Note that $F_e:B_e\to C_e$ can not be defined  similarly as in the proof of (i), since the image of $\phi$ (even in $s$) under $F_e$ is not necessarily even in $s$. Instead, we replace the condition that $\phi$ is even in $s$ to the condition that $\phi$ is $\pi$-periodic in $\varphi$. The new condition is preserved under $F_e$. More precisely, we
define the mapping
\begin{align*}
&F_e:B_e\times[\mu_{2,\omega}-\delta_0,\mu_{2,\omega}+\delta_0]\rightarrow C_e,\\\nonumber
 &\;\;\;\;\;\;\;\;\;\;\;\;\;\;\;\;\;\;\;\;\;\;\;\;\;\;\;\;\;\;\;\;\;(\phi,\mu)\mapsto\Delta \phi-(h(\phi+\Psi_0+\mu s,\mu)-h(\Psi_0+\mu s,\mu)),
\end{align*}
where $\mu_{2,\omega}$ is given in Corollaries \ref{uniqueness3} and \ref{uniqueness4},
\begin{align*}
B_e=\bigg\{\phi\in H_2^4(\mathbb{S}^2)|
 \phi(\pi-\varphi,s)=\phi(\varphi,s)  \text{ and } \pi\text{-periodic in }\varphi\bigg\},
\end{align*}
and
\begin{align*}
C_e=\left\{\phi\in H_2^2(\mathbb{S}^2)|  \phi(\pi-\varphi,s)=\phi(\varphi,s)\text{ and } \pi\text{-periodic in }\varphi\right\}.
\end{align*}
Here, we do not distinguish between $\phi(\varphi,s)$ and $\phi(\zeta(\mathbf{x}))$.
 We decompose $\partial_\phi F_e(0,\mu_{2,\omega})=\Delta-{\Upsilon_0'+2\omega\over \Psi_0'+\mu_{2,\omega}}$ into the Fourier modes
\begin{align*}
\Delta_{2k}-{\Upsilon_0'+2\omega\over \Psi_0'+\mu_{2,\omega}}=\Delta_{2k} -{2\omega+12\mu_{2,\omega}\over 15s^2-3+\mu_{2,\omega}}+12,
\end{align*}
where $k\in\mathbb{Z}$. Similar to \eqref{low bound 2 omega+12mu1over 15s2-3+mu_1},
there exists $C_0>0$ independent of $s\in[-1,1]$ such that
 \begin{align}\label{low bound 2 omega+12mu1over 15s2-3+mu_2}
 {2\omega+12\mu_{2,\omega}\over 15s^2-3+\mu_{2,\omega}}> C_0.
 \end{align}
For $|k|\geq2$, by \eqref{low bound 2 omega+12mu1over 15s2-3+mu_2} we have $-\Delta_{2k}\geq 2|k|(2|k|+1)\geq20$ and thus, $\Delta_{2k}-{\Upsilon_0'+2\omega\over \Psi_0'+\mu_{2,\omega}}< -8-C_0<0$.
 For $|k|=1$, by the neutral mode $(c_{2,\omega},2,\omega,\Phi_{\mu_{2,\omega},\omega,2})$ in Corollaries \ref{uniqueness3} and \ref{uniqueness4}, we have $\Phi_{\mu_{2,\omega},\omega,2}(s)\cos(2\varphi)\in \ker(\partial_\phi F_e(0,\mu_{2,\omega}))$, where $c_{2,\omega}=\omega+\mu_{2,\omega}$. For $|k|=1$, there are no more contributions to $\ker(\partial_\phi F_e(0,\mu_{2,\omega}))$ since the second eigenvalue of the operator $\Delta_{2k}-{2\omega+12\mu_{2,\omega}\over 15s^2-3+\mu_{2,\omega}}$ is less than $-12-C_0$.
For $k=0$, we note that for $n\geq4$, the $n$-th eigenvalue of the operator $\Delta_0-{2\omega+12\mu_{2,\omega}\over 15s^2-3+\mu_{2,\omega}}$  is less than $-12-C_0$.
The principal eigenvalue of $\Delta_0-{2\omega+12\mu_{2,\omega}\over 15s^2-3+\mu_{2,\omega}}$ can be ruled out from contributing to $\ker(\partial_\phi F_e(0,\mu_{2,\omega}))$  by  a similar way as  \eqref{0 mode principal eigenvalue 12}-\eqref{1 mode principal eigenvalue 12}.
How to rule out contributions of the second and the third eigenvalues of $\Delta_0-{2\omega+12\mu_{2,\omega}\over 15s^2-3+\mu_{2,\omega}}$   is much more subtle to be dealt with. We give an approach based on the Frobenius method and the  variation of parameters technique for  solving  ODEs. Here, we only give the proof of ruling out the contributions of  the third eigenvalue of $\Delta_0-{2\omega+12\mu_{2,\omega}\over 15s^2-3+\mu_{2,\omega}}$ to $\ker(\partial_\phi F_e(0,\mu_{2,\omega}))$, since the other proof is similar. Note that an eigenfunction of the third eigenvalue is even.
We rewrite the ODEs
\begin{align}\label{zero mode equation and second mode equation}
\Delta_0\Phi-{2\omega+12\mu_{2,\omega}\over 15s^2-3+\mu_{2,\omega}}\Phi=-12\Phi \quad\text{and}\quad\Delta_2\Phi-{2\omega+12\mu_{2,\omega}\over 15s^2-3+\mu_{2,\omega}}\Phi=-12\Phi
\end{align}
to
\begin{align}\label{homogeneous ode}
(1-s)^2\Phi''+{2s(s-1)\over s+1}\Phi'+{(s-1)(-12(15s^2-3)+2\omega)\over (s+1)(15s^2-3+\mu_{2,\omega})}\Phi=0
\end{align}
and
\begin{align}\label{inhomogeneous ode}
\Phi''-{2s\over 1-s^2}\Phi'-\left({-12(15s^2-3)+2\omega\over 15s^2-3+\mu_{2,\omega}}+{4\over 1-s^2}\right){1\over1-s^2}\Phi=0,
\end{align}
respectively.

 We will prove that if \eqref{homogeneous ode} admits a nontrivial even solution $\Phi_*\in L^2(\mathbb{S}^2)$, then
 any nontrivial even solution $\hat \Phi_*$ of \eqref{inhomogeneous ode} satisfies that  $|\hat \Phi_*(\pm1)|=\infty$.
 This contradicts the existence of the even neutral solution $\Phi_{\mu_{2,\omega},\omega,2}$ with $\Phi_{\mu_{2,\omega},\omega,2}(\pm1)=0$ by Corollaries \ref{uniqueness3} and \ref{uniqueness4}.

 In fact,
since
\begin{align*}
p(s)={2s\over s+1}\quad\text{and}\quad q(s)={(s-1)(-12(15s^2-3)+2\omega)\over (s+1)(15s^2-3+\mu_{2,\omega})}
\end{align*}
are analytic near $1$, $p(1)=1$ and $q(1)=0$, we infer
from the Frobenius method for solving ODEs that
 the indicial equation of \eqref{homogeneous ode} is
 \begin{align*}
 r(r-1)+r=r^2=0.
 \end{align*}
 Thus, a  pair of  linearly independent solutions $\Phi_{*,1}$ and $\Phi_{*,2}$ of \eqref{homogeneous ode} has the expressions
 \begin{align*}
 \Phi_{*,1}(s)=\sum_{j=0}^\infty a_j(s-1)^j, \quad \Phi_{*,2}(s)= \Phi_{*,1}(s)\ln|s-1|+\sum_{j=0}^\infty b_j(s-1)^j
 \end{align*}
 near the endpoint $1$, where $a_0\neq0$. Then $\Phi_{*,1}(1)=a_0\neq0$ and $|\Phi_{*,2}(1)|=\infty$. Since \eqref{homogeneous ode} admits a nontrivial even solution $\Phi_*$ with $\Phi_*\in L^2(\mathbb{S}^2)$, by the first equation in \eqref{zero mode equation and second mode equation} we have $\Phi_*\in H_2^2(\mathbb{S}^2)$. By Theorem 2.7 in \cite{Hebey2000}, $\Phi_*\in C^0(\mathbb{S}^2)$. Thus, $|\Phi_*(\pm1)|<\infty$. Then $\Phi_*= \Phi_{*,1}$ on $[-1,1]$ and $ \Phi_{*,1}(-1)=\Phi_{*,1}(1)\neq0$. Since $\Phi_{*,2}$ can be chosen as an odd function, up to a constant we have
 \begin{align}\label{Phi*2}
 \Phi_{*,2}(s)=\Phi_{*,1}(s)\int_0^s{1\over |\Phi_{*,1}(\tilde s)|^2}e^{\int_0^{\tilde s}{2\hat s\over 1-\hat s^2} d\hat s} d\tilde s=\Phi_{*,1}(s)\int_0^s{1\over |\Phi_{*,1}(\tilde s)|^2}{1\over 1-\tilde s^2} d\tilde s.
 \end{align}
This implies $|\Phi_{*,2}(\pm1)|=\infty$. Direct computation implies that the Wronskian is
\begin{align}\label{Wronskian}
\Phi_{*,1}(s)\Phi_{*,2}'(s)-\Phi_{*,2}(s)\Phi_{*,1}'(s)={1\over 1-s^2} \quad \text{for} \quad s\in(-1,1).
\end{align}
 The equation \eqref{homogeneous ode} has the form
 \begin{align}\label{homogeneous ode2}
\Phi''-{2s\over 1-s^2}\Phi'-{-12(15s^2-3)+2\omega\over (15s^2-3+\mu_{2,\omega})(1-s^2)}\Phi=0.
\end{align}
We regard \eqref{homogeneous ode2} as the homogeneous equation of \eqref{inhomogeneous ode}, and
 the term $-{4\over (1-s^2)^2}\Phi$ in \eqref{inhomogeneous ode} as the inhomogeneous term.
 By the method of variation of parameters and \eqref{Wronskian}, the nontrivial even solution $\hat \Phi_*$ of \eqref{inhomogeneous ode}
has the form
\begin{align*}
\hat \Phi_*(s)=C_1\Phi_{*,1}(s)+C_2\Phi_{*,2}(s)+\int_0^s(\Phi_{*,1}(\tilde s)\Phi_{*,2}(s)-\Phi_{*,1}(s)\Phi_{*,2}(\tilde s)){4\over  1-\tilde s^2}\hat \Phi_*(\tilde s) d\tilde s
\end{align*}
for some $C_1, C_2\in \mathbb{R}$,
where $s\in(-1,1)$.
Note that the terms $\hat \Phi_*, \Phi_{*,1}$ and $\int_0^s(\Phi_{*,1}(\tilde s)\Phi_{*,2}(s)-\Phi_{*,1}(s)\Phi_{*,2}(\tilde s)){4\over  1-\tilde s^2}\hat \Phi_*(\tilde s) d\tilde s$ are even functions, while  $\Phi_{*,2}$ is odd. Thus,  $C_2=0$. By \eqref{Phi*2}, we have
\begin{align}\nonumber
\hat \Phi_*(s)=&C_1\Phi_{*,1}(s)+\int_0^s\bigg(\Phi_{*,1}(\tilde s)\Phi_{*,1}(s)\int_0^s{1\over |\Phi_{*,1}(\hat s)|^2}{1\over 1-\hat s^2} d\hat s
\\&-\Phi_{*,1}(s)\Phi_{*,1}(\tilde s)\int_0^{\tilde s}{1\over |\Phi_{*,1}(\hat s)|^2}{1\over 1-\hat s^2} d\hat s\bigg){4\over  1-\tilde s^2}\hat \Phi_*(\tilde s) d\tilde s\nonumber\\\label{hat Phi*}
=&C_1\Phi_{*,1}(s)+\Phi_{*,1}(s)\int_0^s\bigg(\Phi_{*,1}(\tilde s)\int_{\tilde s}^s{1\over |\Phi_{*,1}(\hat s)|^2}{1\over 1-\hat s^2} d\hat s\bigg){4\over  1-\tilde s^2}\hat \Phi_*(\tilde s) d\tilde s.
\end{align}
Let
\begin{align*}
\Phi=(1-s^2)\tilde \Phi
\end{align*}
in
 \eqref{inhomogeneous ode}.
 Then \eqref{inhomogeneous ode} is transformed to
\begin{align}\label{inhomogeneous ode2}
&(1-s)^2\tilde\Phi''+{6s(s-1)\over 1+s}\tilde\Phi'\\\nonumber
+&\left(-2+{4s^2\over1-s^2}-{-12(15s^2-3)+2\omega\over 15s^2-3+\mu_{2,\omega}}-{4\over 1-s^2}\right){1-s\over 1+s}\tilde\Phi=0.
\end{align}
Noting that
\begin{align*}
\tilde p(s)={6s\over 1+s}\quad\text{and}\quad \tilde q(s)=\left(-2+{4s^2\over1-s^2}-{-12(15s^2-3)+2\omega\over 15s^2-3+\mu_{2,\omega}}-{4\over 1-s^2}\right){1-s\over 1+s}
\end{align*}
are analytic near $1$, $\tilde p(1)=3$ and $\tilde q(1)=0$, we know that
 the indicial equation of \eqref{inhomogeneous ode2} is
$
 r(r-1)+3r=r^2+2r=0.$
 Thus, $r=0$ or $r=-2$, and a  pair of  linearly independent solutions $\tilde \Phi_{*,1}$ and $\tilde \Phi_{*,2}$ of \eqref{inhomogeneous ode2} is
 \begin{align*}
 \tilde\Phi_{*,1}(s)=\sum_{j=0}^\infty \tilde a_j(s-1)^j, \quad \tilde \Phi_{*,2}(s)= \tilde a\tilde\Phi_{*,1}(s)\ln|s-1|+(s-1)^{-2}\sum_{j=0}^\infty \tilde b_j(s-1)^j
 \end{align*}
  near the endpoint $1$, where $\tilde a_0\neq0$ and $\tilde a\in\mathbb{R}$.
  Moreover, there exist $\tilde C_1\in\mathbb{R}$ and $\tilde C_2\neq0$ such that
  \begin{align*}
\tilde \Phi_{*,2}(s)=&\tilde C_1\tilde \Phi_{*,1}(s)+\tilde C_2\tilde \Phi_{*,1}(s)\int_0^s{1\over |\tilde \Phi_{*,1}(\tilde s)|^2}e^{-\int_0^{\tilde s}{6\hat s\over \tilde s^2-1}d\hat s}d\tilde s\\
=&\tilde C_1\tilde \Phi_{*,1}(s)+\tilde C_2\tilde \Phi_{*,1}(s)\int_0^s{1\over |\tilde \Phi_{*,1}(\tilde s)|^2}{1\over (\tilde s^2-1)^3}d\tilde s.
  \end{align*}
  This, along with $\tilde\Phi_{*,1}(1)\neq0$, implies $|\tilde \Phi_{*,2}(1)|=\infty$.
 This implies that
 $\hat \Phi_*(s)=(1-s^2)\tilde\Phi_{*,1}(s)$ near $1$.
 By \eqref{hat Phi*}, we have
 \begin{align}\nonumber
\hat \Phi_*(1)
=C_1\Phi_{*,1}(1)+\Phi_{*,1}(1)\int_0^1\bigg(\Phi_{*,1}(\tilde s)\int_{\tilde s}^1{1\over |\Phi_{*,1}(\hat s)|^2}{1\over 1-\hat s^2} d\hat s\bigg){4\over  1-\tilde s^2}\hat \Phi_*(\tilde s) d\tilde s.
\end{align}
Noting that $\Phi_{*,1}(1)\neq 0$, we have $\left|\int_{\tilde s}^1{1\over |\Phi_{*,1}(\hat s)|^2}{1\over 1-\hat s^2} d\hat s\right|=\infty$ for $\tilde s\in(0,1)$. Moreover,
${4\over  1-\tilde s^2}\hat \Phi_*(\tilde s) ={4\over  1-\tilde s^2}(1-\tilde s^2)\tilde\Phi_{*,1}(\tilde s)=O\left(1\right)$ as $\tilde s\to1^-$ due to
$\tilde\Phi_{*,1}(1)\neq 0$. Thus, $|\hat \Phi_*(\pm1)|=\infty$ since $\hat \Phi_*$ is even.
This proves that the third eigenvalue of $\Delta_0-{2\omega+12\mu_{2,\omega}\over 15s^2-3+\mu_{2,\omega}}$ has no contributions to $\ker(\partial_\phi F_e(0,$ $\mu_{2,\omega}))$.

Therefore, from the $0$'th Fourier mode, there are no  contributions to $\ker(\partial_\phi F_e(0,\mu_{2,\omega}))$,
and thus,
 \begin{align*}
\ker(\partial_\phi F_e(0,\mu_{2,\omega}))=\text{span}\{\Phi_{\mu_{2,\omega},\omega,2}(s)\cos(2\varphi)\}.
 \end{align*}
The rest of the proof is similar to that of (i) and we sketch it as follows. Based on
 \begin{align*}
&\bigg(\Phi_{\mu_{2,\omega},\omega,2}(s)\cos(2\varphi), \partial_\mu\partial_\phi F_e(0,\mu_{2,\omega})(\Phi_{\mu_{2,\omega},\omega,2}(s)\cos(2\varphi))\bigg)_{L^2}=\pi\partial_\mu\tilde\lambda_1(\mu_{2,\omega},\omega),
 \end{align*}
we divide the discussion into the cases that $\partial_\mu\tilde\lambda_1(\mu_{2,\omega},\omega)\neq0$
 and $\partial_\mu\tilde \lambda_1(\mu_{2,\omega},\omega)=0$, respectively, where $\tilde\lambda_1(\mu_{2,\omega},\omega)$ is the principal eigenvalue of \eqref{Rayleigh-type equation lambda k=2} with $\mu=\mu_{2,\omega}$.
 In the first case, we apply the Crandall-Rabinowitz local bifurcation theorem to obtain the desired curve of travelling waves with stream functions satisfying
 \eqref{a curve of traveling wave solution 2 mode0}.
 In the second case that $\partial_\mu\tilde\lambda_1(\mu_{2,\omega},\omega)=0$, we can  prove that  there exists  odd
 $m_0>1$ such that $\partial_\mu^{j}\tilde\lambda_1(\mu_{2,\omega},\omega)=0$ for $1\leq j<m_0$ and $\partial_\mu^{m_0}\tilde \lambda_1(\mu_{2,\omega},\omega)\neq0$.
 This allows us to apply the Kielh\"{o}fer's degenerate local bifurcation theorem to obtain the curve of travelling waves with the stream functions satisfying  \eqref{a curve of traveling wave solution 2 mode0}.
  \\
 {\it Proof of (iii).} The proof of (iii) is similar and indeed simpler than that of (ii), since  $\partial_\mu\tilde \lambda_1(\mu_{3,\omega},\omega)=0$ never occurs by Corollary \ref{uniqueness4}.
 Thus, when applying  local bifurcation theorem, we only need to use the Crandall-Rabinowitz's theorem to obtain the curve of travelling waves with the stream functions satisfying \eqref{a curve of traveling wave solution 2 mode}.
 \end{proof}
Next, we construct  travelling waves near the $3$-jet  based on the Rossby-Haurwitz waves.
\begin{Lemma}\label{travelling waves c omega}
 Let $\omega\in\mathbb{R}$. Then
 the  travelling waves  $\Psi_0(s)+\varepsilon Y(\varphi+{1\over 6}\omega t,s)$  are sufficiently close in
analytic regularity to the $3$-jet, where $Y\in E_3$ is non-zonal and $\varepsilon$ is small enough. Moreover,

$(\rm{i})$ their streamlines have at least a cat's eyes structure for $\omega\in(-18,72)$,

$(\rm{ii})$ all their streamlines are unidirectional for $\omega\in(-\infty,-18)\cup(72,\infty)$.
\end{Lemma}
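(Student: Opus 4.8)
The plan is to first verify, by a direct substitution exploiting the Rossby--Haurwitz structure of $E_3$, that $\Psi(t,\varphi,s)=\Psi_0(s)+\varepsilon Y(\varphi+\tfrac16\omega t,s)$ is an exact solution of $(\mathcal{E}_\omega)$ for every $Y\in E_3$, and then to extract the streamline pattern from the co-moving stream function. Put $\tilde Y(t,\varphi,s)=Y(\varphi+\tfrac16\omega t,s)$. Since $E_3=\ker(\Delta+12)$ is invariant under shifts of $\varphi$ we have $\Delta\tilde Y=-12\tilde Y$, and since $\Psi_0=2P_3\in E_3$ also $\Delta\Psi_0=-12\Psi_0$; hence $\Upsilon=\Delta\Psi=-12\Psi$ and $\Upsilon+2\omega s=\Upsilon_0+2\omega s-12\varepsilon\tilde Y$. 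Plugging into $(\mathcal{E}_\omega)$, the terms quadratic in $\varepsilon$ in the Jacobian $\partial_\varphi\Psi\,\partial_s(\Upsilon+2\omega s)-\partial_s\Psi\,\partial_\varphi(\Upsilon+2\omega s)$ cancel identically, the terms linear in $\varepsilon$ collapse to $\varepsilon\,\partial_\varphi\tilde Y\,(\Upsilon_0'+2\omega+12\Psi_0')=2\omega\varepsilon\,\partial_\varphi\tilde Y$ because $\Upsilon_0'=-12\Psi_0'$, and this exactly cancels $\partial_t\Upsilon=-12\varepsilon\,\partial_t\tilde Y=-2\omega\varepsilon\,\partial_\varphi\tilde Y$. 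Thus $\Psi$ is a travelling wave of speed $c=-\tfrac16\omega$; as $Y$ is a degree-$3$ spherical harmonic it is entire, so $\|\varepsilon Y\|_{\mathcal{G}_\lambda}=\varepsilon\|Y\|_{\mathcal{G}_\lambda}\to0$ as $\varepsilon\to0$, which is the claimed analytic closeness to the $3$-jet.

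For the streamline pattern I would pass to the co-rotating frame in which the wave is steady: by the symmetry \eqref{symmetry} its stream function there is $G(\varphi,s)=\Psi_0(s)+\varepsilon Y(\varphi,s)+cs$ with $c=-\tfrac16\omega$, so streamlines are the level sets of $G$ and stagnation points are the critical points of $G$. Write $G=G_0+\varepsilon Y$ with $G_0(s)=\Psi_0(s)+cs$, $G_0'(s)=15s^2-3+c$. Then $G_0$ has a critical point in $(-1,1)$ iff $-c\in Ran(\Psi_0')=[-3,12]$, i.e.\ iff $\omega\in[-18,72]$; in that case the zeros of $G_0'$ are $s=\pm s_\ast$ with $s_\ast=\sqrt{(18+\omega)/90}\in(0,1)$ and $G_0''(\pm s_\ast)=\pm30s_\ast\neq0$.

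If $\omega\notin[-18,72]$, then $G_0$ is strictly monotone on $[-1,1]$ with $|G_0'|\geq\delta_\omega>0$, so for $\varepsilon$ small (depending on $\omega$ and $Y$) $G$ has no interior critical point away from the poles, and near each pole $G$ is a small perturbation of a solid rotation of nonzero angular rate $-(12+c)=-(12-\tfrac16\omega)$, so $G$ has no saddle at all; hence the wave is unidirectional, proving $(\mathrm{ii})$. If $\omega\in(-18,72)$ I would localize near the circle $\{s=s_\ast\}$, where $G_0''(s_\ast)>0$: the implicit function theorem gives a unique branch $s(\varphi)=s_\ast+O(\varepsilon)$ of zeros of $\partial_sG(\varphi,\cdot)$ there (a minimum in $s$), and the smooth $2\pi$-periodic function $\tilde G(\varphi):=G(\varphi,s(\varphi))$ has its critical points among those of $G$, with $\tilde G''=\det(\mathrm{Hess}\,G)/\partial_s^2G$ and $\partial_s^2G=G_0''(s_\ast)+O(\varepsilon)>0$ at such points; thus a non-degenerate maximum of $\tilde G$ is a hyperbolic saddle of $G$, and more robustly, if $\tilde G$ is merely non-constant then the sublevel sets $\{G<L\}$ in a collar of $\{s=s_\ast\}$ pass from disks to an annulus as $L$ sweeps $(\min\tilde G,\max\tilde G)$, which forces a saddle and hence a cat's eye. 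Since $\tilde G(\varphi)=G_0(s_\ast)+\varepsilon Y(\varphi,s_\ast)+O(\varepsilon^2)$ and $P_3^2(s_\ast),P_3^3(s_\ast)\neq0$ for $s_\ast\in(0,1)$, $\tilde G$ is non-constant unless $Y\in\text{span}\{P_3,Y_3^1,Y_3^{-1}\}$ and $P_3^1(s_\ast)=0$, i.e.\ $s_\ast^2=\tfrac15$, i.e.\ $\omega=0$; in that residual case the wave is the steady flow $AP_3(s)+BP_3^1(s)\cos\varphi$ with $A=2+O(\varepsilon)$ and $B=O(\varepsilon)\neq0$, whose poles are regular (the gradient of $Y_3^{\pm1}$ does not vanish there, being linear in the smooth polar coordinate), and a direct count gives eight non-degenerate stagnation points, so the Poincar\'e--Hopf theorem ($\sum\mathrm{ind}=\chi(\mathbb{S}^2)=2$) forces at least one to be a saddle. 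This establishes $(\mathrm{i})$.

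The hard part will be the two places where the $(\varphi,s)$-chart degenerates: controlling $G$ near the two poles in the unidirectional regime (where the chart is singular and, e.g., $\partial_sY$ is unbounded for $Y$ carrying a first-azimuthal mode), and the exceptional sub-case $\omega=0$, $Y\in\text{span}\{P_3,Y_3^{\pm1}\}$ in the cat's-eyes regime, where the first-order perturbation of $G$ along the critical circle vanishes and one must either expand $\tilde G$ to higher order in $\varepsilon$ or argue purely topologically as above. Neither obstacle requires a new idea, only a careful local analysis.
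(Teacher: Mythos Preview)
Your verification that $\Psi_0+\varepsilon Y$ solves $(\mathcal{E}_\omega)$ with speed $c=-\tfrac16\omega$ is correct and matches the paper, which simply observes this is a Rossby--Haurwitz wave.

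Where your approach diverges is in the streamline analysis. The paper does not carry out a critical-point analysis at all: just before Theorem~\ref{travelling wave solutions thm} it records (as essentially a working characterization) that a travelling wave is cat's-eyes precisely when $c\in\text{Ran}(-\partial_s\Psi)^\circ$ and unidirectional when $c\notin\text{Ran}(-\partial_s\Psi)$. The proof of the lemma then consists of one line: $c=-\tfrac16\omega$ lies in $\text{Ran}(-\Psi_0')^\circ=(-12,3)$ iff $\omega\in(-18,72)$, and lies outside $[-12,3]$ iff $\omega\in(-\infty,-18)\cup(72,\infty)$. For $\varepsilon$ small the range of $-\partial_s\Psi$ is a small perturbation of $[-12,3]$, so the conclusion follows.

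Your route instead attempts to \emph{derive} this characterization from scratch via a Morse-theoretic analysis of the co-moving stream function $G=\Psi_0+\varepsilon Y+cs$: locating the critical circles of $G_0$, perturbing them, and invoking Poincar\'e--Hopf in the degenerate case. This is a legitimate and more self-contained argument, and your identification of the two delicate spots (the coordinate singularity at the poles, and the accidental vanishing $P_3^1(\pm1/\sqrt5)=0$ when $\omega=0$) is accurate. But given that the paper treats the $c$-versus-$\text{Ran}(-\partial_s\Psi)$ criterion as the operative definition of the two streamline types, your analysis is doing substantially more work than the lemma requires in context. If you want your write-up to stand independently of that criterion, your outline is sound; if you are working within the paper's conventions, the one-line argument suffices.
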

\begin{proof} The  travelling wave  $\Psi_0(s)+\varepsilon Y(\varphi+{1\over 6}\omega t,s)$ is a non-zonal Rossby-Haurwitz wave for any $\varepsilon\neq0$ and for any $\omega\in\mathbb{R}$.
For $\omega\in(-18,72)$, the travelling speed $c=-{1\over 6}\omega\in (-12,3)=\text{Ran} (-\Psi_0')^\circ$ and thus  the streamlines have at least a cat's eyes structure. For $\omega\in(-\infty,-18)\cup(72,\infty)$, the travelling speed $c=-{1\over 6}\omega\in (-\infty,-12)\cup(3,\infty)$ and thus $c\notin\text{Ran} (-\Psi_0')^\circ$. Then all the streamlines  are unidirectional.
\end{proof}

\begin{Remark}
In Lemma $\ref{travelling waves c omega}$, the travelling waves are due to the kernel of $J_\omega L$. By the method in \cite{Zelati-Elgindi-Widmayer2023,Nualart2023}, it is expected that there might exist  other cat's eyes travelling waves  arbitrarily close in analytic regularity to the $3$-jet. We do not pursue constructing more nearby travelling waves here since our concern is how the streamline patterns of nearby travelling waves change as the rotation speed increases, see Fig. \ref{Gradual-changes-in-streamline-patterns-of-travelling-waves}.
\end{Remark}

\subsection{Types of  imaginary eigenvalues  of the linearized operators}
To study the rigidity of travelling waves near the $3$-jet in the next subsection,
as a preparatory  work at the linear level,
we  study the types of purely imaginary eigenvalues  of the linearized operator $\mathcal{L}_{\omega,k}$ (the projection of $\mathcal{L}_{\omega}$ on the $k$'th Fourier mode) for $k\neq0$.

\begin{Lemma}\label{spectra of the linearized operatorLrigidity}
$({\rm{i}})$ Let $\omega\in(-3,{69\over2})$. Then  $\mathcal{L}_{\omega,k_0}|_{X^{k_0}}$ has  an   eigenvalue  ${ik_0\over 6}\omega$ embedded  in the  interior of $\sigma_e(\mathcal{L}_{\omega,k_0}|_{X^{k_0}})$ for $0<|k_0|\leq3$, and $\mathcal{L}_{\omega,k}|_{X^k}$ has  no purely imaginary isolated eigenvalues for $k\neq0$. Moreover, the endpoints $-3ki$ and $12ki$ of $\sigma_e(\mathcal{L}_{\omega,k}|_{X^{k}})$ are not eigenvalues of $\mathcal{L}_{\omega,k}|_{X^k}$ for $k\neq0$.

$({\rm{ii}})$ Let $\omega\in(-18,-3)\cup({69\over2},72)$. Then $\mathcal{L}_{\omega,k_0}|_{X^{k_0}}$ has  an embedded  eigenvalue  ${ik_0\over 6}\omega$ for $0<|k_0|\leq3$, and there exists $k_1\in\{1,2\}$ such that $\mathcal{L}_{\omega,k_1}|_{X^{k_1}}$ has  a purely imaginary isolated eigenvalue.

$({\rm{iii}})$
Let $\omega\in(-\infty,-18)\cup\omega\in(72,\infty)$. Then
$\mathcal{L}_{\omega,k}|_{X^k}$ has  no embedded  eigenvalues in the interior of  $\sigma_e(\mathcal{L}_{\omega,k})=ik Ran(\Psi_0')$ for $k\neq0$,
and $\mathcal{L}_{\omega,k_0}|_{X^{k_0}}$ has  an isolated eigenvalue ${ik_0\over 6}\omega$ for $0<|k_0|\leq3$.

$({\rm{iv}})$
Let $\omega\in(-\infty,-18)$ or $\omega\in(72,\infty)\setminus\{{15(j^2-m^2)+144\over2}\big|j\geq m, 0\leq m\leq3\}$. Then
$\mathcal{L}_{\omega,k}|_{X^k}$ has  no embedded  eigenvalues  for $k\neq0$.
\end{Lemma}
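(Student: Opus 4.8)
The plan is to translate everything into statements about neutral modes. On the $k$-th azimuthal mode, $\mathcal{L}_{\omega,k}|_{X^k}=J_{\omega,k}L_k|_{X^k}+\tfrac{ik\omega}{6}$; since $\Delta_k^{-1}$ is compact, $\sigma_e(\mathcal{L}_{\omega,k})=ik\,\text{Ran}(\Psi_0')=ik[-3,12]$, and by Remark \ref{neutral-imaginary} the purely imaginary eigenvalues of $\mathcal{L}_{\omega,k}|_{X^k}$ are precisely the numbers $\lambda=ik(\omega-c)$ with $(c,k,\omega,\Phi)$ a neutral mode such that $\Delta_k\Phi\in X^k$; such $\lambda$ lies in the interior of $\sigma_e$, at an endpoint, or outside $\sigma_e$ (hence is isolated) according as $c$ lies in $\text{Ran}(-\tilde\Psi_\omega')^\circ=(-12+\omega,3+\omega)$, in $\{-12+\omega,3+\omega\}$, or outside $[-12+\omega,3+\omega]$. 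I would first note that the side condition $\Delta_k\Phi\in X^k$ is essentially free: for $k\ne0$ any eigenfunction of $\mathcal{L}_{\omega,k}$ automatically lies in $X$ unless $|k|=1$ and the eigenvalue equals $ik\omega$ — read off from the conservation of $e^{im\omega t}\iint\Upsilon Y_1^m$ — and in the exceptional case Lemma \ref{E1JL exact eigenvalues} with Remark \ref{E1JL exact eigenvalues-rem} identifies the (one-dimensional) eigenspace as $\text{span}\{Y_1^{\pm1}-\tfrac{72}{\omega}\sqrt{\tfrac1{14}}\,Y_3^{\pm1}\}\not\subset X$, so $ik\omega\notin\sigma(\mathcal{L}_{\omega,\pm1}|_{X^{\pm1}})$; while $0$ always belongs to $\sigma_e$, hence is never isolated.

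The embedded (resp.\ isolated) eigenvalue $\tfrac{ik_0}{6}\omega$ for $0<|k_0|\le3$ comes from the \emph{resonant} neutral mode: by \eqref{c omega k} the function $\Phi=P_3^{k_0}$ solves the Rayleigh equation at $c=c_\omega=\tfrac56\omega$, and $\Delta_{k_0}P_3^{k_0}=-12\,P_3^{k_0}\in X^{k_0}$ (automatic for $|k_0|\ge2$; for $|k_0|=1$ because $P_3^1\perp P_1^1$). The corresponding eigenvalue $ik_0(\omega-c_\omega)=\tfrac{ik_0\omega}{6}$ is $\ne0,\pm i\omega$; it lies in the interior $ik_0(-3,12)$ of $\sigma_e$ exactly when $\omega\in(-18,72)$ — yielding (i) and (ii) — and strictly above $12k_0i$ (resp.\ strictly below $-3k_0i$) when $\omega>72$ (resp.\ $\omega<-18$) — yielding (iii).

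The remaining assertions are negative statements about neutral modes, proved regime by regime. For $|k|\ge2$ the main tool is the integral identity \eqref{inte}: if $(c,k,\omega,\Phi)$ is a neutral mode with $c\notin(-12+\omega,3+\omega)$, then, using Lemmas \ref{tilde-psi-change-sign}, \ref{case-omega=12}, \ref{tilde Psi0 does not change sign} to locate $c$, the sign of $R_\omega$, the vanishing \eqref{boundary term pm1} of the boundary terms at $c=-12+\omega$, and $\tilde\Upsilon_\omega'(0)=36+2\omega\ne0$ at $c=3+\omega$ (valid since $\omega\ne-18$), the identity becomes $\int_{-1}^1\!\big(|((1-s^2)^{-1/2}F_\omega)'|^2(1-s^2)^2R_\omega^2+R_\omega(\tfrac{(k^2-1)R_\omega}{1-s^2}+2c)F_\omega^2\big)ds=0$ with a non-negative integrand as soon as $2\omega-24<15(k^2-1)$ — which holds on $(-3,72)$ for $|k|\ge3$ and on $(-3,69/2)$ for $k=2$ — forcing $F_\omega\equiv0$; this removes isolated \emph{and} endpoint imaginary eigenvalues of $\mathcal{L}_{\omega,k}|_{X^k}$ in those ranges (for $k=2$ the even-parity statement is Lemma \ref{instability part k=2}), and with Lemmas \ref{tilde-psi-change-sign}, \ref{case-omega=12} for $\omega\in(-3,12]$ this settles (i) for $|k|\ge2$. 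For $k=1$, the band $\omega\in(-3,12]$ is covered by Lemmas \ref{tilde-psi-change-sign}, \ref{case-omega=12} (no non-resonant neutral modes, either parity), and $\omega\in(-18,-3]$ by Lemma \ref{tilde Psi0 does not change sign}(1), Lemma \ref{case-omega=12}(ii), and Theorem \ref{k=1 negative half-line}. The existence claim in (ii) I would extract from Corollaries \ref{uniqueness2}, \ref{uniqueness4} (take $k_1=1$, $\omega\in(-18,-3)$) and Corollary \ref{uniqueness3} (take $k_1=2$, $\omega\in(69/2,72)$), whose neutral speeds $c_{j,\omega}=\mu_{j,\omega}+\omega$ lie outside $[-12+\omega,3+\omega]$ and hence produce isolated eigenvalues $\ne0,\pm i\omega$. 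For (iv) the interior of $\sigma_e$ is handled by Theorem \ref{Psi prime change sign c}(ii); the endpoint $c=3+\omega$ ($\mu=3$) is excluded by Lemma \ref{tilde Psi0 does not change sign}(2) for $\omega>72$ and, for $\omega<-18$, by a Frobenius analysis at $s=0$ (indicial exponents $\tfrac12(1\pm\sqrt{(8\omega+159)/15})$, with $\tilde\Upsilon_\omega'(0)\ne0$, giving no solution with $\Delta_k\Phi\in L^2$); the endpoint $c=-12+\omega$ ($\mu=-12$) turns \eqref{Rayleigh-type equation 12} into an associated Legendre equation of order $\sqrt{k^2+(144-2\omega)/15}$ and degree $3$, which has an $L^2$-admissible solution vanishing at $s=\pm1$ only when $k^2+(144-2\omega)/15\in\{0,1,4,9\}$, i.e.\ $\omega=\tfrac{15(k^2-m^2)+144}{2}$, precisely the excised set.

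The hard part is $k=1$ on the band $\omega\in(12,69/2)$, where the integral identity is empty ($k^2-1=0$) and the earlier instability analysis (Theorem \ref{k=1 positive half-line critical rotation rate}) controls only the odd-in-$s$ subspace $X_o^1$. I must rule out isolated purely imaginary eigenvalues of $\mathcal{L}_{\omega,1}|_{X^1}$ with \emph{even} eigenfunction — and likewise that $12ki$ is not an eigenvalue. There is one conspicuous even neutral mode outside the range, $(0,1,\omega,(1-s^2)^{1/2}(15s^2-3-\omega))$ at $c=0$ (corresponding to $\lambda=i\omega$), whose vorticity carries a nonzero $P_1^1$-component for $\omega\ne0$ and is therefore not in $X^1$; the delicate point is to exclude every \emph{other} even neutral mode with $c\in(0,-12+\omega)$. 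I plan to do this by a Sturm--Liouville / monotonicity analysis of the even principal eigenvalue of the problem \eqref{Rayleigh-type equation lambda} — using its explicit value $-x(x+1)$ with $x=\sqrt{(159-2\omega)/15}\in(\sqrt6,3)$ at $\mu=-12$, which stays above $-12$ on $(12,69/2)$, together with its value at $\mu=-\omega$ — and the index identity on the even subspace $X_e^1$ (where $n^-(L_1|_{X_e^1})=0$) to preclude crossings from higher even eigenvalues. The difficulty throughout is this case-by-case bookkeeping, not any single estimate.
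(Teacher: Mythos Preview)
Your overall framework is correct and closely parallels the paper's: translate to neutral modes, constrain $c$ via Lemmas~\ref{tilde-psi-change-sign}--\ref{tilde Psi0 does not change sign} and Theorem~\ref{Psi prime change sign c}, use the integral identity \eqref{inte} for $|k|\ge2$, cite Corollaries~\ref{uniqueness1}--\ref{uniqueness4} for the existence claims in (ii), and handle (iv) by Frobenius/Legendre analysis at the endpoints. The embedded eigenvalue $\tfrac{ik_0\omega}{6}$ via $P_3^{k_0}\in\ker L_{k_0}$ is also how the paper does it.

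The one substantive divergence is exactly where you flag it: $k=1$, even parity, $\omega\in(12,\tfrac{69}{2})$. Your proposed route --- track the even principal Sturm--Liouville eigenvalue and invoke the index formula on $X_e^1$ --- is more involved than necessary and, as sketched, not tight. First, the curve $\lambda_1^e(\mu,\omega)$ you describe is on \emph{all} even functions, not on $X_e^1$; in fact $\lambda_1^e(-\omega,\omega)=-12$ with eigenfunction exactly the mode $(1-s^2)^{1/2}(15s^2-3-\omega)$ you already identified, so the curve does hit $-12$ and you would still have to argue that every further crossing (of every eigenvalue, not just the principal one) yields $\Phi\notin X^1$. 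Second, $n^-(L_1|_{X_e^1})=0$ only forces imaginary eigenvalues of $J_{\omega,1}L_1|_{X_e^1}$ to have nonnegative energy; it does not by itself exclude their existence.

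The paper replaces all this by a two-line inequality. If $\Delta_1\Phi_0\in X_e^1$ then $\Phi_0\in X_e^1$, whose lowest mode is $P_3^1$, so $\int_{-1}^1|\nabla_1\Phi_0|^2\ge 12\|\Phi_0\|_{L^2}^2$. Pairing \eqref{Rayleigh-type equation 12} with $\Phi_0$ gives
\[
0\ \ge\ \int_{-1}^1\big({-}|\nabla_1\Phi_0|^2+12|\Phi_0|^2\big)\,ds\ =\ \int_{-1}^1\frac{2\omega+12\mu_0}{15s^2-3+\mu_0}\,|\Phi_0|^2\,ds\ \ge\ 0,
\]
the right-hand side being nonnegative for $\mu_0\le-12$, $\omega\in(12,72)$ since numerator and denominator are both $\le0$. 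Both sides vanish, and since the potential is $>0$ a.e., $\Phi_0\equiv0$. This disposes of the entire even subspace (including the endpoint $\mu_0=-12$) in one stroke, with no curve-tracking.
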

\begin{proof}
Since
$\mathcal{L}_\omega=
J_\omega L+{1\over 6}\omega\partial_\varphi$, we have $\mathcal{L}_{\omega,k}=
J_{\omega,k} L_k+{ik\over 6}\omega$ for $k\neq0$. For $0<|k_0|\leq3$, noting that $P_3^{k_0}\in\ker(L_{k_0}|_{X^{k_0}})$, we have $\mathcal{L}_{\omega,k_0}P_3^{k_0}=
J_{\omega,k_0} L_{k_0}P_3^{k_0}+{ik_0\over 6}\omega P_3^{k_0}={ik_0\over 6}\omega P_3^{k_0}$ for $\omega\in\mathbb{R}$. Thus, ${ik_0\over 6}\omega$ is an eigenvalue of $\mathcal{L}_{\omega,k_0}$. Note that $\sigma_e(\mathcal{L}_{\omega,k})=ik Ran(\Psi_0')=ik[-3,12]$ for $k\neq0$ and $\omega\in\mathbb{R}$. Then the eigenvalue ${ik_0\over 6}\omega$ of $\mathcal{L}_{\omega,k_0}$  is embedded in the interior of $\sigma_e(\mathcal{L}_{\omega,k_0}|_{X^{k_0}})$ for $\omega\in(-18,72)$ and is isolated for $\omega\in(-\infty,-18)\cup(72,\infty)$. Next, we prove the other parts of (i)-(iv), separately.

Note that  $-ik(c-c_\omega)=-ik(\omega+\mu-{5\over6}\omega)=-ik({1\over6}\omega+\mu)$ is an eigenvalue of  $J_{\omega,k}L_k$ if and only if $-ik\mu$ is an eigenvalue of $\mathcal{L}_{\omega,k}$.

Let $\omega\in(-3,{69\over2})$.  If $\omega\in(-3,12]$, by Lemmas \ref{tilde-psi-change-sign}-\ref{case-omega=12} $\mathcal{L}_{\omega,k}|_{X^k}$ has  no  isolated eigenvalues, and  the endpoints $-3ki$ and $12ki$ of $\sigma_e(\mathcal{L}_{\omega,k}|_{X^{k}})$ are not eigenvalues of $\mathcal{L}_{\omega,k}|_{X^k}$ for $k\neq0$.
If $\omega\in(12,{69\over2})$, by Lemma \ref{tilde Psi0 does not change sign} (2), $\mathcal{L}_{\omega,k}|_{X^k}$ has no  eigenvalues in $ik(-\infty,-3]$. Then we discuss $|k|=1$ and $|k|\geq2$ separately. For $|k|=1$, by \eqref{lambda1muomegalessthan-12}, $\mathcal{L}_{\omega,k}|_{X_o^k}$ has no eigenvalues in $ik[12,\infty)$.
Then we consider the restriction  in $X_e^k$. If $-ik\mu_0$ with $\mu_0\in(-\infty, -12]$ is an eigenvalue of $\mathcal{L}_{\omega,k}|_{X_e^k}$ with an  eigenfunction $\Delta_1\Phi_0$ satisfying $\|\Phi_0\|_{L^2}=1$, then
\begin{align*}
0\geq\int_{-1}^1-|\nabla_1\Phi_0|^2ds+12=\int_{-1}^1{2\omega+12\mu_0\over 15s-3+\mu_0}|\Phi_0|^2 ds\geq0,
\end{align*}
which implies $\Phi_0=0$ due to the fact that  ${2\omega+12\mu_0\over 15s-3+\mu_0}\geq0$ on $[-1,1]$. This is a contradiction.  Thus, $\mathcal{L}_{\omega,k}|_{X_e^k}$ and $\mathcal{L}_{\omega,k}|_{X^k}$ have no eigenvalues in $ik[12,\infty)$.
On the other hand, for $|k|\geq2$, suppose that there exists an eigenvalue
$-ik\mu_k$ with $\mu_k\in(-\infty, -12]$ of $\mathcal{L}_{\omega,k}$, and  a corresponding   eigenfunction $\Delta_k\Phi_k$ satisfies $\|\Phi_k\|_{L^2}=1$. By Remark \ref{rem tilde Psi0 does not change sign}, $\mu_k\in[-\omega,-12]$. Then
\begin{align}\nonumber
 &\Delta_k\Phi_k-{-12(15s^2-3)+2\omega\over15s^2-3+\mu_k}\Phi_k=0.
 \end{align}
 Let $
R(s)=\Psi_{0}'(s)+\mu_k=15s^2-3+\mu_k$ and $F(s)={\Phi_k(s)\over R(s)}$. Similar to \eqref{inte-t}, we have
\begin{align}\label{transformed-intRF}
\int_{-1}^{1}R^2\left|\left(\frac{F}{\sqrt{1-s^2}}\right)'\right|^2(1-s^2)^2ds+\int_{-1}^{1}\left(\frac{R^2F^2}{1-s^2}(k^2-1)+
2RF^2(\mu_k+\omega)\right)ds=0.
\end{align}
Since $-\omega\leq \mu_k\leq-12$,  $0\leq \mu_k+\omega\leq -12+\omega<{45\over2}$ and $|k|\geq2$, we have
\begin{align*}
\int_{-1}^{1}\left(\frac{R^2F^2}{1-s^2}(k^2-1)+
2RF^2(\mu_k+\omega)\right)ds=&\int_{-1}^{1}\left(\frac{k^2-1}{1-s^2}+
{2(\mu_k+\omega)\over R}\right)R^2F^2ds\\
\geq&\int_{-1}^{1}\left(k^2-1-
{2(\mu_k+\omega)\over 15}\right){\Phi_k^2\over 1-s^2}ds>0,
\end{align*}
which contradicts \eqref{transformed-intRF}. This proves that $\mathcal{L}_{\omega,k}$ has no  eigenvalues in $ik[12,\infty)$ for $|k|\geq2$.

Let $\omega\in(-18,-3)\cup({69\over2},72)$. If $\omega\in(-18,-3)\cup({99\over2},72)$, then $\mathcal{L}_{\omega,1}$ has an isolated eigenvalue in $i(-\infty,-3)\cup i(12,\infty)$ by
Corollaries \ref{uniqueness1} and \ref{uniqueness2}. If $\omega\in(-18,g^{-1}(-12))\cup({69\over2},72)$, then $\mathcal{L}_{\omega,2}$ has an isolated eigenvalue in $2i(-\infty,-3)\cup 2i(12,\infty)$ by
Corollaries \ref{uniqueness3} and \ref{uniqueness4}.

Let $\omega\in(-\infty,-18)\cup(72,\infty)$. By Theorem \ref{Psi prime change sign c},
  $\mathcal{L}_{\omega,k}$ has no eigenvalues embedded in $ik(-3,12)$ for $k\neq0$.

 If $\omega\in(-\infty,-18)$, then by Lemma \ref{tilde Psi0 does not change sign} (1),  $12ki$ is not an eigenvalue of  $\mathcal{L}_{\omega,k}$ for $k\neq0$. Then we show that $-3ki$ is not an eigenvalue of  $\mathcal{L}_{\omega,k}$ for $k\neq0$. Suppose that there exists $k_0\neq0$ such that $-3k_0i$ is an eigenvalue of  $\mathcal{L}_{\omega,k_0}$ with an eigenfunction $\Delta_{k_0}\hat \Phi_0$. Then
 \begin{align}\label{hat-Phi0mu3}
  ((1-s^2)\hat\Phi_0')'-{k_0^2\over 1-s^2}\hat\Phi_0-{2\omega+36\over 15s^2}\hat\Phi_0=-12\hat\Phi_0, \quad
 \Delta_{k_0}\hat\Phi_0\in L^2(-1,1).
 \end{align}
 Thus,
 \begin{align}\label{odeFrobeniusform}
  s^2\hat\Phi_0''-{2s^3\over 1-s^2}\hat\Phi_0'+\left(-{k_0^2s^2\over (1-s^2)^2}-{2\omega+36\over 15(1-s^2)}+{12s^2\over 1-s^2}\right)\hat\Phi_0=0.
 \end{align}
 Since
$
-{2s^2\over 1-s^2}$ and  $-{k_0^2s^2\over (1-s^2)^2}-{2\omega+36\over 15(1-s^2)}+{12s^2\over 1-s^2}
$
are analytic near $0$, by the Frobenius method
 the indicial equation of \eqref{odeFrobeniusform} is
$
 r(r-1)-{2\omega+36\over 15}=0,
$
whose two roots are $r_{\pm}={1\pm\sqrt{1+{8\omega+144\over 15}}\over2}$. For $\omega\in[-{159\over 8},-18)$, we have $r_+\in[{1\over2},1)$ and $r_-\in(0,{1\over2}]$.
Thus, two linearly independent solutions have the forms $s^{r_+}\sum_{j\geq0}a_js^j$ and $s^{r_-}\sum_{j\geq0}b_js^j$, where $a_j,b_j\in\mathbb{R}$ for $j\geq0$, $a_0\neq0$ and $b_0\neq0$. They are not $C^1$ near $0$, which is a contradiction.
For $\omega\in(-\infty, -{159\over 8})$, the two roots $r_{\pm}$ are non-real.
 For $s\geq0$, the real and imaginary parts of $s^{r_+}\sum_{j\geq0}d_js^{j}$ are two linearly independent solutions, where $d_j\in\mathbb{R}$ for $j\geq0$ and $d_0\neq0$ can be chosen as a real number. Note that
\begin{align*}
&s^{r_+}d_0=s^{{1+i\sqrt{-\left(1+{8\omega+144\over 15}\right)}\over2}}d_0=s^{{1}\over2}e^{{i\sqrt{-\left(1+{8\omega+144\over 15}\right)}\over2}\ln s} d_0\\
=&s^{1\over2}\left(\cos\left({\sqrt{-\left(1+{8\omega+144\over 15}\right)}\over2}\ln s\right)+i\sin\left({\sqrt{-\left(1+{8\omega+144\over 15}\right)}\over2}\ln s\right)\right)d_0.
\end{align*}
By \eqref{hat-Phi0mu3}, $\lim_{s\to0^+}{\hat\Phi_0(s)\over s}=0$.
On the other hand,
 there exist $a, b$, not both zero, such that
 \begin{align*}
 \lim_{s\to0^+}{\hat\Phi_0(s)\over s}= \lim_{s\to0^+}{ad_0\cos\left({\sqrt{-\left(1+{8\omega+144\over 15}\right)}\over2}\ln s\right)+bd_0\sin\left({\sqrt{-\left(1+{8\omega+144\over 15}\right)}\over2}\ln s\right)
 \over s^{1\over2}},
 \end{align*}
 from which one can choose a sequence $\{s_n\}$ such that $ \lim_{s_n\to0^+}\left|{\hat\Phi_0(s_n)\over s_n}\right|=\infty$, which is a contradiction. Thus,  $-3ki$ is not an eigenvalue of  $\mathcal{L}_{\omega,k}$ for $k\neq0$ and $\omega\in(-\infty,-18)$.

 If $\omega\in(72,\infty)$, then by Lemma \ref{tilde Psi0 does not change sign} (2),  $-3ki$ is not an eigenvalue of  $\mathcal{L}_{\omega,k}$ for $k\neq0$. Now, we prove that if $\omega\in(72,\infty)\setminus\{{15(j^2-m^2)+144\over2}\big|j\geq m, 0\leq m\leq3\}$, then $12ki$ is not an eigenvalue of  $\mathcal{L}_{\omega,k}$ for $k\neq0$.
 Suppose that there exists $k_1\neq0$ such that $12k_1i$ is an eigenvalue of  $\mathcal{L}_{\omega,k_1}$ with an eigenfunction $\Delta_{k_1}\hat \Phi_1$. Then
 \begin{align*}
  &((1-s^2)\hat\Phi_1')'-{k_1^2\over 1-s^2}\hat\Phi_1-{2\omega-144\over 15(s^2-1)}\hat\Phi_1
  =((1-s^2)\hat\Phi_1')'+{-k_1^2-{144-2\omega\over 15}\over 1-s^2}\hat\Phi_1=-12\hat\Phi_1,
 \end{align*}
where $
 \Delta_{k_1}\hat\Phi_1\in L^2(-1,1)$. Then there exists $0\leq m_1\leq 3$ such that $-k_1^2-{144-2\omega\over 15}=-m_1^2$. Thus, $|k_1|\geq|m_1|$ and $\omega={15(k_1^2-m_1^2)+144\over2}\in \{{15(j^2-m^2)+144\over2}\big|j\geq m, 0\leq m\leq3\}$, which is a contradiction.
\end{proof}

\subsection{Proof of rigidity of travelling waves  near the $3$-jet}
In this subsection, we prove Theorem $\ref{Rigidity of nearby traveling waves}$. 

\begin{proof}
First, we prove Theorem $\ref{Rigidity of nearby traveling waves}$ $(1)$. The proof is almost the same with Theorem 4.2 in \cite{csz}, and we mainly point out the difference.
Suppose that for any  $n>0$, there exists a travelling wave  $\Psi_n(\varphi-c_nt,s)$
satisfying
\begin{equation*}
\left\lVert\Delta\Psi_n-\Delta\Psi_0\right\rVert_{H_{p}^{2}(\mathbb{S}^2)}
+\left\lVert\partial_{s}\Psi_n-\Psi_{0}'\right\rVert_{C^0(\mathbb{S}^2)}\leq\frac{1}{n},
\end{equation*}
and $c_n\in (-\infty,-12-\delta]\cup[3+\delta,\infty)$, $c_n\neq-\omega$, but $\partial_{\varphi}\Psi_n\not\equiv0$. Moreover,
\begin{align*}
-\partial_{s}(c_ns+\Psi_n)\partial_{\varphi}(\Delta\Psi_n-2c_ns)+\partial_{\varphi}(\Psi_n+c_ns)\partial_{s}(\Delta\Psi_n-2c_ns+2(\omega+c_n)s)=0.
\end{align*}
Thus, $\Delta\Psi_n-2c_ns$ is a steady  solution of ${\rm(\mathcal{E}_{\omega+c_n})}$. Since $c_n+\omega\neq0$, by Theorem 4 in \cite{cg22} (iii) we have $\partial_\varphi\Psi_n\in X$, which is defined in \eqref{def-space-X}.
We normalize $\partial_{\varphi}\Psi_n$ by
$\xi_n=\frac{\partial_{\varphi}\Psi_n}{\left\lVert\partial_{\varphi}\Psi_n\right\rVert_{L^{2}(\mathbb{S}^2)}}$
so that $\left\lVert\xi_n\right\rVert_{L^{2}(\mathbb{S}^2)}=1$. Then $\xi_n\in X$. Similar to (4.17) in \cite{csz}, $\{\xi_n\}$ has a uniform $H_2^2(\mathbb{S}^2)$ bound. Thus, $\xi_n\rightharpoonup\xi_0$ in $H_2^2(\mathbb{S}^2)$ for some $\xi_0\in H_2^2(\mathbb{S}^2)\cap X$ and $\xi_{0,0}=0$ for the $0$'th Fourier mode.
Similar to Cases 1-2 in the proof of Theorem 4.2 of \cite{csz}, there exists $k_0\neq0$ such that $\mathcal{L}_{\omega,k_0}|_{X^{k_0}}$ has an imaginary eigenvalue located outside   $\sigma_e(\mathcal{L}_{\omega,k_0}|_{X^{k_0}})=ik_0 Ran(\Psi_0')=ik_0[-3,12]$.
However, by the spectral property of the linearized operator  $\mathcal{L}_{\omega,k}|_{X^k}$ in  Lemma $\ref{spectra of the linearized operatorLrigidity}$ $({\rm{i}})$, there are no imaginary eigenvalues of $\mathcal{L}_{\omega,k}|_{X^k}$ located outside   $\sigma_e(\mathcal{L}_{\omega,k}|_{X^{k}})=ik Ran(\Psi_0')=ik[-3,12]$   for $k\neq0$ and $\omega\in(-3,{69\over2})$. This is a contradiction.

Next, we prove Theorem $\ref{Rigidity of nearby traveling waves}$ $(2)$. Suppose that for any  $n>0$, there exists a travelling wave  $\Psi_n(\varphi-c_nt,s)$
satisfying
\begin{align}\label{Psi-Psi0distance-Hp2-C0}
\left\lVert\Delta\Psi_n-\Delta\Psi_0\right\rVert_{H_{p}^{2}(\mathbb{S}^2)}
+\sum_{j=1}^2\left\lVert\partial_{s}^j(\Psi_n-\Psi_{0})\right\rVert_{C^0(\mathbb{S}^2)}\leq{1\over n}
\end{align}
and $c_n\in [-12+\delta,3-\delta]$, but $\partial_{\varphi}\Psi_n\not\equiv0$. The travelling wave $\Psi_n(\varphi-c_nt,s)$ solves
\begin{equation}\label{travelling wave equation Psi-n}
-(c_n+\partial_{s}\Psi_n)\partial_{\varphi}\Delta\Psi_n+\partial_{\varphi}\Psi_n(\partial_{s}\Delta\Psi_n+2\omega)=0.
\end{equation}
For $n$ large enough so that ${1\over n}<{1\over 32}\delta$, by \eqref{Psi-Psi0distance-Hp2-C0} we have
\begin{align*}
\left|-\partial_s\Psi_n\left(\varphi,{\sqrt{\delta}\over 4}\right)-\left(-\Psi_0'\left({\sqrt{\delta}\over 4}\right)\right)\right|&=\left|-\partial_s\Psi_n\left(\varphi,{\sqrt{\delta}\over 4}\right)-\left(3-{15\over16}\delta\right)\right|<{1\over 32}\delta\end{align*}
and
\begin{align*}
\left|-\partial_s\Psi_n\left(\varphi,1-\delta^2\right)-\left(-\Psi_0'\left(1-\delta^2\right)\right)\right|&
=\left|-\partial_s\Psi_n\left(\varphi,1-\delta^2\right)-(-12+30\delta^2-15\delta^4)\right|<{1\over 32}\delta
\end{align*}
for $\varphi\in\mathbb{T}_{2\pi}$. Then
\begin{align*}
-\partial_s\Psi_n\left(\varphi,{\sqrt{\delta}\over 4}\right)>3-{15\over16}\delta-{1\over 32}\delta>3-\delta,
\end{align*}
and
\begin{align*}
-\partial_s\Psi_n\left(\varphi,1-\delta^2\right)<-12+30\delta^2-15\delta^4+{1\over 32}\delta<-12+\delta
\end{align*}
for $\varphi\in\mathbb{T}_{2\pi}$ and  $\delta>0$ small enough. Then
\begin{align*}
-\partial_s\Psi_n\left(\varphi,1-\delta^2\right)<-12+\delta\leq c_n\leq 3-\delta<-\partial_s\Psi_n\left(\varphi,{\sqrt{\delta}\over 4}\right)
\end{align*}
for $\varphi\in\mathbb{T}_{2\pi}$. By \eqref{Psi-Psi0distance-Hp2-C0}, we have $\|-\partial_s^2\Psi_n-(-\Psi_{0}'')\|_{C^0(\mathbb{S}^2)}=\|-\partial_s^2\Psi_n-(-30s)\|_{C^0(\mathbb{S}^2)}\leq{1\over n}<{1\over 32}\delta$.
Then  we have
\begin{align*}
-\partial_s^2\Psi_n(\varphi,s)\leq -30s+{\delta\over 32}\leq -{15\sqrt{\delta}\over 2}+{\delta\over 32}<0
\end{align*}
for $s\in\left[{\sqrt{\delta}\over 4},1-\delta^2\right]$, $\varphi\in\mathbb{T}_{2\pi}$ and $\delta>0$ small enough. Thus, for any $\varphi\in\mathbb{T}_{2\pi}$ and for  $n$ large enough, there exists a unique $s_{n,1}(\varphi)\in \left[{\sqrt{\delta}\over 4},1-\delta^2\right]$ such that $-\partial_s\Psi_n(\varphi,s_{n,1}(\varphi))=c_n$.
Similarly, for any $\varphi\in\mathbb{T}_{2\pi}$ and for $n$ large enough, there exists a unique $s_{n,2}(\varphi)\in \left[-1+\delta^2,-{\sqrt{\delta}\over 4}\right]$ such that $-\partial_s\Psi_n(\varphi,s_{n,2}(\varphi))=c_n$. Now, we divide $\mathbb{T}_{2\pi}\times [-1,1]$ into five parts:
\begin{align*}&D_1=\mathbb{T}_{2\pi}\times [-1,-1+\delta^2], D_2=\mathbb{T}_{2\pi}\times \left[-1+\delta^2,-{\sqrt{\delta}\over 4}\right],
D_3=\mathbb{T}_{2\pi}\times \left[-{\sqrt{\delta}\over 4},{\sqrt{\delta}\over 4}\right], \\
&D_4=\mathbb{T}_{2\pi}\times \left[{\sqrt{\delta}\over 4},1-\delta^2\right], D_5=\mathbb{T}_{2\pi}\times [1-\delta^2,1].
\end{align*}
We normalize $\partial_{\varphi}\Psi_n$ by
$\xi_n=\frac{\partial_{\varphi}\Psi_n}{\left\lVert\partial_{\varphi}\Psi_n\right\rVert_{L^{2}(\mathbb{S}^2)}}$
so that $\left\lVert\xi_n\right\rVert_{L^{2}(\mathbb{S}^2)}=1$. By \eqref{travelling wave equation Psi-n}, we have
\begin{equation}\label{travelling wave xin equation}
-(c_n+\partial_{s}\Psi_n)\Delta\xi_n+\xi_n(\partial_{s}\Delta\Psi_n+2\omega)=0.
\end{equation}
By Theorem 2.7 in \cite{Hebey2000}, $H_{3}^{2}(\mathbb{S}^2)$ is embedded in $C^{1}(\mathbb{S}^2)$. So
\begin{align}\label{DeltaPsinC1}
\left\lVert\Delta\Psi_n+2\omega s\right\rVert_{C^{1}(\mathbb{S}^2)}&\leq\left\lVert\Delta\Psi_n\right\rVert_{C^{1}(\mathbb{S}^2)}+C
\leq\left\lVert\Delta\Psi_n-\Delta\Psi_0\right\rVert_{C^{1}(\mathbb{S}^2)}+\left\lVert\Delta\Psi_0\right\rVert_{C^{1}(\mathbb{S}^2)}+C\\\nonumber
&\leq C\left\lVert\Delta\Psi_n-\Delta\Psi_0\right\rVert_{H_{3}^{2}(\mathbb{S}^2)}+C\leq\frac{C}{n}+C\leq C.
\end{align}

First, we estimate $\|\Delta\xi_n\|_{L^2(D_3)}$.  Since $-\Psi_0'(s)=-15s^2+3\in\left[3-{15\over 16}\delta,3\right]$ for $s\in \left[-{\sqrt{\delta}\over 4},{\sqrt{\delta}\over 4}\right]$, we infer from \eqref{Psi-Psi0distance-Hp2-C0} that
\begin{align*}
-\partial_{s}\Psi_n(\varphi,s)>-\Psi_0'(s)-{1\over 32}\delta\geq3-{15\over 16}\delta-{1\over 32}\delta=3-{31\over 32}\delta
\end{align*}
for $(\varphi,s)\in D_3$ and for $n$ large enough so that ${1\over n}<{1\over 32}\delta$. On the other hand, $c_n\leq 3-\delta$. Thus,
\begin{align*}
|\partial_s\Psi_n(\varphi,s)+c_n|=-\partial_s\Psi_n(\varphi,s)-c_n\geq3-{31\over 32}\delta-3+\delta={1\over 32}\delta
\end{align*}
for $(\varphi,s)\in D_3$ and for $n$ large enough so that ${1\over n}<{1\over 32}\delta$. Then
\begin{align}\label{DeltaxinD3}
\|\Delta\xi_n\|_{L^2(D_3)}=\left\|{\xi_n(\partial_s\Delta\Psi_n+2\omega)\over \partial_s\Psi_n+c_n} \right\|_{L^2(D_3)}\leq {32C\over \delta}\left\|{\xi_n\over \sqrt{1-s^2}}\right\|_{L^2(D_3)}\leq{C_\delta}\left\|\xi_n\right\|_{L^2(\mathbb{S}^2)}
\end{align}
for $n$ large enough so that ${1\over n}<{1\over 32}\delta$.

Next, we estimate $\|\Delta\xi_n\|_{L^2(D_1)}$ and $\|\Delta\xi_n\|_{L^2(D_5)}$. Since $-\Psi_0'(s)=-15s^2+3\in[-12, -15(-1+\delta^2)^2+3]$ for $s\in \left[-1,-1+\delta^2\right]$,  by \eqref{Psi-Psi0distance-Hp2-C0} we have
\begin{align*}
-\partial_{s}\Psi_n(\varphi,s)<-\Psi_0'(s)+{1\over 32}\delta\leq-12+30\delta^2-15\delta^4+{1\over 32}\delta< -12+{1\over 16}\delta
\end{align*}
for $(\varphi,s)\in D_1$,   $\delta>0$ small enough so that $30\delta^2-15\delta^4<{1\over 32}\delta$ and  $n$ large enough so that ${1\over n}<{1\over 32}\delta$.
On the other hand, $c_n\geq -12+\delta$. Thus,
\begin{align*}
|\partial_s\Psi_n(\varphi,s)+c_n|=c_n-(-\partial_s\Psi_n(\varphi,s))>-12+\delta-\left(-12+{1\over 16}\delta\right)={15\over 16}\delta
\end{align*}
for $(\varphi,s)\in D_1$,  $\delta>0$ small enough  and $n$ large enough as above. Then
\begin{align}\label{DeltaxinD1}
\|\Delta\xi_n\|_{L^2(D_1)}=&\left\|{\xi_n(\partial_s\Delta\Psi_n+2\omega)\over \partial_s\Psi_n+c_n} \right\|_{L^2(D_1)}\leq {16C\over 15\delta}\left\|{\xi_n\over \sqrt{1-s^2}}\right\|_{L^2(\mathbb{S}^2)}\\\nonumber
\leq&{C_\delta}\left\|{\partial_\varphi\xi_n\over \sqrt{1-s^2}}\right\|_{L^2(\mathbb{S}^2)}\leq C_\delta\|\nabla\xi_n\|_{L^2(\mathbb{S}^2)}\leq  C_\delta\|\xi_n\|_{H_1^2(\mathbb{S}^2)}
\end{align}
for $\delta>0$ small enough and $n$ large enough as above. Similarly, we have
\begin{align}\label{DeltaxinD5}
\|\Delta\xi_n\|_{L^2(D_5)}\leq  C_\delta\|\xi_n\|_{H_1^2(\mathbb{S}^2)}
\end{align}
for $\delta>0$ small enough and $n$ large enough.

Then we estimate $\|\Delta\xi_n\|_{L^2(D_2)}$ and $\|\Delta\xi_n\|_{L^2(D_4)}$. Note that $\partial_{s}\Delta\Psi_n(\varphi,s)+2\omega\neq0$ for $(\varphi,s)\in \mathbb{T}_{2\pi}\times [-1,1]$.
 For $\varphi\in\mathbb{T}_{2\pi}$, since $c_n+\partial_s\Psi_n(\varphi,s_{n,j}(\varphi))=0$,  by \eqref{travelling wave xin equation} we have
$\xi_n(\varphi,s_{n,j}(\varphi))=0$, where $j=1,2$.
By \eqref{Psi-Psi0distance-Hp2-C0} and ${1\over n}<{1\over 32}\delta$, we have $|-\partial_s^2\Psi_n(\varphi,s)-(-\Psi_0''(s))|<{1\over32}\delta$ for $(\varphi,s)\in D_2$. Thus,
\begin{align*}
-\partial_s^2\Psi_n(\varphi,s)>-\Psi_0''(s)-{1\over 32}\delta=-30s-{1\over 32}\delta>{15\over2}\sqrt{\delta}-{1\over 32}\delta>\sqrt{\delta}
\end{align*}
for $\delta>0$ small enough, $n$ large enough and $(\varphi,s)\in D_2$.
Then there exists $\tilde s(\varphi)\in\left[-1+\delta^2,-{\sqrt{\delta}\over 4}\right]$ such that
\begin{align*}
\left|{s-s_{n,2}(\varphi)\over -\partial_s\Psi_n(\varphi,s)-c_n}\right|=&\left|{s-s_{n,2}(\varphi)\over -\partial_s\Psi_n(\varphi,s)-(-\partial_s\Psi_n(\varphi,s_{n,2}(\varphi)))}\right|=\left|{s-s_{n,2}(\varphi)\over -\partial_s^2\Psi_n(\varphi,\tilde s(\varphi))(s-s_{n,2}(\varphi))}\right|\\
=&\left|{1\over -\partial_s^2\Psi_n(\varphi,\tilde s(\varphi))}\right|<{1\over \sqrt{\delta}}
\end{align*}
for $\delta>0$ small enough, $n$ large enough and  $(\varphi,s)\in D_2$.
Then by Hardy's inequality, we have
\begin{align}\label{DeltaxinD2}
\|\Delta\xi_n\|_{L^2(D_2)}=&\left\|{\xi_n(\partial_s\Delta\Psi_n+2\omega)\over \partial_s\Psi_n+c_n} \right\|_{L^2(D_2)}\leq C\left\|{\xi_n\over \sqrt{1-s^2}(\partial_s\Psi_n+c_n)}\right\|_{L^2(D_2)}\\\nonumber
\leq&{C_\delta}\left\|{\xi_n\over \partial_s\Psi_n+c_n}\right\|_{L^2(D_2)}\leq C_\delta\left\|{\xi_n\over s-s_{n,2}(\varphi)}{s-s_{n,2}(\varphi)\over -\partial_s\Psi_n-c_n}\right\|_{L^2(D_2)}\\\nonumber
\leq &C_\delta\left\|{\xi_n\over s-s_{n,2}(\varphi)}\right\|_{L^2(D_2)}=C_\delta\left(\int_0^{2\pi}\left(\int_{-1+\delta^2}^{-{\sqrt{\delta}\over 4}}{\xi_n^2\over (s-s_{n,2}(\varphi))^2}ds\right)d\varphi\right)^{1\over2}\\\nonumber
\leq& C_\delta\left(\int_0^{2\pi}\int_{-1+\delta^2}^{-{\sqrt{\delta}\over 4}}{|\partial_s\xi_n|^2}dsd\varphi\right)^{1\over2}
\leq C_\delta\|\nabla\xi_n\|_{L^2(\mathbb{S}^2)}\leq  C_\delta\|\xi_n\|_{H_1^2(\mathbb{S}^2)}
\end{align}
for $\delta>0$ small enough and $n$ large enough. Similarly,
\begin{align}\label{DeltaxinD4}
\|\Delta\xi_n\|_{L^2(D_4)}\leq  C_\delta\|\xi_n\|_{H_1^2(\mathbb{S}^2)}
\end{align}
for $\delta>0$ small enough and $n$ large enough. Combining \eqref{DeltaxinD3}-\eqref{DeltaxinD4}, we have
\begin{align*}
\|\xi_n\|_{H_2^2(\mathbb{S}^2)}^2\leq\|\Delta\xi_n\|_{L^2(\mathbb{S}^2)}^2\leq  C_\delta\|\xi_n\|_{H_1^2(\mathbb{S}^2)}^2\leq C_\delta\|\xi_n\|_{H_2^2(\mathbb{S}^2)}\|\xi_n\|_{L^2(\mathbb{S}^2)},
\end{align*}
and thus we obtain the uniform $H_2^2(\mathbb{S}^2)$ bound
\begin{align}\label{travelling wave xin uniform bound H2}
\|\xi_n\|_{H_2^2(\mathbb{S}^2)}\leq C_\delta
\end{align}
for $n$ large enough. Up to a subsequence,  there exists $\xi_0\in H_{2}^{2}(\mathbb{S}^2)$ such that $\xi_n\rightharpoonup\xi_0$ in $H_{2}^{2}(\mathbb{S}^2)$, $\xi_n\rightarrow\xi_0$ in $H_{1}^{2}(\mathbb{S}^2)$ and $\left\lVert\xi_0\right\rVert_{L^{2}(\mathbb{S}^2)}=1$.
By \eqref{travelling wave xin equation} we have
\begin{equation}\label{travelling wave-n}
\int_{\mathbb{S}^2}
-(c_n+\partial_{s}\Psi_n)\Delta\xi_n\Phi d\sigma_g+\int_{\mathbb{S}^2}\xi_n(\partial_{s}\Delta\Psi_n+2\omega)\Phi
 d\sigma_g=0
\end{equation}
for any $\Phi\in L^{2}(\mathbb{S}^2)$.
Up to a subsequence, $c_n\to c_0\in[-12+\delta,3-\delta]$. Note that
\begin{align*}
&\int_{\mathbb{S}^2}
\left(-(c_n+\partial_{s}\Psi_n)\Delta\xi_n\Phi+(c_0+\partial_{s}\Psi_0)\Delta\xi_0\Phi\right) d\sigma_g\\\nonumber
=&\int_{\mathbb{S}^2}(-(c_n+\partial_{s}\Psi_n)+(c_0+\partial_{s}\Psi_0))\Delta\xi_n\Phi d\sigma_g+\int_{\mathbb{S}^2}(-(c_0+\partial_{s}\Psi_0))(\Delta\xi_n-\Delta\xi_0)\Phi d\sigma_g.
\end{align*}
By \eqref{Psi-Psi0distance-Hp2-C0},  $c_n+\partial_{s}\Psi_n\to c_0+\partial_{s}\Psi_0$ in $C^0(\mathbb{S}^2)$. This, along with \eqref{travelling wave xin uniform bound H2}, implies that for any $\varepsilon>0$, we have
\begin{align*}
\left|\int_{\mathbb{S}^2}(-(c_n+\partial_{s}\Psi_n)+(c_0+\partial_{s}\Psi_0))\Delta\xi_n\Phi d\sigma_g\right|\leq & \varepsilon\|\Delta\xi_n\|_{L^2(\mathbb{S}^2)}\|\Phi\|_{L^2(\mathbb{S}^2)}\\
\leq&\varepsilon\|\xi_n\|_{H_2^2(\mathbb{S}^2)}\|\Phi\|_{L^2(\mathbb{S}^2)}\leq \varepsilon C_\delta\|\Phi\|_{L^2(\mathbb{S}^2)}
\end{align*}
when $n$ large enough. Moreover, $\Delta\xi_n\rightharpoonup\Delta\xi_0$ in $L^2(\mathbb{S}^2)$. Then
\begin{align}\label{travelling wave-n1}
&\int_{\mathbb{S}^2}
-(c_n+\partial_{s}\Psi_n)\Delta\xi_n\Phi d\sigma_g\to\int_{\mathbb{S}^2}-(c_0+\partial_{s}\Psi_0)\Delta\xi_0\Phi d\sigma_g.
\end{align}
On the other hand, noting that
for  $n\geq1$,
\[\xi_{n,0}=\frac{1}{2\pi}\int_{0}^{2\pi}\xi_nd\varphi=\frac{1}{2\pi}\int_{0}^{2\pi}\frac{\partial_{\varphi}\Psi_n}
{\left\lVert\partial_{\varphi}\Psi_n\right\rVert_{L^{2}(\mathbb{S}^2)}}d\varphi=0,\]
and
$\left\lVert\xi_{n,0}-\xi_{0,0}\right\rVert_{L^{2}(\mathbb{S}^2)}^2
\leq\left\lVert\xi_{n}-\xi_{0}\right\rVert_{L^{2}(\mathbb{S}^2)}^{2}\to0$
as $n\to+\infty$, we have $\xi_{0,0}=0$.
This, along with \eqref{DeltaPsinC1} and the fact that $H_{3}^{2}(\mathbb{S}^2)$ is embedded in $C^{1}(\mathbb{S}^2)$, implies
\begin{align}\label{travelling wave-n2}
&\left|\int_{\mathbb{S}^2}(\xi_n(\partial_{s}\Delta\Psi_n+2\omega)-\xi_0(\partial_{s}\Delta\Psi_0+2\omega))\Phi d\sigma_g\right|\\\nonumber
\leq&\left|\int_{\mathbb{S}^2}{\xi_n-\xi_0\over\sqrt{1-s^2}}\sqrt{1-s^2}(\partial_s\Delta\Psi_n+2\omega)\Phi d\sigma_g\right|\\\nonumber
&+\left|\int_{\mathbb{S}^2}{\xi_0\over\sqrt{1-s^2}}\Phi \sqrt{1-s^2}\partial_s(\Delta\Psi_n-\Delta\Psi_0) d\sigma_g\right|\\\nonumber
\leq&C\left\|{\xi_n-\xi_0\over\sqrt{1-s^2}}\right\|_{L^2(\mathbb{S}^2)}\left\|\Phi\right\|_{L^2(\mathbb{S}^2)}
+C\left\|{\xi_0\over\sqrt{1-s^2}}\right\|_{L^2(\mathbb{S}^2)}\|\Phi\|_{L^2(\mathbb{S}^2)}\|\Delta\Psi_n-\Delta\Psi_0\|_{C^1(\mathbb{S}^2)}\\\nonumber
\leq&C\left\|{\partial_\varphi(\xi_n-\xi_0)\over\sqrt{1-s^2}}\right\|_{L^2(\mathbb{S}^2)}\left\|\Phi\right\|_{L^2(\mathbb{S}^2)}
+C\left\|{\partial_\varphi\xi_0\over\sqrt{1-s^2}}\right\|_{L^2(\mathbb{S}^2)}\|\Phi\|_{L^2(\mathbb{S}^2)}\|\Delta\Psi_n-\Delta\Psi_0\|_{H_3^2(\mathbb{S}^2)}\\\nonumber
\leq&C\left\|\xi_n-\xi_0\right\|_{H_1^2(\mathbb{S}^2)}\left\|\Phi\right\|_{L^2(\mathbb{S}^2)}
+C\left\|{\xi_0}\right\|_{H_1^2(\mathbb{S}^2)}\|\Phi\|_{L^2(\mathbb{S}^2)}\|\Delta\Psi_n-\Delta\Psi_0\|_{H_3^2(\mathbb{S}^2)}\\\nonumber
\to&0
\end{align}
as $n\to \infty$. Combining \eqref{travelling wave-n}-\eqref{travelling wave-n2}, we have
\begin{equation*}
\int_{\mathbb{S}^2}
\left(-(c_0+\partial_{s}\Psi_0)\Delta\xi_0+\xi_0(\partial_{s}\Delta\Psi_0+2\omega)\right)\Phi d\sigma_g=0.
\end{equation*}
By the arbitrary choice of $\Phi\in L^2(\mathbb{S}^2)$, pointwisely  we have
\begin{align*}
-(c_0+\Psi_0')\Delta\xi_0+\xi_0(\Delta\Psi_0'+2\omega)=0.
\end{align*}
Since $\xi_0\neq0$, $\xi_0\in H_2^2(\mathbb{S}^2)$  and $\xi_{0,0}=0$, there exists $k_0\neq0$ such that $\xi_{0,k_0}\neq0$ satisfies $\Delta_{k_0}\xi_{0,k_0}\in L^2(-1,1)$ and
\begin{align}\label{travelling wave Rayleigh eq}
-(c_0+\Psi_0')\Delta_{k_0}\xi_{0,k_0}+\xi_{0,k_0}(\Delta\Psi_0'+2\omega)=0 \quad \text{on}\quad (-1,1).
\end{align}
Since $c_0\in Ran(-\Psi_0')^\circ=(-12,3)$, there exist two points $s_{1}\in(-1,0)$ and $s_{2}\in(0,1)$ solving $c_0+\Psi_0'(s_{i})=0$ for $i=1,2$.
Then
\begin{align}\label{travelling wave Rayleigh eq subinterval}
-\Delta_{k_0}\xi_{0,k_0}+{\Delta\Psi_0'+2\omega\over  \Psi_0'+c_0}
\xi_{0,k_0}=0\quad \text{on}\quad (-1,s_1)\cup(s_1,s_2)\cup(s_2,1).
\end{align}

We claim that there exists $1\leq i_0\leq 2$ such that $\xi_{0,k_0}(s_{i_0})\neq0$.
Suppose that $\xi_{0,k_0}(s_{1})=\xi_{0,k_0}(s_{2})=0$. For $\omega\in(-\infty,-18)$, since  $\xi_{0,k_0}(s_{1})=\xi_{0,k_0}(s_{2})=0$, by \eqref{travelling wave Rayleigh eq subinterval} on $(s_1,s_2)$  we have
\begin{align*}
\int_{s_1}^{s_2}\left(|\nabla_{k_0}\xi_{0,k_0}|^2+{\Delta\Psi_0'+2\omega\over  \Psi_0'+c_0}
|\xi_{0,k_0}|^2\right) ds=0.
\end{align*}
Since $\Delta\Psi_0'+2\omega<0$ and $\Psi_0'+c_0<0$, we have $\xi_{0,k_0}=0$ on $(s_1,s_2)$. For $\omega\in(72,\infty)$, since  $\xi_{0,k_0}(s_{1})=\xi_{0,k_0}(s_{2})=0$, by \eqref{travelling wave Rayleigh eq subinterval} on $(-1,s_1)\cup(s_2,1)$  we have
\begin{align*}
\left(\int_{-1}^{s_1}+\int_{s_2}^{1}\right)\left(|\nabla_{k_0}\xi_{0,k_0}|^2+{\Delta\Psi_0'+2\omega\over  \Psi_0'+c_0}
|\xi_{0,k_0}|^2\right) ds=0.
\end{align*}
Then $\Delta\Psi_0'+2\omega>0$ and $\Psi_0'+c_0>0$ imply $\xi_{0,k_0}=0$ on $(-1,s_1)\cup(s_2,1)$.
Note that $\xi_{0,k_0}\in C^1(-1,1)$ due to $\Delta_{k_0}\xi_{0,k_0}\in L^2(-1,1)$. Since $\xi_{0,k_0}=0$ on $(s_1,s_2)$ for $\omega\in(-\infty,-18)$ and $\xi_{0,k_0}=0$ on $(-1,s_1)\cup(s_2,1)$ for $\omega\in(72,\infty)$, we have $\xi_{0,k_0}'(s_{1})=\xi_{0,k_0}'(s_{2})=0$ for $\omega\in(-\infty,-18)\cup(72,\infty)$. By Lemma \ref{initial ode}, we have
$\xi_{0,k_0}=0$ on $(-1,1)$, which is a contradiction.

Since $\Delta\Psi_0'(s_{i_0})+2\omega\neq0$, $\xi_{0,k_0}(s_{i_0})\neq0$ and $\Psi_0'(s_{i_0})+c_0=0$,
by \eqref{travelling wave Rayleigh eq} we have $\Delta_{k_0}\xi_{0,k_0}\notin L^2(s_{i_0},s_{i_0}+\delta_0)$ for $\delta_0>0$ small enough, which is a contradiction.
\end{proof}

\begin{Remark} 
Based on the types of imaginary eigenvalues of the linearized operators,  the  rigidity in Theorem \ref{Rigidity of nearby traveling waves} might be improved. Precisely,
by Lemma $\ref{spectra of the linearized operatorLrigidity}$
$({\rm{i}})$, the imaginary eigenvalues of the linearized operator  $\mathcal{L}_{\omega,k}|_{X^k}$ have to be in the interior of $\sigma_e(\mathcal{L}_{\omega,k}|_{X^{k}})$  for $k\neq0$ and $\omega\in(-3,{69\over2})$. At the nonlinear level, this suggests that the  rigidity in Theorem $\ref{Rigidity of nearby traveling waves}$ $(1)$ might be improved  in the sense that
any nearby unidirectional travelling wave $($in the norm \eqref{travelling wave norm1}$)$
must be a zonal flow.
 By Lemma $\ref{spectra of the linearized operatorLrigidity}$
$({\rm{iv}})$, the imaginary eigenvalues of  $\mathcal{L}_{\omega,k}|_{X^k}$ are outside  $\sigma_e(\mathcal{L}_{\omega,k}|_{X^{k}})$  for $k\neq0$ and for almost all  $\omega\in(-\infty,-18)\cup(72,\infty)$. The  rigidity in Theorem $\ref{Rigidity of nearby traveling waves}$ $(2)$ might be improved in the sense that
any nearby cat's eyes travelling wave $($in the norm \eqref{travelling wave norm2}$)$
must be a zonal flow.
 These two possible improvements certainly require more sophisticated analysis to deal with the endpoints $-12$ and $3$ of $Ran(-\Psi_0')$.
In addition, the norms \eqref{travelling wave norm1} and \eqref{travelling wave norm2} might be improved to be  optimal.  

For $\omega\in(12,{69\over2})$, noting that $\pm\omega$ is an imaginary isolated eigenvalue of $\sigma(\mathcal{L}_{\omega,\pm1})$ which does not come from the space $X$ but from $E_1+E_3$ in Remark \ref{E1JL exact eigenvalues-rem}, one may construct unidirectional travelling waves with travelling speeds $c=-\omega$ near the $3$-jet.
If one takes this into consideration, the number ${69\over2}$ in Theorem $\ref{travelling wave solutions cat eye unidirectional thm}$ $(1)$-$(2)$ might be replaced by $12$.
\if0
$(3)$
For $ \omega\in\left(-18,g^{-1}(-12)\right)\cup\left({99\over2},72\right)$, spectral stability shown in Theorems $\ref{positive half-line critical rotation rate}$-$\ref{negative half-line critical rotation rate}$ suggests that  the $3$-jet zonal flow might be nonlinearly orbitally   stable. But it  remains  an open problem so far.  It is worth noting that unlike the non-negative nature of the energy quadratic form  of the $1, 2$-jet in $X$, the  energy quadratic form $\langle L\cdot,\cdot\rangle$ of the $3$-jet has multiple negative directions in $X$, which makes the method in \cite{cg22,Cao-Wang-Zuo2023} invalid here.
 The unidirectional travelling waves in Theorem $\ref{travelling wave solutions thm}$ provide useful  curves in which the $3$-jet can be embedded. This might help to offset the negative directions of the energy quadratic form by establishing orbits and  thus shed some light on proving this problem.
 \fi
\end{Remark}

\noindent
{\bf Acknowledgement} This research was supported by the Austrian Science Fund (FWF) [grant number Z 387-N] and by the National Natural Science Foundation of China [grant number 12494544 and 
grant number 12471229].

\end{CJK*}
\end{document}